\numberwithin{equation}{section}
\newcommand{\itemizeEqnVSpacing}{\rule{0pt}{1pt}\vspace*{-12pt}}
\newcommand{\RR}{\mathbb{R}}
\newcommand{\eps}{\varepsilon}
\renewcommand{\dim}{\mathrm{dim}\,}
\newcommand{\diam}{\mathrm{diam}}
\newcommand{\dir}{\mathrm{dir}}
\newcommand{\supp}{\operatorname{supp}}
\newcommand{\tubes}{\mathbb{T}}
\newcommand{\cD}{\mathcal{D}}
\newcommand{\cE}{\mathcal{E}}
\newcommand{\TcE}{\tilde{\mathcal{E}}}
\newcommand{\cF}{\mathcal{F}}
\newcommand{\CKT}{C_{\scriptscriptstyle{KT\textrm{-}CW}}}
\newcommand{\CFC}{C_{\scriptscriptstyle{F\textrm{-}CW}}}
\newcommand{\FS}{C_{\scriptscriptstyle{F\textrm{-}SW}}}
\newtheorem{thm}{Theorem}
\numberwithin{thm}{section}
\newtheorem{conj}[thm]{Conjecture}
\newtheorem{prop}[thm]{Proposition}
\newtheorem{lem}[thm]{Lemma}
\newtheorem{cor}[thm]{Corollary}
\newtheorem*{defnConvexPrime}{Definition \ref{KatzTaoAndFrostmanTubesDefn}$^\prime$}
\newtheorem*{PropWolffHairbrushExpanded}{Proposition \ref{WolffHairbrush}, expanded}
\newtheorem*{GuthGrainsPropRepeat}{Proposition \ref{GuthGrainsProp}}
\newtheorem*{weakerPropEquivDELemInformal}{Lemma \ref{weakerPropEquivDE}, informal version}
\newtheorem*{tubeDoublingTheoremR3Again}{Theorem \ref{tubeDoublingTheoremR3}$'$}
\theoremstyle{remark}
\newtheorem{rem}[thm]{Remark}
\newtheorem{defn}[thm]{Definition}
\title{Volume estimates for unions of convex sets, and the Kakeya set conjecture in three dimensions}
\author{Hong Wang\thanks{Courant Institute of Mathematical Sciences, New York University. New York, NY, USA.} \and Joshua Zahl \thanks{Department of Mathematics, The University of British Columbia. Vancouver, BC, Canada.}}
\date{\today}
\begin{document}

\maketitle

\begin{abstract}
We study sets of $\delta$ tubes in $\RR^3$, with the property that not too many tubes can be contained inside a common convex set $V$. We show that the union of tubes from such a set must have almost maximal volume. As a consequence, we prove that every Kakeya set in $\mathbb{R}^3$ has Minkowski and Hausdorff dimension 3.
\end{abstract}

\tableofcontents


\section{Introduction}\label{introductionSection}
A Kakeya set is a compact subset of $\RR^n$ that contains a unit line segment pointing in every direction. The Kakeya set conjecture asserts that every Kakeya set in $\RR^n$ has Minkowski and Hausdorff dimension $n$. This conjecture was proved by Davies \cite{Dav71} when $n=2$, and is open in three and higher dimensions. See \cite{KT02,Wol96} for an introduction to the Kakeya conjecture and a survey of historical progress on the problem. See \cite{HRZ22, KLT00, KT02b, KaZa19, KaZa21, Wol95, Za21} for current progress towards the conjecture in three and higher dimensions.

The purpose of this paper is to obtain lower bounds on the volume of unions of $\delta$-tubes (i.e.~the $\delta$ neighbourhoods of unit line segments) in $\RR^3$ that satisfy certain non-clustering conditions. As a consequence, we resolve the Kakeya set conjecture in three dimensions. 

\begin{thm}\label{kakeyaSetThm}
Every Kakeya set in $\RR^3$ has Minkowski and Hausdorff dimension 3.
\end{thm}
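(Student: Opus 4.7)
The plan is to deduce the theorem from a volume lower bound on unions of $\delta$-tubes in $\RR^3$ satisfying a non-clustering hypothesis, which is the content of the paper's main technical result.

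First I would perform the standard reduction from the Kakeya statement to a tube statement. Given a Kakeya set $K \subset \RR^3$ and a maximal $\delta$-separated set of directions $\{e_i\} \subset S^2$ of cardinality $\sim \delta^{-2}$, choose for each $i$ a unit segment $\ell_i \subset K$ in direction $e_i$ and let $T_i$ denote its $\delta$-neighbourhood. Then $\bigcup_i T_i$ is contained in the $\delta$-neighbourhood $K_\delta$ of $K$, so to pin down the Minkowski dimension it suffices to show $|\bigcup_i T_i| \geq \delta^{\eps}$ for every $\eps>0$ and every sufficiently small $\delta$.

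Second, I would verify that the family $\{T_i\}$ satisfies a Katz--Tao/Frostman non-clustering condition: for every convex set $V\subset\RR^3$, only a controlled number of the $T_i$ can be contained in $V$. If $V$ is comparable to a box with semi-axes $a \geq b \geq c \geq \delta$, any $T_i\subset V$ must have direction in a spherical cap of radius $\lesssim b/a$, so the $\delta$-separation of directions yields at most $O((b/a)^2 \delta^{-2})$ such tubes. This places $\{T_i\}$ inside the framework of the paper's main volume estimate, and the Minkowski dimension claim follows at once.

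Third, to upgrade to Hausdorff dimension I would run a Frostman argument. If $K$ had Hausdorff dimension strictly less than $3$, Frostman's lemma supplies, for some $\eta>0$, a probability measure $\mu$ supported on $K$ satisfying $\mu(B(x,r))\leq r^{3-\eta}$. Using $\mu$ to weight tubes at every dyadic scale $\delta$ produces a tube family that obeys an even stronger non-clustering estimate, with a $\delta^{\eta}$-type gain, and the volume bound applied scale-by-scale contradicts $|K|=0$.

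The main obstacle, and essentially the content of the paper, is the volume estimate itself. The difficulty is multiscale: tubes in $\RR^3$ can concentrate in thin slabs, planar fans, or algebraically defined grains, and ruling out such pathological concentration requires combining Wolff's hairbrush bound, polynomial partitioning with Guth's grains structure, and a sticky versus non-sticky dichotomy, together with new structural results on the convex bodies that approximate tube unions. I expect the bulk of the work to lie in setting up and iterating an induction-on-scales inequality that controls how these convex bodies factor, with the Kakeya conclusion following cleanly once the volume estimate is established.
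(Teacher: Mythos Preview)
Your reduction to the volume bound (Theorem~\ref{maximalFnBdpThm}) is exactly how the paper proceeds; the paper simply declares Theorem~\ref{kakeyaSetThm} a corollary and does not write out the deduction. Your Minkowski argument is correct in outline. One imprecision: for a prism with semi-axes $a\ge b\ge c$ containing a unit segment, the admissible directions lie in a spherical \emph{rectangle} of dimensions roughly $b\times c$, not a round cap of radius $b/a$; it is this that gives the bound $\lesssim bc\,\delta^{-2}$ required by the hypothesis of Theorem~\ref{maximalFnBdpThm}. Your cap bound $(b/a)^2\delta^{-2}$ is too weak when $c\ll b$ (a thin slab), so as written it does not verify the non-clustering hypothesis, though the fix is immediate.

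The Hausdorff step contains a genuine error: you have Frostman's lemma backwards. A probability measure on $K$ with $\mu(B(x,r))\le r^{3-\eta}$ is guaranteed precisely when $\mathcal H^{3-\eta}(K)>0$, which is what you are trying to prove; under the assumption $\dim_H K<3$ no such measure need exist. The standard deduction (Bourgain~\cite{Bou91}, Wolff~\cite{Wol96}) uses the cover definition directly: assuming $\dim_H K<3-\eta$, take a cover by balls with $\sum r_j^{3-\eta}$ small, pigeonhole each unit segment in $K$ to a dyadic scale $\delta=2^{-k}$ at which it meets the union $E_k$ of cover-balls of that radius in length $\gtrsim k^{-2}$, and apply Theorem~\ref{maximalFnBdpThm} with shading $Y(T)=T\cap E_k$ and $\lambda\sim k^{-2}$. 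The loss $\lambda^K$ in \eqref{maximalFnBdpLamba} is only polylogarithmic in $\delta$, while $|E_k|\lesssim\delta^\eta\sum r_j^{3-\eta}$, giving the contradiction. Your description of the mechanism (``$\delta^\eta$-type gain'', ``scale-by-scale'') suggests this is the argument you intend; only the appeal to Frostman is misplaced.
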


Theorem \ref{kakeyaSetThm} is a corollary of the following slightly more technical result.

\begin{thm}\label{maximalFnBdpThm}
For all $\eps>0$, there exists $K>1$ so that the following holds for all $\delta>0$ sufficiently small.  Let $\tubes$ be a set of $\delta$-tubes contained in the unit ball in $\RR^3$, and suppose that every rectangular prism of dimensions $a\times b\times 2$ contains at most $100 ab\delta^{-2}$ tubes from $\tubes$ (this is true, for example, if the tubes in $\tubes$ point in $\delta$-separated directions). For each $T\in\tubes,$ let $Y(T)\subset T$ be a measurable set with $|Y(T)|\geq\lambda |T|$. Then
\begin{equation}\label{maximalFnBdpLamba}
\Big|\bigcup_{T\in\tubes}Y(T)\Big|\geq  \delta^\eps \lambda^K \sum_{T\in\tubes}|T|.
\end{equation}
\end{thm}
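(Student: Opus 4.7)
The plan is to prove the estimate by induction on scales, using the non-clustering (Frostman/Wolff-type) axiom to drive a self-improvement of the bound as $\delta$ decreases. First I would perform the standard pigeonholing reductions: dyadically refine so that every tube has a fixed-scale ``two-ends'' sub-segment on which the shading $Y(T)$ is uniform, and so that the multiplicity function $\mu(x)=\#\{T:x\in Y(T)\}$ takes an essentially constant value $\mu$ on $\bigcup_T Y(T)$. These reductions cost only factors of $|\log\delta|^{O(1)}$ and are absorbed into $\delta^\eps$. The goal becomes: assuming the volume bound fails, derive a geometric structure that contradicts the axiom or feeds the inductive hypothesis at a smaller-scale problem.

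Next I would run a dichotomy. If $\mu$ is small then Cauchy--Schwarz (or direct integration) already gives \eqref{maximalFnBdpLamba}, so we may assume $\mu$ is large. Large $\mu$ means many tubes pairwise overlap in $\delta$-balls, and the issue is to convert this coincidence information into a \emph{convex} clustering statement: the participating tubes must concentrate inside some convex body $V$ of volume much smaller than the trivial bound would predict. Here the key ingredient is a structure theorem (the convex-factoring proposition that the paper foreshadows via \texttt{factoringConvexSetsProp}): a convex set of volume $V$ containing more than the Frostman budget of tubes admits a ``slab $\times$ disk'' factorization at an intermediate scale $\rho\in(\delta,1)$. The axiom of the theorem exactly prohibits single slabs/prisms from overstuffing, so the useful output is that tubes organize into a union of such factored convex pieces.

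I would then rescale: inside each factored convex piece, apply the affine transformation that sends the slab thickness $\rho$ back to scale $1$; the $\delta$-tubes become $(\delta/\rho)$-tubes in a new unit ball, the Frostman axiom rescales faithfully (because the admissible prism counts are affinely natural), and the shadings $Y(T)$ induce new shadings of density at least a comparable $\lambda'$. Applying the inductive hypothesis at scale $\delta/\rho$ and summing over the convex pieces returns a bound of the form $(\delta/\rho)^\eps(\lambda')^K\sum|T|$; choosing $K$ large enough so that the polynomial loss in $\lambda$ absorbs the loss incurred by passing from $\lambda$ to $\lambda'$, and verifying that the scale ratio $\rho$ is bounded below by $\delta^{o(1)}$, allows the induction to close with the advertised exponent $\eps$.

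The principal obstacle is the structure theorem itself in the third-dimension-specific regime: proving that failure of the volume bound, under only a Frostman-type prism hypothesis (which is strictly weaker than direction separation), forces the tubes into a convex-factored configuration. This requires a genuinely three-dimensional grain/tangency analysis in the spirit of Guth's polynomial method and the Katz--Zahl sticky/plany/grainy trichotomy, but carried out for \emph{convex} rather than just algebraic obstructions, and with the delicate feature that sticky configurations must be shown to inherit the Frostman axiom after rescaling. A secondary obstacle is bookkeeping the constants so that $K$ depends on $\eps$ but not on $\lambda$, which is why the induction must give polynomial rather than merely $\lambda^{1+o(1)}$-type dependence.
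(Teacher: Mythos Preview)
Your proposal captures the coarse skeleton of an induction-on-scales argument, but it has a genuine gap at the step where you ``rescale and apply the inductive hypothesis.'' The issue is exactly the non-sticky scenario the paper highlights in Section~\ref{inductionOnScaleSection} (Figure~\ref{stickyVsWellSeparated}, right): when you cover $\tubes$ by $\rho$-tubes $\tubes_\rho$, the coarse family $\tubes_\rho$ need \emph{not} satisfy the same prism non-concentration hypothesis --- it is entirely possible that $\#\tubes_\rho \gg \rho^{-2}$ while each $\tubes[T_\rho]$ is sparse. In that case your induction does not close: applying the inductive bound inside each $T_\rho$ gives a fine-scale estimate, but summing over $T_\rho$ overcounts badly because the $\rho$-tubes themselves overlap with high multiplicity, and you have no hypothesis controlling that. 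Your claim that ``the Frostman axiom rescales faithfully'' is true for the fine-scale pieces $\tubes^{T_\rho}$, but not for the coarse family $\tubes_\rho$, and that is where the argument breaks.

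The paper's route around this is substantially more elaborate than what you sketch. It introduces a two-parameter family of quantitative assertions $\cD(\sigma,\omega)$ and $\cE(\sigma,\omega)$ (Definition~\ref{defnCDE}) designed so that $\cE$ tolerates arbitrary $\CKT$ and $\FS$ errors, proves these are equivalent (Proposition~\ref{equivDE}), and then runs a self-improvement $\cE(\sigma,\omega)\Rightarrow\cD(\sigma,\omega-g)$ (Proposition~\ref{improvingProp}). The heart of that self-improvement is not a single rescaling but a two-scale grains decomposition (Sections~\ref{twoScaleGrainsSec}--\ref{moves123Sec}) that forces near-extremizers to have $\CKT(\tubes_\rho)\lessapprox 1$ at an intermediate scale $\rho$. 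Iterating this at many scales shows that a near-extremizer must satisfy the Katz--Tao Convex Wolff Axioms \emph{at every scale}, and the proof is finished by invoking the Sticky Kakeya theorem from \cite{WZ22,WZ23} (via a Nikishin--Stein--Pisier factorization, Section~\ref{stickyKakeyaEveryScaleSec}). This last ingredient --- a deep external input equivalent to the full Kakeya conjecture for sticky sets --- does not appear in your outline at all, and the paper's argument cannot be completed without it.
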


The Kakeya maximal function conjecture asserts that for each $\eps>0$, Inequality \eqref{maximalFnBdpLamba} is true for $K=3$. The Kakeya maximal function conjecture in $\RR^2$ was proved by Cordoba \cite{Cor77}. While we do not resolve the Kakeya maximal function conjecture in $\RR^3$, the weaker statement given in Theorem \ref{maximalFnBdpThm} is nonetheless sufficient to obtain Theorem \ref{kakeyaSetThm}.

The hypothesis that each $a\times b\times 2$ rectangular prism contains at most $100ab\delta^{-2}$ tubes from $\tubes$ is a type of non-clustering condition. A close variant of this hypothesis was first introduced by Wolff in \cite{Wol95}, and sets of tubes that satisfy this hypothesis are said to satisfy the Wolff axioms. 


\subsection{Theorem \ref{maximalFnBdpThm} and multi-scale analysis}\label{inductionOnScaleSection}
In \cite{WZ22, WZ23}, the authors showed that Theorem \ref{maximalFnBdpThm} is true when the set $\tubes$ has a property called stickiness (see Figure \ref{stickyVsWellSeparated} (left)). Roughly speaking, $\tubes$ is \emph{sticky} if it satisfies the non-clustering condition from Theorem \ref{maximalFnBdpThm}; has cardinality roughly $\delta^{-2}$; and for every intermediate scale $\delta\leq\rho\leq 1$, the tubes in $\tubes$ can be covered by a set of $\rho$ tubes that satisfy the non-clustering condition from Theorem \ref{maximalFnBdpThm} with $\rho$ in place of $\delta$.

Unfortunately, not every set of tubes is sticky --- see Figure \ref{stickyVsWellSeparated} (right) for an example. The arrangement illustrated in Figure \ref{stickyVsWellSeparated} (right) is challenging to analyze, because the $\rho$ tubes intersect with large multiplicity (i.e.~many $\rho$ tubes pass through a typical point), but the arrangement of $\delta$ tubes inside each $\rho$ tube is sparse (i.e.~the union of $\delta$ tubes inside each $\rho$ tube only fill out a small fraction of that $\rho$ tube). To help us analyze this type of arrangement, in Section \ref{unionsConvexSetsSec} we will introduce two variants of the non-clustering hypothesis from Theorem \ref{maximalFnBdpThm}, and two variants of the volume estimate \eqref{maximalFnBdpLamba}.

\begin{figure}
 \includegraphics[width=.42\linewidth]{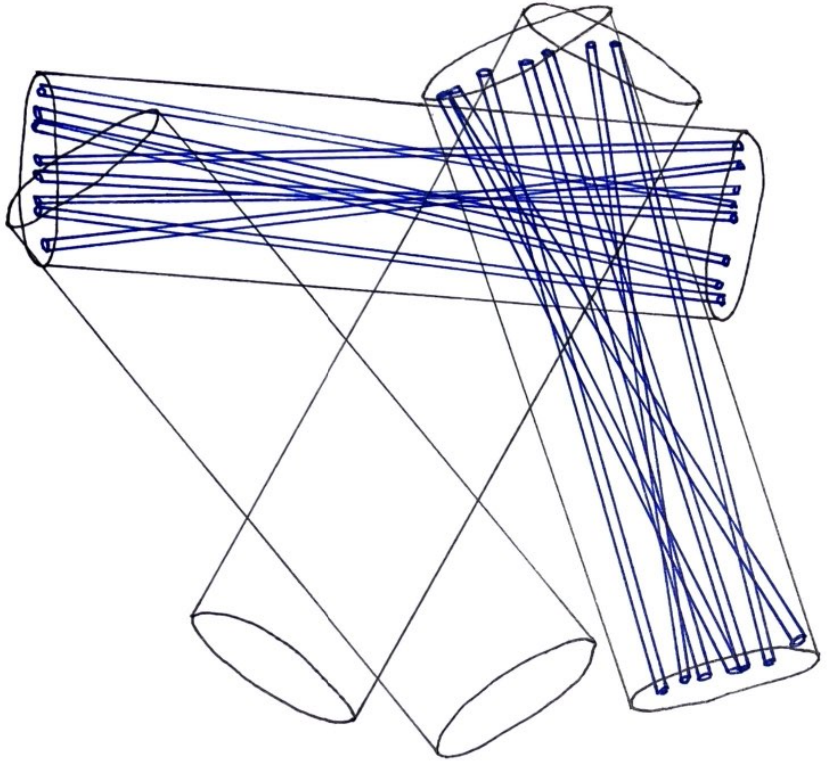}\hfill
 \includegraphics[width=.42\linewidth]{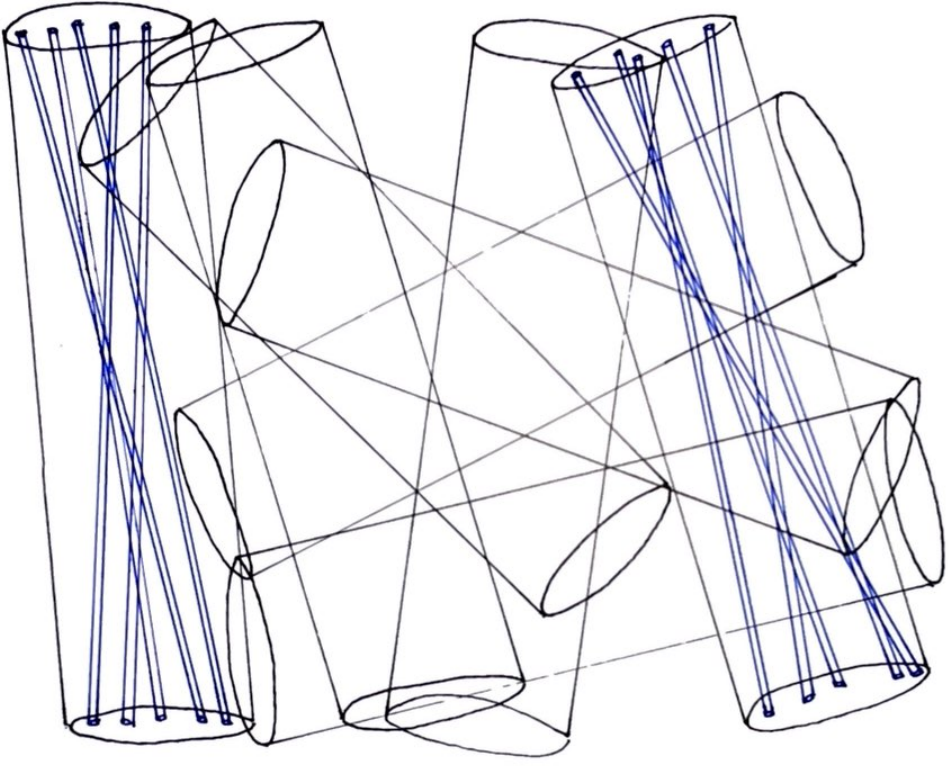}
\caption{ Left: The tubes at scale $\rho$ (black) satisfy the non-concentration hypothesis of Theorem \ref{maximalFnBdpThm}, as do the (rescaled) $\delta$ tubes (blue) inside each $\rho$ tube. Multi-scale analysis is straightforward in this setting. This is sometimes called the ``sticky'' case. For clarity, not all $\delta$ tubes have been drawn.
\\
Right: The tubes at scale $\rho$ do not satisfy the non-concentration hypothesis of Theorem \ref{maximalFnBdpThm}. The tubes at scale $\rho$ intersect with high multiplicity, while the $\delta$ tubes inside each $\rho$ tube are sparse.}
\label{stickyVsWellSeparated}
\end{figure}


\subsection{Unions of convex sets, and non-clustering}\label{unionsConvexSetsSec}
In what follows, we say a pair of sets $U,V\subset\RR^n$ are \emph{essentially distinct} if $|U\cap V|\leq\frac{1}{2}\max(|U|,|V|)$. $\tubes$ will denote a set of essentially distinct $\delta$-tubes contained in the unit ball in $\RR^3$, and $|T|$ will denote the volume of a $\delta$-tube, i.e. $|T|$ has size about $\delta^2$.

\begin{defn}\label{KatzTaoAndFrostmanTubesDefn} 
Let $\tubes$ be a set of $\delta$-tubes in $\RR^3$. \\
(A) We define $\CKT(\tubes)$ to be the infimum of all $C>0$ such that
\[
\#\{T\in\tubes\colon T\subset W\} \leq C |W| |T|^{-1}\quad\textrm{for all convex sets}\ W\subset\RR^3.
\]
We say that $\tubes$ obeys the Katz-Tao Convex Wolff Axioms with error $\CKT(\tubes)$ .

\medskip

\noindent (B) We define $\FS(\tubes)$ to be the infimum of all $C>0$ such that
\[
\#\{T\in\tubes\colon T\subset W\} \leq C |W| (\#\tubes)\quad\textrm{for all slabs}\ W\subset\RR^3,
\]
where a ``slab'' is the intersection of the unit ball with the thickened neighbourhood of a (hyper) plane.  We say that $\tubes$ obeys the Frostman Slab Wolff Axioms with error $\FS(\tubes)$.
\end{defn}

\begin{rem}$\phantom{1}$\\
(A) A note on etymology. The terms ``Katz-Tao'' and ``Frostman'' refer to different types of non-concentration conditions; they are the analogues of the well-studied non-concentration conditions $|E \cap B|\leq (r/\delta)^2$ and $|E\cap B|\leq r^2|E|$, where $E\subset\RR^n$ is a $\delta$-separated set and $B$ is a ball of radius $r$. An arrangement of tubes arising from a Kakeya set, i.e.~a set of $\delta$-tubes with one tube pointing in each $\delta$-separated direction, obeys both the Katz-Tao Convex Wolff Axiom and Frostman Slab Wolff Axiom with error $\lesssim 1$. The terms ``convex'' and ``slab'' refer to the class of sets for which the non-clustering condition is imposed. The term ``Wolff axioms'' suggests that the above definition is an analogue of the Wolff axioms from \cite{Wol95}.

\medskip
\noindent (B) The above definitions are two special cases of a non-clustering condition (Definition \ref{KatzTaoAndFrostmanTubesDefn}$^\prime$) that will be defined in Section \ref{factorConvexSetsSec}. In Definition \ref{KatzTaoAndFrostmanTubesDefn}$^\prime$, both tubes and convex sets (resp.~slabs) are replaced by more general objects. 

\medskip
\noindent (C) If $\tubes$ is non empty, then by taking $W$ to be a $\delta \times 1\times 1$-slab containing a tube of $\tubes$,  we can see $\FS(\tubes)\leq C$ implies  $\#\tubes \geq C^{-1} \delta^{-1}$. 
\end{rem}

Next, we introduce two Kakeya-type volume estimates for unions of tubes in $\RR^3$. These are analogues of Inequality \eqref{maximalFnBdpLamba} that are carefully formulated to be amenable to induction on scale. In what follows, we use the notation $(\tubes,Y)_\delta$ to denote a collection $\tubes$ of essentially distinct $\delta$-tubes in $\RR^3$, and a \emph{shading} of these tubes, i.e.~for each $T\in\tubes$, $Y(T)$ is a subset of $T$. For $\lambda>0$, we say $(\tubes,Y)_\delta$ is $\lambda$ \emph{dense} if $\sum_{T\in\tubes}|Y(T)|\geq \lambda\sum_{T\in\tubes}|T|$.

\medskip

\begin{defn}\label{defnCDE}
Let $\sigma, \omega \geq 0$. 
\begin{itemize}
\item We say that \emph{Assertion $\cD(\sigma,\omega)$ is true} if the following holds:\\
For all $\eps>0$, there exists $\kappa ,\eta>0$ such that the following holds for all $\delta>0$. Let $(\tubes,Y)_\delta$ be $\delta^{\eta}$ dense and obey the Katz-Tao Convex Wolff Axioms and Frostman Slab Wolff Axioms, both with error at most $\delta^{-\eta}$. Then
\begin{equation}\label{defnCDEqn}
\Big|\bigcup_{T\in\tubes}Y(T)\Big|\geq \kappa \delta^{\omega+\eps}(\#\tubes)|T|\big((\#\tubes)|T|^{1/2}\big)^{-\sigma}.
\end{equation}

\item  We say that \emph{Assertion $\cE(\sigma,\omega)$ is true} if the following holds:\\
For all $\eps>0$, there exists $\kappa,\eta>0$ such that the following holds for all $\delta>0$. Let $(\tubes,Y)_\delta$ be $\delta^{\eta}$ dense. Then
\begin{equation}\label{defnCEEstimate}
\Big|\bigcup_{T\in\tubes}Y(T)\Big|\geq \kappa \delta^{\omega+\eps}m^{-1}(\#\tubes)|T|\big(m^{-3/2}\ell (\#\tubes)|T|^{1/2}\big)^{-\sigma},
\end{equation}
where  $m = \CKT(\tubes)$ and $\ell = \FS(\tubes)$.
\end{itemize}
\end{defn}

\medskip

Let us examine the numerology in the estimates \eqref{defnCDEqn} and \eqref{defnCEEstimate}. First, in the special case $\sigma=\omega$, Assertion $\cD(\sigma,\sigma)$ yields the estimate
\[
\Big|\bigcup_{T\in\tubes}Y(T)\Big|\geq \kappa \delta^{\eps} (\#\tubes)^{-\sigma}\sum_{T\in\tubes}|T|,
\]
i.e.~it says that there are $\lesssim \delta^{-\eps}(\#\tubes)^{\sigma}$ tubes passing through a typical point of the union $\bigcup Y(T)$ (for general $\sigma$ and $\omega$, this quantity is about $\delta^{\sigma-\omega-\eps}(\#\tubes)^{\sigma}$). For $\sigma>0$ small, this means that the tubes in the union $\bigcup Y(T)$ are almost disjoint. In the arguments that follow, it will be helpful to consider situations where $\sigma$ and $\omega$ are not necessarily equal. 

The shape of the estimate \eqref{defnCDEqn} is motivated in part by the following consideration. To begin our induction on scale argument, we would like to prove that $\cE(\sigma,0)$ holds for some $\sigma\in(0, 2/3]$. When $\sigma = 1/2$ and $\omega = 0$, Inequality \eqref{defnCDEqn} becomes the estimate
\[
\Big|\bigcup_{T\in\tubes}Y(T)\Big| \geq \kappa \delta^{1/2+\eps}(\delta^2\#\tubes)^{1/2}.
\]
This is essentially Wolff's hairbrush bound from \cite{Wol95} (here we make use of the fact that $\tubes$ obeys the Frostman Slab Wolff Axioms with small error; see Appendix \ref{WolffHairbrushSec} for details).

\medskip

Assertion $\cD(\sigma,\omega)$ is a special case of Assertion $\cE(\sigma,\omega)$. We will explain the shape of the final bracketed term of Inequality \eqref{defnCEEstimate}. To understand the term $\ell$, it is helpful to consider the following scenario. Suppose we know that Assertion $\cD(\sigma,\omega)$ is true. Let $\delta < \rho < 1$, and let $\tubes$ be a set of $\delta$ tubes of cardinality $(\rho/\delta)^2$ that are contained inside a common $\rho$ tube, which we will denote by $T_\rho$. Suppose that the tubes in $\tubes$ obey the Katz-Tao Convex Wolff Axioms with error roughly 1. This implies that $\FS(\tubes)\sim\rho^{-1}$. 

For $T\in\tubes$ (and hence $T\subset T_\rho$), we will write $T^{T_\rho}$ to denote the image of $T$ under the affine transformation that anisotropically  dilates $T_\rho$ by a factor of $\rho^{-1}$ in its two ``short'' directions, and translates the image to the unit ball. After this rescaling and translation, the tubes in $\tubes$ become $\delta/\rho$ tubes that satisfy the Katz-Tao Convex Wolff Axioms and Frostman Slab Wolff Axioms, both with error roughly 1. Applying Assertion $\cD(\sigma,\omega)$ to this rescaled collection of tubes, we obtain the volume bound
\[
\Big| \bigcup_{T\in\tubes}T^{T_\rho}\Big| \geq \kappa (\delta/\rho)^{\eps} (\#\tubes)|T^{T_\rho}|\big( (\#\tubes)|T^\rho|^{1/2}\big)^{-\sigma}.
\]
Undoing the anisotropic rescaling and translation (which distorted volumes by a factor of $\rho^2$) and noting that $|T^{T_\rho}|\sim\rho^2|T|$, we can rewrite this as 
\[
\Big| \bigcup_{T\in\tubes}T\Big| \gtrsim \kappa \delta^{\eps} (\#\tubes)|T|\big(\ell (\#\tubes)|T|^{1/2}\big)^{-\sigma},\quad\textrm{where}\ \ell = \FS(\tubes) \sim \rho^{-1}.
\]

As a second justification for the term $\ell$, note that for every set $\tubes$ of $\delta$ tubes, we must always have $\FS(\tubes) (\#\tubes)|T|^{1/2}\geq 1$. This is because we can always select a slab $W$ of thickness $|T|^{1/2}$ that contains at least one tube from $\tubes$. This observation also explains the choice to write $|T|^{1/2}$ rather than $\delta$; any convex set $S\subset \RR^3$ of diameter 1 can be contained in a slab of thickness $|S|^{1/2}$. Later in the proof we will consider generalizations of Assertion $\cE(\sigma,\omega)$ in which tubes are replaced by more general families of convex sets.

\medskip

To understand the terms $m^{-1}$ and $m^{-3/2}$ in Inequality \eqref{defnCEEstimate}, it is helpful to consider the following scenario.  Suppose we know that Assertion $\cD(\sigma,\omega)$ is true. Let $\tubes$ be a set of $\delta$ tubes that obey the Frostman Slab Wolff Axioms with error roughly 1, and the Katz-Tao Convex Wolff Axioms with error $m>\!\!>1$. Let $\rho = m^{1/2}\delta$, and suppose that there exists a set $\tubes_\rho$ of $\rho$ tubes, each of which contains $m|T_\rho| |T|^{-1} = m^2$ tubes from $\tubes$. Observe that this is the maximum number of essentially distinct $\delta$ tubes that can fit inside a $\rho$ tube. In particular, the union of the $\delta$ tubes inside each $\rho$ tube fill out essentially all of the $\rho$ tube.
We have $\#\tubes_\rho = m^{-2}(\#\tubes) = m^{-1}(|T|/|T_\rho|)(\#\tubes)$, i.e. $(\#\tubes_\rho)|T_\rho| = m^{-1}(\#\tubes)|T|$. It is straightforward to compute that $\CKT(\tubes_\rho)\lesssim 1$. Applying Assertion $\cD(\sigma,\omega)$ and using the fact that the union of $\delta$ tubes inside each $\rho$ tube fill out most of the $\rho$ tube, we obtain the volume bound 
\begin{align*}
	\Big|\bigcup_{T\in \tubes} T \Big| \sim \Big| \bigcup_{T_{\rho}\in \tubes_\rho} T_{\rho}\Big| & \geq  \kappa \rho^{\omega+\eps}  (\#\tubes_\rho) |T_{\rho}| \big( \#\tubes_\rho ) |T_{\rho}|^{1/2}\big)^{-\sigma} \\
	&=  \kappa \rho^{\omega+\eps}  m^{-1}(\#\tubes)|T|\big(m^{-3/2} (\#\tubes)|T|^{1/2}\big)^{-\sigma}.
\end{align*}




\subsection{From Assertions $\cD$ and $\cE$ to the Kakeya set conjecture}

Clearly $\cE(\sigma,\omega)\implies \cD(\sigma,\omega)$. In Section \ref{cEIffcDSec}, we will show that the reverse implication also holds:

\begin{prop}\label{equivDE}
Let $0\leq\sigma\leq 2/3,\ \omega\geq 0$. Then $\cE(\sigma,\omega) \Longleftrightarrow \cD(\sigma,\omega).$
\end{prop}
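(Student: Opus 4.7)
The forward direction $\cE(\sigma,\omega)\Rightarrow\cD(\sigma,\omega)$ is immediate. When the hypotheses of $\cD$ hold, $m:=\CKT(\tubes)$ and $\ell:=\FS(\tubes)$ are both at most $\delta^{-\eta}$, so the factor $m^{-1}(m^{-3/2}\ell)^{-\sigma}=m^{-1+3\sigma/2}\ell^{-\sigma}$ distinguishing the right-hand sides of \eqref{defnCEEstimate} and \eqref{defnCDEqn} is bounded below by $\delta^{\eta(1-\sigma/2)}$; here the constraint $0\leq\sigma\leq 2/3$ is exactly what makes the exponent of $m$ non-positive and hence the whole factor bounded below by a small power of $\delta$. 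Choosing $\eta$ sufficiently small in terms of the $\eps$-parameter of $\cD$ absorbs this loss.

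For the harder direction $\cD\Rightarrow\cE$, let $(\tubes,Y)_\delta$ be $\delta^\eta$-dense with $m=\CKT(\tubes)$ and $\ell=\FS(\tubes)$ arbitrary. The plan is to factor the configuration through an intermediate scale where $\cD$ applies directly. Set $\rho:=m^{1/2}\delta$, which is the scale at which the Katz-Tao concentration bound is saturated: a $\rho$-tube can contain at most $m|T_\rho|/|T|=m^2$ essentially distinct $\delta$-tubes. For each $T\in\tubes$ let $T^\rho$ denote a $\rho$-tube containing $T$. By dyadically pigeonholing over the multiplicity $\mu_{T^\rho}:=\#\{T'\in\tubes\colon T'\subset T^\rho\}$, pass to a sub-collection $\tubes^*$ on which this multiplicity is a common value $\mu$ (with density loss only a power of $\log(1/\delta)$, absorbed into $\eps$). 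Let $\tubes_\rho:=\{T^\rho\colon T\in\tubes^*\}$; a direct computation shows that the Katz-Tao hypothesis on $\tubes$ gives $\CKT(\tubes_\rho)\lesssim m^2/\mu$, and the Frostman axiom for $\tubes_\rho$ transfers similarly from $\FS(\tubes)=\ell$.

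Now apply $\cD(\sigma,\omega)$ at two scales. Inside each $T^\rho$, rescale $T^\rho$ anisotropically to the unit ball; the $\mu$ tubes of $\tubes^*$ contained in $T^\rho$ become essentially distinct $(\delta/\rho)$-tubes which, by essential distinctness and the original Katz-Tao hypothesis, satisfy both Wolff axioms with $O(1)$ error, so $\cD$ at scale $\delta/\rho$ yields a lower bound for $|\bigcup_{T\subset T^\rho}Y(T)|$. Externally, apply $\cD(\sigma,\omega)$ at scale $\rho$ to $\tubes_\rho$, after a parallel rescaling step that handles the Frostman error by zooming into a slab of thickness $\sim 1/\ell$. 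Multiplying the inner and outer estimates, and using $|T_\rho|=m|T|$ together with $\#\tubes_\rho=\#\tubes^*/\mu$, reproduces \eqref{defnCEEstimate}.

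The principal obstacle is numerological: the multiplicity $\mu$ must cancel between the inner and outer applications of $\cD$, and this forces the exponent $-1+3\sigma/2$ of $m$ in the final expression to be non-positive — exactly the hypothesis $\sigma\leq 2/3$. The extremal configuration from the motivating discussion in Section \ref{unionsConvexSetsSec} (fully packed $\rho$-tubes, $\mu\sim m^2$) is recovered as the worst case. Secondary bookkeeping tracks the density of $(\tubes^*,Y)$ and both Wolff-axiom errors through the pigeonholing and rescaling; each step costs a factor of at most $\delta^{-O(\eta)}$, absorbed by shrinking $\eta$ relative to $\eps$.
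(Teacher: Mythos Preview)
The forward direction is fine and matches the paper. The backward direction, however, has a genuine gap: your two applications of $\cD$ are not justified, and in fact cannot both be made to work by the scheme you describe.

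Concretely, to invoke $\cD(\sigma,\omega)$ at scale $\rho$ on $\tubes_\rho$ you need $\CKT(\tubes_\rho)$ to be at most a small power of $\rho^{-1}$. You correctly compute $\CKT(\tubes_\rho)\lesssim m^2/\mu$, but nothing forces $\mu$ to be close to $m^2$; when $\mu\ll m^2$ (the ``sparse'' regime, cf.\ the well-spaced example in Section~\ref{proofOfPropEquivDESecIntro}), $\CKT(\tubes_\rho)$ is genuinely large and $\cD$ does not apply externally. Conversely, in the fully-packed regime $\mu\sim m^2$ that you call extremal, the inner collection $\tubes^{T_\rho}$ consists of $\sim m^2$ tubes at scale $\delta/\rho=m^{-1/2}$, so $\CKT(\tubes^{T_\rho})\sim m$, not $O(1)$; your assertion that ``essential distinctness and the original Katz--Tao hypothesis'' give $O(1)$ error on both Wolff axioms is false. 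In either regime one of the two scales lands outside the hypothesis set of $\cD$, and what you would actually need there is $\cE$ --- which is circular. The slab-rescaling you gesture at for handling $\ell$ has the same problem: localizing to a slab does not by itself reduce $\CKT$, and the paper's discussion following Definition~\ref{defnCDE} only explains the \emph{shape} of \eqref{defnCEEstimate}, not a proof.

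The paper's actual argument is structurally different: it is an iterative bootstrap (Lemma~\ref{weakerPropEquivDE}) that assumes both $\cD(\sigma,\omega)$ and $\cE(\sigma,\omega')$ for some $\omega'>\omega$ and improves $\omega'$, starting from the trivial $\cE(\sigma,2)$. Each step uses a factoring trichotomy (Proposition~\ref{tubeTricotProp}) whose cases include one where the tubes satisfy the Frostman Convex Wolff Axioms at every scale, handled by the Sticky Kakeya theorem (Theorem~\ref{WZThm52}) --- an external input your sketch does not invoke and cannot avoid.
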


As mentioned above, our proof uses induction on scale. In brief, if $\cE(\sigma,\omega)$ is true, then we will use this fact at many locations and scales to prove that $\cD(\sigma,\omega')$ is true for some $\omega'<\omega$ (observe that smaller values of $\omega$ are better). The precise statement is as follows.

\begin{prop}\label{improvingProp}
There exists a function $g\colon [0,2/3]\times(0,1]\to(0,1]$ so that the following is true. Let $0\leq\sigma\leq 2/3,\ \omega>0$. Then $\cE(\sigma,\omega)\implies \cD(\sigma,\omega-g(\sigma,\omega))$.
\end{prop}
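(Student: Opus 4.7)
The plan is a two-scale analysis that exploits the key feature of $\cE$: unlike $\cD$, it applies to tube collections without requiring the Wolff axioms. Given $(\tubes,Y)_\delta$ satisfying the hypothesis of $\cD(\sigma,\omega-g)$, introduce an intermediate scale $\rho=\delta^{\alpha}$ with $\alpha\in(0,1)$ (concretely $\alpha=1/2$). Cover $\tubes$ by essentially distinct $\rho$-tubes $\tubes_\rho$, and for each $T_\rho$ set $\tubes[T_\rho]=\{T\in\tubes:T\subset T_\rho\}$, $Y_\rho(T_\rho)=\bigcup_{T\in\tubes[T_\rho]}Y(T)$, and let $\phi_{T_\rho}$ be the anisotropic affine map rescaling $T_\rho$ to the unit ball by a factor $\rho^{-1}$ in its two short directions, so that $\phi_{T_\rho}(\tubes[T_\rho])$ is a collection of $\delta/\rho$-tubes. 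By dyadic pigeonholing (losing only $\delta^{O(\eta)}$), one may assume every $\rho$-tube contains the same number $k$ of $\delta$-tubes with uniform shading density $\approx\delta^{\eta}$. Set $M:=(\rho/\delta)^{2}/k\ge 1$, the ``sparsity factor'' ($M=1$ being the sticky case illustrated on the left of Figure \ref{stickyVsWellSeparated}).

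A direct combinatorial count from the Wolff axioms of $\tubes$ at scale $\delta$ yields $\CKT(\phi_{T_\rho}(\tubes[T_\rho]))\lesssim\delta^{-O(\eta)}$, $\CKT(\tubes_\rho)\lesssim\delta^{-O(\eta)}M$, $\FS(\tubes_\rho)\lesssim\delta^{-O(\eta)}$, and $(\#\tubes_\rho)|T_\rho|\approx M(\#\tubes)|T|$. Applying $\cE(\sigma,\omega)$ at scale $\delta/\rho$ to each rescaled inner collection $\phi_{T_\rho}(\tubes[T_\rho])$ supplies a lower bound on $|Y_\rho(T_\rho)|/|T_\rho|$; in particular, $(\tubes_\rho,Y_\rho)$ is $\rho^{O(\eta)}$-dense, so $\cE(\sigma,\omega)$ may then be applied at scale $\rho$ to $(\tubes_\rho,Y_\rho)$. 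Substituting the bounds above into this outer estimate and simplifying, the $m_\rho^{-1}$ prefactor cancels one factor of $M$ from $(\#\tubes_\rho)|T_\rho|\approx M(\#\tubes)|T|$, producing
\[
\Big|\bigcup_{T\in\tubes}Y(T)\Big|\;\gtrsim\;\delta^{\,\alpha(\omega+\eps)+(\alpha-1)\sigma+O(\eta)}\,M^{\sigma/2}\,(\#\tubes)|T|\bigl((\#\tubes)|T|^{1/2}\bigr)^{-\sigma}.
\]

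Since $M\ge 1$ and $\sigma\ge 0$, the factor $M^{\sigma/2}\ge 1$ is harmless. This step uses $\sigma\le 2/3$, which is precisely the regime where the $\cE$-estimate is monotonically decreasing in $m$, so that the upper bound on $m_\rho$ translates into a valid lower bound. The resulting $\delta$-exponent is strictly less than $\omega-g+\eps$ provided $g<(1-\alpha)(\omega+\sigma)$ and $\eta$ is sufficiently small; taking $\alpha=1/2$ and defining $g(\sigma,\omega):=\omega/4$ works uniformly on $[0,2/3]\times(0,1]$. The main obstacle is the quantitative bookkeeping that translates the scale-$\delta$ Wolff axioms into the bounds for $\CKT(\tubes_\rho)$, $\FS(\tubes_\rho)$, and the rescaled inner collections, together with ensuring the inner-scale application of $\cE$ genuinely produces $\rho^{O(\eta)}$-density at the outer scale. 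The latter requires the $\eta$-parameters at the two scales to be coupled appropriately, which limits how aggressively $\alpha$ can be decreased and hence the size of the achievable gain $g$.
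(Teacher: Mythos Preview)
Your argument has a fatal gap at the claim ``in particular, $(\tubes_\rho,Y_\rho)$ is $\rho^{O(\eta)}$-dense.'' The density of $Y_\rho(T_\rho)=\bigcup_{T\in\tubes[T_\rho]}Y(T)$ inside $T_\rho$ is not controlled by the $\delta^{\eta}$-density of the original shading; it is controlled only by the inner volume estimate, which (even granting $\FS\approx 1$ for the inner collection) gives
\[
\frac{|Y_\rho(T_\rho)|}{|T_\rho|}\;\gtrsim\;(\delta/\rho)^{\omega+\sigma+\eps}\,M^{\sigma-1}.
\]
In the sticky case $M=1$ with $\rho=\delta^{1/2}$ this is $\rho^{\omega+\sigma+\eps}$, which is far too sparse for the outer application of $\cE$: the definition of $\cE(\sigma,\omega)$ requires density at least $\rho^{\eta_\rho}$ for a \emph{small} fixed $\eta_\rho$. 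Your displayed inequality is exactly the outer $\cE$ estimate rewritten in the $\delta$-variables (this is why it appears to give a large gain), but it is valid only if $\cE$ may legitimately be applied at scale $\rho$, and here it may not.

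This is not repairable by tuning $\alpha$. Enforcing the density constraint $(1-\alpha)(\omega+\sigma)\leq\alpha\eta_\rho$ forces $1-\alpha\lesssim\eta_\rho/(\omega+\sigma)$, so the would-be gain $(1-\alpha)(\omega+\sigma)$ is at most $\eta_\rho$; meanwhile the outer application incurs a loss $\rho^{\eps_\rho}\approx\delta^{\eps_\rho}$, and in the statement of $\cE$ the parameter $\eta_\rho$ is produced \emph{after} $\eps_\rho$ and is typically much smaller. The net gain is non-positive. More conceptually: sticky configurations ($M\approx 1$) exactly saturate any naive two-scale argument, and they are genuine configurations that must be handled. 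This is precisely why the paper's proof is long. Sections~\ref{twoScaleGrainsSec}--\ref{refinedInductionOnScaleSec} develop a two-scale grains decomposition and a refined induction-on-scales argument showing that a near-extremizer for $\cD(\sigma,\omega)$ must, at \emph{every} intermediate scale, be covered by $\rho$-tubes satisfying the Katz-Tao Convex Wolff axioms with small error; Section~\ref{stickyKakeyaEveryScaleSec} then invokes the Sticky Kakeya theorem (Theorem~\ref{katzTaoEveryScaleStickyKakeyaThm}, built on \cite{WZ22,WZ23}) to conclude $|\bigcup Y(T)|\gtrapprox(\#\tubes)|T|$ for such configurations. There is no elementary two-scale substitute for this dichotomy.

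A secondary gap: the bound $\FS(\phi_{T_\rho}(\tubes[T_\rho]))\lesssim\delta^{-O(\eta)}$ does not follow from the scale-$\delta$ axioms. The Frostman Slab constant is inherited upward by balanced covers (so your bound on $\FS(\tubes_\rho)$ is correct), but not downward to the rescaled inner collections, since $\phi_{T_\rho}^{-1}$ of a slab is not a slab. The paper handles this separately via Lemma~\ref{bigVolOrFactorFrostmanSlabLem}.
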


Propositions \ref{equivDE} and \ref{improvingProp} lead to a self-improving property for $\cE(\sigma,\omega)$ (or equivalently, for $\cD(\sigma,\omega)$). Since the collections of tubes in the definitions of $\cE$ and $\cD$ are essentially distinct and are contained in the unit ball, we always have $\#\tubes\lesssim\delta^{-4}$, and thus we can ``trade'' an improvement in $\omega$ for an improvement in $\sigma$. In particular, Proposition \ref{improvingProp} tells us that $\cE(\sigma,\omega)\implies\cD(\sigma-g(\sigma,\omega)/4,\omega)$.

By applying Propositions \ref{equivDE} and \ref{improvingProp}, we can upgrade an initial estimate $\cD(\sigma,\omega)$ to the improved estimate $\cD(\sigma-g(\sigma,\omega)/4,\omega)$. We can then iterate this process. In order to begin the iteration, we must prove that $\cD(\sigma,\omega)$ is true for some $\omega>0$ and $0\leq\sigma\leq 2/3$. In \cite{Wol95}, Wolff proved that every Kakeya set in $\RR^n$ has Hausdorff dimension at least $\frac{n+2}{2}$. In Appendix \ref{WolffHairbrushSec}, we will use a similar argument to show that $\cD(1/2,0)$ is true:

\begin{prop}\label{WolffHairbrush}
$\cD(1/2,0)$ is true.
\end{prop}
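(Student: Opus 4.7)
The target inequality for $\sigma=1/2,\omega=0$ reads $|\bigcup_{T\in\tubes}Y(T)|\geq\kappa\,\delta^{\eps}(\#\tubes)^{1/2}|T|^{3/4}$, which simplifies (using $|T|\sim\delta^{2}$) to $|\bigcup Y(T)|\gtrsim\delta^{1/2+\eps}(\delta^{2}\#\tubes)^{1/2}$. The plan is to run Wolff's hairbrush argument from \cite{Wol95}, substituting the Frostman Slab Wolff Axiom for the direction-separated hypothesis wherever Wolff bounds the number of tubes lying near a $2$-plane; the Katz--Tao axiom is not needed.

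Standard dyadic pigeonholing reduces to the case where $|Y(T)|\sim\lambda|T|$ for every $T\in\tubes$ and the multiplicity $\mu(x)=\#\{T\in\tubes:x\in Y(T)\}$ is essentially constant, $\mu(x)\sim\mu$, on $X:=\bigcup Y(T)$, so that $\mu|X|\sim\lambda(\#\tubes)|T|$. Pigeonholing again, I select a \emph{root} tube $T_{0}\in\tubes$ so that $|Y(T_{0})\cap X^{\mu}|\gtrsim\lambda|T|$, where $X^{\mu}=\{x:\mu(x)\sim\mu\}$, and form the hairbrush $H:=\{T\in\tubes\setminus\{T_{0}\}:Y(T)\cap Y(T_{0})\cap X^{\mu}\neq\varnothing\}$. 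Double-counting incidences between tubes of $H$ and points of $Y(T_{0})\cap X^{\mu}$, after discarding the few tubes that are nearly parallel to $T_{0}$, gives $\#H\gtrsim\lambda\mu\delta^{-1}$ up to $\log\delta^{-1}$ losses.

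For each $T\in H$ let $\theta_{T}=\angle(T,T_{0})\in[\delta,1]$ and let $P_{T}$ be the $2$-plane spanned by $T$ and $T_{0}$, parametrised by the rotation angle $\phi\in[0,2\pi)$ about the core of $T_{0}$. Dyadically fix the angular scale $\theta\sim\theta_{T}$, restrict to $H_{\theta}\subset H$, and group $H_{\theta}$ by $\phi$ in intervals of length $\sim\delta/\theta$. Two tubes in the same $\phi$-group lie in a common slab of thickness $\lesssim\delta/\theta$ containing $T_{0}$, so the Frostman Slab Wolff Axiom caps the count per group by $\FS(\tubes)(\delta/\theta)\#\tubes$; this is precisely the planar direction count that Wolff obtains from $\delta$-separation. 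Within each slab the tubes project to a $2$-dimensional fan of $\delta$-tubes based near the projection of $T_{0}$, with directions roughly $\theta$-separated and shadings of density $\sim\lambda$; applying C\'ordoba's $L^{2}$ estimate in the plane and multiplying by the $\delta$-thickness yields the contribution of each slab.

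Summing over the essentially disjoint slabs at angular scale $\theta$ (disjointness holds outside a $(\delta/\theta)$-neighbourhood of $T_{0}$, which is peeled off by dyadic decomposition in distance from $T_{0}$) and then over dyadic values of $\theta$ produces $|X|\gtrsim(\log\delta^{-1})^{-O(1)}\lambda^{O(1)}\delta^{1/2}(\delta^{2}\#\tubes)^{1/2}$. Absorbing the logarithmic, $\lambda$, and $\FS(\tubes)\leq\delta^{-\eta}$ factors using the hypothesis $\lambda\geq\delta^{\eta}$ and a sufficiently small $\eta$ delivers Assertion $\cD(1/2,0)$. The principal technical obstacle, inherited from \cite{Wol95}, is the bookkeeping near $T_{0}$: tubes at small angle to $T_{0}$ and points close to $T_{0}$ contribute heavily to $\mu$ but little to $|X|$, and must be stripped out by a careful dyadic decomposition in both $\theta_{T}$ and distance to $T_{0}$. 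The only genuinely new ingredient beyond Wolff's original proof is that each direction-counting step is replaced by a Frostman Slab Wolff application, at a negligible cost of $\delta^{-\eta}$.
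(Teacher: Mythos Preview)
Your approach shares the same two ingredients as the paper's --- Wolff's hairbrush argument and the Frostman Slab Wolff axiom --- and you are correct that the Katz--Tao axiom plays no role. The paper, however, organises them differently, and the difference is substantive. Rather than running the hairbrush directly on $\tubes$ and invoking the slab axiom inside each planar slice of the brush, the paper first performs an explicit robust-transversality (two-ends) reduction: it finds a scale $\theta\in[\delta,1]$ and a balanced partitioning cover $\tubes_\theta$ of a refinement of $\tubes$ so that each rescaled family $\tubes^{T_\theta}$ is genuinely broad and the shadings associated to distinct $T_\theta\in\tubes_\theta$ are disjoint. Wolff's standard hairbrush bound for broad tubes (which needs only essential distinctness, no axiom) is then applied inside each $T_\theta$, yielding $|\bigcup Y^{T_\theta}|\gtrsim(\delta/\theta)\,(\#\tubes^{T_\theta})^{1/2}$; the slab axiom enters only at the final summation, to force $\#\tubes_\theta\gtrsim\delta^{O(\eta)}\theta^{-1}$ via the observation that each $T_\theta$ sits in a $\theta$-slab.

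The reorganisation matters precisely at the step you flag as the principal obstacle. Your assertion $\#H\gtrsim\lambda\mu\delta^{-1}$ after ``discarding the few tubes nearly parallel to $T_0$'' presupposes that most tubes through a typical point of $Y(T_0)$ make angle $\sim 1$ with $T_0$; when instead they are confined to a $\theta$-cap, the incidence count drops to $\#H\sim\lambda\mu\theta/\delta$, and tracing the rest of the argument through leaves the volume bound short by a factor of $\theta^{1/2}$. Capping the tube count per $\phi$-group by $\FS(\tubes)(\delta/\theta)\#\tubes$, as you propose, does not repair this loss: that cap is not where the argument is tight (in the broad regime the per-plane Cordoba step already succeeds from essential distinctness), and in the non-broad regime no amount of per-slab bookkeeping recovers the missing $\theta^{1/2}$. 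The genuine role of the slab axiom is to forbid $\tubes$ from concentrating in too few $\theta$-tubes, which is exactly how the paper deploys it. Your sketch would be completed by front-loading the broadness reduction and relocating the slab-axiom application from the per-slab count to that reduction.
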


Beginning with Proposition \ref{WolffHairbrush} and then iterating Propositions \ref{equivDE} and \ref{improvingProp}, we conclude the following.
\begin{thm}\label{cDAndcEAreTrue}
The statements $\cD(0,0)$ and $\cE(0,0)$ are true.
\end{thm}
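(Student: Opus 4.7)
The plan is to iterate Propositions \ref{equivDE} and \ref{improvingProp}, using Proposition \ref{WolffHairbrush} (i.e.~$\cD(1/2,0)$) as the base case. First I would note that $\cD(1/2,0)$ trivially upgrades to $\cD(1/2,\omega_0)$ for every $\omega_0>0$, because the factor $\delta^{\omega+\eps}$ only gets weaker as $\omega$ grows; this is what lets me circumvent the hypothesis $\omega>0$ in Proposition \ref{improvingProp}. The iteration step I would use is the following: given $\cD(\sigma,\omega_0)$ with $\sigma\in(0,2/3]$ and $\omega_0>0$, Proposition \ref{equivDE} gives $\cE(\sigma,\omega_0)$, Proposition \ref{improvingProp} promotes this to $\cD(\sigma,\omega_0-g(\sigma,\omega_0))$, and the trivial bound $\#\tubes\lesssim\delta^{-4}$ (equivalently, $(\#\tubes)|T|^{1/2}\lesssim\delta^{-3}$) trades the $\omega$-gain for a $\sigma$-gain, yielding $\cD(\sigma-g(\sigma,\omega_0)/4,\omega_0)$, exactly as noted in the paragraph preceding the theorem.

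With this iteration step in hand, I would fix $\omega_0>0$ and drive $\sigma$ down to zero at this level. Set
\[
S_{\omega_0}=\{\sigma\in[0,2/3]\colon \cD(\sigma,\omega_0)\text{ holds}\},\qquad \sigma^*=\sigma^*(\omega_0)=\inf S_{\omega_0},
\]
so that $\sigma^*\leq 1/2$ by the base case. First I would show $\sigma^*\in S_{\omega_0}$ by a limiting argument: choose $\sigma_n\in S_{\omega_0}$ with $\sigma_n\searrow\sigma^*$, and for a target $\eps>0$ pick $n$ so large that $\sigma_n-\sigma^*<\eps/6$; applying $\cD(\sigma_n,\omega_0)$ with its internal $\eps$-parameter set to $\eps/2$ then bounds the discrepancy factor $((\#\tubes)|T|^{1/2})^{\sigma^*-\sigma_n}$ by $\delta^{-3(\sigma_n-\sigma^*)}\leq\delta^{-\eps/2}$, which is absorbed by the remaining $\delta^{\eps/2}$ to yield $\cD(\sigma^*,\omega_0)$. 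Next I would argue $\sigma^*=0$ by contradiction: if $\sigma^*>0$ then one more application of the iteration step at $(\sigma^*,\omega_0)$ produces $\cD(\sigma^*-g(\sigma^*,\omega_0)/4,\omega_0)$, and since $g$ takes positive values on $[0,2/3]\times(0,1]$ this contradicts $\sigma^*=\inf S_{\omega_0}$.

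Having obtained $\cD(0,\omega_0)$ for every $\omega_0>0$, I would pass to $\omega_0=0$ by the same absorbing trick: given $\eps>0$, apply $\cD(0,\eps/2)$ with internal parameter $\eps/2$ to conclude $\big|\bigcup_T Y(T)\big|\geq\kappa\delta^{\eps}(\#\tubes)|T|$, which is $\cD(0,0)$. Proposition \ref{equivDE} then upgrades $\cD(0,0)$ to $\cE(0,0)$. The main obstacle is not any single deep step but the quantifier bookkeeping: one must check that the $((\#\tubes)|T|^{1/2})^{\pm\mathrm{small}}$ factors appearing both in the limiting step and in the $\omega$-to-$\sigma$ trade always fit inside the $\delta^\eps$ cushion built into the definitions of $\cD$ and $\cE$, and that the values of $\kappa$ and $\eta$ produced at each stage feed consistently into the next. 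The factor $4$ in $g/4$ is precisely this safety margin; once the arrangement is in place, the entire iteration is driven by the strict positivity of $g$ on $[0,2/3]\times(0,1]$.
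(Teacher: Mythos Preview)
Your proposal is correct and follows essentially the same approach as the paper. The paper phrases the argument as showing that $\{\sigma\in[0,2/3]:\cD(\sigma,\omega)\text{ holds}\}$ is both relatively open (via Propositions \ref{equivDE} and \ref{improvingProp} and the $\omega$-to-$\sigma$ trade) and relatively closed in $[0,2/3]$, hence all of $[0,2/3]$; you phrase it equivalently via the infimum $\sigma^*$, showing $\sigma^*$ is attained (closedness) and then $\sigma^*=0$ (openness at $\sigma^*$). Your explicit limiting argument for closedness spells out what the paper calls ``straightforward to verify from Definition \ref{defnCDE}.''
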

\begin{proof}
Fix $\omega>0$. By Proposition \ref{WolffHairbrush}, we have that $\cD(1/2,0)$ and hence $\cD(1/2,\omega)$ is true. If $\cD(\sigma,\omega)$ is true for some $\sigma\in[0,2/3]$, then so is $\cD(\sigma',\omega)$ for all $\sigma'\in[\sigma,2/3]$. Using Propositions \ref{equivDE} and \ref{improvingProp}, we conclude that the set $\{\sigma\in[0,2/3]\colon \cD(\sigma,\omega)\ \textrm{is true}\}$ is relatively open in the metric space $[0,2/3]$. On the other hand, it is straightforward to verify from Definition \ref{defnCDE} that this set is also relatively closed in $[0,2/3]$. We conclude that $\cD(\sigma,\omega)$ is true for all $\sigma\in[0,2/3]$, so in particular $\cD(0,\omega)$ is true. 

A similar argument shows that $\cD(0,0)$ is true; we have shown that $\cD(0,\omega)$ is true for every $\omega>0$. On the other hand, the set $\{\omega\geq 0\colon \cD(0,\omega)\ \textrm{is true}\}$ is relatively closed in the metric space $[0,\infty)$. We conclude that $\cD(0,0)$ is true. By Proposition \ref{equivDE} we have that $\cE(0,0)$ is true.
\end{proof}

\noindent The conclusion of Theorem  \ref{cDAndcEAreTrue} can be rephrased as follows
\begin{cor}\label{maximalFnCor}
For all $\eps>0$, there exists $K$ so that the following holds for all $\delta>0$ sufficiently small. Let $(\tubes,Y)_\delta$ be $\lambda$-dense. Then 
\begin{equation}
\Big| \bigcup_{T\in\tubes}Y(T)\Big| \geq \delta^{\eps}\lambda^K m^{-1}(\#\tubes)|T|,\qquad \textrm{where}\ m = \CKT(\tubes).
\end{equation}
\end{cor}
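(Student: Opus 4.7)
The plan is to deduce the corollary from assertion $\cE(0,0)$, which is established in Theorem \ref{cDAndcEAreTrue}. Specialized to $\sigma=0,\omega=0$, that assertion reads: for every $\eps'>0$ there exist $\kappa,\eta>0$ such that whenever $(\tubes,Y)_\delta$ is $\delta^\eta$-dense,
\[
\Big|\bigcup_{T\in\tubes}Y(T)\Big|\geq \kappa\delta^{\eps'}m^{-1}(\#\tubes)|T|,\qquad m=\CKT(\tubes).
\]
The task is to promote this into an estimate depending polynomially on an arbitrary density $\lambda$. The strategy is a clean two-case split: when $\lambda\geq\delta^\eta$ I apply $\cE(0,0)$ directly, and when $\lambda<\delta^\eta$ I fall back on the averaging lower bound $\max_T|Y(T)|\geq\lambda|T|$. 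The necessary size of $K$ is forced by the second case.

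Fix $\eps>0$. I invoke $\cE(0,0)$ with parameter $\eps/2$ to obtain constants $\kappa,\eta>0$, and set $K=\lceil 5/\eta\rceil$, so in particular $\eta(K-1)>4$. Given a $\lambda$-dense $(\tubes,Y)_\delta$ with $m=\CKT(\tubes)$, if $\lambda\geq\delta^\eta$ then the hypothesis of $\cE(0,0)$ is satisfied and
\[
\Big|\bigcup_{T\in\tubes}Y(T)\Big|\geq \kappa\delta^{\eps/2}m^{-1}(\#\tubes)|T|.
\]
Since $\lambda\leq 1$ and $K\geq 1$ imply $\lambda^K\leq 1$, the target $\delta^\eps\lambda^K m^{-1}(\#\tubes)|T|$ is no larger than $\delta^\eps m^{-1}(\#\tubes)|T|$, which is itself bounded by $\kappa\delta^{\eps/2}m^{-1}(\#\tubes)|T|$ once $\delta\leq\kappa^{2/\eps}$.

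In the second case $\lambda<\delta^\eta$, averaging $\sum_T|Y(T)|\geq\lambda(\#\tubes)|T|$ over $T$ produces some $T_0\in\tubes$ with $|Y(T_0)|\geq\lambda|T|\gtrsim\lambda\delta^2$, hence $\big|\bigcup Y(T)\big|\geq|Y(T_0)|\gtrsim\lambda\delta^2$. On the other hand, using $m\geq 1$ (take $W=T_0$ in Definition \ref{KatzTaoAndFrostmanTubesDefn}, noting that a $\delta$-tube is convex) and $(\#\tubes)|T|\lesssim\delta^{-2}$ (because there are at most $\lesssim\delta^{-4}$ essentially distinct $\delta$-tubes in the unit ball), the target right-hand side is at most $C\delta^{\eps-4}\lambda^K$. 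The desired inequality thus reduces to $\lambda^{K-1}\leq C^{-1}\delta^{4-\eps}$, which follows from $\lambda<\delta^\eta$ together with $\eta(K-1)>4$ once $\delta$ is sufficiently small. The only real subtlety in the argument is calibrating $K$ so that the polynomial factor $\lambda^K$ can absorb the crude volume bound $\delta^{-4}$ that appears precisely when the $\delta^\eta$-density hypothesis of $\cE(0,0)$ fails.
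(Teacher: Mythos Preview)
Your proof is correct and follows the natural approach: apply $\cE(0,0)$ directly when $\lambda\geq\delta^\eta$, and use the trivial single-tube lower bound when $\lambda<\delta^\eta$. The paper presents this corollary as an immediate rephrasing of Theorem \ref{cDAndcEAreTrue} without spelling out a proof, so your two-case split is exactly what is intended. One small numerical slip: since $(\#\tubes)|T|\lesssim\delta^{-2}$, the target right-hand side in Case 2 is bounded by $C\delta^{\eps-2}\lambda^K$ rather than $C\delta^{\eps-4}\lambda^K$; but your final reduced inequality $\lambda^{K-1}\leq C^{-1}\delta^{4-\eps}$ and your choice $\eta(K-1)>4$ are consistent with the correct bound, so the argument goes through unchanged.
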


Theorem \ref{maximalFnBdpThm} is now a special case of Corollary \ref{maximalFnCor} --- the hypotheses of Theorem \ref{maximalFnBdpThm} ensure that $\CKT(\tubes)\leq 1000$. 


\subsection{Proof philosophy, and previous work on the Kakeya set conjecture in $\RR^3$}
In \cite{KLT00}, Katz, \L{}aba, and Tao proved that every Kakeya set in $\RR^3$ has upper Minkowski dimension at least $5/2+c$ for a (small) absolute constant $c>0$. To do this, they analyzed the structure of a (hypothetical) Kakeya set in $\RR^3$ that has upper Minkowski dimension close to $5/2$. They proved that such a Kakeya set, or more precisely, the set of $\delta$ tubes arising from such a Kakeya set, must have three structural properties that they named ``planiness,'' ``graininess,'' and ``stickiness.''  Katz, \L{}aba, and Tao then showed that a Kakeya set possessing these structural properties must have dimension at least $5/2+c$.

In a talk and accompanying blog post \cite{TaoBlog} in 2014, Tao described a potential approach developed by Katz and Tao for solving the Kakeya problem. The Katz-Tao program proceeds as follows. First, one must show that a (hypothetical) counter-example to the Kakeya conjecture in $\RR^3$, i.e. a Kakeya set with dimension strictly less than 3, must have the structural properties planiness, graininess, and stickiness. Second, these properties are used to obtain increasingly precise statements about the multi-scale structure of the Kakeya set. Third, results from discretized sum-product theory, in the spirit of Bourgain's discretized sum-product theorem \cite{Bour03}, are used to show that a Kakeya set with this type of multi-scale structure cannot exist.

When Tao shared the Katz-Tao program for solving the Kakeya conjecture in $\RR^3$, some progress had already been made towards the first step described above. The Bennett-Carbery-Tao multilinear Kakeya theorem \cite{BCT06} implied that every (hypothetical) counter-example to the Kakeya conjecture in $\RR^3$ must be plany. In \cite{Gut14}, Guth proved that every (hypothetical) counter-example to the Kakeya conjecture in $\RR^3$ must be grainy. Stickiness, however, appeared to be more challenging.

The trilogy of papers \cite{WZ22,WZ23}, and the present paper, can be thought of as a realization of the Katz-Tao program. In \cite{WZ22}, the authors sidestepped the First step of the Katz-Tao program, and tackled the Second and Third steps. More precisely, the authors showed that every sticky Kakeya set in $\RR^3$ (i.e.~a Kakeya set possessing the structural property of stickiness) must have Hausdorff dimension 3. This result is called the Sticky Kakeya Theorem. See \cite[\S 1.1]{WZ22} for a discussion of the proof of this theorem, and how this proof compares to the strategy outlined in the Katz-Tao program. 

In \cite{WZ23}, the authors showed that every (hypothetical) Kakeya set in $\RR^3$ with \emph{Assouad} dimension strictly less than 3 must be sticky. More precisely, they showed that if there exists a Kakeya set $K$ with $\dim_A(K)<3$, then there must also exist a Kakeya set $K'$ with $\dim_A(K')<3$ that possesses a multi-scale self-similarity property similar to stickiness. The authors then used (a mild generalization of) the Sticky Kakeya Theorem to conclude that such a Kakeya set cannot exist. In particular, the Sticky Kakeya theorem from \cite{WZ22} assumed that the tubes from a Kakeya set point in different directions; in \cite{WZ23} the authors generalized this theorem to the weaker assumption that the tubes satisfy the Wolff axioms at every scale (a precise definition is given in Section \ref{cEIffcDSec}). Note that since the Assouad dimension of a set can be larger than its Minkowski or Hausdorff dimension, the results in \cite{WZ23} did not resolve the Kakeya set conjecture in $\RR^3$. 

In the present paper, we take this line of reasoning to its conclusion. We show that if $\tubes$ is a set of $\delta$ tubes that makes the estimate \eqref{defnCDEqn} from Assertion $\cD(\sigma,\omega)$  tight for some $\sigma$ and $\omega$, then $\tubes$ must have a multi-scale self-similarity property similar to stickiness. Specifically, at many scales $\rho$ between $\delta$ and $1$, it is possible to cover $\tubes$ by a family of $\rho$ tubes that obey Katz-Tao Convex Wolff Axioms (recall Definition \ref{KatzTaoAndFrostmanTubesDefn}) with small error. We then use a generalization of the Sticky Kakeya Theorem to show that the estimate \eqref{defnCDEqn} from Assertion $\cD(\sigma,\omega)$ can only be tight for such a set $\tubes$ if $\sigma$ and $\omega$ are both 0. As we have already seen in Section \ref{unionsConvexSetsSec}, this implies that every Kakeya set in $\RR^3$ (and indeed, every set satisfying the Wolff axioms) must have Minkowski and Hausdorff dimension 3.

%


\subsection{A vignette of the proof}\label{vignetteOfProofSection}
Proposition \ref{improvingProp} is the most important step in the proof of Theorem \ref{cDAndcEAreTrue} (which in turn implies Theorems \ref{kakeyaSetThm} and \ref{maximalFnBdpThm}). In this section we will discuss some of the ideas used to prove this proposition in the key special case where the tubes are arranged as in Figure \ref{stickyVsWellSeparated} (right). In Section \ref{proofSketchSection} we will give a more thorough proof sketch that mirrors the structure of the actual proof.

To simply our exposition, we will disregard factors of the form $\delta^{\eps}$ or $\delta^{-\eps}$, and we will (somewhat informally) write $A \lessapprox B$ to mean that $A \leq C \delta^{-\eps}B$, for some constant $C$ that is independent of $\delta$ and some small parameter $\eps>0$ that we will ignore for the purposes of this sketch. 

Fix a choice of $\sigma>0$ and $\omega>0$, and suppose that Assertions $\cD(\sigma,\omega)$ and $\cE(\sigma,\omega)$ are true (roughly speaking, this says that the union of tubes has ``dimension'' at least $3 - \sigma-\omega$).  Let $\tubes$ be a set of $\delta$ tubes of cardinality roughly $\delta^{-2}$ that obeys the hypotheses of Assertion $\cD(\sigma,\omega)$. Our goal is to prove that $\bigcup_{\tubes}T$ has volume substantially larger than what is guaranteed by the estimate \eqref{defnCDEqn}, i.e.~we wish to obtain an inequality of the form
\begin{equation}\label{desiredVolumeBdVignette}
\Big|\bigcup_{T\in\tubes}T\Big| \gtrapprox \delta^{\sigma+\omega-\alpha},
\end{equation}
for some $\alpha = \alpha(\sigma,\omega)>0$.

Let us suppose that there exists a multiplicity $\mu$ with the property that there are about $\mu$ tubes from $\tubes$ passing through each point of $\bigcup_{\tubes}T$. One way to obtain our desired volume bound \eqref{desiredVolumeBdVignette} is to instead prove the multiplicity bound
\begin{equation}\label{desiredMuBd}
\mu\lessapprox\delta^{-\sigma-\omega+\alpha}.
\end{equation}

A second way to obtain  \eqref{desiredVolumeBdVignette} is to show there exists some scale $\tau >\!\!> \delta$ such that the union $\bigcup_{\tubes} T$ has larger than expected density at scale $\tau$. More specifically, to obtain \eqref{desiredVolumeBdVignette} it suffices to show that for a typical ball $B_\tau$ of radius $\tau$ that intersects $\bigcup_{\tubes} T$, we have a density estimate of the form
\begin{equation}\label{eq: denseball}
\Big |B_\tau\ \cap\  \bigcup_{T\in \tubes} T \Big| \gtrapprox \delta^{-\alpha}  (\delta/\tau)^{\sigma+\omega} |B_\tau|. 
\end{equation}
This will be discussed in greater detail in ``Step 2, Case 2'' below. 


If $\tubes$ is sticky, then for each scale $\delta<\rho<1$, it is possible to find a set $\tubes_\rho$ consisting of about $\rho^{-2}$ essentially distinct $\rho$ tubes, each of which contain about $(\delta/\rho)^2$ tubes from $\tubes$. We will suppose instead that $\tubes$ is \emph{not} sticky, i.e.~$\tubes$ resembles the arrangement in Figure \ref{stickyVsWellSeparated} (right). We will call this \emph{Simplifying Assumption A}.  More precisely, there exists a scale $\delta <\!\!<\rho<\!\!<1$, and a set of essentially distinct $\rho$ tubes $\tubes_\rho$ so that each $T\in\tubes$ is contained in at least one tube from $\tubes_\rho$, and each $T_\rho\in\tubes_\rho$ contains about $\delta^{\nu} (\rho/\delta)^2$ tubes from $\tubes$, for some (small) $\nu>0$. We will try to establish Inequality \eqref{desiredMuBd} with some small improvement $\alpha>0$.

\medskip

\noindent {\bf A fine-scale estimate.}\\ 
For each $T_\rho\in\tubes_\rho$, define
\begin{equation}\label{tubesTRhoDefn}
\tubes[T_\rho]=\{T\in\tubes\colon T\subset T_\rho\}\quad \textrm{and}\quad \tubes^{T_\rho} = \{T^{T_\rho}\colon T\in\tubes[T_\rho]\}.
\end{equation}
(Recall that $T^{T_\rho}$ is defined in the discussion following Definition \ref{defnCDE}). Suppose that for each $T_\rho\in\tubes_\rho$, the tubes in $\tubes^{T_\rho}$ satisfy the hypotheses of Assertion $\cD(\sigma,\omega)$; we will call this \emph{Simplifying Assumption B}. We define $\mu_{\operatorname{fine}}$ to be the number of tubes from $\tubes^{T_\rho}$ passing through a typical point of $\bigcup_{\tubes^{T_\rho}}T^{T_\rho}$ (it is harmless to suppose that this number is the same for each $\rho$ tube in $\tubes_\rho$). Applying Assertion $\cD(\sigma,\omega)$ to each set $\tubes^{T_\rho}$ and recalling the discussion following Definition \ref{defnCDE}, we conclude that 
\begin{equation}\label{muFineEstimateHeartSketch}
\mu_{\operatorname{fine}} 
\lessapprox \Big(\frac{\delta}{\rho}\Big)^{\sigma-\omega}(\#\tubes[T_\rho])^{\sigma}
\leq\Big(\frac{\delta}{\rho}\Big)^{\sigma-\omega}\Big(\delta^{\nu}\frac{\rho^2}{\delta^2}\Big)^{\sigma}= \delta^{\nu\sigma} \Big(\frac{\rho}{\delta}\Big)^{\sigma+\omega},
\end{equation}
where the second inequality used our assumption that $\#\tubes[T_\rho]\leq \delta^{\nu}(\rho/\delta)^2$.

Inequality \eqref{muFineEstimateHeartSketch} bounds the typical intersection multiplicity of the $\delta$ tubes inside a common $\rho$ tube. Next, we define the quantity $\mu_{\operatorname{coarse}}$ as follows: for a typical point $x\in\bigcup_{\tubes}T$, there are about $\mu_{\operatorname{coarse}}$ distinct $\rho$ tubes $T_\rho\in\tubes_\rho$ with the property that $x\in\bigcup_{\tubes[T_\rho]}T$. With this definition, we have
\begin{equation}\label{eq: mu}
\mu \sim \mu_{\operatorname{fine}}\mu_{\operatorname{coarse}}.
\end{equation}

In the past, researchers have considered a weaker variant of \eqref{eq: mu} of the form 
$ \mu \lesssim \mu_{\operatorname{fine}}  \mu_{\tubes_\rho}$,
where $\mu_{\tubes_\rho}$ is the number of tubes from $\tubes_\rho$ passing through a typical point of $\cup_{\tubes_\rho} T_\rho$. Our use of the more refined estimate \eqref{eq: mu} is a key new ingredient in the proof.

In light of \eqref{muFineEstimateHeartSketch}, our desired multiplicity bound \eqref{desiredMuBd} will follow if we can establish the estimate
\begin{equation}\label{desiredBoundMuCoarse}
\mu_{\operatorname{coarse}}\lessapprox \rho^{-\sigma-\omega}.
\end{equation}

Naively, we might attempt to obtain \eqref{desiredBoundMuCoarse} by observing that $\mu_{\operatorname{coarse}}\leq \mu_{\tubes_\rho}$, and then bounding the latter using  Assertion $\cE(\sigma, \omega)$. However, this approach does not yield \eqref{desiredBoundMuCoarse} because the cardinality of $\tubes_\rho$ (in this proof vignette) is substantially larger than $\rho^{-2}$.

\medskip

\noindent {\bf A coarse-scale estimate Step 1: a grains decomposition.}\\
Fix a tube $T_\rho\in\tubes_\rho$. Using a variant of Guth's grains decomposition from \cite{Gut14}, we can suppose that the $\delta/\rho$ tubes in $\tubes^{T_\rho}$ arrange themselves into ``grains,'' i.e.~rectangular prisms of dimensions $\delta/\rho\times c\times c,$ with $c \geq  \frac{\rho}{\delta} (\#\tubes[T_\rho])^{-1}$ (Note that our hypotheses on the size of $\#\tubes[T_\rho]$ guarantees that $c>\!\!> \delta/\rho$). Here and throughout, we will adopt the convention that when referring to a rectangular prism of dimensions $a\times b\times c$, we will always have $a\leq b \leq c$.  

This means that we can cover $E_{T_\rho} = \bigcup_{\tubes^{T_\rho}}T^{T_\rho}$ by a set of (mostly) disjoint rectangular prisms of dimensions $\delta/\rho \times c\times c$, each of which have large intersection with $E_{T_\rho}$, in the sense that $|G \cap E_{T_\rho}|\gtrapprox |G|$, for each such prism $G$; see Figure \ref{grainsDecompHeartOfMatter} (left).

\begin{figure}[h!]
 \includegraphics[width=.42\linewidth]{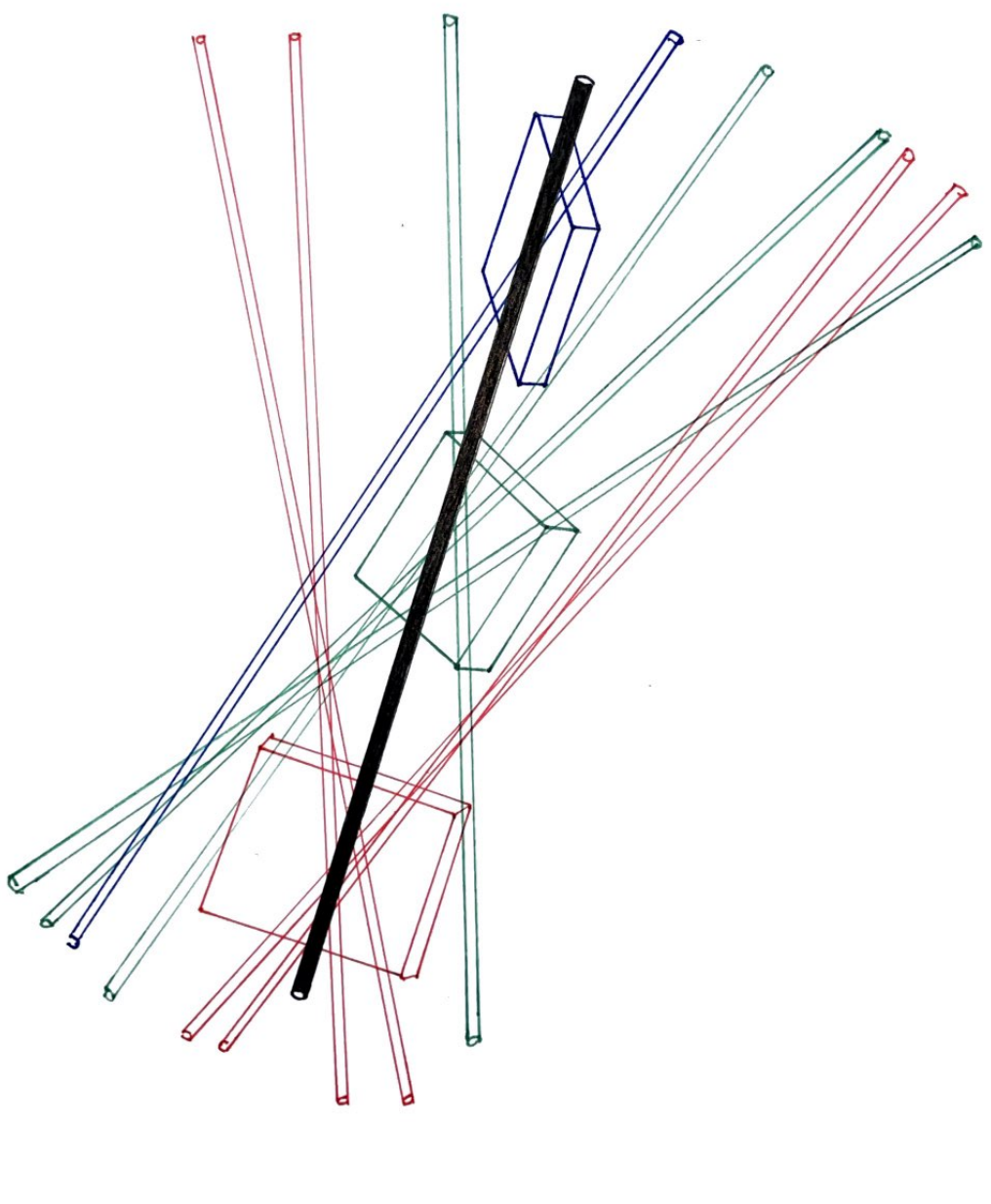}\hfill
 \includegraphics[width=.42\linewidth]{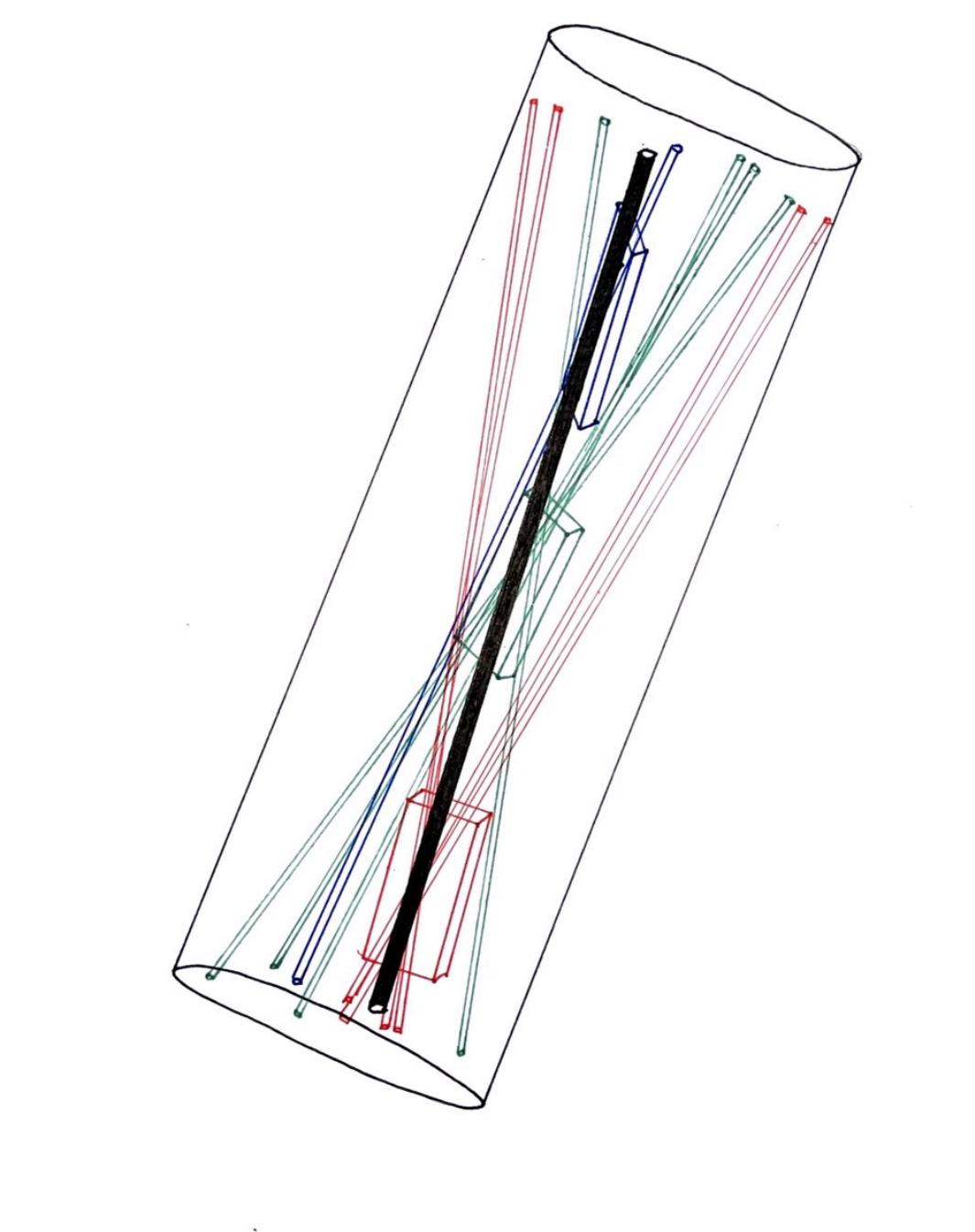}
\caption{Left: The set of tubes $\tubes^{T_\rho}$ and the grains $\{G\}.$ For clarity, we have only drawn the grains and tubes that intersect the black tube (and even most of these have been omitted; the set of red tubes passing through the red grain fill out a substantial portion of the red grain, and similarly for the other grains); the situation is similar for each tube in $\tubes^{T_\rho}$.\\
Right: The image of Figure \ref{grainsDecompHeartOfMatter} (left) after undoing the anisotropic rescaling associated to $T_\rho$. The dimensions of each grain have changed from $\delta/\rho\times c\times c$ to $\delta\times\rho c\times c$. }
\label{grainsDecompHeartOfMatter}
\end{figure}

Undoing the anisotropic rescaling associated to $T_\rho$ that was described above, we have that $\bigcup_{\tubes[T_\rho]}T$ can be covered by a set of (mostly) disjoint rectangular prisms of dimensions $\delta \times \rho c \times c$; see Figure \ref{grainsDecompHeartOfMatter} (right). The same statement is true for each $T_\rho\in\tubes_\rho$. Let $\mathcal{P}$ denote the set of all such $\delta \times \rho c \times c$ prisms, from all $\rho$ tubes in $\tubes_\rho$. In order to bound $\mu_{\operatorname{coarse}}$, it suffices to bound the typical intersection multiplicity of the prisms in $\mathcal{P}$.

\medskip

\noindent {\bf A coarse-scale estimate Step 2: intersection multiplicity of the grains.}\\
Each $\delta \times \rho c \times c$ prism in $\mathcal{P}$ has an associated tangent plane, which is well-defined up to accuracy $\delta/(\rho c)$. Suppose that the prisms in $\mathcal{P}$ intersect ``tangentially,'' in the sense that whenever two prisms $P,P'\in\mathcal{P}$ intersect, their corresponding tangent planes agree up to accuracy $\delta/(\rho c)$. We will call this \emph{Simplifying Assumption C}. This means that for each point $x$, the set of prisms from $\mathcal{P}$ containing $x$ are contained in a common prism of dimensions roughly $\delta/\rho\times c \times c$. Thus we can partition $\mathcal{P}$ into sets, $\mathcal{P} = \bigcup \mathcal{P}_i$, with the property that if two prisms intersect then they are contained in a common set, and the $\delta \times \rho c \times c$ prisms in each set $\mathcal{P}_i$ are contained in a common prism $\square_i$ of dimensions roughly $\delta/\rho \times c \times c$; see Figure \ref{rescaledPrismsHeartOfMatter} (left).

\begin{figure}[h!]
 \includegraphics[width=.42\linewidth]{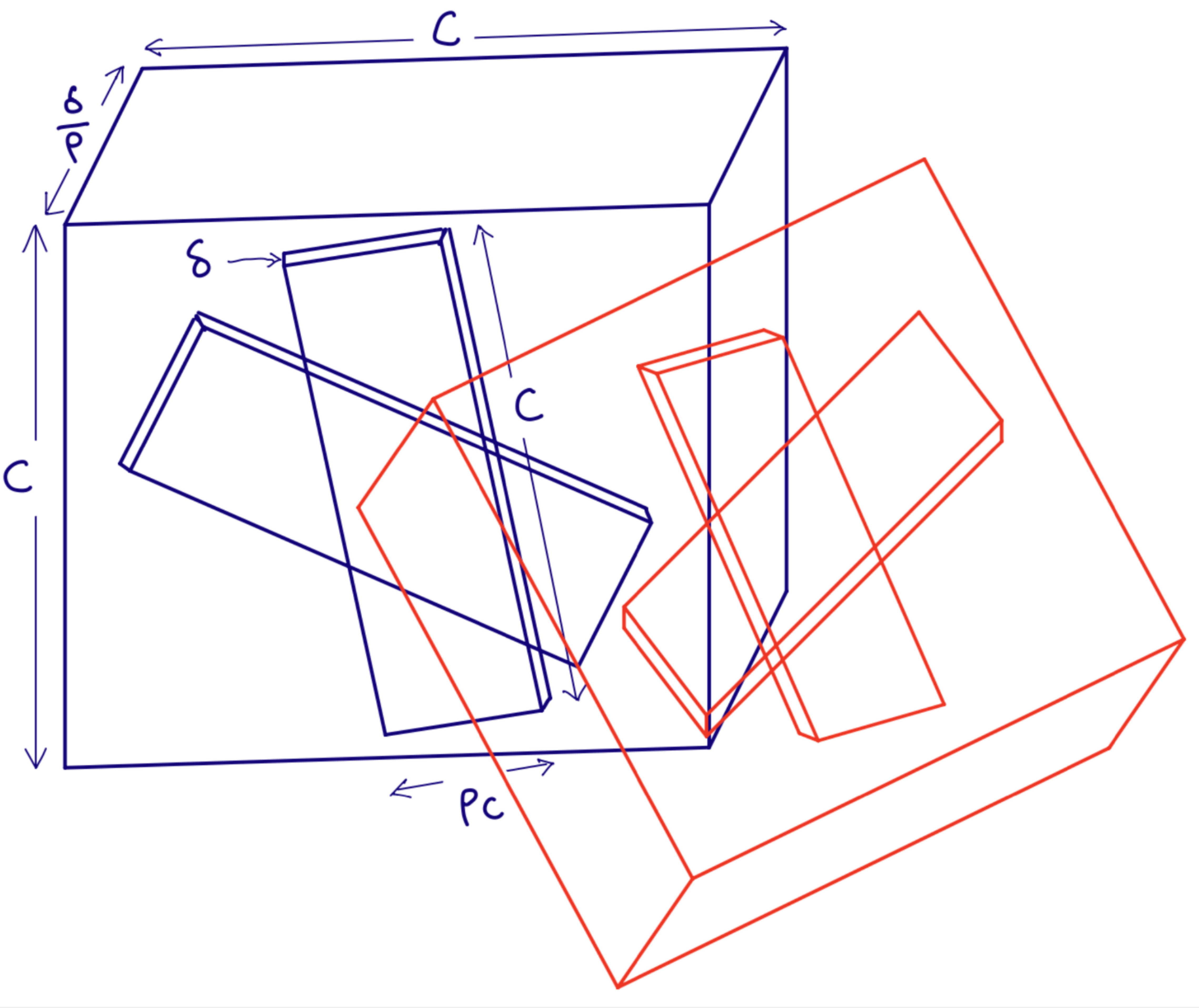}\hfill
 \includegraphics[width=.42\linewidth]{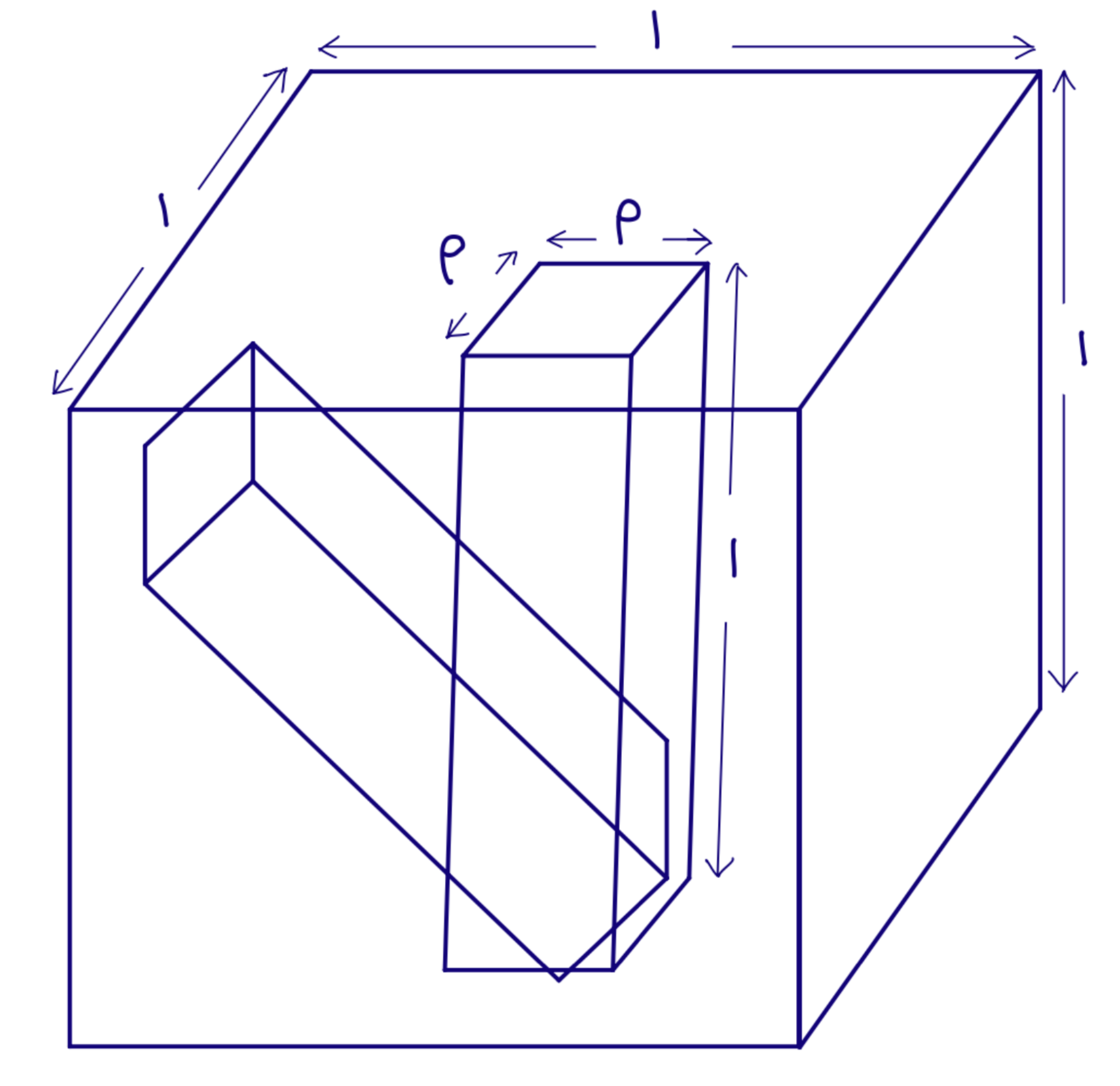}
\caption{Left: two sets of $\delta\times \rho c\times c$ prisms from the partition of $\mathcal{P}$ (blue and red, respectively), and the associated $\delta/\rho \times c \times c$ prisms ${\color{blue}\square}$ and ${\color{red}\square}$ that contain them.\\
Right: The anisotropic rescaling that maps the blue $\delta/\rho \times c \times c$ prism ${\color{blue}\square}$ to the unit cube maps each blue $\delta\times \rho c \times c$ prism to a $\rho\times\rho\times 1$ prism (this is comparable to a $\rho$ tube).}
\label{rescaledPrismsHeartOfMatter}
\end{figure}

Fix a set $\mathcal{P}'$ from the partition of $\mathcal{P}$ described above, and let $\square$ be the associated $\delta/\rho\times c\times c$ prism. The image of each $P\in\mathcal{P}'$ under the anisotropic rescaling sending $\square$ to the unit cube will be a prism of dimensions roughly $\rho\times\rho\times 1$ (see Figure \ref{rescaledPrismsHeartOfMatter} (right)). Since a $\rho\times\rho\times 1$ prism is comparable to a $\rho$ tube, we will abuse notation slightly and pretend that this set of prisms is actually a set of $\rho$ tubes; we will call this set $\tilde\tubes$. Our task of estimating $\mu_{\operatorname{coarse}}$ now reduces to estimating the typical intersection multiplicity of the tubes in  $\tilde\tubes$. 

A priori, we do not know anything about the structure of the set $\tilde\tubes$. A key new idea of our paper is a structure theorem that finds a set $\mathcal{W}$ of convex sets such that $\mathcal{W}$ obeys (a suitable analogue of) the Katz-Tao Convex Wolff Axioms with error $\lessapprox 1,$ and for each $W\in \mathcal{W}$, the set 
\[
\tilde\tubes[W]=\{\tilde T\in\tilde\tubes\colon \tilde T\subset W\}
\]
satisfies the following key properties:
\begin{enumerate}
	\item  \label{it: cardi} The cardinality estimate $\#\tilde\tubes[W] \approx \CKT(\tilde\tubes) \cdot |W|/|\tilde T|$  (here $|\tilde T|\sim\rho^2$ denotes the volume of a tube from $\tilde\tubes$). 
	\item \label{it: Frostman}  For every convex set $U\subset W$, we have $\#\tilde\tubes[U] \lessapprox \frac{|U|}{|W|} \#\tilde\tubes[W]. $
\end{enumerate}
See Figure \ref{factoringInsideBoxesFig} for an illustration of this process, and Proposition~\ref{factoringConvexSetsProp} for a precise statement.

Let's analyze a special case to see what these two properties mean. Suppose for a moment that $W$ is a $\tau$ tube for some $\rho < \tau< 1$, then Item~\ref{it: cardi} says that after rescaling $W$ to a unit cube,  $\tilde\tubes[W]$ becomes a set of $\rho/\tau$-tubes of cardinality $\gtrapprox  \CKT(\tilde\tubes) (\tau/\rho)^2$.  Item~\ref{it: Frostman} is a non-concentration condition on these tubes that was first introduced in \cite{WZ23}; families of tubes obeying this non-concentration condition are said to satisfy the Frostman Convex Wolff Axioms. For example, Items~\ref{it: cardi} and \ref{it: Frostman} are satisfied if the following holds: in each $\rho/\tau$-separated direction, we have roughly $\CKT(\tilde\tubes)$ many parallel $\rho/\tau$-tubes. This type of tube arrangement was previously considered by Wolff \cite{Wol98}, and volume estimates for unions of tubes satisfying these properties are called X-ray estimates. The Assertion $\mathcal{E}(\sigma, \omega)$, in particular $\mathcal{E}(1/2, 0)$,  is a generalization of Wolff's X-ray estimate from \cite{Wol98}. As a consequence, we should expect $\bigcup_{\tilde\tubes[W]} \tilde T$ to have a large volume if $\CKT(\tilde \tubes)$ is substantially greater than $1$. See Case 2 below for more details.

Our argument now splits into three cases.
\medskip

\noindent \emph{Case 1: $\CKT(\tilde \tubes) \lessapprox 1$.} In this case, $\mathcal{W}$ consists of a single convex set, which is comparable to the unit ball. To simplify this proof vignette, we will suppose that $\tilde\tubes$ satisfies the Frostman Slab Wolff Axioms with error $\lessapprox 1$, and thus $\tilde\tubes$ satisfies the hypothesis of Assertion $D(\sigma, \omega)$; this simplification can be justified using certain rescaling arguments that we will not detail here. In particular, this means that $\#\tilde \tubes \lessapprox \rho^{-2}$, and thus we can apply Assertion $D(\sigma, \omega)$ to obtain the desired estimate 
	\[
	\mu_{\operatorname{coarse}} \lessapprox \rho^{\sigma-\omega}(\#\tilde\tubes)^{\sigma} \lessapprox \rho^{-\sigma-\omega}. 
	\]

\medskip

\noindent \emph{Case 2: $\CKT(\tilde \tubes) \gg 1$, and each $W\in\mathcal{W}$ has thickness $t\gg\delta$}. To handle this case, we will consider the following analogy. Suppose that $\tubes$ is a set of $\delta$ tubes of cardinality $m\delta^{-2}$, for some $m\gg 1$. Suppose furthermore that $\tubes$ satisfies the Katz-Tao Convex Wolff Axioms with error $m$, and the Frostman Slab Wolff Axioms with error $\sim 1$. Then Assertion $\cE(\sigma,\omega)$ says that $\bigcup_{\tubes}T$ has volume $\gtrapprox m^{\sigma/2}\delta^{\sigma+\omega},$ which is substantially larger than $\delta^{\sigma+\omega}$. We apply a similar argument to the set of tubes $\tilde\tubes[W]$ to conclude that for each $W\in\mathcal{W}$, the union $\bigcup_{\tilde\tubes[W]}\tilde T$ has large volume (see also the discussion of the two properties above). Undoing the re-scaling described in the previous step (and illustrated in Figure \ref{rescaledPrismsHeartOfMatter}), we obtain a scale $\delta <\!\!< \tau<\!\!<\delta/\rho\geq$  (here $\tau$ depends on $t$ and the orientation of $W$ with respect to $\square$) with the property that for a typical point $x\in \bigcup_{\tubes}T$, the ball $B_\tau = B(x,\tau)$ has a large intersection with $\bigcup_{\tubes}T$. This means that we obtain an inequality of the following form:
\begin{equation}\label{eq: dense ball}
	\Big| B_\tau \bigcap \big( \bigcup_{T\in \tubes} T\big) \Big| \gtrapprox \CKT(\tilde\tubes)^{\sigma/2}  (\delta/\tau)^{\sigma+\omega} |B_\tau|. 
\end{equation}
This is precisely \eqref{eq: denseball}, provided $\CKT(\tilde\tubes)\geq \delta^{-2\alpha/\sigma}$ (this  is what we mean by $\CKT(\tubes)\gg 1$). 

Next, let $\tubes_\tau$ be a set of essentially distinct $\tau$ tubes with the property that each $T\in\tubes$ is contained in some tube from $\tubes_\tau$, and suppose that each $T_\tau\in\tubes_\tau$ contains about $(\#\tubes)/(\#\tubes_\tau)$ tubes from $\tubes$. It is straightforward to compute that $\FS(\tubes_\tau)\lessapprox 1$ (indeed, this is inherited from $\tubes$), and that $\CKT(\tubes_\tau)\approx (\#\tubes_\tau)|T_\tau|$ (this latter quantity is $\geq 1$, since $\CKT(\tubes)\lessapprox 1$ and thus at least $|T_\tau|^{-1}$ essentially distinct $\tau$ tubes are needed to cover the tubes in $\tubes$). Applying the estimate $\cE(\sigma,\omega)$ to $\tubes_\tau$, we conclude that
\[
\Big|\bigcup_{T_\tau\in\tubes_{\tau}}T_\tau\Big|\gtrapprox \tau^{\omega}\Big((\#\tubes_\tau)|T_\tau|^2\Big)^{\sigma/2} \gtrapprox \tau^{\omega+\sigma}.
\]
For the last inequality, we used the estimate $\#\tubes_\tau\gtrapprox |T_\tau|^{-1}$, which follows from the hypotheses $\CKT(\tubes)\lessapprox 1$ and $\#\tubes \sim \delta^{-2}$. 
Pairing this scale$-\tau$ estimate with our previously discussed estimate \eqref{eq: dense ball} inside balls of radius $\tau$, we obtain \eqref{desiredVolumeBdVignette}:
\begin{align*}
	\Big|\bigcup_{T\in \tubes} Y(T)\Big|  & \gtrapprox \Big|\bigcup_{T_\tau\in\tubes_{\tau}}T_\tau\Big| \cdot \CKT(\tilde\tubes)^{\sigma/2}  \left( \frac{\delta}{\tau}\right)^{\sigma+\omega}  \\
	&\gtrapprox   \delta^{\omega+\sigma} \CKT(\tilde\tubes)^{\sigma/2}. 
\end{align*}

\noindent \emph{Case 3: $\CKT(\tilde \tubes) \gg 1$, and each $W\in\mathcal{W}$ has thickness $\approx \delta$}. In this case, the grains in $\mathcal{P}$ can be replaced by larger prisms---these are the (rescaled) convex sets coming from $\mathcal{W}$. 
This process may change $\rho$ and also change the dimensions of the grains.   We iterate the argument described above with our new $\rho$ and larger grains. If we repeatedly find ourselves in Case 3 with each iteration, then the grains become wider and wider. Suppose for the moment that after a sufficient number of iterations, both $\rho$ and $c$ have size $\approx 1$. Then $\bigcup_{\tubes} T$ is organized into a union of $\delta\times 1\times 1$-slabs. From here, a straightforward geometric argument (analogous to Cordoba's proof of the Kakeya maximal function conjecture in the plane) shows that $\big| \bigcup_{\tubes} T\big|\approx 1$. If instead $c\ll 1$, then a different Cordoba type geometric argument and  the assumption that $\mu\gg 1$ (if this assumption fails, then we are done) allows us to enlarge $c$, and we iterate the argument again.

\medskip

\noindent {\bf Justifying the simplifying assumptions.}\\
We will briefly justify Simplifying Assumptions A -- C. First, if Simplifying Assumption A fails, then we can directly prove \eqref{desiredVolumeBdVignette} by using the sticky Kakeya theorem; see Section \ref{stickyKakeyaEveryScaleSec} for details. In general, Simplifying Assumption B might not hold, but if it fails, then either it is possible to directly prove \eqref{desiredVolumeBdVignette}, or else it is possible to find an intermediate scale between $\delta$ and $\rho$ at which the assumption holds; this introduces additional steps and complexity to the argument, but does not fundamentally change the flavor of the proof. 

If Simplifying Assumption C fails, then we can use a straightforward Cordoba-type geometric argument to show that for a typical prism $P_0\in\mathcal{P}$, the union of prisms $P\in\mathcal{P}$ that intersect $P_0$ fill out (most of) a thickened neighbourhood of $P_0$. This in turn means that for a typical tube $T_0\in\tubes$, the union $\bigcup_{\tubes}T$ fills out (most of) a thickened neighbourhood of $T_0$. We can then argue as in Case 2 (described above) to obtain \eqref{desiredMuBd}.

In the table below, we summarize some of the geometric objects that appeared in the arguments from Section~\ref{vignetteOfProofSection}. 

\begin{center}
	\begin{tabular}{ | m{0.8cm} | m{1.6cm} | m{1.7cm}| m{1.6cm}|  m{1.3cm}| m{1.7cm}| m{2.6cm}|m{2cm}|} 
		\hline
	   {$\!$object}   & {$\!$cardinality} & {$\!$dimensions} & {$\!$bounding box} & {$\!$union size} & {desired union size} &{multiplicity} &  {desired multiplicity } \\ 
		\hline
	 $\tubes$  & $\delta^{-2}$ &  $\delta \times \delta\times 1$ & $1\times1 \times 1$ &  $\gtrapprox\delta^{\sigma+\omega}$ & $\gtrapprox\delta^{\sigma+\omega-\alpha}$ &  $\lessapprox \delta^{-\sigma-\omega}$ & $\lessapprox\delta^{-\sigma-\omega+\alpha} $\\
		\hline
 $\tubes[T_\rho]$ & $\delta^{\nu} (\delta/\rho)^{-2} $ &$\delta\times \delta\times 1$ & $\rho\times \rho \times 1$  &  &   &  $\lessapprox\delta^{\nu \sigma} \left(\frac{\delta}{\rho}\right)^{-\sigma-\omega} $ &    \\
		\hline
		 $\mathcal{P}$  &  & $\delta\times c\rho \times c$  &  $\frac{\delta}{\rho}\times c\times c\ \ \ $ {\tiny (if tangential)}&   &   & $\mu_{\operatorname{coarse}}$ & $\lessapprox\rho^{-\sigma-\omega}$ \\
		\hline
		 $\tilde{\tubes}$  &  & $\rho \times \rho \times 1$  & $1\times 1\times 1 $ &   &   & $\mu_{\operatorname{coarse}}$ & $\lessapprox\rho^{-\sigma-\omega}$ \\
		 \hline 
	\end{tabular}
\end{center}


\subsection{Tube doubling and Keleti's line segment extension conjecture }\label{furtherConsequencesSec}
In this section we will discuss further consequences of Theorem  \ref{cDAndcEAreTrue}. We begin by introducing the Tube Doubling Conjecture (see e.g. \cite[Conjecture 15.19]{Gut16}). In what follows, if $T$ is a $\delta$ tube in $\RR^n$, then $\tilde T$ denotes the 2-fold dilate of $T$. Besicovitch constructed a set $\tubes$ of roughly $\delta^{-1}$ tubes in $\RR^2$ for which 
\begin{equation}\label{bigGainDilatedTubes}
\Big|\bigcup_{T\in\tubes}\tilde T\Big|\gtrsim \frac{\log(1/\delta)}{\log\log(1/\delta)}\Big|\bigcup_{T\in\tubes} T\Big|.
\end{equation}
This construction was adapted by Fefferman \cite{Fef71} to show that the ball multiplier is unbounded on $L^p$ for $p\neq 2$. 
The Tube Doubling Conjecture asserts that up to sub-polynomial factors, Inequality \eqref{bigGainDilatedTubes} is tight. One formulation is as follows.
\begin{conj}\label{tubeDoubling}
Let $n\geq 2$ and $\eps>0$. Then the following is true for all $\delta>0$ sufficiently small. Let $\tubes$ be a set of $\delta$ tubes in $\RR^n$. Then
\begin{equation}\label{doublingConjIneq}
\Big|\bigcup_{T\in\tubes}\tilde T\Big| \leq \delta^{-\eps}\Big|\bigcup_{T\in\tubes}T\Big|.
\end{equation}
\end{conj}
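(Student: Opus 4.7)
\emph{Proof proposal.} The plan is to derive the three-dimensional case of Conjecture~\ref{tubeDoubling} from Corollary~\ref{maximalFnCor}, the central output of this paper. The two-dimensional case is classical, following from Cordoba's proof of the planar Kakeya maximal function estimate; dimensions $n\geq 4$ are (morally) equivalent to the outstanding $n$-dimensional Kakeya maximal function conjecture and seem out of reach of the present methods.

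Setting $E=\bigcup_{T\in\tubes}T$ and $\tilde E=\bigcup_{T\in\tubes}\tilde T$, the trivial pointwise bound $|\tilde E|\leq \sum_T|\tilde T|\lesssim \sum_T|T|$ reduces the conjecture to the matching lower bound $|E|\gtrsim \delta^{\eps}\sum_T|T|$. The first move is to apply Corollary~\ref{maximalFnCor} to $\tubes$ with the trivial shading $Y(T)=T$ (so $\lambda=1$), which gives
\[
|E| \;\geq\; \delta^{\eps}\,m^{-1}\sum_T|T|,\qquad m=\CKT(\tubes).
\]
If $m\leq \delta^{-\eps'}$ for a suitable $\eps'\ll\eps$, then $|\tilde E|/|E|\leq \delta^{-\eps}$ follows at once and we are done.

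The only remaining scenario is $\CKT(\tubes)$ large. By the definition of $\CKT$, there is then a convex set $W$ containing $\gtrsim m\,|W|/|T|$ tubes of $\tubes$. I would split $\tubes=\tubes_W\sqcup\tubes'$, where $\tubes_W$ is the sub-collection of tubes contained in $W$. For $\tubes_W$, the doubled tubes lie in $2W$ so that $\bigl|\bigcup_{T\in\tubes_W}\tilde T\bigr|\leq|2W|\lesssim|W|$; after the anisotropic rescaling sending $W$ to the unit cube, Corollary~\ref{maximalFnCor} applies to the rescaled $\tubes_W$ (whose $\CKT$ is now $O(1)$ by construction of $W$) and yields $\bigl|\bigcup_{T\in\tubes_W}T\bigr|\gtrsim \delta^{\eps}|W|$. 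Hence the doubling ratio restricted to $\tubes_W$ is $\lesssim\delta^{-\eps}$. The complementary family $\tubes'$ no longer has $W$ as a clustering witness, and one iterates.

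The main obstacle I anticipate is controlling the cumulative loss across the iteration: each decomposition may introduce a small polynomial-in-$\delta$ factor, and the rescaled subproblem on $\tubes_W$ may, in its own right, exhibit a large $\CKT$ and demand further recursion. The standard remedy is an induction on scale (or on $\#\tubes$), paired with careful dyadic bookkeeping ensuring the total accumulated error is subpolynomial in $\delta$; this is in the same spirit as the multi-scale structure that drove the proof of Theorem~\ref{cDAndcEAreTrue}.
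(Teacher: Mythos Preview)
Two concrete steps in your argument fail as written. The claim that the rescaled family $\tubes_W^W$ has $\CKT=O(1)$ is false: taking $V=W$ itself as a test set shows $\CKT(\tubes_W^W)=m$, not $O(1)$. What the (near-)maximality of $W$ actually buys is the \emph{Frostman} condition $\CFC(\tubes_W^W)\lesssim 1$, since any convex $V\subset W$ satisfies $\#\tubes[V]\leq(|V|/|W|)\#\tubes[W]$. Moreover, Corollary~\ref{maximalFnCor} applies to $\delta$-tubes, but the anisotropically rescaled objects are $\tfrac{\delta}{b}\times\tfrac{\delta}{a}\times 1$ prisms; the right tool is Assertion~$\cF$ (Definition~\ref{defnF}, available via Proposition~\ref{EiffF}). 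With $\cF$ and the Frostman input one does recover $|\bigcup_{\tubes_W}T|\gtrsim\delta^{\eps}|W|$, so the conclusion for a single $W$ survives, but not by the route you describe.

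The more serious gap is the iteration, which you flag but do not close. Peeling one $W$ at a time does not terminate in $\lessapprox_\delta 1$ steps: a single witness $W$ may contain only $m|W|/|T|$ tubes, an arbitrarily small fraction of $\#\tubes$. Even granting termination in $N$ steps, summing the per-piece doubling bounds fails because the sets $\bigcup_{\tubes_{W_j}}T$ can overlap heavily inside $E$; what you would need is $|E|\gtrsim\delta^{\eps}\sum_j|W_j|$, and this does not follow from the individual bounds $|\bigcup_{\tubes_{W_j}}T|\gtrsim\delta^{\eps}|W_j|$. The paper's resolution (Section~\ref{extensionAndKeletiSec}) is to invoke Proposition~\ref{factoringConvexSetsProp} at each stage to extract an entire family $\mathcal{W}_j$ covering a $\approx_\delta 1$ fraction of the remaining tubes (so $N\lessapprox_\delta 1$) with $\CKT(\mathcal{W}_j)\lessapprox_\delta 1$. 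That last property is the key missing ingredient: it permits a \emph{second} application of $\cF$, this time to the prisms in $\mathcal{W}=\bigcup_j\mathcal{W}_j$ equipped with the dense shadings just built, giving $|\bigcup_{T\in\tubes}T|\gtrapprox_\delta\delta^{\eps}\sum_{W\in\mathcal{W}}|W|$ directly; this global lower bound then pairs with the trivial $|\bigcup_T\tilde T|\leq\sum_W|2W|$.
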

Conjecture \ref{tubeDoubling} is known in dimension two, and open in three and higher dimensions. As a consequence of Theorem  \ref{cDAndcEAreTrue}, we resolve Conjecture \ref{tubeDoubling} in $\RR^3$.

\begin{thm}\label{tubeDoublingTheoremR3}
The Tube Doubling Conjecture is true in $\RR^3$.
\end{thm}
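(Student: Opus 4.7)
The plan is to deduce Theorem~\ref{tubeDoublingTheoremR3} from Corollary~\ref{maximalFnCor} applied to a carefully chosen subcollection of $\tubes$ whose Katz--Tao Convex Wolff error can be controlled by the multiplicity of the dilated tubes $\tilde T$. Without loss of generality I may assume $\tubes$ is essentially distinct (else refine, losing only constant factors). Write $E=\bigcup_{T\in\tubes}T$ and $E'=\bigcup_{T\in\tubes}\tilde T$. The core idea is that if the dilates $\tilde T$ pile up on a set with multiplicity $\mu_0$, then the corresponding tubes $T$ themselves cluster inside convex hulls, which forces $\CKT$ of any refined subcollection to be controlled by $\mu_0$ divided by the typical density of the refined tubes on the distinguished multiplicity level.

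The extraction uses a double dyadic pigeonhole. Let $N(x)=\#\{T\in\tubes:x\in\tilde T\}$ and partition $E'=\bigsqcup_\mu E'_\mu$ with $E'_\mu=\{x:N(x)\in[\mu,2\mu)\}$. Only $O(\log(1/\delta))$ dyadic levels occur, so one can choose $\mu_0$ with $|E'_0|:=|E'_{\mu_0}|\gtrsim |E'|/\log(1/\delta)$. For dyadic $A\in[\delta^{O(1)},1]$, set $\tubes_A=\{T\in\tubes:|\tilde T\cap E'_0|\in[A|\tilde T|,2A|\tilde T|)\}$. The identity
\[
\sum_A A|\tilde T|\,\#\tubes_A \;\sim\; \int_{E'_0}N(x)\,dx \;\in\; [\mu_0|E'_0|,\,2\mu_0|E'_0|]
\]
combined with a second pigeonhole then yields $A_0$ with $A_0(\#\tubes_{A_0})|\tilde T|\gtrsim \mu_0|E'_0|/\log(1/\delta)$.

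The decisive step is the bound $\CKT(\tubes_{A_0})\lesssim \mu_0/A_0$. If a convex set $W$ contains at least one $T\in\tubes_{A_0}$, then $W$ contains a $\delta$-ball, so $W':=W+B(0,\delta)$ is convex with $|W'|\lesssim |W|$, and $T\subset W$ implies $\tilde T\subset W'$; hence
\[
A_0|\tilde T|\cdot\#\{T\in\tubes_{A_0}:T\subset W\}\;\leq\;\sum_{\tilde T\subset W'}|\tilde T\cap E'_0|\;\leq\;\int_{W'\cap E'_0}N(x)\,dx\;\leq\;2\mu_0|W'|\;\lesssim\;\mu_0|W|.
\]
Applying Corollary~\ref{maximalFnCor} to $\tubes_{A_0}$ with the trivial shading $Y(T)=T$ (so $\lambda=1$), the $\mu_0$ and $A_0$ factors cancel exactly:
\[
|E|\;\geq\;\Big|\bigcup_{T\in\tubes_{A_0}}T\Big|\;\gtrsim\;\delta^\eps\,\CKT(\tubes_{A_0})^{-1}(\#\tubes_{A_0})|T|\;\gtrsim\;\delta^\eps\cdot\frac{A_0}{\mu_0}\cdot\frac{\mu_0|E'_0|}{A_0\log(1/\delta)}\;=\;\frac{\delta^\eps|E'_0|}{\log(1/\delta)}\;\gtrsim\;\frac{\delta^\eps|E'|}{\log^2(1/\delta)}.
\]
After absorbing the $\log^2(1/\delta)$ factor into $\delta^{-\eps}$, this is the desired doubling inequality.

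I don't foresee a genuine obstacle beyond bookkeeping. The substantive point -- and the only place that requires a real observation -- is that a uniform multiplicity bound on $E'_0$ together with the uniform density $|\tilde T\cap E'_0|\sim A_0|\tilde T|$ for $T\in\tubes_{A_0}$ forces $\CKT(\tubes_{A_0})$ to be dominated by exactly the ratio $\mu_0/A_0$, which is precisely what is needed so that the corresponding factors cancel in Corollary~\ref{maximalFnCor}. Minor care is needed in handling the essentially-distinct reduction at the outset and in checking that the constants from $|W+B(0,\delta)|\lesssim |W|$ are harmless.
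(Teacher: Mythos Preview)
Your approach has a genuine error at the ``decisive step.'' You claim that if $T\subset W$ then $\tilde T\subset W':=W+B(0,\delta)$. This is false: the $2$-fold dilate $\tilde T$ has length $2$, not $1$, so if $W$ is (say) a unit ball containing $T$, the dilate $\tilde T$ protrudes from $W+B(0,\delta)$ by roughly $1/2$ at each end. Fattening $W$ by $\delta$ captures the doubled \emph{thickness} but not the doubled \emph{length}---and the length is the whole point of tube doubling.

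The fix is clean. For $T\subset W$ with center $c_T\in W$, one has $\tilde T=\{2t-c_T:t\in T\}\subset 2W-c_T$, hence $\tilde T\subset 2W-W$ (Minkowski combination). Since $W$ is convex in $\RR^3$, a John ellipsoid argument gives $|2W-W|\lesssim |W|$ with a dimensional constant. Taking $W'=2W-W$ in place of $W+B(0,\delta)$, your chain of inequalities then goes through verbatim and yields $\CKT(\tubes_{A_0})\lesssim\mu_0/A_0$ as you want. (You should also localize to tubes in $B(0,1)$ at the outset, as Corollary~\ref{maximalFnCor} is stated for tubes in the unit ball; this is the standard covering reduction the paper performs in its Step~1.)

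With that repair your argument is correct, and it is genuinely different from the paper's. The paper proves the stronger Theorem~\ref{tubeDoublingTheoremR3}$'$ by iterating Proposition~\ref{factoringConvexSetsProp} to exhaust $\tubes$ into pieces $\tubes_j'$ each factored by prisms $\mathcal{W}_j$, then applies the $\cF$-estimate twice (once inside each $W$, once to $\mathcal{W}'$) to get $|\bigcup Y(T)|\gtrsim\delta^{\eps}\sum_{\mathcal W}|W|$; finally the dilated prisms cover the dilated tubes. Your route is more direct: a double dyadic pigeonhole on the multiplicity of the $\tilde T$'s and on $|\tilde T\cap E_0'|$ produces a subfamily whose $\CKT$ is controlled by exactly the ratio that cancels in Corollary~\ref{maximalFnCor}. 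Your argument uses only Corollary~\ref{maximalFnCor} and avoids the $\cF$-machinery and the iterated factoring; the paper's route, on the other hand, immediately yields the stronger shaded $R$-fold version.
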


We will discuss the proof of Theorem \ref{tubeDoublingTheoremR3} in Section \ref{extensionAndKeletiSec}. The Tube Doubling Conjecture is closely related to Keleti's Line Segment Extension Conjecture \cite{Kel16}. In the statement that follows, if $\ell$ is a line segment (by definition, line segments have positive length), then $\tilde\ell$ denotes the line containing $\ell$. 

\begin{conj}\label{lineExtension}
Let $L$ be a set of line segments in $\RR^n$. Then
\[
\dim \Big(\bigcup_{\ell \in L}\tilde\ell\phantom{.}\Big) = \dim \Big(\bigcup_{\ell\in L}\ell\Big).
\]
\end{conj}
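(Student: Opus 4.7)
The plan is to deduce Conjecture~\ref{lineExtension} in $\RR^3$ from Theorem~\ref{tubeDoublingTheoremR3} by iterating the doubling inequality. The inclusion $\dim\!\big(\bigcup_\ell \ell\big)\leq \dim\!\big(\bigcup_\ell \tilde\ell\big)$ is trivial, so only the reverse inequality requires work. By countable stability of Hausdorff dimension, after stratifying $L$ by the dyadic length of its segments and by the dyadic cube containing their midpoints, we may assume every $\ell\in L$ has length in $[1,2)$ with midpoint in the unit ball $B_1$. Since $\bigcup_{\ell}\tilde\ell$ is the countable increasing union of $\bigcup_{\ell}(\tilde\ell\cap B_R)$ over $R\in\mathbb{N}$, it suffices to bound the dimension of $\bigcup_{\ell\in L}\tilde\ell\cap B_R$ for each fixed $R\geq 1$.

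The heart of the argument is an iterated tube-doubling estimate. Write $T_\ell$ for the $\delta$-neighbourhood of $\ell$, and let $T_\ell^{(k)}$ denote the $2^k$-fold dilation of $T_\ell$ about its midpoint; this is a $(2^k\delta)$-tube of length $2^k$. Applying Theorem~\ref{tubeDoublingTheoremR3} at the scales $\delta,2\delta,\dots,2^{k-1}\delta$ and multiplying the resulting inequalities gives
\[
\Big|\bigcup_{\ell\in L}T_\ell^{(k)}\Big|\;\leq\;\delta^{-k\eps}\Big|\bigcup_{\ell\in L}T_\ell\Big|.
\]
Choose $k=\lceil\log_2(2R+2)\rceil$. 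Since each midpoint lies in $B_1$, every point of $\tilde\ell\cap B_R$ is within distance $R+1\leq 2^{k-1}$ of the midpoint of $\ell$, so $\tilde\ell\cap B_R\subset T_\ell^{(k)}$. Therefore the $\delta$-neighbourhood of $\bigcup_{\ell}(\tilde\ell\cap B_R)$ has volume at most $\delta^{-\eps\log_2(2R+2)}$ times that of the $\delta$-neighbourhood of $\bigcup_{\ell}\ell$. Since $R$ is fixed and $\eps>0$ is arbitrary, this yields the upper Minkowski form of the conjecture.

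For the Hausdorff version I would pass through Frostman's lemma: if the Hausdorff dimension of $\bigcup_{\ell}\tilde\ell$ exceeds $s$, pick a probability measure $\mu$ on $\bigcup_{\ell}\tilde\ell$ satisfying $\mu(B(x,r))\lesssim r^s$, then dyadically pigeonhole the $\mu$-mass onto a single scale and onto a subfamily of segments with uniform geometric parameters. This reduces matters to the iterated doubling estimate above, forcing the Hausdorff dimension of $\bigcup_{\ell}\ell$ to exceed $s-O(\eps)$; letting $\eps\to 0$ concludes. The main obstacle is precisely this Hausdorff-versus-Minkowski bridge: the iteration itself is benign, since each of the $O(\log R)$ doubling steps costs only $\delta^{-\eps}$ and $R$ is fixed after stratification, but the Frostman/pigeonhole step must be executed so that the multiplicative $\delta^{-\eps}$ losses accumulated across scales do not swamp the small exponent gap between $s$ and the Hausdorff dimension of $\bigcup_{\ell}\ell$.
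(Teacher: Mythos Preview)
The paper does not argue directly: it simply invokes the black-box implication of Keleti--M\'ath\'e \cite{KelMat22} (Kakeya set conjecture in $\RR^n$ $\Rightarrow$ Conjecture~\ref{lineExtension} in $\RR^n$) together with Theorem~\ref{kakeyaSetThm}. Your route through iterated tube doubling is genuinely different, and the Minkowski-dimension portion is correct.

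The Hausdorff step, however, has a real gap. A Frostman measure $\mu$ on $\bigcup_\ell\tilde\ell$ combined with your iterated doubling inequality yields $|N_\delta(\bigcup_\ell\ell)|\gtrsim\delta^{3-s+O(\eps)}$ for every $\delta$, but this lower-bounds only the \emph{lower Minkowski} dimension of $\bigcup_\ell\ell$, not its Hausdorff dimension. ``Pigeonholing onto a single scale'' cannot repair this: a Hausdorff lower bound must defeat every cover, and a single-scale volume estimate says nothing about covers that mix many scales. Since $\underline\dim_M$ can strictly exceed $\dim_H$, your argument as written proves only $\underline\dim_M(\bigcup_\ell\ell)\ge\dim_H(\bigcup_\ell\tilde\ell)$, which is not the conjecture. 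A workable fix within the tube-doubling framework runs in the \emph{opposite} direction and uses the shaded version Theorem~\ref{tubeDoublingTheoremR3}$'$: start from an efficient cover $\{B(x_i,r_i)\}$ of $\bigcup_\ell\ell$ with $\sum r_i^s$ small; pigeonhole each $\ell$ to a dominant dyadic covering scale $2^{-j(\ell)}$ (so that the scale-$2^{-j(\ell)}$ balls cover a $\gtrsim j(\ell)^{-2}$ fraction of its length); at each scale $j$ apply Theorem~\ref{tubeDoublingTheoremR3}$'$ with $\delta=2^{-j}$ and shading $Y(T_\ell)=T_\ell\cap\bigcup_{r_i\sim2^{-j}}2B_i$, whose union has volume $\lesssim N_j\,2^{-3j}$; this shows $\bigcup_{j(\ell)=j}\tilde\ell\cap B_R$ is covered by $\lesssim_R 2^{j\eps}N_j$ balls of radius $\sim 2^{-j}$, and summing over $j$ produces an efficient cover of $\bigcup_\ell\tilde\ell\cap B_R$ at exponent $s+\eps$.
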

In \cite{KelMat22}, Keleti and M\'ath\'e proved that the Kakeya set conjecture in $\RR^n$ implies Conjecture \ref{lineExtension} in $\RR^n$. As a consequence, Theorem \ref{kakeyaSetThm} has the following corollary.

\begin{thm}\label{lineSegmentExtensionThm}
Conjecture \ref{lineExtension} is true in $\RR^3$. 
\end{thm}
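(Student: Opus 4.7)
The plan is essentially a direct citation: combine Theorem \ref{kakeyaSetThm} with the reduction of Keleti and M\'ath\'e from \cite{KelMat22}. The excerpt explicitly records that \cite{KelMat22} proves the implication ``Kakeya set conjecture in $\RR^n$ $\Longrightarrow$ Conjecture \ref{lineExtension} in $\RR^n$'' as a black box, so once we have the Kakeya set conjecture in $\RR^3$, Theorem \ref{lineSegmentExtensionThm} follows immediately by specializing that implication to $n=3$. No new estimate is required.

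To expand slightly on the structure of the Keleti--M\'ath\'e argument (which is the only non-trivial ingredient and which we would import verbatim): given a family $L$ of segments in $\RR^3$, set $E=\bigcup_{\ell\in L}\ell$ and $\tilde E=\bigcup_{\ell\in L}\tilde\ell$. The inequality $\dim(\tilde E)\geq\dim(E)$ is trivial, so the content is to upgrade to equality. One argues by contradiction: if $\dim(\tilde E)>\dim(E)=s$, then by discretizing $\tilde E$ at scale $\delta$, intersecting with a generic direction-selecting slice, and using the fact that each $\tilde\ell$ can be partitioned into finitely many translates of $\ell$, one extracts at scale $\delta$ a $\delta$-discretized Kakeya-like configuration of line segments whose union has Hausdorff/Minkowski dimension strictly smaller than $3$. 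Applying this simultaneously to a suitable countable family of directions and passing to a limit produces a Kakeya set in $\RR^3$ of dimension $<3$, contradicting Theorem \ref{kakeyaSetThm}.

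Consequently the ``proof'' we need to write is two lines: (i) cite Theorem \ref{kakeyaSetThm} to conclude that every Kakeya set in $\RR^3$ has Hausdorff (and Minkowski) dimension $3$; (ii) cite the main implication of \cite{KelMat22} with $n=3$. The only step that could conceivably be called a main obstacle is the Kakeya input itself, which has already been established earlier in the paper via Theorem \ref{cDAndcEAreTrue} and Corollary \ref{maximalFnCor}; the Keleti--M\'ath\'e reduction is purely a geometric-measure-theoretic black box and need not be reproduced here.
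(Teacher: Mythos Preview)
Your proposal is correct and matches the paper's approach exactly: the paper gives no separate proof of Theorem \ref{lineSegmentExtensionThm}, but simply records (in the sentence immediately preceding the theorem) that Keleti and M\'ath\'e \cite{KelMat22} proved the implication ``Kakeya set conjecture in $\RR^n$ $\Longrightarrow$ Conjecture \ref{lineExtension} in $\RR^n$,'' and then states Theorem \ref{lineSegmentExtensionThm} as a corollary of Theorem \ref{kakeyaSetThm}. Your two-line proof is precisely what the paper does; the expanded sketch of the Keleti--M\'ath\'e mechanism is unnecessary here (and, as you note, is treated as a black box in both places).
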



\subsection{Thanks}
The authors would like to thank Ciprian Demeter, Larry Guth, Nets Katz, Izabella Laba, Tuomas Orponen, Keith Rogers, Pablo Shmerkin, and Terence Tao 
for comments, suggestions, and corrections to an earlier version of this manuscript. Hong Wang would like to thank Guido de Philippis for interesting conversations. Hong Wang is supported by NSF CAREER DMS-2238818 and NSF DMS-2055544. Joshua Zahl is supported by a NSERC Discovery Grant and a NSERC Alliance Grant.


\section{A sketch of the proof}\label{proofSketchSection}
Our goal in this section is to briefly outline the major steps in the proofs of Propositions \ref{equivDE} and \ref{improvingProp}. To simplify the exposition in this proof sketch, we will gloss over many technical details and make a number of white lies. For example, we will pretend that every shading $Y(T)\subset T$ is just the trivial shading $Y(T) = T$. At the same time, we will pretend that each point $x\in\bigcup_{T\in\tubes}T$ is always contained in the same number of tubes from $\tubes$, and similarly for other collections of tubes, rectangular prisms, etc. In the same spirit as in Section \ref{vignetteOfProofSection}, we will disregard factors of the form $\delta^{\eps}$ or $\delta^{-\eps}$, and we will (somewhat informally) write $A \lessapprox B$ to mean that $A \leq C \delta^{-\eps}B$, for some constant $C$ that is independent of $\delta$ and some small parameter $\eps>0$ that we will ignore for the purposes of this sketch (in Section \ref{notationSection} we will give a precise definition of the relation $\lessapprox$, which will be used for the remainder of the proof). In the actual proof there are myriad parameters (of which $\eps$ is an example), and navigating the precise interplay between these parameters is a major technical challenge in the paper. This issue will be entirely ignored in the proof sketch. 

Finally, in this proof sketch it will be helpful to introduce ``informal versions'' of certain definitions and theorems that occur later in the paper. These informal versions are intentionally imprecise, and often are not literally true. These informal statements will be superseded by their formal counterparts that occur later in the paper. With these caveats, we now proceed as follows. 


\subsection{Proposition \ref{equivDE}: Assertions $\cD$ and $\cE$ are equivalent}\label{proofOfPropEquivDESecIntro}
Our first goal is to prove Proposition \ref{equivDE}. To do this, we will iterate the following lemma:
\begin{weakerPropEquivDELemInformal}
Let $0<\omega<\omega',$ and suppose that both $\cD(\sigma,\omega)$ and $\cE(\sigma,\omega')$ are true. Then $\cE(\sigma,\omega' - \alpha)$ is true, where $\alpha>0$ depends only on the quantities $\omega$ and $\omega'-\omega$. 
\end{weakerPropEquivDELemInformal}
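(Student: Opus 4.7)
The plan is a two-scale argument that uses $\cD(\sigma,\omega)$ at a ``fine'' scale and the given $\cE(\sigma,\omega')$ at a ``coarse'' scale. Let $(\tubes,Y)_\delta$ be $\delta^\eta$-dense with $m = \CKT(\tubes)$ and $\ell = \FS(\tubes)$. We aim to show
\[
\Big|\bigcup_{T\in\tubes}Y(T)\Big| \gtrapprox \delta^{\omega'-\alpha+\eps}\,m^{-1}(\#\tubes)|T|\bigl(m^{-3/2}\ell(\#\tubes)|T|^{1/2}\bigr)^{-\sigma}.
\]
The first step is a dichotomy. If $\max(m,\ell) \leq \delta^{-\eta_0}$ for a parameter $\eta_0$ chosen small enough that $(\tubes,Y)_\delta$ satisfies the hypotheses of $\cD(\sigma,\omega)$, then $\cD(\sigma,\omega)$ applies to $\tubes$ directly and produces $\bigl|\bigcup Y(T)\bigr| \gtrapprox \delta^{\omega+\eps}(\#\tubes)|T|\bigl((\#\tubes)|T|^{1/2}\bigr)^{-\sigma}$. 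Since $\omega < \omega'-\alpha$ for $\alpha \in (0,\omega'-\omega)$, and since the restriction $\sigma \leq 2/3$ forces the quantity $m^{-1}(m^{-3/2}\ell)^{-\sigma} = m^{-1+3\sigma/2}\ell^{-\sigma}$ to be at most $1$ for $m,\ell \geq 1$, the extra factor $\delta^{\omega - (\omega'-\alpha)}$ is already sufficient to derive the target $\cE(\sigma,\omega'-\alpha)$ bound in this case.

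Thus we may assume $\max(m,\ell) \geq \delta^{-\eta_0}$; without loss of generality $m \geq \delta^{-\eta_0}$ (the other case is analogous). In this hard case, I would pigeonhole over dyadic scales $\delta<\rho<1$ to extract an intermediate scale at which the obstruction to the Wolff axioms ``lives.'' The output is a subcollection $\tubes' \subset \tubes$ of comparable size and an essentially distinct collection $\tubes_\rho$ of $\rho$-tubes covering $\tubes'$, such that (i) each $T_\rho\in\tubes_\rho$ contains roughly the same number of tubes of $\tubes'$; (ii) after the anisotropic rescaling sending $T_\rho$ to the unit ball, the rescaled $\delta/\rho$-tubes $\tubes^{T_\rho}$ obey both the Katz-Tao Convex and Frostman Slab Wolff Axioms with error $\lessapprox 1$, so that $\cD(\sigma,\omega)$ is applicable; and (iii) the coarse family $\tubes_\rho$ has $\CKT$- and $\FS$-values $m_\rho,\ell_\rho$ that can be related back to $m,\ell$. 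Applying $\cD(\sigma,\omega)$ to each rescaled $(\tubes^{T_\rho},Y^{T_\rho})$ and undoing the rescaling gives a lower bound on $|Y(T)\cap T_\rho|$; applying $\cE(\sigma,\omega')$ to $(\tubes_\rho,Y_\rho)$ with the induced shading $Y_\rho(T_\rho) := T_\rho \cap \bigcup_{T\in \tubes'[T_\rho]} Y(T)$ gives a lower bound on $\bigl|\bigcup_{T_\rho} Y_\rho(T_\rho)\bigr|$. Combining these via the multiplicity decomposition $\mu \sim \mu_{\mathrm{coarse}}\,\mu_{\mathrm{fine}}$ yields the desired $\cE(\sigma,\omega'-\alpha)$ estimate. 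The quantitative gain $\alpha>0$ arises from using the fine-scale exponent $\omega$ rather than $\omega'$: this contributes an extra factor $(\delta/\rho)^{\omega-\omega'} \geq 1$, and because $m \geq \delta^{-\eta_0}$ forces the bad scale to satisfy $\rho \geq \delta^{1-O(\eta_0)}$, this gain is at least $\delta^{-\alpha}$ for a suitable $\alpha = \alpha(\omega,\omega'-\omega,\eta_0) > 0$.

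The hard part will be the pigeonholing together with the bookkeeping needed to verify (i)--(iii) with enough uniformity. In particular, the scale $\rho$ must be chosen so that (ii) holds, in order that the fine-scale application of $\cD$ is legal, while (iii) must simultaneously provide enough control that the factors $m_\rho^{-1}$ and $(m_\rho^{-3/2}\ell_\rho)^{-\sigma}$ produced by applying $\cE(\sigma,\omega')$ to the coarse family combine with the fine-scale output to reproduce the target factors $m^{-1}$ and $(m^{-3/2}\ell)^{-\sigma}$ attached to $\tubes$. The interplay between the many small parameters ($\eps$, $\eta$, $\eta_0$, $\alpha$) must also be arranged so that $\alpha$ ultimately depends only on $\omega$ and $\omega'-\omega$, which is what permits iteration of the lemma as in Section~\ref{proofOfPropEquivDESecIntro} to prove Proposition~\ref{equivDE}.
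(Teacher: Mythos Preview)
Your two-scale plan has the roles of $\cD$ and $\cE$ reversed relative to what the argument actually needs, and this reversal creates a genuine gap. You want to find $\rho\gg\delta$ so that each rescaled family $\tubes^{T_\rho}$ satisfies the hypotheses of $\cD(\sigma,\omega)$, i.e.\ has $\CKT\lessapprox 1$. But $\CKT$ is inherited \emph{downward}, not created by rescaling: if $\CKT(\tubes)=m$ is large, there is no reason $\CKT(\tubes^{T_\rho})$ drops to $\approx 1$ at any scale $\rho$ bounded away from $\delta$. Concretely, if $\tubes$ satisfies the Frostman Convex Wolff Axioms (so the tubes are evenly spread at every scale), then $\CKT(\tubes^{T_\rho})\approx m$ for \emph{every} $\rho$, and your condition (ii) fails at every usable scale. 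Your claim that ``$m\geq\delta^{-\eta_0}$ forces $\rho\geq\delta^{1-O(\eta_0)}$'' is exactly the unjustified step.

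The paper's argument goes the other way: when $m$ is large it factors $\tubes$ through a family $\mathcal{W}$ of convex sets (Proposition~\ref{factoringConvexSetsProp}) so that the \emph{coarse} family $\mathcal{W}$ has $\CKT\lessapprox 1$ --- this is where $\cD(\sigma,\omega)$ is applied --- while each fine family $\tubes^{W}$ still has $\CKT\approx m$ and receives the weaker estimate $\cE(\sigma,\omega')$. The gain is then $\rho^{\omega-\omega'}$ with $\rho\ll 1$, not $(\delta/\rho)^{\omega-\omega'}$. Even this is not enough on its own: the sets in $\mathcal{W}$ may be flat prisms rather than tubes (handled by Proposition~\ref{aLLbProp}), or a single intermediate scale may not exist (the ``well-spaced'' case, handled by a three-scale argument in Step~2 of the proof), or $\tubes$ may satisfy the Frostman Convex Wolff Axioms at every scale --- precisely the configuration above --- in which case the paper must invoke the Sticky Kakeya theorem (Theorem~\ref{WZThm52}). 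Your proposal omits all of this machinery; in particular it never uses Sticky Kakeya, which is an essential input.
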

To prove Proposition \ref{equivDE}, we fix $\omega$ and $\sigma$ and suppose that $\cD(\sigma,\omega)$ is true. The statement $\cE(\sigma, 2)$ is trivially true, since the volume of $\bigcup_\tubes T$ is bounded below by the volume of a single tube. We then iterate Lemma \ref{weakerPropEquivDE} multiple times to conclude that $\cE(\sigma,\omega+\eps)$ is true for every $\eps>0$, and thus $\cE(\sigma,\omega)$ is true. 

The idea behind Lemma \ref{weakerPropEquivDE} is as follows. Given a set $\tubes$ of $\delta$ tubes, our goal is to establish the estimate
\begin{equation}\label{desiredTubesEstimateSketch}
\Big|\bigcup_{T\in\tubes}T\Big| \gtrapprox \delta^{\omega'-\alpha}m^{-1}(\#\tubes)|T|\Big( m^{-3/2}\ell (\#\tubes)|T|^{1/2}\Big)^{-\sigma},
\end{equation}
with $m=\CKT(\tubes)$ and $\ell=\FS(\tubes)$. For simplicity we will pretend that every collection of tubes always satisfies $\FS(\tubes)\lessapprox 1$. Removing this assumption introduces a few additional difficulties that we will not discuss here.

If $\CKT(\tubes) \lessapprox 1$, then $\tubes$ satisfies the hypotheses of $\cD(\sigma,\omega)$, and thus we can apply the estimate $\cD(\sigma,\omega)$ to $\tubes$ and immediately obtain \eqref{desiredTubesEstimateSketch}. Suppose instead that $\CKT(\tubes)=m>\!\!> 1$. This means that there is a convex set $W$ that contains at least $m|W|\delta^{-2}$ tubes from $\tubes$. The convex set $W$ must have diameter $\geq 1$ (since it contains at least one tube), and wlog we can suppose that it has diameter $\sim 1$ (since the tubes in $\tubes$ are contained in the unit ball). Thus we may suppose that $W$ is comparable to a rectangular prism of dimensions $a\times b\times 1$, for some $\delta\leq a\leq b\leq 1$. We will focus on the most interesting case, which is when $a$ and $b$ have similar size, i.e.~$W$ is comparable to a $\rho$ tube for some $\delta\leq \rho\leq 1$. 

Motivated by the above discussion, let us explore what happens when $\CKT(\tubes) = m >\!\!> 1$; there is a scale $\delta<\!\!<\rho<\!\!< 1$; and a set $\tubes_\rho$ of $\rho$ tubes, each of which contains about $m(\rho/\delta)^2$ tubes from $\tubes$. It is straightforward to verify that $\CKT(\tubes_\rho)=O(1)$: if a convex set $W$ contains $N$ tubes from $\tubes_\rho$, then it contains about $Nm(\rho/\delta)^2$ tubes from $\tubes$. On the other hand, $W$ can contain at most $m|W|/\delta^2$ tubes from $\tubes$; see Figure \ref{denseInsideRhoTube}. Note that this situation is in some sense the opposite of the problematic situation described in Section \ref{inductionOnScaleSection} (and illustrated in Figure \ref{stickyVsWellSeparated} (right)); in that Section, we considered the scenario where there are many (i.e. far more than $\rho^{-2}$) $\rho$ tubes, each of which contains few (i.e. far fewer than $(\rho/\delta)^2$) $\delta$ tubes.

We have just shown that $\tubes_\rho$ satisfies the hypotheses of $\cD(\sigma,\omega)$, and thus
\begin{equation}\label{unionRhoTubes}
\Big|\bigcup_{T_\rho\in\tubes_\rho}T_\rho\Big| \gtrapprox \rho^{\omega}(\#\tubes_\rho)|T_\rho|\Big((\#\tubes_\rho)|T_\rho|^{1/2}\Big)^{-\sigma}.
\end{equation}
(In the above, we write $|T_\rho|\sim\rho^2$ to denote the volume of a $\rho$ tube). On the other hand, for each $T_\rho\in\tubes_\rho$, the (re-scaled) $\delta$ tubes inside $T_\rho$ will satisfy the Katz-Tao Convex Wolff Axioms with error about $m$, i.e.~$\CKT(\tubes^{T_\rho})\lesssim m = \CKT(\tubes)$. 

\begin{figure}[h!]
\begin{center}
\includegraphics[scale=0.4]{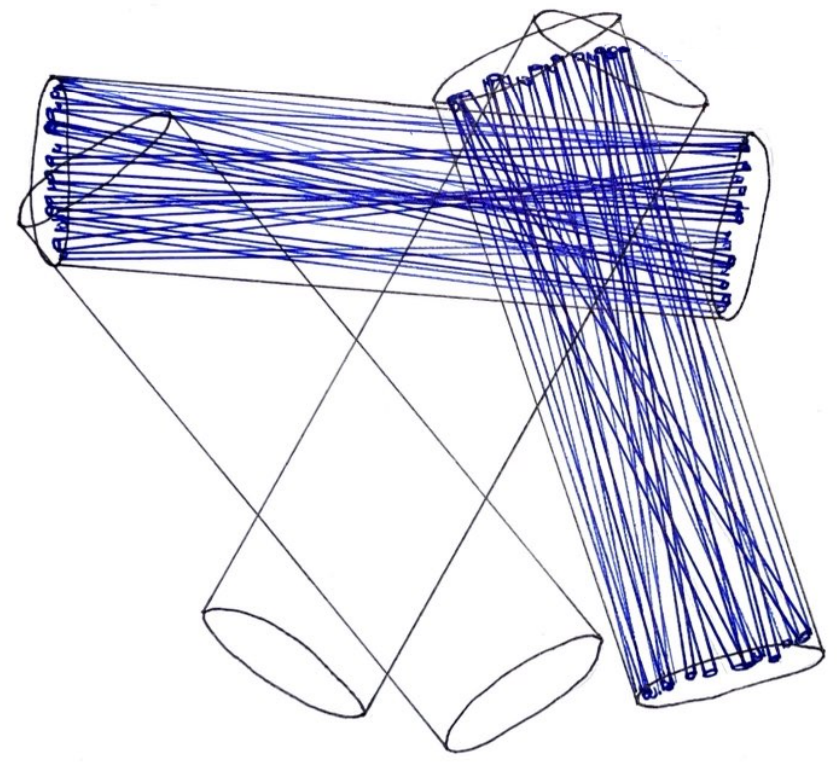}
\caption{ $\tubes_\rho$ (black), and $\tubes$ (blue). For clarity, we have only drawn the tubes from $\tubes$ inside two $\rho$ tubes. Note that the $\rho$ tubes are (comparatively) sparse, while the tubes in $\tubes[T_\rho]$ are densely packed. The situation is similar to that in Figure \ref{stickyVsWellSeparated} (left), except that the set of (rescaled) $\delta$ tubes inside each $\rho$ tube are very dense, and thus $\CKT(\tubes^{T_\rho})$ is large.}
\label{denseInsideRhoTube}
\end{center}
\end{figure}

 Applying the estimate \eqref{defnCEEstimate} from Assertion $\cE(\sigma,\omega'),$ we conclude that
\begin{equation}\label{innerVolEstimate}
\Big|\bigcup_{T^{T_\rho}\in\tubes^{T_\rho}}T^{T_\rho}\Big| \gtrapprox \Big(\frac{\delta}{\rho}\Big)^{\omega'} m^{-1}(\#\tubes[T_\rho])|T^{T_\rho}|\Big(m^{-3/2}(\#\tubes[T_\rho])|T^{T_\rho}|^{1/2}\Big)^{-\sigma}.
\end{equation}

Inequality \eqref{unionRhoTubes} says that about $\rho^{-3+\omega}(\#\tubes_\rho)|T_\rho|\Big((\#\tubes)_\rho|T_\rho|^{1/2}\Big)^{-\sigma}$ distinct $\rho$ balls are needed to cover $\bigcup_{\tubes}T$, and the RHS of \eqref{innerVolEstimate} gives a lower bound for the density of $\bigcup_{\tubes}T$ inside a typical $\rho$ ball from this collection. Combining these estimates and noting that $(\#\tubes_\rho)(\#\tubes[T_\rho]) = \#\tubes$ and $|T_\rho||T^{T_\rho}|=|T|$, we conclude that
\begin{equation}\label{improvementFromRho}
\Big| \bigcup_{T\in\tubes}T\Big| \gtrapprox \rho^{\omega-\omega'}\delta^{\omega'}m^{-1}(\#\tubes)|T|\Big(m^{-3/2}(\#\tubes)|T|^{1/2}\Big)^{-\sigma}.
\end{equation}
If $\rho<\delta^{\zeta}$ for some $\zeta>0$ bounded away from $0$, then \eqref{improvementFromRho} is precisely \eqref{desiredTubesEstimateSketch}, with $\alpha = \zeta(\omega'-\omega)$. 

This concludes the proof of Lemma \ref{weakerPropEquivDE} and hence Proposition \ref{equivDE}, except that in our proof we assumed the existence of a set of $\rho$ tubes that satisfies the following properties:
\begin{itemize}
\item[(a)] $\CKT(\tubes_\rho) = O(1)$.
\item[(b)] Each $\rho$ tube $T_\rho$ contains about $m|T_\rho|/|T|$ tubes from $\tubes$, where $m = \CKT(\tubes)$.
\item[(c)] The sets in $\tubes_\rho$ are \emph{tubes}, i.e.~they have dimensions $\rho\times\rho\times 1$.
\item[(d)] $\rho<\!\!< 1$, in the sense that $\rho = \delta^{\zeta}$ for some $\zeta>0$ bounded away from $0$. 
\end{itemize}

Unfortunately, given a set of $\delta$ tubes $\tubes$, it need not be the case that such a set of $\rho$ tubes satisfying the above properties will always exist. Consider, for example, the case where $\tubes$ is an arrangement of $\delta$ tubes of cardinality $\delta^{-5/2}$, we define $s = \delta^{5/8}$, and each of the roughly $s^{-4}$ essentially distinct $s$ tubes in $B(0,1)\subset\RR^3$ contains one $\delta$ tube from $\tubes$. Examples of this type are called the \emph{well-spaced} case. For such a set $\tubes$, there does not exist a scale $\rho$ satisfying Items (a) -- (d) above. Note, however, that a slightly different statement is true for this arrangement: There are scales $\delta\leq\tau\leq\rho$, and sets of $\tau$ and $\rho$ tubes $\tubes_\tau$ and $\tubes_\rho$ that satisfy the following:
\begin{itemize}
\item[(i)] $\tubes$ has cardinality about $m|T|^{-1}$, where $m = \CKT(\tubes)$.
\item[(ii)] $\CKT(\tubes_\rho) \lesssim (\#\tubes_\rho)|T_\rho|$.
\item[(iii)] Each $\rho$ tube $T_\rho$ satisfies $\CKT(\tubes_\tau^{T_\rho}) = O(1)$, and $\#\tubes_\tau^{T_\rho} \sim |T_\tau^{T_\rho}|^{-1} = (\rho/\tau)^2$.
\item[(iv)] Each $\tau$ tube $T_\tau$ satisfies $\CKT(\tubes^{T_\tau})\lesssim(\#\tubes[T_\tau])|T^{T_\tau}|$.
\item[(v)] $\tau<\!\!< \rho$, in the sense that $\tau = \delta^{\zeta}\rho$ for some $\zeta>0$ bounded away from $0$. 
\end{itemize}
For the well-spaced example described above, we would have  $m=\delta^{-1/2}$, $\tau=\delta$, $\rho = \delta^{1/4}$,  $\tubes_\tau =\tubes$, and  $\tubes_\rho$ is a maximal set of $\rho^{-4}$ essentially distinct $\rho$ tubes. 

The arguments described above can be adapted to this situation: By Item (ii), the $\rho$ tubes satisfy the hypothesis of Assertion $\cE(\sigma,\omega)$, and thus we obtain the volume estimate
\begin{equation}\label{sketchEstimateScaleRho}
\Big| \bigcup_{T_\rho \in\tubes_\rho}T_\rho\Big|  
\gtrapprox \rho^{\omega'} (\#\tubes_\rho)^{\sigma/2}|T_\rho|^{\sigma}.
\end{equation}
Note that the RHS of \eqref{sketchEstimateScaleRho} is precisely the estimate \eqref{defnCEEstimate} from Assertion $\cE(\sigma,\omega')$ (ignoring the multiplicative constant $\kappa$), with $m = (\#\tubes_\rho)|T_\rho|$ and $\ell = O(1)$.

By Item (iii), the $\tau$ tubes inside each $\rho$ tube satisfy the hypotheses of Assertion $\cD(\sigma,\omega)$, and thus for each $\rho$ tube $T_\rho$ we obtain the volume estimate
\begin{equation}\label{sketchEstimateScaleTau}
\Big|\bigcup_{T_\tau^{T_\rho}\in\tubes_\tau^{T_\rho}}T_\tau^{T_\rho}\Big| 
\gtrapprox \Big(\frac{\tau}{\rho}\Big)^\omega |T_\tau^{T_\rho}|^{\sigma/2}.
\end{equation}
Note that the RHS of \eqref{sketchEstimateScaleTau} is precisely the estimate \eqref{defnCDEqn} from Assertion $\cD(\sigma,\omega)$, with $\#\tubes_\tau^{T_\rho} = |T_\tau^{T_\rho}|^{-1}$.

Finally, by Item (iv), the $\delta$ tubes inside each $\tau$ tube satisfy the hypothesis of Assertion $\cE(\sigma,\omega')$, and thus for each $\tau$ tube $T_\tau$  we obtain the volume estimate
\begin{equation}\label{sketchEstimateScaleDelta}
\Big|\bigcup_{T^{T_\tau}\in\tubes^{T_\tau}}T^{T_\tau}\Big| 
\gtrapprox \Big(\frac{\delta}{\tau}\Big)^{\omega'} (\#\tubes[T_\tau])^{\sigma/2}|T^{T_\tau}|^{\sigma}.
\end{equation}
If the $\tau$ tubes are evenly distributed among $\rho$ tubes, and the $\delta$ tubes are evenly distributed among the $\tau$ tubes, then we may suppose that for each $\tau$ tube $T_\tau$ and each $\rho$ tube $T_\rho$, we have $(\#\tubes^{T_\tau})(\#\tubes_\tau^{T_\rho})(\#\tubes_\rho)=\#\tubes$. Thus we can combine \eqref{sketchEstimateScaleRho}, \eqref{sketchEstimateScaleTau}, and \eqref{sketchEstimateScaleDelta} to obtain the following analogue of \eqref{improvementFromRho}:
\begin{equation}
\begin{split}
\Big| \bigcup_{T\in\tubes}T\Big| &
\gtrapprox \Big(\frac{\tau}{\rho}\Big)^{\omega-\omega'}\delta^{\omega'} (\#\tubes)^{\sigma/2}|T|^{\sigma}\\
&=\Big(\frac{\tau}{\rho}\Big)^{\omega-\omega'}\delta^{\omega'}m^{-1}(\#\tubes)|T|\Big(m^{-3/2}(\#\tubes)|T|^{1/2}\Big)^{-\sigma},
\end{split}
\end{equation}
where the second equality used Item (i). By Item (v) we have $\tau/\rho<\delta^{\zeta}$, and thus we obtain \eqref{desiredTubesEstimateSketch} with $\alpha = \zeta(\omega'-\omega)$, as desired.

To prove Lemma \ref{weakerPropEquivDE} (and hence Proposition \ref{equivDE}), we show that for every arrangement of $\delta$ tubes, at least one of the following must hold. 
\begin{itemize}
	\item[(A)] There is a set of $\rho$ tubes satisfying Items (a) - (d) above.
	\item[(B)] There are sets of $\tau$ and $\rho$ tubes satisfying Items (i) - (v) above. 
	\item[(C)] The tubes in $\tubes$ can be efficiently packed inside rectangular prisms of dimensions $s\times t\times 1$, with $s<\!\!< t$.
	\item[(D)] The tubes in $\tubes$ satisfy the \emph{Frostman Convex Wolff Axioms at every scale} (see Definition \ref{convexAtEveryScaleFromAssouadPaper}).
\end{itemize}

To establish the above polychotomy, in Section \ref{arrangementsConvexSetsSec} we develop a general theory for ``factoring'' collections of convex sets in $\RR^n$. Given a set of $\delta$ tubes $\tubes$, this allows us to find a collection of convex sets $\mathcal{W}$ that satisfies the analogues of Items (a) and (b) above with $\mathcal{W}$ in place of $\tubes_\rho$. If these convex sets have dimensions $s\times t\times 1$ with $s<\!\!<t$, then this gives us Item (C). If instead $s\sim t$, then the convex sets in $\mathcal{W}$ are almost tubes. We apply arguments of this type at several carefully chosen scales to show that at least one of Items (A) -- (D) must hold. 

The arguments described thus far establish the desired inequality \eqref{desiredTubesEstimateSketch} in the case where (A) or (B) holds. In Section \ref{factoringTubesSection} we show that Inequality \eqref{desiredTubesEstimateSketch} holds in Case (C); this is done using a careful rescaling argument. Finally, Case (D) is precisely the setting where we can apply the Sticky Kakeya Theorem (as generalized in \cite{WZ23}) to immediately conclude that $\tubes$ satisfies \eqref{desiredTubesEstimateSketch}. 

This concludes the proof sketch of Proposition \ref{equivDE}. We now turn to Proposition \ref{improvingProp}.  


\subsection{A two-scale grains decomposition}\label{twoScaleGrainsDecompIntro}
 In Sections \ref{twoScaleGrainsSec} and \ref{moves123Sec}, we study the structure of arrangements of $\delta$ tubes for which the estimate \eqref{defnCDEqn} from Assertion $\cD(\sigma,\omega)$ is (almost) tight, i.e.~sets of $\delta$ tubes that satisfy the hypotheses of Assertion $\cD(\sigma,\omega)$, and also satisfy an inequality of the form
\[
\Big|\bigcup_{T\in\tubes}Y(T)\Big|\lessapprox \delta^{\omega}(\#\tubes)|T|\big((\#\tubes)|T|^{1/2}\big)^{-\sigma}.
\]
We will assume for now that such a set $\tubes$ exists, and at the end of Section \ref{proofSketchSection} we will arrive at a contradiction. With care, this contradiction will remain when the term $\delta^{\omega}$ is replaced by $\delta^{\omega-\nu}$ for $\nu>0$ a small positive number.

In \cite{Gut14}, Guth proved that under mild ``broadness'' hypotheses, every union of $\delta$ tubes $\bigcup_{\tubes}T$ in $\RR^3$ can be written as a disjoint union of rectangular prisms of dimensions $\delta \times c \times c$, with $c\geq( (\#\tubes)|T|^{1/2} )^{-1}$; see Figure \ref{grainsDecompHeartOfMatter} (left). This lower bound on $c$ is interesting when $\#\tubes$ is substantially smaller than $|T|^{-1}$ (recall that $|T|$ has size roughly $\delta^2$). At the opposite extreme, if $\#\tubes$ has size about $|T|^{-1/2}$ (this is the smallest possible cardinality for $\tubes$ that is allowable, given the broadness hypotheses mentioned above), then grains have dimensions roughly $\delta\times 1\times 1$. We remark that Guth's methods also yield a stronger bound of the form $c \geq \mu (  (\#\tubes)|T|^{1/2} )^{-1}$, where $\mu$ is the number of tubes from $\tubes$ that pass through a typical point, but this stronger bound won't be needed here.

First, we show that there exists a scale $\delta<\!\!<\rho<\!\!< 1$ and a set of $\rho$ tubes $\tubes_\rho$ so that both $\tubes_\rho$ and the rescaled sets $\tubes^{T_\rho}$ (recall \eqref{tubesTRhoDefn}) satisfy the hypotheses of Assertion $\cD(\sigma,\omega)$. In addition, each rescaled set $\tubes^{T_\rho}$ satisfies the broadness hypotheses needed to apply (a variant of) Guth's result. Thus we can write $\bigcup_{\tubes[T_\rho]}T^{T_\rho}$ as a disjoint union of rectangular prisms of dimensions $\delta/\rho \times c \times c$, where $c\geq ((\#\tubes[T_\rho]) |T^{T_{\rho}}|^{1/2})^{-1}$. Note that the grains become larger as $\#\tubes[T_\rho]$ becomes smaller; this numerology will be important later in the argument. Undoing the scaling, we obtain a partition of $\bigcup_{\tubes[T_\rho]}T$ into disjoint $\delta\times \rho c \times c$ rectangular prisms; we will refer to these as grains (see Figure \ref{grainsDecompHeartOfMatter} (right)), and we refer to this set of grains as $\mathcal{G}_{T_\rho}$. Let $\mathcal{G} = \bigcup_{\tubes_\rho}\mathcal{G}_{T_\rho}$. In our discussion below, we will call $\mathcal{G}$ the ``two scale Guth grains decomposition'' of $\tubes$.

Recall that in the proof vignette outlines in Section \ref{vignetteOfProofSection}, we made Simplifying Assumption D. We will now dicuss the technical steps needed to justify this assumption. The main goal of Section \ref{twoScaleGrainsSec} is to define three ``Moves,'' which we will briefly describe below. After these moves have been applied, we obtain a new scale $\rho$ with $\delta<\!\!<\rho<\!\!< 1$; a new set of $\rho$ tubes $\tubes_\rho$ that cover $\tubes$; and a new collection $\mathcal{G}$ of grains that have the following properties:
\begin{itemize}
	\item[(i)] Each grain has dimensions $\delta\times \rho c\times c$, with $ c\geq ((\#\tubes[T_\rho]) |T^{T_{\rho}}|^{1/2})^{-1}$.
	\item[(ii)] Each grain $G\in\mathcal{G}$ is associated to a unique tube $T_\rho\in \tubes_\rho,$ where $G\subset T_\rho,$ and both $G$ and $T_\rho$  point in the same direction (up to uncertainty $\rho$).
	\item[(iii)] Distinct grains from $\mathcal{G}$ associated to the same $\rho$ tube are disjoint.
	\item[(iv)] For each $\rho$ tube $T_\rho$, we have $\bigsqcup G = \bigcup_{T\in\tubes[T_\rho]}T$, where the former union is taken over the set of grains associated to $T_\rho$.
	\item[(v)] Grains associated to different $\rho$ tubes can intersect, but this intersection must be tangential; i.e.~the tangent planes of intersecting grains must agree up to uncertainty $\delta/(\rho c)$.
\end{itemize}
Item (v) means that we can cover $\RR^3$ by boxes of dimensions $\frac{\delta}{\rho}\times c\times c$, so that each grain is contained in $O(1)$ boxes, and two grains intersect only if they are contained in a common box. If we re-scale a box to become the unit cube, then the grains inside this box become $\rho\times \rho\times 1$ rectangular prisms, i.e.~$\rho$ tubes (see Figure \ref{grainsDecompHeartOfMatter}). We introduce the following notation: If $\square$ is a $\frac{\delta}{\rho}\times c\times c$ box, then $\mathcal{G}^\square$ will denote the set of $\rho$-tubes obtained by re-scaling the grains from $\mathcal{G}$ inside $\square$. With this notation, we can state one final property for $\mathcal{G}$:
\begin{itemize}
	\item[(vi)] For each box $\square$, the $\rho$ tubes in $\mathcal{G}^{\square}$ satisfy the hypotheses of $\cE(\sigma,\omega)$, and $\CKT(\mathcal{G}^{\square})\lessapprox 1$. 
\end{itemize}

In a moment, we will describe the Moves needed to find a scale $\rho$; a set of $\rho$ tubes $\tubes_\rho$; and a set of grains $\mathcal{G}$ that satisfy Items (i) -- (vi). We begin by letting $\mathcal{G}$ be the two scale Guth grains decomposition of $\tubes$, as described above. Items (i), (ii), (iii), and (iv) hold for this choice of $\mathcal{G}$, and  properties (ii)-(iv) are preserved throughout the process. 

If Item (v) fails at any point in the process, then we argue by contradiction as follows. Using a $L^2$ argument similar to Cordoba's proof of the Kakeya maximal function conjecture in the plane, we can show that there exists some scale $\tilde\delta>\!\!>\delta$ so that the ``hairbrush'' of a typical grain $G$ (i.e.~the union of the grains $G'\in\mathcal{G}$ with $G'\cap G\neq\emptyset$) fills out (most of) the $\tilde\delta$-neighbourhood of $G$. Let us pretend that the hairbrush fills out all of the $\tilde\delta$ neighbourhood of $G$. Then for each $\delta$ tube $T\in\tubes$, the corresponding $\tilde\delta$ tube $\tilde T = N_{\tilde\delta}(T)$ satisfies $\tilde T \subset \bigcup_{T'\in\tubes}T'$. Thus we can replace our original collection of $\delta$ tubes with a new collection $\tilde\tubes$ of fatter $\tilde\delta$ tubes, and $\bigcup_{\tubes}T =\bigcup_{\tilde\tubes}\tilde T$. The new collection of fatter tubes will satisfy the hypotheses of $\cE(\sigma,\omega)$ (with favorable values of $\CKT(\tilde\tubes)$ and $\FS(\tilde\tubes)$), and hence we can apply the estimate $\cE(\sigma,\omega)$ to $\tilde\tubes$ and obtain a volume estimate for $\big|\bigcup_{\tubes}T\big|$ that is superior to the estimate coming from Assertion $\cD(\sigma,\omega)$. But this contradicts the assumption that the volume estimate from Assertion $\cD(\sigma,\omega)$ was sharp for $\tubes$.

We will now describe the three Moves alluded to above. For ease of exposition, it will be helpful to introduce these Moves in the opposite order that they are defined in Section \ref{moves123Sec}.  

Move \#3  handles the situation when Item (vi) fails (recall the Assertion $\mathcal{D}(\sigma, \omega)$ is sharp for $\tubes$). Using an $L^2$ argument, we show that the hairbrush of each grain $G\in\mathcal{G}$ fills out (most of) a wider grain $\tilde G\supset G$; these wider grains have the same ``length'' $c$, but a substantially larger value of $\rho$. See Figure \ref{breakXIntoUFig} for a visual depiction of this step. 

Unfortunately, after applying Move \#3, it might be the case that $\rho$ has become so large that the inequality $\rho<\!\!<1$ is no longer true. Move \#2 handles this situation. Move \#2 uses a $L^2$ argument to find a new set of grains with a new (substantially larger) length $c$, and a new $\rho$ that satisfies  $\delta<\!\!< \rho <\!\!< 1$. See Figure \ref{longThinSlabFig} for a visual depiction of this step.

Finally, whenever the value of $\rho$ changes, so does the quantity $ ((\#\tubes[T_\rho]) |T^{T_{\rho}}|^{1/2})^{-1}$. Thus after applying Moves \#2 or \#3, it might be the case that $((\#\tubes[T_\rho]) |T^{T_{\rho}}|^{1/2})^{-1}$ has become much larger than $c$, and hence Item (i) fails. Move \#1 handles this case: we throw away our set $\mathcal{G}$ and replace it with the two scale Guth grains decomposition of $\tubes$ that was described above (both Moves \#2 and \#3 maintain the broadness condition needed to invoke the two scale Guth grains decomposition of $\tubes$). This gives us a new grains decomposition with the same value of $\rho$ and a substantially larger value of $c$.

Each of Moves \#1, \#2, and \#3 can be applied to ensure that $\mathcal{G}$ satisfies (some of) the Properties (i) -- (vi) described above. Unfortunately, the application of Move \#1, \#2, or \#3 might destroy other Properties. However, each Move either substantially increases the ``length'' $c$ of the grains, or maintains the length and substantially increases the value of $\rho$. Since $c$ and $\rho$ are bounded above by 1, the process of applying Moves \#1, \#2, and \#3 must halt after a bounded number of steps. The resulting grains decomposition satisfies Properties (i) -- (vi).


\subsection{Refined induction on scales}
In Section \ref{refinedInductionOnScaleSec} we use the two-scale grains decomposition from Section \ref{twoScaleGrainsSec} to apply the estimate from Assertion $\cE(\sigma,\omega)$ at two different scales --- once to the (rescaled) $\delta$ tubes inside each $\rho$ tube, and once to the $\rho$ tubes arising as the re-scaled grains inside each box $\square,$ i.e.~to each arrangement $\mathcal{G}^{\square}$. This is a critical step in the proof of Proposition \ref{improvingProp}, and the entire proof up to this point was carefully structured in order to allow us to apply the estimate $\cE(\sigma,\omega)$ to $\mathcal{G}^{\square}$.

The argument is as follows. Suppose that $\tubes$ is a set of $\delta$ tubes for which the estimate from Assertion $\cD(\sigma,\omega)$ is tight, and let $\tubes_\rho$ and $\mathcal{G}$ be the grains decomposition described in the previous section. Employing a small white lie, we can suppose that there is a number $\mu$ so that each point $x\in \bigcup_{\tubes}T$ is contained in $\sim \mu$ tubes from $\tubes$. We have $\big|\bigcup_{\tubes}T\big| \sim \mu^{-1}(\#\tubes)|T|$, so our goal is to obtain an upper bound for $\mu$. We will suppose there is a number $ \mu_{\operatorname{fine}}$ so that for each $T_\rho$, each point $x\in\bigcup_{\tubes[T_\rho]}T$ is contained in $\sim \mu_{\operatorname{fine}}$ tubes from $\tubes[T_\rho]$.  Finally, we will suppose there is a number $\mu_{\operatorname{coarse}}$ so that each point $x\in \bigcup_{\tubes} T = \bigcup_{G\in\mathcal{G}}G$ is contained in about $\mu_{\operatorname{coarse}}$ grains from $\mathcal{G}$. By Items (ii) and (iv) from Section \ref{twoScaleGrainsDecompIntro}, we have $\mu\lesssim \mu_{\operatorname{fine}}\mu_{\operatorname{coarse}}$, and thus our task is to estimate the latter two quantities. 

Since each rescaled set $\tubes^{T_\rho}$ satisfies the hypotheses of Assertion $\cD(\sigma,\omega)$, we have the estimate 
\begin{equation}\label{introEstimateMuFine}
\mu_{\operatorname{fine}}\lessapprox (\delta/\rho)^{-\omega}\Big(\#\tubes[T_\rho]|T^{T_{\rho}}|^{1/2}\Big)^{\sigma},
\end{equation}
where $\#\tubes[T_\rho]$ has size roughly $(\#\tubes)/(\#\tubes_\rho)$ and $|T^{T_{\rho}}|=|T|/|T_\rho|.$

Our next task is to estimate $\mu_{\operatorname{coarse}}$. We apply $\cE(\sigma,\omega)$ to each set of $\rho$ tubes $\mathcal{G}^{\square}$. (We must use the estimate $\cE(\sigma,\omega)$ rather than $\cD(\sigma,\omega)$, since $\FS(\mathcal{G}^{\square})$ might be large, which entails a separate argument. We will gloss over this issue.) Doing so gives the estimate
\begin{equation}\label{introEstimateMuCoarseGood}
\mu_{\operatorname{coarse}}\lessapprox \rho^{-\omega}\big((\#\mathcal{G}^{\square}) |T_\rho|^{1/2}\big)^\sigma\lessapprox \rho^{-\omega}|T_\rho|^{-\sigma/2}.
\end{equation}
The second inequality in \eqref{introEstimateMuCoarseGood} follows from the fact that $\CKT(\mathcal{G}^{\square})\lessapprox 1 $, and hence $\#\mathcal{G}^{\square} \lessapprox  |T_\rho|^{-1}$. 
Combining \eqref{introEstimateMuFine} and \eqref{introEstimateMuCoarseGood}, we conclude that
\begin{equation}\label{betterMuBd}
\mu\lessapprox  \Big[\delta^{-\omega}\big((\#\tubes)|T|^{1/2}\big)^\sigma\Big]\Big[|T_\rho|(\#\tubes_\rho)\Big]^{-\sigma}.
\end{equation}

The first term in square brackets is the estimate that would follow from applying Assertion $\cD(\sigma,\omega)$ directly to $\tubes$. Thus \eqref{betterMuBd} yields a superior estimate precisely when $\#\tubes_\rho>\!\!>|T_\rho|^{-1}$. Since we assumed that $\tubes$ is a set of tubes for which $\cD(\sigma,\omega)$ is tight, we conclude that $\#\tubes_\rho \lessapprox |T_\rho|^{-1}$. 

The above step was simplified to highlight the main ideas. In reality, we actually need (and prove) a slightly stronger statement: rather than concluding that $\#\tubes_\rho\lessapprox |T_\rho|^{-1}$, we must instead arrive at the estimate $\CKT(\tubes_\rho)\lessapprox 1$. This more difficult estimate is obtained as follows. Suppose to the contrary that $\CKT(\tubes_\rho)>\!\!>1$. Then we can find a convex set $W$ so that $\tubes_\rho[W]$ has cardinality much larger than $|W|/|T_\rho|$ (in fact, we can find many such sets $W$---see Proposition~\ref{factoringConvexSetsProp}). The argument described above is the special case when $W$ is comparable to the unit ball. The general case introduces technical challenges, but in light of the techniques already developed in Sections \ref{arrangementsConvexSetsSec} and \ref{factoringTubesSection} to prove Proposition \ref{equivDE} (see the discussion at the end of Section \ref{proofOfPropEquivDESecIntro}), it does not require any additional new ideas.


\subsection{Multi-scale structure, Nikishin-Stein-Pisier factorization, and Sticky Kakeya}\label{endOfProofSketchSec}
Let us summarize the conclusion of the previous steps: if  $\tubes$ is a set of $\delta$ tubes for which the estimate $\cD(\sigma,\omega)$ is tight, then there is a scale $\delta<\!\!<\rho<\!\!< 1$ and a set of $\rho$ tubes $\tubes_\rho$ with $\CKT(\tubes_\rho)\lessapprox 1$, so that both $\tubes_\rho$ and each (rescaled) set $\tubes[T_\rho]$ satisfy the hypotheses of Assertion $\cD(\sigma,\omega)$, and furthermore, the estimate $\cD(\sigma,\omega)$ is tight for all of these arrangements of tubes. 
 
This last conclusion means that we can iteratively apply the same argument to both $\tubes_\rho$ and each (rescaled) set $\tubes[T_\rho]$. After some pruning, we conclude that there is a sequence of closely spaced scales $\delta = \rho_N < \rho_{N-1} < \ldots < \rho_0 = 1$ and sets $\{\tubes_{\rho_i}\}_{i=1}^N$ covering $\tubes$, with $\CKT(\tubes_{\rho_i})\lessapprox 1$ for each index $i$. 

We would like to apply the Sticky Kakeya Theorem to conclude that $\big|\bigcup_{\tubes}T\big|$ is almost as large as $\sum_{\tubes}|T|$. Indeed, the situation described above almost matches the setup of the Sticky Kakeya Theorem, as generalized in \cite[Theorem 1.8]{WZ23}. Specifically, $\tubes$ would satisfy the hypotheses of \cite[Theorem 1.8]{WZ23} if $\#\tubes\approx \delta^{-2}$. Since $\CKT(\tubes) \lessapprox 1$, we know that $\#\tubes \lessapprox \delta^{-2}$. Unfortunately, however, it could be the case that $\#\tubes$ is much smaller than $\delta^{-2}$.

In Section \ref{stickyKakeyaEveryScaleSec} we use a Nikishin-Stein-Pisier factorization argument to show that if $\#\tubes<\!\!<\delta^{-2}$, then we can construct a new set $\hat\tubes$ consisting of a union of about $\delta^{-2}(\#\tubes)^{-1}$ randomly translated and rotated copies of $\tubes$. This new set $\hat\tubes$ will have cardinality about $\delta^{-2}$. Just like the original set $\tubes$, the new set $\hat\tubes$ will have a sequence of covers $\{\hat\tubes_{\rho_i}\}_{i=1}^N$ with $\CKT(\hat\tubes_{\rho_i})\lessapprox 1$ for each index $i$. Hence we can apply the Sticky Kakeya Theorem to $\hat\tubes$ to conclude that $\big|\bigcup_{T\in\hat\tubes}T\big|\gtrapprox 1$.  Since the volume of $\bigcup_{\tubes}T$ is invariant under translation and rotation (this is a key ingredient for Nikishin-Stein-Pisier factorization), we conclude that 
\begin{equation}\label{goodVolumeBoundTubes}
\big|\bigcup_{T\in\tubes}T\big|\gtrapprox (\#\tubes)|T|.
\end{equation}

But if $\sigma,\omega>0$, then \eqref{goodVolumeBoundTubes} contradicts the assumption that the estimate $\cD(\sigma,\omega)$ is tight for $\tubes$. We conclude that when $\sigma,\omega>0$, there does not exist \emph{any} set $\tubes$ satisfying the hypotheses of Assertion $\cD(\sigma,\omega)$ for which the estimate $\cD(\sigma,\omega)$ is tight. The quantitative version of this statement is Proposition \ref{improvingProp}.


\section{Notation}\label{notationSection}
In the arguments that follow, $\delta>0$ will denote a small positive quantity. Overriding the (informal) notation from Sections \ref{introductionSection} and \ref{proofSketchSection}, we write $A(\delta)\lessapprox_\delta B(\delta)$ if for all $\eps>0$, there exists $K_\eps>0$ so that $A(\delta)\leq K_\eps \delta^{-\eps}B(\delta)$. If the role of $\delta$ is apparent from context, we will often write $A\lessapprox B$. For example if $K$ is a constant independent of $\delta$, then $\log(1/\delta)^K\lessapprox 1$. Similarly, $e^{\sqrt{\log 1/\delta}}\lessapprox 1$. 

In some sections of the paper, it will ease notation to fix certain variables (for example the values of $\sigma$ and $\omega$ from Definition \ref{defnCDE}). In such cases, we will clearly state which variables are fixed, and use bold font throughout that section to denote these fixed variables, and also to denote quantities that depend only on fixed variables. For example we might define $\boldsymbol{\beta} = \boldsymbol{\sigma\omega/100}$.


\subsection{Convex sets and shadings}
In the introduction, we defined a $\delta$-tube to be the $\delta$ neighbourhood of a unit line segment. There are several other types of convex sets that will make frequent appearances in our arguments. A \emph{prism} is a rectangular prism in $\RR^n$ (usually $\RR^3$); we will denote the dimensions by $a\times b\times c\times\ldots,$ with the convention that $a\leq b\leq c\leq\ldots$. Informally, we say a prism in $\RR^3$ is ``flat'' if it has dimensions $a\times b\times c$ with $a<\!\!<b$, and we say it is ``square'' if $b$ and $c$ have comparable size. Finally, we will sometimes refer to the quantities $a$, $b$, and $c$ respectively as the ``thickness,'' ``width,'' and ``length'' of a prism.

Rather than working with rectangular prisms, it will sometimes be convenient to work with ellipsoids, or more general convex sets. This motivates the following definition, which generalizes the definition of $(\tubes,Y)_\delta$ from the introduction. 

\begin{defn}
For $0<a\leq b\leq c$, we write $(\mathcal{P},Y)_{a\times b\times c}$ to denote the following pair: $\mathcal{P}$ is a set of essentially distinct convex subsets of $\RR^3$; for each $P\in\mathcal{P}$, the outer John ellipsoid of $P$ has axes of lengths comparable to $a,b,$ and $c$ respectively. $Y$ is a shading on $\mathcal{P}$, i.e. for each $P\in\mathcal{P}$, we have $Y(P)\subset P$. 

For example, we could write $(\tubes,Y)_{\delta}$ as $(\tubes,Y)_{\delta\times\delta\times 1}$. Finally, we say $(\mathcal{P},Y)_{a\times b\times c}$ is $\lambda$ \emph{dense} if $\sum_{P\in\mathcal{P}}|Y(P)|\geq\lambda\sum_{P\in\mathcal{P}}|P|$. 
\end{defn}

\begin{defn}
If $(\mathcal{P},Y)_{a\times b\times c}$ is a set of prisms and their associated shading and $x\in\RR^3,$ we define
\[
\mathcal{P}_Y(x)=\{P\in\mathcal{P}\colon x\in Y(P)\}.
\]
Similarly, if $\mathcal{P}$ is a set of prisms (or more generally, convex sets) and no shading is present, then we define $\mathcal{P}(x)=\{P\in\mathcal{P}\colon x\in P\}.$
\end{defn}

\begin{defn}
We say a pair $(\mathcal{P}',Y')_{a\times b\times c}$ is a $t$-refinement of $(\mathcal{P},Y)_{a\times b\times c}$ if $\mathcal{P}'\subset\mathcal{P}$; $Y'(P)\subset Y(P)$ for each $P\in\mathcal{P}'$, and $\sum_{P'\in\mathcal{P}'}|Y'(P')|\geq t \sum_{P\in\mathcal P}|Y(P)|$.  In practice, we will often have $t \approx_\delta 1$, in which case we will call it a $\approx_\delta 1$ refinement. 
\end{defn}
\noindent Note that if $(\mathcal{P},Y)_{a\times b\times c}$ is $\lambda$ dense and $(\mathcal{P}',Y')_{a\times b\times c}$ is a $t$-refinement, then $\#\mathcal{P}'\geq \lambda t(\#\mathcal{P})$.

\begin{defn}
If $W\subset\RR^3$ is a convex set whose outer John ellipsoid $E$ has dimensions $a\times b\times c$, we write $\operatorname{dir}(W)\in \operatorname{Gr}(1; \RR^3)$ and $\Pi(W)\in \operatorname{Gr}(2; \RR^3)$ to denote the 1 and 2-dimensional subspaces of $\RR^3$ spanned by the primary and secondary axes of $E$. We have that $\operatorname{dir}(W)$ is meaningfully defined up to accuracy $b/c$, and $\Pi(W)$ is meaningfully defined up to accuracy $a/b$. For example, if $T$ is a $\delta$ tube, then $\dir(T)$ is meaningfully defined up to accuracy $\delta$, while $\Pi(T)$ is only meaningfully defined up to accuracy $1$ (i.e.~$\Pi(T)$ is not a meaningful quantity if $T$ is a $\delta$ tube).
\end{defn}

We will employ the following synecdoche notation: if $\mathcal{P}$ (resp.~$\tubes$, $\mathcal{W}$, etc.) is a collection of convex sets, each of the same volume, then we will use $|P|$ (resp.~$|T|$, $|W|$, etc) to denote the volume of one of these convex sets. In practice, we will abuse notation slightly and continue to employ this notation if the sets in $\mathcal{P}$ have comparable (but not necessarily identical) volume.

\begin{defn}\label{defnPhiW}
Let $W\subset\RR^n$ be a convex set. We define $\phi_W\colon\RR^n\to\RR^n$ to be an affine-linear transformation that maps the outer John ellipsoid of $W$ to the unit ball. For concreteness, if $v_1,\ldots,v_n$ are the axes of the John Ellipsoid, with lengths $\ell_1\leq\ldots\leq \ell_n$, then we select $\phi_W$ so that the $j$-th axis of the John Ellipsoid is mapped to the $x_j$ axis in $\RR^n$. If two more more axes have the same length, then we pick an ordering arbitrarily. 

If $U\subset\RR^n$, we define $U^W = \phi_W(U)$. In particular, if $U$ is a convex subset of $W$ then $U^W$ is a convex subset of the unit ball, and $|U^W|\sim |U|/|W|$. This is compatible with our earlier definition of $T^{T_\rho}$ from \eqref{tubesTRhoDefn}.
\end{defn}

\begin{defn}
Let $\mathcal{U}$ be a collection of convex subsets of $\RR^n$ and let $W$ be a convex subset of $\RR^n$. We define 
\[
\mathcal{U}[W] = \{U\in\mathcal{U}\colon U\subset W\},
\]
and
\[
\mathcal{U}^W = \{U^W\colon U \in \mathcal{U}[W]\}.
\]

If $Y$ is a shading on $\mathcal{U}$, we will use $Y^W$ to denote the corresponding shading on $\mathcal{U}^W$, i.e.~for each $U^W\in\mathcal{U}^W$, we define $Y^W(U^W)=\phi_W(Y(U))$. 
\end{defn}

\begin{rem}
The expression $\mathcal{U}^W $ should not be confused with $\mathcal{U}_W$; the latter notation will be as follows: If $\mathcal{U}$ and $\mathcal{W}$ are sets of convex subsets of $\RR^n$, then $\mathcal{U}_W,\ W\in\mathcal{W}$ will be used to denote a set of subsets of $\mathcal{U}$ that are indexed by the elements of $\mathcal{W}$.
\end{rem}


\subsection{Table of notation}
To aid the reader, we will use certain notation conventions throughout this paper. For example, some symbols (such as $\sigma$ and $\omega$) will be reserved to always have the same meaning. For future reference, we record these notation conventions in the table below

\begin{center}
\begin{longtable}{ | m{2cm} | m{13cm}|  } 
 \hline
{\bf Symbol} & {\bf Meaning}  \\ 
 \hline 
 $\delta,\rho,\tau$ & These variables will denote scales. Typically $\delta\leq\rho\leq\tau$. \\  
  \hline 
 $a,b,c$ & These variables will denote scales; typically the dimensions of a prism.\\
  \hline 
 $\theta$ & $\theta$ will denote an angle\\
\hline
$\eps,\eta,\zeta,\alpha$ & These variables will represent (typically small) exponents, i.e.~they will appear in the form $\delta^\eta$, $\rho^\eps$, etc.\\
\hline
$\kappa,K$ & These variables will represent (positive) multiplicative constants, i.e.~ $|\bigcup T|\geq\kappa\delta^{\eps}$ or $\CKT(\tubes)\leq K\delta^{-\eta}$. Typically $\kappa>0$ is small and $K>\!\!>1$ is large.\\
\hline 
$\sigma,\omega$ & $\sigma$ and $\omega$ and their variants $\sigma',\tilde\sigma,$ etc.~will always be quantities related to the estimates $\cE(\sigma,\omega)$ and $\cD(\sigma,\omega)$. \\
\hline
 $\boldsymbol{\sigma}$, $\boldsymbol{\omega}$   & In Sections \ref{twoScaleGrainsSec} and \ref{moves123Sec}, we will fix values of $\sigma$ and $\omega$ that are kept constant throughout that section. We use bold symbols to denote these fixed numbers, and all subsequent quantities that depend (only) on them. \\
\hline
$T,P,G,S,\square$ & These variables will denote convex sets. Typically $T$ is a tube, $P$ and $G$ are prisms of dimensions $a\times b\times c$, $S$ is a slab, and $\square$ is a ``box'' of dimensions $a\times c\times c$. We use symbols $\tubes,\mathcal{P},\mathcal{G},\mathcal{S}$ to denote sets of such objects.\\
  \hline 
 $\tubes',\tubes_1,\tilde\tubes$ & $\tubes'$ or $\tubes_1$ will denote a subset of $\tubes$. Similarly $\tubes_2$ will denote a subset of $\tubes_1$, etc. $\tilde\tubes$ will denote a new set of tubes that is related to $\tubes$, but not necessarily a subset (for example, $\tilde\tubes$ might consist of the 2-fold dilates of the tubes in $\tubes$).\\
 \hline 
 $(\tubes',Y')_\delta$ & $(\tubes',Y')_\delta$ will denote a refinement of $(\tubes,Y)_\delta$. Similarly for $(\tubes_1,Y_1)_\delta$.\\
\hline
\end{longtable}
\end{center}


\section{Wolff Axioms and Factoring Convex Sets}\label{arrangementsConvexSetsSec}

\subsection{Definitions: Wolff axioms and covers}
\begin{defn}\label{defnOfCover}
Let $\mathcal{U},\mathcal{W}$ be collections of convex sets in $\RR^n$. 
\begin{itemize}
\item[(A)] We say that $\mathcal{W}$ is a \emph{cover} of $\mathcal{U}$ (or $\mathcal{W}$ \emph{covers} $\mathcal{U}$) if $\bigcup_{W\in \mathcal{W}}\mathcal{U}[W]=\mathcal{U}$. We will denote this by $\mathcal{U} \prec \mathcal{W}$.
%
\item[(B)] We say that $\mathcal{W}$ is a $K$-\emph{almost partitioning cover} (resp.~partitioning cover) if it is a cover, and furthermore each $U\in\mathcal{U}$ is contained in at most $K$ sets (resp.~1 set) of the form $\mathcal{U}[W]$.
\item[(D)] We say that $\mathcal{W}$ is a $K$-balanced cover (resp.~balanced cover) if it is a cover, and furthermore the numbers $|W|^{-1}\sum_{U\in\mathcal{U}[W]}|U|$ are comparable for all $W\in\mathcal{W}$, up to a multiplicative factor of $K$ (resp.~2).
\end{itemize}
\end{defn}

\noindent The following is a mild generalization of Definition \ref{KatzTaoAndFrostmanTubesDefn}.
\begin{defnConvexPrime}
Let $\mathcal{U}$ and $\mathcal{W}$ be collections of convex subsets of $\RR^n$.\\
(A) We define the \emph{Katz-Tao Wolff constant of $\mathcal{U}$ with respect to $\mathcal{W}$} to be the infimum of all $C>0$ so that
\begin{equation}\label{defnKTWConst}
\sum_{U\in \mathcal{U}[W]}|U| \leq C |W|\quad\textrm{for all}\ W\in\mathcal{W}.
\end{equation}

\medskip

\noindent (B) We define the \emph{Frostman Wolff constant of $\mathcal{U}$ with respect to $\mathcal{W}$} to be the infimum of all $C>0$ so that 
\begin{equation}\label{FWConst}
\sum_{U\in \mathcal{U}[W]}|U| \leq C |W|\sum_{U\in\mathcal{U}}|U|\quad\textrm{for all}\ W\in\mathcal{W}.
\end{equation}
\end{defnConvexPrime}

\begin{rem}\label{remarksFollowingConvexWolffDefn}$\phantom{1}$\\
(A) To ease notation, we define $\CKT(\mathcal{U})$ (resp.~$\CFC(\mathcal{U})$) to be the Katz-Tao (resp.~Frostman) Wolff constant of $\mathcal{U}$ associated to the set $\mathcal{W}$ of convex subsets of $\RR^n$. We define $\FS(\mathcal{U})$ to be the Frostman Wolff constant of $\mathcal{U}$ associated to the set $\mathcal{W}$ of slabs in $\RR^n$. Note that these definitions are compatible with those from Definition \ref{KatzTaoAndFrostmanTubesDefn}.  

\medskip
\noindent (B) A set $\tubes$ of $\delta$-tubes obeys the Wolff axioms, in the sense of \cite{Wol95} (see Property $(*)$ on p655 and the preceding discussion) if the Katz-Tao Wolff constant of $\tubes$ is small with respect to the set $\mathcal{W}$ consisting of all rectangular prisms of dimensions $10\delta\times \rho\times\ldots\times\rho\times 2$, with $0<\delta\leq\rho\leq 2$. 

\medskip
\noindent (C) For some arguments, it will be useful to consider an analogue of the above definitions where the quantity $|W|$ on the RHS of \eqref{defnKTWConst} is replaced by $|W\cap B(0,1)|/|B(0,1)|$, and similarly for \eqref{FWConst}. This leads to a quantity that transforms naturally under affine maps such as $\phi_W$ from Definition \ref{defnPhiW}.

\medskip

\noindent (D) Note that the above definitions continue to make sense if $\mathcal{U}$ is a multiset. This will be useful in Section \ref{factorizationArgSection}.

\medskip 
\noindent  (E) If the set $\mathcal{U} \neq \emptyset $ consists of convex sets of the same size, then $\CFC(\mathcal{U})\leq C$ implies that $\#\mathcal{U} \geq C^{-1} |U|^{-1}$. To see this, take $W$ to be a convex set in $\mathcal{U}$. Then the LHS of \eqref{FWConst} equals to $|U|$ while the RHS of $\eqref{FWConst}$ equals to $C|U|^2 (\#\mathcal{U})$.  Roughly speaking, if $\CKT(\mathcal{U})$ is small, then $\mathcal{U}$ is ``sparse'', while if $\CFC(\mathcal{U})$ is small, then $\mathcal{U}$ is ``dense.'' 

\end{rem}

\medskip

\begin{rem}\label{FrostmanWolffInheritedUpwardsDownwards}$\phantom{1}$\\
(A) The Frostman Wolff constant is ``inherited upwards'' by covers. More precisely, if $\mathcal{U}$ and $\mathcal{W}$ are collections of convex subsets of $\RR^n$, and if $\mathcal{W}$ is a $K$-balanced cover of $\mathcal{U}$, then
\begin{equation}\label{FrostmanWolffInheritedUpwardsEqn}
\CFC(\mathcal{W})\lesssim K \CFC(\mathcal{U})\quad\textrm{and}\quad \FS(\mathcal{W})\lesssim K \FS(\mathcal{U}).
\end{equation}

\medskip
\noindent (B)
The Katz-Tao Wolff constant is ``inherited downwards'' by covers. More precisely, if $\mathcal{U}$ is a collection of convex subsets of $\RR^n$, and if $W$ is a convex subset of $\RR^n$, then
\begin{equation}
\CKT(\mathcal{U}^W) = \CKT(\mathcal{U}[W]) \leq \CKT(\mathcal{U}).
\end{equation}

\medskip
\noindent (C) The Frostman Slab Wolff Constant is ``sub-multiplicative'' with respect to covers. More precisely, if $\mathcal{U}\prec\mathcal{V}$ are collections of convex subsets of a convex set $W\subset \RR^n$, then in some situations we have that $\FS(\mathcal{U}^W)$ is controlled by $\max_{V\in\mathcal{V}}\FS(\mathcal{U}^V)\FS(\mathcal{V}^W)$. In certain special cases, the same is true for the Katz-Tao Convex Wolff Constant.  See Section \ref{frostmanSlabTransitiveUnderCoversSec} for a precise statement. 
\end{rem}


\subsection{Factoring Convex Sets}\label{factorConvexSetsSec}

As we have observed in Remark \ref{FrostmanWolffInheritedUpwardsDownwards}, Frostman Wolff constants are inherited upwards, while Katz-Tao Wolff constants are inherited downwards. The following definition will help us exploit this observation when performing multi-scale analysis and induction on scale. 

\begin{defn}\label{factoringDefn}
Let $\mathcal{U}$ and $\mathcal{W}$ be collections of convex subsets of $\RR^n$, and let $K>0$. \medskip

\noindent (A) We say that $\mathcal{W}$ \emph{factors} $\mathcal{U}$ \emph{from above with respect to the Katz-Tao (resp.~Frostman) Convex Wolff axioms with error} $K$ if $\mathcal{W}$ covers $\mathcal{U}$, and $\mathcal{W}$ satisfies the Katz-Tao (resp.~Frostman) Convex Wolff axioms with error $K$. 

\medskip
\noindent(B) We say that $\mathcal{W}$ \emph{factors} $\mathcal{U}$ \emph{from below with respect to the Katz-Tao (resp.~Frostman) Convex Wolff axioms with error} $K$ if $\mathcal{W}$ covers $\mathcal{U}$, and for each $W\in\mathcal{W}$ the set  $\mathcal{U}^W$ satisfies the Katz-Tao (resp.~Frostman) Convex Wolff axioms with error $K$. 

\medskip
\noindent(C) We say that $\mathcal{W}$ \emph{factors} $\mathcal{U}$ \emph{from above (resp.~below) with respect to the Katz-Tao (or Frostman) Slab Wolff axioms with error} $K$ if the natural analogue of (A) (resp.~(B)) holds, where the Convex Wolff axioms are replaced by Slab Wolff axioms. 
\end{defn}

\begin{rem}
Definition \ref{factoringDefn} highlights a few special cases of a more general definition: If $\mathcal{U},$ $\mathcal{W},$ and $\mathcal{V}$ are collections of convex subsets of $\RR^n$, we can define what it means for $\mathcal{W}$ to factor $\mathcal{U}$ from above (or below) with respect to the Katz-Tao (or Frostman) Wolff axioms with respect to $\mathcal{V}$. Item (A) and (B) in Definition \ref{factoringDefn} correspond to the special case where $\mathcal{V}$ is the collection of convex sets in $\RR^n$, while Item (C) corresponds to the case where $\mathcal{V}$ is the collection of slabs in $\RR^n$.
\end{rem}

Definition \ref{KatzTaoAndFrostmanTubesDefn}$^\prime$ was carefully formulated to allow the following result, which says that for every collection $\mathcal{U}$ of convex subsets of $\RR^n$, there exists some $\mathcal{W}$ that factors $\mathcal{U}$ from below with respect to the Frostman Convex Wolff axioms, and from above with respect to the Katz-Tao Convex Wolff axioms, both with small error. The precise statement is as follows.


\begin{prop}\label{factoringConvexSetsProp}
Let $\mathcal{U}$ be a finite set of congruent convex subsets of the unit ball in $\RR^n$, each of which contains a ball of radius $\delta$. Let $K = 100^n e^{100 \sqrt{\log(\delta^{-1} \# \mathcal{U})}}$ (the exact shape of $K$ is not important; what matters is that if $\#\mathcal{U}\leq\delta^{-100}$, then $K\lessapprox_\delta 1$). 

Then there exists a set $\mathcal{W}$ of congruent convex subsets of $\RR^n$ and a set $\mathcal{U}'\subset\mathcal{U}$ with the following properties:
\begin{enumerate}[i)]
	\item $\# \mathcal{U}' \geq K^{-1}(\#\mathcal{U})$.
	\item $\mathcal{W}$ is a $K$-balanced, $K$-almost partitioning cover of $\mathcal{U}'$, and 
	\begin{equation}\label{lotsOfUinW}
		\#\mathcal{U}'[W]\geq K^{-1}\CKT(\mathcal{U}')|W||U|^{-1}\quad\textrm{for each}\ W\in\mathcal{W}.
	\end{equation} 
	\item $\mathcal{W}$ factors $\mathcal{U}'$ from above respecting the Katz-Tao Convex Wolff Axioms with error $K$.
	\item $\mathcal{W}$ factors $\mathcal{U}'$ from below respecting the Frostman Convex Wolff Axioms with error $K$.
\end{enumerate}
\end{prop}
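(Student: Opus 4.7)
The plan is to construct $\mathcal{W}$ via density optimization and dyadic pigeonholing, exploiting the following elementary identity: for any convex $W$,
\[
\CFC(\mathcal{U}^W) \;=\; \sup_{V \subset W\ \mathrm{convex}} d(V)/d(W), \qquad\textrm{where}\ d(V) := \#\mathcal{U}[V]\,|U|\,/\,|V|.
\]
Since $\CKT(\mathcal{U}) = \sup_V d(V)$, any cover $\mathcal{W}$ whose members all satisfy $d(W)\approx \CKT(\mathcal{U})$ automatically inherits the Frostman factoring property (iv) with error $\lesssim \CKT(\mathcal{U})/d(W) \lesssim 1$. So the task reduces to finding a balanced, almost-partitioning cover $\mathcal{W}$ of a large subset $\mathcal{U}' \subset \mathcal{U}$ by congruent convex sets $W$ that each nearly maximize the density functional $d$.

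To find such a cover, I would first pigeonhole over a dyadic grid of ``shapes.'' Each $U\in\mathcal{U}$ is contained in the unit ball and contains a ball of radius $\delta$, so it suffices to consider convex sets $W$ whose outer John ellipsoid has axis lengths in the dyadic grid on $[\delta/2,2]$; this costs a multiplicative factor of $O(\log\delta^{-1})^n$. An additional discretization of orientation on $SO(n)$ restricts attention to translates and rotations of a single ellipsoid $E$. Further pigeonholes over dyadic values of $d(W)$ and of $\#\mathcal{U}[W]$ isolate a sub-collection $\mathcal{W}_*$ of $E$-shaped sets, each with $d(W)\approx \CKT(\mathcal{U})$ and $\#\mathcal{U}[W]\approx N$ for some $N$, whose union captures $K^{-1}\#\mathcal{U}$ members of $\mathcal{U}$. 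Finally, a greedy or Vitali-type selection extracts $\mathcal{W}\subset\mathcal{W}_*$ that is $K$-almost partitioning and $K$-balanced, giving properties (i), (ii), and (iv). Property (iii), Katz-Tao factoring from above, then follows by a short count: for any convex $V$,
\[
\sum_{W \in \mathcal{W},\, W \subset V}|W| \;\lesssim\; \sum_{W \subset V}\frac{\#\mathcal{U}'[W]\,|U|}{d(W)} \;\lesssim\; \frac{K|U|}{\CKT(\mathcal{U}')}\,\#\{U \in \mathcal{U}'\colon U \subset V\} \;\leq\; K|V|,
\]
where the last step uses $\#\mathcal{U}'[V]\leq\CKT(\mathcal{U})|V|/|U|$ and $\CKT(\mathcal{U})\approx\CKT(\mathcal{U}')$.

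The main obstacle will be matching the precise error bound $K=100^n\exp(100\sqrt{\log(\delta^{-1}\#\mathcal{U})})$ quoted in the statement. The direct pigeonhole strategy above produces only $K=(\log\delta^{-1})^{O(n)}$, which is much worse. The $\exp(O(\sqrt{\log}))$ shape strongly suggests the true argument balances the loss per pigeonhole against the number of pigeonholing stages in a multi-scale iteration---a classical optimization in which each stage loses $e^{O(1)}$ and one optimizes the number of stages to roughly $\sqrt{\log(\delta^{-1}\#\mathcal{U})}$. A secondary, more geometric difficulty is the continuous nature of $SO(n)$: a naive $\delta$-net of rotations forces a polynomial-in-$\delta^{-1}$ loss, so rotations likely must be handled via a rotation-stable density potential, or by working with ellipsoids slightly enlarged in each axis so that the combinatorics of inclusion are insensitive to $\eps$-angular perturbations. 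With these two ingredients in hand, the plan above should deliver the stated bound.
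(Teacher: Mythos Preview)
Your strategy---find convex sets $W$ with $d(W)\approx\CKT(\mathcal{U})$ and observe that properties (iii) and (iv) then follow by the density identity---is correct and matches the paper's approach. Your verification of (iii) is essentially the paper's argument. But there is a genuine gap, and your two diagnosed ``obstacles'' are both misidentified.

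\textbf{The gap.} You assert that pigeonholing over shapes and density values produces a family $\mathcal{W}_*$ of congruent sets, each with $d(W)\approx\CKT(\mathcal{U})$, whose union captures $K^{-1}\#\mathcal{U}$ members. This is not true in general. Consider $\mathcal{U}$ consisting of one dense cluster (giving $\CKT(\mathcal{U})=M$ large) together with a much larger number of scattered elements sitting in low-density regions. No collection of sets with $d(W)\approx M$ can cover more than a tiny fraction of $\mathcal{U}$. The paper's fix is to first pass to a subset $\mathcal{U}_0\subset\mathcal{U}$ minimizing the potential
\[
\exp\Big[\big(\log\tfrac{\#\mathcal{U}}{\#\mathcal{U}'}\big)^2\Big]\,\CKT(\mathcal{U}')
\]
over nonempty $\mathcal{U}'\subset\mathcal{U}$. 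The point of this potential is that any $\mathcal{U}'\subset\mathcal{U}_0$ with $\#\mathcal{U}'\geq\tfrac{1}{2}\#\mathcal{U}_0$ satisfies $\CKT(\mathcal{U}')\geq\kappa_0\,\CKT(\mathcal{U}_0)$ with $\kappa_0=e^{-O(\sqrt{\log\#\mathcal{U}})}$: the Katz--Tao constant is \emph{stable} under removing half the elements. One then greedily peels off density-maximizing $W_j$ (removing the captured $U$'s each time) until half of $\mathcal{U}_0$ is gone; stability guarantees every $W_j$ has $d(W_j)\gtrsim\kappa_0\,\CKT(\mathcal{U}_0)$. This is the missing ingredient in your plan.

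\textbf{The $e^{\sqrt{\log}}$ bound.} It does not come from a multi-scale iteration with $\sim\sqrt{\log}$ stages. It comes directly from the stability constant $\kappa_0$ produced by the minimization above: the square in the exponent is what makes the minimizer satisfy $\#\mathcal{U}_0\geq e^{-\sqrt{\log\#\mathcal{U}}}\#\mathcal{U}$ and simultaneously forces $\CKT$ to be stable with the same loss. After that, only a single dyadic pigeonhole on the $n$ axis lengths of the John ellipsoids is needed, costing $O(\log\delta^{-1})^n$---this is the source of the $100^n$.

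\textbf{The $SO(n)$ issue.} This is a red herring. ``Congruent'' allows arbitrary orientation, so one only pigeonholes on the $n$ axis \emph{lengths}, then replaces each $W_j$ by the ellipsoid with those common lengths and the \emph{same} orientation as $W_j$'s John ellipsoid. No net on $SO(n)$ is required.
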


\begin{figure}
\includegraphics[width=.42\linewidth]{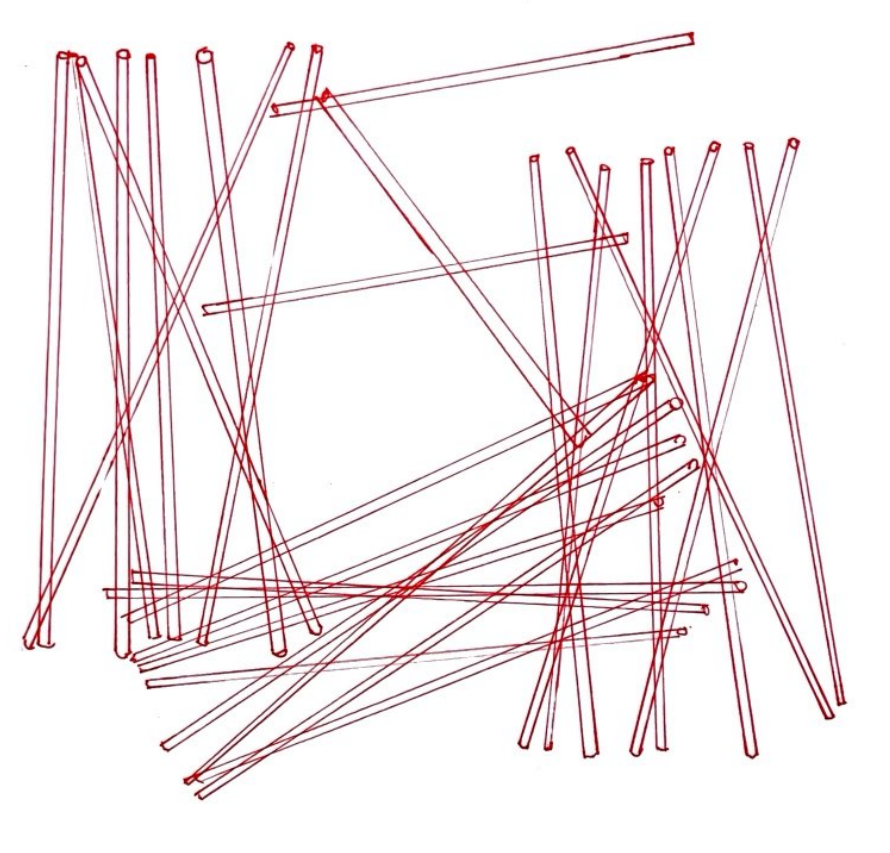}\hfill
\includegraphics[width=.42\linewidth]{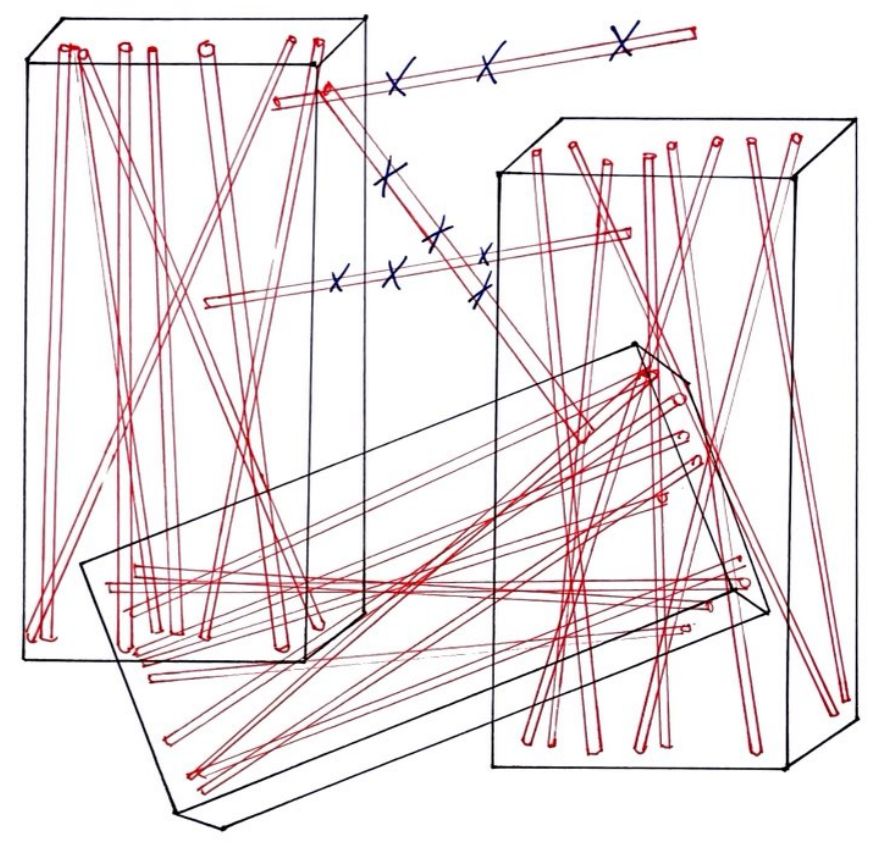}
\caption{Left: $\mathcal{U}$ is a set of tubes (red) that cluster into rectangular prisms. Right: Proposition \ref{factoringConvexSetsProp} locates these prisms (black). The tubes in $\mathcal{U}\backslash \mathcal{U}'$ have been X-ed out. }
\label{factoringInsideBoxesFig}
\end{figure}


Our proof of Proposition \ref{factoringConvexSetsProp} will use the following ``iterated graph pruning'' lemma, which allows us to prune a bipartite graph and find an induced subgraph for which every vertex has many neighbours.

\begin{lem}\label{graphRefinementLemma}
Let $G = (A\sqcup B, E)$ be a bipartite graph. Then there is a sub-graph $G'=(A'\sqcup B', E')$ so that $\#E'\geq \#E/2$; each vertex in $A'$ has degree at least $\frac{\#E}{4\#A}$; and each vertex in $B'$ has degree at least $\frac{\#E}{4\#B}$.
\end{lem}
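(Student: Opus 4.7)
\emph{Proof proposal.} The plan is to obtain $G'$ by a straightforward greedy pruning procedure that iteratively deletes any vertex whose current degree falls below the target threshold, and then to show that not too many edges can be destroyed in the process. Set $\alpha = \#E/(4 \#A)$ and $\beta = \#E/(4 \#B)$ for brevity.

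Starting from $G_0 = G$, I would build a decreasing sequence of subgraphs as follows. At each step, if the current graph contains a vertex $a \in A$ whose degree (in the current graph) is strictly less than $\alpha$, remove $a$ together with all edges incident to it; similarly, remove any vertex $b \in B$ whose current degree is strictly less than $\beta$. Continue until no such vertex exists, and let $G' = (A' \sqcup B', E')$ be the resulting induced subgraph. By the stopping criterion, every $a \in A'$ has degree at least $\alpha = \#E/(4\#A)$ and every $b \in B'$ has degree at least $\beta = \#E/(4\#B)$, which is the degree condition demanded by the lemma.

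It remains to verify the edge count. Each time a vertex of $A$ is deleted, the number of edges removed is strictly less than $\alpha$; since at most $\#A$ vertices of $A$ can ever be deleted, the total number of edges lost to $A$-deletions is less than $\#A \cdot \alpha = \#E/4$. By the symmetric argument, at most $\#E/4$ edges are lost to $B$-deletions. Thus the total number of edges destroyed is at most $\#E/2$, so $\#E' \geq \#E - \#E/2 = \#E/2$, as required.

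There is essentially no obstacle here: the only point requiring a moment of care is that the degree of a vertex that \emph{survives} pruning can only decrease during the process, so one cannot simply use the initial degrees to check the stopping condition. This is handled automatically by running the procedure until no low-degree vertex remains, which must terminate in finitely many steps since the vertex set is finite and strictly decreases at each step. (If $\#E = 0$ the conclusion is trivial with $G' = G$.)
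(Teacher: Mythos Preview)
Your proposal is correct and is precisely the iterated vertex-pruning argument the paper has in mind; the paper itself omits the details and simply remarks that the lemma ``is proved via iteratively removing those vertices that have few neighbours,'' citing \cite{DvGo15}. Your write-up supplies exactly that argument with the right bookkeeping.
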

\noindent Lemma \ref{graphRefinementLemma} is proved via iteratively removing those vertices that have few neighbours. See e.g.~ \cite{DvGo15} for a proof.

\begin{proof}[Proof of Proposition \ref{factoringConvexSetsProp}] $\phantom{1}$\\
{\bf Step 1.} 
Let $\mathcal{U}_0\subset\mathcal{U}$ be a set minimizing the quantity
\begin{equation}\label{eq: minUPrime}
\min_{\substack{\mathcal{U}'\subset \mathcal{U} \\ \mathcal{U}'\neq\emptyset}} \exp\Big[\Big(\log\frac{\# \mathcal{U}}{\# \mathcal{U}'}\Big)^2\Big]\ \CKT(\mathcal{U}').
\end{equation}

Since $\CKT(\mathcal{U}_0)\geq 1$, we have
\[
\exp\Big[\Big(\log\frac{\# \mathcal{U}}{\# \mathcal{U}_0}\Big)^2\Big] \leq \exp\Big[\Big(\log\frac{\# \mathcal{U}}{\# \mathcal{U}_0}\Big)^2\Big]\ \CKT(\mathcal{U}_0)\leq \CKT(\mathcal{U})\leq \#\mathcal{U}.
\]
Re-arranging,
\begin{equation}\label{eq: lowerbdU0}
\# \mathcal{U}_0 \geq e^{-\sqrt{\log(\# \mathcal{U})}}(\# \mathcal{U}).
\end{equation}

Observe that if $\mathcal{U}'\subset \mathcal{U}_0$ with $\# \mathcal{U}'\geq \frac{1}{2}(\#\mathcal{U}_0)$, then 
\[
\exp\Big[\Big(\log\frac{2(\# \mathcal{U})}{\# \mathcal{U}_0}\Big)^2\Big]\ \CKT(\mathcal{U}') \geq \exp\Big[\Big(\log\frac{\# \mathcal{U}}{\# \mathcal{U}'}\Big)^2\Big]\ \CKT(\mathcal{U}') \geq \exp\Big[\Big(\log\frac{\# \mathcal{U}}{\# \mathcal{U}_0}\Big)^2\Big]\ \CKT(\mathcal{U}_0).
\]
Re-arranging and using \eqref{eq: lowerbdU0}, 
\begin{equation}\label{lowerBdOnCKTUPrime}
\CKT(\mathcal{U}') \geq \kappa_0 \CKT(\mathcal{U}_0),\quad \textrm{where}\ \kappa_0=  e^{-2\log 2 \sqrt{\log(\# \mathcal{U})}}.
\end{equation}

\medskip

\noindent{\bf Step 2.}
Select closed convex sets $W_1,W_2,\ldots$ in $\RR^n$ and sets $\mathcal{U}_1\supset\mathcal{U}_2\supset\ldots$ according to the following procedure. Beginning with $j=1$, we select $W_j$ to maximize\footnote{Since $\mathcal{U}$ is a finite set of compact sets, such a maximizer exists; however the proof would work equally well if we merely approximate the maximum within a constant factor.} the quantity $\#\mathcal{U}_{j-1}[W_j]/ |W_j|$. By the definition of $\CKT(\mathcal{U}_{j-1})$, we can select such a $W_j$ so that
\begin{equation}\label{sizeOfWmCalU}
\#\mathcal{U}_{j-1}[W_j] = \CKT(\mathcal{U}_{j-1})\frac{|W_j|}{|U|}.
\end{equation}
(Recall that $|U|$ is the volume of a set from $\mathcal{U}$; all such sets have identical volume). Define $\mathcal{U}_j = \mathcal{U}_{j-1}\backslash \mathcal{U}_{j-1}[W_j]$. Continue this process until $\# \mathcal{U}_j<\frac{1}{2}(\# \mathcal{U}_0)$. 

Let $\mathcal{W}_0 = \{W_1,\ldots,W_{j-1}\}$. Then 
\begin{equation}\label{mostOfU0Kept}
\#\Big(\bigcup_{W\in\mathcal{W}_0}\mathcal{U}_0[W]\Big) = \# (\mathcal{U}_0 \backslash \mathcal{U}_j) >\frac{1}{2}(\# \mathcal{U}_0).
\end{equation}
Furthermore, for each $i=1,\ldots, j$, we have $\#\mathcal{U}_{i-1}\geq\frac{1}{2}(\#\mathcal{U}_{0})$, and hence by \eqref{sizeOfWmCalU} and \eqref{lowerBdOnCKTUPrime},
\begin{equation}\label{lowerBdWiCalUi}
\# \mathcal{U}_{i-1}[W_i] = \CKT(\mathcal{U}_{i-1})\frac{|W_i|}{|U|} \geq \kappa_0 \CKT(\mathcal{U}_0)\frac{|W_i|}{|U|}.
\end{equation}
Hence if $\mathcal{W}'\subset\mathcal{W}_0$, to compare $ \# \Big(\bigcup_{W_i\in\mathcal{W}'} \mathcal{U}_0[W_i] \Big) $ and $\sum_{W_i\in\mathcal{W}'} \# \mathcal{U}_0[W_i] $, 
\begin{equation}\label{numberOfSetsComparable}
\begin{split}
\kappa_0\frac{\CKT(\mathcal{U}_0)}{|U|}\sum_{W_i\in\mathcal{W}'}|W_i|
&\leq \sum_{W_i\in\mathcal{W}'}\# \mathcal{U}_{i-1}[W_i]
= \# \Big(\bigsqcup_{W_i\in\mathcal{W}'} \mathcal{U}_{i-1}[W_i] \Big)\\
&\leq \# \Big(\bigcup_{W_i\in\mathcal{W}'} \mathcal{U}_0[W_i] \Big) 
\leq \sum_{W_i\in\mathcal{W}'} \# \mathcal{U}_0[W_i] 
\leq \frac{\CKT(\mathcal{U}_0)}{|U|}\sum_{W_i\in\mathcal{W}'}|W_i|.
\end{split}
\end{equation}
The  equality in \eqref{numberOfSetsComparable} uses the critical fact that if $i\neq i',$ then $\mathcal{U}_{i-1}[W_i]$ and $\mathcal{U}_{i'-1}[W_{i'}]$ are disjoint.
\medskip

\noindent {\bf Step 3}.
Each $W\in\mathcal{W}_0$ has a John ellipsoid whose axes have lengths $\ell_1,\ldots,\ell_n$. Since each  set $\mathcal{U}_0[W]$ is non-empty and each $U\in \mathcal{U}_0$ contains a ball of radius $\delta$, we have that $\ell_i\geq\delta$ for each $i$. Since the sets in $\mathcal{U}$ are contained in the unit ball, we may suppose that $\ell_i\leq 2$ for each $i$. Thus by dyadic pigeonholing and \eqref{mostOfU0Kept}, there exist $a_1,\ldots, a_n$ and a set $\mathcal{W}_1\subset\mathcal{W}_0$, so that the following two items hold:
\begin{enumerate}
	\item[(i)] Each $W\in\mathcal{W}_1$ has a John ellipsoid whose axes have lengths $\ell_1\leq \ell_2\ldots\leq \ell_n$ with $\ell_i\in [a_i/2, a_i).$
	\item[(ii)] \itemizeEqnVSpacing
\begin{equation}\label{mostOfUPreserved}
\#\Big(\bigcup_{W\in\mathcal{W}_1}\mathcal{U}_0[W]\Big)\geq (100|\log\delta|)^{-n}(\#\mathcal{U}_0).
\end{equation}
\end{enumerate}
Replace each $W$ by a congruent copy of $W_0$---an ellipsoid whose axes have lengths $a_1,\ldots,a_n$, and denote the corresponding set $\mathcal{W}_2$. Observe that \eqref{mostOfUPreserved} remains true with $\mathcal{W}_2$ in place of $\mathcal{W}_1$, and \eqref{numberOfSetsComparable} remains true for all sets $\mathcal{W}'\subset\mathcal{W}_2$, though the first inequality has been weakened by a factor of $2^{n}$ on the RHS. Define $\mathcal{U}_2 = \bigcup_{W\in\mathcal{W}_2}\mathcal{U}_0[W]$ (recall that a sequence sequence $\mathcal{U}_1,\mathcal{U}_2,\ldots$ was defined earlier, and hence $\mathcal{U}_2$ was previously defined, but this is a harmless abuse of notation); we have that the cardinality of $\mathcal{U}_2$ is bounded below by the RHS of \eqref{mostOfUPreserved}.

Since $\mathcal{U}_0[W]=\mathcal{U}_2[W]$ for all $W\in\mathcal{W}_2$, by applying  \eqref{numberOfSetsComparable}  (beginning with the final inequality, and then using the first few inequalities) with $\mathcal{W}'=\mathcal{W}_2$ we conclude that
\begin{equation}\label{pairCount}
\begin{split}
\#\{(U, W)\in \mathcal{U}_2&\times\mathcal{W}_2  \colon U\subset W\}
=\sum_{W\in\mathcal{W}_2}\# \mathcal{U}_0[W]\leq \frac{\CKT(\mathcal{U}_0)}{|U|}\sum_{W\in\mathcal{W}_2}|W|\\
& \leq 2^n \kappa_0^{-1}\#\Big(\bigcup_{W\in\mathcal{W}_2} \mathcal{U}_0[W] \Big) =  2^n\kappa_0^{-1}(\#\mathcal{U}_2).
\end{split}
\end{equation}
In the above estimate, \eqref{numberOfSetsComparable} was used to obtain the first and second inequalities, while the final equality follows from the definition of $\mathcal{U}_2$. 

\medskip

\noindent{\bf Step 4}.
Construct the bipartite incidence graph $(\mathcal{I}, \mathcal{U}_2\times\mathcal{W}_2)$ whose edges consist of those pairs $(U,W)$ with $U\subset W$. This graph has the following properties:
\begin{itemize}
	\item[(i)] $\mathcal{I}$ has at most $2^n \kappa_0^{-1}(\#\mathcal{U}_2)$ edges.
	\item[(ii)] Each $U\in\mathcal{U}_2$ has at least one neighbour.
	\item[(iii)] Each $W\in\mathcal{W}_2$ has between $2^{-n}\kappa_0 \CKT(\mathcal{U}_0)|W_0| |U|^{-1}$ and $\CKT(\mathcal{U}_0)|W_0||U|^{-1}$ neighbours.
\end{itemize}

Note that Items (i) and (iii) imply that
\begin{equation}\label{upperBdOnCalW2}
\#\mathcal{W}_2 \leq 2^{2n} \kappa_0^{-2} \frac{(\#\mathcal{U}_2)|U|}{\CKT(\mathcal{U}_0)|W_0|}.
\end{equation}

We will construct an induced subgraph of $(\mathcal{I}, \mathcal{U}_2\times\mathcal{W}_2)$ as follows. First, remove all $U\in\mathcal{U}_2$ with more than $2^{n+1}\kappa_0^{-1}$ neighbours, and denote the resulting induced subgraph by $(\mathcal{I}_3, \mathcal{U}_3\times\mathcal{W}_2)$; by Items (i) and (ii),  we have $\#\mathcal{U}_3\geq\frac{1}{2}\#\mathcal{U}_2$, and $\#\mathcal{I}_3\geq \#\mathcal{U}_3$. Next, apply Lemma \ref{graphRefinementLemma} (iterated graph pruning) to $(\mathcal{I}_3, \mathcal{U}_3\times\mathcal{W}_2)$. Denote the resulting induced subgraph by  $(\mathcal{I}', \mathcal{U}' \times\mathcal{W})$.

\medskip

\noindent{\bf Step 5}.
We will verify that $\mathcal{U}'$ and $\mathcal{W}$ satisfy Conclusions (i)--(iv) of Proposition \ref{factoringConvexSetsProp}. For Conclusion (i), we have
\begin{align*}
\#\mathcal{U}'& \geq \frac{\kappa_0}{2^{n+1}}(\# \mathcal{I}') \geq \frac{\kappa_0}{2^{n+3}}(\# \mathcal{I}_3) \geq \frac{\kappa_0}{2^{n+3}}(\# \mathcal{U}_3) \geq   \frac{\kappa_0}{2^{n+4}}(\# \mathcal{U}_2) \geq  K^{-1}(\# \mathcal{U}),
\end{align*}
since $\#\mathcal{U}_2$ is bounded below by the RHS of \eqref{mostOfUPreserved};  $\#\mathcal{U}_0$ is bounded below by \eqref{eq: lowerbdU0};   and $K$ was defined in the statement of Proposition~\ref{factoringConvexSetsProp}. 
\medskip

For Conclusion (ii), Since each $U\in\mathcal{U}'$ has at most $K$ neighbours in $(\mathcal{I}', \mathcal{U}'\times\mathcal{W})$, we have that $\mathcal{W}$ is a $K$-almost partitioning cover of $\mathcal{U}'$. It remains to verify \eqref{lotsOfUinW}. Since $\#\mathcal{U}'[W]\leq \CKT(\mathcal{U}') |W| |U|^{-1}$, it will then follow that $\mathcal{W}$ is a $K$-balanced cover of $\mathcal{U}'$. By Lemma \ref{graphRefinementLemma} followed by \eqref{upperBdOnCalW2}, for each $W\in\mathcal{W}$, we have
\begin{equation}\label{goodBdOnCalUPrimeW}
\begin{split}
\#\mathcal{U}'[W] &\geq \frac{1}{4}\Big(\# I_3\Big)\Big(\#\mathcal{W}_2\Big)^{-1} \geq \frac{1}{4}\Big(\frac{1}{2}\#\mathcal{U}_2 \Big)\Big(2^{-2n} \kappa_0^{2}\frac{\CKT(\mathcal{U}_0)|W_0|}{(\#\mathcal{U}_2)|U|}\Big)\\
& \geq \big(2^{-2n-4}\kappa_0^2\big) \CKT(\mathcal{U}_0)|W_0||U|^{-1}.
\end{split}
\end{equation}
Since $\mathcal{U}'\subset\mathcal{U}_0$, we have $\CKT(\mathcal{U}')\leq \CKT(\mathcal{U}_0)$. 

\medskip

For Conclusion (iii), let $V\subset\RR^n$ be a convex set. Since each $U\in \mathcal{U}_3$ has at most $2^{n+1} \kappa_0^{-1}$ neighbours and $(\mathcal{I}', \mathcal{U}'\times \mathcal{W})$ is an induced subgraph of $(\mathcal{I}_3, \mathcal{U}_3\times \mathcal{W}_2)$,  each $U\in\mathcal{U}'$ is contained in at most $2^{n+1}\kappa_0^{-1}$ sets $\mathcal{U}'[W]$, we have
\begin{equation}\label{lowerBdUInV}
\begin{split}
\#\mathcal{U}'[V] & \geq 2^{-n-1}\kappa_0\sum_{W\in\mathcal{W}[V]}\#\mathcal{U}'[W] \geq \big(2^{-3n-5}\kappa_0^3\big) \CKT(\mathcal{U}_0)|W_0||U|^{-1}(\#\mathcal{W}[V]),
\end{split}
\end{equation}
where the final inequality used \eqref{goodBdOnCalUPrimeW}. On the other hand,
\begin{equation}\label{upperBdUInV}
\#\mathcal{U}'[V] \leq \CKT(\mathcal{U}')|V||W_0|^{-1}.
\end{equation}
Comparing \eqref{lowerBdUInV} and \eqref{upperBdUInV}, we see that $\#\mathcal{W}[V] \leq K|V||W_0|^{-1}$, as desired.

\medskip

Finally, for Conclusion (iv), let $W\in\mathcal{W}$ and let $V\subset W$ be a convex set. Then
\begin{equation}\label{UinsideSmallV}
\#(\mathcal{U}'[W])[V] \leq \#\mathcal{U}'[V] \leq \CKT(\mathcal{U}')|V| |U|^{-1}.
\end{equation}
Comparing \eqref{UinsideSmallV} and \eqref{lotsOfUinW} (which we verified  using  \eqref{goodBdOnCalUPrimeW}), we conclude that 
\[
\#(\mathcal{U}'[W])[V] \leq K |V||W|^{-1} (\#\mathcal{U}'[W]).
\]
This is precisely the statement that $\CFC\big((\mathcal{U}')^W\big)\leq K.$
\end{proof}


\subsection{Convex Sets and the Frostman Slab Wolff Axioms}
The goal of this section is to prove that after a refinement, every collection of convex sets can be partitioned into non-interacting pieces, each of which (after an appropriate rescaling) satisfies the Frostman Slab Wolff axioms. The precise statement is as follows.


\begin{prop}\label{slabWolffFactoring}
For all $n\geq 2,\eps>0$, there exists $\eta>0$ and $\kappa,K>0$ so that the following holds for all $\delta>0$. Let $\mathcal{U}$ be a collection of closed convex subsets of the unit ball in $\RR^n$, each of which contains a ball of radius $\delta$. For each $U\in\mathcal{U}$, let $Y(U)\subset U$ be a shading with $|Y(U)|\geq\delta^{\eta}|U|$. 

Then there exists a set $\mathcal{U}'\subset \mathcal{U}$; sets $Y'(U')\subset Y(U'),\ U'\in\mathcal{U}'$; and a set $\mathcal{W}$ of closed convex subsets of $\RR^n$ with the following properties:
\begin{enumerate}[i)]
	\item $\mathcal{W}$ factors $\mathcal{U'}$ from below with respect to the Frostman Slab Wolff axioms with error $\delta^{-\eps}$.

	\item The sets $\mathcal{U}'[W],$ $W\in\mathcal{W}$ do not interact, in the sense that the sets $\big\{ \bigcup_{U'\in\mathcal{U}'[W]}Y'(U'),\ W\in\mathcal{W}\big\}$ are disjoint. 

	\item The subset $\mathcal{U}'$ and the refined shading $Y'$ preserve most of the mass of the original collection $(\mathcal{U},Y)$, in the sense that 
	\begin{equation}\label{deltaEpsFractionMassPreserved}
		\sum_{U'\in\mathcal{U}'}|Y'(U')| \geq \kappa \delta^{\eps}(\log \#\mathcal{U})^{-K} \sum_{U\in\mathcal{U}}|U|.
	\end{equation}
\end{enumerate}
\end{prop}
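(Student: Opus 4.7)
By standard dyadic pigeonholing, refine $(\mathcal{U},Y)$ to a sub-collection on which $|U|$, $|Y(U)|$, and the multiplicity $\mu_Y(x)=\#\{U:x\in Y(U)\}$ are each constant up to a factor of $2$, at the cost of a $(\log\#\mathcal{U})^{O(1)}$ factor to be absorbed into (iii). From here the plan mirrors Proposition~\ref{factoringConvexSetsProp}, but with slabs in place of general convex sets, augmented by a cell-partition argument to enforce condition (ii).

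\textbf{Step 2 (Slab-peeling tree).} Build a rooted tree $T$ of convex subsets of $\RR^n$, whose root is $B(0,1)$ attached to the full refined collection. At each node $W$ with attached subcollection $\mathcal{U}_W$, test whether the rescaled family $\mathcal{U}_W^W$ satisfies the Frostman Slab Wolff axiom with error $\delta^{-\eps/2}$; if so, $W$ is a leaf. Otherwise greedily peel off a maximally concentrating slab $S\subset W$ (in rescaled coordinates) witnessing a violation, hand $\mathcal{U}_W[S]$ to a new child $S$, iterate on the residual until it passes the test, and recurse on each child. The key quantitative fact is that a slab witnessing a $\delta^{-\eps/2}$-violation must satisfy $|S|\leq\delta^{\eps/2}$ in rescaled coordinates (otherwise its mass fraction would exceed $1$), so every descent multiplies relative volume by at most $\delta^{\eps/2}$. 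Combined with the lower bound $|U|\geq c\delta^{n}$ this caps the depth of $T$ by $O(n/\eps)$, independent of $\delta$.

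\textbf{Step 3 (Cells and shading refinement).} Let $\mathcal{W}$ be the leaves of $T$ and assign each $U\in\mathcal{U}'$ to its unique leaf $W(U)$; then (i) holds by the termination criterion, since slabs are convex. To enforce (ii), partition $\RR^n$ into cells $\{C_W\}_{W\in\mathcal{W}}$ compatible with $T$: at each internal node fix an ordering of its children, and send each point $x$ (not yet assigned at shallower depth) to the first child slab containing it, or to the residual leaf attached to that parent if no child contains $x$. Set $Y'(U):=Y(U)\cap C_{W(U)}$; then $\bigcup_{U'\in\mathcal{U}'[W]}Y'(U')\subset C_W$, so the $C_W$ being disjoint gives (ii).

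\textbf{Main obstacle.} The principal technical difficulty is (iii): controlling the mass lost when passing from $Y$ to $Y'$. A typical point $x$ may lie in several overlapping sibling slabs at some level of $T$, and the cell assignment then discards the shading at $x$ for all but one sibling. To keep the aggregate loss subpolynomial in $\delta$, I would, at each of the $O(n/\eps)$ levels, pigeonhole on the number of sibling slabs simultaneously containing a typical point and choose a child-ordering (e.g.\ by expected shading mass) so that the per-level multiplicative mass loss is $\delta^{O(\eps/n)}$. Compounded across the $O(n/\eps)$ levels, the total loss is $\delta^{O(\eps)}$, which together with the uniformization loss $(\log\#\mathcal{U})^{O(1)}$ fits inside the bound~\eqref{deltaEpsFractionMassPreserved}; making this bookkeeping work uniformly in $n$ and $\eps$, while simultaneously maintaining the partition property needed for (ii), is the step I expect to demand the most care.
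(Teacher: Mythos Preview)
Your tree structure and the $O(n/\eps)$ depth bound are right, and you have correctly located the only genuine difficulty. But the fix you propose for Step~3 does not work, and is not the mechanism the paper uses.

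The concrete gap is this. At a fixed node with children $S_1,\dots,S_k$ (ordered however you like), a set $U$ assigned to $S_j$ satisfies $U\subset S_j$ but $U\not\subset S_i$ for $i<j$; this only tells you $|U\cap S_i|<|U|$. There is no a~priori bound on $|Y(U)\cap(S_1\cup\cdots\cup S_{j-1})|$, and no a~priori subpolynomial bound on how many sibling slabs (in varying orientations) pass through a typical point, so pigeonholing on the overlap multiplicity and then reordering cannot produce a per-level loss of $\delta^{O(\eps/n)}$. A single $U$ can legitimately have almost all of its shading eaten by earlier siblings, regardless of ordering.

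The paper handles both (ii) and (iii) with a different device. Disjointness is not imposed by an ex~post cell partition; instead, every time a slab $S_i$ is peeled, one replaces $Y(U)\leftarrow Y(U)\setminus S_i$ for \emph{all} surviving $U$, and any $U$ whose shading density drops below $\tfrac{\lambda}{2}$ is discarded. This makes the shadings assigned to distinct slabs disjoint automatically, and accepts that some $U$ may be lost entirely. The mass bookkeeping then rests on a convexity input you do not invoke: by Brunn's theorem (Lemma~\ref{ellipsoidLem}), a convex $U$ with $|U\cap S_i|\sim t|U|$ is contained in a parallel slab of thickness $\sim s_i/t$; combined with the \emph{greedy maximality} of $S_i$, this yields $\sum_U |U\cap S_i|\lesssim \log(1/\delta)\sum_{U\subset S_i}|U|$ (this is the estimate \eqref{mostMassInsideSi} inside Lemma~\ref{findingSlabsLem}), i.e.\ the mass removed at each peeling is comparable to the mass captured. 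Iterating over $O(n/\eps)$ levels then gives the $\delta^{O(\eps)}$ loss once $\eta\sim\eps^2/n$. Your argument needs this Brunn-plus-maximality step (or an equivalent) in place of the ordering heuristic.
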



Proposition \ref{slabWolffFactoring} will rely on the following consequence of Brunn's theorem:
\begin{lem}\label{ellipsoidLem}
Let $U\subset\RR^n$ be a convex set, let $H\subset\RR^n$ be a hyperplane, and let $s>0$, $t\in(0,1]$. Suppose that $|U\cap N_s(H)|= t |U|$. Then $U\subset N_{K_n s/t}(H)$, where $K_n$ depends only on $n$.
\end{lem}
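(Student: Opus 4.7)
\medskip

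\noindent\textbf{Proof plan for Lemma~\ref{ellipsoidLem}.} After translating and rotating we may assume $H=\{x_n=0\}$, so the slab is $N_s(H)=\RR^{n-1}\times[-s,s]$. The idea is to reduce everything to a $1$-dimensional statement about the function
\[
f(y)=\bigl|U\cap\{x_n=y\}\bigr|_{n-1},\qquad y\in\RR,
\]
whose support is some compact interval $[a,b]$, and to exploit Brunn's theorem: $f^{1/(n-1)}$ is concave on $[a,b]$. This concavity is what makes the assumption ``$(n-1)$-dimensional slab captures a definite fraction $t$ of the mass'' translate into the desired ``every slice lies within $O(s/t)$ of $H$.''

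The quantitative engine is the following pair of inequalities. Let $M_0=\max_y f(y)$ be attained at some $y^*\in[a,b]$. On the one hand,
\[
\int_{-s}^{s}f(y)\,dy \;\le\; 2s\,M_0,
\]
simply because $f\le M_0$. On the other hand, using Brunn concavity and $f\ge 0$, for $y\in[a,y^*]$ we have $f(y)^{1/(n-1)}\ge M_0^{1/(n-1)}(y-a)/(y^*-a)$, and symmetrically on $[y^*,b]$; integrating the $(n-1)$-th power of these tent functions yields
\[
|U| \;=\; \int_a^b f(y)\,dy \;\ge\; \frac{M_0(b-a)}{n}.
\]
Combining these two bounds with the hypothesis $\int_{-s}^{s}f=t|U|$ gives
\[
\frac{t\,M_0(b-a)}{n}\;\le\; t|U| \;\le\; 2sM_0,
\qquad\text{hence}\qquad b-a\;\le\;\frac{2ns}{t}.
\]

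To conclude, observe that the hypothesis $|U\cap N_s(H)|=t|U|>0$ forces $[a,b]\cap[-s,s]\neq\emptyset$, so we may pick $y_1\in[a,b]$ with $|y_1|\le s$. For any $y\in[a,b]$,
\[
|y|\;\le\;|y_1|+(b-a)\;\le\; s+\tfrac{2ns}{t}\;\le\;\tfrac{(2n+1)s}{t},
\]
using $t\le 1$. Thus $U\subset\{|x_n|\le (2n+1)s/t\}=N_{K_n s/t}(H)$ with $K_n=2n+1$.

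The only genuinely non-trivial input is Brunn's theorem (i.e.\ the concavity of $f^{1/(n-1)}$), which is where convexity of $U$ enters; the rest is a one-variable manipulation. The ``main obstacle'' is minor: one has to verify that the lower bound $|U|\ge M_0(b-a)/n$ really follows from the Brunn tent-function estimate (a direct integration of $u^{n-1}$ on $[0,1]$ produces the factor $1/n$), and that the non-emptiness $[a,b]\cap[-s,s]\ne\emptyset$ is actually guaranteed by the hypothesis $t>0$ -- otherwise the final step of centering the interval around $H$ would fail.
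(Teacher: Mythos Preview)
Your proof is correct and follows essentially the same approach as the paper: reduce to a one-dimensional problem via the slice function, invoke Brunn's theorem for concavity of $f^{1/(n-1)}$, and compare the upper bound $t|U|\le 2sM_0$ with the lower bound $|U|\gtrsim M_0(b-a)$. Your version is in fact more explicit than the paper's---you compute the constant $K_n=2n+1$ and you spell out the centering step (using $[a,b]\cap[-s,s]\neq\emptyset$) that the paper leaves implicit.
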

\begin{proof}
Without loss of generality we may suppose that the hyperplane $H$ is given by $\{x_1=0\}$. Let $f(t) = |U\cap \{x_1=t\}|^{\frac{1}{n-1}}$ (here $|\cdot|$ denotes $(n-1)$-dimensional Lebesgue measure), and let $I =\supp(f)$. Our task is to show that $|I|\leq K_n s/t$.

By Brunn's theorem, $f$ is concave on $I$. The result now follows by comparing the estimates $t|U| = |U\cap N_s(H)|\leq (2s)(\sup f)^{n-1}$ and $|U|\geq K_n|I|(\sup f)^{n-1}$ (the latter is a consequence of the concavity of $f$).
\end{proof}

Combining Lemma \ref{ellipsoidLem} with a Cordoba-style $L^2$ argument, we obtain the following.
\begin{lem}\label{findingSlabsLem}
Let $\lambda\in(0,1]$, let $\mathcal{U}$ be a collection of closed convex subsets of the unit ball in $\RR^n$, each of which contains a ball of radius $\delta$. For each $U\in\mathcal{U}$, let $Y(U)\subset U$ be a shading with $|Y(U)|\geq\lambda|U|$. 

Then there exists a set $\mathcal{U}'\subset \mathcal{U}$; sets $Y'(U)\subset Y(U),\ U\in\mathcal{U}'$; and a set $\mathcal{S}$ of infinite slabs (i.e.~the $s$-neighbourhood of a hyperplane in $\RR^n$) with the following properties:
\begin{enumerate}[i)]
	\item $\mathcal{S}$ is a partitioning cover of $\mathcal{U'}$.

	\item If $S\in\mathcal{S}$ has thickness $s$, then the (rescaled) sets $\mathcal{U}'[S]$ have Frostman Slab Wolff constant $O(s^{-1})$. More concretely, if $\tilde S \subset S$ is a $s\times 2\times\ldots\times 2$ slab that contains the convex sets from $\mathcal{U}'[S]$, then $\FS(\mathcal{U}^{'\tilde S})\lesssim s^{-1}$.

	\item The sets $\bigcup_{U\in\mathcal{U}'[S]}Y'(U)$, $S\in\mathcal{S}$ are pairwise disjoint.

	\item $|Y'(U)|\geq \frac{\lambda}{2}|U|$ for each $U\in\mathcal{U'}$, and 
	\begin{equation}\label{enoughMassPreserved}
		\sum_{U\in\mathcal{U}'}|U| \gtrsim \log(\lambda^{-1}\delta^{-1}\#\mathcal{U})^{-1}\lambda \sum_{U\in\mathcal{U}}|U|.
	\end{equation}
\end{enumerate}
\end{lem}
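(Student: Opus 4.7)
The plan is to combine an iterative greedy extraction of slabs---always choosing the thinnest slab that witnesses a Frostman violation---with a recursive application of the lemma inside each extracted slab. The two main tools will be Lemma \ref{ellipsoidLem}, which converts shading concentration inside a slab into containment inside a fattened slab, together with a Cordoba-style $L^2$ computation to manage the disjointness bookkeeping.

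First I would reduce to the case where the multiplicity function $\mu(x) = \sum_{U \in \mathcal{U}} \mathbf{1}_{Y(U)}(x)$ is comparable to a single dyadic value $\mu_0$ on the support of the refined shading; a standard dyadic pigeonhole accomplishes this while still preserving $|Y_0(U)| \geq \tfrac{\lambda}{2}|U|$ for the surviving sets, and it costs only a $\log(\lambda^{-1}\delta^{-1}\#\mathcal{U})^{-O(1)}$ factor, which is absorbed into \eqref{enoughMassPreserved}.

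Next I would run the following iterative/recursive procedure, initialised on the full collection inside the unit ball. I test whether the current collection $\mathcal{U}_{\mathrm{cur}}$ satisfies the Frostman slab Wolff bound at the ambient scale; if yes, I take the ambient slab as the cover for the remaining sets. Otherwise I pick a witnessing slab $W$ whose thickness $s$ is minimal (up to a constant factor), and dyadically pigeonhole over $U \in \mathcal{U}_{\mathrm{cur}}[W]$ on the fraction $t \sim |Y_0(U) \cap W|/|Y_0(U)|$. Applying Lemma \ref{ellipsoidLem} to each surviving $U$ implies that $U$ lies in a hyperplane slab of thickness $\lesssim s/t$ approximately parallel to $W$; after an additional pigeonhole on the orientation I can group all surviving sets into a single slab $\tilde S$ of thickness $\sim s/t$. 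I add $\tilde S$ to $\mathcal{S}$, assign the sets to it, remove them from $\mathcal{U}_{\mathrm{cur}}$, and then call the procedure recursively inside $\tilde S$ (rescaled via $\phi_{\tilde S}$) to further partition these sets so that the rescaled Frostman slab Wolff bound $\lesssim s^{-1}$ holds.

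For the disjointness condition (iii) and the pointwise lower bound in (iv), I would set $Y'(U) = Y_0(U) \cap W \setminus \bigcup_{W_j\ \text{extracted earlier}} W_j$. The constant-multiplicity normalisation from the first step, combined with a Cauchy--Schwarz estimate on the overlap region $\bigcup W_j$, will show that on average only a constant fraction of $|Y_0(U) \cap W|$ has been consumed by earlier slabs, so that after one further dyadic pigeonhole on $t$ the surviving $Y'(U)$ still has measure $\geq \tfrac{\lambda}{2}|U|$. Termination is automatic because each recursive call strictly decreases the ambient thickness, of which there are only $O(\log\delta^{-1})$ dyadic scales. The step I expect to be hardest is this last one: simultaneously arranging for (iii), (iv) and the $\lesssim s^{-1}$ Frostman bound while preserving the logarithmic fraction of the mass requires the Cordoba-type $L^2$ bookkeeping to mesh precisely with the nested dyadic pigeonholes, and it is here that the opening reduction to constant multiplicity will do most of the work.
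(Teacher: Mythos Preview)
Your proposal has a genuine gap: the greedy criterion is wrong, and this forces you into a recursion whose losses you cannot afford.

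The paper's proof selects at each step the slab $S_i$ that \emph{maximises the density} $s_i^{-1}\sum_{U\in\mathcal{U}_{i-1}[S_i]}|U|$. This single choice does all the work. Conclusion (ii) is immediate from maximality: for any other slab $S'$ of thickness $s'$, one has $\sum_{U\in\mathcal{U}^{(i)}[S']}|U|\le \sum_{U\in\mathcal{U}_{i-1}[S']}|U|\le (s'/s_i)\sum_{U\in\mathcal{U}^{(i)}}|U|$, which after rescaling by $\phi_{\tilde S}$ is exactly $\FS\lesssim s_i^{-1}$. No recursion is needed. The same maximality, combined once with Lemma~\ref{ellipsoidLem}, gives the mass bound: for each dyadic $t$, the sets with $|U\cap S_i|\sim t|U|$ lie in $N_{K_n s_i/t}(H_i)$, whose density cannot exceed that of $S_i$, so their contribution is $\lesssim\sum_{U\in\mathcal{U}^{(i)}}|U|$; summing over the $O(\log(\lambda^{-1}\delta^{-1}\#\mathcal{U}))$ dyadic values of $t$ gives the single logarithm in \eqref{enoughMassPreserved}.

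Your ``minimal-thickness witnessing slab'' criterion does not carry this maximality, which is why you are forced to recurse inside each extracted $\tilde S$ to restore (ii). But that recursion has depth $O(\log\delta^{-1})$, and at each level you perform dyadic pigeonholes on $t$ and on orientation; these compound to losses of order $(\log\delta^{-1})^{O(\log\delta^{-1})}$, which is far worse than the single logarithm the lemma demands. The constant-multiplicity reduction does not rescue this: it is irrelevant to the lemma (the paper never uses it here), and in any case it cannot undo the compounding.

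Your handling of (iii) and (iv) is also more fragile than necessary. In the paper, sets extracted at step $i$ satisfy $U\subset S_i$, so their full current shading $Y_{i-1}(U)\subset S_i$ is kept as $Y'(U)$; for the remaining sets one simply sets $Y_i(U)=Y_{i-1}(U)\setminus S_i$ and discards those whose shading drops below $\tfrac{\lambda}{2}|U|$. Disjointness and the pointwise shading bound are then automatic. Your definition $Y'(U)=Y_0(U)\cap W\setminus\bigcup W_j$ restricts to the thin witnessing slab $W$ rather than the fattened $\tilde S$ that actually contains $U$; when the pigeonholed fraction $t$ is small, $|Y_0(U)\cap W|$ is a small fraction of $|U|$ and (iv) fails outright.
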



\begin{proof}$\phantom{1}$\\
\noindent {\bf Step 1.}
Define $\mathcal{U}_0=\mathcal{U}$, and define $Y_0(U)=Y(U)$ for each $U\in\mathcal{U}_0$. For $i=1,\ldots$, let $S_i=N_{s_i}(H_i)$ be a slab maximizing the quantity
\begin{equation}\label{slabMaximizingQuantity}
s_i^{-1}\sum_{U\in \mathcal{U}_{i-1}[S_i]}|U|.
\end{equation}
Let $\mathcal{U}^{(i)}=\mathcal{U}_{i-1}[S_i]$; by the maximality of $S_i$, we have that $\mathcal{U}^{(i)}$ satisfies Conclusion (ii) from Lemma \ref{findingSlabsLem}. 

For each $U\in\mathcal{U}_{i-1}$, define $Y_i(U) = Y_{i-1}(U)\backslash S_i$, and define 
\[
\mathcal{U}_{i}=\big\{U\in\mathcal{U}_{i-1}\colon |Y_i(U)|\geq\frac{\lambda}{2}|U|\big\}.
\]
In particular, $\mathcal{U}_{i}\cap \mathcal{U}^{(i)}=\emptyset$.

\medskip
\noindent {\bf Step 2.}
We claim that 
\begin{equation}\label{mostMassInsideSi}
\sum_{U\in\mathcal{U}_{i-1}}|U\cap S_i| \lesssim \log(\lambda^{-1}\delta^{-1}\#\mathcal{U})\sum_{U\in\mathcal{U}^{(i)}}|U|.
\end{equation}
We verify \eqref{mostMassInsideSi} as follows. Since $\sum_{U\in\mathcal{U}^{(i)}}|U|\geq\frac{\lambda}{2} \delta^n$, the contribution from those $U\in\mathcal{U}_{i-1}$ with $|U\cap S_i|\leq \frac{1}{4}\lambda \delta^n(\#\mathcal{U})^{-1}$ is negligible. For each dyadic $t\in [\frac{1}{4}\lambda \delta^n(\#\mathcal{U})^{-1}, 1]$ we have
\begin{equation}\label{tContrib}
\begin{split}
\sum_{\substack{U\in\mathcal{U}_{i-1}\\ |U\cap S|\sim t|U|}}|U\cap S_i| & 
\lesssim t \sum_{\substack{U\in\mathcal{U}_{i-1}\\ |U\cap S|\sim t|U|}}|U|
 \leq t \sum_{\substack{U\in\mathcal{U}_{i-1}[N_{K_n s_i/t}}(H_i)]}|U|\\
& \leq K_n  \sum_{\substack{U\in\mathcal{U}_{i-1}[S_i]}}|U| = K_n\sum_{U\in\mathcal{U}^{(i)}}|U|.
\end{split}
\end{equation}
The second inequality follows from the containment
\[
\big\{U\in\mathcal{U}_{i-1}\colon |U\cap S|\sim t|U|\big\}\subset \big\{ U\in\mathcal{U}_{i-1}[N_{K_ns/t}(H_i)]\big\}
\]
for an appropriately chosen constant $K_n$ depending on $n$; this is Lemma \ref{ellipsoidLem}. The third inequality used the maximality of $S_i$, in the sense of \eqref{slabMaximizingQuantity}. \eqref{mostMassInsideSi} now follows from summing \eqref{tContrib} over dyadic values of $t$.

\medskip
\noindent {\bf Step 3.}
We halt the procedure described above when $\mathcal{U}_N=\emptyset$. Define $\mathcal{U}'=\bigsqcup_{i=1}^N \mathcal{U}^{(i)}$, and for each $U\in\mathcal{U}'$, define $Y'(U)=Y_{i-1}(U)$, where $i$ is the unique index so that $U\in \mathcal{U}^{(i)}$. Conclusions (i) and (iii) follow immediately from the above construction, as does the fact that $|Y'(U')|\geq\frac{\lambda}{2}|U'|$ for each $U'\in\mathcal{U}'$. Conclusion (ii) was already verified in Step 1.

It remains to verify \eqref{enoughMassPreserved}. We claim that each $U\in\mathcal{U} = \mathcal{U}_0$ contributes at least $\frac{\lambda}{2}|U|$ to the sum
\[
\sum_{i=1}^N \sum_{U\in\mathcal{U}_{i-1}}|U\cap S_i|,
\]
in the sense that $\sum_{i: U\in \mathcal{U}_{i-1}} |U\cap S_i|\geq \frac{\lambda}{2}|U|$. 
Indeed, suppose that $j>0$ is the smallest index with $U\not\in \mathcal{U}_j$, and hence $|Y(U)\backslash \bigcup_{i=1}^{j} S_i|<\frac{\lambda}{2}|U|$. But since $|Y(U)|\geq \lambda|U|$, this means that 
\[
\sum_{i=1}^{j} |U\cap S_i| \geq\frac{\lambda}{2}|U|,
\]
as claimed.

We conclude that
\begin{equation}\label{lotsOfContrib}
\sum_{i=1}^N \sum_{U\in\mathcal{U}_{i-1}}|U\cap S_i| \geq \frac{\lambda}{2}\sum_{U\in\mathcal{U}}|U|.
\end{equation}
Comparing \eqref{mostMassInsideSi} and \eqref{lotsOfContrib}, we obtain \eqref{enoughMassPreserved}.
\end{proof}

\medskip
\noindent Proposition \ref{slabWolffFactoring} follows from repeatedly applying Lemma \ref{findingSlabsLem}. We now turn to the details. 
\begin{proof}[Proof of Proposition \ref{slabWolffFactoring}]
Let $\lambda = \frac{1}{4}\delta^{\eta}$. After discarding those $U\in\mathcal{U}$ with $|Y(U)|<\lambda |U|$, we may suppose that $\mathcal{U}$ satisfies the hypotheses of Lemma \ref{findingSlabsLem}. 

Let $\mathcal{U}_0 = \mathcal{U}$ and for each $U\in\mathcal{U}_0$, let $Y_0(U)=Y(U)$. Let $\mathcal{P}_0=\{B(0,1)\}$, and let $\mathcal{W}_0 = \emptyset$. The set $\mathcal{P}_0$ corresponds to convex sets that still need to be ``processed'' by Lemma \ref{findingSlabsLem}, while $\mathcal{W}_0$ will hold the convex sets that satisfy the hypotheses of Proposition \ref{slabWolffFactoring}.

We will iteratively construct sets $\mathcal{U}_i\subset\mathcal{U}_{i-1}$ and $Y_i(U)\subset Y_{i-1}(U)$; a set $\mathcal{W}_i\supset\mathcal{W}_{i-1}$ of convex subsets of $B(0,1)$; and a set $\mathcal{P}_i$ of convex subsets of $B(0,1)$ such that the following properties hold:
\begin{enumerate}
	\item For each $W\in\mathcal{W}_i$, $\FS\big(\mathcal{U}_i^W\big)\lesssim\delta^{-\eps}$ (recall $\mathcal{U}_i^W= \phi_W(\mathcal{U}_i[W])$).
	\item For each $P\in\mathcal{P}_i$, $|P|\leq\delta^{i\eps}|B(0,1)|$.
	\item $\mathcal{W}_i\sqcup\mathcal{P}_i$ is a partitioning cover of $\mathcal{U}_i$.
	\item The sets $\bigcup_{U\in\mathcal{U}_i[V]}Y_i(U)$ are pairwise disjoint, as $V$ ranges over the convex sets in $\mathcal{P}_i\sqcup\mathcal{W}_i$.
	\item $|Y_i(U)|\geq 2^{-i}\lambda |U|$ for each $U\in\mathcal{U}_i$.
	\item \itemizeEqnVSpacing
	\[
		\sum_{V\in\mathcal{P}_i\cup\mathcal{W}_i}\sum_{U\in\mathcal{U}_i[V]}|Y_i(U)| \gtrsim \log(\delta^{-1}\#\mathcal{U})^i\lambda^i \sum_{U\in\mathcal{U}}|U|.
	\]
\end{enumerate}
These six items are trivially satisfied when $i=0$. For the $i$-th step, begin by setting $\mathcal{W}_i = \mathcal{W}_{i-1}$, $\mathcal{P}_i = \emptyset$, and  $\mathcal{U}_i = \bigcup_{W\in \mathcal{W}_{i-1}} \mathcal{U}_{i-1}[W]$, $Y_i(U)=Y_{i-1}(U)$ for each $U\in \mathcal{U}_i$.

 For each $P\in\mathcal{P}_{i-1}$, apply Lemma \ref{findingSlabsLem} (with $2^{-(i-1)}\lambda$ in place of $\lambda$) to each collection $\mathcal{U}_{i-1}^P=\phi_P( \mathcal{U}_{i-1}[P])$ of ellipsoids, and their associated shadings $\phi_P(Y_{i-1}(U))$. We obtain a collection of slabs $\mathcal{S}$, a set $\mathcal{U}_{i-1}'[P]\subset \mathcal{U}_{i-1}[P]$, and a shading, which we denote by $Y_i(U)$,  on the ellipsoids in $\mathcal{U}_{i-1}'[P]$. Add the ellipsoids in $\mathcal{U}_{i-1}'[P]$ and their associated shading $Y_i(U)$ to $\mathcal{U}_i$.   Next, we consider each slab $S\in\mathcal{S}$ in turn.
\begin{itemize}
	\item If a slab $S\in\mathcal{S}$ has thickness $\geq\delta^{\eps}$, then this corresponds to a convex set $W = P\cap \phi_P^{-1}(S)$ for which $\FS\big((\mathcal{U}_{i-1}'[P])^W\big)\lesssim\delta^{-\eps}$. Add this set to $\mathcal{W}_i$.

	\item If a slab $S\in\mathcal{S}$ has thickness $\leq \delta^{\eps}$, then this corresponds to a convex set $P' = P\cap \phi_P^{-1}(S)$ for which 
	\[ 
	|P'|\leq\delta^{\eps}|P| \leq \delta^\eps \big(\delta^{(i-1)\eps}|B(0,1)|\big) = \delta^{i\eps}|B(0,1)|.
	\]
	Add this set to $\mathcal{P}_i$.
\end{itemize}
After this procedure has been performed for each $P\in\mathcal{P}_{i-1}$, Properties 1 and 2 are immediate, while Properties 3-6 follow from their counterparts in Lemma \ref{findingSlabsLem}.

We halt the process when the output $\mathcal{P}_N = \emptyset$. Since each $U\in\mathcal{U}$ has volume at least $\delta^n$, the above procedure must halt after at most $n/\eps$ steps. We let $\mathcal{W}=\mathcal{W}_N$, $\mathcal{U}'=\mathcal{U}_N$, and $Y'(U) = Y_N(U)$. To obtain \eqref{deltaEpsFractionMassPreserved}, $\eta$ must be selected sufficiently small so that $\delta^{N\eta}\leq \delta^{\eps}$, i.e. $\eta \sim \eps^2/n$ and $K\sim n/\eps$ will suffice.
\end{proof}


\subsection{The Frostman Slab Wolff Axioms and Covers}\label{frostmanSlabTransitiveUnderCoversSec}
In this section we will state and prove a precise version of Remark \ref{FrostmanWolffInheritedUpwardsDownwards}(C). We first consider the Frostman Slab Wolff Axioms. 
\begin{lem}\label{frostmanSlabTransitiveUnderCovers}
Let $W\subset \RR^n$ be a convex set and let $\mathcal{U}$ and $\mathcal{V}$ be collections of convex subsets of $W$, with $\mathcal{U}\prec\mathcal{V}$ and $\#\mathcal{U}[V]\leq K(\#\mathcal{U})/(\#\mathcal{V})$ for each $V\in\mathcal{V}$. Suppose that each set in $\mathcal{U}$ has the same volume, and similarly for $\mathcal{V}$. Finally, suppose that each set in $\mathcal{U}^W$ has diameter $\geq 1/100$. 

Then
\begin{equation}\label{boundFSCalUW}
\FS(\mathcal{U}^W)\lesssim \log(2+ |U^W|^{-1}) K\Big(\sup_{V\in\mathcal{V}}\FS(\mathcal{U}^V)\Big)\Big(\FS(\mathcal{V}^W)\Big).
\end{equation}
\end{lem}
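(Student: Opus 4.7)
After applying $\phi_W$ I reduce to the case $W=B(0,1)$, so it suffices to show, for every slab $S\subset B(0,1)$ of thickness $s$,
\[
\sum_{U\in\mathcal{U}[S]}|U|\;\lesssim\;\log(2+|U|^{-1})\,K\,\bigl(\sup_{V}\FS(\mathcal{U}^V)\bigr)\,\FS(\mathcal{V})\,|S|\sum_{U\in\mathcal{U}}|U|.
\]
My plan is a dyadic two-scale argument. First, assign each $U\in\mathcal{U}$ to some $V(U)\in\mathcal{V}$ containing it (possible by $\mathcal{U}\prec\mathcal{V}$), and dyadically group the $V$'s by $t_V:=|V\cap S|/|V|$. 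Since $V\cap S$ must contain some $U$, we have $t_V\geq|U|/|V|\geq|U|$, so there are only $O(\log(2+|U|^{-1}))$ dyadic levels to consider.

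At a fixed level $t$, Lemma~\ref{ellipsoidLem} confines every $V\in\mathcal{V}_t$ to a common slab $\widetilde{S}_t$ parallel to $S$ of thickness $\lesssim_n s/t$, with $|\widetilde{S}_t|\lesssim_n |S|/t$ by direct slab-ball volume computation (uniformly in the distance $d_0$ of $H_S$ to the origin). Applying $\FS(\mathcal{V})$ to $\widetilde{S}_t$ gives $\#\mathcal{V}_t\lesssim_n \FS(\mathcal{V})(|S|/t)\#\mathcal{V}$. Inside each $V\in\mathcal{V}_t$ I apply $\FS(\mathcal{U}^V)$ in the rescaled frame, bounding $\sum_{U\in\mathcal{U}[V][S]}|U|\leq \FS(\mathcal{U}^V)\,|\phi_V(S)\cap B(0,1)|\sum_{U\in\mathcal{U}[V]}|U|$, and use the cover hypothesis $\#\mathcal{U}[V]\leq K\#\mathcal{U}/\#\mathcal{V}$. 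Assembling per level and summing over the dyadic levels yields the claim \emph{provided} $|\phi_V(S)\cap B(0,1)|\lesssim_n t_V$ for every $V$ with $\mathcal{U}[V][S]\neq\emptyset$.

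This geometric comparison is the crux and the main obstacle. The hypothesis $\mathrm{diam}(U^W)\geq 1/100$ combined with John's theorem on $V$ (which bounds the longest semi-axis $\ell_n$ of $V$'s outer John ellipsoid by $n$) forces $\mathrm{diam}(\phi_V(U))\gtrsim_n 1$, so $C_V:=\phi_V(V)\cap\phi_V(S)$ is a convex set of diameter $\gtrsim_n 1$ and volume $\sim_n t_V$ (using $(1/n)B(0,1)\subset\phi_V(V)\subset B(0,1)$, from John applied to $\phi_V(V)$). If the hyperplane of $\phi_V(S)$ lies within $1/(2n)$ of the origin, then the inscribed $(1/n)B(0,1)\subset\phi_V(V)$ supplies enough of $C_V$ to give $|C_V|\gtrsim_n |\phi_V(S)\cap B(0,1)|$ by explicit slab-ball volume comparison. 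Otherwise, the diameter lower bound on $C_V$ forces the slab's hyperplane to sit at distance at most $1-c_n$ from the origin, and the balanced Loewner contact points of $\phi_V(V)$ on $\partial B(0,1)$ force $\phi_V(V)$ to extend into the slab region in a controlled fashion, recovering the comparison. Cross-polytope-type examples near a vertex show the inequality genuinely fails without the diameter hypothesis, so this second case is where the diameter assumption is essential and the geometric analysis the most delicate.
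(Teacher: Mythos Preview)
Your scheme is exactly the paper's: rescale to $W=B(0,1)$, dyadically decompose in $t_V=|V\cap S|/|V|$, confine the $V$'s at level $t$ to a parallel slab of volume $\sim t^{-1}|S|$ via Lemma~\ref{ellipsoidLem} (using the diameter hypothesis to get linear volume growth of $N_{s/t}(H)\cap B(0,1)$, precisely as the paper does), bound $\#\mathcal{U}[V\cap S]$ by $\FS(\mathcal{U}^V)$ inside each $V$, and sum over the $O(\log(2+|U|^{-1}))$ dyadic levels. You are in fact more scrupulous than the paper at one point: in the inner step the paper writes $|(V\cap S)^V|$ in the $\FS$ bound, but $(V\cap S)^V=\phi_V(V)\cap\phi_V(S)$ is not a slab in the sense of the definition, so one really needs $|\phi_V(S)\cap B(0,1)|\lesssim_n t_V$, which you correctly isolate as the crux and as the place the diameter hypothesis enters a second time.

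Your argument for this comparison in the ``far'' case is not a proof, and the tool you reach for is the wrong one. The Loewner contact points of $\phi_V(V)$ lie on $\partial B(0,1)$, typically outside the slab; their isotropy only guarantees that $\phi_V(V)$ reaches $\partial B(0,1)$ in each direction at a \emph{single} point, and the cone from any such point over $(1/n)B$ is far too thin inside a distant slab to control $|\phi_V(S)\cap B(0,1)|$. Your own triangle example shows this: for $n=2$, $\phi_V(V)$ the equilateral triangle inscribed in $B(0,1)$ with a vertex at $(1,0)$, and $N=\{|x-d|\le s'\}$, one has $|\phi_V(V)\cap N|\sim s'(1-d)$ while $|B\cap N|\sim s'\sqrt{1-d}$, and the contact-point-over-ball cone gives exactly the same $s'(1-d)$; nothing about the three contact points prevents the ratio from blowing up as $d\to1$. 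What actually closes the argument is that $\diam(C_V)\gtrsim_n 1$ directly forces $1-(d-s')\gtrsim_n 1$, so either $s'\gtrsim_n 1$ or $1-d\gtrsim_n 1$; one must then check each regime by an explicit convex-geometry computation (for instance via Brunn's concavity of $x_n\mapsto |\phi_V(V)\cap\{x_n\}|^{1/(n-1)}$, using both the anchor $|\phi_V(V)\cap\{x_n=0\}|\gtrsim_n 1$ from $(1/n)B\subset\phi_V(V)$ \emph{and} a large-diameter cross-section of $C_V$ coming from the points $p,q$). This is a short but genuine step, and the contact-point heuristic does not substitute for it.
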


\begin{proof}
First, to simplify notation we may suppose wlog that $W = B(0,1)$; indeed, both the hypotheses and conclusion of Lemma \ref{frostmanSlabTransitiveUnderCovers} remain unchanged if we replace $W$ by $W^W$ (the latter is comparable to $B(0,1)$); replace $\mathcal{U}$ by $\mathcal{U}^W$; and replace $\mathcal{V}$ by $\mathcal{V}^W$. In particular, each set in $\mathcal{U}$ now has diameter $\geq 1/100$.

Fix a truncated, thickened hyperplane $S=N_s(H)\cap B(0,1),$ with $\mathcal{U}[S]\neq\emptyset$ (so in particular $s\geq |U|/K_n$, where $K_n$ is a constant depending only on the dimension $n$). We may suppose that $s\leq 2,$ since otherwise we can replace $N_s(H)$ by a hyperplane of the form $N_2(H')$, which has the same intersection with $B(0,1)$.

Since $\mathcal{U}\prec\mathcal{V}$, we have
\begin{equation}\label{boundCalUWS}
\mathcal{U}[S] = \bigcup_{V\in\mathcal{V}}\mathcal{U}[V \cap S]= \bigcup_{t\ \textrm{dyadic}}\bigcup_{\substack{V\in\mathcal{V} \\ |V\cap S|\sim t|V|}}\mathcal{U}[V \cap S],
\end{equation}
where the first union ranges over dyadic values of $t$ between $|U||B(0,1)|^{-1}$ and $1$ (this range is sufficient, since if $|V\cap S|< |U||B(0,1)|^{-1}|V| < |U|$, then $\mathcal{U}[V\cap S]=\emptyset$). Observe that there are $\lesssim \log(2+|U|^{-1})$ dyadic values of $t$ in this range.

Since $\mathcal{U}[S]\neq\emptyset$ and each element of $\mathcal{U}$ has diameter $\geq 1/100$, we have $\diam(S\cap B(0,1))\geq 1/100$, and hence 
\begin{equation}\label{linearGrowthInTheta}
|N_{s/t}(H)\cap B(0,1)|\sim s/t \sim t^{-1}|S|\quad\textrm{for all}\ t\in [s,1]. 
\end{equation}

Next, let $V\in\mathcal{V}$ with $|V\cap S|\sim t|V|$. This means that $|(V\cap S)^V|\sim t$. We have

\begin{equation}\label{numberOfUInsideVThetaIntersection}
\begin{split}
\#\mathcal{U}[V\cap S]
&=\#\{U\in\mathcal{U}\colon U\subset V\cap S\} \\
&=\#\{U\in\mathcal{U}[V]\colon U\subset V\cap S\} \\
&=\#\{U^V\in\mathcal{U}^V\colon U^V \subset (V\cap S)^V\} \\
& \leq \FS(\mathcal{U}^V)|(V\cap S)^V| (\#\mathcal{U}[V])\\
& \lesssim t  \Big(\sup_{V\in\mathcal{V}}\FS(\mathcal{U}^V)\Big)\Big(K \frac{\#\mathcal{U}}{\#\mathcal{V}}\Big).
\end{split}
\end{equation}

On the other hand, by Lemma \ref{ellipsoidLem} we have
\begin{equation}\label{controlNumberVWithThetaIntersection}
\begin{split}
\#\{V\in\mathcal{V}\colon |V\cap S|\sim t|V|\} & \leq \#\{V\in\mathcal{V}\colon V\subset N_{K_n s/t}(H) \}\\ 
&\lesssim \FS(\mathcal{V})|B(0,1)\cap N_{K_n s/t}(H)|(\#\mathcal{V})\\
& \lesssim t^{-1}\FS(\mathcal{V})|S|(\#\mathcal{V}),
\end{split}
\end{equation}
where the final inequality used \eqref{linearGrowthInTheta}.

Using \eqref{numberOfUInsideVThetaIntersection} and \eqref{controlNumberVWithThetaIntersection} to control the cardinality of the union \eqref{boundCalUWS}, we conclude that
\begin{equation*}
\begin{split}
\#\mathcal{U}[S] &\lesssim \sum_{t\ \textrm{dyadic}}\Big( t^{-1}\FS(\mathcal{V})|S|(\#\mathcal{V})\Big)\Big( t  \big(\sup_{V\in\mathcal{V}}\FS(\mathcal{U}^V)\big)\big(K \frac{\#\mathcal{U}}{\#\mathcal{V}}\big)\Big)\\
&\lesssim K_1 |S| (\#\mathcal{U}),\quad K_1 = \log(2+ |U|^{-1}) K\Big(\sup_{V\in\mathcal{V}}\FS(\mathcal{U}^V)\Big)\Big(\FS(\mathcal{V})\Big).
\qedhere
\end{split}
\end{equation*}
\end{proof}

Next we consider Remark \ref{FrostmanWolffInheritedUpwardsDownwards}(C) for the Katz-Tao Convex Wolff Axioms. We will restrict attention to the special case where the convex sets in question are tubes.

\begin{lem}\label{KTWSubMultiplicativeTubes}
Let $0<\delta\leq \rho \leq 1$. Let $\tubes$ be a multiset of $\delta$-tubes and let $\tubes_\rho$ be a cover of $\tubes$. Then
\begin{equation}\label{CKTTubesBd}
\CKT(\tubes)\lesssim  \Big(\sup_{T_\rho\in\tubes_\rho}\CKT(\tubes^{T_\rho})\Big)\Big(\CKT(\tubes_\rho)\Big).
\end{equation}
\end{lem}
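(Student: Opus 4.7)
My plan is to establish the pointwise inequality $\#\tubes[W] \lesssim C_1 C_2 |W|/|T|$ for every convex $W \subset \RR^3$, where $C_1 = \sup_{T_\rho \in \tubes_\rho} \CKT(\tubes^{T_\rho})$ and $C_2 = \CKT(\tubes_\rho)$. Since $\tubes$ lies in the unit ball I would first replace $W$ by $W \cap B(0,1)$ without changing $\tubes[W]$, after which $W$ has longest dimension $c \leq 2$; in the only non-trivial case $\tubes[W]\neq\emptyset$, $W$ also contains a unit segment, so $c \geq 1 \geq \rho$. Write the John ellipsoid dimensions of $W$ as $a \leq b \leq c$.

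The first ingredient is to transfer from $\delta$-scale to $\rho$-scale. I would set $W_+ = W + B(0, 10\rho)$ and observe that any $T_\rho \in \tubes_\rho$ which contains some $T \in \tubes[W]$ is forced into $W_+$, because the core of $T_\rho$ must lie within $O(\rho)$ of the core of $T \subset W$. For each $T \in \tubes[W]$ pick some such $T_\rho(T)$, and group the sum by this assignment. Applying $\CKT(\tubes^{T_\rho}) \leq C_1$ to the rescaled convex set $\phi_{T_\rho}(T_\rho \cap W)$ and using $|\phi_{T_\rho}(T_\rho \cap W)|/|T^{T_\rho}| = |T_\rho \cap W|/|T|$ yields
\[
\#\tubes[W] \;\leq\; \frac{C_1}{|T|}\sum_{T_\rho \in \mathcal{T}_\rho^*} |T_\rho \cap W|,\qquad \mathcal{T}_\rho^* := \{T_\rho(T) : T \in \tubes[W]\} \subset \tubes_\rho[W_+].
\]
From $\CKT(\tubes_\rho) \leq C_2$ applied to $W_+$, one also has $\#\mathcal{T}_\rho^* \leq C_2 |W_+|/|T_\rho|$. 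The remaining task is to show $\sum_{T_\rho \in \mathcal{T}_\rho^*}|T_\rho \cap W| \lesssim C_2 |W|$.

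I would handle this by a three-way case split on how $a$ and $b$ compare to $\rho$. In the ``fat'' case $a \geq \rho$, $|W_+| \sim |W|$ and the trivial bound $|T_\rho \cap W| \leq |T_\rho| \sim \rho^2$ gives the claim. In the ``very thin'' case $b < \rho$, one has $|W_+| \sim \rho^2 c$, so $\#\mathcal{T}_\rho^* \lesssim C_2 c = O(C_2)$, and the trivial pointwise bound $|T_\rho \cap W| \leq |W|$ closes the case. The most delicate is the intermediate case $a < \rho \leq b$: here $|W_+| \sim \rho bc$ costs a factor $\rho/a$ over $|W|$, so one cannot afford the bound $|T_\rho \cap W|\leq |T_\rho|$ and must do better.

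I expect this intermediate case to be the main obstacle. The key geometric input is that the projection of a $\rho$-tube onto the plane perpendicular to $W$'s shortest axis has area $\lesssim \rho^2 \sin\alpha + \rho \cos\alpha \lesssim \rho$, independent of the angle $\alpha$ between the tube's direction and that axis. Multiplying by the thickness $a$ of $W$ in that direction gives $|T_\rho \cap W| \lesssim a\rho$, and the resulting gain $a/\rho$ exactly cancels the loss $\rho/a$ from the enlargement, so again $\sum |T_\rho \cap W| \lesssim C_2 abc = C_2 |W|$. This projection computation is the only step that uses the specific geometry of tubes (as opposed to more general convex sets), which is consistent with the lemma being stated only for tubes.
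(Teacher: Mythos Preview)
Your proof is correct and follows essentially the same approach as the paper. Both arguments reduce to bounding $\#\tubes[W]$ by decomposing over $T_\rho$'s that contain some $T\in\tubes[W]$, passing to the $\rho$-enlargement of $W$ to count such $T_\rho$'s via $\CKT(\tubes_\rho)$, and bounding each $\#\tubes[T_\rho\cap W]$ via $\CKT(\tubes^{T_\rho})$ together with a volume estimate $|T_\rho\cap W|\lesssim \min(a,\rho)\min(b,\rho)$.

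The only difference is presentational: the paper sets $\tilde a=\min(a,\rho)$, $\tilde b=\min(b,\rho)$ and observes uniformly that $T_\rho\cap W$ is bounded by a $2\tilde a\times 2\tilde b\times 1$ box (hence $|T_\rho\cap W|\lesssim\tilde a\tilde b$) and $|N_{3\rho}(W)|\sim(\rho/\tilde a)(\rho/\tilde b)|W|$, so the two factors cancel directly. Your three-way case split recovers exactly the same numerology; your projection argument in the intermediate case $a<\rho\le b$ (giving $|T_\rho\cap W|\leq a\cdot|\pi(T_\rho)|\lesssim a\rho$) is a valid alternative derivation of $|T_\rho\cap W|\lesssim\tilde a\tilde b=a\rho$.
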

\begin{proof}
Let $W\subset\RR^3$ be a convex set with $\tubes[W]\neq\emptyset$. Replacing $W$ by $W\cap B(0,1)$ and then enlarging $W$ by a constant factor, we may assume that $W$ is a prism of dimensions $a\times b\times 2$. Since $\tubes_\rho$ covers $\tubes$, we have 
\begin{equation}\label{decomposeTubesW}
\tubes[W]=\bigcup_{T_\rho\in\tubes_\rho}(\tubes[T_\rho])[W]=\bigcup_{T_\rho\in\tubes_\rho}\tubes[T_\rho\cap W].
\end{equation}
Observe that if $\tubes[T_\rho\cap W]\neq\emptyset$, then $T_\rho\cap W$ must contain a unit line segment, and thus $T_\rho\subset N_{3\rho}(W)$.

Let $\tilde a =\min(a,\rho)$ and $\tilde b = \min(b,\rho)$. Observe that 
\[
|N_{3\rho}(W)|\sim \big(\frac{\rho}{\tilde a}\big)\big(\frac{\rho}{\tilde b}\big) |W|,
\]
and thus
\begin{equation}\label{numberTubesN3rhoW}
\#\tubes_\rho[N_{3\rho}(W)]  \leq  \CKT(\tubes_\rho) \frac{|N_{3\rho}W|}{|T_\rho|} \lesssim \CKT(\tubes_\rho) \frac{|W|}{\tilde a \tilde b}.
\end{equation}
On the other hand, if $\tubes[T_\rho\cap W]$ is non-empty, then $T_\rho\cap W$ is a convex set of dimensions bounded by  $2\tilde a \times2 \tilde b \times 1$, and thus
\begin{equation}\label{numerRescaledTubesInRhoW}
\#\tubes[T_\rho\cap W] \lesssim \CKT(\tubes[T_\rho])\frac{|T_\rho\cap W|}{|T|}\lesssim \CKT(\tubes^{T_\rho})\frac{\tilde a\tilde b}{|T|}.
\end{equation}
\eqref{CKTTubesBd} now follows by combining \eqref{decomposeTubesW}, \eqref{numberTubesN3rhoW}, and \eqref{numerRescaledTubesInRhoW}.
\end{proof}


\section{Factoring tubes into flat prisms}\label{factoringTubesSection}

In this section, we will explore what happens when Proposition \ref{factoringConvexSetsProp} is applied to a set $\tubes$ of $\delta$-tubes. Recall that Proposition \ref{factoringConvexSetsProp} outputs a refinement $\tubes'\subset\tubes$ and a set $\mathcal{W}$ of convex sets. If $\CFC(\tubes)=1$, then $\mathcal{W}=\{B(0,1)\}$. On the other hand, if $\CKT(\tubes)=1$, then $\mathcal{W}=\tubes$. If both $\CKT(\tubes)$ and $\CFC(\tubes)$ are large, then $\mathcal{W}$ will consist of a collection of convex sets, each of which are comparable to a rectangular prism of dimensions $a\times b\times 1,$ for some $\delta\leq a\leq b\leq 1$. The goal of this section is to explore the following theme: if the prisms in $\mathcal{W}$ are flat, in the sense that $a <\!\!<b$, then the union $\bigcup T$ will have larger volume than predicted by the estimate \eqref{defnCEEstimate} from Assertion $\cE(\sigma,\omega)$. The precise statement is as follows.

\begin{prop}\label{aLLbProp}
Let $\omega>0,\ 0<\sigma\leq 2/3$, and suppose $\cE(\sigma,\omega)$ is true. Then for all $\eps>0$, there exists $\kappa,\eta>0$ so that the following holds for all $\delta>0$. Let $(\tubes,Y)_{\delta}$ be $\delta^{\eta}$ dense. Let $\delta\leq a\leq b\leq 1$, and let $\mathcal{W}$ be a $\delta^{-\eta}$ balanced cover of $\tubes$ consisting of congruent copies of an $a\times b\times 2$ prism.

\medskip

\noindent (A) 
Suppose that $\mathcal{W}$ factors $\tubes$ from below with respect to the Frostman Convex Wolff axioms and from above with respect to the Katz-Tao Convex Wolff axioms, both with with error $\delta^{-\eta}$. Suppose as well that $\mathcal{W}$ is a $\delta^{-\eta}$-balanced, $\delta^{-\eta}$-almost partitioning  cover of $\tubes$, and that
$\#\tubes[W] \geq  \delta^{\eta}\CKT(\tubes)\frac{|W|}{|T|}$  for each $W\in \mathcal{W}$ (this condition is satisfied, for example, if $\mathcal{W}$ is the output when Proposition \ref{factoringConvexSetsProp} is applied to $\tubes$). Then
\begin{equation}\label{eq: 5.1}
\Big|\bigcup_{T\in\tubes}Y(T)\Big| \geq \kappa \delta^{\omega+\eps}\Big(\frac{b}{a}\Big)^{\omega} m^{-1}(\#\tubes)|T|\big(m^{-3/2}\ell (\#\tubes)|T|^{1/2}\big)^{-\sigma},
\end{equation}
where $m = \CKT(\tubes)$ and $\ell = \FS(\tubes)$.
\medskip

\noindent (B)
Suppose that $\mathcal{W}$ factors $\tubes$ from above and below with respect to the Frostman Convex Wolff Axioms, both with error $\leq\delta^{-\eta}$. Suppose as well that $\mathcal{W}$ satisfies the Katz-Tao Convex Wolff axioms at scale $b$ in the following sense: for all $W\in\mathcal{W}$ we have $\CKT(\mathcal{W}[N_b(W)])\leq\delta^{-\eta}$. Then
\begin{equation}\label{PropALLbPropConclusionBEstimate}
\Big|\bigcup_{T\in\tubes}Y(T)\Big| \geq \kappa \delta^{\omega+\eps}\Big(\frac{b}{a}\Big)^{\omega}\big((\#\tubes)^{1/2}|T|\big)^{\sigma}.
\end{equation}
 \end{prop}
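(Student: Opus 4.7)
The estimate \eqref{eq: 5.1} differs from the bound that $\cE(\sigma,\omega)$ provides directly for $\tubes$ by exactly a factor of $(b/a)^\omega$. Since $b^\omega\cdot(\delta/a)^\omega = \delta^\omega(b/a)^\omega$, this strongly suggests a two-scale application of $\cE(\sigma,\omega)$: once at the coarse scale $b$ (where $\mathcal W$ lives) and once at the fine scale $\delta/a$ (inside each rescaled $W$). The overall strategy therefore mirrors the proof of Proposition \ref{equivDE} sketched in Section \ref{proofOfPropEquivDESecIntro}, with $\mathcal{W}$ playing the role of the intermediate-scale cover.

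\textbf{Part (A): the two-scale argument.} The plan is, first, to apply the convex-set generalization of $\cE(\sigma,\omega)$ alluded to in Remark (B) after Definition \ref{KatzTaoAndFrostmanTubesDefn} to the cover $\mathcal W$ itself, treated as a family of congruent $a\times b\times 2$ prisms. The hypothesis $\CKT(\mathcal W)\leq\delta^{-\eta}$ is given; a bound on $\FS(\mathcal W)$ is obtained from $\FS(\tubes)$ by sub-multiplicativity along the cover (Lemma \ref{frostmanSlabTransitiveUnderCovers}); and the Frostman Slab hypothesis of $\cE$ is arranged by first refining $\mathcal{W}$ with Proposition \ref{slabWolffFactoring}. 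This outputs an estimate for $|\bigcup_{W\in\mathcal W}W|$ of order $b^{\omega+\eps}$ times the combinatorial factors from $\cE$. Second, for each $W\in\mathcal W$, apply $\phi_W$ to send $W$ to the unit ball; the rescaled family $\tubes^W$ consists of $\delta/a\times\delta/b\times 1$ prisms satisfying $\CFC(\tubes^W)\leq\delta^{-\eta}$. Using Proposition \ref{equivDE} (so that we may work with $\cD$ in place of $\cE$) and the generalized version of $\cD(\sigma,\omega)$ for convex sets, obtain a lower bound of order $(\delta/a)^{\omega+\eps}$ for $|\bigcup_{U\in\tubes^W}U|$, with combinatorial factors matching those in \eqref{eq: 5.1}. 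Third, undo the rescaling, multiply the coarse and fine estimates together (using the fact that the cover is $\delta^{-\eta}$-almost partitioning, so that $|\bigcup_{T\in\tubes}Y(T)|$ is comparable to $\sum_{W\in\mathcal W}|\bigcup_{T\in\tubes[W]}Y(T)|$), and collect the factors $b^\omega\cdot(\delta/a)^\omega = \delta^\omega(b/a)^\omega$. The cardinality bookkeeping is arranged by $\#\tubes[W]\gtrsim\delta^\eta m|W|/|T|$ together with the balanced-cover hypothesis, which guarantees that $(\#\mathcal W)|W|$ and $m^{-1}(\#\tubes)|T|$ match to within a $\delta^{-O(\eta)}$ factor.

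\textbf{Part (B) and main obstacle.} For Part (B) the global Katz-Tao bound on $\mathcal W$ is absent, but is replaced by a local one at scale $b$: $\CKT(\mathcal W[N_b(W)])\leq\delta^{-\eta}$. The plan is to replicate the two-scale argument, but to perform the coarse-scale application of $\cE(\sigma,\omega)$ inside each $N_b(W)$ separately and then piece the contributions together using $\CFC(\mathcal W)\leq\delta^{-\eta}$ (the factoring from above). The absence of $m$ and $\ell$ on the right-hand side of \eqref{PropALLbPropConclusionBEstimate} reflects the fact that these weaker hypotheses do not control them; the coarse-scale $\cE$-estimate contributes only $((\#\mathcal W)^{1/2}|W|)^\sigma$, and combining with the fine-scale contribution produces the claimed form. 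The main technical obstacle throughout is the interaction of the Katz-Tao and Frostman Convex Wolff constants across scales, since the former is inherited downward and the latter upward (Remark \ref{FrostmanWolffInheritedUpwardsDownwards}); Proposition \ref{factoringConvexSetsProp} is tailored to give a cover for which \emph{both} behave well simultaneously, which is what makes the two-scale application possible. Secondary obstacles are the need to preserve $\delta^\eta$-density through each refinement (invoking Proposition \ref{slabWolffFactoring} for the Frostman Slab condition) and to verify that the generalized $\cE$/$\cD$ for convex sets applies when the rescaled fine-scale prisms $\tubes^W$ are genuinely non-square (i.e.\ $a\ll b$).
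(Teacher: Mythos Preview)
Your two-scale structure is correct, but the mechanism for combining the coarse and fine estimates has a gap. You write that ``the cover is $\delta^{-\eta}$-almost partitioning, so that $|\bigcup_T Y(T)|$ is comparable to $\sum_W|\bigcup_{\tubes[W]}Y(T)|$''; but almost-partitioning controls only how many $W$'s contain a given \emph{tube}, not whether the unions $\bigcup_{\tubes[W]}Y(T)$ are disjoint for different $W$. In general these unions can overlap heavily, so only the trivial inequality $|\bigcup_T Y(T)|\le\sum_W|\cdot|$ holds --- the wrong direction. The paper uses a density-regularization trick instead: refine the shading so that around each surviving point $x$ the set $\bigcup_T Y(T)$ has density $\sim\lambda$ on the ball $B(x,a)$; bound $\lambda$ from below using the fine-scale estimate; then apply the coarse-scale estimate not to $W$ itself but to the $a$-blurred shading $\tilde Y(W)=W\cap N_a\big(\bigcup_{\tubes[W]}Y(T)\big)$. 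This yields $|\bigcup_T Y(T)|\gtrsim\lambda\cdot|\bigcup_W\tilde Y(W)|$, which is the correct multiplicative combination. A separate step --- a Cordoba-type $L^2$ argument using the Frostman Convex Wolff hypothesis inside each $W$ --- is needed to show that $\tilde Y(W)$ is $\delta^{O(\eta)}$-dense, so that the coarse-scale estimate applies at all.

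You also underestimate what you call a ``secondary obstacle''. Both the fine-scale family $\tubes^W$ (prisms of dimensions $\delta/b\times\delta/a\times 1$) and the coarse-scale family $\mathcal W$ ($a\times b\times 2$ prisms) are genuinely non-square, so neither $\cE$ nor $\cD$ applies directly; the paper passes through Assertion $\cF(\sigma,\omega)$ (Definition \ref{defnF}, equivalent to $\cE$ by Proposition \ref{EiffF}), and the extra quantity $D$ there must be controlled at both scales. For the fine scale $D\lesssim 1$ because $\tubes^W$ arises from rescaled tubes inside a prism (Remark \ref{discussionOfDRemark}, Situation 3). For the coarse scale, in Part (A) one controls $D$ via $\CKT(\mathcal W)\le\delta^{-\eta}$, while in Part (B) one controls it via the local hypothesis $\CKT(\mathcal W[N_b(W)])\le\delta^{-\eta}$ (Situation 2). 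This is precisely how the local Katz--Tao hypothesis in Part (B) enters --- not by localizing the coarse-scale application to each $N_b(W)$ as you propose, but through the $D$-term in a single global application of $\cF$. Finally, your invocation of Proposition \ref{equivDE} is circular: that proposition is proved later (Section \ref{cEIffcDSec}) using the present one.
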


\noindent Note that \eqref{PropALLbPropConclusionBEstimate} agrees with \eqref{eq: 5.1} when $\CKT(\tubes)= (\#\tubes) |T|$, in which case both $\CFC(\tubes)$ and $\FS(\tubes)$ have size $\sim 1$. 

In Section \ref{refinedInductionOnScaleSec} we will need the following mild generalization of Proposition \ref{aLLbProp}(A).

\begin{prop}\label{PropALLbPropGen}
Let $\omega>0,\ 0<\sigma\leq 2/3$, and suppose $\cE(\sigma,\omega)$ is true. Then for all $\eps>0$, there exists $\kappa,\eta>0$ so that the following holds for all $0<\delta\leq\rho\leq a\leq b\leq 1$. Let $(\tubes,Y)_{\delta}$ be $\delta^{\eta}$ dense and let $\tubes_\rho$ be a $\delta^{-\eta}$ balanced cover of $\tubes$. Let $\mathcal{W}$ be a $\delta^{-\eta}$ balanced cover of $\tubes_\rho$ consisting of congruent copies of an $a\times b\times 2$ prism.

Suppose that $\tubes_\rho$ factors $\tubes$ from below with respect to the Frostman Slab Wolff Axioms with error $\ell'$. Suppose that $\mathcal{W}$ factors $\tubes_\rho$ from above with respect to the Katz-Tao Convex Wolff axioms and from below with respect to the Frostman Convex Wolff axioms, both with error $\delta^{-\eta}$. Then
\begin{equation}
\Big|\bigcup_{T\in\tubes}Y(T)\Big| \geq \kappa \delta^{\omega+\eps}\Big(\frac{b}{a}\Big)^{\omega} m^{-1}(\#\tubes)|T|\big(m^{-3/2}\ell \ell' (\#\tubes)|T|^{1/2}\big)^{-\sigma},
\end{equation}
where $m = \CKT(\tubes)$ and $\ell = \FS(\tubes)$.
\end{prop}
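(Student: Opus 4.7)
The plan is to deduce Proposition \ref{PropALLbPropGen} from Proposition \ref{aLLbProp}(A) applied at scale $\rho$ to the $\rho$-tube cover $\tubes_\rho$ with the same cover $\mathcal{W}$ of flat prisms. The extra factor $\ell'$ in the conclusion tracks the Frostman Slab Wolff information on the rescaled fine-scale collections $\tubes^{T_\rho}$ and enters the density check needed to invoke Proposition \ref{aLLbProp}(A) at scale $\rho$.

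First I would perform a pigeonhole/refinement pass, shrinking $(\tubes,Y)$ to $(\tubes',Y')$ and $\tubes_\rho$ to $\tubes_\rho'$ (losing only factors of $\delta^{-O(\eta)}$), so that $\mathcal{W}$ is an almost-partitioning, balanced cover of $\tubes_\rho'$ and satisfies the lower bound $\#\tubes_\rho'[W]\geq \delta^\eta\CKT(\tubes_\rho')|W|/|T_\rho|$ for each $W\in\mathcal{W}$; the balanced-cover hypothesis together with the Katz--Tao Convex Wolff error bound $\CKT(\mathcal{W})\leq \delta^{-\eta}$ yields this density. Define the shading $Y_\rho(T_\rho):=\bigcup_{T\in\tubes'[T_\rho]} Y'(T)$, so that $\bigcup_T Y'(T)=\bigcup_{T_\rho} Y_\rho(T_\rho)$. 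To verify that $(\tubes_\rho',Y_\rho)_\rho$ is $\rho^{\eta}$-dense, I would apply Assertion $\cE(\sigma,\omega)$ to the rescaled pair $(\tubes'^{T_\rho},Y'^{T_\rho})_{\delta/\rho}$ for each $T_\rho$; the hypothesis that $\tubes_\rho$ factors $\tubes$ from below with respect to FSW gives $\FS(\tubes'^{T_\rho})\leq \ell'$, which plays the role of the slab error in that application and produces a quantitative lower bound on $|Y_\rho(T_\rho)|/|T_\rho|$.

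I would then apply Proposition \ref{aLLbProp}(A) at scale $\rho$ to $(\tubes_\rho',Y_\rho)_\rho$ with cover $\mathcal{W}$ of $a\times b\times 2$ prisms (valid since $\rho\leq a\leq b\leq 1$), obtaining the bound
\begin{equation*}
\Big|\bigcup_{T_\rho} Y_\rho(T_\rho)\Big|\;\gtrsim\;\rho^{\omega+\eps'}(b/a)^{\omega}\, m_\rho^{-1}(\#\tubes_\rho)|T_\rho|\, \big(m_\rho^{-3/2}\ell_\rho (\#\tubes_\rho)|T_\rho|^{1/2}\big)^{-\sigma},
\end{equation*}
where $m_\rho:=\CKT(\tubes_\rho')$ and $\ell_\rho:=\FS(\tubes_\rho')$. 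The final arithmetic step converts this $\rho$-scale bound into the claimed $\delta$-scale bound using Lemma \ref{KTWSubMultiplicativeTubes} (submultiplicativity of the Katz--Tao Convex Wolff constant along tube covers), Lemma \ref{frostmanSlabTransitiveUnderCovers} (the analogue for $\FS$), the balanced-cover identities $\#\tubes\approx(\#\tubes_\rho)(\#\tubes[T_\rho])$ and $|T|/|T_\rho|=(\delta/\rho)^2$, and the trivial $\rho\geq\delta$.

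The main technical obstacle is the bookkeeping around $\ell_\rho$: Lemma \ref{frostmanSlabTransitiveUnderCovers} yields $\ell\lesssim \ell_\rho\ell'$, giving only a lower bound on $\ell_\rho$, whereas replacing $\ell_\rho$ by $\ell\ell'$ in the bracket $(m_\rho^{-3/2}\ell_\rho(\#\tubes_\rho)|T_\rho|^{1/2})^{-\sigma}$ (which carries a negative exponent) requires the reverse inequality. I would handle this by refining $\tubes_\rho'$ via Proposition \ref{slabWolffFactoring} applied at scale $\rho$ to cap its FSW constant at $\lesssim \ell\ell'$, then treating each resulting slab component separately and summing. A secondary issue is controlling the residual Katz--Tao constants $\CKT(\tubes^{T_\rho})$ when comparing $m$ with $m_\rho$; a dyadic pigeonhole on $\CKT(\tubes^{T_\rho})$ should allow this factor to be absorbed favorably into the final bound.
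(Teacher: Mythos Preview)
Your approach has a genuine gap. You propose to apply Proposition \ref{aLLbProp}(A) as a black box at scale $\rho$ to the pair $(\tubes_\rho',Y_\rho)_\rho$ with $Y_\rho(T_\rho)=\bigcup_{T\in\tubes'[T_\rho]}Y'(T)$, and then convert the resulting scale-$\rho$ bound to the desired scale-$\delta$ bound. Both halves of this plan fail. First, the density hypothesis of Proposition \ref{aLLbProp}(A) requires $(\tubes_\rho',Y_\rho)_\rho$ to be $\rho^{\eta_0}$-dense for a small $\eta_0$, but $|Y_\rho(T_\rho)|/|T_\rho|$ can be as small as $(\delta/\rho)^2$ (e.g.\ when each $\rho$-tube contains only a bounded number of $\delta$-tubes). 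Applying $\cE(\sigma,\omega)$ to $(\tubes'^{T_\rho},Y'^{T_\rho})$ gives a quantitative lower bound of the form $(\delta/\rho)^{\omega}\,(\cdots)$, which is generically far below $\rho^{\eta_0}$; it does not verify the threshold hypothesis. Second, even granting the density, the output of Proposition \ref{aLLbProp}(A) at scale $\rho$ involves only $m_\rho,\ell_\rho,\#\tubes_\rho,|T_\rho|$ and is blind to the fine-scale data $m,\#\tubes,|T|,\ell'$. Your proposed ``final arithmetic step'' cannot recover this: for instance, Lemma \ref{KTWSubMultiplicativeTubes} yields only $m\lesssim m_\rho\cdot\sup_{T_\rho}\CKT(\tubes^{T_\rho})$, an upper bound on $m/m_\rho$, whereas your conversion would need a lower bound. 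Tracing the arithmetic in the regime $N:=\#\tubes[T_\rho]\approx 1$ shows the scale-$\rho$ bound can be weaker than the desired scale-$\delta$ bound by a factor $(\rho/\delta)^{\omega-4+O(\sigma)}\ll 1$.

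The paper does \emph{not} invoke Proposition \ref{aLLbProp}(A) as a black box. Instead it reruns the two-scale argument from the proof of that proposition, with two targeted modifications. In the fine-scale step inside each $W\in\mathcal{W}$ one applies $\cF(\sigma,\omega)$ directly to the $\delta$-tubes $\tubes^W$; the point is that Lemma \ref{frostmanSlabTransitiveUnderCovers} gives $\FS(\tubes^W)\lessapprox \FS(\tubes^{T_\rho})\cdot\FS(\tubes_\rho^W)\lessapprox \ell'\,\delta^{-\eta}$, so the slab error $\ell'$ enters exactly here and propagates to the analogue of \eqref{secondEstimateRescaledTubes}. In the coarse-scale step one must show the shading on $\mathcal{W}$ is dense; this is where the intermediate cover $\tubes_\rho$ is actually used, via the hypothesis that $\mathcal{W}$ factors $\tubes_\rho$ from below with respect to the Frostman Convex Wolff axioms (the Cordoba-type $L^2$ argument in Step 3 of the proof of Proposition \ref{aLLbProp} runs on $a$-tubes coming from $\tubes_\rho$, not from $\tubes$). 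Combining the modified fine- and coarse-scale estimates gives the claimed bound with the extra $\ell'$ factor. The role of $\tubes_\rho$ is thus purely auxiliary: it transports the slab-Wolff information into $\FS(\tubes^W)$ and furnishes the density of the $\mathcal{W}$-shading; it is never the collection to which Proposition \ref{aLLbProp}(A) is applied.
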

Proposition \ref{aLLbProp}(A) is the special case of Proposition \ref{PropALLbPropGen} where $\rho=\delta$ and $\tubes_\rho=\tubes$.

The main goal of Section \ref{factoringTubesSection} is to prove Propositions \ref{aLLbProp} and \ref{PropALLbPropGen}. A second goal is to introduce two cousins of the estimate $\cE(\sigma,\omega),$ and to show that these three estimates are equivalent. The first estimate is (formally) weaker: it is the special case of the estimate $\cE(\sigma,\omega)$ when $\ell$ has size about 1. 

\begin{defn}\label{TCEDefn}
We say that \emph{Assertion $\TcE(\sigma,\omega)$ is true} if the following holds:\\
For all $\eps>0$, there exists $\kappa,\eta>0$ such that the following holds for all $\delta>0$. Let $(\tubes,Y)_\delta$ be $\delta^{\eta}$ dense, and suppose $\FS(\tubes)\leq\delta^{-\eta}$. Then
\begin{equation}\label{TCEEstimate}
\Big|\bigcup_{T\in\tubes}Y(T)\Big|\geq \kappa \delta^{\omega+\eps}m^{-1}(\#\tubes)|T|\big(m^{-3/2} (\#\tubes)|T|^{1/2}\big)^{-\sigma},
\end{equation}
where  $m = \CKT(\tubes)$.
\end{defn}

The second estimate is (formally) stronger: it is a generalization of the estimate $\cE(\sigma,\omega)$ where $\delta$-tubes are replaced by congruent convex sets of diameter 1.


\begin{defn}\label{defnF}
We say that \emph{Assertion $\cF(\sigma,\omega)$ is true} if the following holds:\\
For all $\eps>0$, there exists $\kappa,\eta>0$ such that the following holds for all $0<a\leq b\leq 1$. Let $(\mathcal{P},Y)_{a\times b\times 1}$ be $a^{\eta}$ dense. Then
\begin{equation}
\Big|\bigcup_{P\in\mathcal{P}}Y(P)\Big|\geq \kappa a^\eps b^{\omega}m^{-1}(\#\mathcal P)|P|(m^{-3/2}\ell (\#\mathcal{P})|P|^{1/2})^{-\sigma}D^{-\sigma},
\end{equation}
where  
\begin{equation}\label{defnOfD}
m = \CKT(\mathcal{P}),\quad \ell = \FS(\mathcal{P}),\quad\textrm{and}\quad  D = \max_{P\in\mathcal{P}}\sup_{\rho\in[a,b]}\frac{|P|}{|N_\rho(P)|}\big(\#\mathcal{P}[N_{\rho}(P)]\big)^{1/2}.
\end{equation}
\end{defn}

\begin{rem} \label{rmk: RHSF}
	When $\sigma \in (0, 2/3]$, the term 
	\[
	m^{-1} (\#\mathcal P)|P|(m^{-3/2}\ell (\#\mathcal{P})|P|^{1/2})^{-\sigma}  D^{-\sigma}
	\] 
	is always at most $1$. To see this,  since  $\#\mathcal{P} \leq m |P|^{-1}$  and $\sigma \in (0,  2/3]$, it remains to show that $ ((\#\mathcal{P}) |P| )^{1/2} l^{-1} |P|^{1/2} \leq D $.  This is true because  $  \#\mathcal{P}\leq  \#\mathcal{P}[N_b(P)] \cdot b^{-4} \leq D^2 b^{-2} a^{-2},$ and $\ell \geq 1$. 
\end{rem}

\begin{rem}\label{discussionOfDRemark}
If $\mathcal{P}$ is non-empty, then by selecting $\rho=a$ we see that the quantity $D = D(\mathcal{P})$ from \eqref{defnOfD} is always $\gtrsim 1$. In general, $D$ can be as large as $(b/a)^{1/2}$: when $\rho=b$, it is possible for about $(b/a)^3$ essentially distinct $a\times b\times 1$ prisms to fit inside the $b$ tube $N_b(P)$. If this happens, then the RHS of \eqref{defnOfD} becomes $\frac{ab}{b^2}(b/a)^{3/2}=(b/a)^{1/2}$. However, there are several important situations where we can guarantee that $D$ has size roughly 1. We describe three of these below.
\end{rem}

\noindent {\bf Situation 1.}
If $a=b$, then since the prisms in $\mathcal{P}$ are essentially distinct, we have $\#\mathcal{P}(N_a(P))\sim 1$ and hence $D\sim 1$. In particular, this means that $\cF(\sigma,\omega)\implies \cE(\sigma,\omega)$.

\noindent {\bf Situation 2.}
Suppose that for each $P\in\mathcal{P}$, we have $\CKT(\mathcal{P}[N_b(P)])\leq K,$ for some $K\geq 1$. This means that the Katz-Tao Convex Wolff constant of $\mathcal{P}$ might be large, but if we restrict attention to those prisms $\mathcal{P}'$ contained inside a tube of diameter $b$, then the Katz-Tao Convex Wolff constant of $\mathcal{P}'$ is small (this is the setup for Item (B) of Proposition \ref{aLLbProp}). Then for each $\rho\in[a,b]$ and $P\in\mathcal{P}$ we have 
$
\#\mathcal{P}[N_\rho(P)]\lesssim K \frac{\rho\cdot b\cdot 1}{a\cdot b\cdot 1} =K\frac{\rho}{a},
$
and hence $D\lesssim K^{1/2}.$

\medskip

\noindent {\bf Situation 3.}
Let $\tubes$ be a set of essentially distinct $\delta$-tubes contained in a $s\times t\times 2$ prism $W$, with $0<s\leq t\leq 2$. Let $\mathcal{P} = \phi_W(\tubes)$. Then the sets in $\mathcal{P}$ are comparable to rectangular prisms of dimensions $a\times b\times 2$ with $a=\delta/t$ and $b = \delta/s$. Let us estimate the quantity $D=D(\mathcal{P})$ for this arrangement. For each $\rho\in[a,b]$ and each $P\in\mathcal{P}$, $\#\mathcal{P}[N_\rho(P)]$ counts the number of $\delta$-tubes from $\tubes$ contained inside a rectangular box of dimensions roughly $\delta\times (\delta \rho/a)\times 2$. Since the tubes in $\tubes$ are essentially distinct, at most $O\big(\big(\frac{(\delta \rho/a)}{\delta}\big)^2\big)=O\big(\rho^2/a^2\big)$ tubes from $\tubes$ can be contained in such a box, i.e. $D \lesssim \sup_{\rho\in[a,b]} \big(\frac{ab}{\rho b}\big)\big(\frac{\rho}{a}\big) = 1.$


\subsection{A few frequently used Cordoba-type $L^2$ arguments}\label{frequentlyUsedCordoba}
In this section, we will explore several variants of the following argument: To show that a union $\bigcup_{P\in\mathcal{P}}P$ is large, it suffices to show that the quantity $\Vert \sum_{P\in\mathcal{P}}\chi_P\Vert_2^2 = \sum_{P,P'\in\tubes}|P\cap P'|$ is small, and then use Cauchy-Schwartz to conclude that
\[
\Big|\bigcup_{P\in\mathcal{P}}P\Big| \geq \Big(\sum_{P\in\mathcal P}|P|\Big)^2\ \Big/\ \Big\Vert \sum_{P\in\mathcal{P}}\chi_P\Big\Vert_2.
\]
This argument was used by Cordoba \cite{Cor77} to prove the Kakeya maximal function conjecture in $\RR^2$, so we will call this style of argument a ``Cordoba-type $L^2$ argument.''

\subsubsection{A volume estimate for slabs}
In this section we will use a Cordoba-type $L^2$ argument to estimate the volume of a union of slabs. The precise statement is as follows.
\begin{lem}\label{cordobaLem}
Let $\delta,\lambda>0$. Let $\mathcal{S}$ be a collection of $\delta\times 1\times\ldots\times 1$ slabs (\emph{n.b.~these slabs need not be essentially distinct}), and let $Y$ be a $\lambda$-dense shading on $\mathcal{S}$. Let $m = \CKT(\mathcal{S})$. Then
\begin{equation}\label{cordobaEstimate}
\Big|\bigcup_{S\in\mathcal{S}}Y(S)\Big| \gtrsim |\log\delta|^{-1}m^{-1}\lambda^2(\#\mathcal{S})|S|.
\end{equation}
\end{lem}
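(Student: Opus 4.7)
The plan is to run the standard Cordoba $L^2$ argument. Letting $f = \sum_{S \in \mathcal{S}} \chi_{Y(S)}$, the function $f$ is supported on $\bigcup_S Y(S)$, so Cauchy--Schwarz gives
\[
\Big(\sum_{S \in \mathcal{S}} |Y(S)|\Big)^2 = \Big(\int f\Big)^2 \leq \Big|\bigcup_{S\in\mathcal{S}} Y(S)\Big| \cdot \|f\|_2^2.
\]
The density hypothesis gives $\sum_S |Y(S)| \geq \lambda (\#\mathcal{S})|S|$, so the lemma reduces to the $L^2$ estimate
\[
\|f\|_2^2 \;=\; \sum_{S, S'} |Y(S) \cap Y(S')| \;\leq\; \sum_{S, S'} |S \cap S'| \;\lesssim\; m |\log \delta| (\#\mathcal{S})|S|,
\]
and in fact to the pointwise claim $\sum_{S'} |S \cap S'| \lesssim m|\log\delta| \cdot |S|$ for each fixed $S$.

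To prove the pointwise claim, dyadically decompose over the angle $\theta$ between the normals of $S$ and $S'$, with $\theta$ ranging over dyadic values in $[\delta,1]$ together with a single ``parallel'' regime $\theta \lesssim \delta$. An elementary computation in the $2$-plane spanned by the two normals gives $|S \cap S'| \lesssim \delta^2/\theta$ for $\theta \geq \delta$ and $|S \cap S'| \lesssim \delta$ when the slabs are essentially parallel. The main geometric step is that when $\theta \geq \delta$, any slab $S'$ at angle $\sim \theta$ to $S$ satisfying $S \cap S' \neq \emptyset$ is contained in $W_\theta := N_{C\theta}(H_S) \cap B(0, 1)$, a slab of thickness $\sim \theta$; this follows from linearizing near $H_S \cap H_{S'}$, which meets $B(0,2)$, and noting that the distance from $H_S$ grows by at most $\sin\theta$ per unit length along $H_{S'}$. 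Applying the Katz--Tao hypothesis to $W_\theta$ yields $\#\{S' \in \mathcal{S}: S' \subset W_\theta\} \leq m|W_\theta|/|S'| \lesssim m\theta/\delta$, so each dyadic band contributes $(m\theta/\delta)(\delta^2/\theta) = m\delta$; the parallel regime gives the same bound by applying Katz--Tao to $W = N_{3\delta}(S)$.

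Summing over the $O(|\log\delta|)$ dyadic scales yields $\sum_{S'} |S \cap S'| \lesssim m|\log \delta| \cdot |S|$, and the lemma follows after plugging back into Cauchy--Schwarz. The only step requiring any real care is the containment $S' \subset N_{C\theta}(H_S)$ when two slabs intersect; everything else is a routine dyadic summation, and the $|\log \delta|$ loss is intrinsic to summing over $\log(1/\delta)$ angle scales. The same proof works in all dimensions $n \geq 2$ with identical numerology, since a $\delta$-slab always has volume $\sim \delta$ and intersections of two slabs at angle $\theta$ have volume $\sim \delta^2/\theta$ regardless of $n$.
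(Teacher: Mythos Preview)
Your argument is correct and is essentially the paper's proof, reparameterized: the paper decomposes dyadically over the intersection fraction $t=|S\cap S'|/|S|$ and obtains the containment $S'\subset N_{C\delta/t}(S)$ from Lemma~\ref{ellipsoidLem} (a Brunn-theorem consequence), whereas you decompose over the angle $\theta$ and argue the containment $S'\subset N_{C\theta}(H_S)$ directly; the two are related by $t\sim\delta/\theta$ and yield identical per-scale contributions $\sim m|S|$.
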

\begin{proof}
Fix $S\in\mathcal{S}$. By Lemma \ref{ellipsoidLem} (applied to the outer John ellipsoid of each element of $\mathcal{S}$), we have that for each $t \in[\delta,1]$, we have 
\[
\#\{S'\in\mathcal{S}\colon |S\cap S'| \sim t|S|\} \leq \# \{S'\in\mathcal{S}\colon S'\subset N_{C\delta/t}(S)\} \lesssim m \frac{\delta}{t}|S|^{-1}\sim\frac{m}{t},
\]
where $C = C(n)$ depends only on $n$. Thus
\begin{equation}\label{L2Estimate}
\begin{split}
\Big\Vert \sum_{S\in\mathcal{S}}\chi_{Y(S)}\Big\Vert_2^2 &
\leq \Big\Vert \sum_{S\in\mathcal{S}}\chi_{S}\Big\Vert_2^2 \lesssim \sum_{S\in\mathcal{S}} \sum_{\substack{\delta\leq t\leq 1\\ t\phantom{.}\operatorname{dyadic}}}\sum_{\substack{S'\in\mathcal{S}\\
|S\cap S'|\sim t|S|}} t|S|\\
& \lesssim \sum_{S\in\mathcal{S}} \sum_{\substack{\delta\leq t\leq 1\\ t\phantom{.}\operatorname{dyadic}}}\big(\frac{m}{t}\big) t|S| \lesssim |\log\delta| m(\#\mathcal{S})|S|.
\end{split}
\end{equation}
Let $E = \bigcup_{S\in\mathcal{S}}Y(S)$. Using Cauchy-Schwartz, we have
\begin{equation}\label{applyCS}
\Big(\lambda |S|(\#\mathcal{S})\Big)^2\leq \Big(\int \chi_{E} \sum_{S\in\mathcal{S}}\chi_{Y(S)}\Big)^2 \leq |E| \Big\Vert \sum_{S\in\mathcal{S}}\chi_{Y(S)}\Big\Vert_2^2.
\end{equation}
The result now follows by comparing \eqref{L2Estimate} and \eqref{applyCS}.
\end{proof}

\noindent We will highlight a few instances where Lemma \ref{cordobaLem} will be helpful.
\begin{itemize}
	\item $(\mathcal{S},Y)$ is $\delta^{\eta}$ dense, and $\CFC(\mathcal{S})\lessapprox_\delta \delta^{-\eta}$. Then the RHS of \eqref{cordobaEstimate} is $\gtrapprox_\delta \delta^{3\eta}$.

	\item $\mathcal{R}$ is a set of $\delta\times 1$ rectangles inside a $\rho\times 2$ rectangle $W$: we will apply Lemma \ref{cordobaLem} to $\mathcal{R}^W$.

	\item $\tubes$ is a set of $\delta$-tubes inside a $100\delta\times b\times 1$ prism $W$: we will apply Lemma \ref{cordobaLem} to $\tubes^W$.
\end{itemize}

\subsubsection{Tangential vs transverse prism intersection}

The next result says that if a collection of $a\times b\times 1$ prisms intersect transversely, in the sense that their tangent planes make large angle at a typical point of intersection, then the union of these prisms fills out a large fraction of a thickened neighbourhood of these prisms. The precise statement is Lemma \ref{OnePrismWithTangentPlanes} and Corollary \ref{prismsWithTangentPlanes} below. Before stating that result, we need a few definitions.

\begin{defn}\label{regularShading}
Let $W\subset\RR^n$, let $Y(W)\subset W$ be a shading, and let $\delta>0$. We say that the shading $Y(W)$ is \emph{regular} at scales $\geq\delta$ if for each $x\in Y(W)$ and each $r\in[\delta, 1]$, we have
\begin{equation}\label{regularShadingIneq}
|Y(W)\cap B(x,r)| \geq (100\log(1/\delta))^{-1} |Y(W)|\Big(\frac{|B(x,r)\cap W|}{|W|}\Big).
\end{equation}
If the quantity $\delta$ is apparent from context, then we will omit it and say that $Y(W)$ is regular. 
\end{defn}

The next lemma says after a harmless refinement, every shading has a regular subshading. This is Lemma 2.7 from \cite{KWZ}; see also \cite[Lemma 2.3]{WZ23}. 
\begin{lem}\label{everySetHasARegularShadingLem}
Let $W\subset\RR^n$ and let $Y(W)$ be a shading. Then there is a regular shading $Y'(W)\subset Y(W)$ with $|Y'(W)|\geq\frac{1}{2}|Y(W)|$. 
\end{lem}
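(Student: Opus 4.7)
The plan is a greedy pruning: I would iteratively remove balls $B(x,r)$ on which the desired density bound fails, and show that the total removed mass is at most $|Y(W)|/2$.

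I would first reduce the regularity condition to dyadic scales $r \in \{2^{-k}\} \cap [\delta,1]$ and to centers lying in a fixed $\delta$-net $N_\delta$ of $W$; the non-dyadic and non-net versions would follow by comparison with nearby dyadic scales and net points, using $B(x',r') \subset B(x,r)$ together with $|B(x,r) \cap W| \leq c_n |B(x',r') \cap W|$ (which holds for convex $W$---the relevant case throughout this paper---at the cost of a dimensional factor in the constant).

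The core of the argument would be the following greedy algorithm. Set $Y' = Y(W)$ and $t := (C'_n \log(1/\delta))^{-1}$ for a suitably large dimensional constant $C'_n$. While there exists a pair $(x,r)$ with $x \in Y' \cap N_\delta$, $r \in [\delta,1]$ dyadic, $Y' \cap B(x,r) \neq \emptyset$, and $|Y' \cap B(x,r)| < t\,|Y'|\,|B(x,r) \cap W|/|W|$, update $Y' \leftarrow Y' \setminus B(x,r)$. This process terminates in finitely many steps, since each removal eliminates at least one net center from further consideration. The crucial combinatorial observation is that at any fixed dyadic scale $r$, successively chosen centers are pairwise $r$-separated (after $Y' \cap B(x_i, r)$ is removed, no later center can lie in $B(x_i, r)$), so the balls $\{B(x_i, r)\}_i$ at that scale have $O_n(1)$ overlap. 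This bounds the total mass removed at scale $r$ by
\[
\sum_i t\,|Y(W)|\,|B(x_i,r) \cap W|/|W| \leq C_n t\,|Y(W)|,
\]
using $\sum_i |B(x_i, r) \cap W| \leq C_n |W|$.

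Summing over the $O(\log(1/\delta))$ dyadic scales and choosing $C'_n$ large gives total removed mass $\leq |Y(W)|/2$. The output $Y'$ satisfies the regularity condition at every $(x,r) \in (Y' \cap N_\delta) \times \{\text{dyadic}\}$ with constant $t$ (otherwise the pair would have been processed), and the initial reduction transfers this to all $x \in Y'$ and $r \in [\delta, 1]$ with a constant at least $(100\log(1/\delta))^{-1}$ after further enlarging $C'_n$. The main technical hurdle is the bounded-overlap estimate at each fixed scale: it leverages both the greedy rule's refusal to reuse already-removed centers and the comparability of $|B \cap W|$ across nearby balls in the convex setting, and a careless formulation of the algorithm (e.g.~pruning all bad balls of $Y$ at once, rather than re-checking against the current $Y'$) would fail to give a condition valid for $Y'$ itself.
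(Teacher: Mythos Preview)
The paper does not give a proof here; it cites Lemma~2.7 of \cite{KWZ} (see also \cite[Lemma~2.3]{WZ23}). Your greedy pruning is exactly the standard argument used in those references: remove bad balls one at a time, note that centers chosen at a fixed dyadic scale $r$ are $r$-separated (hence their balls have $O_n(1)$ overlap), and sum the removed mass over the $O(\log(1/\delta))$ scales.

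One minor technical point to patch: restricting centers to a fixed net $N_\delta$ and requiring $x \in Y' \cap N_\delta$ means that after termination you only control pairs $(x',r)$ with $x' \in Y'_{\mathrm{final}} \cap N_\delta$, and your transfer step tacitly needs, for each $x \in Y'_{\mathrm{final}}$, a nearby net point that \emph{also} lies in $Y'_{\mathrm{final}}$---which need not exist (in the extreme case $Y(W) \cap N_\delta = \emptyset$ the algorithm never runs at all). The easy fix is to drop the requirement $x \in Y'$ and instead require $|Y' \cap B(x,\delta/2)| > 0$ (with a slightly finer net); termination and the separation bound survive with only constant-factor losses, and the transfer then works at every Lebesgue density point of $Y'_{\mathrm{final}}$.
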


The next result says that if a prism $P_0$ is incident to many prisms that intersect $P_0$ non-tangentially, then the union of these prisms fills out a thickened neighbourhood of $P_0$.

\begin{lem}\label{OnePrismWithTangentPlanes}
Let $0<a\leq b\leq c$ and let $\lambda>0$. Let $P_0$ be a $a\times b\times c$ prism with shading $Y_0(P_0)$. Let $(\mathcal{P},Y)_{a\times b\times c}$ be a set of prisms and their associated shading. Suppose that $|Y_0(P_0)|\geq\lambda|P_0|$; $|Y(P)|\geq\lambda|P|$ for each $P\in\mathcal{P}$; and each shading $Y(P)$ is regular, in the sense of Definition \ref{regularShading}.

Let $\theta\in[\frac{a}{b}, 1]$, and suppose that
\[
\theta \leq \theta_{\operatorname{min}}:= \frac{a}{b} + \inf_{x\in Y_0(P_0)} \sup_{P\in \mathcal{P}_Y(x)} \angle\big(\Pi(P_0),\Pi(P)\big).
\]
Then 
\begin{equation}\label{nbhdMostlyFullOnePrism}
\Big|N_{b \theta}(P_0) \cap \bigcup_{P\in\mathcal{P}}Y(P)\Big| \gtrapprox_a \lambda^4|N_{b \theta}(P_0)|.
\end{equation}
\end{lem}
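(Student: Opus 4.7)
My approach is a Cordoba-type $L^2$ argument, executed after a dyadic pigeonhole in the angle and an anisotropic rescaling of $N_{b\theta}(P_0)$. First, for each $x \in Y_0(P_0)$ I would select $P(x) \in \mathcal{P}_Y(x)$ realizing (up to a factor of $2$) the inner supremum, so that $\alpha(x) := \angle(\Pi(P_0), \Pi(P(x))) \geq \theta - a/b$. A dyadic pigeonhole in $\alpha(x)$ over the range $[a/b, 1]$ furnishes a dyadic value $\theta_*$ and a subset $E \subset Y_0(P_0)$ with $|E| \gtrapprox_a \lambda |P_0|$ on which $\alpha(x) \in [\theta_*, 2\theta_*]$. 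If $\theta < 2a/b$ then $N_{b\theta}(P_0)$ is comparable in size to $P_0$ and the conclusion is immediate from $E \subset \bigcup_P Y(P) \cap N_{b\theta}(P_0)$; so I may assume $\theta \geq 2a/b$, whence $\theta_* \sim \theta$.

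A further dyadic pigeonhole on $|Y(P) \cap E|$ produces a balanced subcollection $\mathcal{P}' \subset \{P(x) : x \in E\}$. Writing $\tilde Y(P) := Y(P) \cap N_{b\theta}(P_0)$, for each $P \in \mathcal{P}'$ I pick any $x \in Y(P) \cap E \subset P_0$ and apply the regularity of $Y$ on $P$ at scale $r = b\theta \in [a, b]$. Using $|B(x, b\theta) \cap P| \sim a(b\theta)^2$ together with the containment $B(x, b\theta) \subset N_{b\theta}(P_0)$, this yields
\[
|\tilde Y(P)| \geq |Y(P) \cap B(x, b\theta)| \gtrapprox_a \lambda \, a b^2 \theta^2.
\]

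To bound the pairwise intersections $\sum_{P, P' \in \mathcal{P}'} |\tilde Y(P) \cap \tilde Y(P')|$, let $\Phi$ be the anisotropic linear map carrying the $\sim b\theta \times b \times c$ bounding box of $N_{b\theta}(P_0)$ onto the unit cube. A direct computation of how unit normals transform shows that under $\Phi$ each $P \in \mathcal{P}'$ becomes, up to universal constants, a slab of thickness $\sim a/(b\theta)$ whose rescaled normal lies close to $\Phi(\Pi(P_0))^\perp$; essential distinctness of the original prisms transfers, up to polylogarithmic factors in $a$, to a Katz--Tao Wolff estimate on this family of rescaled slabs. Applying Lemma \ref{cordobaLem} to the rescaled shadings $\Phi(\tilde Y(P))$, then pulling back and invoking Cauchy--Schwarz,
\[
\Big|\bigcup_{P \in \mathcal{P}'} \tilde Y(P)\Big| \geq \frac{\big(\sum_{P} |\tilde Y(P)|\big)^2}{\sum_{P, P'} |\tilde Y(P) \cap \tilde Y(P')|} \gtrapprox_a \lambda^4 |N_{b\theta}(P_0)|,
\]
as required.

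The main obstacle I foresee is the third paragraph: rigorously verifying that the rescaled slabs satisfy a Katz--Tao Wolff estimate with polylogarithmic constant. The angular hypothesis $\theta_* \gtrsim \theta - a/b$ is precisely what makes the rescaled slab thickness $a/(b\theta)$ at most $1$; and essential distinctness of the original prisms, combined with a pairwise volume estimate of the form $|P \cap P'| \lesssim a^2 c / \angle(\Pi(P), \Pi(P'))$ for prisms with well-aligned long axes, controls how many rescaled slabs can cluster inside any fixed sub-slab. A secondary subtlety is that $|P \cap N_{b\theta}(P_0)|$ can be much smaller than $|P|$ when the long axes of $P$ and $P_0$ are badly misaligned, but this is absorbed by the fact that the regularity ball $B(x, b\theta)$ is centered at $x \in P_0$ and therefore automatically contained in $N_{b\theta}(P_0)$.
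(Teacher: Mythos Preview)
Your overall plan—pigeonhole in the angle, rescale, then run a Cordoba $L^2$ argument on slabs—is the same as the paper's, and your lower bound $|\tilde Y(P)|\gtrapprox_a \lambda\,a(b\theta)^2$ via regularity at scale $b\theta$ is correct. The gap is exactly where you fear it is, and your proposed remedy does not close it.

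A smaller point first: under your map $\Phi$, the image of a prism $P$ with normal $n\approx(1,\theta\cos\beta,\theta\sin\beta)$ is a slab of thickness $a/|\tilde n|$ with $\tilde n=(n_1 b\theta,\,n_2 b,\,n_3 c)$; this ranges from $\sim a/(b\theta)$ when $\beta\approx 0$ down to $\sim a/(c\theta)$ when $\beta\approx\pi/2$ and $c\gg b$. So the claimed uniform thickness is false without an additional pigeonhole on the azimuth $\beta$ (the paper does perform precisely such a pigeonhole).

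The essential gap is the Katz--Tao estimate. Essential distinctness together with a pairwise bound $|P\cap P'|\lesssim a^2c/\angle(\Pi(P),\Pi(P'))$ does \emph{not} control how many rescaled slabs can pile into a common convex set: nothing prevents many essentially distinct $P$ from sharing nearly the same $\Pi(P)$ and all landing in one thin slab after rescaling, and you have no lower bound on $\#\mathcal{P}'$ to compensate. The paper's fix is a specific extraction. After localizing to a ball $B(x,b')$ with $b'=b\theta/\theta_1$ (so every $P_y\cap B(x,b')$ is honestly a slab of common dimensions, regardless of $\dir(P_y)$) and pigeonholing the normals near a fixed $v$, it applies Fubini to find a line segment $L$ in the direction $v'$ (the projection of $v$ to the $(z_2,z_3)$-plane), picks $\frac{a}{b\theta}$--separated points $y_1,\dots,y_N$ on $L\cap\phi_Q(E')$ with $N\gtrapprox_a\lambda\,b\theta/a$, and sets $\mathcal{S}=\{\phi_Q(P_{y_i}\cap B(x,b'))\}$. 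For this subfamily, $\CKT(\mathcal{S})\lesssim 1$ follows from a convex-hull argument: any convex $W$ containing $M\geq 2$ slabs of $\mathcal{S}$ contains both a length-$\sim M\cdot a/(b\theta)$ piece of $L$ and a unit slab transverse to $L$, forcing $|W|\gtrsim M|S|$. This ``separated points on a transversal line'' trick is the missing ingredient; it is what substitutes for your appeal to essential distinctness.
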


\begin{figure}[h!]
\centering
\begin{overpic}[ scale=0.45]{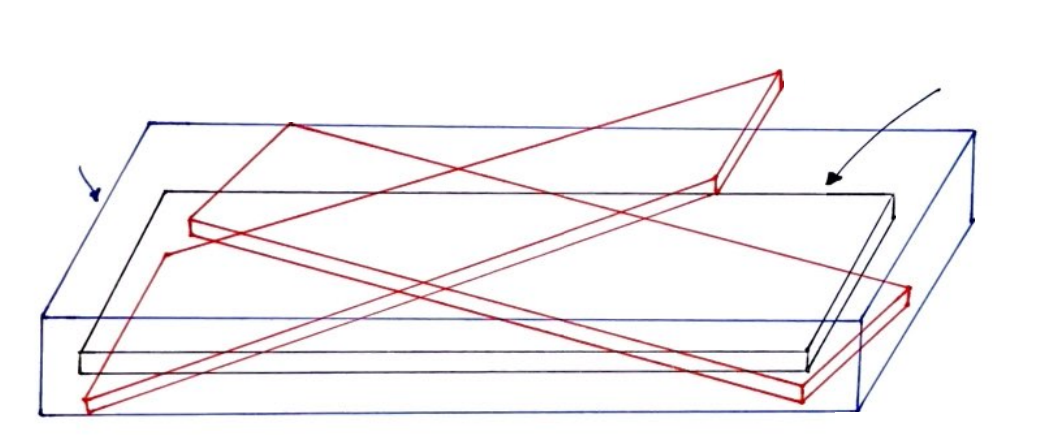}
 \put (90,32) {$P_0$}
 \put (-5,27) {$N_{b\theta}(P_0)$}
\end{overpic}
\caption{The geometry of Lemma \ref{OnePrismWithTangentPlanes}. 
For each $x\in Y_0(P_0)$, there is a prism (red) from $\mathcal{P}$ that intersects $P_0$ at $x$ at angle at least $\theta_{\operatorname{min}}$. For clarify, only two such prisms have been drawn. The union of these (red) prisms fills out a large fraction of every $b$-ball centered at a point of $P_0$.}
\label{prismsFillOutTubeFig}
\end{figure}

\begin{rem}
The exponent $\lambda^4$ in \eqref{nbhdMostlyFull} is not important --- the exponent $\lambda^{100}$ would work equally well for our applications of Lemma \ref{OnePrismWithTangentPlanes}.
\end{rem}
\begin{proof}
The argument is similar in spirit to Wolff's hairbrush argument from \cite{Wol95}. After dyadic pigeonholing, we can select a number $\theta_1\in[\theta, 1]$ and a set $Y'_0(P_0)\subset Y_0(P_0)$ with $|Y'_0(P_0)|\gtrapprox_a |Y_0(P_0)|$ so that 
\[
\sup_{P\in \mathcal{P}_Y(y)} \angle\big(\Pi(P_0),\Pi(P)\big)\sim\theta_1\quad\textrm{for each}\ y\in Y_0'(P_0).
\]
Applying Lemma \ref{everySetHasARegularShadingLem} (and further refining $Y_0'(P_0)$ by a factor of 2), we may suppose that $Y_0'(P_0)$ is regular.

Let $b'=b\frac{\theta}{\theta_1}$; note that $a\leq b'\leq b$. Divide $P_0$ into sub-prisms of dimensions $a\times b'\times b'$; we have that $\gtrapprox \lambda \frac{|P|}{a (b')^2}$ of these sub-prisms intersect $Y(P)$. Thus to obtain \eqref{nbhdMostlyFull}, it suffices to show that for each $x\in Y'(P_0)$, we have
\begin{equation}\label{localizedNBThetaNbhdBd}
\Big|N_{b \theta}\big(P_0 \cap B(x,b')\big) \cap \bigcup_{P\in\mathcal{P}}Y(P)\Big| \gtrapprox_a \lambda^3 (b \theta)(b')^2.
\end{equation}

Fix a choice of $x\in Y'(P_0)$. Our goal is to show that \eqref{localizedNBThetaNbhdBd} holds for this choice of $x$. Let $E = Y'(P_0)\cap B(x,b')$. Since the shading $Y'(P_0)$ is regular, we have $|E|\gtrapprox_a \lambda a(b')^2$. For each $y\in E$, let $P_y\in\mathcal{P}_Y(y)$ be a prism with $\angle(\Pi(P_0),\Pi(P))\sim\theta_1$.

$B(x,b')\cap \bigcup_{y\in E}P_y$ is contained in a prism of dimensions comparable to $b\theta \times b'\times b'$ that is concentric with the $a\times b'\times b'$ prism $B(x,b')\cap P$; denote this $b\theta\times b'\times b'$ prism by $Q$. Then $\phi_Q(B(x,b')\cap P)$ is comparable to a prism of dimensions $\frac{a}{b\theta}\times 1\times 1$. For notational convenience, we will select coordinates so that this prism is given by $\tilde P = [0, \frac{a}{b\theta}]\times[0,1]\times[0,1]$. 

For each $y\in E$, we have that $\phi_Q(B(x,b')\cap P_y)$ is comparable to a $\frac{a}{b\theta}\times 1\times 1$ prism, and each such prism intersects the $(z_2,z_3)$-plane with angle $\sim 1$, i.e.~the projection of the normal vector $v_y$ of this prism to the $(z_2,z_3)$ plane has magnitude $\gtrsim 1$. 
Since the shading of each $P_y\in\mathcal{P}$ is regular, we have that $\phi_Q \big(Y(P_y)\cap B(x,b')\big)$ is a subset of the prism $\phi_Q \big(P_y\cap B(x,b')\big)$ that has density $\gtrapprox_a\lambda$. 

After pigeonholing, we can select a set $E'\subset E$ with $|E'|\gtrsim|E|$, so that for each $y\in E'$, $\phi_Q\big(P_y\cap B(x,b')\big)$ is comparable to a $\frac{a}{b\theta}\times 1\times 1$ prism whose normal vector $v_y$ makes angle $\leq 1/100$ with some fixed unit vector $v$, and the projection of $v$ to the $(z_2,z_3)$ plane has magnitude $\gtrsim 1$. Let $v'$ be the projection of $v$ to the $(z_2,z_3)$ plane. 

By Fubini, we can find a line segment $L\subset [0,\frac{a}{b\theta}]\times[0,1]\times[0,1]$ pointing in direction $v'$, with $|L\cap\phi_Q(E')|\geq|\phi_Q(E')|$; here the left $|\cdot|$ denotes one-dimensional Lebesgue measure, while the right $|\cdot|$ denotes three-dimensional Lebesgue measure. Let $y_1,\ldots,y_N$, $N\gtrsim \frac{b\theta}{a}|L\cap\phi_Q(E')| \gtrapprox_a \lambda \frac{b\theta}{a}$ be a $\frac{a}{b\theta}$-separated subset of $L\cap\phi_Q(E')$, and let 
\[
\mathcal{S} = \big\{\phi_Q\big(P_{y_i}\cap B(x,b')\big)\colon i=1,\ldots,N\big\}.
\] 
$\mathcal{S}$ is a set of convex subsets of $\RR^3$, each of which is comparable to a $\frac{a}{b\theta}\times 1\times 1$ slab. Each $S\in\mathcal{S}$ has a $\gtrapprox_a \lambda$-dense shading (the shading is the set $\phi_Q\big(Y(P_{y_i})\cap B(x,b')\big)$), so in particular the pair $(\mathcal{S},Y)_{\frac{a}{b\theta}\times 1\times 1}$ is $\gtrapprox_a\lambda$ dense.  

We claim that $\CKT(\mathcal{S})\lesssim 1$. To verify this, let $W\subset\RR^3$ be a convex set, and suppose $\#\{S\in\mathcal{S}\colon S\subset W\}=M$. We need to show that
\begin{equation}\label{boundOnM}
M\lesssim |W||S|^{-1}.
\end{equation}
 If $M=0$ there is nothing to prove. If $M=1$, then clearly $|W|\geq|S|$. Suppose instead that $M\geq 2$. Then $|W\cap L|\geq (M-1)\frac{a}{b\theta}\geq\frac{M}{2b\theta}\gtrsim M|S|$, since $W\cap L$ contains at least $M$ points on $L$ that are $\frac{a}{b\theta}$-separated. On the other hand, since $\mathcal{S}[W]$ is non-empty, we know that $W$ contains a $\frac{a}{b\theta}\times 1\times 1$ prism $S\in \mathcal{S}[W]$ whose normal direction makes angle $\leq\frac{1}{50}$ with the direction of $v$ (whose projection to the $(z_2, z_3)$-plane is $v'$, i.e.~the direction of $L$). Since $W$ is convex, $W$ contains the convex hull of $S\cup (W\cap L)$, which has three-dimensional volume $\gtrsim |W\cap L|\gtrsim M|S|$. Thus $|W|\gtrsim M|S|,$ which gives \eqref{boundOnM}.

Applying Lemma \ref{cordobaLem} (a Cordoba-type $L^2$ argument for slabs), we conclude that 
\[
\Big|\bigcup_{S\in\mathcal{S}}Y(S)\Big|\gtrapprox_a\lambda^2(\#\mathcal{S})|S| \gtrapprox_a \lambda^2\big(\lambda \frac{b\theta}{a}\big)\big(\frac{a}{b\theta}\big) = \lambda^3.
\]
To conclude the proof, we verify that 
\begin{equation}
\begin{split}
\Big|N_{b \theta}\big(P_0 \cap B(x,b')\big) \cap \bigcup_{P\in\mathcal{P}}Y(P)\Big| 
&\geq \Big|N_{b \theta}\big(P_0 \cap B(x,b')\big)\cap\bigcup_{y\in E'}Y(P_y)\Big|\\
&\geq (b\theta)(b')^2 \Big|\phi_Q\Big(B(x,b')\cap\bigcup_{y\in E'}Y(P_y)\Big)\Big|\\
&\geq (b\theta)(b')^2 \Big|\phi_Q\Big(B(x,b')\cap\bigcup_{i=1}^N Y(P_{y_i})\Big)\Big|\\
&\geq (b\theta)(b')^2 \Big|\bigcup_{S\in\mathcal{S}}Y(S)\Big| \gtrapprox_a \lambda^3 (b\theta)(b')^2.\qedhere
\end{split}
\end{equation}
This establishes \eqref{localizedNBThetaNbhdBd}, as desired.
\end{proof}


In practice, we will often use Lemma \ref{OnePrismWithTangentPlanes} in situations where each prism in $P_0\in \mathcal{P}$ satisfies the hypotheses of the lemma. The following definitions help make that precise.

\begin{defn}\label{thetaMinDefn}
Let $(\mathcal{P},Y)_{a\times b\times c}$ be a collection of prisms and their associated shading, and let $x\in\bigcup_{P\in\mathcal{P}}Y(P)$. We define
\[
\theta(x) = \frac{a}{b}+ \sup_{P,P'\in\mathcal{P}_Y(x)}\angle(\Pi(P),\Pi(P')),
\]
where $\mathcal{P}_Y(x) = \{P\in\mathcal{P}\colon x\in Y(P)\}$. We define 
\[
\theta_{\operatorname{min}}=\inf \theta(x),
\]
where the infimum is taken over all $x\in\bigcup_{P\in\mathcal{P}}Y(P)$. Note that $\theta_{\operatorname{min}}$ depends on the pair $(\mathcal{P},Y)_{a\times b\times c}$. The choice of $\mathcal{P}$ and $Y$ will be apparent from context.
\end{defn}

Be considering each prism $P_0\in\mathcal{P}$ in turn, Lemma \ref{OnePrismWithTangentPlanes} now has the following corollary
\begin{cor}\label{prismsWithTangentPlanes}
Let $0<a\leq b\leq c$, $\lambda>0$. Let $(\mathcal{P},Y)_{a\times b\times c}$ be a set of prisms and their associated shading. Suppose that each shading $Y(P)$ is regular, in the sense of Definition \ref{regularShading}, and satisfies $|Y(P)|\geq\lambda|P|$.
Then for each $P_0\in\mathcal{P}$, we have
\begin{equation}\label{nbhdMostlyFull}
\Big|N_{b \theta_{\operatorname{min}}}(P_0) \cap \bigcup_{P\in\mathcal{P}}Y(P)\Big| \gtrapprox_a \lambda^4|N_{b \theta_{\operatorname{min}}}(P)|.
\end{equation}
\end{cor}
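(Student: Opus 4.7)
The plan is to fix an arbitrary $P_0 \in \mathcal{P}$ and invoke Lemma~\ref{OnePrismWithTangentPlanes} with $Y_0(P_0) := Y(P_0)$ and the pair $(\mathcal{P},Y)$ unchanged. The density $|Y_0(P_0)| \ge \lambda |P_0|$ and the regularity of each shading $Y(P)$ are inherited directly from the hypotheses of the corollary, so the only hypothesis that requires work is the bound on $\theta$. The lemma demands $\theta \le \theta_{\min}^{P_0}$ where
\[
\theta_{\min}^{P_0} := \tfrac{a}{b} + \inf_{x \in Y(P_0)} \sup_{P \in \mathcal{P}_Y(x)} \angle(\Pi(P_0), \Pi(P)),
\]
whereas the conclusion of the corollary is phrased in terms of the $\theta_{\min}$ of Definition~\ref{thetaMinDefn} (an infimum, over all $x \in \bigcup_{P} Y(P)$, of the \emph{symmetric} diameter $\sup_{P,P' \in \mathcal{P}_Y(x)} \angle(\Pi(P), \Pi(P'))$ of the directions present at $x$). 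The real content of the proof is therefore to reconcile these two quantities.

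The key step will be the following angle comparison. For any $x \in Y(P_0)$ one has $P_0 \in \mathcal{P}_Y(x)$, so by the triangle inequality for the metric $\angle(\cdot,\cdot)$ on the Grassmannian,
\[
\sup_{P,P' \in \mathcal{P}_Y(x)} \angle(\Pi(P), \Pi(P')) \;\le\; 2 \sup_{P \in \mathcal{P}_Y(x)} \angle(\Pi(P_0), \Pi(P)).
\]
Taking the infimum over $x \in Y(P_0) \subseteq \bigcup_P Y(P)$ — so that the infimum defining $\theta_{\min}$ is taken over a superset and is hence no larger — yields $\theta_{\min} \le 2\theta_{\min}^{P_0}$, and in particular $\theta_{\min}^{P_0} \ge \max(a/b,\theta_{\min}/2)$. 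Consequently Lemma~\ref{OnePrismWithTangentPlanes} is applicable with $\theta := \theta_{\min}^{P_0}$.

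Finally, the lemma's output
\[
\Big|N_{b\theta_{\min}^{P_0}}(P_0) \cap \bigcup_{P \in \mathcal{P}} Y(P)\Big| \;\gtrapprox_a\; \lambda^4 \,|N_{b\theta_{\min}^{P_0}}(P_0)|
\]
will be transferred to the stated bound via a volume comparison between $N_{b\theta_{\min}^{P_0}}(P_0)$ and $N_{b\theta_{\min}}(P_0)$. In the regime $\theta \ge a/b$, the neighborhood $N_{b\theta}(P_0)$ is comparable to a prism of dimensions $b\theta \times b \times c$, so its volume is linear in $\theta$; since $\theta_{\min}^{P_0}$ and $\theta_{\min}$ agree up to a factor of $2$, the two neighborhoods have comparable volumes, and the containment in whichever direction is appropriate (depending on the sign of $\theta_{\min} - \theta_{\min}^{P_0}$) transfers the estimate to the form stated in the corollary, with the constant factor absorbed into $\gtrapprox_a$. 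The only (mild) obstacle to executing this plan is precisely the symmetric-versus-asymmetric mismatch between the two versions of $\theta_{\min}$; once this is bridged by the triangle inequality on the Grassmannian, the corollary is an immediate consequence of Lemma~\ref{OnePrismWithTangentPlanes}.
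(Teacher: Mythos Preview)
Your approach matches the paper's (apply Lemma~\ref{OnePrismWithTangentPlanes} to each $P_0$), and your observation that the lemma's $\theta_{\min}^{P_0}$ differs from the $\theta_{\min}$ of Definition~\ref{thetaMinDefn} is a point the paper glosses over entirely. However, there is a small gap in your transfer step. You only establish the one-sided bound $\theta_{\min} \le 2\theta_{\min}^{P_0}$; the claim that the two quantities ``agree up to a factor of $2$'' is not justified in the other direction. Indeed, $\theta_{\min}^{P_0}$ can be much larger than $\theta_{\min}$ --- this happens whenever the infimum defining $\theta_{\min}$ is attained at a point outside $Y(P_0)$ where the prisms happen to be nearly coplanar. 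In that regime your choice $\theta = \theta_{\min}^{P_0}$ yields a density statement for the \emph{large} neighborhood $N_{b\theta_{\min}^{P_0}}(P_0)$, and density in a large set does not pass to a small subset, so the ``containment in whichever direction is appropriate'' does not actually transfer the estimate.

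The fix is immediate: when $\theta_{\min} \le \theta_{\min}^{P_0}$, invoke the lemma directly with $\theta = \theta_{\min}$, which is then an admissible choice; when $\theta_{\min} > \theta_{\min}^{P_0}$, your argument using $\theta_{\min} \le 2\theta_{\min}^{P_0}$ works as written. Equivalently, apply the lemma once with $\theta = \min(\theta_{\min}, \theta_{\min}^{P_0})$, which is always admissible and always lies in $[\theta_{\min}/2, \theta_{\min}]$.
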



\subsection{Assertions $\cF$, $\cE$, and $\TcE$ are equivalent}\label{tubesVsPrismsSubSec}
As noted in Remark \ref{discussionOfDRemark}, Situation 1, we have $\cF(\sigma,\omega)\implies \cE(\sigma,\omega)\implies \TcE(\sigma,\omega)$. In this section we will prove that the reverse implications hold. More precisely, we have the following. 


\begin{prop}\label{EiffF}
Let $0 <\sigma\leq 2/3,\ \omega> 0$. Then $\cF(\sigma,\omega) \Longleftrightarrow \cE(\sigma,\omega) \Longleftrightarrow \TcE(\sigma,\omega)$.
\end{prop}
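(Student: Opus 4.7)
The forward implications $\cF(\sigma,\omega) \Rightarrow \cE(\sigma,\omega) \Rightarrow \TcE(\sigma,\omega)$ are immediate from the definitions: setting $a=b=\delta$ in $\cF$ forces $D \sim 1$ by Situation 1 of Remark \ref{discussionOfDRemark}, reducing the $\cF$ estimate to the $\cE$ estimate; and $\TcE$ is the restriction of $\cE$ to collections $\tubes$ with $\FS(\tubes) \lessapprox_\delta 1$, in which case the $\ell^{-\sigma}$ factor can be absorbed into the sub-polynomial loss. Consequently it suffices to establish the single implication $\TcE(\sigma,\omega) \Rightarrow \cF(\sigma,\omega)$, which closes the cycle.

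For $\TcE \Rightarrow \cF$, given a prism arrangement $(\mathcal{P},Y)_{a\times b\times 1}$ with $\CKT(\mathcal{P})=m$, $\FS(\mathcal{P})=\ell$, and $D$ as in \eqref{defnOfD}, I would carry out two nested reductions. First, apply Proposition \ref{slabWolffFactoring} to obtain a refinement $(\mathcal{P}',Y')$ and a cover $\mathcal{W}_{\operatorname{slab}}$ of $\mathcal{P}'$ by convex sets such that each rescaled collection $\mathcal{P}'^W$ satisfies $\FS \lessapprox_\delta 1$. Properties (ii) and (iii) of that proposition guarantee the shadings $\bigcup_{P \in \mathcal{P}'[W]} Y'(P)$ are pairwise disjoint across $W$, and that most of the mass of $(\mathcal{P},Y)$ is preserved; this reduces the problem to establishing the $\cF$ estimate in the special case $\FS \lessapprox_\delta 1$, with the summation over $W$ producing the desired $\ell^{-\sigma}$ factor.

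Second, in the reduced case, cover the prisms inside each $W$ by their enclosing $b$-tubes $\tubes_b$ and, after dyadic pigeonholing, assume each $b$-tube contains the same number $N_b$ of prisms. At the outer scale, apply $\TcE(\sigma,\omega)$ to $\tubes_b$ --- the Wolff constants of $\tubes_b$ inherit the correct dependence on $m$ and $\ell$ via Lemmas \ref{frostmanSlabTransitiveUnderCovers} and \ref{KTWSubMultiplicativeTubes} --- to produce the $b^{\omega+\eps}$ factor. At the inner scale, observe that inside a fixed $b$-tube $T_b$, applying $\phi_{T_b}$ maps the prisms in $\mathcal{P}'[T_b]$ to a collection of $(a/b)$-thick slabs parallel to the long axis of $T_b$; apply Lemma \ref{cordobaLem} to bound the volume of the union of these slabs. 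Combining the outer and inner estimates via pointwise control of the two multiplicities produces the $D^{-\sigma}$ factor, since by definition $D$ encodes precisely the clustering of prisms within $\rho$-tubes for $\rho \in [a,b]$.

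The main obstacle is multi-parameter bookkeeping: the three parameters $m$, $\ell$, $D$, together with the sub-polynomial losses $\delta^{\eps}$, $a^{\eps}$ from each application of $\TcE$ and Proposition \ref{slabWolffFactoring}, must combine to reproduce the exact shape of the $\cF$ inequality. A secondary technical point is that after Stage 1 the rescaled collection $\mathcal{P}'^W$ is still a prism arrangement rather than an honest $\delta$-tube arrangement, so Stage 2 must be performed in the rescaled coordinates produced by Stage 1 rather than sequentially in the original coordinates of $\RR^3$; managing this interleaving cleanly is the most delicate aspect of the argument.
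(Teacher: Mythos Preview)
Your reduction to $\TcE \Rightarrow \cF$ is correct, and Stage~1 (using Proposition~\ref{slabWolffFactoring} to reduce to the case $\FS \lessapprox 1$) is a reasonable first move. The genuine gap is in Stage~2, and it is not merely bookkeeping.

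The problem is that your inner estimate is too local. When you cover $\mathcal{P}'$ by $b$-tubes and apply Lemma~\ref{cordobaLem} to the rescaled slabs $\mathcal{P}'[T_b]^{T_b}$, you obtain a lower bound on the density of $\bigcup_{P \in \mathcal{P}'[T_b]} Y(P)$ inside $T_b$. But this density can be as small as $a/b$: take one prism per $b$-tube ($N_b = 1$). Then the shading on each $T_b$ has density $\sim a/b$, which for $a \ll b$ is far below the $b^{\eta}$-density threshold needed to invoke $\TcE(\sigma,\omega)$ at scale $b$. Your proposed workaround via multiplicities $\mu \le \mu_b \cdot \mu_S$ does not rescue this: bounding $\mu_b$ by the \emph{full-shading} $b$-tube multiplicity $\mu_b'$ and $\mu_S$ by $\CKT$ of the slabs produces, in the concrete scenario $a=\delta$, $b=\delta^{1/2}$, $N_b=1$, $m=\ell=D=1$, $\#\mathcal{P}=\delta^{-1}$, a bound of order $\delta^{\omega/2 + 1/2 + \sigma/2}$, which is weaker than the required $\cF$ bound $\delta^{\omega/2 + 1/2 + \sigma/4}$ by a factor $\delta^{\sigma/4}$.

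What your inner estimate misses is the contribution of prisms $P' \in \mathcal{P}'$ that are \emph{not} contained in $T_b$ but whose shadings nonetheless intersect $T_b$ --- prisms living in transversely intersecting $b$-tubes. These are precisely what can fill $T_b$ to density $\approx 1$. The paper captures this via a tangential/transverse dichotomy: Lemma~\ref{isolatePrismsBroad} iteratively factors $\mathcal{P}$ into pieces on which $\theta_{\min} \gtrsim a^{\eps}$ (so intersecting prisms have well-separated tangent planes), and then Corollary~\ref{prismsWithTangentPlanes} shows that under this transversality the set $N_{\tilde b}(P_0) \cap \bigcup_{P \in \mathcal{P}} Y(P)$, with the union over \emph{all} prisms, has density $\gtrsim \lambda^4$. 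This is the step that makes the shading on the inflated $\tilde b$-tubes dense enough for $\TcE$ to apply. Your two-stage decomposition separates the direction information (in $\tubes_b$) from the tangent-plane information (in the slabs), and the C\'ordoba argument on the slabs alone cannot recover the cross-tube interaction that the transversality lemma exploits.
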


The goal of Section \ref{tubesVsPrismsSubSec} is to prove Proposition \ref{EiffF}. We begin with several lemmas that describe the structure of arrangements of rectangular prisms. Recall the quantity $\theta_{\operatorname{min}}$ from Definition \ref{thetaMinDefn}. When $\theta_{\operatorname{min}}\sim 1$, then a typical pair of intersecting prisms intersect transversely, and by Corollary \ref{prismsWithTangentPlanes} this means that each prism can be replaced by a thickened neighbourhood that is comparable to an $a$-tube. In the next lemma, we will analyze what happens when $\theta_{\operatorname{min}}$ is small. I.e.~we consider a collection of $a\times b\times 1$ prisms $\mathcal{P}$ that intersect tangentially, in the sense that their tangent planes make small angle at a typical point of intersection. 

Informally, the argument is as follows. We will begin by describing three Moves, which we will then apply repeatedly. Note that these are \emph{not} the Moves described in Section \ref{twoScaleGrainsDecompIntro} --- those Moves will be described in Sections \ref{twoScaleGrainsSec} and \ref{moves123Sec}.

Since the tangent plane of a prism is constant along the entire prism, this means that the collection of prisms can be partitioned into smaller sub-collections $\mathcal{P}_1,\ldots,\mathcal{P}_N$, where for each index $i$, the normal vector of the tangent plane of each prism in $\mathcal{P}_i$ makes small angle with a fixed unit vector $v_i$. If $\mathcal{P}_i$ and $\mathcal{P}_j$ are two such sub-collections, then the corresponding sets $\bigcup_{P\in\mathcal{P}_i} P$ and $\bigcup_{P\in\mathcal{P}_j} P$ are mostly disjoint. We can cover the prisms in each set $\mathcal{P}_i$ by a set of slabs whose tangent planes have normal vector $v_i$. We then rescale and continue this process inside each slab. This is Move $\#1$.

Next, we can apply Proposition \ref{slabWolffFactoring} (factoring with respect to the Frostman Slab Wolff Axioms) to further break our collection of prisms into sub-collections, each of which satisfy the Frostman Slab Wolff Axioms with small error. This is Move $\#2$.

Finally we apply Lemma \ref{everySetHasARegularShadingLem} (every shading has a regular sub-shading). This is Move $\#3$.

We iteratively apply Moves 1, 2, and 3 until our set of prisms $\mathcal{P}$ has been covered by a set $\mathcal{W}$ of convex subsets of $\RR^3$, so that each set $\mathcal{P}^W$ satisfies the hypotheses of Corollary \ref{prismsWithTangentPlanes} with $\theta_{\operatorname{min}}$ about 1, and $\mathcal{P}^W$ satisfies the Frostman Slab Wolff Axioms with error about 1. The precise statement is as follows.


\begin{lem}\label{isolatePrismsBroad}
For all $\eps>0$, there exists $\eta,c>0$ so that the following holds for all $0<a\leq b\leq 1$. Let $(\mathcal{P},Y)_{a\times b\times 1}$ be $a^\eta$ dense. Then there exists a refinement $(\mathcal{P}',Y')_{a\times b\times 1}$ with $\sum_{P'\in\mathcal{P}'}|Y'(P')|\gtrapprox_a a^\eps\sum_{P\in\mathcal{P}}|P|$ and cover $\mathcal{W}$ of $\mathcal{P}'$ by congruent convex sets, such that the following holds.
\begin{itemize}
	\item[(A)] The shading $Y'$ is regular, in the sense of Definition \ref{regularShading}.
	\item[(B)] $\mathcal{W}$ is a $\approx_a 1$ balanced cover of $\mathcal{P}'$, and factors $\mathcal{P}'$ from below with respect to the Frostman Slab Wolff axioms, with error $a^{-\eps}$. 
	\item[(C)] All of the convex sets in $\bigcup_{W\in\mathcal{W}}\mathcal{P}^{\prime W}$ have the same dimensions up to a multiplicative factor of 2, i.e.~each one is comparable (after a suitable rigid transformation) to a common convex set $\tilde P$ (see Remark \ref{failureToBeCongrudentAfterRescaling} below).
	\item[(D)] For each $W\in\mathcal{W}$, the sets $\mathcal{P}^{\prime W}$ and their associated shading satisfy $\theta_{\operatorname{min}}\geq a^\eps$, in the sense of Definition \ref{thetaMinDefn}.
	\item[(E)] The sets $\bigcup_{P'\in\mathcal{P}'[W]}Y'(P)$, $W\in\mathcal{W}$ are pairwise disjoint.
\end{itemize}
\end{lem}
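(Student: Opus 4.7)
The plan is to construct $\mathcal{W}$ by iterative refinement, starting from the trivial cover $\mathcal{W}_0 = \{B(0,1)\}$. Throughout, I maintain a refinement $(\mathcal{P}'',Y'')$ of $(\mathcal{P},Y)$ together with a partitioning cover $\mathcal{W}$ of $\mathcal{P}''$ (forced to consist of congruent convex sets by dyadic pigeonholing on shapes), some of whose elements are flagged \emph{finalized}, meaning that conclusions (A)--(E) already hold locally. The iteration applies three moves to each non-finalized $W\in\mathcal{W}$.

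\textbf{Move I (Regularize).} Apply Lemma \ref{everySetHasARegularShadingLem} to each shading $Y''(P)$ with $P\in\mathcal{P}''[W]$, losing a factor of $2$ in mass but securing conclusion (A). \textbf{Move II (Frostman Slab factoring).} Apply Proposition \ref{slabWolffFactoring} to the rescaled pair $(\mathcal{P}''^W,Y''^W)$: this returns a sub-collection, a refined shading, and a cover $\mathcal{V}_W$ of convex subsets of $\RR^3$ that factors the rescaled prisms from below with respect to the Frostman Slab Wolff axioms with error $a^{-\eps}$, with pairwise disjoint shading unions and mass retention $\gtrsim a^{\eps}$. Replace $W$ by the preimages $\phi_W^{-1}(V)$, $V\in\mathcal{V}_W$; this yields conclusions (B) and (E). \textbf{Move III (Split tangent clusters).} For each new piece $W'$ inspect $\theta_{\operatorname{min}}(\mathcal{P}''^{W'})$. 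If $\theta_{\operatorname{min}}\geq a^{\eps}$, finalize $W'$. Otherwise, writing $\tilde a\times\tilde b\times 1$ for the current rescaled prism dimensions, pigeonhole dyadically on $\theta(x)$ to find $\theta\in[\tilde a/\tilde b, a^{\eps}]$ such that a $|\log a|^{-1}$ fraction of the shading lies at points $x$ with $\theta(x)\sim\theta$. Because the tangent plane of each prism is constant along its length, every rescaled prism incident to such an $x$ is contained in a slab of thickness $O(\theta\tilde b)$ through $x$; cover this portion of the shading by an essentially disjoint family of such slabs and replace $W'$ by their preimages.

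Each invocation of Move III shrinks the width-to-thickness ratio $\tilde b/\tilde a$ of the rescaled prisms by a factor of at least $a^{-\eps}$. Since this ratio lies in $[1,a^{-1}]$, Move III can fire at most $O(1/\eps)$ times, after which every surviving piece satisfies $\theta_{\operatorname{min}}\geq a^{\eps}$. Interleaved dyadic pigeonholing on the dimensions of the slabs and prisms produced at each stage, and on the quantity $|W|^{-1}\sum_{P\in\mathcal{P}''[W]}|P|$, at a cumulative cost of $(100|\log a|)^{O(1/\eps)}\approx_a 1$ in mass, ensures that $\mathcal{W}$ is a $\approx_a 1$ balanced cover by congruent convex sets and that all rescaled prisms in $\bigcup_W \mathcal{P}^{\prime W}$ share common dimensions up to a factor of $2$, providing the balancedness clause of (B) and conclusion (C). The cumulative mass loss across the $O(1/\eps)$ iterations is $a^{O(\eps^2)}$; re-parameterizing $\eps$ then yields the stated $a^{\eps}$ bound.

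The main obstacle will be Move III: one must verify that the local angular concentration of tangent planes at a single point $x$ does indeed force all prisms incident to $x$ into a common slab of thickness $O(\theta\tilde b)$. This uses the rigidity that a prism's tangent plane is constant along its entire length, together with the regularity of the shading (ensured by Move I) to transfer the pointwise tangent-plane information into a slab containing most of each relevant prism. A secondary subtlety is that Move II can disrupt the regularity guaranteed by Move I, so Move I must be reapplied after each invocation of Move II; the mass loss remains controlled because regularization costs only a factor of $2$ each time.
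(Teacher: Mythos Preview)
Your three moves match the paper's (your Moves I, II, III are its Moves \#3, \#2, \#1), and the iterative structure is the same. The gap is in termination. You track the aspect ratio $\tilde b/\tilde a$ and claim Move III shrinks it by $a^{-\eps}$ per firing; but Move II can \emph{increase} this ratio. For instance, if all current rescaled prisms share a common orientation and are stacked along their $\tilde a$-axis, then the most Frostman-violating slab selected by Proposition~\ref{slabWolffFactoring} has its normal along the $\tilde b$-axis and thickness $\sim\tilde b$; rescaling by it sends $\tilde b\mapsto 1$ while leaving $\tilde a$ fixed, boosting the ratio by $1/\tilde b$. So the ratio is not monotone across a full round of Moves I--III, and your $O(1/\eps)$ bound on Move III firings is unjustified. (A smaller error: the slab in Move III has thickness $O(\theta)$, not $O(\theta\tilde b)$, since the length-$1$ axis of the prism lies in $\Pi(P)$ and hence contributes $1\cdot\theta=\theta$ to the extent along the slab normal.)

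The paper instead tracks the volume $|P^W|=|P|/|W|$ of the rescaled prism, which \emph{is} monotone: every move that shrinks $W$ increases it, irrespective of orientation. Each firing of Move III intersects $W$ with a slab of relative thickness $O(\tilde a+\theta)\leq O(a^{\eps})$ (using the corrected thickness, and noting that $\theta_{\min}<a^{\eps}$ forces $\tilde a\leq\tilde b\,\theta_{\min}\lesssim a^{\eps}$), so $|P^W|$ jumps by $\gtrsim a^{-\eps}$. Since $|P^W|$ starts at $ab\geq a^2$ and is bounded by $O(1)$, Move III fires at most $O(1/\eps)$ times. Replace your ratio argument with this volume argument and the rest of your proof goes through.
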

\begin{rem}\label{failureToBeCongrudentAfterRescaling}
Note that even though the prisms in $\mathcal{P}$ (resp.~$\mathcal{W}$) all have the same dimensions, it could happen that the prisms in $\mathcal{P}^W$ have differing dimensions --- see Figure \ref{nonCongruentImagesFig} for an example of this phenomena. Conclusion (C) (which is achieved by pigeonholing) ensures that this does not happen.
\end{rem}

\begin{figure}[h!]
\begin{overpic}[width=.42\linewidth]{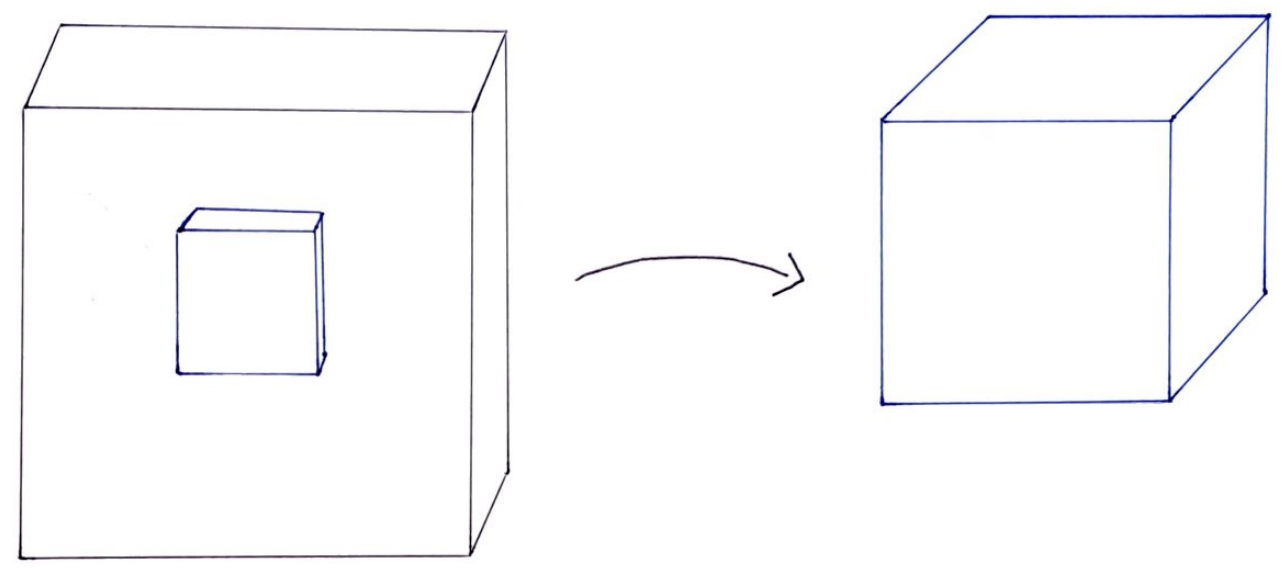}
	\put (4,31) {$W_1$}
	\put (16,20) {$P_1$}
	\put (50,28) {$\phi_{W_1}$}
	\put (70,8) {$\phi_{W_1}(P_1)$}
\end{overpic}
\hfill
\begin{overpic}[width=.42\linewidth]{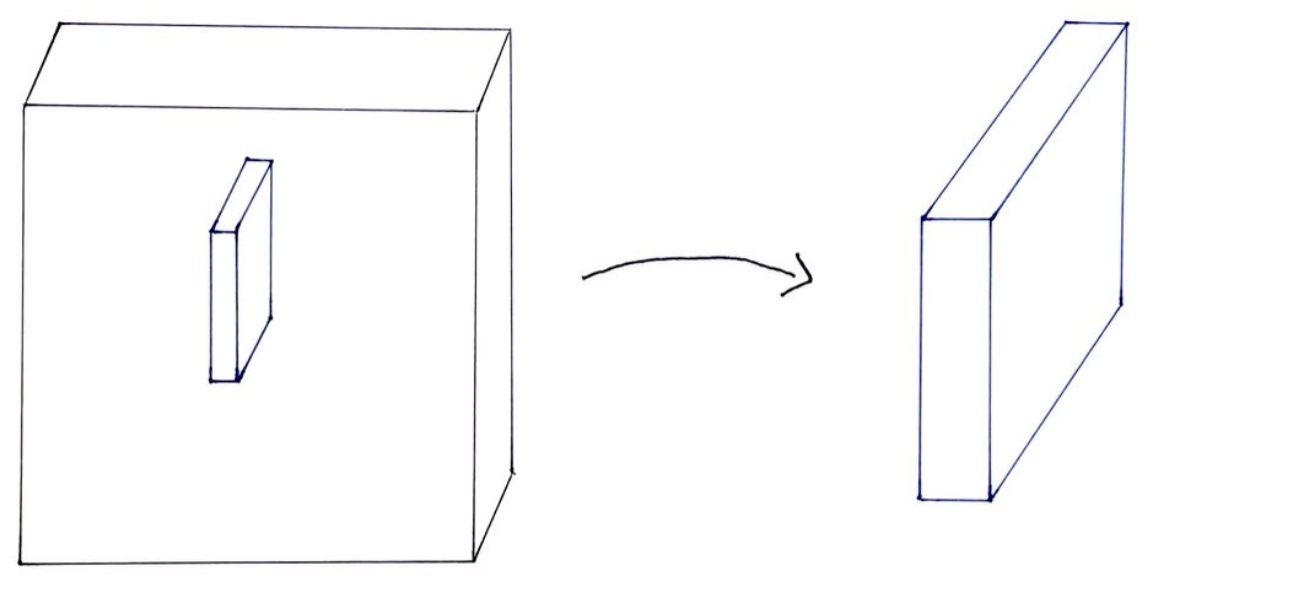}
\put (4,31) {$W_2$}
\put (22,22) {$P_2$}
\put (50,28) {$\phi_{W_2}$}
\put (80,7) {$\phi_{W_2}(P_2)$}
\end{overpic}
\caption{ An example where the convex sets $P_1$ and $P_2$ are congruent, and $U_1$ and $U_2$ are congruent, but $\phi_{U_1}(P_1)$ and $\phi_{U_2}(P_2)$ are not congruent.}
\label{nonCongruentImagesFig}
\end{figure}


\begin{proof}
The main ideas of the proof were already outlined above; in brief, we apply Moves 1, 2, and 3 described above, in that order. We iteratively repeat this process until Conclusions (A), (B), (D), and (E) of Lemma \ref{isolatePrismsBroad} are satisfied. Each iteration increases the volume of each surviving convex set by at least $a^{-\eps}$. On the other hand, each convex set initially had volume $ab\geq a^2$, and at each step all convex sets are contained inside the unit ball, and thus have volume at most $O(1)$. Hence the iterative process detailed above halts after at most $2/\eps$ steps. If $\eta>0$ is chosen sufficiently small (depending on $\eps$), then the resulting refinement will satisfy $\sum_{P'\in\mathcal{P}'}|Y'(P')|\gtrapprox_a a^\eps\sum_{P\in\mathcal{P}}|P|$. Finally, we dyadically pigeonhole the set $\mathcal{P}'$ to select a refinement that satisfies Conclusion (C).
\end{proof}


We are now ready to prove Proposition \ref{EiffF}. The idea is as follows. Given a set $(\mathcal{P},Y)_{a\times b\times 1}$ of prisms, we apply Lemma \ref{isolatePrismsBroad} to cover a refinement of  $\mathcal{P}$ by a collection of convex sets $\mathcal{W}$, and then apply Corollary \ref{prismsWithTangentPlanes} to each collection $\mathcal{P}^W$ --- this gives us a collection of $\tilde b$ tubes associated to $\mathcal{P}^W$, for some $\tilde b\geq b$. The collection of $\tilde b$ tubes satisfies the Frostman Slab Wolff axioms with error $\lessapprox 1$, and satisfies the Katz-Tao Convex Wolff axioms with some error $\tilde m$ that we will analyze later. We now apply the estimate $\TcE(\sigma,\omega)$ to this collection of $\tilde b$ tubes, and then undo the transformation $\phi_W$. Summing the contributions from each $W\in\mathcal{W}$, we obtain an estimate for $\Big|\bigcup Y(P)\Big|$ that becomes better as $\tilde m$ becomes larger. Thus we are faced with the task of estimating the size of $\tilde m$ --- this quantity is closely related to the quantity $D$ from \eqref{defnOfD}. We now turn to the details.

\begin{proof}[Proof of Proposition \ref{EiffF}]
It suffices to prove that $\TcE(\sigma,\omega)\implies \cF(\sigma,\omega)$. Fix $0 <\sigma\leq 2/3,\ \omega> 0$, and suppose $\TcE(\sigma,\omega)$ is true. Fix $\eps>0$. Then there exists $\eta_0>0$ so that the volume estimate \eqref{TCEEstimate} holds for all pairs $(\tubes,Y)_{\delta}$ that are $\delta^{\eta_0}$ dense and satisfy $\FS(\tubes)\leq\delta^{-\eta_0}$. 

Let $\kappa,\eta>0$ be quantities to be determined below. Let $0<a\leq b\leq 1$ and let $(\mathcal{P},Y)_{a\times b\times 1}$ be $a^{\eta}$ dense. Our goal is to prove that
\begin{equation}\label{desiredIneqForPropEiffF}
\Big|\bigcup_{P\in\mathcal{P}}Y(P)\Big|\geq \kappa a^\eps b^{\omega}m^{-1}(\#\mathcal P)|P|(m^{-3/2}\ell (\#\mathcal{P})|P|^{1/2})^{-\sigma}D^{-\sigma},
\end{equation}
with $m,\ell$, and $D$ as defined in \eqref{defnOfD}.

\medskip

\noindent {\bf Step 1.}
Let $\eta_1$ be a small quantity to be determined later (we will select $\eta_1$ very small compared to $\eta_0$, and $\eta$ very small compared to $\eta_1$). We will choose $\eta>0$ sufficiently small so that we can apply Lemma \ref{isolatePrismsBroad} with $\eta_1/4$ in place of $\eps$ to $(\mathcal{P},Y)_{a\times b \times 1}$. Denote the output of that lemma by $(\mathcal{P}_1,Y_1)_{a\times b\times 1}$ and $\mathcal{W}$. By Conclusion (B) of Lemma \ref{isolatePrismsBroad} we have $\FS(\mathcal{P}_1^{W})\leq a^{-\eta_1/4}$ for each $W\in\mathcal{W}_1$. By Conclusion (E) of Lemma \ref{isolatePrismsBroad}, we have
\begin{equation}\label{disjointnessOfP1}
\Big|\bigcup_{P\in\mathcal{P}}Y(P)\Big|\geq \sum_{W\in\mathcal{W}}\Big|\bigcup_{P\in\mathcal{P}_1[W]}Y_1(P)\Big|.
\end{equation}

By Conclusion (C) of Lemma \ref{isolatePrismsBroad}, there are numbers $\tilde a \leq\tilde b\leq 1$ with $\tilde b\geq b$ so that for each $W\in\mathcal{W}$ and each $P^W\in\mathcal{P}_1^W$, $P^W$ is comparable to a $\tilde a\times\tilde b\times 1$ prism. Conclusions (A) and (D) of Lemma \ref{isolatePrismsBroad} say that for each $W\in\mathcal{W}$, the pair $(\mathcal{P}_1^W,Y_1^W)_{\tilde a\times \tilde b\times 1}$ satisfies the hypotheses of Corollary \ref{prismsWithTangentPlanes}, and hence we can apply Corollary \ref{prismsWithTangentPlanes} to conclude that for each $P^W\in\mathcal{P}_1^W$ we have

\begin{equation}\label{densityOfTildeBNbhd}
\Big|N_{\tilde b}(P^W) \cap \bigcup_{P^{\prime W}\in\mathcal{P}_1^W}Y_1^W(P^{\prime W})\Big| \gtrsim  a^{\eta_1}|N_{\tilde b}(P^W)|.
\end{equation}
In words, \eqref{densityOfTildeBNbhd} says the following: for each $P^W\in \mathcal{P}_1^W$, the set $N_{\tilde b}(P^W)$ is comparable to a $\tilde b$-tube. This $\tilde b$-tube has almost full intersection with the union of shadings $\bigcup_{P^{\prime W}\in\mathcal{P}_1^W}Y_1^W(P^{\prime W})$. See Figure \ref{prismsFillOutTubeFig} for an illustration of this situation.

\medskip
\noindent{\bf Step 2.} As noted above, each set $N_{\tilde b}(P^W)$ is comparable to a $\tilde b$ tube. Let $\tubes_W$ be a maximal, essentially distinct subset of $\{N_{\tilde b}(P^W)\colon P^W\in\mathcal{P}_1^W\}$. For each $T\in\tubes_W$, define the shading
\[
Y(T) = T\cap \bigcup_{P^W\in\mathcal{P}_1^W}Y_1^W(P^W).
\]
Then \eqref{densityOfTildeBNbhd} says that $(\tubes_W,Y)_{\tilde b}$ is $\gtrsim  a^{\eta_1}$ dense. After pigeonholing and refining  $\mathcal{W}$ and $\tubes_W$ (which induces a refinement on $\mathcal{P}_1^W$), we can ensure that $\#\mathcal{P}_1^W[T]$ is roughly the same (up to a factor of 2) for each $T\in\tubes_W, W\in \mathcal{W}$. Abusing notation, we continue to refer to these refined sets as $\mathcal{W}, \tubes_W$ and $\mathcal{P}_1^W$. 

At this point, $\tubes_W$ is a balanced cover of $\mathcal{P}_1^W$, and we still have $\FS(\mathcal{P}_1^W)\lessapprox_a a^{-2\eta_1}.$ Since $\tubes_W$ is a balanced cover of $\mathcal{P}_1^W$, by Remark  \ref{FrostmanWolffInheritedUpwardsDownwards}(A) (i.e.~Frostman Wolff constants are inherited upwards), we have that $\tilde\ell := \max_{W\in\mathcal{W}}\FS(\tubes_W)\lessapprox_a a^{-2\eta_1}$. Finally, define $\tilde m := \max_{W\in\mathcal{W}} \CKT(\tubes_W)$.

\medskip
\noindent{\bf Step 3.} At this point, we have covered our set of prisms $\mathcal{P}_1$ by a collection of convex sets $\mathcal{W}$. For each rescaled set $\mathcal{P}_1^W$, we have located a collection of tubes $\tubes_W$, whose shadings are almost full. The relation between the volumes of these objects is as follows:
\begin{equation}\label{relationVolumePWT}
\Big|\bigcup_{P\in\mathcal{P}_1[W]}Y_1(P)\Big| \sim |W|\Big|\bigcup_{P^W\in\mathcal{P}_1^W}Y_1^W(P^W)\Big| \geq |W|\Big|\bigcup_{T\in\tubes_W}Y(T)\Big|.
\end{equation}

Our next task is to estimate the volume of $\bigcup_{\tubes_W}Y(T)$. First we consider the case where $\tilde b \leq a^{\eps/5}$. In this case, $(\tubes_W,Y)_{\tilde b}$ is $\gtrsim {\tilde b}^{5\eta_1/\eps}$ dense, and $\FS(\tubes_W)\lessapprox_a {\tilde b}^{-10\eta_1/\eps}$. If $\eta_1$ is selected sufficiently small depending on $\eta_0$ and $\eps$ (for example, $\eta_1 = \eps\eta_0/20$ will suffice), then we can apply the estimate $\TcE(\sigma,\omega)$ with $\eps/2$ in place of $\eps$ to conclude that
\begin{equation}\label{volumeBdUnionTildeP}
\begin{split}
\Big|\bigcup_{T\in\tubes_W}Y(T)\Big| & \gtrsim  \tilde b^{\omega+\eps/2}\tilde m^{-1}(\#\tubes_W)|T|\Big(\tilde m^{-3/2}\tilde \ell (\#\tubes_W)|T|^{1/2} \Big)^{-\sigma}\\
& \gtrapprox_a a^{\eps/2+2\eta_1\sigma} b^{\omega}\tilde m^{-1}(\#\tubes_W)|T|\Big(\tilde m^{-3/2} (\#\tubes_W)|T|^{1/2} \Big)^{-\sigma}.
\end{split}
\end{equation}
On the other hand, if $\tilde b  \geq a^{\eps/5}$, then the estimate \eqref{volumeBdUnionTildeP} follows from the fact that for every $T\in\tubes_W$ we have
\begin{equation}
\begin{split}
\Big|\bigcup_{T\in\tubes_W}Y(T)\Big| & \geq |Y(T)| \gtrsim a^{\eta_1}{\tilde b}^2\geq a^{\eps/2},
\end{split}
\end{equation}
which is stronger than \eqref{volumeBdUnionTildeP}.

\medskip

\noindent{\bf Step 4.}
We have estimated the volume of $\bigcup_{\tubes_W}Y(T)$ for each $W\in\mathcal{W}$. Our next task is to combine these estimates in order to estimate the volume of $\bigcup_{P\in\mathcal{P}}Y(P)$. 

We know that each prism $W\in\mathcal{W}$ has the same dimensions up to a factor of 2; call these dimensions $s\times t\times 1$. Then for each $W\in\mathcal{W}$ and each $T\in \tubes_W$, we have that $\phi_W^{-1}(T)$ is comparable to a rectangular prism of dimensions $s\tilde b\times t\tilde b\times 1$. 

Let $\hat{\mathcal{P}}_W=\{ \phi_W^{-1}(T)\colon T\in\tubes_W\}$, i.e.~$\hat{\mathcal{P}}_W$ is a set of $s\tilde b\times t\tilde b\times 1$ prisms contained in $W$; $|T|\sim|\hat P|/|W|$; and $\#\hat{\mathcal{P}}_W=\#\tubes_W$. For each $W\in\mathcal{W}$ and each $\hat P =  \phi_W^{-1}(T) \in \hat{\mathcal{P}}_W$, define the natural shading $\hat Y(\hat P) = \phi_W^{-1}(Y(T))$.

\eqref{disjointnessOfP1} allows us to combine the (rescaled) volume estimates \eqref{relationVolumePWT} and \eqref{volumeBdUnionTildeP} from each $W\in\mathcal{W}$. Defining $\hat{\mathcal{P}}=\bigsqcup \hat{\mathcal{P}}_W$, we have
\begin{equation}\label{volumeEstimateY2T}
\begin{split}
\Big|\bigcup_{P\in\mathcal{P}}Y(P)\Big| & \gtrapprox_a |W| \sum_{W\in\mathcal{W}}\Big[a^{\eps/2+\eta_1\sigma}b^{\omega}\tilde m^{-1}(\#\hat{\mathcal{P}}_W)\frac{|\hat P|}{|W|}\Big(\tilde m^{-3/2} (\#\hat{\mathcal{P}}_W)\Big(\frac{|\hat P|}{|W|}\Big)^{1/2} \Big)^{-\sigma}\Big].\\
& \approx_a a^{\eps/2+\eta_1\sigma} b^{\omega}\tilde m^{-1}(\#\hat{\mathcal{P}})|\hat P|\Big(\tilde m^{-3/2} \frac{\#\hat{\mathcal{P}}}{\#\mathcal{W}}\Big(\frac{|\hat P|}{|W|}\Big)^{1/2} \Big)^{-\sigma}.
\end{split}
\end{equation} 
 
\medskip

\noindent{\bf Step 5.} To understand the RHS of \eqref{volumeEstimateY2T} we must estimate $\tilde m$. Recall that in Step 2, we have pigeonholed to ensure that $\#\mathcal{P}_1^W[T]$ is roughly the same for each $T\in\tubes_W,\ W\in\mathcal{W}$. Thus each $\hat P\in\hat{\mathcal{P}}$ satisfies $\#\mathcal{P}_1[\hat P]\approx_a \frac{\#\mathcal{P}_1}{\#\hat{\mathcal{P}}}$. Thus we have
\begin{equation}\label{estimateForTildeM}
\tilde m \lessapprox_a m \frac{|\hat P|}{|P|}\frac{\#\hat{\mathcal{P}}}{\#\mathcal{P}_1}.
\end{equation}
Thus we can estimate the RHS of \eqref{volumeEstimateY2T} as follows.
\begin{equation}\label{secondEstimateBigcupCalP}
\begin{split}
\Big|\bigcup_{P\in\mathcal{P}}Y(P)\Big| & \gtrapprox_a a^{\eps/2+\eta_1\sigma} b^{\omega}m^{-1}(\#\mathcal{P}_1)|P|
\Big( m^{-3/2}\Big(\frac{|P|}{|\hat P|}\frac{\#\mathcal{P}_1}{\#\hat{\mathcal{P}}}\Big)^{3/2} \frac{\#\hat{\mathcal{P}}}{\#\mathcal{W}}\Big(\frac{|\hat P|}{|W|}\Big)^{1/2} \Big)^{-\sigma}\\
& 
\gtrapprox_a a^{\eps/2+3\eta_1} b^{\omega}m^{-1}(\#\mathcal{P})|P|
\Big( m^{-3/2} (\#\mathcal{P})|P|^{1/2}\Big)^{-\sigma}\\
&\qquad\qquad\qquad\qquad\qquad\cdot \Big[(\#\mathcal{W})|W|^{1/2}\Big]^{\sigma}\Big[\frac{|P|}{|\hat P|}\Big(\frac{\#\mathcal{P}}{\#\hat{\mathcal{P}}}\Big)^{1/2}\Big]^{-\sigma}.
\end{split}
\end{equation}
In the above computation, we used the fact that $\#\mathcal{P}_1\gtrapprox_a\delta^{-\eta_1}(\#\mathcal{P}).$

\medskip

\noindent{\bf Step 6.}
Compare the RHS of \eqref{secondEstimateBigcupCalP} with \eqref{desiredIneqForPropEiffF}. It remains to analyze the final two terms on the RHS of \eqref{secondEstimateBigcupCalP}. We begin with the penultimate term. Since the sets in $\mathcal{W}$ are convex and have diameter $\sim 1$, each $W\in\mathcal{W}$ is contained in a slab of volume  $O(|W|^{1/2})$. Thus
\[
\#\mathcal{P}_1[W] \lesssim \FS(\mathcal{P})|W|^{1/2}(\#\mathcal{P}).
\]
Since $\#\mathcal{P}_1[W]\approx_a \frac{\#\mathcal{P}_1}{\#\mathcal{W}}\gtrapprox_a a^{\eta_1} \frac{\#\mathcal{P}}{\#\mathcal{W}}$, we conclude that $(\#\mathcal{W})|W|^{1/2}\gtrapprox_a a^{\eta_1} \FS(\mathcal{P})^{-1}=a^{\eta_1}\ell^{-1}$. 

\medskip

\noindent{\bf Step 7.}
We now turn to the final term on the RHS of \eqref{secondEstimateBigcupCalP}. Recall that $\mathcal{P}$ is a collection of prisms of dimensions $a\times b\times 1$, while $\hat{\mathcal{P}}$ is a collection of prisms that all have the same, but unknown, dimensions  $s\tilde b\times t\tilde b\times 1$ --- call these dimensions $a'\times b'\times 1$, with $a\leq a'\leq b'$ and $b'\geq b$. Our desired estimate \eqref{desiredIneqForPropEiffF} will follow from \eqref{secondEstimateBigcupCalP} and the estimate
\begin{equation}\label{needToControlPInsideHatP}
\frac{|P|}{|\hat P|}\Big(\frac{\#\mathcal{P}}{\#\hat{\mathcal{P}}}\Big)^{1/2} \lesssim D = \max_{P\in\mathcal{P}}\sup_{\rho\in[a,b]}\frac{|P|}{|N_\rho(P)|}\big(\#\mathcal{P}[N_{\rho}(P)]\big)^{1/2}.
\end{equation}
\medskip

Fix $\hat P\in\hat{\mathcal P}$. Let $\mathcal{P}^\dag$ be a maximal set of essentially distinct $\min(a',b)\times b\times 1$ prisms contained in $\hat P$, so that each $P\in\mathcal{P}[\hat P]$ is contained in at least one $P^\dag\in\mathcal{P}^\dag$. We claim that
\begin{equation}\label{cardPDag}
\#\mathcal{P}^\dag \sim (|\hat P|/|P^\dag|)^2.
\end{equation}
Indeed, when $a'\leq b$, the RHS of \eqref{cardPDag} is $(b'/b)^2$ and the numerology comes from the fact that a $b'\times 1$ rectangle can be filled with about $(b'/b)^2$ essentially distinct $b\times 1$ rectangles. When $a'\geq b$ the RHS of \eqref{cardPDag} is $(a'b'/b^2)^2$, and the numerology comes from the fact that a $a'\times b'\times 1$ prism can be filled with about $(b'/a')^2$ essentially distinct $a'\times a'\times 1$ tubes, and each of these tubes can be filled with about $(a'/b)^4$ essentially distinct $b\times b\times 1$ tubes.

By the definition of $D$, we have
$
D\geq  \frac{|P|}{|P^{\dag}|}\big(\#\mathcal{P}[P^{\dag}]\big)^{1/2},
$
i.e. $\#\mathcal{P}[P^{\dag}] \leq \big(D \frac{|P^{\dag}|}{|P|}\big)^2.$ 
Thus by \eqref{cardPDag}, 
\[
\#\mathcal{P}[\hat P]\lesssim\Big(\frac{|\hat P|}{|P^\dag|}\Big)^2 \#\mathcal{P}[P^{\dag}] \lesssim  \Big(\frac{|\hat P|}{|P^\dag|}\Big)^2 \Big(D \frac{|P^{\dag}|}{|P|}\Big)^2,
\]
which is \eqref{needToControlPInsideHatP}.
\end{proof}


\subsection{Proof of Proposition \ref{aLLbProp}: Tubes that factor through flat boxes}
With Proposition \ref{EiffF} in hand, we are now ready to prove Proposition \ref{aLLbProp}. Fix $\omega>0,\ 0<\sigma\leq 2/3$, and suppose $\cE(\sigma,\omega)$ is true (and thus by Proposition \ref{EiffF}, $\cF(\sigma,\omega)$ is true). Let $\kappa,\eta>0$ be small quantities to be specified below. Let $(\tubes,Y)_{\delta}$ be $\delta^{\eta}$ dense, let $\delta\leq a\leq b\leq 1$, and let $\mathcal{W}$ be a set of congruent copies of an $a\times b\times 2$ prism $W_0$, as described in the statement of Proposition \ref{aLLbProp}.

\medskip

\noindent {\bf Step 1.}
 After dyadic pigeonholing, we can find a refinement $(\tubes_1,Y_1)_\delta$ of $(\tubes,Y)_\delta$ with $\sum_{T\in\tubes_1}|Y_1(T)|\gtrapprox_\delta\sum_{T\in\tubes}|Y(T)|$, and a subset $\mathcal{W}_1\subset\mathcal{W}$ so that for each $W\in\mathcal{W}_1$ and each $x \in \bigcup_{T\in\tubes_1[W]}Y_1(T)$, we have
	\begin{equation}\label{densityRho}
		\Big| B(x, a) \cap \bigcup_{T\in\tubes}Y(T)\Big| \sim \lambda |B(x,a)|,\quad \textrm{with}\quad \lambda \geq |W|^{-1}\Big|\bigcup_{T\in\tubes_1[W]}Y_1(T)\Big|,
	\end{equation}
where the ``density'' $\lambda=\lambda(W)$ is the same (up to a factor of 2) for all $W\in\mathcal{W}$. In words, \eqref{densityRho} says that if we blur the shading $\bigcup_{\tubes_1[W]}Y_1(T)$ at scale $a$ (for example by convolving with the characteristic function of $B(0,a)$), then each point in the shading has density $\sim \lambda$.

After further pigeonholing, we can ensure that each set $(\tubes_1[W], Y_1)_{\delta},\ W\in\mathcal{W}_1$ is $\approx_\delta \delta^{O(\eta)}$ dense; $\mathcal{W}_1$ is a $\approx_\delta \delta^{-O(\eta)}$ balanced cover of $\tubes_1$; and each set $\tubes_1^W$ obeys the Frostman Convex Wolff axioms with error $\lessapprox_\delta \delta^{-O(\eta)}$. Abusing notation, we will continue to refer to the output of this pigeonholing by $(\tubes_1,Y_1)_\delta$ and $\mathcal{W}_1$.

\medskip

\noindent {\bf Step 2.} Fix $W\in\mathcal{W}_1$. Then $(\tubes_1^W, Y_1^W)_{\delta/b\times \delta/a\times 1}$ is a collection of convex sets, each comparable to a $\delta/b\times \delta/a\times 1$ prism, with a $\delta^{O(\eta)}$-dense shading. We first consider the case where $b\geq\delta^{1-\eps/100}$, so $\delta/b \leq \delta^{\eps/100}$. In this case, the shading on $(\tubes_1^W, Y_1^W)_{\delta/b\times \delta/a\times 1}$ is $(\delta/b)^{O(\eta)/\eps}$ dense. If $\eta$ is selected sufficiently small, then we can apply the estimate $\cF(\sigma,\omega)$ with $\eps/2$ in place of $\eps$ (recall that $\cF(\sigma,\omega)$ is true in light of Proposition \ref{EiffF}) to conclude that
\begin{equation}\label{firstEstimateRescaledTubes}
\Big| \bigcup_{T^W \in \tubes_1^W}Y_1^W(T^W)\Big| \geq \kappa_\eps \big(\frac{\delta}{b}\big)^{\eps/2} \big(\frac{\delta}{a}\big)^\omega m_W^{-1}(\#\tubes^W)|T^W|
\Big(m_W^{-3/2}\ell_W(\#\tubes^W)|T^W|^{1/2}\Big)^{-\sigma},
\end{equation}
where $m_W = \CKT(\tubes^W)$ and $\ell_W = \FS(\tubes^W)\leq \CFC(\tubes^W)\lessapprox_\delta\delta^{-O(\eta)}$. Recall as well that $|T^W| \sim |T|/|W|$. Note that the estimate $\cF(\sigma,\omega)$ involves the additional term ``$D$'' defined in \eqref{defnOfD}, but as discussed in Remark \ref{discussionOfDRemark}, Situation 3, we have $D\lesssim 1$.

Since $\CFC(\tubes^W)\lessapprox_\delta \delta^{-O(\eta)}$, we have that $m_W\lessapprox_\delta \delta^{-O(\eta)}|T^W|(\#\tubes^W)$, and thus \eqref{firstEstimateRescaledTubes} allows us to estimate the density $\lambda$ (recall that $\lambda$ was defined in \eqref{densityRho}):
\begin{equation}\label{secondEstimateRescaledTubes}
\lambda \geq \Big| \bigcup_{T^W \in \tubes^W}Y^W(T^W)\Big| \gtrapprox_\delta \kappa_\eps \big( \frac{ \delta}{b}\big)^{\eps/2} \delta^{O(\eta)} \big(\frac{\delta}{a}\big)^\omega 
\Big((\#\tubes^W)^{1/2}|T^W|\Big)^{\sigma}.
\end{equation}
Note that Inequality \eqref{secondEstimateRescaledTubes} is currently only valid when $b\geq\delta^{1-\eps/100}$. Next we consider the case where $b<\delta^{1-\eps/100}$, so each $T^W\in\tubes_1^W$ has dimensions comparable to $d_1 \times d_2 \times 1$ for some $1\geq d_2\geq d_1 \geq\delta^{\eps/100}$. This is true because \[
\lambda \geq 
|Y^W(T^W)| \geq \delta^{O(\eta)} |T^W|\geq \delta^{\eps/50 + O(\eta)}\geq \delta^{\eps/10}. 
\]


\medskip

\noindent {\bf Step 3.} For each $W\in\mathcal{W}_1$, define the shading 
\[
\tilde Y_1(W) = W \cap N_a\Big(\bigcup_{T\in\tubes_1[W]}Y_1(T)\Big).
\]
By \eqref{densityRho}, we have
\begin{equation}\label{twoScaleEstimate}
\Big|\bigcup_{T\in\tubes}Y(T)\Big| \gtrsim \lambda \Big|\bigcup_{W\in\mathcal{W}_1}\tilde Y_1(W)\Big|.
\end{equation}

Our next task is to show that $(\mathcal{W}_1,\tilde Y_1)_{a\times b\times 1}$ is $\delta^{O(\eta)}$ dense. The argument is shown in Figure \ref{thinTubesFatTubesBoxFig}. Fix $W\in\mathcal{W}$. Since $(\tubes_1^W,Y_1^W)$ is $\gtrapprox \delta^{O(\eta)}$ dense, we can select a refinement so that each $T\in\tubes_1[W]$ satisfies $|Y_1(T)|\gtrapprox\delta^{O(\eta)}|T|$. After pigeonholing, we can select a set $\tubes_a$ of essentially distinct $a$-tubes that form a $\approx_\delta 1$ balanced cover of $\tubes_1[W]$. Since $(\tubes_1^W,Y_1^W)$ satisfies the Frostman Convex Wolff axioms with error $\lessapprox_\delta \delta^{-O(\eta)}$, we have that $\tubes_a$  satisfies the Frostman Convex Wolff axioms with error $\lessapprox_\delta \delta^{-O(\eta)}$. The shading $Y_a(T_a)=T_a\cap N_a\big(\bigcup_{T\in\tubes[W]}Y_1(T)\big)$ is $\approx_\delta \delta^{O(\eta)}$ dense. Applying Lemma \ref{cordobaLem} to $(\tubes_a^W, Y_a^W)_{a/b\times 1\times 1}$ and then undoing the scaling $\phi_W$, we conclude that
\[
\Big| W \cap N_a\Big(\bigcup_{T\in\tubes_1[W]}Y_1(T)\Big)\Big| \geq \Big|\bigcup_{T_a\in\tubes_a}Y_a(T_a)\Big|\gtrapprox_\delta\delta^{O(\eta)}|W|.
\]
\begin{figure}[h!]
\begin{centering}
\includegraphics[scale=0.4]{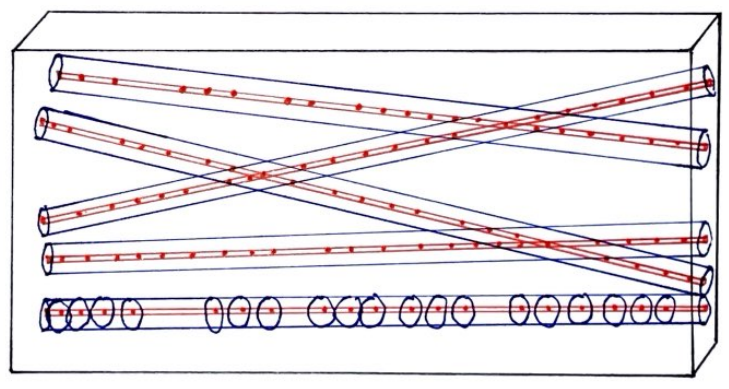}
\caption{Each $a$-tube in $\tubes_a$ (blue tubes) contains at least one $\delta$-tube from $\tubes_1[W]$ (red tube); this $\delta$-tube has a dense shading (red dots), which in turn gives us a dense shading on the $a$-tube that contains it (blue balls; for clarify we have only drawn these for one of the tubes in $\tubes_a$). Finally, since $W$ (black box) has thickness $a$, and the $a$-tubes inside $W$ satisfy a (rescaled) Frostman  Convex Wolff axioms, an $L^2$ argument says that the union of $a$-tubes has almost full volume.}
\label{thinTubesFatTubesBoxFig}
\end{centering}
\end{figure}

\medskip

\noindent {\bf Step 4.} 
It remains to estimate the RHS of \eqref{twoScaleEstimate}.
 We first consider the case where $a\leq\delta^{\eps/100}$. 
In this case, $(\mathcal{W}_1,\tilde Y_1)_{a\times b\times 1}$ is $a^{O(\eta/\eps)}$ dense. If $\eta$ is selected sufficiently small depending on $\eps$, then we can apply the estimate $\cF(\sigma,\omega)$ to conclude that
\begin{equation}\label{firstEstimatePrismsW}
\Big|\bigcup_{W\in\mathcal{W}_1}\tilde Y_1(W)\Big|  \geq \kappa_\eps a^{\eps/2} b^\omega \tilde m^{-1}(\#\mathcal{W}_1)|W|\Big(\tilde m^{-3/2}\tilde\ell(\#\mathcal{W}_1)|W|^{1/2}\Big)^{-\sigma},
\end{equation}
where $\tilde m = \CKT(\mathcal{W}_1)$ and $\tilde\ell = \FS(\mathcal{W}_1).$  Note that the estimate $\cF(\sigma,\omega)$ involves the additional term ``$D$'' defined in \eqref{defnOfD}. However, we claim that both both Parts (A) and (B) of Proposition \ref{aLLbProp}, we have 
\begin{equation}\label{DNotTooBigClaim}
D\lesssim\delta^{-\eta}.
\end{equation}

We verify this claim as follows. In Part (A) of Proposition \ref{aLLbProp}, we have the hypothesis that $\CKT(\mathcal{W})\leq\delta^{-\eta}$. As discussed in Remark \ref{discussionOfDRemark}, Situation 1, this ensures that $D\lesssim \delta^{-\eta}$. In Part (B) of Proposition \ref{aLLbProp}, we have the hypothesis that 
\[
\CKT(\mathcal{W}[N_b(W)])\leq\delta^{-\eta}\quad\textrm{for all}\ W\in\mathcal{W}.
\]
As discussed in Remark \ref{discussionOfDRemark}, Situation 2, this ensures that $D\lesssim \delta^{-\eta}$. This establishes \eqref{DNotTooBigClaim}.

In summary, we have established \eqref{firstEstimatePrismsW} when $a\leq\delta^{\eps/100}$. If instead $a>\delta^{\eps/100}$, then \eqref{firstEstimatePrismsW} follows from the estimate 
\begin{equation}\label{eq: large a}
\Big|\bigcup_{W\in\mathcal{W}_1}\tilde Y_1(W)\Big|\gtrsim \delta^{\eps/20} \gtrsim \delta^{\eps/20}  \tilde m^{-1}(\#\mathcal{W}_1)|W|.
\end{equation}

Combining \eqref{secondEstimateRescaledTubes}, \eqref{twoScaleEstimate}, and \eqref{firstEstimatePrismsW} (when $a \leq \delta^{\eps/100}$) or \eqref{eq: large a} (when $a\geq \delta^{\eps/100}$), we conclude that
\begin{equation}\label{volumeBdTrappedInPrisms}
\begin{split}
\Big|\bigcup_{T\in\tubes}Y(T)\Big| 
&\gtrapprox_\delta \delta^{\omega+\eps+O(\eta)- \eps^2/200}\Big(\frac{b}{a}\Big)^{\omega }\tilde m^{-1}(\#\mathcal{W}_1)|W|\Big(\tilde m^{-3/2}\tilde\ell(\#\mathcal{W}_1)|W|^{1/2}\Big)^{-\sigma}\Big(\Big(\frac{\#\tubes}{\#\mathcal{W}_1}\Big)^{1/2}\frac{|T|}{|W|}\Big)^{\sigma}\\
& = \delta^{\omega+\eps-\eps^2/300}\Big(\frac{b}{a}\Big)^{\omega}\tilde m^{-1}(\#\mathcal{W}_1)|W|\Big(\tilde m^{-3/2}\tilde\ell(\#\mathcal{W}_1)^{3/2}|W|^{3/2}(\#\tubes)^{-1/2}|T|^{-1}\Big)^{-\sigma}.
\end{split}
\end{equation}

\medskip

\noindent {\bf Step 5.} 
It remains to analyze the RHS of \eqref{volumeBdTrappedInPrisms}. Our analysis will differ for Parts (A) and (B) of Proposition \ref{aLLbProp}. We begin with Part (A). We have $\tilde m \lesssim \delta^{-\eta}$, and since $\mathcal{W}_1$ is a $\approx_\delta\delta^{-O(\eta)}$ balanced cover of $\tubes_1$, by Remark \ref{FrostmanWolffInheritedUpwardsDownwards}(A) (i.e.~Frostman Wolff constants are inherited upwards) we have $ \delta^{\eta} \tilde \ell \lessapprox_\delta \FS(\tubes)=\ell$. Since  $\mathcal{W}$ is a $\delta^{-\eta}$-balanced, $\delta^{-\eta}$-almost partitioning cover of $\tubes$, 
$\#\tubes[W] \geq  \delta^{\eta}\CKT(\tubes)\frac{|W|}{|T|}$  for each $W\in \mathcal{W}$,  and $\mathcal{W}_1$ is a refinement of $\mathcal{W}$, we have 
$(\#\mathcal{W}_1)|W|\gtrapprox_\delta \delta^{\eta}m^{-1}(\# \tubes)|T|$. The RHS of \eqref{volumeBdTrappedInPrisms} becomes
\[
\delta^{\omega+\eps}\Big(\frac{b}{a}\Big)^{\omega} m^{-1}(\#\tubes)|T|\Big( m^{-3/2} \ell(\#\tubes)|T|^{1/2}\Big)^{-\sigma},
\]
as claimed.

Next we consider part (B). We have $\tilde m \lessapprox_\delta \delta^{-\eta}|W|(\#\mathcal{W}) \lessapprox_{\delta} \delta^{-\eta}|W|(\#\mathcal{W}_1)$, and $\tilde\ell\lessapprox_\delta \delta^{-\eta}$. Thus the RHS of \eqref{volumeBdTrappedInPrisms} becomes
\[
\delta^{\omega+\eps}\Big(\frac{b}{a}\Big)^{\omega}\Big((\#\tubes)^{1/2}|T|\Big)^{\sigma}.
\]
This concludes the proof of Proposition \ref{aLLbProp}.


\subsection{Proof of Proposition \ref{PropALLbPropGen}: Factoring at two scales}
The proof of Proposition \ref{PropALLbPropGen} is almost identical to the proof of  Proposition \ref{aLLbProp}. Rather than present a more complicated unified proof of the two results, for clarity of exposition we have opted to instead briefly sketch the proof of Proposition \ref{PropALLbPropGen} and highlight where the two proofs differ. 

We begin by refining the shading $(\tubes,Y)_\delta$ to find a subset $(\tubes_1,Y_1)_\delta$ that has at least average density on balls of radius $a$; this is the analogue of \eqref{densityRho}. By Lemma \ref{frostmanSlabTransitiveUnderCovers}, we have that for each $W\in\mathcal{W}$, $\FS(\tubes^W)\lessapprox_\delta \lessapprox_{\delta} \FS(\tubes^{T_{\rho}})  \cdot \FS(\tubes_\rho^W) \lessapprox_{\delta} \delta^{-\eta}\ell'$. Thus the analogue of \eqref{firstEstimateRescaledTubes} is 
\begin{equation}
\Big| \bigcup_{T^W \in \tubes_1^W}Y_1^W(T^W)\Big| \gtrsim \big(\frac{\delta}{b}\big)^{\eps/2} \big(\frac{\delta}{a}\big)^\omega m^{-1}(\#\tubes^W)|T^W|
\Big(m^{-3/2}(\delta^{-\eta}\ell')(\#\tubes^W)|T^W|^{1/2}\Big)^{-\sigma},
\end{equation}
where $m = \CKT(\tubes)$, and this gives us the following analogue of \eqref{secondEstimateRescaledTubes}:
\begin{equation}\label{secondEstimateRescaledTubesAnalogue}
\lambda \gtrapprox_\delta \big( \frac{\delta}{b}\big)^{\eps/2} \delta^{O(\eta)} \big(\frac{\delta}{a}\big)^\omega m^{-1}(\#\tubes[W])|T|
\Big(m^{-3/2}\ell'(\#\tubes^W)|T^W|^{1/2}\Big)^{-\sigma}.
\end{equation}

The next step is to define a dense shading on $\mathcal{W}$; it is here that we use the fact that $\mathcal{W}$ factors $\tubes_\rho$ from below with respect to the Frostman Convex Wolff axioms with error $\leq \delta^{-\eta}$ --- this allows us to use the same argument as in Step 3 from the proof of Proposition \ref{aLLbProp} to show that the shading $\tilde{Y}_1(W)$ is $\delta^{O(\eta)}$ dense. 

Finally, we have the following analogue of \eqref{firstEstimatePrismsW}: 
\begin{equation}\label{firstEstimatePrismsWAnalogue}
\Big|\bigcup_{W\in\mathcal{W}_1}\tilde Y_1(W)\Big|  \gtrsim a^{\eps/2} b^\omega \tilde m^{-1}(\#\mathcal{W}_1)|W|\Big(\tilde m^{-3/2}\tilde\ell(\#\mathcal{W})|W|^{1/2}\Big)^{-\sigma},
\end{equation}
where $\tilde m = \CKT(\mathcal{W}_1)\leq\delta^{-\eta}$ (by hypothesis) and $\tilde\ell = \FS(\mathcal{W}_1)\lessapprox\delta^{-\eta}\ell$ (by Remark \ref{FrostmanWolffInheritedUpwardsDownwards}(A) ).

Combining \eqref{secondEstimateRescaledTubesAnalogue} and \eqref{firstEstimatePrismsWAnalogue} (using the same argument that was used to obtain \eqref{volumeBdTrappedInPrisms}), we conclude that 
\[
\Big|\bigcup_{T\in\tubes}Y(T)\Big| \gtrsim  \delta^{\eps+\omega } \big(\frac{b}{a}\big)^\omega m^{-1}(\#\tubes)|T| \Big(m^{-3/2}\ell \ell'(\#\tubes)|T|^{1/2}\Big)^{-\sigma}.
\]


\subsection{Tubes organized into  to slabs}
We conclude this section by using the tools developed thus far to prove the following. Let $(\tubes,Y)_\delta$ be a set of tubes and their associated shading. Suppose that for each $T\in\tubes$, there is a $\delta \times b\times 1$ slab $S\supset T$ that has large intersection with $\bigcup_{\tubes}Y(T)$. Then provided $\CKT(\tubes)$ is small, $\bigcup_{\tubes}Y(T)$ has larger volume than one would expect from the estimate $\cE(\sigma,\omega)$; the estimate becomes better as $\CKT(\tubes)$ becomes smaller and $b$ becomes larger. The precise statement is as follows. 

\begin{lem}\label{inflateTubesToSlabsLem}
Let $\omega>0,\sigma\in[0,2/3]$ and suppose $\cE(\sigma,\omega)$ is true. For all $\eps>0$, there exist $\kappa,\eta>0$ so that the following holds for all $\delta>0$. Let $(\tubes,Y)_\delta$ be a set of tubes and their associated shading. Let $b\geq\delta$ and suppose that for each $T\in\tubes$ there exists a $\delta\times b\times 1$ prism $S\supset T$ with $\big|S\cap\bigcup_{T\in\tubes}Y(T)\big| \geq \delta^\eta|S|$. Then
\begin{equation}\label{betterBoundPlanks}
\Big|\bigcup_{T\in\tubes}Y(T)\Big| \geq \kappa \delta^\eps b^{\omega}m^{-1}(\#\tubes)|T|\big(m^{-1}\ell(\#\tubes)|T|^{1/2}\big)^{-\sigma} \Big(\frac{|S|}{|T|}\Big)^{\sigma/2},
\end{equation}
where $m = \CKT(\tubes)$ and $\ell = \FS(\tubes)$.
\end{lem}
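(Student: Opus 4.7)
\textbf{Proof proposal for Lemma \ref{inflateTubesToSlabsLem}.} The plan is to reduce the problem to an application of the estimate $\cF(\sigma,\omega)$ (which, by Proposition \ref{EiffF}, is equivalent to $\cE(\sigma,\omega)$) applied to a suitable collection of $\delta\times b\times 1$ slabs associated to $\tubes$. The extra factor $(|S|/|T|)^{\sigma/2} = (b/\delta)^{\sigma/2}$ and the weakened ``bracket term'' $m^{-1}$ (rather than $m^{-3/2}$) should emerge precisely from the rescaling factor $D$ in Definition~\ref{defnF} applied to the slab family.

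First I would, for each $T\in\tubes$, fix a $\delta\times b\times 1$ prism $S(T)\supset T$ as in the hypothesis. By extracting a maximal essentially distinct subcollection and replacing each $S(T)$ by an essentially equal representative, I obtain a set $\mathcal{S}$ of essentially distinct $\delta\times b\times 1$ slabs with $T\mapsto S(T)\in\mathcal{S}$. A standard dyadic pigeonholing on the fibres $\{T\colon S(T)=S\}$ allows me to refine $\tubes$ and $\mathcal{S}$ (losing only $\lessapprox_\delta 1$ factors) so that every fibre has a common cardinality $N$, giving $\#\mathcal{S}=(\#\tubes)/N$. Endowing $\mathcal{S}$ with the inherited shading $Y_S(S)=S\cap \bigcup_{T\in\tubes}Y(T)$, each slab satisfies $|Y_S(S)|\geq\delta^\eta|S|$ by hypothesis, so $(\mathcal{S},Y_S)_{\delta\times b\times 1}$ is $\delta^\eta$-dense, and trivially $|\bigcup_{T}Y(T)|\geq|\bigcup_{S\in\mathcal S}Y_S(S)|$.

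Next I would control the parameters. Writing $r=|S|/|T|=b/\delta$, the fact that each $S\in\mathcal S$ accounts for $N$ tubes, together with $\CKT(\tubes)=m$, yields for every convex $W$ the inequality $\#\mathcal{S}[W]\cdot N\leq \#\tubes[W]\leq m|W|/|T|$, hence
\[
\tilde m := \CKT(\mathcal{S})\leq \frac{mr}{N}.
\]
Restricting this argument to slabs $W$ and using $\FS(\tubes)=\ell$ gives $\tilde\ell := \FS(\mathcal{S})\leq \ell$. Finally, for $\rho\in[\delta,b]$ one has $|S|/|N_\rho(S)|=\delta/\rho$ and $\#\mathcal{S}[N_\rho(S)]\leq \tilde m\rho/\delta$, so the quantity $D$ from \eqref{defnOfD} (applied to $\mathcal{S}$) is bounded by $\sup_\rho (\delta/\rho)(\tilde m\rho/\delta)^{1/2}\leq \tilde m^{1/2}$.

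Then I would apply $\cF(\sigma,\omega)$ to $(\mathcal{S},Y_S)$, obtaining
\[
\Big|\bigcup_{S\in\mathcal S}Y_S(S)\Big|\gtrsim \delta^\eps b^\omega\tilde m^{-1}(\#\mathcal S)|S|\big(\tilde m^{-3/2}\tilde\ell(\#\mathcal S)|S|^{1/2}\big)^{-\sigma}D^{-\sigma}.
\]
Inserting $D^{-\sigma}\geq \tilde m^{-\sigma/2}$ collapses the $\tilde m$-factors to $\tilde m^{\sigma-1}$. Since $\sigma-1<0$, the bound $\tilde m\leq mr/N$ gives $\tilde m^{\sigma-1}\geq (mr/N)^{\sigma-1}$, and $\tilde\ell^{-\sigma}\geq \ell^{-\sigma}$. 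Substituting $\#\mathcal S=(\#\tubes)/N$ and $|S|=r|T|$ and collecting exponents, all $N$-powers cancel and the two $r$-powers combine to give $r^{\sigma/2}=(|S|/|T|)^{\sigma/2}$, producing precisely the RHS of \eqref{betterBoundPlanks}.

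The main obstacle is keeping track of the numerology, in particular verifying that (a) the a priori inequalities $\tilde m\leq mr/N$ and $\tilde \ell\leq \ell$ are applied in the right direction so that the final bound \emph{improves} rather than degrades no matter the actual value of $N$, and (b) the factor $D$ in Definition~\ref{defnF} (which is allowed to be as large as $\tilde m^{1/2}$) does not spoil the gain — indeed it is the presence of $D^{-\sigma}$ in $\cF(\sigma,\omega)$, combined with the larger-than-unit value of $\tilde m$, that ultimately generates the bonus factor $(|S|/|T|)^{\sigma/2}$ and the weakened dependence on $m$ inside the bracket. Everything else is routine pigeonholing and refinement, with the usual $\lessapprox_\delta 1$ losses absorbed into the factor $\delta^\eps$.
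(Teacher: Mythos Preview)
Your proposal is correct and follows essentially the same route as the paper: pigeonhole to a balanced family $\mathcal S$ of $\delta\times b\times 1$ slabs with $N$ tubes per slab, bound $\CKT(\mathcal S)\lesssim m\,(|S|/|T|)\,N^{-1}$ and $\FS(\mathcal S)\lesssim\ell$, bound the quantity $D$ from Definition~\ref{defnF} by $\CKT(\mathcal S)^{1/2}$ (the paper routes this through Remark~\ref{discussionOfDRemark} Situation~2 rather than your direct computation, but the content is identical), apply $\cF(\sigma,\omega)$, and unwind the algebra. Your observation that the $N$-exponents cancel and the $r$-exponents combine to $r^{\sigma/2}$ is exactly the computation the paper carries out, just organized slightly differently.
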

\begin{rem}
Note that the exponent of $m$ in \eqref{betterBoundPlanks} is only $m^{\sigma-1}$, rather than the usual estimate $m^{(3/2)\sigma-1}$ from $\cE(\sigma,\omega)$. While this is likely not optimal, in practice we will only apply Lemma \ref{inflateTubesToSlabsLem} with $m$ of size about 1, so the distinction will not be important.
\end{rem}

\begin{proof}
After pigeonholing, we can select a $\approx_\delta 1$ refinement $(\tubes_1,Y_1)_\delta$ of $(\tubes,Y)_\delta$ and a set $\mathcal{S}$ of essentially distinct $\delta\times b\times 1$ slabs with the following properties:
\begin{itemize}
	\item For each $T\in\tubes_1$ with corresponding slab $S(T)$, there is a slab $S\in\mathcal{S}$ comparable to $S(T)$. We denote this by $T\sim S$
	\item There is an integer $N$ so that each slab in $\mathcal{S}$, there are between $N$ and $2N$ tubes $T\in\tubes_1$ with $T\sim S$.
\end{itemize}
Abusing notation, we will replace each slab in $\mathcal{S}$ with its 10-fold dilate. Then
\begin{itemize}
\item[(i)] $\mathcal{S}$ covers $\tubes_1$,
\item[(ii)] $\#\tubes[S]\geq N$ for each $S\in\mathcal{S}$.
\item[(iii)] $\#\mathcal{S}\sim N^{-1}(\#\tubes_1)$.
\item[(iv)]The shading $Y(S)=S\cap\bigcup_{T\in\tubes}Y(T)$ is $\gtrsim \delta^{\eta}$ dense. 
\end{itemize}
From Items (ii) and (iii) we conclude that 
\[
\FS(\mathcal{S})\lesssim \FS(\tubes_1)\lessapprox_\delta\delta^{-\eta} \ell,
\]
and
\begin{equation}\label{boundCKTS}
\CKT(\mathcal{S}) \lesssim \CKT(\tubes_1)\frac{\#\mathcal{S}}{\#\tubes_1}\frac{|S|}{|T|}\lessapprox_\delta\delta^{-\eta}m\frac{\#\mathcal{S}}{\#\tubes}\frac{|S|}{|T|}.
\end{equation}
We would like to use Proposition \ref{EiffF} and apply the estimate $\cF(\sigma,\omega)$ to obtain a lower bound for the volume of $\bigcup Y(S)$. Before doing so, we should estimate the quantity $D$ from \eqref{defnOfD}. By  Remark~\ref{discussionOfDRemark}, Situation 2, and \eqref{boundCKTS}, we have

\[
D\lesssim \Big( \sup_{S\in \mathcal{S}} \CKT(\mathcal{S}[N_b(S)])\Big)^{1/2}
\lessapprox_{\delta} \Big( \delta^{-\eta} m \frac{\#\mathcal{S}}{\#\tubes} \frac{|S|}{|T|} \Big)^{1/2}.
\]
If $\eta,\kappa>0$ are chosen sufficiently small depending on $\omega,\sigma,$ and $\eps$, then we can apply the estimate $\cF(\sigma,\omega)$ to conclude that
\begin{equation*}
\begin{split}
\Big|\bigcup_{T\in\tubes}Y(T)\Big| &\geq \Big|\bigcup_{S\in\mathcal{S}}Y(S)\Big|\\
& \geq \kappa \delta^{\eps/2} b^{\omega}\CKT(\mathcal{S})^{-1}(\#\mathcal{S})|S|\Big(\CKT(\mathcal{S})^{-3/2}\FS(\mathcal{S})(\#\mathcal{S})|S|^{1/2}\Big)^{-\sigma} D^{-\sigma}\\
& \gtrapprox_\delta \kappa \delta^{\eps/2+2\eta} b^{\omega} m^{-1}(\#\tubes)|T|\Big(m^{-1}\ell(\#\tubes)|T|^{1/2}\Big)^{-\sigma}\Big(m^{1/2} \frac{|S|}{|T|}\frac{(\#\mathcal{S})^{1/2}}{(\#\tubes)^{1/2}} D^{-1}\Big)^{\sigma}\\
& \gtrapprox_\delta \kappa \delta^{\eps/2+2\eta} b^{\omega} m^{-1}(\#\tubes)|T|\Big(m^{-1}\ell(\#\tubes)|T|^{1/2}\Big)^{-\sigma} \Big(\frac{|S|}{|T|}\Big)^{\sigma/2}.\qedhere
\end{split}
\end{equation*}

\end{proof}

Often, we will use the following weaker version of Lemma \ref{inflateTubesToSlabsLem}.
\begin{cor}\label{corOfInflateTubesToSlabsLem}
Let $\omega>0,\sigma\in (0,2/3]$ and suppose $\cE(\sigma,\omega)$ is true. For all $\eps>0$, there exist $\eta,c>0$ so that the following holds for all $\delta>0$. Let $(\tubes,Y)_\delta$ be a set of tubes and their associated shading. Let $\rho\geq\delta$ and suppose that for each $T\in\tubes$ there exists a $\rho$ tube $T_\rho\supset T$ with $\big|T_\rho \cap\bigcup_{T\in\tubes}Y(T)\big| \geq \delta^\eta|T_\rho|$. Then
\begin{equation}\label{betterBoundPlankscor}
\Big|\bigcup_{T\in\tubes}Y(T)\Big| \geq \kappa \delta^\eps \rho^{\omega}m^{-1}(\#\tubes)|T|\big(m^{-1}\ell(\#\tubes)|T|^{1/2}\big)^{-\sigma}   \Big( \frac{\rho}{\delta} \Big)^{\sigma/2},
\end{equation}
where $m = \CKT(\tubes)$ and $\ell = \FS(\tubes)$.
\end{cor}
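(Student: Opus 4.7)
The plan is to reduce Corollary~\ref{corOfInflateTubesToSlabsLem} to Lemma~\ref{inflateTubesToSlabsLem} with $b=\rho$, by foliating each $\rho$-tube into $\delta\times\rho\times 1$ slabs and extracting good ones. After standard pigeonholing I would first arrange a cover of a $\lessapprox_\delta 1$-refinement $\tubes_1\subset\tubes$ by a set $\tubes_\rho$ of essentially disjoint $\rho$-tubes, such that each $T_\rho\in\tubes_\rho$ satisfies $|T_\rho\cap\bigcup_\tubes Y|\gtrsim\delta^\eta|T_\rho|$ and $\#\tubes_1[T_\rho]$ is comparable across $T_\rho$. For each such $T_\rho$, I would foliate it into $\sim\rho/\delta$ essentially disjoint $\delta\times\rho\times 1$ slabs; by the same pigeonholing used in the proof of Lemma~\ref{findingSlabsLem}, the ``good'' slabs (those of density $\geq c\delta^\eta|S|$ for $\bigcup_\tubes Y$) capture at least $9/10$ of $|T_\rho\cap\bigcup_\tubes Y|$.

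Collecting these good slabs across all $T_\rho$ into a family $\mathcal{S}$, and letting $\tubes^*\subset\tubes_1$ consist of those $T$ lying inside some good slab in its own $T_\rho$, a final round of dyadic pigeonholing produces a $\delta^{O(\eta)}$-refinement for which $\mathcal{S}$ is a balanced, essentially partitioning cover of $\tubes^*$ with a uniform tube count $N$ per slab, and the shading $Y(S):=S\cap\bigcup_\tubes Y$ is $\gtrapprox\delta^{O(\eta)}$-dense on $\mathcal{S}$. Since $|\bigcup_\tubes Y|\geq\big|\bigcup_{S\in\mathcal{S}}Y(S)\big|$, it suffices to bound the latter from below; notably, the density hypothesis needed from here uses $\bigcup_\tubes Y$, matching the convention of the shading $Y(S)$ in the proof of Lemma~\ref{inflateTubesToSlabsLem}.

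From this point the argument parallels the final steps of Lemma~\ref{inflateTubesToSlabsLem} applied to $(\mathcal{S},Y)_{\delta\times\rho\times 1}$ via Proposition~\ref{EiffF} (so that $\cF(\sigma,\omega)$ is available). I would verify: $\FS(\mathcal{S})\lessapprox_\delta\ell$ (inherited upward from $\tubes^*$ by Remark~\ref{FrostmanWolffInheritedUpwardsDownwards}(A)); $\CKT(\mathcal{S})\lesssim m\cdot(\#\mathcal{S}/\#\tubes^*)(|S|/|T|)$ (any convex $W$ containing $k$ slabs contains $\sim Nk$ tubes of $\tubes^*$, and $Nk\leq m|W|/|T|$); and $D(\mathcal{S})\lesssim 1$. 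The $D$-bound follows from Remark~\ref{discussionOfDRemark}, Situation~2: for each $S\in\mathcal{S}$ the set $N_\rho(S)$ is comparable to a $\rho$-tube, which contains at most $O(\rho/\delta)$ essentially disjoint $\delta\times\rho\times 1$ slabs, so $\CKT(\mathcal{S}[N_\rho(S)])=O(1)$.

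With these three bounds in hand, applying $\cF(\sigma,\omega)$ to $(\mathcal{S},Y)_{\delta\times\rho\times 1}$ and then performing the same algebraic manipulation as in the final display of the proof of Lemma~\ref{inflateTubesToSlabsLem} (with $b$ replaced by $\rho$, so $|S|/|T|=\rho/\delta$) produces exactly the right-hand side of~\eqref{betterBoundPlankscor}. The only genuinely delicate point, rather than a real obstacle, is the $D\lesssim 1$ bound in the third bullet above: once granted, the unusual exponent $m^{-1}\ell$ in the inner bracket (in place of $m^{-3/2}\ell$) comes from precisely the same cancellation between $m_\mathcal{S}^{-3/2}$ and $D^{1/2}\lesssim m_\mathcal{S}^{1/2}$ that drives the analogous step in the proof of Lemma~\ref{inflateTubesToSlabsLem}, so no new numerology must be verified.
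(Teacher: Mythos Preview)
There is a genuine gap in your second paragraph. After foliating each $T_\rho$ into parallel slabs and declaring those with density $\geq c\delta^\eta$ ``good,'' you assert that the tubes $T$ lying in good slabs form a $\delta^{O(\eta)}$-refinement $\tubes^*$. But the fact that good slabs capture $9/10$ of the \emph{mass} $|T_\rho\cap\bigcup Y|$ says nothing about where the \emph{tubes} of $\tubes_1[T_\rho]$ sit: all of them could lie in a single bad slab of the foliation while the mass (contributed by shadings of tubes from other $\rho$-tubes passing through $T_\rho$) is concentrated in the remaining slabs. In that scenario $\tubes^*\cap\tubes_1[T_\rho]=\emptyset$, and the construction of a balanced slab cover of $\tubes^*$ collapses.

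The fix is both simpler and closer to the paper's intent (which presents the Corollary as an immediate consequence of the Lemma). For each fixed $T\in\tubes$, consider instead the $\sim\rho/\delta$ essentially distinct $\delta\times 4\rho\times 1$ slabs $S$ with $T\subset S$, parametrized by the orientation of the width direction. Since $T\subset T_\rho$, these slabs cover $T_\rho$, and $\sum_S|S|\sim\rho^2\sim|T_\rho|$. Hence
\[
\sum_S\big|S\cap\textstyle\bigcup Y\big|\ \geq\ \big|T_\rho\cap\textstyle\bigcup Y\big|\ \geq\ \delta^\eta|T_\rho|\ \gtrsim\ \delta^\eta\sum_S|S|,
\]
so by averaging at least one such $S$ has density $\gtrsim\delta^\eta$ in $\bigcup Y$. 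This verifies the hypothesis of Lemma~\ref{inflateTubesToSlabsLem} with $b\sim\rho$ for \emph{every} $T\in\tubes$, with no refinement of $\tubes$ needed at all; the Corollary then follows as a black-box application, with $|S|/|T|\sim\rho/\delta$ giving the factor $(\rho/\delta)^{\sigma/2}$.

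A minor separate point: your last paragraph first asserts $D\lesssim 1$ via Situation~2, but then attributes the $m^{-1}$ (rather than $m^{-3/2}$) exponent to the cancellation $D\lesssim m_{\mathcal S}^{1/2}$ from the Lemma's proof. These are inconsistent. If indeed $D\lesssim 1$ for your slab family, you would obtain the stronger $m^{-3/2}$ bound; either way the Corollary follows, so this does not affect correctness, but the reasoning should be made internally consistent.
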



\section{Assertions $\cD$ and $\cE$ are equivalent}\label{cEIffcDSec}
Our goal in this section is to prove Proposition \ref{equivDE}. To do so, we will need a result from \cite{WZ23}, which (informally) says that if a set of $\delta$-tubes satisfies the Frostman Convex Wolff Axioms at many different scales, then the union of these tubes must have large volume. To state the result precisely, we recall Definition 2.12 from \cite{WZ23}.

\begin{defn}\label{convexAtEveryScaleFromAssouadPaper}
Let $K\geq 1,\delta>0$. We say a set $\tubes$ of $\delta$-tubes in $\RR^3$ satisfies the \emph{Frostman Convex Wolff Axioms at every scale with error $K$} if the tubes in $\tubes$ are essentially distinct, and for every $\rho_0\in [\delta,1]$, there exists $\rho\in [\rho_0, K\rho_0)$ and a set of $\rho$-tubes $\tubes_\rho$ that satisfies the following properties.
\begin{itemize}
	\item[(i)] $\tubes_\rho$ is a $K$-balanced partitioning cover of $\tubes$.
	\item[(ii)] For each $T_\rho\in\tubes_\rho$, $\tubes^{T_{\rho}}$ satisfies the Frostman Convex Wolff Axioms with error $K$.
\end{itemize}
\end{defn}

Next we recall Theorem 5.2 from \cite{WZ23}.
\begin{thm}\label{WZThm52}
For all $\eps>0$, there exists $\eta,\kappa>0$ so that the following holds for all $\delta>0$. Let $\tubes$ be a set of $\delta$-tubes that satisfy the Frostman Convex Wolff Axioms at every scale with error $\delta^{-\eta}$, and let $Y(T)$ be a $\delta^{\eta}$ dense shading. Then
\begin{equation}\label{conclusionStickySelfSimThm}
\Big|\bigcup_{T\in\tubes}Y(T)\Big|\geq \kappa\delta^\eps.
\end{equation}
\end{thm}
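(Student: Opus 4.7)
The plan is to prove this by multi-scale induction on $\log(1/\delta)$, exploiting the recursive structure built into the hypothesis: at any intermediate scale $\rho \in [\delta, 1]$, Definition \ref{convexAtEveryScaleFromAssouadPaper} provides a balanced partitioning cover $\tubes_\rho$ such that each rescaled family $\tubes^{T_\rho}$ of $(\delta/\rho)$-tubes inherits the Frostman Convex Wolff Axioms at every scale. This is the key self-similarity property that drives the induction, and it holds both ``upward'' (the cover $\tubes_\rho$ itself satisfies Frostman Convex Wolff Axioms at every scale at its own scale, since at every $\rho' \in [\rho, 1]$ one may use the original cover from the hypothesis) and ``downward'' (the rescaled $\tubes^{T_\rho}$ satisfies the condition with only a mildly worse error parameter).

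The induction step would proceed as follows. Pick a geometrically intermediate scale, say $\rho = \delta^{1/2}$, and let $\tubes_\rho$ be a cover provided by the hypothesis. First, apply the induction hypothesis at scale $\delta/\rho$ to each rescaled family $(\tubes^{T_\rho}, Y^{T_\rho})$, obtaining a bound of the form $|\bigcup Y^{T_\rho}(T^{T_\rho})| \geq \kappa (\delta/\rho)^{\eps/2}$. Undoing the rescaling, the induced shading $Y_\rho(T_\rho) := T_\rho \cap \bigcup_{T\in\tubes[T_\rho]} Y(T)$ has density $\gtrapprox (\delta/\rho)^{\eps/2}$ on each $T_\rho$, so after pigeonholing on a refinement $(\tubes_\rho', Y_\rho')$ the pair is $\rho^\eta$-dense in the sense required for the induction. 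Next, apply the induction hypothesis at scale $\rho$ to $(\tubes_\rho', Y_\rho')$, which yields $|\bigcup Y_\rho'(T_\rho)| \geq \kappa \rho^{\eps/2}$. Since the density of the original shading $\bigcup Y(T)$ inside each $T_\rho$ contributing to $Y_\rho'$ is at least $(\delta/\rho)^{\eps/2}$, a Fubini/averaging argument (combined with the control $\CFC(\tubes_\rho') \lessapprox 1$ inherited from the Frostman Convex Wolff Axioms, which controls overlaps via Lemma \ref{frostmanSlabTransitiveUnderCovers}) gives the desired bound $|\bigcup Y(T)| \gtrapprox \delta^\eps$.

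For the base case, when $\delta$ is bounded away from $0$ the statement is trivial. Alternatively, one could seed the induction with Proposition \ref{WolffHairbrush}: since the Frostman Convex Wolff Axioms with small error imply the Frostman Slab Wolff Axioms with small error, the hairbrush bound $\cD(1/2, 0)$ gives an initial volume estimate of the form $|\bigcup Y(T)| \gtrapprox \delta^{1/2}$, which can be iteratively improved using the multi-scale structure.

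The main obstacle is the propagation of errors and the interplay between the density exponent $\eta$ and the Frostman Convex Wolff error exponent $\eta$. After one step of rescaling, the error becomes $(\delta/\rho)^{-\eta}$, which at $\rho = \delta^{1/2}$ is $\delta^{-\eta/2}$, so errors compound over $O(\log \log (1/\delta))$ iterations. One must carefully choose the sequence of scales, pigeonhole to preserve the Frostman Convex Wolff Axioms at every scale on the refinement after each step, and absorb the losses from the pigeonholing and from the Fubini step into the $\delta^{-\eps}$ slack. A self-improving argument (analogous to the one used to deduce Theorem \ref{cDAndcEAreTrue} from Propositions \ref{equivDE} and \ref{improvingProp}) would then close the induction, potentially invoking a variant of the Sticky Kakeya Theorem from \cite{WZ22} as a key structural input to handle the ``fully sticky'' end of the parameter range.
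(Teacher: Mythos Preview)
This theorem is not proved in the present paper; it is quoted verbatim as Theorem 5.2 from \cite{WZ23} and used as a black box. The actual proof in \cite{WZ23} is a generalization of the Sticky Kakeya Theorem from \cite{WZ22}, and the latter rests essentially on discretized sum-product theory (Bourgain's theorem \cite{Bour03}). Your proposal attempts to bypass this entirely with a pure multi-scale induction, and that cannot work.

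The concrete gap is in the error bookkeeping. Fix $\eps>0$ and let $\eta=\eta(\eps)$ be the associated parameter. At scale $\delta$ you split at $\rho=\delta^{1/2}$ and try to invoke the result at scale $\delta^{1/2}$. But the rescaled family $\tubes^{T_\rho}$ inherits the Frostman Convex Wolff error $\delta^{-\eta}=(\delta^{1/2})^{-2\eta}$, not $(\delta^{1/2})^{-\eta}$; likewise the induced coarse shading $Y_\rho$ has density only $\gtrsim(\delta/\rho)^{\eps}=\delta^{\eps/2}$, which in the $\rho$-variable is $\rho^{\eps}$, not $\rho^{\eta}$. So each halving of $\log(1/\delta)$ doubles the effective error exponent and replaces the density exponent $\eta$ by $\eps$. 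No choice of $\eta(\eps)$ makes this close: you are trying to prove the $\eps$-loss statement by assuming the $\eps$-loss statement with a strictly smaller admissible error window, which is strictly stronger. Induction on scales of this shape can propagate an existing exponent but cannot improve it, which is exactly why the paper needs the separate improving mechanism of Proposition~\ref{improvingProp}.

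There is also a second gap you glossed over: the definition only asserts that each $\tubes^{T_\rho}$ satisfies the Frostman Convex Wolff Axioms, not that it satisfies them \emph{at every scale}. Establishing the latter requires knowing that the covers $\tubes_{\rho'}$ at different scales $\rho'$ are compatible with one another inside each $T_\rho$, which is not automatic from Definition~\ref{convexAtEveryScaleFromAssouadPaper} and needs a separate argument. Finally, your closing hedge---``potentially invoking a variant of the Sticky Kakeya Theorem from \cite{WZ22}''---is circular: Theorem~\ref{WZThm52} \emph{is} the extension of the Sticky Kakeya Theorem to the Frostman Convex Wolff setting, so it cannot be an input to its own proof. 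The sum-product input is where the actual content lives, and your outline contains no substitute for it.
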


When $\eps<\omega$, the conclusion of Theorem \ref{WZThm52} gives a volume estimate that is superior to the volume estimate \eqref{defnCEEstimate} from Assertion $\cE(\sigma,\omega)$. However, the hypotheses of Assertion $\cE(\sigma,\omega)$ are weaker. The next result says that for every collection $\tubes$ of tubes that satisfies the Frostman Convex Wolff axioms, at least one of the following must occur:
\begin{itemize}
	\item[(A)] $\tubes$ satisfies the hypothesis of Theorem \ref{WZThm52}. This is good, provided we select $\eps<\omega$.
	\item[(B)] At a suitable scale, $\tubes$ satisfies the Katz-Tao Convex Wolff axioms with error roughly 1. This is good, if we know that $\cD(\sigma,\omega_1)$ is true for some $\omega_1<\omega.$
	\item[(C)] At a suitable scale, $\tubes$ is factored by flat rectangular prisms, and thus satisfies the hypotheses of Proposition \ref{aLLbProp}. This is good, since it gives a stronger volume estimate than $\cE(\sigma,\omega)$.
\end{itemize}
The precise statement is as follows. 


\begin{prop}\label{tubeTricotProp}
Let $\zeta_1\geq\zeta_2\geq\zeta_3>0$. Then there exists $\eta>0$ such that the following holds for all $\delta>0$. Let $\tubes$ be a set of $\delta$-tubes satisfying the Frostman Convex Wolff Axioms with error $\leq\delta^{-\eta}$. Then after replacing $\tubes$ by a $\approx_\delta 1$ refinement, at least one of the following is true. 

\begin{enumerate}
	\item[(A)] $\tubes$ satisfies the Frostman Convex Wolff Axioms at every scale with error $\delta^{-\zeta_1},$ in the sense of Definition \ref{convexAtEveryScaleFromAssouadPaper}.
	
	\item[(B)] There exists $\delta\leq\tau<\rho\leq 1$, with $\tau\leq\delta^{\zeta_1/5}\rho;$ a balanced partitioning cover $\tubes_{\tau}$ of $\tubes$; and a balanced partitioning cover $\tubes_{\rho}$ of $\tubes_{\tau}$ such that the following is true:
	\begin{enumerate}
		\item[(i)] $\CFC(\tubes^{T_\tau})\lesssim \delta^{-\zeta_2}$ for each $T_\tau\in\tubes_\tau$.
		\item[(ii)] $\CKT(\tubes_\tau^{T_\rho})\lesssim \delta^{-\zeta_2}$ and $\#\tubes_\tau^{T_\rho}\geq\delta^{\zeta_2}(\rho/\tau)^2$ for each $T_\rho\in\tubes_\rho$.
		\item[(iii)] $\CFC(\tubes_\rho) \lessapprox_{\delta} \delta^{-\eta}. $
	\end{enumerate}

	\item[(C)] There exists $\delta\leq a<b\leq 1$ with $a\leq\delta^{\zeta_2/100}b$, and a set $\mathcal{W}$ of $a\times b\times 1$ prisms that satisfies the hypotheses of Proposition \ref{aLLbProp}(B): $\mathcal{W}$ factors $\tubes$ above and below with respect to the Frostman Convex Wolff Axioms, with error $O(\delta^{-\zeta_3})$. And for each $W\in\mathcal{W}$, we have $\CKT(\mathcal{W}[N_b(W)])\lesssim\delta^{-\zeta_3}$. 
\end{enumerate}
\end{prop}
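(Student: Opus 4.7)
The plan is to prove the trichotomy by first attempting to verify (A) directly, and when that fails at some scale, invoking Proposition \ref{factoringConvexSetsProp} at that scale to extract either case (B) or case (C). Specifically, I would first ask, for each dyadic $\rho_0\in[\delta,1]$, whether there exist $\rho\in[\rho_0,\delta^{-\zeta_1}\rho_0)$ and a $\delta^{-\zeta_1}$-balanced partitioning $\rho$-tube cover $\tubes_\rho$ of $\tubes$ such that $\CFC(\tubes^{T_\rho})\leq\delta^{-\zeta_1}$ for every $T_\rho$. If the answer is yes at every dyadic $\rho_0$, then case (A) holds. Otherwise, fix a bad scale $\tau$ and construct a balanced almost-partitioning $\tau$-tube cover $\tubes_\tau$ of $\tubes$ by greedy selection and dyadic pigeonholing; by a preliminary $\approx_\delta 1$ refinement of $\tubes$, I arrange that $\CFC(\tubes^{T_\tau})\leq\delta^{-\eta'}$ for every $T_\tau\in\tubes_\tau$, where $\eta'$ is an auxiliary parameter satisfying $\eta\ll\eta'\ll\zeta_3\ll\zeta_2\ll\zeta_1$.

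Next I would apply Proposition \ref{factoringConvexSetsProp} to $\tubes_\tau$ (viewed as a finite set of congruent convex subsets of the unit ball), obtaining a refinement $\tubes_\tau'\subset\tubes_\tau$ and a set $\mathcal{W}_0$ of congruent convex sets satisfying all four conclusions of that proposition. After dyadically pigeonholing the outer John ellipsoids, I may assume each $W\in\mathcal{W}_0$ is comparable to a common $a_0\times b_0\times 2$ prism with $\tau\lesssim a_0\leq b_0\leq 1$. If $a_0/b_0\leq\delta^{\zeta_2/100}$, the prisms are flat and I set $a=a_0,\, b=b_0,\, \mathcal{W}=\mathcal{W}_0$; conclusions (iii) and (iv) of Proposition \ref{factoringConvexSetsProp} give the FCWA factorizations of $\tubes_\tau'$ from above and below with error $\delta^{-\eta'}$, which transfer to $\tubes$ via the cover $\tubes\prec\tubes_\tau$. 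The condition $\CKT(\mathcal{W}[N_b(W)])\lesssim\delta^{-\zeta_3}$ then follows from the global bound $\CKT(\mathcal{W}_0)\leq\delta^{-\eta'}$ together with a volume-count inside a single $b$-tube, since flattening forces $N_b(W)$ to be essentially a slab of volume $\sim b$, so any excess packing would contradict Proposition \ref{factoringConvexSetsProp}(iii). This gives case (C).

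If instead $a_0\sim_\delta b_0$, the sets in $\mathcal{W}_0$ are essentially $a_0$-tubes, and I claim $a_0\geq\delta^{-\zeta_1}\tau$. The argument is that if $a_0\in[\tau,\delta^{-\zeta_1}\tau)$, then $\mathcal{W}_0$, together with the preliminary FCWA refinement on each $\tubes^{T_\tau}$ and the FCWA factorization $\CFC((\tubes_\tau')^W)\leq\delta^{-\eta'}$ from Proposition \ref{factoringConvexSetsProp}(iv), would yield exactly the kind of balanced partitioning $a_0$-tube cover of $\tubes$ whose nonexistence defined $\tau$; this is the convex-set analogue of the sub-multiplicativity established in Lemma \ref{frostmanSlabTransitiveUnderCovers} for slabs, and works here because the $K$-balanced, $K$-almost partitioning property in Proposition \ref{factoringConvexSetsProp}(ii) provides the needed cardinality control. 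Hence $a_0\geq\delta^{-\zeta_1}\tau$. Setting $\rho=a_0$ and $\tubes_\rho=\mathcal{W}_0$ gives $\tau\leq\delta^{\zeta_1}\rho\leq\delta^{\zeta_1/5}\rho$, which is the required scale separation. Condition (i) of case (B) is the preliminary refinement; condition (iii) is upward inheritance (Remark \ref{FrostmanWolffInheritedUpwardsDownwards}(A)) of $\CFC(\tubes)\leq\delta^{-\eta}$ along $\tubes\prec\tubes_\rho$; and condition (ii) is obtained by combining Proposition \ref{factoringConvexSetsProp}(ii) with the conversion $\CKT(\mathcal{U})=\CFC(\mathcal{U})\cdot(\#\mathcal{U})\cdot|U|$ applied to $(\tubes_\tau')^{T_\rho}$, using the bound on $\CKT(\tubes_\tau)$ that comes from the FCWA bound on $\tubes_\tau$ and its balance.

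The main obstacle will be the careful parameter bookkeeping, forcing $\eta\ll\eta'\ll\zeta_3\ll\zeta_2\ll\zeta_1$ with enough slack to absorb the several $\delta^{-\eta'}$-type losses picked up from Proposition \ref{factoringConvexSetsProp}, from the refinements, and from the FCWA-to-KTCWA conversions. A secondary technical point is the degenerate regime where $\CKT(\tubes_\tau)$ is itself tiny: in that case, the lower bound $\#\tubes_\tau'[W]\geq K^{-1}\CKT(\tubes_\tau')(\rho/\tau)^2$ from Proposition \ref{factoringConvexSetsProp}(ii) is too weak to produce the packing bound in (B)(ii), and I would handle this case by a short direct argument showing that a sparse $\tubes_\tau$ already satisfies FCWA at scale $\tau$, contradicting the failure of (A) or folding into case (B) with the unit ball playing the role of the coarse tube.
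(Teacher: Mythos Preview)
Your architecture has a genuine logical gap. You declare $\tau$ to be a \emph{bad} scale---one at which no $\rho\in[\tau,\delta^{-\zeta_1}\tau)$ admits a balanced $\rho$-tube cover with $\CFC(\tubes^{T_\rho})\le\delta^{-\zeta_1}$---and then immediately assert that ``by a preliminary $\approx_\delta 1$ refinement of $\tubes$'' you can arrange $\CFC(\tubes^{T_\tau})\le\delta^{-\eta'}$ with $\eta'\ll\zeta_1$. These two statements are incompatible: the second says $\tau$ itself is a good scale with a much stronger error bound. No refinement can produce this; if the $\delta$-tubes inside some $T_\tau$ concentrate in a small convex subset, $\CFC(\tubes^{T_\tau})$ stays large for every $\approx_\delta 1$ sub-collection. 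This is not a bookkeeping slip---it is the mechanism by which (A) fails, and you are assuming it away.

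The gap propagates everywhere downstream. Your verification of (B)(i) is exactly this impossible claim. Your verification of (C) requires $\mathcal{W}_0$ to factor $\tubes$ (not $\tubes_\tau$) from below with respect to the Frostman Convex Wolff Axioms; Proposition~\ref{factoringConvexSetsProp}(iv) only gives $\CFC(\tubes_\tau^W)\lessapprox 1$, and to pass from this to $\CFC(\tubes^W)$ you need precisely $\CFC(\tubes^{T_\tau})$ small. Likewise your contradiction argument for $a_0\ge\delta^{-\zeta_1}\tau$ needs $\mathcal{W}_0$ to be a good cover of $\tubes$, which again hinges on the same missing ingredient.

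The paper resolves this by applying Proposition~\ref{factoringConvexSetsProp} in the opposite direction: not to the cover $\tubes_\tau$, but to the rescaled $\delta$-tubes $\tubes^{T_{\delta_i}}$ \emph{inside} each tube at the bad scale $\delta_i$. Conclusion~(iv) of that proposition then directly yields $\CFC(\tubes^W)\lessapprox 1$ for the resulting prisms $W$. The scale $\tau$ is not chosen in advance but \emph{emerges} as (roughly) the width of these prisms when they are non-flat, and (B)(i) is then a consequence of the factorization rather than an assumption. The paper still has to work to locate $\rho$---via a minimality argument over admissible scales between $\delta_i$ and $1$, with a further iteration if the initial candidate fails---but the crucial point is that the ``from below'' Frostman control is manufactured by the factorization itself, not imposed by fiat.
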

\noindent We will defer the proof of Proposition \ref{tubeTricotProp} to Section \ref{tubeTricotProp}.

Using Proposition \ref{tubeTricotProp}, we will prove the following weaker form of Proposition \ref{equivDE}; recall that $\TcE$ is defined in Definition \ref{TCEDefn}.

\begin{lem}\label{weakerPropEquivDE}
Let $0<\sigma\leq 2/3$. For all $\omega,t>0$, there exists $\alpha>0$ so that the following holds for all $\omega'\geq \omega+t$.  Suppose $\cD(\sigma,\omega)$ and $\cE(\sigma,\omega')$ are true. Then $\TcE(\sigma, \omega'-\alpha)$ is true.
\end{lem}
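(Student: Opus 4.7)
The plan is to apply Proposition~\ref{factoringConvexSetsProp} to $\tubes$ and then split into cases based on the shape of the resulting cover; in the flat-prism case we invoke Proposition~\ref{aLLbProp}, while in the tube case we apply Proposition~\ref{tubeTricotProp} inside each element of the cover. Fix $\eps>0$, choose $\alpha>0$ small depending only on $\sigma,\omega,t$, and then $\eta>0$ small depending on $\alpha,\eps$. Let $(\tubes,Y)_\delta$ be $\delta^\eta$-dense with $\FS(\tubes)\leq\delta^{-\eta}$; write $m=\CKT(\tubes)$. Applying Proposition~\ref{factoringConvexSetsProp} and dyadic pigeonholing produces a refinement $\tubes'\subset\tubes$ and a balanced cover $\mathcal{W}$ of $\tubes'$ by congruent convex sets of dimensions $a\times b\times 1$, factoring $\tubes'$ from above with Katz-Tao Convex Wolff error $\lessapprox_\delta 1$ and from below with Frostman Convex Wolff error $\lessapprox_\delta 1$; by Remark~\ref{FrostmanWolffInheritedUpwardsDownwards}(A) we also get $\FS(\mathcal{W})\lessapprox_\delta 1$.

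If $b\geq\delta^{-\alpha/(2\omega)}a$ (flat prisms), Proposition~\ref{aLLbProp}(A) (invoked with $\cE(\sigma,\omega')$) yields the factor $(b/a)^{\omega'}\geq\delta^{-\alpha}$, providing the needed improvement. Otherwise $a\sim b=:\rho$, so $\mathcal{W}$ is a family of approximate $\rho$-tubes with $\CKT(\mathcal{W})\lessapprox_\delta 1$, $\FS(\mathcal{W})\lessapprox_\delta 1$, and each $\tubes'^W$ satisfies $\CFC(\tubes'^W)\lessapprox_\delta 1$ (hence $\FS(\tubes'^W)\lessapprox_\delta 1$) together with $\CKT(\tubes'^W)\leq m$. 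The plain two-scale argument ($\cD(\sigma,\omega)$ on $\mathcal{W}$ combined with $\cE(\sigma,\omega')$ on each $\tubes'^W$, as in Section~\ref{proofOfPropEquivDESecIntro}) yields a lower bound of $\rho^{-(\omega'-\omega)}\delta^{\omega'}$ times the standard factors, which already suffices provided $\rho\leq\delta^{\alpha/(\omega'-\omega)}$.

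For the remaining regime $\rho>\delta^{\alpha/(\omega'-\omega)}$, apply Proposition~\ref{tubeTricotProp} to $\tubes'^W$ at scale $\delta/\rho$ (valid since $\CFC(\tubes'^W)\lessapprox_\delta 1$) with $\zeta_1=5\alpha/(\omega'-\omega)$, $\zeta_2=100\alpha/\omega'$, and small $\zeta_3$. In Case~(A), Theorem~\ref{WZThm52} at scale $\delta/\rho$ yields $|\bigcup_{T\in\tubes'[W]}Y(T)|\gtrapprox_\delta(\delta/\rho)^\eps|W|$, which combined with $\cD(\sigma,\omega)$ on $\mathcal{W}$ dominates the target. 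In Case~(B), a three-scale argument inside $W$ (applying $\cE(\sigma,\omega')$ at the outer scale, $\cD(\sigma,\omega)$ at the intermediate scale $\tau$, and $\cE(\sigma,\omega')$ at scale $\delta$, with the Frostman Slab Wolff constants at each level controlled via Lemma~\ref{frostmanSlabTransitiveUnderCovers}) produces a gain of at least $\delta^{-\alpha}$ over the plain $\cE(\sigma,\omega')$ estimate. In Case~(C), Proposition~\ref{aLLbProp}(B) applied to the flat-prisms cover of $\tubes'^W$ yields an analogous $\delta^{-\alpha}$ gain. Combining any of these three with the outer $\cD(\sigma,\omega)$ estimate on $\mathcal{W}$ gives the desired $\TcE(\sigma,\omega'-\alpha)$ bound.

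The principal technical obstacle is the three-scale (or, combined with the outer cover, four-scale) estimate in Case~(B): tracking how the Katz-Tao Convex Wolff and Frostman Slab Wolff constants propagate through the nested covers (the latter via Lemma~\ref{frostmanSlabTransitiveUnderCovers}), how the cardinalities of the tube families at each scale relate, and how the shading densities survive the cascade of dyadic pigeonholings and rescalings. The other cases each reduce directly to a single invocation of Proposition~\ref{aLLbProp}, Theorem~\ref{WZThm52}, or the standard two-scale argument, so almost all of the bookkeeping is concentrated in this subcase.
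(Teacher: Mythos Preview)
Your approach is sound and reaches the same conclusion, but it is organized differently from the paper's proof. The paper first tests whether some large subset $\tubes'\subset\tubes$ satisfies $\CFC(\tubes')\leq\delta^{-\zeta_4}$. If so (Steps 1--3), it applies Proposition~\ref{tubeTricotProp} directly to $\tubes'$ at scale $\delta$; the trichotomy yields the gain, and no outer cover is needed. If not (Step 4), it factors via Proposition~\ref{factoringConvexSetsProp}, notes that $|W|\lessapprox\delta^{\zeta_4}$ forces the resulting cover to live at a scale bounded away from $1$, and then runs a plain two-scale argument ($\cD(\sigma,\omega)$ on the outer tubes $\tubes_b$, $\cE(\sigma,\omega')$ inside). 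In contrast, you always factor first and then, when the factor scale $\rho$ is large, invoke the trichotomy \emph{inside} each $W$ and combine with the outer $\cD$ estimate on $\mathcal{W}$. This works, but in Case~(B) it forces the four-scale bookkeeping you flag at the end, whereas the paper sidesteps that by only deploying the trichotomy when the factoring is trivial. A minor point: your specific choices $\zeta_1=5\alpha/(\omega'-\omega)$, $\zeta_2=100\alpha/\omega'$ need not satisfy the ordering $\zeta_1\geq\zeta_2$ required by Proposition~\ref{tubeTricotProp} for all admissible $\omega'$, so the parameter hierarchy would need minor adjustment; the paper instead fixes $\zeta_1=\zeta_1(\omega)$ first (so that Theorem~\ref{WZThm52} yields $\delta^\omega$) and then chooses $\zeta_2,\zeta_3,\zeta_4$ and finally $\alpha$ in terms of it.
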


\begin{proof}
Let $\alpha = \alpha(\sigma, \omega,t)>0$ be a small number to be specified below. Let $\eta,\kappa>0$ be small numbers that depend on $\omega,\omega',$ and $\sigma$. Our goal is to prove that if $(\tubes,Y)_{\delta}$ is $\delta^{\eta}$ dense with $\FS(\tubes)\leq\delta^{-\eta}$, then 
\begin{equation}\label{desiredOmegaOmegaPrimeVolumeBd}
\Big|\bigcup_{T\in\tubes}\Big|\geq \kappa\delta^{\omega'-\alpha}m^{-1}(\#\tubes)|T|\Big( m^{-3/2} (\#\tubes)|T|^{1/2} \Big)^{-\sigma},
\end{equation}
with $m = \CKT(\tubes)$. Note that the estimate \eqref{desiredOmegaOmegaPrimeVolumeBd} is slightly stronger than the desired estimate $\TcE(\sigma, \omega'-\alpha)$, since there is no additional $\delta^{\eps}$ loss; this stronger estimate is possible since $\omega'-\omega>0$, which gives us a bit of ``wiggle room.''

\medskip

\noindent{\bf Step 1.}
Without loss of generality we may suppose that $|Y(T)|\geq \delta^{\eta}|T|$ for each $T\in\tubes$. Let $\zeta_i=\zeta_i(\sigma, \omega,t)$, $i=1,\ldots,5$ be small quantities to be chosen below; we will have $\zeta_{i+1}$ very small compared to $\zeta_i$, and $\eta$ very small compared to $\zeta_5$. 

We first consider the case where there exists a subset $\tubes'\subset\tubes$ with $\#\tubes'\geq \delta^{\eta}(\#\tubes)$, and $\CFC(\tubes')\leq\delta^{-\zeta_4}$ (this assumption will remain until Step 4, where we will consider the case where no such subset exists). Abusing notation slightly, we will continue to use $\tubes$ to refer to this subset. In particular, we have that 
\begin{equation}\label{lotsOfTubes}
\#\tubes \gtrapprox_\delta \delta^{\eta+\zeta_4}m|T|^{-1},\quad\textrm{and}\quad \CFC(\tubes)\leq\delta^{-\zeta_4}.
\end{equation}

If $\zeta_4$ and $\eta$ are chosen sufficiently small compared to $\zeta_1,\zeta_2,\zeta_3$, then we can apply Proposition \ref{tubeTricotProp} to $(\tubes,Y)_{\delta}$, with $\zeta_1,\zeta_2,\zeta_3$ as specified above. We will select $\zeta_1=\zeta_1(\omega)$ and $\eta$ sufficiently small so that if Conclusion (A) of Proposition \ref{tubeTricotProp} holds, then we can use Theorem \ref{WZThm52} to conclude that
\[
\Big|\bigcup_{T\in\tubes}Y(T)\Big| \gtrsim \delta^{\omega},
\]
and hence \eqref{desiredOmegaOmegaPrimeVolumeBd} holds provided we choose $\alpha(\omega,t)\leq t$. Henceforth we shall assume that Conclusion (A) of Proposition \ref{tubeTricotProp} does \emph{not} hold.

\medskip

\noindent{\bf Step 2.} Suppose that Conclusion (B) of Proposition \ref{tubeTricotProp} holds. We will define shadings $Y_\tau$ and $Y_\rho$ on the sets of tubes $\tubes_\tau$ and $\tubes_\rho$ as follows. For each $T_\tau\in\tubes_\tau$, we refine the shading on $\tubes[T_{\tau}]$ to have average density on balls of radius $\tau$ (see \eqref{densityRho} and the surrounding discussion). We define the shading $Y_\tau(T_\tau)$ to be the union of those $\tau$-balls that intersect $\bigcup_{\tubes[T_\tau]}Y(T)$. We perform the analogous procedure to define $Y_{\rho}$ (this induces a refinement on the shadings $Y_\tau$ and $Y$).

After these steps have been performed, $(\tubes_\rho, Y_\rho)_\rho$ is $\delta^{O(\eta)}$ dense; each pair $(\tubes_\tau^{T_\rho},Y_\tau^{T_\rho})_{\tau/\rho}$ is $\delta^{O(\eta)}$ dense; and each pair $(\tubes^{T_\tau}, Y^{T_{\tau}})_{\delta/\tau}$ is $\delta^{O(\eta)}$ dense. Furthermore, we have
\begin{equation}\label{boundYTByProduct}
\Big|\bigcup_{T\in\tubes}Y(T)\Big|\geq 
\bigg(\Big|\bigcup_{T_\rho\in\tubes_\rho}Y_\rho(T_\rho)\Big|\bigg)
\bigg( \inf_{T_\rho\in\tubes_\rho}\Big|\bigcup_{T_\tau^{T_\rho} \in\tubes_\tau^{T_\rho}}Y_{\tau}^{T_\rho}(T_\tau^{T_\rho})\Big|\bigg)
\bigg( \inf_{T_\tau\in\tubes_\tau}\Big|\bigcup_{T^{T_\tau} \in\tubes^{T_\tau}}Y^{T_\tau}(T^{T_\tau})\Big|\bigg).
\end{equation}
Our next task is to estimate the three terms on the RHS of \eqref{boundYTByProduct} as follows.
\begin{align}
\label{estimateScaleRho}
\Big|\bigcup_{T_\rho\in\tubes_\rho}Y_\rho(T_\rho)\Big| & \geq \delta^{\eps_1}\rho^{\omega'}\big((\#\tubes_\rho)^{1/2}|T_\rho|\big)^\sigma,\\
\label{estimateScaleRhoTau}
\inf_{T_\rho\in\tubes_\rho}\Big|\bigcup_{T_\tau^{T_\rho} \in\tubes^{T_\rho}}Y_{\tau}^{T_\rho}(T_\tau^{T_\rho})\Big| 
& \geq \delta^{\eps_1} \Big(\frac{\tau}{\rho}\Big)^{\omega}\Big(\Big(\frac{\#\tubes_\tau}{\#\tubes_\rho}\Big)^{1/2} \frac{|T_\tau|}{|T_\rho|}\Big)^\sigma,\\
\label{estimateScaleTauDelta}
\inf_{T_\tau\in\tubes_\tau}\Big|\bigcup_{T^{T_\tau} \in\tubes^{T_\tau}}Y^{T_\tau}(T^{T_\tau})\Big| 
& \geq \delta^{\eps_1} \Big(\frac{\delta}{\tau}\Big)^{\omega'}\Big(\Big(\frac{\#\tubes}{\#\tubes_\tau}\Big)^{1/2} \frac{|T|}{|T_\tau|}\Big)^\sigma,
\end{align}
where $\eps_1 = \frac{\zeta_1}{24}(\omega-\omega')$.

First, observe that if we choose $\eta$ sufficiently small, then \eqref{estimateScaleRho} (resp.~\eqref{estimateScaleRhoTau} or \eqref{estimateScaleTauDelta}) immediately holds if $\rho >\delta^{\eps_1/2}$ (resp.~$\delta/\tau > \delta^{\eps_1}/2$). This is because the volume of the union \eqref{estimateScaleRho} is bounded by the volume of a single tube. In particular, we have the following
\begin{itemize}
\item Either \eqref{estimateScaleRho} automatically holds, or $(\tubes_\rho,Y_\rho)_\rho$ is $\rho^{O(\eta/\eps_1)}$ dense and satisfies $\CFC(\tubes_\rho)\lesssim \rho^{-2\zeta_2/\eps_1}$.
\item Since $\eps_1\leq \zeta_1/24$ and $\tau \leq \delta^{\zeta_1/5}\rho$, each pair $(\tubes_\tau^{T_\rho},Y_\rho^{T_\rho})_{\tau/\rho}$ is $(\tau/\rho)^{O(\eta/\eps_1)}$ dense and satisfies $\CKT(\tubes_\tau^{T_\rho})\lesssim (\tau/\rho)^{-2\zeta_2/\eps_1}$ and $\#\tubes_\tau^{T_\rho} \geq (\tau/\rho)^{2\zeta_2/\eps_1}(\rho/\tau)^2$. 
\item Either \eqref{estimateScaleTauDelta} automatically holds, or each pair $(\tubes^{T_\tau}, Y^{T_{\tau}})_{\delta/\tau}$ is $(\delta/\tau)^{O(\eta/\eps_1)}$ dense and satisfies $\CKT(\tubes^{T_\tau})\lesssim (\delta/\tau)^{-2\zeta_2/\eps_1}$.
\end{itemize}

If we select $\eta$ and $\zeta_2$ sufficiently small, depending on $\omega$ and $t$ (recall that $t\leq \omega-\omega'$) ($\eta$ and $\zeta_2$ also depend on $\eps_1$, but $\eps_1$ only depends on $\zeta_1=\zeta_1(\omega)$, $\omega$ and $t$), then in light of the three bullet points above, the estimates \eqref{estimateScaleRho}, \eqref{estimateScaleRhoTau}, and \eqref{estimateScaleTauDelta} follow from $\cE(\sigma,\omega')$, $\cD(\sigma,\omega)$, and $\cE(\sigma,\omega')$, respectively. 

Combining \eqref{boundYTByProduct}, \eqref{estimateScaleRho}, \eqref{estimateScaleRhoTau}, and \eqref{estimateScaleTauDelta}, we conclude that
\begin{equation}\label{improvedFrostmanBd}
\Big|\bigcup_{T\in\tubes}Y(T)\Big|  \geq \delta^{3\eps_1}\Big(\frac{\rho}{\tau}\Big)^{\omega-\omega'}\delta^{\omega'}\Big((\#\tubes)^{1/2}|T|\Big)^\sigma
\geq \delta^{\frac{\zeta_1(\omega-\omega')}{8}}\delta^{\omega'}\Big((\#\tubes)^{1/2}|T|\Big)^\sigma.
\end{equation}
Combining \eqref{lotsOfTubes} and \eqref{improvedFrostmanBd}, we verify that \eqref{desiredOmegaOmegaPrimeVolumeBd} holds, provided $\alpha(\omega,t)\leq \frac{\zeta_1 t}{16}$; $\zeta_4<\frac{\zeta_1 t}{17}$; and $\eta>0$ is chosen sufficiently small. Henceforth we shall assume that Conclusion (B) of Proposition \ref{tubeTricotProp} does \emph{not} hold.

\medskip

\noindent{\bf Step 3.} Suppose that Conclusion (C) of Proposition \ref{tubeTricotProp} holds, i.e. there is a set $\mathcal{W}$ of $a\times b\times 1$ prisms, with $a\leq\delta^{\zeta_2/100}b$, so that $\mathcal{W}$ satisfies the hypotheses of Proposition \ref{aLLbProp}(B): $\mathcal{W}$ factors $\tubes$ above and below with respect to the Frostman Convex Wolff Axioms with error $O(\delta^{-\zeta_3})$. And for each $W\in\mathcal{W}$ we have $\CKT(\mathcal{W}[N_b(W)])\lesssim\delta^{-\zeta_3}$. 

Let $\eps_2 = \zeta_2\omega/200$. If $\zeta_3$ is selected sufficiently small depending on $\zeta_2$ and $\eps_2$ (both of these numbers in turn ultimately only depend on $\omega$ and $t$) and if $\eta>0$ is selected sufficiently small, then by Proposition \ref{aLLbProp}(B) (with $\eps_2$ in place of $\eps$) we have
\begin{equation}\label{case3Estimate}
\begin{split}
\Big|\bigcup_{T\in\tubes}Y(T)\Big| &\gtrsim \delta^{\omega'+\eps_2}\Big(\frac{b}{a}\Big)^{\omega'}\big((\#\tubes)^{1/2}|T|\big)^{\sigma}\\
& \geq \delta^{\omega'+\eps_2}\delta^{-\frac{\zeta_2\omega}{100}}\big((\#\tubes)^{1/2}|T|\big)^{\sigma}\\
& \geq \delta^{\omega'-\frac{\zeta_2\omega}{200}}\big((\#\tubes)^{1/2}|T|\big)^{\sigma}.
\end{split}
\end{equation}
Combining \eqref{lotsOfTubes} and \eqref{case3Estimate},  we verify that \eqref{desiredOmegaOmegaPrimeVolumeBd} holds, provided $\alpha(\omega,t)\leq \frac{\zeta_2\omega}{400}$; $\zeta_4<\frac{\zeta_2\omega}{500}$; and $\eta>0$ is chosen sufficiently small.

\medskip

\noindent{\bf Step 4.}
It remains to consider the case where every subset $\tubes'\subset\tubes$ with $\#\tubes'\geq \delta^{\eta}(\#\tubes)$ satisfies $\CFC(\tubes')>\delta^{-\zeta_4}$. Apply Proposition \ref{factoringConvexSetsProp} (factoring a collection of convex sets) to $\tubes$, and denote the output by $\tubes'$ and $\mathcal{W}$. Then Item ii) of Proposition~\ref{factoringConvexSetsProp} implies $\#\tubes[W]\approx_{\delta} \CKT(\tubes') \frac{|W|}{|T|}$. Since $\#\tubes'\geq\delta^{\eta}(\#\tubes)$, we have $\CFC(\tubes')>\delta^{-\zeta_4}$. We also have $\CFC(\tubes^{\prime W})\lessapprox_\delta 1$ for each $W\in\mathcal{W}$, from which it follows that $|W|\lessapprox_\delta \delta^{\zeta_4}$ (recall that the sets $W\in\mathcal{W}$ are congruent, and thus they all have identical volume).

If the prisms in $\mathcal{W}$ are flat, in the sense that they are comparable to $a\times b\times 1$ prisms with $a\leq\delta^{\zeta_5}b$, then we can apply Proposition \ref{aLLbProp}(A) with $\eps_3 = \zeta_5\omega'/2$ to conclude that
\begin{equation}\label{conclusionStep4Flat}
\begin{split}
\Big|\bigcup_{T\in\tubes}Y(T)\Big| & \gtrsim \delta^{\omega'+\eps_3}\Big(\frac{b}{a}\Big)^{\omega'} (m')^{-1}(\#\tubes')|T|\big((m')^{-3/2}(\ell') (\#\tubes)|T|^{1/2}\big)^{-\sigma}\\
&\gtrapprox \delta^{\omega'-\frac{\zeta_5\omega'}{2}+2\eta}m^{-1}(\#\tubes)|T|\big(m^{-3/2} (\#\tubes)|T|^{1/2}\big)^{-\sigma}.
\end{split}
\end{equation}
In the second inequality, we used the fact that $\#\tubes'\geq \delta^{\eta}\#\tubes$; $\ell':=\FS(\tubes')\lessapprox \delta^{-\eta}$; $m':=\CKT(\tubes')\leq m$; and $\sigma\leq 2/3$. We conclude that \eqref{desiredOmegaOmegaPrimeVolumeBd} holds, provided $\alpha(\omega,t)\leq \frac{\zeta_5\omega}{4} \leq\frac{\zeta_5\omega'}{4}  $ and $\eta>0$ is chosen sufficiently small.

Finally, we consider the case where the prisms in $\mathcal{W}$ are not flat, in the sense that $a\geq\delta^{\zeta_5}b$. In this case we can replace each prism $W\in\mathcal{W}$ by its $b$-neighbourhood, and then refine the corresponding set of $b$-tubes $\tubes_b$ by a factor of $(b/a)^3 \leq \delta^{-3\zeta_5}$ (this is the number of essentially distinct $a\times b\times 1$ prisms that can fit inside a $b$ tube) so that the tubes in $\tubes_b$ are essentially distinct. Since $|W|\lessapprox_\delta \delta^{\zeta_4}$ and $a\geq\delta^{\zeta_5}b$, we have 
\begin{equation}\label{lowerBdOnB}
b\lessapprox_\delta \delta^{\zeta_4/2 - \zeta_5}\leq \delta^{\zeta_4/3}.
\end{equation} 
To recap, the set $\tubes_b$ has the following properties.
\begin{itemize}
	\item $\CKT(\tubes_b)\lesssim \frac{b}{a}\CKT(\mathcal{W})\lessapprox_\delta \delta^{-\zeta_5}\leq b^{-3\zeta_5/\zeta_4}.$
	\item $\FS(\tubes_b)\lesssim\delta^{-3\zeta_5}\FS(\mathcal{W})\lessapprox_\delta \delta^{-3\zeta_5}\FS(\tubes) \lessapprox_\delta \delta^{-3\zeta_5-\eta}$.
	\item For each $T_b\in\tubes_b$, we have $\CFC(\tubes^{T_b})\lesssim \frac{b}{a}\CFC(\tubes^{W})\lessapprox_\delta \delta^{-\zeta_5}$, where $\mathcal{W}\ni W\subset T_b$ is the prism containing all of the tubes in $\tubes[T_b]$.

	\item For each $T_b\in\tubes_b$, we have $\#\tubes^{T_b} \lessapprox_{\delta}  \CKT(\mathcal{W}) \frac{b}{a}  \#\tubes^W \lessapprox_{\delta}  \frac{b}{a} m \frac{|W|}{|T|} \lessapprox_{\delta} \delta^{-2\zeta_5} m \frac{|T_b|}{|T|}.$ 
\end{itemize}
(For the second item above, we make crucial use of the fact that $\FS(\tubes)\leq\delta^{-\eta}$, which is why the conclusion of Lemma \ref{weakerPropEquivDE} only says that $\TcE(\sigma,\omega'-\alpha)$ is true, rather than the superficially stronger statement $\cE(\sigma,\omega'-\alpha)$).

Mirroring the argument in Step 2, refine the shadings $Y(T)$ on each set of tubes $\tubes[T_b]$ to have average density on balls of radius $b$. We define the shading $Y_b(T_b)$ to be the union of those $b$-balls that intersect $\bigcup_{\tubes[T_b]}Y(T)$. Then $(\tubes_b, Y_b)_b$ is $\delta^{\eta}\geq b^{3\eta/\zeta_4}$ dense. We thus have the following analogue of \eqref{boundYTByProduct}:
\begin{equation}\label{boundYTByProductTwoTerms}
\Big|\bigcup_{T\in\tubes}Y(T)\Big|\geq 
\bigg(\Big|\bigcup_{T_b\in\tubes_b}Y_\rho(T_b)\Big|\bigg)
\bigg( \inf_{T_b\in\tubes_b}\Big|\bigcup_{T^{T_b} \in\tubes^{T_b}}Y^{T_b}(T^{T_b})\Big|\bigg).
\end{equation}

Let $\eps_4 = \frac{\zeta_4(\omega-\omega')}{12}$. If $\zeta_5$ and $\eta$ are selected sufficiently small depending on $\zeta_4$ and $\eps_4$ (which in turn depend on $\omega$ and $t$), then we can use the estimate \eqref{defnCDEqn} from Assertion $\cD(\sigma,\omega)$ to conclude that
\begin{equation}\label{volumeBdOnTb}
\Big|\bigcup_{T_b\in\tubes_b}Y_\rho(T_b)\Big| \gtrsim b^{\omega+\eps_4}(\#\tubes_b)|T_b|\big((\#\tubes_b)|T_b|^{1/2}\big)^{-\sigma}.
\end{equation}

Finally, we would like to obtain the estimate
\begin{equation}\label{volumeBdOnTInsideTb}
\Big|\bigcup_{T^{T_b} \in\tubes^{T_b}}Y^{T_b}(T^{T_b})\Big|\gtrsim \delta^{\eps_4}\big(\frac{\delta}{b}\big)^{\omega'}\Big((\#\tubes^{T_b})^{1/2}\frac{|T|}{|T_b|}\Big)^{\sigma}.
\end{equation}
When $\delta/b>\delta^{\eps_4/2}$, we have that \eqref{volumeBdOnTInsideTb} follows from the elementary fact that the shading on each $Y(T)$ is $\gtrapprox \delta^{\eta}$ dense, and the union on the LHS of \eqref{volumeBdOnTInsideTb} is bounded by the volume of a single tube. On the other hand, when $\delta/b\leq\delta^{\eps_4/2}$, we have that \eqref{volumeBdOnTInsideTb} follows from the estimate \eqref{defnCEEstimate} from Assertion $\cE(\sigma,\omega')$, provided $\zeta_5$ is chosen sufficiently small depending on $\zeta_4$ and $\eps_4$, since $\CFC(\tubes^{T_b})\lessapprox \delta^{-\zeta_5}$. 

Since $\#\tubes^{T_b}\lessapprox m \delta^{-2\zeta_5}\frac{|T|}{|T_b|}$ for each $T_b\in\tubes_b$ (where $m= \CKT(\tubes)$), \eqref{volumeBdOnTInsideTb} becomes
\begin{equation}\label{volumeBdOnTInsideTbUseful}
\Big|\bigcup_{T^{T_b} \in\tubes^{T_b}}Y^{T_b}(T^{T_b})\Big|
\gtrsim \delta^{\eps_4+2\zeta_5}\big(\frac{\delta}{b}\big)^{\omega'} m^{-1}(\#\tubes^{T_b})\frac{|T|}{|T_b|}\Big(m^{-3/2}(\#\tubes^{T_b})\Big(\frac{|T|}{|T_b|}\Big)^{1/2}\Big)^{-\sigma}.
\end{equation}
Combining \eqref{boundYTByProductTwoTerms}, \eqref{volumeBdOnTb}, \eqref{volumeBdOnTInsideTbUseful}, and \eqref{lowerBdOnB}, we conclude that if we select $\zeta_5\leq\zeta_4(\omega-\omega')/12$, then
\begin{equation}
\begin{split}
\Big|\bigcup_{T\in\tubes}Y(T)\Big|
& \geq  \delta^{\omega'+\eps_4+2\zeta_5}b^{\omega-\omega' + \eps_4} m^{-1}(\#\tubes)|T|\Big(m^{-3/2}(\#\tubes)|T|^{1/2}\Big)^{-\sigma}\\
& \geq  \delta^{\omega' - \frac{\zeta_4(\omega-\omega')}{12}} m^{-1}(\#\tubes)|T|\Big(m^{-3/2}(\#\tubes)|T|^{1/2}\Big)^{-\sigma}.
\end{split}
\end{equation}
We conclude that \eqref{desiredOmegaOmegaPrimeVolumeBd} holds, provided $\alpha(\omega,t)\leq \frac{\zeta_4t}{12},$ and $\eta>0$ is chosen sufficiently small.
\end{proof}


\noindent We  now use Lemma \ref{weakerPropEquivDE} to prove Proposition \ref{equivDE}.
\begin{proof}[Proof of Proposition \ref{equivDE}]
Let $0\leq\sigma\leq 2/3,\ \omega\geq 0,$ and suppose that Assertion $\cD(\sigma,\omega)$ is true. Fix $t>0$ and let $\alpha=\alpha(\sigma, \omega,t)>0$ be the output of
 Lemma \ref{weakerPropEquivDE}. Since a $\delta$-tube has volume $\sim\delta^2$, we always have that $\cE(\sigma, 2)$ is true. Now suppose that $\cE(\sigma, \omega')$ is true for some $\omega'\in [\omega+t,2]$. Applying Lemma \ref{weakerPropEquivDE} followed by Proposition \ref{EiffF}, we have
 \[
\cE(\sigma, \omega')\implies \TcE(\sigma, \omega'-\alpha)\implies \cE(\sigma, \omega'-\alpha).
 \]
 Iterating the above argument, we conclude that $\cE(\sigma, \omega'')$ is true for some $\omega''\leq\omega+t$, so in particular $\cE(\sigma, \omega+t)$ is true. 

 However, $t>0$ was arbitrary, and by the definition of  $\cE$, it is clear that the set 
 \[
 \{\omega'\in [\omega,2]\colon \cE(\sigma, \omega')\ \textrm{is true}\}
 \]
 is a closed interval. We conclude that $\cE(\sigma,\omega)$ is true. 
\end{proof}
This concludes the proof of Proposition \ref{equivDE}, except that we must still prove Proposition \ref{tubeTricotProp}. We do this below. 


\subsection{Proof of Proposition \ref{tubeTricotProp}: A factoring trichotomy}$\phantom{1}$\\
{\bf Step 1.}
We begin by regularizing the set $\tubes$. Let $\eta>0$ be a small quantity to be determined below, with $N=1/\eta$ an integer. Define $\delta_i = \delta^{i/N},\ i=1,\ldots, N$. By iterated pigeonholing and replacing $\tubes$ by a $|\log\delta|^{-N}$-refinement, we may suppose that for each $i=1,\ldots,N$ there exists a set $\tubes_{\delta_i}$ of $\delta_i$-tubes that is a balanced partitioning cover of $\tubes_{\delta_{i+1}}$. We will call numbers of the form $\delta_i$ ``admissible scales.'' In particular, for each admissible scale $\delta_i$, we have
\begin{equation}\label{cardTubesTDeltaI}
\#\tubes[T_{\delta_i}]\approx_\delta \frac{\#\tubes\phantom{_{\delta_i}}}{\#\tubes_{\delta_i}}\quad\textrm{for every}\ T_{\delta_i}\in\tubes_{\delta_i}.
\end{equation}

Next we apply  Proposition \ref{factoringConvexSetsProp} to each set $\tubes^{T_{\delta_i}}$; the output of Proposition \ref{factoringConvexSetsProp} is a $\approx 1$ refinement of $\tubes^{T_{\delta_i}}$ (this induces a $\approx 1$ refinement of $\tubes$ and all sets $\tubes_{\delta_j}$ for $j>i$; observe that \eqref{cardTubesTDeltaI} remains true after this refinement; to simplify notation, we still use $\tubes_{\delta_i}$ and $\tubes$ to denote these refinement) and a set $\mathcal{W}$ of convex subsets of $\RR^3$.  If $|W|\geq\delta^{\zeta_1/2}$, then $\CFC(\tubes^{T_{\delta_i}})\leq\delta^{-\zeta_1/2}$. In this case, we say $\tubes^{T_{\delta_i}}$ is of \emph{Type 1}. If $|W|<\delta^{\zeta_1/2}$, then we say we say $\tubes^{T_{\delta_i}}$ is of \emph{Type 2}.

We now proceed as follows. For each $i=N-1,\ldots,1$, if at least half the sets $\tubes^{T_{\delta_i}},\ T_{\delta_i}\in\tubes_{\delta_i}$ are of Type 1, then we refine $\tubes_{\delta_i}$ to only consist of those $T_{\delta_i}$ for which $\tubes^{T_{\delta_i}}$ are of Type 1; observe that \eqref{cardTubesTDeltaI} remains true after this refinement. We say that $\tubes$ has passed stage $i$. On the other hand, if at least half the sets $\tubes^{T_{\delta_i}},\ T_{\delta_i}\in\tubes_{\delta_i}$ are of Type 2, then we say that $\tubes$ has failed stage $i$.

Suppose that $\tubes$ passes every stage $i = N-1,\ldots, 1$. Then by \eqref{cardTubesTDeltaI} we have that for each $i=1,\ldots,N$, the set $\tubes_{\delta_i}$ (this consists of those $\delta_i$-tubes that survived the refinements described above) is a $\approx_\delta 1$-balanced partitioning cover of $\tubes$. Furthermore, since $\tubes$ passed stage $i$, we have that for each $T_{\delta_i}\in\tubes_{\delta_i}$ we have that $\CFC(\tubes^{T_{\delta_i}})\lessapprox_\delta \delta^{-\zeta_1/2}.$ We conclude that $\tubes$ satisfies the Frostman Convex Wolff Axioms at every scale with error $O(\delta^{-\zeta_1})$, and hence Conclusion (A) of Proposition \ref{tubeTricotProp} holds. 

\medskip

\noindent{\bf Step 2.}
Suppose that $\tubes$ fails some stage $i\geq 1$. After pigeonholing and replacing $\tubes_{\delta_i}$ and $\tubes$ by a $\approx_\delta 1$ refinement, we may suppose that there exists $\delta\leq a\leq b\leq 1$ so that for each $T_{\delta_i}\in\tubes_{\delta_i}$, the output of Proposition \ref{factoringConvexSetsProp} applied to $\tubes^{T_{\delta_i}}$ consists of a set $\mathcal{W}_{T_{\delta_i}}$ of $\frac{a}{\delta_i}\times \frac{b}{\delta_i}\times 1$ prisms that forms a $\approx_\delta 1$ balanced cover of $\tubes^{T_{\delta_i}}$, and factors $\tubes^{T_{\delta_i}}$ from above (resp.~below) with respect to the Katz-Tao Convex Wolff Axioms (resp.~Frostman Convex Wolff Axioms) with error $\lessapprox_\delta 1$.

Since the tubes in $\tubes_{\delta_i}$ are essentially distinct, we can further refine $\tubes_{\delta_i}$ by a $\sim 1$ factor so that every pair of distinct tubes  $T_{\delta_i},T_{\delta_i}'\in \tubes_{\delta_i}$ that intersect must satisfy $\angle\big(\dir(T_{\delta_i}),\ \dir(T_{\delta_i}')\big)\geq 100\delta_i$, so in particular we have 
\begin{equation}\label{smallDiameterIntersectingRhoiTubes}
\operatorname{diam}(2T_{\delta_i}\cap 2T_{\delta_i}')\leq \frac{1}{2}.
\end{equation}

Define 
\[
\mathcal{W}=\bigsqcup_{T_{\delta_i}\in\tubes_{\delta_i}}\phi_{T_{\delta_i}}^{-1}\big(\mathcal{W}_{T_{\delta_i}}\big).
\]
Then $\mathcal{W}$ is a collection of convex sets, each of which is comparable to a $a\times b\times 1$ prism. Recall that since $\tubes$ failed stage $i$, we have that the prisms in $\mathcal{W}$ are substantially smaller than the tubes in $\tubes_{\delta_i}$; specifically, we have 
\begin{equation}\label{WMuchSmallerThanTauDeltaI}
|W|\lesssim \delta^{\zeta_1/2}|T_{\delta_i}|.
\end{equation}

We claim that the convex sets in $\mathcal{W}$ are essentially distinct. To verify this claim, we argue as follows. Every pair of convex sets $W,W'$ from the same set $\mathcal{W}_{T_{\delta_i}}$ are essentially distinct. On the other hand, if $W\in \mathcal{W}_{T_{\delta_i}}$ and $W'\in \mathcal{W}_{T_{\delta_i}'}$ for distinct tubes $T_{\delta_i}$ and $T_{\delta_i}'$, then $\diam(W\cap W') \leq \diam(T_{\delta_i}\cap T_{\delta_i}')\leq \frac{1}{2}$ by \eqref{smallDiameterIntersectingRhoiTubes}, from which it follows that $W$ and $W'$ are essentially distinct. 

Since $\mathcal{W}$ is a $\approx_\delta 1$ balanced cover of $\tubes$, and $\CFC(\tubes)\lessapprox_\delta\delta^{-\eta}$, by Remark \ref{FrostmanWolffInheritedUpwardsDownwards}(A) (Frostman Wolff Axioms are inherited upwards), we have $\CFC(\mathcal{W})\lessapprox_\delta\delta^{-\eta}$; we will select $\eta>0$ sufficiently small so that $\CFC(\mathcal{W})\lesssim\delta^{-\zeta_3}$.

\medskip

\noindent{\bf Step 3.} Suppose that the prisms in $\mathcal{W}$ are flat, in the sense that $a\leq\delta^{\zeta_2/100}b$. Our task is to show that $\mathcal{W}$ and our refined set $\tubes$ satisfies the conditions of Conclusion (C) from Proposition \ref{tubeTricotProp}.

Each $W\in\mathcal{W}$ came from some set $\mathcal{W}_{T_{\delta_i}}$. We claim that if $W'\in\mathcal{W}$ satisfies $W'\subset N_{b}(W)$, then we must have that $W'$ came from the same set $\mathcal{W}_{T_{\delta_i}}$, i.e.
\begin{equation}\label{allWPrimeComeFromSameTube}
\mathcal{W}[N_{b}(W)] = (\mathcal{W}[T_{\delta_i}] )[N_b(W)]=\phi_{T_{\delta_i}}^{-1}(\mathcal{W}_{T_{\delta_i}})[N_{b}(W)].
\end{equation}
To verify this claim, we argue by contradiction: suppose instead that $W'$ came from a distinct tube $T_{\delta_i}'$, then we would have  $W'\subset N_{b}(W) \cap T_{\delta_i}' \subset 2T_{\delta_i}\cap T_{\delta_i}',$ and in particular the latter set would have diameter $\geq 1$. But this is forbidden by \eqref{smallDiameterIntersectingRhoiTubes}. 

\eqref{allWPrimeComeFromSameTube} implies that 
\[
\CKT\big(\mathcal{W}[N_{b}(W)]\big) = \CKT\big( (\mathcal{W}[T_{\delta_i}])[N_{b}(W)] \big) \leq \CKT(\mathcal{W}_{T_{\rho_i}})\lessapprox_\delta 1.
\]
Thus $\mathcal{W}$ and our refined set $\tubes$ satisfies the conditions of Conclusion (C) from Proposition \ref{tubeTricotProp}.

\medskip

\noindent{\bf Step 4.}
Now we consider the case where the prisms in $\mathcal{W}$ are not flat, in the sense that $a> \delta^{\zeta_2/100}b$. Define $\tau$ to be the smallest admissible scale greater than or equal to $b$. Since at most $O( (b/a)^3)$ essentially distinct $a\times b\times 1$ prisms can fit inside a $b$-tube, and at most $O(\delta^{-4/N})=O(\delta^{-4\eta})$ essentially distinct $b$-tubes can fit inside a $\tau$-tube, we have that $\mathcal{W}[T_\tau]\lesssim \delta^{-4\eta}(b/a)^3\lesssim \delta^{-4\zeta_2/100}$ for every $\tau$-tube $T_{\tau}$ (for the last inequality we select $\eta\leq \zeta_2/400$). After pigeonholing $\tubes,\mathcal{W}$, and $\tubes_\tau$, we may suppose that $\tubes_\tau$ is a balanced partitioning cover of $\mathcal{W}$. We have 
\begin{equation}\label{almostEqualCardBd}
1\leq \#\mathcal{W}[T_{\tau}]\lesssim\delta^{-4\zeta_2/100}\quad\textrm{for each}\ T_{\tau}\in\tubes_\tau.
\end{equation}
Since $\CFC(\tubes[W])\lessapprox_\delta 1$ for each $W\in\mathcal{W}$, we conclude from \eqref{almostEqualCardBd} that $\CFC(\tubes[T_\tau])\lessapprox_\delta \delta^{-4\zeta_2/100}$ for each $T_\tau\in\tubes_\tau$; this gives us Conclusion (B.i).  

At this point, we have correctly identified the scale $\tau$ from Conclusion (B) of Proposition \ref{tubeTricotProp}. What about the scale $\rho$? One candidate is $\delta_i$; by \eqref{WMuchSmallerThanTauDeltaI} we have $\tau\lesssim \delta^{\zeta_1/4-2\zeta_2/100}\delta_i\leq \delta^{\zeta_1/5}\delta_i$, as specified in Conclusion (B). 

The scale $\delta_i$ satisfies some of the required properties of Conclusion (B). Recall that for each $T_{\delta_i}\in\tubes_{\delta_i}$, we have $\CKT(\mathcal{W}[T_{\delta_i}])\lessapprox_\delta 1$. Since $|T_\tau|=(b/a)|W|\leq \delta^{-\zeta_2/100}|W|$, by \eqref{almostEqualCardBd} we have 
\begin{equation}\label{goodKatzTaoBoundTauInsideRhoI}
\CKT\big(\tubes_\tau[T_{\delta_i}]\big)\leq \delta^{-5\zeta_2/100}\quad\textrm{for each}\ T_{\delta_i}\in\tubes_{\delta_i}.
\end{equation}
This is half of Conclusion (B.ii). If $\frac{\#\tubes_\tau}{\#\tubes_{\delta_i}}\geq \delta^{100\eta}(\delta_i/\tau)^2$ (in fact a weaker estimate $\frac{\#\tubes_\tau}{\#\tubes_{\delta_i}}\geq \delta^{\zeta_2} (\delta_i/\tau)^2$ suffices), then after a refinement, $\tubes_{\delta_i}$ satisfies Conclusion (B.ii). Conclusion (B.iii) then follows from Remark \ref{FrostmanWolffInheritedUpwardsDownwards}(A) (Frostman Wolff Axioms are inherited upwards), and we are done.

Suppose instead that $\frac{\#\tubes_\tau}{\#\tubes_{\delta_i}}< \delta^{100\eta}(\delta_i/\tau)^2$. Let $\rho$ be the minimum of all admissible scales in $[\delta_i,1]$ for which
\begin{equation}\label{minimalCoverOfTubesByRho}
\frac{\#\tubes_\tau}{\#\tubes_\rho}\geq \delta^{100\eta}(\rho/\tau)^2.
\end{equation}
Such a choice of $\rho\in[\rho_i,1]$ must exist, since $\CFC(\tubes_{\tau})\lessapprox_\delta \delta^{-\eta},$ and hence $\#\tubes_{\tau}\gtrapprox_\delta\delta^\eta\tau^{-2},$ from which it follows that $\rho=1$ satisfies \eqref{minimalCoverOfTubesByRho} and hence $\rho=1$ is a valid candidate.

In particular, for this choice of $\rho$ we have
\begin{equation}\label{minimalCoverOfTubesByRhoUpperLower}
\delta^{100\eta}(\rho/\tau)^2 \leq \frac{\#\tubes_\tau}{\#\tubes_\rho} \leq \delta^{50\eta}(\rho/\tau)^2,
\end{equation}
since if the RHS of \eqref{minimalCoverOfTubesByRhoUpperLower} failed, then \eqref{minimalCoverOfTubesByRho} would hold for a smaller admissible scale, which would contradict the minimality of $\rho$. 

Suppose for the moment that there exists a $\gtrapprox_{\delta} 1$-refinement of $\tubes_{\tau}$ (abusing notation, we will continue to call this set $\tubes_\tau$) such that 
\begin{equation}\label{goodFrostmanBoundTauInsideRho}
\CFC\big(\tubes_\tau^{T_{\rho}}\big)\leq \delta^{-\zeta_2/2}\quad\textrm{for at least half of the tubes}\ T_{\rho}\in\tubes_{\rho}.
\end{equation}
Then \eqref{minimalCoverOfTubesByRhoUpperLower} plus \eqref{goodFrostmanBoundTauInsideRho} implies that Conclusion (B.ii) holds, and Conclusion (B.iii) follows from Remark \ref{FrostmanWolffInheritedUpwardsDownwards}(A) (Frostman Wolff axioms are inherited upwards). Thus if \eqref{goodFrostmanBoundTauInsideRho} is true, then Conclusion (B) of Proposition \ref{tubeTricotProp} holds, and we are done.

\medskip

\noindent{\bf Step 5.} We claim that either \eqref{goodFrostmanBoundTauInsideRho} holds (and we are done, as discussed in the previous step), or else Conclusion (C) of Proposition \ref{tubeTricotProp} holds.

We will verify this claim as follows. Suppose that \eqref{goodFrostmanBoundTauInsideRho} fails.  
Applying Proposition \ref{factoringConvexSetsProp} to each set $\tubes^{T_\rho}$ and then undoing the scaling $\phi_{T_\rho}$, we obtain a refinement of $\tubes[T_\rho]$ (abusing notation, we will continue to call this set $\tubes[T_\rho]$), and a set $\mathcal{U}_{T_\rho}$ of $s\times t\times 1$ prisms contained in $T_\rho$ (so in particular $t\leq\rho$) that factors $\tubes[T_\rho]$ from below with respect to the Frostman Convex Wolff axioms with error $\lessapprox_\delta 1$ and each $T\in \tubes[T_{\rho}]$ is contained in $\lessapprox_{\delta} 1$ sets $U\in \mathcal{U}_{T_{\rho}}$.

Pigeonhole and refine $\tubes_{\rho}$ to consist of those tubes $T_{\rho}$ for which  $\CFC\big(\tubes_\tau^{T_{\rho}}\big)>\delta^{-\zeta_2/2}$  (such a refinement exists because \eqref{goodFrostmanBoundTauInsideRho} fails) and the corresponding sets $\mathcal{U}_{T_{\rho}}$  are comparable to $s\times t\times 1$ prisms for a common pair of numbers $(s,t)$.  In particular, this implies 
\begin{equation}\label{ULessapproxBd}
|U|\lessapprox_\delta \delta^{\zeta_2/2}|T_\rho|.
\end{equation}
(C.f.~\eqref{WMuchSmallerThanTauDeltaI} when $s\geq \tau$. When $s\leq \tau$, this is true because $\tau \lesssim \delta^{\zeta_1/5}\delta_i\leq \delta^{\zeta_1/5}\rho$).

Suppose for the moment that $s\geq \tau$.  Then $\CFC(\tubes_{\tau}^U)\lessapprox_{\delta} 1$ by Remark \ref{FrostmanWolffInheritedUpwardsDownwards}(A) (Frostman Wolff axioms are inherited upwards).   By \eqref{minimalCoverOfTubesByRhoUpperLower} and our hypothesis that $\CFC(\tubes_{\tau}^{T_{\rho}}) > \delta^{-\zeta_2/2}$, we have 
\begin{equation}\label{eq: CKTtubestauTrho}
	 \CKT(\tubes_{\tau}[T_{\rho}]) \geq \delta^{-\zeta_2/2+100\eta}.
\end{equation} We conclude that  by Item ii) of Proposition~\ref{factoringConvexSetsProp}, 
\[
\#(\tubes[T_\rho])[U]\gtrapprox_{\delta} \CKT(\tubes[T_\rho]) \Big(\frac{|U|}{|T|}\Big) \quad\textrm{for each}\ U\in \mathcal{U}_{T_\rho}.
\]
Since $s\geq \tau$ and $\tau$ is an admissible scale, we have 
\[
\CKT(\tubes[T_\rho]) \gtrapprox_{\delta} \frac{\#\tubes}{\#\tubes_\tau} \frac{|T|}{|T_\tau|} \CKT(\tubes_\tau[T_\rho]),
\]  
and so 
\begin{equation}\label{numberOfTTauInWPrime}
\#(\tubes_\tau[T_\rho])[U] \gtrapprox_\delta
\CKT\big(\tubes_\tau[T_{\rho}]\big)\Big(\frac{|U|}{|T_{\tau}|}\Big)
\geq \delta^{-\zeta_2/2+100\eta}\Big(\frac{|U|}{|T_{\tau}|}\Big)\quad\textrm{for each}\ U\in \mathcal{U}_{T_\rho}.
\end{equation}

\noindent We will show that 
\begin{equation}\label{sSmall}
s<\delta^{\zeta_2/100}t.
\end{equation} 
To verify \eqref{sSmall}, suppose to the contrary that $s\geq\delta^{\zeta_2/100}t$; we will obtain a contradiction. If $s\geq\delta^{\zeta_2/100}t$ then  $t\leq \delta^{\zeta_2/4}\rho$.  By \eqref{numberOfTTauInWPrime} and the fact that each tube in $\tubes_{\tau}[T_{\rho}]$ is contained in $\lessapprox_{\delta} 1$ sets $U\in \mathcal{U}_{T_{\rho}}$, we have
\[
\# \mathcal{U}_{T_\rho}  \lessapprox_{\delta} \frac{ \#\tubes_{\tau}[T_{\rho}]}{\inf_{U\in \mathcal{U}_{T_\rho}}\#(\tubes_{\tau}[T_{\rho}])[U]}\lessapprox_\delta \delta^{\zeta_2/2-100\eta}  (\#\tubes_\tau[T_\rho])\frac{|T_{\tau}|}{|U|}\leq \delta^{\frac{48}{100}\zeta_2}(\#\tubes_\tau[T_\rho])\big(\frac{\tau}{t}\big)^2,
\]
and thus if $t'$ is the smallest admissible scale greater than or equal to $t$, then
\begin{equation}\label{lotsOfTauInsideTPrime}
\#\tubes_{t'} \leq \# \Big( \bigcup_{T_\rho\in\tubes_\rho}\mathcal{U}_{T_\rho}\Big) \lessapprox_\delta  \delta^{\frac{48}{100}\zeta_2}(\#\tubes_\tau)\big(\frac{\tau}{t}\big)^2\leq \delta^{\frac{47}{100}\zeta_2}(\#\tubes_\tau)\big(\frac{\tau}{t'}\big)^2.
\end{equation}
From \eqref{lotsOfTauInsideTPrime} we see that $t'\geq\delta_i$ --- if not, then there would exist a $t'$-tube $T_{t'}$ with $\#\tubes_\tau[T_{t'}]\gtrapprox_\delta \delta^{-\frac{47}{100}\zeta_2}\frac{|T_{t'}|}{|T_{\tau}|}$, and this $t'$-tube would be contained in some $\delta_i$-tube; but this violates \eqref{goodKatzTaoBoundTauInsideRhoI}.

Comparing \eqref{lotsOfTauInsideTPrime} and \eqref{minimalCoverOfTubesByRho}, we see that $t' \in [t, \delta^{\zeta_2/5}\rho] \subset [\delta_i, \delta^{\zeta_2/5}\rho]$ is an admissible scale that satisfies \eqref{minimalCoverOfTubesByRhoUpperLower}. But this contradicts the assumption that $\rho$ was the minimal such scale. We conclude that \eqref{sSmall} must hold.

When \eqref{sSmall} holds,  we are  in precisely the same situation as the beginning of Step 3:
The set of flat (recall \eqref{sSmall}) prisms $\mathcal{U} = \bigcup_{T_\rho\in\tubes_\rho}\mathcal{U}_{T_\rho}$ plays the role of $\mathcal{W}$, and the set of essentially distinct $\rho$-tubes $\tubes_\rho$ plays the role of $\tubes_{\delta_i}$. An identical argument with the same numerology (up to harmless $\delta^{\eta}$ factors) shows that Conclusion (C) of Proposition \ref{tubeTricotProp} holds.

Now suppose \eqref{sSmall} does not hold, so $\delta^{\zeta_2/100}t\leq s<\tau$.  If $s \geq \tau \delta^{\zeta_2/20}$, then together with $s\geq \delta^{\zeta_2/100}t$, 
 \[
 \CKT(\tubes_\tau[T_\rho]) \leq \delta^{-\zeta_2/4} \CKT(\mathcal{U}_{T_\rho}) \lessapprox_{\delta} \delta^{-\zeta_2/4},
 \]
 which is a contradiction to \eqref{eq: CKTtubestauTrho}.  So we may assume $ s\leq \tau\delta^{\zeta_2/20}$ and $s\geq \delta^{\zeta_2/100}t$, and  we are in the same situation as the beginning of Step 4 with $(s,t)$ in place of $(a,b)$, $\mathcal{U}$ in place of $\mathcal{W}$, and $\tubes_\rho$ in place of $\tubes_{\delta_i}$. However, we have the additional condition that  $s/\rho \leq \delta^{\zeta_2/20} \tau/\delta_i$; this means that the prisms in $\mathcal{U}$ are substantially flatter than the prisms in $\mathcal{W}$. We return to the beginning of Step 4 and repeat the argument; we iterate this process until either Conclusion (B) or Conclusion (C) holds; this must occur after at most $20/\zeta_2$ iterations. Note that each iteration of this process induces a $\approx_\delta 1$ refinement of $\tubes$, etc.~but since this process repeats at most $20/\zeta_2$ times, this refinement is harmless.


\section{A two-scale grains decomposition for tubes in $\RR^3$}\label{twoScaleGrainsSec}

In \cite{KLT00}, Katz, \L{}aba and Tao proved that every union of $\delta$ tubes in $\RR^3$ coming from the discretization of a (hypothetical) Kakeya set with upper Minkowski dimension $5/2$ can be written as a union of ``grains,'' (i.e.~rectangular prisms) of dimensions roughly $\delta\times\delta^{1/2}\times\delta^{1/2}$. Guth \cite{Gut14} generalized this result and proved that every union of $\delta$ tubes in $\RR^3$ satisfying a certain broadness hypothesis can be written as a union of grains of dimensions roughly $\delta\times t\times t$, where the diameter $t$ is related to the number of tubes in the arrangement and the volume of their union.

The purpose of this section is to prove a structural statement for unions of $\delta$ tubes in $\RR^3$, in the spirit of the Katz-\L{}aba-Tao and Guth results described above. This is Proposition \ref{grainsDecomposition} below. As discussed in the introduction, Proposition \ref{grainsDecomposition} is a key step in the proof of Proposition \ref{improvingProp}---Proposition \ref{grainsDecomposition} helps us find the correct scales and arrangements of convex sets to which we can apply Assertion $\cE(\sigma,\omega)$.

In brief, Proposition \ref{grainsDecomposition} explores what happens when we cover an arrangement of $\delta$-tubes by $\rho$-tubes, apply (a variant of) Guth's grains theorem inside each re-scaled $\rho$-tube, and then analyze how the resulting grains coming from the $\delta$ tubes inside different $\rho$-tubes interact. The specific hypotheses and conclusions of Proposition \ref{grainsDecomposition} are somewhat technical; they were adapted to match the needs of the arguments in Section \ref{refinedInductionOnScaleSec}. In order to state Proposition \ref{grainsDecomposition} we will require a few definitions. 

\begin{defn}\label{twoScaleGrainsDecomp}
Let $\lambda>0$, $0<\delta\leq\rho\leq 1$ and $\delta\leq a\leq b\leq c$ with $\rho = b/c$. Let $(\tubes,Y)_\delta$ be a set of $\delta$ tubes and their associated shading, and let $\tubes_\rho$ be a balanced partitioning cover of $\tubes$. We say $(\mathcal{P},Y)_{a\times b\times c}$ is a \emph{robustly $\lambda$-dense two-scale grains decomposition of $(\tubes,Y)_\delta$ with regard to (wrt) $\tubes_\rho$} if the following is true:
\begin{itemize}

\item[(i)] For each $P\in\mathcal{P}$, there is a unique $\tubes_\rho\in\tubes_{\rho}$ satisfying $P\subset T_\rho$ and $\angle(\dir(P),\dir(T_{\rho}))\leq 2\rho$. This induces a partition $\mathcal{P}=\bigsqcup_{\tubes_{\rho}} \mathcal{P}_{T_{\rho}}$.

\item[(ii)] For each $T_{\rho}\in\tubes_\rho$, the sets $\{Y(P)\colon P\in P\in\mathcal{P}_{T_\rho}\}$ are disjoint, and we have
\begin{equation}\label{equalityOfTubesGrains}
\bigcup_{T\in\tubes[T_\rho]}Y(T) = \bigsqcup_{P\in\mathcal{P}_{T_\rho}}Y(P).
\end{equation}

\item[(iii)] The pair $(\mathcal{P},Y)_{a\times b\times c}$ is $\lambda$-dense, and furthermore there exists a number $\mu$ so that for each $T_\rho\in\tubes_\rho$ and each $x\in \eqref{equalityOfTubesGrains}$, we have $\#\tubes[T_\rho]_Y(x)\sim\mu$.

\item[(iv)] For each $T_{\rho}\in\tubes_\rho$ and each pair $T\in\tubes[T_\rho]$ and $P\in\mathcal{P}_{T_\rho}$ with $Y(T)\cap Y(P)\neq\emptyset$, we have that $T$ exits $P$ through the ``long end'' (See Figure \ref{exitLongEndsFig}), and $Y(T)\cap P \subset Y(P)$.
\end{itemize}
\end{defn}

\begin{figure}
 \includegraphics[width=.3\linewidth]{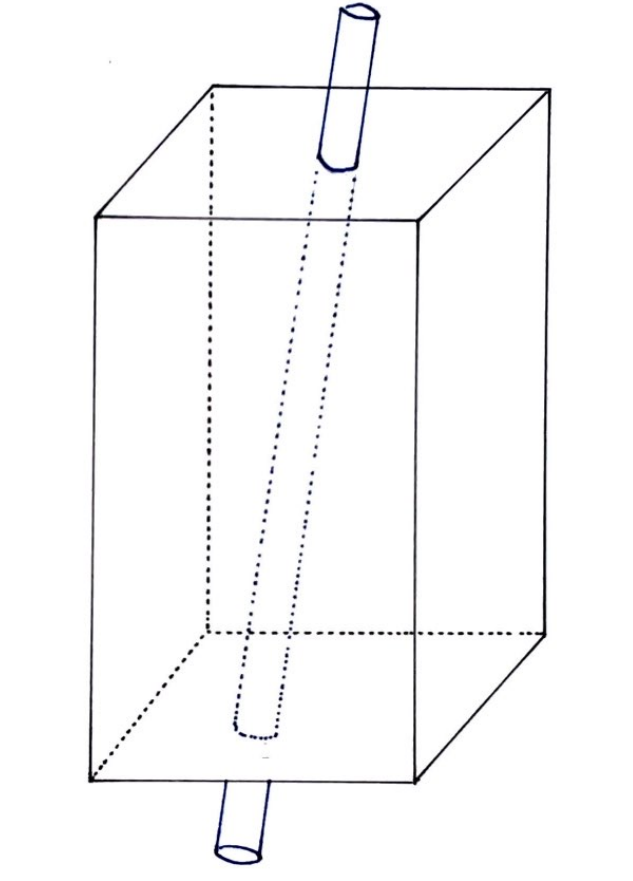}\hfill
  \includegraphics[width=.3\linewidth]{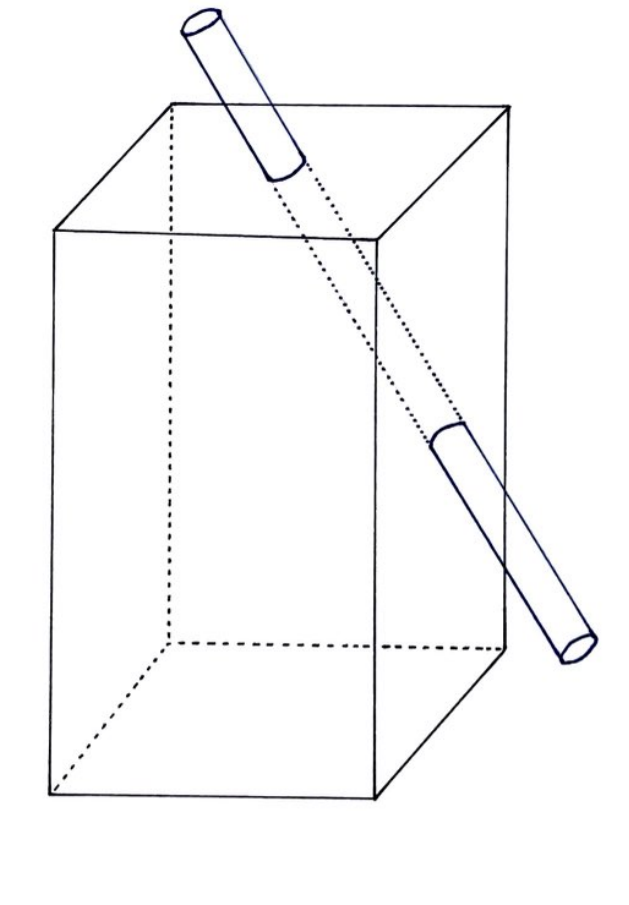}\hfill
   \includegraphics[width=.3\linewidth]{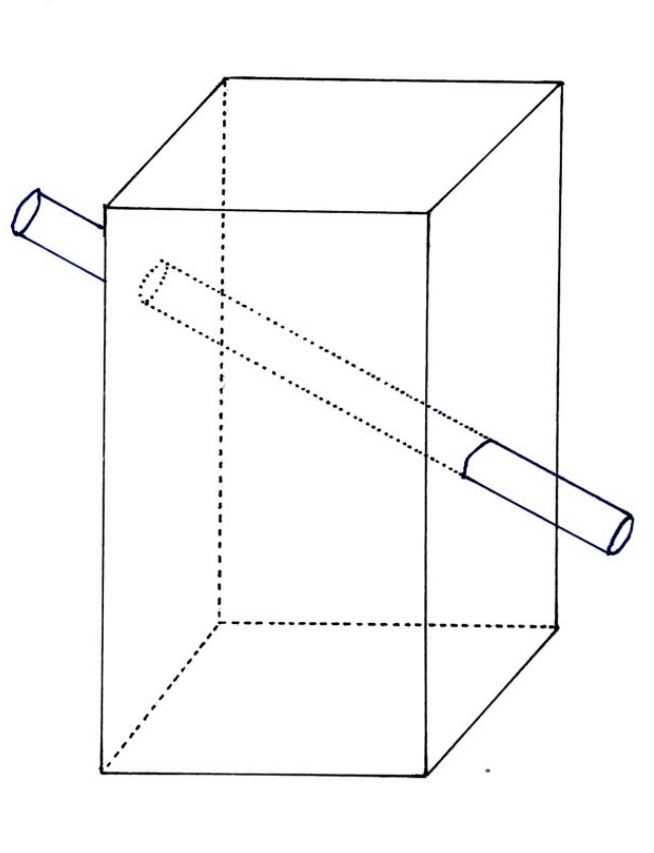}\hfill
\caption{A tube exiting a grain through the long ends (left) vs failing to do so (middle and right).}
\label{exitLongEndsFig}
\end{figure}

\begin{rem}
Conclusion (iv) implies that $a\geq 2\delta$. Usually we will be interested in the case where $a\sim\delta$, though it will sometimes be useful to consider larger values of $a$.
\end{rem}

\begin{defn}\label{defnPSquare}
Let $P$ be a $a\times b\times c$ prism. Define $\square(P)$ to be the $\frac{ac}{b}\times c\times c$ prism containing $P$ with the same center and normal direction as $P$ (the latter condition means that $\Pi(P) = \Pi(\square(P))$.  Observe that both $\Pi(P)$ and $\Pi(\square(P))$ are defined up to accuracy $a/b$. 

Let $\mathcal{P}$ be a set of $a\times b\times c$ prisms, and let $W$ be a convex set. Define
\[
\mathcal{P}\langle W\rangle = \{P\in\mathcal{P}\colon \square(P)\subset W\}.
\]
Note that since $P\subset\square(P)$, we have $\mathcal{P}\langle W\rangle\subset\mathcal{P}[W]$.
\end{defn}

Observe that if $P,P'$ are intersecting $a\times b\times c$ prisms and $\angle(\Pi(P),\Pi(P'))\leq K\frac{a}{b}$ for some $K\geq 1$, then the $K$-fold thickenings (i.e. the prisms of dimensions $\frac{ac}{b}\times c\times c$ with the same center, direction, and tangent plane) of $\square(P)$ and $\square(P')$ are comparable up to factors of $K$. In particular, if $(\mathcal{P},Y)_{a\times b\times c}$ is a pair of prisms and their associated shadings, with $\angle(\Pi(P),\Pi(P'))\leq K\frac{a}{b}$ for all pairs $P,P'\in\mathcal{P}$ for which $Y(P)\cap Y(P')\neq\emptyset$, then we can find a set $\mathcal{W}$ of prisms of dimensions comparable to $K\frac{ac}{b}\times c\times c$ so that each $P\in\mathcal{P}$ is contained in at least 1, and at most $O(1)$ sets of the form $\mathcal{P}\langle W\rangle,\ W\in\mathcal{W}.$ Furthermore, the sets $\big\{\bigcup_{P\in\mathcal{P}\langle W\rangle}Y(P),\ W\in\mathcal{W}\big\}$ are $O(1)$ overlapping.

In our arguments below, we will exploit the above observation when $K\leq\delta^{-\eps}$ for a small $\eps>0$. Since we will analyze each set $\mathcal{P}\langle W\rangle$ individually, we will be interested in the quantity $\CKT(\mathcal{P}\langle W\rangle)$, rather than the (potentially much larger) quantity $\CKT(\mathcal{P})$. Each set $W$ contains at most $100K^{100}$ essentially distinct prisms of the form $\square(P)$, and thus if $K$ is not too large then $\CKT(\mathcal{P}\langle W\rangle)$ is controlled by $\sup_P \CKT(\mathcal{P}\langle 2\square(P)\rangle)$, where $2\square(P)$ denotes the 2-fold dilate of $\square(P)$. This motivates the following definition. 
\begin{defn}
Let $\mathcal{P}$ be a set of $a\times b\times c$ prisms. Define
\[
\CKT^{\operatorname{loc}}(\mathcal{P})=\max_{P\in\mathcal{P}}\CKT\big(\mathcal{P}\langle2\square(P)\rangle\big).
\]
\end{defn}

With these definitions, we can now state the main result of Section \ref{twoScaleGrainsSec}.

\begin{prop}\label{grainsDecomposition}
Let $\omega>0,\ \sigma\in(0,2/3]$, and $\zeta\in(0,\omega/1000)$. Suppose that $\cE(\sigma,\omega)$ is true. Then there exists $\alpha,\eta,\kappa>0$ so that the following holds for all $\delta>0$. Let $(\tubes,Y)_\delta$ be $\delta^\eta$ dense, with $\CKT(\tubes)\leq\delta^{-\eta}$ and $\FS(\tubes)\leq\delta^{-\eta}$. Then at least one of the following must hold.
\begin{itemize}
	\item[(A)] \itemizeEqnVSpacing
		\[
			\Big|\bigcup_{T\in\tubes}Y(T)\Big| \geq \kappa \delta^{\omega-\alpha}(\#\tubes)|T|\big((\#\tubes)|T|^{1/2}\big)^{-\sigma}.
		\]

	\item[(B)] 
There exist the following:
\begin{itemize} 
	\item Numbers $\rho$ and $\delta\leq a\leq b\leq c\leq 1$, with $\rho=b / c$.
	\item A $\delta^{\zeta}$ refinement $(\tubes',Y')_\delta$ of $(\tubes,Y)_\delta$.
	\item A set $\tubes_{\rho}$ of $\rho$ tubes.
	\item A pair $(\mathcal{G}, Y)_{a \times b\times c}$.
	\end{itemize}
These objects have the following properties:
	\begin{itemize}
	\item[(i)] $(\tubes',Y')_\delta$  is $\delta^{\zeta}$ dense, $\CKT(\tubes')\leq\delta^{-\zeta}$, and $\FS(\tubes')\leq\delta^{-\zeta}$. 
	\item[(ii)] $\tubes_{\rho}$ is a balanced partitioning cover of $\tubes'$ that factors $\tubes'$ above and below with respect to the Frostman Slab Wolff Axioms with error $\delta^{-\zeta}$. 
	\item[(iii)] $(\mathcal{G}, Y)_{ a \times  b\times  c}$ is a robustly $\delta^\zeta$-dense two-scale grains decomposition of $(\tubes',Y')_\delta$ wrt $\tubes_\rho$.
	\item[(iv)] \itemizeEqnVSpacing
		\begin{equation}\label{cSuffBig}
				 c \geq \delta^{\zeta} \frac{\rho}{\delta}\frac{\#\tubes_\rho}{\#\tubes_\delta}.
		\end{equation} 
	\item[(v)] $\delta^{1-\omega/100}\leq\rho\leq \delta^{\omega/100}$.
	\item[(vi)] $\CKT^{\operatorname{loc}}(\mathcal{G})\leq \delta^{-\zeta}$.
	\end{itemize}
\end{itemize}
\end{prop}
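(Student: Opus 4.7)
The plan is to construct, by dyadic pigeonholing together with a variant of Guth's grains theorem, an initial candidate two-scale grains decomposition and then iteratively repair it via three ``Moves'' (each addressing the failure of one of Conclusions (i), (v), (vi)) until all six items of Case (B) hold simultaneously. If at any stage the repair would require non-tangential intersection between grains coming from different $\rho$-tubes, we will instead derive Case (A) via a Cordoba-type $L^2$ hairbrush argument. The numerological miracle making the iteration terminate is that every Move either strictly increases $c$ or keeps $c$ fixed and strictly increases $\rho$; since both are bounded above by $1$, termination occurs after $O(1/\zeta)$ steps.

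Concretely, I first dyadically pigeonhole $\rho$ in the admissible window $[\delta^{1-\omega/100},\delta^{\omega/100}]$ and invoke Proposition~\ref{slabWolffFactoring} to refine $(\tubes,Y)_\delta$ to $(\tubes',Y')_\delta$ and produce a $\rho$-tube cover $\tubes_\rho$ that factors $\tubes'$ from below with respect to the Frostman Slab Wolff Axioms with error $\delta^{-\zeta}$ — this yields Conclusions (i), (ii), (v) up to minor bookkeeping. Inside each rescaled set $\tubes^{\prime T_\rho}$ I apply a (to-be-invoked) two-scale variant of Guth's grains decomposition, producing $(\delta/\rho)\times c\times c$ grains with $c\geq(\#\tubes'[T_\rho])^{-1}|T^{T_\rho}|^{-1/2}$; undoing $\phi_{T_\rho}^{-1}$ yields $\delta\times \rho c\times c$ prisms, which form the initial $\mathcal{G}$. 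Since $|T^{T_\rho}|^{1/2}\sim\delta/\rho$ and $\#\tubes'[T_\rho]\sim\#\tubes'/\#\tubes_\rho$ by the balanced cover property, this bound on $c$ is precisely \eqref{cSuffBig}, so Conclusions (iii) and (iv) also hold initially.

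The iterative step inspects (vi): if $\CKT^{\operatorname{loc}}(\mathcal{G})>\delta^{-\zeta}$, I apply \textbf{Move \#3}, in which a Cordoba-style $L^2$ argument shows that the ``grain hairbrush'' of each $G\in\mathcal{G}$ fills most of a strictly wider grain $\widetilde G$ with the same $c$ but larger intermediate scale; I replace $\mathcal{G}$ by the collection of such $\widetilde G$'s and update $\rho$ accordingly. If the new $\rho$ exceeds $\delta^{\omega/100}$ (violating (v)), \textbf{Move \#2} uses another $L^2$ computation to find a fresh collection of grains with strictly larger $c$ and a $\rho$ back in the permissible window. Finally, since (iv) is sensitive to $\rho$, any application of Moves \#2 or \#3 may invalidate it; \textbf{Move \#1} then simply discards $\mathcal{G}$ and reruns the Guth grains step at the current $\rho$, preserving $\rho$ and only enlarging $c$. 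One must verify that the broadness hypothesis needed to invoke Guth's theorem survives Moves \#2 and \#3 — this is a standard but technical point. Termination follows from the monotonicity noted above, and after termination the required Frostman Slab Wolff factoring (ii) and Katz-Tao bound (vi) are both simultaneously in force.

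The main obstacle, and the reason Case (A) must be available as an escape hatch, is the possibility that grains associated to \emph{distinct} $\rho$-tubes intersect non-tangentially (their tangent planes $\Pi(\cdot)$ differing by much more than $\delta/(\rho c)$ at a typical intersection point). In this regime the hypotheses of Corollary~\ref{prismsWithTangentPlanes} are met, and a Cordoba-type $L^2$ argument shows that the union of such grains fills almost all of a fat tube $N_{\tilde\delta}(G)$ at some scale $\tilde\delta\gg\delta$. Replacing $\tubes$ by $\widetilde\tubes=\{N_{\tilde\delta}(T):T\in\tubes\}$ (which inherits favourable Katz-Tao and Frostman Slab Wolff constants from $\tubes$) and applying Assertion $\cE(\sigma,\omega)$ to $\widetilde\tubes$ yields, after accounting for the gain $\tilde\delta/\delta\geq\delta^{-c'}$, a volume estimate of the form of Case (A) with $\alpha>0$ depending on $\zeta$, $\sigma$, $\omega$. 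The final value of $\alpha$ is then set to the minimum of the improvements produced by any single Move or by this transversality argument, and $\eta$ is chosen small enough to absorb all implicit $\delta^{O(\eta)}$ losses incurred during pigeonholing.
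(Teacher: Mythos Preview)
Your overall architecture---initialize via Guth's grains, iterate three Moves until (iv), (v), (vi) hold simultaneously, with termination forced by monotonicity of $(c,\rho)$, and escape to Case (A) whenever grains intersect transversally---matches the paper's proof. But there is a genuine structural gap in the order of operations.

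You establish Conclusion (ii) (the Frostman Slab Wolff factoring by $\tubes_\rho$) \emph{before} the iteration, via Proposition~\ref{slabWolffFactoring}. This cannot work: Moves \#2 and \#3 both change $\rho$ (you acknowledge this yourself when you say ``If the new $\rho$ exceeds $\delta^{\omega/100}$\ldots''), so any slab-Wolff factoring established for the initial $\rho$ is destroyed. The paper's proof establishes (ii) only \emph{after} the iteration terminates, by applying Lemma~\ref{bigVolOrFactorFrostmanSlabLem}, which takes as input not a slab-factoring but \emph{broadness} of $(\tubes',Y')_\delta$ relative to the final cover $\tubes_\rho$. Broadness, not slab-factoring, is the invariant actually preserved by all three Moves (this is Item (ii) of the ``Common setup for Conclusions (B) and (C)'' in Section~\ref{movesParallelStructureSec}), and it is what allows Guth's grains to be re-invoked in Move \#1 at each step.

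Relatedly, your initial $\rho$ cannot be obtained by ``dyadic pigeonholing in the admissible window'' followed by Proposition~\ref{slabWolffFactoring}: that proposition outputs general convex sets, not $\rho$-tubes, and gives you no broadness. In the paper, the initial $\rho$ and the initial broadness come from Lemma~\ref{findingBroadScale} (via Lemma~\ref{existsTwoScaleGrainsLem}), which either places $\rho$ in the window $[\delta^{1-\omega/100},\delta^{\omega/100}]$ with the required broadness, or directly gives Case (A). Without this, you have no hypothesis under which to invoke Guth's grains decomposition even once.
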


\noindent An overview for the proof of Proposition \ref{grainsDecomposition} is outlined in Section \ref{twoScaleGrainsDecompIntro}.

\medskip
\noindent{\bf Fixing $\omega$ and $\sigma$}. In our proof of Proposition \ref{grainsDecomposition}, the values of $\omega>0$, $\sigma\in(0,2/3]$ and $\zeta>0$ will never change. Thus to simplify our exposition below, we will fix values of $\boldsymbol{\omega}$, $\boldsymbol{\sigma},$ and $\boldsymbol{\zeta}$, which will remain unchanged throughout Sections \ref{twoScaleGrainsSec} and \ref{moves123Sec}. In particular, some of our definitions (such as Definition \ref{broadnessDefn}, which defines broadness) will depend on these quantities. By fixing them in advance, we can suppress this dependence.

As previewed in Section \ref{twoScaleGrainsDecompIntro}, we will make crucial use Guth's methods to find a grains decomposition inside each re-scaled set $\tubes^{T_\rho}$. In order to apply Guth's techniques, the tubes need to satisfy a ``broadness'' condition. We describe this below.


\subsection{Broadness}\label{broadnessSection}
For the definition that follows, we will fix $\boldsymbol{\beta}=\boldsymbol{\omega}\boldsymbol{\zeta}/100.$

\begin{defn}\label{broadnessDefn}
Let $\delta>0$ and $K\geq 1$. We say a multi-set $\mathcal{V}\subset S^{n-1}$ of unit vectors in $\RR^n$ is \emph{broad with error $K$ at scales $\geq\delta$} if for all unit vectors $v_0\in\RR^n$ and all $r\in[\delta,1]$ we have
\[
\# \{v\in\mathcal{V}\colon \angle(v,v_0)\leq r\} \leq K r^{\boldsymbol{\beta}} (\#\mathcal{V}).
\]

More generally, let $\mathcal{V}$ be contained in a disk $D\subset S^{n-1}$ of radius $\rho$. We say $\mathcal{V}$ is broad with error $K$ at scales $\geq\delta$ inside $D$ if for all unit vectors $v_0\in\RR^n$ and all $r\in[\delta,\rho]$ we have
\[
\# \{v\in\mathcal{V}\colon \angle(v,v_0)\leq r\} \leq K (r/\rho)^{\boldsymbol{\beta}} (\#\mathcal{V}).
\]
\end{defn}

\begin{defn}\label{broadnessDefnShading}
Let $(\tubes,Y)_\delta$ be a collection of tubes and their associated shading, and let $\tubes_\rho$ be a cover of $\tubes$. 

\begin{itemize}
	\item[(A)] We say that $(\tubes,Y)_\delta$ is \emph{broad with error $K$} if for each $x\in\bigcup_{T\in\tubes}Y(T)$, the set of unit vectors $\{\dir(T)\colon x\in Y(T)\}$ is broad with error $K$ at scales $\geq\delta$. 
	\item[(B)] We say that $(\tubes,Y)_\delta$ is \emph{broad with error $K$} relative to the cover $\tubes_\rho$ if for each $T_\rho\in\tubes_\rho$, the set $(\tubes^{T_\rho},Y^{T_\rho})_{\delta/\rho}$ is broad with error $K$.
\end{itemize}
\end{defn}

The next result is a variant of the ``two ends'' reduction. In general, a set $\mathcal{V}$ of unit vectors need not be broad (with small error). However, the next result says that every set of unit vectors is broad when localized inside $\rho$ disks, for some value of $\rho$. The precise statement is as follows. 

\begin{lem}\label{findingGoodBroadness}
Let $\delta>0$ and let $\mathcal{V}$ be a set of vectors in $\RR^n$ pointing in $\delta$-separated directions. Then there exists a scale $\rho\in[\delta,1]$; a set $\mathcal{B}$ of disjoint balls $B\subset S^{n-1}$ of radius $\rho$; and sets $\mathcal{V}_B\subset\mathcal{V}\cap B$ so that the following holds.
\begin{itemize}
\item[(i)] Each set $\mathcal{V}_B$ has cardinality $\gtrsim \rho^{\boldsymbol{\beta}}(\#\mathcal{V})$.
\item[(ii)] $\mathcal{V}_B$ is broad with error $100$ at scales $\geq\delta$ inside $B$.
\item[(iii)] $\bigcup_{\mathcal{B}} \mathcal{V}_B \gtrsim (\log 1/\delta)^{-1}(\#\mathcal{V})$.
\end{itemize}
\end{lem}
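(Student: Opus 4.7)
The plan is to combine a stopping-time definition of a ``characteristic scale'' for each $v\in\mathcal{V}$ with a dyadic pigeonhole over those scales. For each $v\in\mathcal{V}$ and dyadic $\rho\in[\delta,1]$ set $n(v,\rho)=\#(\mathcal{V}\cap B(v,\rho))$ and $N(r)=\sup_{w\in S^{n-1}}n(w,r)$. Define $\rho(v)$ to be the largest dyadic $\rho\in[\delta,1]$ satisfying
\[
n(v,\rho)\geq\frac{\rho^{\boldsymbol\beta}}{100}\sup_{r\in[\delta,\rho]\ \text{dyadic}}\frac{N(r)}{r^{\boldsymbol\beta}}.
\]
This inequality is tailored so that the broadness condition inside $B(v,\rho(v))$ comes for free: for any $w_0\in S^{n-1}$ and $r\in[\delta,\rho(v)]$,
\[
\#\{w\in\mathcal{V}\cap B(v,\rho(v)):\angle(w,w_0)\leq r\}\leq N(r)\leq 100\,(r/\rho(v))^{\boldsymbol\beta}n(v,\rho(v)),
\]
which is exactly the broadness-with-error-$100$ statement once we set $\mathcal{V}_B=\mathcal{V}\cap B(v,\rho(v))$.

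Next I would pigeonhole over the $O(\log(1/\delta))$ possible dyadic values of $\rho(v)$ to extract a single common scale $\rho^*$ shared by a set $\mathcal{V}^*\subset\mathcal{V}$ of cardinality $\gtrsim(\log 1/\delta)^{-1}\#\mathcal{V}$. A standard Vitali argument then produces a maximal $2\rho^*$-separated subfamily $\{v_1,\ldots,v_k\}\subset\mathcal{V}^*$; the balls $B_i=B(v_i,\rho^*)$ are pairwise disjoint and $\mathcal{V}^*\subset\bigcup_iB(v_i,2\rho^*)$. I set $\mathcal{B}=\{B_i\}$, $\rho=\rho^*$, and $\mathcal{V}_{B_i}=\mathcal{V}\cap B_i$.

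For the verification, property (ii) is automatic by the design of $\rho(v)$ above. For (i), the defining inequality at $\rho=\rho^*$ combined with the trivial bound $\sup_rN(r)/r^{\boldsymbol\beta}\geq N(1)=\#\mathcal{V}$ gives $\#\mathcal{V}_{B_i}=n(v_i,\rho^*)\gtrsim(\rho^*)^{\boldsymbol\beta}\#\mathcal{V}$. For (iii), the maximality of $\rho(v_i)=\rho^*$ means the defining inequality \emph{fails} at $2\rho^*$, so $n(v_i,2\rho^*)<(2\rho^*)^{\boldsymbol\beta}\sup_{r\leq 2\rho^*}N(r)/r^{\boldsymbol\beta}\cdot\tfrac{1}{100}$; comparing this with the $\rho=\rho^*$ bound and using $2^{\boldsymbol\beta}=O(1)$ shows $n(v_i,2\rho^*)\lesssim n(v_i,\rho^*)$. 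Hence
\[
\sum_i n(v_i,\rho^*)\gtrsim\sum_i n(v_i,2\rho^*)\geq\#\mathcal{V}^*\gtrsim(\log 1/\delta)^{-1}\#\mathcal{V},
\]
which is (iii).

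The main obstacle is to guarantee that $\rho(v)$ is well-defined (i.e., the defining inequality is satisfied at \emph{some} $\rho\geq\delta$) for a set of $v$'s capturing enough mass. Vectors $v$ for which the inequality fails even at $\rho=\delta$ must be discarded, and one has to check that they constitute at most a constant fraction of $\mathcal{V}$; this is done by a separate dyadic pigeonhole on $n(v,\delta)$, retaining vectors inside $\delta$-balls with near-maximal count (a $(\log 1/\delta)^{-1}$-fraction by pigeonhole), which is harmless since the overall loss in (iii) is already logarithmic. A second, related delicate point is to control the comparison between $\sup_{r\leq\rho^*}N(r)/r^{\boldsymbol\beta}$ and $\sup_{r\leq 2\rho^*}N(r)/r^{\boldsymbol\beta}$ needed in the step bounding $n(v_i,2\rho^*)$ by $O(n(v_i,\rho^*))$: one splits into the cases where the supremum is attained for $r\leq\rho^*$ (trivial) or in $(\rho^*,2\rho^*]$ (handled by a final dyadic pigeonhole over the growth rate of $\tilde f(\rho)=\sup_{r\leq\rho}N(r)/r^{\boldsymbol\beta}$, at the cost of another $\log(1/\delta)$ factor absorbed into the constant).
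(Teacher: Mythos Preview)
Your verification of (i) has a gap: the supremum in the defining inequality for $\rho(v)$ runs only over $r\le\rho$, not $r\le 1$, so you cannot invoke $N(1)\ge\#\mathcal{V}$ unless $\rho^*=1$. This is not just a slip in the write-up; the construction can genuinely fail (i). Take $n=3$ and let $\mathcal{V}=A\cup B$, where $A$ is a maximal $\delta$-separated set in a cap of radius $\delta^{1/2}$ (so $\#A\approx\delta^{-1}$) and $B$ is a $\delta^{1/2}$-net of $S^2$ (so $\#B\approx\delta^{-1}$; arrange $\#B\ge\#A$). For $r\le\delta^{1/2}$ one has $N(r)\approx(r/\delta)^2$ (from $A$), hence $\tilde f(\rho)\approx\rho^{2-\boldsymbol\beta}\delta^{-2}$. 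For $w\in B$ away from $A$ we have $n(w,\rho)=1$ on $[\delta,\delta^{1/2}]$, so the inequality $1\ge\rho^{\boldsymbol\beta}\tilde f(\rho)/100=\rho^2/(100\delta^2)$ fails once $\rho>10\delta$; one checks it continues to fail for $\rho\in[\delta^{1/2},1]$ as well. Thus $\rho(w)\approx\delta$ for all such $w$. If the dyadic pigeonhole selects $\rho^*\approx\delta$ (it may, since $\#B\ge\#A$), each Vitali ball contains a single point, while (i) demands $\gtrsim(\rho^*)^{\boldsymbol\beta}\#\mathcal{V}\approx\delta^{\boldsymbol\beta-1}\gg1$.

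The paper avoids this via a greedy extraction rather than a per-point stopping time: at step $i$ one picks $(v_i,r_i)$ maximizing $r^{-\boldsymbol\beta}\#(\mathcal{V}_{i-1}\cap B(v,r))$ over \emph{all} centers and \emph{all} radii $r\in[\delta,1]$, sets $\mathcal{W}_i=\mathcal{V}_{i-1}\cap B(v_i,r_i)$, and deletes $B(v_i,100r_i)$. Comparing the maximizer with the candidate $r=1$ gives $\#\mathcal{W}_i\ge r_i^{\boldsymbol\beta}\#\mathcal{V}_{i-1}\ge\tfrac12 r_i^{\boldsymbol\beta}\#\mathcal{V}$, which is exactly (i); the same maximality yields (ii); the deletion makes comparable-radius balls disjoint; and a final dyadic pigeonhole on the $r_i$ produces $\rho$ and $\mathcal{B}$. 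Incidentally, with your stated definition $\rho(v)$ is always well-defined (at $\rho=\delta$ the inequality reads $n(v,\delta)\ge N(\delta)/100$, and $N(\delta)=O_n(1)$ by $\delta$-separation), and the ``second delicate point'' for (iii) dissolves because $N(2r)\le C_n N(r)$ by a covering argument, so $\tilde f(2\rho^*)\le C_n\tilde f(\rho^*)$ with no extra pigeonhole. The real obstacle is (i).
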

\begin{proof}
We will greedily construct a sequence of sets $\mathcal{V}=\mathcal{V}_0\supset\mathcal{V}_1\supset\ldots$ as follows. For each index $i\geq 1$, let $v_i$ be a unit vector and $r_i\in[\delta,1]$ a radius that maximizes the quantity
\begin{equation}\label{maximalityOfVR}
 r_i^{-\boldsymbol{\beta}}(\#\mathcal{V}_{i-1}\cap B(v_i,r_i)).
\end{equation}
Let $\mathcal{W}_i = \mathcal{V}_{i-1}\cap B(v_i,r_i)$ and let $\mathcal{V}_i = \mathcal{V}_{i-1}\backslash B(v_i,\ 100r_i)$. Note that $\#(\mathcal{V}_{i-1}\cap B(v_i,3r_i))\leq 100^{\boldsymbol{\beta}}(\#\mathcal{W}_i)$. Continue this process until $\#\mathcal{V}_i \leq \frac{1}{2}\#\mathcal{V}$. 

We claim that for each index $i$, each unit vector $v$, and each $r\geq \delta$, we have
\begin{equation}\label{nonConcentratedInsideWi}
\#(\mathcal{W}_i \cap B(v,r)) \leq 100 (r/r_i)^{\boldsymbol{\beta}}(\#\mathcal{W}_i).
\end{equation}
This will give Conclusion (ii). To verify \eqref{nonConcentratedInsideWi}, suppose to the contrary that \eqref{nonConcentratedInsideWi} failed for some pair $(v,r)$, then this would contradict the maximality of $(v_i,r_i)$ in \eqref{maximalityOfVR}. 

Note as well that since $r_i = 1$ is a valid choice of $r$, we have $\#\mathcal{W}_i\geq r_i^{\boldsymbol{\beta}}(\#\mathcal{V}_{i-1})\geq\frac{1}{2}r_i^{\boldsymbol{\beta}}(\#\mathcal{V})$. Finally, note that if $r_i\leq r_j \leq 2r_i$, then the balls $2B(v_i, r_i)$ and $2B(v_j, r_j)$ must be disjoint. Indeed, we may suppose that both $\mathcal{W}_i$ and $\mathcal{W}_j$ are non-empty. If $i<j$ then we must have $B(v_j,r_j)\cap B(v_i, 100r_i)^c\neq\emptyset$, while if $j<i$ then we must have $B(v_i,r_i)\cap B(v_i, 100r_j)^c\neq\emptyset$. In either case, since $r_i \leq r_j \leq 2r_i$, this means that $2B(v_i,r_i)\cap 2B(v_j,r_j)=\emptyset$. 

To conclude the proof, use dyadic pigeonholing to select a scale $\rho\in[\delta,1]$ so that $\# \bigcup_{i\colon \rho/2\leq r_i\leq \rho}\mathcal{W}_i\gtrsim(\log 1/\delta)^{-1}(\#\mathcal{V})$. This gives us the collection $\mathcal{B}$ of disjoint balls (it is harmless for us to replace each ball $B(v_i,r_i)$ with $B(v_i,\rho)$; as noted above, the balls remain disjoint).
\end{proof}

\begin{rem}
Lemma \ref{findingGoodBroadness} is similar to the standard ``two-ends'' broadness reduction. However, the standard two-ends broadness reduction typically replaces $\mathcal{V}$ by $\mathcal{V}\cap B$ for a \emph{single} $\rho$ ball $B$. The resulting set has cardinality $\gtrsim\rho^{\boldsymbol{\beta}}(\#\mathcal{V})$.  For our applications, this would have introduced an unacceptably large reduction in the cardinality of $\mathcal{V}$.
\end{rem}

\begin{cor}\label{coveringTubesBroadCor}
Let $(\tubes,Y)_\delta$ be a set of $\delta$ tubes and their corresponding shading. Then there exists a $\gtrapprox_\delta 1$ refinement $(\tubes',Y')_\delta$ of $(\tubes,Y)_\delta$; a scale $\rho\in[\delta,1]$; and a balanced partitioning cover $\tubes_\rho$ of $(\tubes',Y')$, so that the following holds:
\begin{itemize}
	\item[(i)] Each point $x\in\RR^3$ is contained $\lessapprox_\delta \rho^{-\boldsymbol{\beta}}$ sets of the form $\bigcup_{\tubes'[T_\rho]}Y'(T)$, $T_\rho\in\tubes_\rho$.
	\item[(ii)] $\tubes'$ is broad with error $\approx_\delta 1$ relative to the cover $\tubes_\rho$.
\end{itemize}
\end{cor}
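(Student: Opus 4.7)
The plan is to apply Lemma \ref{findingGoodBroadness} pointwise, then dyadic pigeonhole to extract a single scale $\rho$ that works for most of the shading mass, and finally convert the resulting broad-ball data at each point into a global cover $\tubes_\rho$ and a refined shading. Concretely, for each $x\in\bigcup_{T\in\tubes}Y(T)$, consider the multiset $\mathcal{V}_x=\{\dir(T):T\in\tubes_Y(x)\}\subset S^2$, which is $\delta$-separated (up to constants) because the tubes in $\tubes$ are essentially distinct. Applying Lemma \ref{findingGoodBroadness} to $\mathcal{V}_x$ produces a scale $\rho_x\in[\delta,1]$, a family $\mathcal{B}_x$ of disjoint $\rho_x$-balls on $S^2$, and subsets $\mathcal{V}_{x,B}\subset\mathcal{V}_x\cap B$ ($B\in\mathcal{B}_x$) each of which is broad with error $100$ at scales $\geq\delta$ inside $B$, with $\bigcup_B\mathcal{V}_{x,B}$ capturing a $\gtrsim(\log 1/\delta)^{-1}$ fraction of $\mathcal{V}_x$.

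Next, dyadically pigeonhole over the $O(\log 1/\delta)$ dyadic values of $\rho_x$ to pass to a $\gtrapprox_\delta 1$ sub-shading on which $\rho_x$ is comparable to a fixed scale $\rho$. Let $\tubes_\rho$ be a maximal essentially distinct set of $\rho$-tubes that covers $\tubes$, and assign to each tube $T\in\tubes$ a unique $T_\rho\in\tubes_\rho$ containing it with $\angle(\dir(T),\dir(T_\rho))\leq\rho$. Define the refined shading $Y'(T)$ to consist of those $x\in Y(T)$ for which $\rho_x\sim\rho$ and $\dir(T)\in\mathcal{V}_{x,B}$ for the (unique) ball $B\in\mathcal{B}_x$ containing $\dir(T_\rho)$; throw away any $T$ for which this shading is too sparse. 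Standard dyadic pigeonholing to equalize $\#\tubes'[T_\rho]$ over $T_\rho\in\tubes_\rho$ then yields the required balanced partitioning cover while losing only a $\gtrapprox_\delta 1$ factor in total shading mass.

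For Conclusion (i), since the balls in $\mathcal{B}_x$ are disjoint on $S^2$ and each $\mathcal{V}_{x,B}$ has cardinality $\gtrsim\rho^{\boldsymbol{\beta}}(\#\mathcal{V}_x)$ by part (i) of Lemma \ref{findingGoodBroadness}, we get $\#\mathcal{B}_x\lessapprox_\delta\rho^{-\boldsymbol{\beta}}$. The number of $T_\rho\in\tubes_\rho$ for which $x\in\bigcup_{T\in\tubes'[T_\rho]}Y'(T)$ is bounded by $\#\mathcal{B}_x$ times the number of essentially distinct $\rho$-tubes through $x$ with direction in a fixed $\rho$-ball, which is $O(1)$; hence the multiplicity is $\lessapprox_\delta\rho^{-\boldsymbol{\beta}}$. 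For Conclusion (ii), fix $T_\rho\in\tubes_\rho$ and consider the rescaled pair $(\tubes'^{T_\rho},Y'^{T_\rho})_{\delta/\rho}$. For any $y$ in its shading, with $x=\phi_{T_\rho}^{-1}(y)$, the set of directions $\{\dir(T):T\in\tubes'[T_\rho],\ x\in Y'(T)\}$ is, by construction, a subset of $\mathcal{V}_{x,B}$ for $B$ the $\rho$-ball around $\dir(T_\rho)$. The broadness of $\mathcal{V}_{x,B}$ inside $B$ at scales $\geq\delta$ transforms under $\phi_{T_\rho}$ (which rescales directions in $B$ to directions in a unit ball, and scales $\delta$ to $\delta/\rho$) into broadness at scales $\geq\delta/\rho$ with error $\approx_\delta 1$, as required.

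The main obstacle will be carefully orchestrating the refinements so that $\tubes_\rho$ is simultaneously balanced, partitioning, and preserves a $\gtrapprox_\delta 1$ fraction of the original shading mass, while making the local assignment $T\mapsto T_\rho$ well-defined. This is a bookkeeping exercise that combines several rounds of standard dyadic pigeonholing with the geometric observation that essentially distinct $\rho$-tubes through a common point with directions in a common $\rho$-ball on $S^2$ are $O(1)$ in number; once this is in place, Conclusions (i) and (ii) follow directly from Lemma \ref{findingGoodBroadness}.
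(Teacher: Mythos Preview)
Your approach follows the same outline as the paper's proof, but there is a genuine gap in the verification of Conclusion (ii). You correctly observe that for $x$ in the refined shading and $T_\rho\in\tubes_\rho$, the set $\{\dir(T):T\in\tubes'[T_\rho],\ x\in Y'(T)\}$ is a \emph{subset} of $\mathcal{V}_{x,B}$. But a proper subset of a broad set need not be broad: broadness with error $K$ is only inherited by a subset $\mathcal{V}'\subset\mathcal{V}$ with error $K\cdot(\#\mathcal{V}/\#\mathcal{V}')$. Since the balls $B\in\mathcal{B}_x$ vary with $x$ and are not aligned with the fixed partition $\{\tubes[T_\rho]\}$, a given $\mathcal{V}_{x,B}$ may be split among several $T_\rho$'s, and there is nothing in your construction preventing the portion landing in the particular $T_\rho$ under consideration from being a single direction (in which case broadness fails outright).

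The paper closes this gap with two extra ingredients you omitted. First, it pigeonholes to a uniform per-ball cardinality $\nu$ (after first uniformizing the total multiplicity $\mu$). Second, and crucially, it introduces a further shading refinement $Y_3(T)=\{x\in Y_2(T):\#\{T'\in\tubes_2[T_\rho]:x\in Y_2(T')\}\geq\kappa_0\nu\}$, which discards precisely those points where the surviving set of directions in $\tubes'[T_\rho]$ is too small a fraction of $\mathcal{V}_{x,B}$. Because this discards at most a constant fraction of mass (a double-counting argument using the uniform $\nu$), the refinement is $\approx_\delta 1$, and now at every surviving point one has a $\gtrsim 1$ fraction of a broad set, hence broadness with error $O(1)$. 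Your ``standard dyadic pigeonholing to equalize $\#\tubes'[T_\rho]$'' does not accomplish this: it equalizes the \emph{global} count per $\rho$-tube, not the \emph{pointwise} count, which is what broadness requires.
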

\begin{proof}
Using dyadic pigeonholing, we can select a number $\mu\geq 1$ and a $(\log 1/\delta)^{-1}$ refinement $(\tubes,Y_1)_\delta$ of $(\tubes,Y)$ with the property that $\#\{T\in\tubes\colon x\in Y_1(T)\}\sim\mu$ for each $x\in\bigcup_{T\in\tubes}Y_1(T)$. Apply Lemma \ref{findingGoodBroadness} to each point $x\in \bigcup Y_1(T)$. We obtain a scale $\rho_x\in[\delta,1]$, and a set of disjoint $\rho_x$ balls $\mathcal{B}_x$ (each ball in $\mathcal{B}_x$ is a subset of $S^2$). After further dyadic pigeonholing we can select the following:
\begin{itemize}
	\item A common scale $\rho$;
	\item Multiplicities $\nu \geq 1$ and $N\lesssim\rho^{-\boldsymbol{\beta}}$;
	\item A set $\mathcal{B}$ of $\rho$ balls (each ball in $\mathcal{B}$ is a subset of $S^2$), whose $100\rho$ neighbourhoods are disjoint;
	\item A $\gtrsim (\log 1/\delta)^{-1}$ refinement $(\tubes,Y_2)_\delta$ of $(\tubes,Y_1)_\delta$.
\end{itemize}
We can select the above numbers and sets so that the following property holds: for each $x\in \bigcup_{\tubes} Y_2(T)$, the set of unit vectors $\{\dir(T)\colon x\in Y_2(T)\}\subset S^2$ can be covered by a union of $N$ balls from $\mathcal{B}$, and for each such ball, we have that the set $B\cap \{\dir(T)\colon x\in Y_2(T)\}$ has cardinality $\nu$ and is broad with error $O(1)$ inside $B$.

After refining $\tubes$ by a factor of $\approx_{\delta} 1$, we obtain a new pair $(\tubes_2,Y_2)_\delta$ that is a $\approx_{\delta} 1$ refinement of $(\tubes,Y_2)_\delta$, and a set $\tubes_\rho$ of $\rho$ tubes with the property that $\tubes_\rho$ is a balanced partitioning cover of $\tubes_2$, and furthermore the family of convex sets $\{3T_\rho\colon T_\rho\in\tubes_\rho\}$ is a partitioning cover of $\tubes_2$. This means that for each $T_\rho\in\tubes_\rho$, $\tubes_2[T_\rho]=\tubes_2[3T_\rho]$. 

For each $T\in\tubes_2$ with $T\subset T_\rho\in\tubes_\rho$, define
\[
Y_3(T) = \{x\in Y_2(T)\colon \#\{ T'\in\tubes_2[T_\rho]\colon x\in Y_2(T')\} \geq \kappa_0 \nu\}.
\]
Since $(\tubes_2,Y_2)_\delta$ is a $\approx_{\delta} 1$ refinement of $(\tubes,Y_2)_\delta$, if $\kappa_0>0$ is chosen sufficiently small (depending on the implicit constant mentioned previously), then $(\tubes_2,Y_3)_\delta$ is a $\approx_{\delta} 1$ refinement of $(\tubes_2,Y_2)_\delta$. Furthermore, for each point $x\in\bigcup_{T\in\tubes[3T_\rho]}Y_3(T) = \bigcup_{T\in\tubes[T_\rho]}Y_3(T)$, we have that the set of unit vectors $\{\dir(T)\colon T\in\tubes_2[3T_\rho]=\tubes_2[T_\rho],\ x\in Y_3(T)\}$ is broad inside $B(\dir(T_\rho), 2\rho)$ with error $O(1)$, in the sense of Definition \ref{broadnessDefn}.  
We conclude that the pair $(\tubes_2,Y_3)_\delta$ and $\{3T_\rho, T_\rho\in\tubes_\rho\}$ satisfy the conclusions of Corollary \ref{coveringTubesBroadCor} (with $3\rho$ in place of $\rho$).
\end{proof}

The next result says that every pair $(\tubes,Y)_\delta$ of $\delta$ tubes and their associated shading is either broad at \emph{some} scale $\delta<\!\!<\rho<\!\!< 1$, or else the tubes in $(\tubes,Y)_\delta$ are almost disjoint.

\begin{lem}\label{findingBroadScale}
Let $\delta>0$ and let $(\tubes,Y)_\delta$ be a pair of $\delta$ tubes and their associated shading. Then at least one of the following must occur:
\begin{itemize}
	\item[(A)] \itemizeEqnVSpacing
		\begin{equation}\label{goodConclusionR1R2Scale}
			\Big|\bigcup_{T\in\tubes}Y(T)\Big| \gtrapprox_\delta \delta^{\boldsymbol{\omega}/2} \sum_{T\in\tubes}|Y(T)|.
		\end{equation}

	\item[(B)] There is a scale $\rho\in[\delta^{1-\boldsymbol{\omega}/100},\delta^{\boldsymbol{\omega}/100}],$ a $\approx_\delta 1$ refinement $(\tubes',Y')_\delta$ of $(\tubes,Y)$, and a balanced partitioning cover $\tubes_\rho$ of $\tubes$, so that  $(\tubes',Y')_\delta$ is broad with error $O(1)$ relative to the cover $\tubes_\rho$.
\end{itemize}
\end{lem}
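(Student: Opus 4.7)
The plan is to apply Corollary~\ref{coveringTubesBroadCor} to $(\tubes, Y)_\delta$, producing a $\approx_\delta 1$ refinement $(\tubes', Y')_\delta$, a scale $\rho \in [\delta, 1]$, and a balanced partitioning cover $\tubes_\rho$ for which the cover multiplicity is $\lessapprox_\delta \rho^{-\boldsymbol{\beta}}$ and broadness with error $O(1)$ holds relative to $\tubes_\rho$. When $\rho$ lies in the window $[\delta^{1-\boldsymbol{\omega}/100}, \delta^{\boldsymbol{\omega}/100}]$, conclusion (B) is immediate. The remaining work is to force (A) in each of the two extremal regimes.

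For $\rho < \delta^{1-\boldsymbol{\omega}/100}$, observe that a point $x$ lies in at most $O((\rho/\delta)^2) \leq \delta^{-\boldsymbol{\omega}/50}$ essentially distinct $\delta$-tubes contained in any single $T_\rho$, because the $\delta$-tubes through $x$ inside $T_\rho$ are parametrized by directions in a $\rho$-ball of direction space and those directions are $\delta$-separated. Thus $|\bigcup_{T \in \tubes'[T_\rho]} Y'(T)| \gtrsim \delta^{\boldsymbol{\omega}/50} \sum_{T \in \tubes'[T_\rho]} |Y'(T)|$, and combining with the $\rho^{-\boldsymbol{\beta}}$ cover multiplicity gives
\[
\Bigl|\bigcup_{T \in \tubes'} Y'(T)\Bigr| \gtrapprox_\delta \rho^{\boldsymbol{\beta}} \delta^{\boldsymbol{\omega}/50} \sum_T |Y'(T)| \gtrapprox_\delta \delta^{\boldsymbol{\omega}/50 + O(\boldsymbol{\beta})} \sum_T |Y(T)|,
\]
which dominates $\delta^{\boldsymbol{\omega}/2} \sum_T |Y(T)|$ because $\boldsymbol{\beta} = \boldsymbol{\omega}\boldsymbol{\zeta}/100$ is negligible compared to $\boldsymbol{\omega}/50$. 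This establishes (A).

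For $\rho > \delta^{\boldsymbol{\omega}/100}$, the plan is to revisit the proof of Corollary~\ref{coveringTubesBroadCor} and modify the dyadic pigeonholing step so that we prefer scales in the good window. Pointwise application of Lemma~\ref{findingGoodBroadness} produces per-point concentration scales $r_x$; if a $\gtrapprox_\delta 1$ fraction of the shading has $r_x$ in the good window we pigeonhole to obtain $\rho$ there, recovering (B). Otherwise almost all mass has $r_x$ outside the window, and we further split: mass with $r_x < \delta^{1-\boldsymbol{\omega}/100}$ feeds into the small-$\rho$ multiplicity bound above (giving (A)); mass with $r_x > \delta^{\boldsymbol{\omega}/100}$ comes from direction clusters $\mathcal{W}_i$ each of size $\geq \delta^{\boldsymbol{\omega}\boldsymbol{\beta}/100}\#\mathcal{V}$, so at most $\delta^{-\boldsymbol{\omega}\boldsymbol{\beta}/100} \approx_\delta 1$ of them are active at each point. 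A further $\approx_\delta 1$ pigeonhole fixes a single dominant cluster of radius $r \in [\delta^{\boldsymbol{\omega}/100}, 1]$; refining the associated $r$-cover to a balanced $\delta^{\boldsymbol{\omega}/100}$-cover inside each cluster using the already-established broadness, we land on a scale in the good window with broadness inherited.

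The principal obstacle will be the last subcase, specifically ensuring that the broadness of the $r$-cover descends to the refined $\delta^{\boldsymbol{\omega}/100}$-cover with the error $O(1)$ required by (B). The key point is that the ratio $(\delta^{\boldsymbol{\omega}/100}/r)^{\boldsymbol{\beta}} \leq 1$ makes the original broadness bound formally no worse at the finer scale, and the balanced-cover pigeonhole synchronizes $\mu_{T_{\delta^{\boldsymbol{\omega}/100}}}(x)$ across the refined cover, so the residual error can be absorbed into a $\approx_\delta 1$ factor.
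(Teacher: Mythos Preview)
Your treatment of the small-$\rho$ regime is correct and matches the paper: when $\rho<\delta^{1-\boldsymbol{\omega}/100}$, the multiplicity bound $\#\tubes'_{Y'}(x)\lessapprox_\delta \rho^{-\boldsymbol{\beta}}(\rho/\delta)^2\leq \delta^{-\boldsymbol{\beta}-\boldsymbol{\omega}/50}$ is far below $\delta^{-\boldsymbol{\omega}/2}$, yielding (A).

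The large-$\rho$ case, however, has a genuine gap. Your proposed ``descent'' of broadness from an $r$-cover to a $\delta^{\boldsymbol{\omega}/100}$-cover does not work. Broadness relative to an $r$-cover says that for each $x$ and each $T_r\in\tubes_r$, the directions $\{\dir(T):T\in\tubes'[T_r]_{Y'}(x)\}$ satisfy a non-concentration estimate \emph{with denominator} $\#\tubes'[T_r]_{Y'}(x)$. If you pass to a sub-cover by $\delta^{\boldsymbol{\omega}/100}$-tubes, broadness relative to that finer cover requires the same non-concentration \emph{with denominator} $\#\tubes'[T_{\delta^{\boldsymbol{\omega}/100}}]_{Y'}(x)$, which can be vastly smaller. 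The inequality $(\delta^{\boldsymbol{\omega}/100}/r)^{\boldsymbol{\beta}}\leq 1$ that you invoke controls the numerator but says nothing about this change of denominator; indeed, a set that is broad in an $r$-cap can be concentrated (or even a singleton) inside every $\delta^{\boldsymbol{\omega}/100}$-sub-cap. Your ``balanced-cover pigeonhole'' does not repair this, since synchronizing cardinalities does not manufacture angular spread at the finer scale.

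The paper avoids the large-$\rho$ case entirely by a simple device: first pass to a partitioning cover $\tubes_r$ with $r=\delta^{\boldsymbol{\omega}/100}$, and then apply Corollary~\ref{coveringTubesBroadCor} \emph{inside each rescaled} $(\tubes^{T_r},Y^{T_r})_{\delta/r}$. The resulting scale $\tilde\rho\in[\delta/r,1]$ gives $\rho=\tilde\rho\, r\in[\delta,r]$, so $\rho\leq\delta^{\boldsymbol{\omega}/100}$ is automatic. Only the small-$\rho$ dichotomy remains, and there the multiplicity bound (with an extra harmless factor $r^{-2}=\delta^{-\boldsymbol{\omega}/50}$ coming from the outer $r$-cover) gives (A).
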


\begin{proof}
Let $r=\delta^{\boldsymbol{\omega}/100}$. After replacing $(\tubes,Y)_\delta$ by a $\sim 1$ refinement, we can find a partitioning cover $\tubes_{r}$ of $\tubes$. Apply Corollary \ref{coveringTubesBroadCor} to each set $(\tubes^{T_{r}},Y^{T_{r}})_{\delta/r}$. After dyadic pigeonholing $\tubes_{r}$ and $\tubes$, we can suppose that the resulting scale, which we will call $\tilde\rho$, is the same for each $T_{r}\in\tubes_{r}$. Define $\rho = \tilde\rho r$, and let $(\tubes',Y')$ be the $\approx_\delta 1$ refinement of $(\tubes,Y)_\delta$ consisting of the tubes and their associated shadings coming from the conclusion of Corollary \ref{coveringTubesBroadCor} for each $T_{r}\in\tubes_{r}$.

Recall that the tubes $\tubes_{r}$ form a partitioning cover of $\tubes'$, and for each $T_{r}\in\tubes_{r}$, the (re-scaled) $\rho$ tubes coming from Corollary \ref{coveringTubesBroadCor} form a partitioning cover of $\tubes'[T_{r}]$. We conclude that if we define $\tubes_\rho$ to be the union of these $\rho$ tubes, then $\tubes_\rho$ is a partitioning cover of $\tubes'$, and furthermore $(\tubes',Y')_\delta$ is broad with error $O(1)$ relative to the cover $\tubes_\rho$. 

At this point, we have constructed the pair $(\tubes',Y')_\delta$ and $\tubes_\rho$, which satisfies all of the requirements for Conclusion (B) of Lemma \ref{findingBroadScale} with one exception --- we know that $\rho\in[\delta,\delta^{\boldsymbol{\omega}/100}]$, but we do not know that $\rho\in[\delta^{1-\boldsymbol{\omega}/100},\delta^{\boldsymbol{\omega}/100}].$  Our next task is to show that if   $\rho<\delta^{1-\boldsymbol{\omega}/100}$, then Conclusion (A) holds. 

Since the tubes in $\tubes_{r}$ are essentially distinct, $O(r^{-2})$ tubes can pass through a common point. By Conclusion (i) from Corollary \ref{coveringTubesBroadCor}, we have that for each tube $T_{r}$ and each $x\in T_{r}$ we have
\[
\# \{ T_\rho\in\tubes_\rho[T_{r}]\colon x \in \bigcup_{T\in\tubes'[T_\rho]}Y'(T)\}\lessapprox_\delta \rho^{-\boldsymbol{\beta}}\leq\delta^{-\boldsymbol{\omega}/100}.
\]
Recall $\boldsymbol{\beta} =\boldsymbol{\omega}\boldsymbol{\zeta}/100$ was fixed at the beginning of Section 7.1. 

Finally, for each $T_{\rho}\in\tubes_\rho$, at most $(\rho/\delta)^2$ tubes from $\tubes'[T_\rho]$ can pass through a common point and at most $r^{-2}$ distinct $T_r\in \mathbb{T}_r$ can pass through a common point. We conclude that 
\[
\#\tubes'_{Y'}(x)\lesssim \delta^{-\boldsymbol{\omega}/100}\Big(\frac{\rho}{\delta r}\Big)^2\quad\textrm{for each}\ x\in\RR^3.
\]

In summary, if  $\rho<\delta^{1-\boldsymbol{\omega}/100}$, then Conclusion (A) holds. Otherwise, Conclusion (B) holds. 
\end{proof}


We conclude this section with two results on the union of broad sets of vectors. The first is a straightforward result saying that a union of broad sets is broad. We omit the proof.
\begin{lem}\label{broadnessUnion}
Let $\delta>0$, let $B\subset S^{n-1}$ be a disk, and let $\mathcal{V}_i$, $i=1,\ldots,N$ be sets of unit vectors in $B$. Suppose that each set $\mathcal{V}_i$ is broad with error $K$ at scales $\geq\delta$ inside $B$. Then the multi-set $\bigsqcup_{i=1}^N \mathcal{V}_i$ is broad with error $K$ at scales $\geq\delta$ inside $B$. 
\end{lem}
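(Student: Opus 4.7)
The proof is immediate from the definition and the additivity of cardinality over disjoint unions, so the ``plan'' is essentially a one-line calculation. Fix an arbitrary unit vector $v_0\in\RR^n$ and a radius $r\in[\delta,\rho]$ (where $\rho$ is the radius of $B$). Since $\mathcal{V}=\bigsqcup_{i=1}^N\mathcal{V}_i$ as a multi-set, cardinality is additive, so
\[
\#\{v\in\mathcal{V}\colon\angle(v,v_0)\leq r\}=\sum_{i=1}^N\#\{v\in\mathcal{V}_i\colon\angle(v,v_0)\leq r\}.
\]
Now apply the broadness hypothesis for each $\mathcal{V}_i$ term-by-term. Since each $\mathcal{V}_i\subset B$ and each is broad with error $K$ at scales $\geq\delta$ inside $B$, each summand is bounded by $K(r/\rho)^{\boldsymbol{\beta}}(\#\mathcal{V}_i)$. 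Summing over $i$ and factoring out $K(r/\rho)^{\boldsymbol{\beta}}$ yields
\[
\#\{v\in\mathcal{V}\colon\angle(v,v_0)\leq r\}\leq K(r/\rho)^{\boldsymbol{\beta}}\sum_{i=1}^N\#\mathcal{V}_i=K(r/\rho)^{\boldsymbol{\beta}}(\#\mathcal{V}),
\]
which is precisely the broadness condition for $\mathcal{V}$ inside $B$ with error $K$. Since $v_0$ and $r$ were arbitrary, the conclusion follows. There is no obstacle here; the only thing to verify is that the exponent $\boldsymbol{\beta}$ and the scale range $[\delta,\rho]$ are the same for all $\mathcal{V}_i$, which is given by hypothesis. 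The key structural point is simply that the broadness constant $K$ appears as a multiplicative factor in front of $\#\mathcal{V}_i$, which is exactly the form that survives summation; had the definition involved, say, $K(\#\mathcal{V}_i)^\alpha$ for some $\alpha\neq 1$, the statement would fail. Thus the lemma is essentially a restatement of this linearity.
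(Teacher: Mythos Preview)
Your proof is correct; this is precisely the intended one-line verification (the paper itself omits the proof, calling it ``straightforward''). The linearity observation you make at the end is exactly the point.
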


The second result described how broadness combines across scales.
\begin{lem}\label{broadnessAcrossScales}
Let $0<\delta<\rho\leq 1$. Let $\mathcal{U}$ be a set of unit vectors in $\RR^n$. For each $u\in\mathcal{U}$, let $\mathcal{V}_u\subset B(u,\rho)\subset S^{n-1}$ be contained in the disk of radius $\rho$ centered at $u$. 
Suppose that $\mathcal{U}$ is broad with error $K_1$ at scales $\geq\rho$, and that each set $\mathcal{V}_u$ is broad with error $K_2$ at scales $\geq\delta$ inside $B(u,\rho)$.
Suppose furthermore that each set $\mathcal{V}_u$ has comparable cardinality (up to a factor of 2).
Then the multi-set $\bigsqcup_{u\in\mathcal{U}}\mathcal{V}_u$ is broad with error $O(K_1K_2)$ at scales $\geq\delta$.
\end{lem}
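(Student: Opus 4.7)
The plan is to fix an arbitrary unit vector $v_0 \in S^{n-1}$ and radius $r \in [\delta, 1]$, and bound the quantity $\#\{v \in \bigsqcup_u \mathcal{V}_u : \angle(v, v_0) \leq r\}$ by splitting into two cases according to whether $r \geq \rho$ or $r < \rho$. Let $\mathcal{W} = \bigsqcup_u \mathcal{V}_u$, and write $N = \#\mathcal{W}$ and $M = \#\mathcal{U}$. By the comparable-cardinality hypothesis, $\#\mathcal{V}_u \sim N/M$ for every $u \in \mathcal{U}$.

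First I would handle the easier case $r \geq \rho$. Here, any $v \in \mathcal{V}_u$ with $\angle(v, v_0) \leq r$ satisfies $\angle(u, v_0) \leq \angle(u,v) + \angle(v,v_0) \leq \rho + r \leq 2r$, so the contributing $u$ lie in $B(v_0, 2r) \cap \mathcal{U}$. Broadness of $\mathcal{U}$ at scale $2r \geq \rho$ gives $\#\{u \in \mathcal{U} : \angle(u, v_0) \leq 2r\} \leq K_1 (2r)^{\boldsymbol{\beta}} M$ (if $2r > 1$ the statement is vacuous since the left side is trivially $\leq M$ and $r^{\boldsymbol{\beta}} \gtrsim 1$). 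Multiplying by the trivial per-$u$ bound $\#\mathcal{V}_u \lesssim N/M$ yields $\lesssim K_1 r^{\boldsymbol{\beta}} N \leq K_1 K_2 r^{\boldsymbol{\beta}} N$, since $K_2 \geq 1$.

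Next I would handle the main case $r < \rho$. Here, for $\mathcal{V}_u$ to contribute at all we need $B(u, \rho) \cap B(v_0, r) \neq \emptyset$, which forces $\angle(u, v_0) \leq \rho + r \leq 2\rho$. Broadness of $\mathcal{U}$ applied at scale $2\rho$ (again the case $2\rho > 1$ is trivial) bounds the number of such $u$ by $K_1 (2\rho)^{\boldsymbol{\beta}} M$. For each such $u$, since $\mathcal{V}_u$ is broad with error $K_2$ inside $B(u, \rho)$ at scales $\geq \delta$ and $r \in [\delta, \rho]$, we have $\#\{v \in \mathcal{V}_u : \angle(v, v_0) \leq r\} \leq K_2 (r/\rho)^{\boldsymbol{\beta}} \#\mathcal{V}_u \lesssim K_2 (r/\rho)^{\boldsymbol{\beta}} N/M$. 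Multiplying these two estimates and canceling the factors of $\rho^{\boldsymbol{\beta}}$ and $M$ gives $\lesssim K_1 K_2 r^{\boldsymbol{\beta}} N$, as required.

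There is no real obstacle here: the whole point is that broadness at disjoint scale ranges composes multiplicatively, and the common exponent $\boldsymbol{\beta}$ in both definitions is precisely what makes the $\rho^{\boldsymbol{\beta}}$ factors telescope. The only minor care needed is the case when $2r$ or $2\rho$ exceeds $1$ (where the hypothesis is formally unused but the conclusion is trivial), and the observation that ``broad inside $B(u,\rho)$'' requires $r \leq \rho$, which is why the split at $r = \rho$ is natural.
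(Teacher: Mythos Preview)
Your proof is correct and follows essentially the same approach as the paper's: split at $r=\rho$, and in each regime count the contributing $u\in\mathcal{U}$ via the broadness of $\mathcal{U}$ and then apply either the trivial bound or the inner broadness of $\mathcal{V}_u$. The paper routes the small-$r$ case through an auxiliary quantity $X=\sup_{u_0}\#\{u:\angle(u,u_0)\le\rho\}$ rather than applying broadness directly at scale $2\rho$, but this is only a cosmetic difference.
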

\begin{proof}
Let $\mathcal{V} = \bigsqcup_{u\in\mathcal{U}}\mathcal{V}_u$. By hypothesis, there is a number $M_1$ so that $M_1\leq \#\mathcal{V}_u\leq 2M_1$ for each set $\mathcal{V}_u$. Let $X=\sup_{u_0}\#\{ u\in\mathcal{U}\colon \angle(u,u_0)\leq\rho\}$, where the supremum is taken over all unit vectors $u_0\in S^{n-1}$. Since $\mathcal{U}$ is broad with error $K_1$ at scales $\geq \rho$, if we choose a unit vector $u_0$ achieving the above supremum, then 
\[
X \leq \#\{u\in\mathcal{U}\colon \angle(u,u_0)\leq\rho\}\leq K_1 \rho^{\boldsymbol{\beta}}(\#\mathcal{U}),
\]
and thus $\#\mathcal{U}\geq K_1^{-1}\rho^{-\boldsymbol{\beta}}X$.

First we consider the case where $r\in[\delta,\rho]$. For each unit vector $v_0$, there are at most $O(X)$ vectors $u\in\mathcal{U}$ for which $\{v\in\mathcal{V}_u\colon \angle(v,v_0)\leq r\}$ is non-empty. For each such $u$, we have
\begin{equation*}
\#\{v\in\mathcal{V}_u\colon \angle(v,v_0)\leq r\} 
\lesssim K_2(r/\rho)^{\boldsymbol{\beta}}(2M_1) 
\lesssim K_1K_2r^{\boldsymbol{\beta}}(\#\mathcal{U})X^{-1}M_1.
\end{equation*}
Thus the total contribution from all such $u$ is $\lesssim K_1K_2r^{\boldsymbol{\beta}}(\#\mathcal{V})$, as desired. 

Next we consider the case where $r\in[\rho,1]$. Then 
\begin{equation*}
\#\{v\in\mathcal{V}\colon \angle(v,v_0)\leq r\}
\lesssim  2M_1(\#\{u\in\mathcal{U}\colon \angle(u,v_0)\leq 2r\})
\leq M_1 K_1 r^{\boldsymbol{\beta}}(\#\mathcal{U}) = K_1 r^{\boldsymbol{\beta}}(\#\mathcal{V}).\qedhere
\end{equation*}
\end{proof}


\subsection{Broadness and the Frostman Slab Wolff axioms}
Given a pair $(\tubes,Y)_\delta$, Lemma \ref{findingBroadScale} allows us to find a set $\tubes_\rho$ for which the pairs $(\tubes^{T_\rho},Y^{T_\rho})_{\delta/\rho}$ are broad. The next result says that under suitable hypotheses, $\tubes_\rho$ will factor $\tubes$ with respect to the Frostman Slab Wolff axioms. The precise statement is as follows.

\begin{lem}\label{bigVolOrFactorFrostmanSlabLem}
Suppose that $\cE(\boldsymbol{\sigma},\boldsymbol{\omega})$ is true, and let $\eps>0$. Then there exists $\alpha,\eta,\kappa>0$ so that the following holds for all $0<\delta\leq\rho\leq 1$. Let $(\tubes,Y)_\delta$ be $\delta^{\eta}$ dense, with $\CKT(\tubes)\leq\delta^{-\eta}$ and $\FS(\tubes)\leq\delta^{-\eta}$. Let $\tubes_\rho$ be a balanced cover of $\tubes$, and suppose $(\tubes,Y)_\delta$ is broad with error $\delta^{-\eta}$ relative to the cover $\tubes_\rho$. Then at least one of the following must hold.

\begin{itemize}
\item[(A)]\itemizeEqnVSpacing
\begin{equation}\label{conclusionBigVolumeInFactorFrostmanSlab}
\Big| \bigcup_{T\in\tubes}Y(T) \Big| \geq \kappa \delta^{\boldsymbol{\omega}-\alpha}(\#\tubes)|T|\big((\#\tubes)|T|^{1/2}\big)^{-\boldsymbol{\sigma}}.
\end{equation} 

\item[(B)] There exists a $\delta^{\eps}$ refinement $(\tubes',Y')_\delta$ and a set $\tubes_\rho'\subset\tubes_\rho$, so that $\tubes_\rho'$ factors $\tubes'$ above and below with respect to the Frostman Slab Wolff Axioms with error $\delta^{-\eps}$.
\end{itemize}
\end{lem}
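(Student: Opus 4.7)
The plan is to dichotomize on whether, for most $T_\rho\in\tubes_\rho$, the rescaled collection $\tubes^{T_\rho}$ already has Frostman Slab Wolff constant $\leq\delta^{-\eps}$. If so, a simple restriction of the cover produces (B); if not, the slab concentration inside many of the $\rho$-tubes, combined with the broadness hypothesis and Assertion $\cE(\boldsymbol\sigma,\boldsymbol\omega)$, will be converted into the volume improvement (A).

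The ``factor from above'' part of (B) is automatic: Remark \ref{FrostmanWolffInheritedUpwardsDownwards}(A), applied to the balanced cover $\tubes_\rho\succ\tubes$, gives $\FS(\tubes_\rho)\lesssim\FS(\tubes)\leq\delta^{-\eta}$, which is $\leq\delta^{-\eps}$ after choosing $\eta\ll\eps$. Thus the real content of the lemma lies in factoring from below. Partition $\tubes_\rho=\mathcal{G}\sqcup\mathcal{B}$, with $\mathcal{G}=\{T_\rho\colon\FS(\tubes^{T_\rho})\leq\delta^{-\eps}\}$ the ``good'' subset. After first refining $Y$ so that $|Y(T)|\gtrsim\delta^{\eta}|T|$ for all $T\in\tubes$, I would compare the shading mass carried over $\mathcal{G}$ versus over $\mathcal{B}$. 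If $\mathcal{G}$ carries at least half the shading mass, then taking $\tubes_\rho'=\mathcal{G}$ and restricting $\tubes$ accordingly produces a $\delta^{\eps}$-dense refinement (for $\eta$ small), and by the definition of $\mathcal{G}$ the factoring from below holds with error $\delta^{-\eps}$. This gives (B).

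The main case is when $\mathcal{B}$ carries at least half the shading mass. For each $T_\rho\in\mathcal{B}$, the failure $\FS(\tubes^{T_\rho})>\delta^{-\eps}$ produces a slab $\tilde S_{T_\rho}\subset B(0,1)$ in rescaled coordinates, of thickness $s_{T_\rho}\in[\delta/\rho,\delta^\eps]$, containing a $\geq\delta^{-\eps}s_{T_\rho}$-fraction of $\tubes^{T_\rho}$. Dyadic pigeonhole in $s_{T_\rho}$ and in the orientation of $\tilde S_{T_\rho}$ relative to $\dir(T_\rho)$ yields, at cost $\approx_\delta 1$, a uniform thickness $s=s_{T_\rho}$ and orientation type across all $T_\rho\in\mathcal{B}$. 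Undoing $\phi_{T_\rho}$ produces a congruent family $\mathcal{W}=\{S_{T_\rho}\colon T_\rho\in\mathcal{B}\}$, each $S_{T_\rho}\subset T_\rho$. The generic case has $S_{T_\rho}$ a $(s\rho)\times\rho\times 1$ flat prism; the exceptional case (slab normal parallel to $\dir(T_\rho)$) has $S_{T_\rho}$ a $\rho\times\rho\times s$ shortened tube. In the flat-prism case, I would restrict $\tubes$ to tubes contained in some $S_{T_\rho}$, verify that the restricted arrangement and $\mathcal{W}$ satisfy the hypotheses of Proposition \ref{PropALLbPropGen} with $a=s\rho,\ b=\rho$, and apply that proposition. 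The resulting volume bound carries a $(b/a)^{\boldsymbol\omega}=s^{-\boldsymbol\omega}\geq\delta^{-\eps\boldsymbol\omega}$ gain, which after absorbing the losses from $\ell,\ell'$ (each bounded by a small negative power of $\delta$) beats the $\cD(\boldsymbol\sigma,\boldsymbol\omega)$ bound by a factor of $\delta^{-\alpha}$ with $\alpha$ a fixed small multiple of $\eps\boldsymbol\omega$; this is (A). In the shortened-tube sub-case, I would dilate along $\dir(T_\rho)$ and apply Corollary \ref{corOfInflateTubesToSlabsLem}, using the anomalously high shading density of the dilated tubes to extract the $(\rho/\delta)^{\boldsymbol\sigma/2}$ improvement that likewise yields (A).

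The main obstacle is the verification, in the flat-prism case, that $\mathcal{W}$ satisfies the Katz-Tao Convex Wolff factoring from above required by Proposition \ref{PropALLbPropGen}: a priori nothing forbids many $S_{T_\rho}$ from sitting inside a common convex set much smaller than $B(0,1)$, which would force $\CKT(\mathcal{W})$ large. Controlling this is where the broadness hypothesis enters: broadness of $(\tubes^{T_\rho},Y^{T_\rho})_{\delta/\rho}$ forces the tube directions within each $T_\rho$ to spread, and combined with the hypothesis $\FS(\tubes)\leq\delta^{-\eta}$ (so that the $\rho$-tubes themselves spread across slabs), this prevents the slabs $S_{T_\rho}$ from collectively packing into a thin convex region. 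Carrying this geometric control through all the earlier pigeonholes---and quantitatively balancing it against the dilation-based Case (ii) argument---is where the most delicate bookkeeping will lie.
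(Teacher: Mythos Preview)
Your dichotomy on whether $\FS(\tubes^{T_\rho})\leq\delta^{-\eps}$ is fine, and the ``good'' branch gives (B) exactly as you describe. The problem lies in the ``bad'' branch, where your plan to invoke Proposition \ref{PropALLbPropGen} has two gaps, only one of which you flag.

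First, a single slab $\tilde S_{T_\rho}$ witnessing $\FS(\tubes^{T_\rho})>\delta^{-\eps}$ tells you nothing about how the tubes inside $S_{T_\rho}$ are distributed---in particular there is no reason $\tubes^{S_{T_\rho}}$ should satisfy the Frostman \emph{Convex} Wolff Axioms, which Proposition \ref{PropALLbPropGen} (and Proposition \ref{aLLbProp}(A)) requires of $\mathcal{W}$ from below. Second, and more structurally, Proposition \ref{PropALLbPropGen} requires $\mathcal{W}$ to \emph{cover} an intermediate set $\tubes_\rho$ with $\rho\leq a$; your slabs have $a=s\rho<\rho$ and sit \emph{inside} the $\rho$-tubes, so the containment runs the wrong way. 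The $\CKT(\mathcal{W})$ obstacle you do identify is real, and the proposed fix (broadness inside each $T_\rho$ combined with the global bound $\FS(\tubes)\leq\delta^{-\eta}$) does not obviously control how slabs from \emph{different} $\rho$-tubes pack together. Finally, your ``shortened-tube'' exceptional case is vacuous: a slab whose normal is parallel to the tube axis has thickness $<1$ in that direction and cannot contain any unit-length tube.

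The paper's proof avoids all of this by not using Proposition \ref{PropALLbPropGen} at all. It applies Proposition \ref{slabWolffFactoring} inside each $T_\rho$ to get prisms of dimensions $s\times t\times 1$ factoring $\tubes[T_\rho]$ from below with respect to the Frostman \emph{Slab} Wolff Axioms. Broadness then forces $t\gtrapprox\delta^{O(\eps_1)/\boldsymbol{\beta}}\rho$: at some surviving point the tubes through $x$ spread at angle $\approx\rho$, yet all lie in one prism of width $t$. Hence $s\ll t\approx\rho$ and the prisms are genuinely flat. Now for each $\delta$-tube $T$ and most $x\in Y(T)$ there is a companion tube in the same prism meeting $T$ at angle $\approx\rho$; a Cordoba-type $L^2$ argument shows the hairbrush of $T$ fills most of a $\delta\times(t/s)\delta\times 1$ slab $S(T)\supset T$. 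This is precisely the hypothesis of Lemma \ref{inflateTubesToSlabsLem}, which delivers the $(|S(T)|/|T|)^{\boldsymbol{\sigma}/2}\gtrsim\delta^{-\eps\boldsymbol{\sigma}/800}$ gain and hence Conclusion (A). No Katz-Tao control on any slab family is ever required.
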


In brief, the proof is as follows. We apply Proposition \ref{slabWolffFactoring} inside each set $\tubes^{T_\rho}$ to cover the tubes in $\tubes[T_\rho]$ by prisms $\mathcal{W}$, with the property that each set $\tubes^W$ satisfies the Frostman Slab Wolff axioms with small error, and the tubes in $\tubes[T_\rho]$ from distinct prisms do not interact (i.e.~their shadings are almost disjoint). Each prism $W$ is contained inside its corresponding $\rho$ tube $T_\rho$. If $W$ is almost as large as $T_\rho$, then $\FS(\tubes^{T_\rho})$ must be almost as small as $\FS(\tubes^{W}),$ which in turn has size about 1. If this happens, then Conclusion (B) holds.

Suppose instead that the prisms are much smaller than $T_\rho$; we will refer to the dimensions of these prisms as $s\times t\times 1$, with $\delta\leq s\leq t$. Since each prism is contained inside a $\rho$ tube, we also have $t\leq\rho$. Thus if the prisms are much smaller than the $\rho$ tubs, then in particular we must have $s<\!\!<\rho$. 

Next, we will make use of the assumption that the tubes are broad relative to the cover $\tubes_\rho$ in order to show that $t\sim\rho$. Since $s<\!\!<\rho$, this means that the prisms are flat. Specifically, broadness ensures that a typical pair of tubes passing through a common point $x\in\bigcup_{T\in\tubes[T_\rho]}Y(T)$ make angle roughly $\rho$. Such a pair of tubes is contained in a common prism $W$, from which it follows that $W$ must have ``width'' $\rho$, i.e.~ each prism $W$ has dimensions roughly $s\times \rho\times 1$. 

To summarize, at this point in the argument each $\delta$ tube is contained inside a flat prism $W$ of dimensions roughly $s\times \rho \times 1$ with $s<\!\!<\rho$. Furthermore, at a typical point $x\in Y(T)$, there is at least one other tube from $\tubes$ contained inside the same flat prism $W$, and this second tube intersects $T$ at angle roughly $\rho$. Thus the hairbrush of $T$ (i.e. the union of set of tubes intersecting $T$), when restricted to the $\frac{\rho}{s}\delta$ neighbourhood of $T$, fills out a rectangular slab of dimensions roughly $\delta\times \frac{\rho}{s}\delta\times 1$. But this is precisely the setting where Lemma \ref{inflateTubesToSlabsLem} asserts that $\bigcup_{\tubes}Y(T)$ is larger than we would expect from the estimate $\cE(\sigma,\omega)$, and thus Conclusion (A) holds. We now turn to the details.

\begin{proof}[Proof of Lemma \ref{bigVolOrFactorFrostmanSlabLem}]$\phantom{1}$\\
\noindent{\bf Step 1.} 
We may suppose that $\rho\leq\delta^{\eps/10}$, or else Conclusion (B) follows by selecting a single $\rho$-tube that contains at least $\rho^4(\#\tubes)$ tubes from $\tubes$. We may also suppose that $\rho \geq \delta^{\eps/10}$, or else Conclusion (B) holds trivially by taking $\tubes'=\tubes$ and $\tubes_\rho'=\tubes_\rho$. 

By pigeonholing and replacing $\tubes_\rho$ by a subset $\tubes_{\rho,1}$, we can find a $\approx_\delta 1$ refinement $(\tubes_1,Y_1)_\delta$ of $(\tubes,Y)_\delta$ so that $\tubes_{\rho,1}$ is a balanced cover of $\tubes_1$, and furthermore each set $(\tubes_1^{T_\rho},Y^\rho)_{\delta/\rho}$ is $\gtrapprox_\delta\delta^{\eta}\gtrapprox_\rho\rho^{10\eta/\eps}$ dense.

Let $\eps_1=\eps{\boldsymbol{\beta}}/1600$. Apply Proposition \ref{slabWolffFactoring} with $\eps_1$ in place of $\eps$ to each set $\tubes_1^{T_\rho}$; this gives us a collection of convex sets $\mathcal{W}_{T_\rho}$ that factors  a $(\delta/\rho)^{\eps_1}$-fraction of $\tubes_1^{T_\rho}$  (abusing notation, we continue to use $\tubes_1^{T_\rho}$ to denote this $(\delta/\rho)^{\eps_1}$-fraction) from below with respect to the Frostman Slab Wolff axioms with error $(\delta/\rho)^{-\eps_1}\leq \delta^{-\eps_1}$.  We may do this, provided $\eta>0$ is selected sufficiently small depending on $\eps$ and $\eps_1$. 

After further pigeonholing (which induces a further $\approx_\delta 1$ refinement $(\tubes_2,Y_2)_\delta$ of $(\tubes_1,Y_1)$ and replaces $\tubes_{\rho,1}$ by a subset $\tubes_{\rho,2}$), we may suppose that the convex sets in $\phi_{T_\rho}^{-1}(\mathcal{W}_{T_\rho})$ all have common dimensions (up to a factor of 2) for each $T_\rho\in\tubes_{\rho,2}$; call these dimensions $s\times t\times 1$. We may also suppose that the value of $\FS(\tubes_2^{T_\rho})$ is the same (up to a factor of 2) for all $T_\rho\in\tubes_{\rho,2}$, and that the mass $\sum_{T\in\tubes[T_\rho]}|Y_2(T)| $ is the same (up to a factor of 2) for each $T_\rho\in\tubes_{\rho,2}$.

Let $\mathcal{W} = \bigcup_{T_\rho\in\tubes_\rho}\phi_{T_\rho}^{-1}(\mathcal{W}_{T_\rho})$. To summarize, the situation is as follows:
\begin{itemize}
\item[(i)] We have $\sum_{T\in\tubes_2[T_\rho]}|Y_2(T)|\gtrapprox_\delta \delta^{\eps_1}\sum_{T\in\tubes[T_\rho]}|Y(T)|$ for each $T_\rho\in\tubes_{\rho,2}$.

\item[(ii)] We have a collection $\mathcal{W}$ of convex sets of dimensions $s\times t\times 1$, with $\tubes_2 \prec \mathcal{W} \prec \tubes_\rho$. 

\item[(iii)] For each $T_\rho\in\tubes_{\rho,2}$, the sets $\bigcup_{\tubes_2[W]}Y_2(T),\ W\in\mathcal{W}[T_\rho]$ are disjoint. 

\item[(iv)] For each $W\in\mathcal{W}$, we have $\FS(\tubes_2^W)\leq\delta^{-\eps_1}.$
\end{itemize}

By Items (ii) and (iv), for each $T_\rho\in\tubes_\rho$ we have
\begin{equation}\label{crudeEstimateFSTubes2TRho}
\FS(\tubes_2^{T_\rho})\leq \Big(\sup_{W\in\mathcal{W}[T_\rho]}\FS(\tubes_2^W)\Big)\Big(\frac{|T_\rho|}{|W|}\Big)^{100} \leq \delta^{-\eps/2}\Big(\frac{|T_\rho|}{|W|}\Big)^{100}.
\end{equation}
The last term in the above inequality is an (intentionally) crude estimate for the number of essentially distinct $s\times t\times 1$ prisms that can fit inside a $\rho$-tube. 

If $\Big(\frac{|T_\rho|}{|W|}\Big)^{100}\leq \delta^{-\eps/2}$, then Conclusion (B) holds with $(\tubes',Y')_{\delta} = (\tubes_2,Y_2)$ and $\tubes_\rho = \tubes_{\rho,2}$ and we are done. 

\medskip

\noindent{\bf Step 2.} 
We shall suppose henceforth that
\begin{equation}\label{WSignificantlySmallerTRho}
|W|\leq \delta^{\eps/200}|T_\rho|.
\end{equation}
Our goal is to prove the \eqref{conclusionBigVolumeInFactorFrostmanSlab} holds for an appropriate choice of $\alpha$.

Each prism in $\mathcal{W}$ has dimensions $s\times t\times 1$, with $t\leq\rho$. We claim the reverse inequality is almost true. Specifically, we have
\begin{equation}\label{tAlmostAsBigRho}
t\gtrapprox_\delta \delta^{\frac{2\eps_1}{{\boldsymbol{\beta}}}}\rho.
\end{equation}
Recall from Definition \ref{broadnessDefnShading} that for each $T_\rho\in\tubes_{\rho,2}$ and each $x\in\bigcup_{T\in\tubes[T_\rho]}Y(T)$, we have 
\begin{equation}\label{angularNonConcentration}
\#\{ T\in\tubes[T_\rho] \colon x\in Y(T),\ \angle(v,\dir(T))\leq r\}\leq \delta^{-\eta} (\frac{r}{\rho})^{\boldsymbol{\beta}} \#\{ T\in\tubes[T_\rho] \colon x\in Y(T)\},\quad v\in S^2,\ r\geq\delta.
\end{equation}
By Item (i) above, there exists at least one point $x\in\RR^3$ for which 
\begin{equation}
\label{pointNotMuchLost}
\#\{T\in\tubes_2[T_\rho]\colon x\in Y_2(T)\}\gtrapprox_\delta \delta^{\eps_1} \#\{T\in\tubes[T_\rho]\colon x\in Y(T)\}>0,
\end{equation}
and hence for this choice of $x$ we have
\begin{equation}\label{angularNonConcentrationInsideT2}
\#\{ T\in\tubes_2[T_\rho] \colon x\in Y_2(T),\ \angle(v,\dir(T))\leq r\}\leq \delta^{-\eta-\eps_1} (\frac{r}{\rho})^{\boldsymbol{\beta}} \#\{ T\in\tubes_2[T_\rho] \colon x\in Y_2(T)\},\quad v\in S^2,\ r\geq\delta.
\end{equation}

On the other hand, by Item (iii) above, the tubes $\{T\in\tubes_2[T_\rho]\colon x\in Y_2(T)\}$ are all contained in a common prism $W\in \mathcal{W}$, and thus must all make angle $\leq 2t$ with the direction $v$ of this prism. Selecting $r=2t$ in \eqref{angularNonConcentrationInsideT2} and comparing with  (iii),  
 we obtain \eqref{tAlmostAsBigRho} (provided we select $\eta\leq\eps_1$).  

Comparing \eqref{WSignificantlySmallerTRho} and \eqref{tAlmostAsBigRho}, we see that the prisms in $\mathcal{W}$ must be flat, i.e.
\begin{equation}\label{boundSvsT}
s \sim t^{-1}|W| \lesssim t^{-1} \delta^{\eps/200}\rho^2 \lessapprox_\delta \delta^{\frac{\eps}{200}-\frac{4\eps_1}{\boldsymbol{\beta}}}t \leq \delta^{\frac{\eps}{400}}t.
\end{equation}

\medskip

\noindent{\bf Step 3.} 
Apply Lemma \ref{everySetHasARegularShadingLem} to replace each shading $Y_2(T),\ T\in\tubes_2$ with a regular sub-shading $Y_3(T)\subset Y_2(T)$. Define $\tubes_3=\tubes_2$. We say a point $x\in\bigcup_{T\in\tubes}Y(T)$ has \emph{survived} if 
\[
\#\{T\in\tubes_3\colon x\in Y_3(T)\}\geq \kappa_0\delta^{2\eps_1} \#\{T\in\tubes\colon x\in Y(T)\}.
\] 
Let $Y_4(T)\subset Y_3(T)$ consist of surviving points and let $\tubes_4=\tubes_3$; we will choose the constant $\kappa_0$ sufficiently small so that $(\tubes_4,Y_4)_{\delta}$ is a $\gtrapprox_\delta\delta^{\eps_1}$ refinement of $(\tubes_3,Y_3)$. 

Observe that if the point $x$ has survived, then there is a unique prism $W\in\mathcal{W}$ with $x\in \bigcup_{T\in\tubes_3[W]}Y(T)$, and at least two tubes $T,T'\in \tubes_3[W]$ with $x\in Y_3(T),\ x\in Y_3(T')$ with $\angle(\dir(T),\dir(T'))\gtrapprox_\delta \delta^{\frac{2\eps_1}{\boldsymbol{\beta}}}\rho$. 

For each $T\in\tubes_4$, let $S(T)\supset T$ be the $\delta\times \frac{t}{s}\delta \times 1$ prism with coaxial line $T$, and plane parallel to $\Pi(W)$, where $W\in\mathcal{W}$ is the unique prism covering $T$. Recall that by \eqref{boundSvsT}, we have $\frac{t}{s}\delta\geq \delta^{1-\frac{\eps}{400}}.$ 
A Cordoba style $L^2$ argument (see i.e.~Lemma \ref{cordobaLem} and its proof for an example of a similar argument) shows that for each $T\in\tubes_4$, 
\[
\Big|S(T)\cap\bigcup_{T\in\tubes_3}Y_3(T)\Big|\gtrapprox_\delta \delta^{4\eps_1}\frac{|Y_4(T)|}{|T|}|S|.
\]
Let $b = \frac{t}{s}\delta\geq \delta^{1-\eps/400}$. Applying Lemma \ref{inflateTubesToSlabsLem}, we obtain Conclusion (A) for $\alpha = \eps {\boldsymbol{\omega}}/500$, provided we select $\eta>0$ sufficiently small.
\end{proof}


\subsection{The iteration base case: Guth's grains decomposition}
In our proof sketch from Section \ref{twoScaleGrainsDecompIntro}, we described a single-scale grains decomposition due to Guth. In this section we will state the result precisely. 

\begin{prop}\label{GuthGrainsProp}
Let $\eps>0$. Then there exists $\eta,\kappa>0$ so that the following holds for all $\delta>0$. Let $(\tubes,Y)_\delta$ be $\delta^\eta$ dense and be broad with error $\delta^{-\eta}$. Suppose that the tubes in $\tubes$ are contained in a common 1 tube $T_1$.

Then there is a $\delta^\eps$ refinement $(\tubes',Y')$ that is $\delta^\eps$ dense and is broad with error $\leq\kappa^{-1}\delta^{-\eps}$, and a number $\mu\geq 1$ so that $\mu\sim \#\tubes'_{Y'}(x)$ for each $x\in \bigcup_{\tubes'} Y'(T)$. In addition, there is a number $c\geq  \kappa \mu  \delta^\eps (\delta\#\tubes)^{-1}$;  and a pair $(\mathcal{G},Y)_{\delta\times c\times c}$, so that $(\mathcal{G},Y)_{\delta\times c\times c}$ is a robustly $\delta^{\eps}$-dense two-scale grains decomposition of $(\tubes',Y')_\delta$ wrt $\{T_1\}$ (the latter is a set consisting of a single 1 tube).
\end{prop}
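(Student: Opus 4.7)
The plan is to reduce to the case of uniform multiplicity and then invoke the polynomial partitioning machinery of Guth \cite{Gut14}. First I would apply dyadic pigeonholing twice: once in the density $|Y(T)|/|T|$ to reduce to a common value $\lambda \approx_\delta \delta^\eta$, and once in the multiplicity $\# \tubes_Y(x)$ to pass to a refined pair $(\tubes', Y')$ for which there is a single number $\mu$ with $\#\tubes'_{Y'}(x) \sim \mu$ for every $x \in \bigcup_{\tubes'} Y'(T)$. Both pigeonholing steps cost only $(\log 1/\delta)^{O(1)}$ factors and so are absorbed in $\delta^\eps$. The broadness hypothesis is preserved under these refinements up to a $\delta^{-O(\eps)}$ loss in the broadness error, which is likewise acceptable.

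Next I would apply the grains theorem from \cite{Gut14} (in the broad setting, after the anisotropic rescaling $\phi_{T_1}$ that sends the ambient $1$-tube to the unit ball, so the tubes become $\delta$-tubes in $B(0,1)$). The output is a collection of rectangular prisms of dimensions $\delta \times c \times c$ together with a partition (up to a $\lessapprox_\delta 1$ loss) of $\bigcup_{\tubes'} Y'(T)$ by these prisms, where
\[
c \geq \kappa \mu \delta^\eps (\delta \#\tubes)^{-1}.
\]
This is the version of Guth's theorem that incorporates the multiplicity $\mu$ (the weaker lower bound $c \geq \kappa\delta^\eps(\delta\#\tubes)^{-1}$ is obtained just from total volume, while the stronger one comes from rerunning the polynomial partitioning argument relative to the union $\bigcup Y'(T)$, which has volume $\sim \mu^{-1}(\#\tubes)|T|$). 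The output produces, for each grain $G$, a shading $Y(G) \subset G$ that agrees with $G \cap \bigcup_{\tubes'} Y'(T)$, and a collection $\tubes'[G]$ of tubes tangent to the long axis of $G$. Set $\mathcal{G}$ to be this collection.

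With $\rho = 1$ and $\tubes_\rho = \{T_1\}$, I would then verify the four conditions of Definition \ref{twoScaleGrainsDecomp}. Condition (i) is automatic because every $T \in \tubes'$ satisfies $T \subset T_1$ and $\angle(\dir(T), \dir(T_1)) \leq 2$. Condition (ii), disjointness of $\{Y(G): G \in \mathcal{G}\}$ and the equality $\bigsqcup Y(G) = \bigcup_{\tubes'} Y'(T)$, is arranged by replacing $Y(G)$ by $G \cap \bigcup_{\tubes'} Y'(T) \setminus \bigcup_{G' \prec G} Y(G')$ for some ordering of the grains (the overlap in the partitioning output is $\lessapprox_\delta 1$, so the resulting shading is still $\delta^\eps$-dense). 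Condition (iii) — uniform multiplicity $\mu$ — is what the second pigeonholing step gave us. Condition (iv), that each intersecting tube exits its grain through a long end and that $Y'(T) \cap G \subset Y(G)$, is the substantive geometric content of Guth's theorem: the grains produced by polynomial partitioning are osculating prisms aligned with tangent planes of the algebraic surface along the tubes, so any tube crossing a grain must be nearly parallel to the long $c \times c$ face.

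The main obstacle is the verification of condition (iv); this requires quoting the geometric output of Guth's grains decomposition in its sharper form, rather than just the volume lower bound for $c$. In particular one must maintain, through the polynomial partitioning iteration, the property that the grains at each stage have tangent planes that agree (up to accuracy $\delta/c$) with the directions of the tubes they contain; this tangency is what prevents tubes from puncturing the short sides of a grain. A secondary technical issue is preserving broadness of $(\tubes', Y')$ with error $\kappa^{-1}\delta^{-\eps}$ after pigeonholing: here I would use Lemma \ref{broadnessUnion} to combine broadness inside the dyadic multiplicity classes, and use that the dyadic pigeonhole loss $(\log 1/\delta)^{O(1)}$ degrades the broadness error by at most the same factor, which is absorbed by choosing $\eta$ sufficiently small in terms of $\eps$.
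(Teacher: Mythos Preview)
Your high-level plan---pigeonhole to uniform multiplicity, then polynomial partitioning---matches the paper's strategy, but there is a genuine gap in how you obtain rectangular prisms and verify condition (iv).

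The polynomial partitioning step (the paper's Proposition~A.3, which plays the role of what you call ``Guth's grains theorem'') does \emph{not} output rectangular prisms. It outputs semi-algebraic grains $E_i = N_{100\delta}(S_i)$ where $S_i$ is a piece of a bounded-degree algebraic variety, together with the bounds $|E_i| \lesssim \delta(\operatorname{diam} E_i)^2$ and $\#\{i : T \cap E_i \neq \emptyset\} \lesssim N^{1/3}$. Nothing in this output says the $E_i$ are nearly planar, or that tubes exit through long ends. Your claimed mechanism---``osculating prisms aligned with tangent planes of the algebraic surface''---is not available: a single grain can bend across its diameter, and no single tangent plane is attached to it by the partitioning. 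Moreover, the statement you want to cite does not appear in \cite{Gut14} with the broadness hypothesis of Definition~\ref{broadnessDefnShading}; the paper says this explicitly and supplies its own proof for that reason.

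The paper closes this gap with a separate planarity argument (Lemma~A.5 and its rescaled Corollary~A.6) that is combinatorial rather than differential-geometric. Inside each $E_i$, the broadness hypothesis together with the small-volume bound $|E_i| \lesssim \delta r^2$ forces the existence of many ``triangles'': triples of tubes $(T_0,T,T')$ that pairwise intersect at three well-separated points with pairwise angles $\gtrsim \delta^{O(\eta/\beta)}$. Three lines that pairwise intersect at distinct points are coplanar, so each such triangle lies in a $\lesssim \delta^{1-O(\eta/\beta)}$-neighbourhood of a plane. Pigeonholing over the triangles yields a plane $\Pi_i$ so that a $\delta^\eps$-fraction of the mass inside $E_i$ comes from tubes contained in $N_{2\delta}(\Pi_i)$. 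The rectangular prism $G_i$ is then essentially $N_{2\delta}(\Pi_i)$ intersected with a ball of radius $r$ containing $E_i$, and the ``exits through long ends'' property follows because the surviving tubes are trapped in the slab $N_{2\delta}(\Pi_i)$.

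This triangle/planarity step is the missing idea in your proposal; without it you have semi-algebraic cells but no $\delta\times c\times c$ prisms and no route to condition (iv) of Definition~\ref{twoScaleGrainsDecomp}.
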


\begin{rem}\label{remarkLongEndsExitSquareGrain}
Proposition \ref{GuthGrainsProp} says that $(\mathcal{G},Y)_{\delta\times c\times c}$  is a two-scale grains decomposition of $(\tubes',Y')_\delta$ wrt $\{T_1\}$. A two-scale grains decomposition is defined in Definition \ref{twoScaleGrainsDecomp}, and Item (iv) from that definition specifies that if $Y'(T)\cap Y(G)\neq\emptyset$, then $T$ exits $G$ through its long ends, in the sense of Figure \ref{exitLongEndsFig}. Since the grains $(\mathcal{G},Y)_{\delta\times c\times c}$ from Proposition \ref{GuthGrainsProp} are square, the definition is somewhat ambiguous in this setting. However, Proposition \ref{GuthGrainsProp} is only used to prove Corollary \ref{grainsDecompFromBroadCor}. Thus the 1 tube $T_1$ should be thought of as the anisotropic rescaling of a $\rho$ tube $T_\rho$. What is needed is the following: the images of the tubes in $\tubes'$ under the anisotropic scaling sending $T_1$ to $T_\rho$ must exit the images of the grains in $\mathcal{G}$ through their long ends.
\end{rem}

Proposition \ref{GuthGrainsProp} is a variant of Guth's grains decomposition from \cite{Gut14}. Since this precise statement does not appear in \cite{Gut14} (the hypotheses in \cite{Gut14} are stated slightly differently), we will provide a proof in Appendix \ref{guthGrainsDecompAppendix}.

If $(\tubes,Y)_\delta$ is broad relative to a set of $\rho$ tubes $\tubes_\rho$, then we can apply Proposition \ref{GuthGrainsProp} to each re-scaled set $(\tubes^{T_\rho},Y^{T_\rho})_\delta$.



\begin{cor}\label{grainsDecompFromBroadCor}
Let $\eps>0$. Then there exists $\kappa,\eta>0$ so that the following holds for all $\delta>0$.
Let $(\tubes,Y)_\delta$ be $\delta^\eta$ dense and let $\tubes_\rho$ be a balanced partitioning cover of $\tubes$. Suppose that $(\tubes,Y)_\delta$ is broad with error $\delta^{-\eta}$ relative to the cover $\tubes_\rho$, and that $\big|\bigcup_{T\in\tubes}Y(T)|\leq\delta^{\eps}(\#\tubes)|T|$.

Then there is a $\delta^\eps$ refinement $(\tubes',Y')_\delta$ of $(\tubes,Y)_\delta$ that is $\delta^\eps$ dense; a subset $\tubes_\rho'\subset \tubes_\rho$; a number $c\gtrsim \frac{\rho}{\delta}\frac{\#\tubes_\rho}{\#\tubes}$; and a pair $(\mathcal{P},Y)_{\delta \times b\times c}$ with $\rho=b/c$ that is a robustly $\delta^\eps$-dense two-scale grains decomposition of $(\tubes',Y')_\delta$ wrt $\tubes_\rho'$. Finally, $(\tubes',Y')_\delta$ is broad with error $\leq \kappa^{-1}\delta^{-\eps}$ relative to the cover $\tubes_\rho'$.
\end{cor}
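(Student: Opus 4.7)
The plan is to apply Proposition \ref{GuthGrainsProp} separately inside each (rescaled) $\rho$-tube and then aggregate the resulting grains via dyadic pigeonholing. Fix $\eps>0$ and choose $\eta>0$ much smaller than $\eps$, and run Proposition \ref{GuthGrainsProp} with a parameter (say) $\eps/10$. First I would pigeonhole $\tubes_\rho$ and refine the shading to ensure that each rescaled pair $(\tubes^{T_\rho},Y^{T_\rho})_{\delta/\rho}$ is $(\delta/\rho)^{\eta'}$ dense for some $\eta'\leq\eta$; this is possible because $(\tubes,Y)_\delta$ is $\delta^\eta$ dense and $\tubes_\rho$ is a balanced partitioning cover (so a typical $T_\rho$ receives its fair share of mass). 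The broadness hypothesis on $(\tubes,Y)_\delta$ relative to $\tubes_\rho$ gives exactly the broadness hypothesis needed to apply Proposition \ref{GuthGrainsProp} to each rescaled set inside the unit tube that is the image of $T_\rho$ under $\phi_{T_\rho}$.

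Next, Proposition \ref{GuthGrainsProp} produces, for each $T_\rho$, a refined pair of $(\delta/\rho)$-tubes, a multiplicity $\mu_{T_\rho}\geq 1$, a length $c_{T_\rho}\gtrsim \mu_{T_\rho}(\delta/\rho)^{\eps/10}((\delta/\rho)\#\tubes[T_\rho])^{-1}$, and a robustly $(\delta/\rho)^{\eps/10}$-dense two-scale grains decomposition of the refinement in grains of dimensions $(\delta/\rho)\times c_{T_\rho}\times c_{T_\rho}$ relative to the singleton cover consisting of the rescaled $T_\rho$. After dyadic pigeonholing over $T_\rho\in\tubes_\rho$ (which costs a $(\log 1/\delta)^{-O(1)}$ factor) I would restrict to a subset $\tubes_\rho'\subset\tubes_\rho$ on which $c_{T_\rho}$ and $\mu_{T_\rho}$ are common up to a factor of $2$; call these common values $c$ and $\mu$. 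Undoing each rescaling $\phi_{T_\rho}^{-1}$ turns the grains inside $T_\rho$ into $\delta\times \rho c\times c$ prisms; taking $b=\rho c$ gives $\rho=b/c$ as required. Setting $\mathcal P=\bigsqcup_{T_\rho\in\tubes_\rho'}\phi_{T_\rho}^{-1}(\mathcal G_{T_\rho})$ and transporting the shadings via $\phi_{T_\rho}^{-1}$ produces the pair $(\mathcal P,Y)_{\delta\times b\times c}$. Conditions (i)--(iv) of Definition \ref{twoScaleGrainsDecomp} are immediate from the corresponding conditions inside each rescaled unit tube, together with Remark \ref{remarkLongEndsExitSquareGrain} which clarifies that the ``long-end exit'' property is inherited correctly through the anisotropic rescaling. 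The broadness statement for $(\tubes',Y')_\delta$ relative to $\tubes_\rho'$ with error $\kappa^{-1}\delta^{-\eps}$ is just the conjunction of the broadness conclusions of Proposition \ref{GuthGrainsProp} applied to each rescaled set.

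The hard part will be extracting the clean lower bound $c\gtrsim \frac{\rho}{\delta}\frac{\#\tubes_\rho}{\#\tubes}$ from the bound given by Proposition \ref{GuthGrainsProp}, which (after rescaling and using balancedness of $\tubes_\rho$ so that $\#\tubes[T_\rho]\approx \#\tubes/\#\tubes_\rho$) only yields
\[
c \;\gtrsim\; \mu\,\delta^{\eps/10}\,\frac{\rho\,\#\tubes_\rho}{\delta\,\#\tubes}.
\]
So I must show that $\mu\,\delta^{\eps/10}\gtrsim 1$, and for this I would use the hypothesis $\big|\bigcup_T Y(T)\big|\leq \delta^\eps(\#\tubes)|T|$ together with the density $\sum|Y(T)|\geq \delta^\eta\sum|T|$: the global average multiplicity on $\bigcup Y(T)$ is at least $\delta^{\eta-\eps}$, and after the usual dyadic pigeonholing to localize this bound to each $T_\rho$, taking $\eta$ small enough compared to $\eps$ forces $\mu\gtrapprox_\delta \delta^{-\eps/2}$, which easily absorbs the $\delta^{\eps/10}$ loss. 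The remaining bookkeeping obstacle is ensuring that the various refinements (from regularity, from the Guth application, and from the two rounds of pigeonholing) compose to a total loss no worse than $\delta^\eps$; this is handled by choosing the Guth parameter ($\eps/10$) and $\eta$ small enough at the start so that every intermediate loss is $\lessapprox_\delta 1$ and the final cumulative loss is bounded by $\delta^\eps$.
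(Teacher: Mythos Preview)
Your overall strategy—apply Proposition \ref{GuthGrainsProp} inside each rescaled $\rho$-tube, pigeonhole to a common $c$ and $\mu$, and undo the scaling—is exactly the paper's intended approach. The bookkeeping you describe for aggregating the grains and checking Definition \ref{twoScaleGrainsDecomp} is correct.

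There is, however, a genuine gap at the step you flag as the hard one. You try to deduce that the fine multiplicity $\mu$ is $\gtrapprox \delta^{-\eps/2}$ from the global volume hypothesis $|\bigcup Y(T)|\le \delta^{\eps}(\#\tubes)|T|$ by ``localizing via pigeonholing.'' This does not work: the global multiplicity factors (after pigeonholing) as $\mu_{\mathrm{fine}}\cdot\mu_{\mathrm{coarse}}$, and the volume hypothesis only constrains the product. There is nothing preventing $\mu_{\mathrm{coarse}}$ (the number of distinct $T_\rho$ whose shaded union passes through a typical point) from absorbing all of the multiplicity, leaving $\mu_{\mathrm{fine}}$ close to $1$. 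Concretely, the only upper bound you have on $|\bigcup_{\tubes[T_\rho]}Y(T)|$ from the global hypothesis is $\delta^{\eps}(\#\tubes)|T|$, which yields $\mu_{\mathrm{fine}}\gtrsim \delta^{\eta-\eps}/\#\tubes_\rho$—useless unless $\#\tubes_\rho$ is very small.

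The fix is to use broadness instead of the volume hypothesis. Proposition \ref{GuthGrainsProp} outputs a refinement that is broad with error $\kappa^{-1}(\delta/\rho)^{-\eps'}$ inside each $T_\rho$, and its multiplicity is exactly $\mu$. Testing broadness at the finest scale $r=\delta/\rho$ (where the left-hand side of the broadness inequality is at least $1$) gives $1\le \kappa^{-1}(\delta/\rho)^{\beta-\eps'}\mu$, i.e.\ $\mu\ge \kappa(\rho/\delta)^{\beta-\eps'}$. Hence $\mu\,(\delta/\rho)^{\eps'}\ge \kappa(\rho/\delta)^{\beta-2\eps'}\ge \kappa$ as soon as you choose the Guth parameter $\eps'\le \beta/2$ (rather than $\eps/10$). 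With $\eps'=\min(\eps,\beta/2)$ all the other conclusions (the $\delta^{\eps}$-refinement, $\delta^{\eps}$-density, and broadness error $\kappa^{-1}\delta^{-\eps}$) still go through. This is presumably what the paper means by ``$\mu$ compensates for the $\delta^{\eps}$ loss.''
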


Note that the estimate on the size of $c$ in Corollary \ref{grainsDecompFromBroadCor} omits the term $\mu$ (though this term is used to compensate for the $\delta^{\eps}$ loss in Proposition \ref{GuthGrainsProp}); this is because the weaker estimate  $c\gtrsim \frac{\rho}{\delta}\frac{\#\tubes_\rho}{\#\tubes}$ will be sufficient in the arguments that follow.

Corollary \ref{grainsDecompFromBroadCor} has two important consequences. First, when combined with Lemma \ref{findingBroadScale}, it says that if $(\tubes,Y)_\delta$ is an arrangement for which $\cE(\boldsymbol{\sigma},\boldsymbol{\omega})$ is tight, then $(\tubes,Y)_\delta$ admits a two-scale grains decomposition. In Section \ref{twoScaleGrainsDecompIntro}, we called this the ``Guth grains decomposition'' of $\tubes$.  The precise statement is as follows

\begin{lem}\label{existsTwoScaleGrainsLem}
Suppose that $\cE(\boldsymbol{\sigma},\boldsymbol{\omega})$ is true and let $\eps>0$. Then there exists $\alpha,\eta,\kappa>0$ so that the following holds for all $\delta>0$. Let $(\tubes,Y)_\delta$ be $\delta^{\eta}$ dense, with $\CKT(\tubes)\leq\delta^{-\eta}$ and $\FS(\tubes)\leq\delta^{-\eta}$. Then at least one of the following must hold.

\begin{itemize}
\item[(A)]\itemizeEqnVSpacing
\begin{equation}
\Big| \bigcup_{T\in\tubes}Y(T) \Big| \geq \kappa \delta^{\boldsymbol{\omega}-\alpha}(\#\tubes)|T|\big((\#\tubes)|T|^{1/2}\big)^{-\boldsymbol{\sigma}}.
\end{equation} 

\item[(C)] There exist the following:
\begin{itemize}
	\item A scale $\rho\in [\delta^{1-\boldsymbol{\omega}/100},\delta^{\boldsymbol{\omega}/100}]$.
	\item A $\delta^\eps$ refinement $(\tubes',Y')_\delta$ of $(\tubes,Y)_\delta$.
	\item A balanced partitioning cover $\tubes_\rho$ of $\tubes'$.
	\item Numbers $\delta\leq b\leq c$ with $b/c=\rho$ and $c\gtrsim \frac{\rho}{\delta}\frac{\#\tubes_\rho}{\#\tubes}$.
	\item A pair $(\mathcal{P},Y)_{\delta \times b\times c}$.
\end{itemize}
So that the following holds:
\begin{itemize}
	\item[(i)] $(\tubes',Y')_\delta$ is broad with error $\leq\delta^{-\eps}$ relative to the cover $\tubes_\rho$.
	\item[(ii)]$(\mathcal{P},Y)_{\delta \times b\times c}$ is a robustly $\delta^\eps$-dense two-scale grains decomposition of $(\tubes',Y')_\delta$ wrt $\tubes_\rho$.
\end{itemize}
\end{itemize}
\end{lem}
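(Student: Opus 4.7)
The plan is to combine Lemma \ref{findingBroadScale} with Corollary \ref{grainsDecompFromBroadCor} and do a small case analysis, essentially assembling pre-existing machinery without introducing any new ideas. Let $\eps>0$ be given. Choose $\alpha = \boldsymbol{\omega}/4$, and pick $\eta,\kappa>0$ much smaller than $\eps$ and $\alpha$ (the precise interplay will be dictated by Lemma \ref{findingBroadScale} and Corollary \ref{grainsDecompFromBroadCor}). Let $(\tubes,Y)_\delta$ satisfy the hypotheses of the lemma.

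First I would apply Lemma \ref{findingBroadScale} to the pair $(\tubes,Y)_\delta$. If its Conclusion (A) holds, then
\[
\Big|\bigcup_{T\in\tubes}Y(T)\Big|\gtrapprox_\delta \delta^{\boldsymbol{\omega}/2}\sum_{T\in\tubes}|Y(T)|\geq \delta^{\boldsymbol{\omega}/2+\eta}(\#\tubes)|T|,
\]
and since $(\#\tubes)|T|^{1/2}\geq 1$ (by Remark \ref{remarksFollowingConvexWolffDefn}(E), using $\FS(\tubes)\leq\delta^{-\eta}$), the factor $((\#\tubes)|T|^{1/2})^{-\boldsymbol{\sigma}}$ is at most $1$, so this bound dominates the target quantity $\delta^{\boldsymbol{\omega}-\alpha}(\#\tubes)|T|((\#\tubes)|T|^{1/2})^{-\boldsymbol{\sigma}}$ once $\eta$ is small compared to $\alpha = \boldsymbol{\omega}/4$ and $\kappa$ is chosen small enough to absorb the $\lessapprox_\delta$ constant; this is Conclusion (A) of the present lemma.

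Otherwise Conclusion (B) of Lemma \ref{findingBroadScale} produces a scale $\rho\in[\delta^{1-\boldsymbol{\omega}/100},\delta^{\boldsymbol{\omega}/100}]$, a $\approx_\delta 1$ refinement $(\tubes_1,Y_1)_\delta$ of $(\tubes,Y)_\delta$, and a balanced partitioning cover $\tubes_\rho$ of $\tubes_1$ such that $(\tubes_1,Y_1)_\delta$ is broad with error $O(1)$ relative to $\tubes_\rho$. At this point I split further depending on the size of the union: if
\[
\Big|\bigcup_{T\in\tubes_1}Y_1(T)\Big|\geq \delta^{\eps}(\#\tubes_1)|T|,
\]
then, since $(\#\tubes_1)\gtrapprox_\delta\#\tubes$ and $((\#\tubes)|T|^{1/2})^{-\boldsymbol{\sigma}}\leq 1$, the same arithmetic as above (with $\eps$ playing the role of $\boldsymbol{\omega}/2+\eta$ and provided we choose $\eps<\alpha$ in the statement, which we may do by shrinking $\eps$ if necessary before choosing $\alpha$) yields Conclusion (A). If instead
\[
\Big|\bigcup_{T\in\tubes_1}Y_1(T)\Big|< \delta^{\eps}(\#\tubes_1)|T|,
\]
then the pair $(\tubes_1,Y_1)_\delta$ satisfies the hypotheses of Corollary \ref{grainsDecompFromBroadCor} (with a slightly smaller $\eps$, say $\eps/2$, and suitably small $\eta$), and its conclusion supplies a further $\delta^{\eps/2}$ refinement $(\tubes',Y')_\delta$; a subset $\tubes_\rho'\subset\tubes_\rho$; numbers $\delta\leq b\leq c$ with $b/c=\rho$ and $c\gtrsim\frac{\rho}{\delta}\frac{\#\tubes_\rho'}{\#\tubes_1}$; and a pair $(\mathcal{P},Y)_{\delta\times b\times c}$ forming a robustly $\delta^{\eps/2}$-dense two-scale grains decomposition of $(\tubes',Y')_\delta$ with regard to $\tubes_\rho'$, and with $(\tubes',Y')_\delta$ broad with error $\leq\delta^{-\eps/2}$ relative to $\tubes_\rho'$. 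This is precisely Conclusion (C) (with minor adjustments of $\eps$).

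The only real technical point is parameter bookkeeping: I need to select $\eps_1\ll\eps$ when invoking Lemma \ref{findingBroadScale} and $\eps_2\ll\eps_1$ when invoking Corollary \ref{grainsDecompFromBroadCor}, and then take $\eta$ small enough that both inputs are satisfied, and finally verify that $\alpha$ can be chosen uniformly (the natural choice being any fixed $\alpha<\boldsymbol{\omega}/2$, e.g.\ $\alpha=\boldsymbol{\omega}/4$). No geometric obstacle arises — the two auxiliary results were already set up so that their conclusions dovetail: Lemma \ref{findingBroadScale} produces the scale $\rho$ and broadness required by Corollary \ref{grainsDecompFromBroadCor}, and Corollary \ref{grainsDecompFromBroadCor} produces exactly the object required by Conclusion (C). The lower bound $c\gtrsim\frac{\rho}{\delta}\frac{\#\tubes_\rho'}{\#\tubes'}$ in (C) follows directly from the corresponding bound in Corollary \ref{grainsDecompFromBroadCor} once we note $\#\tubes'\approx_\delta \#\tubes_1 \approx_\delta \#\tubes$.
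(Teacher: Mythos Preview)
Your proposal is correct and follows precisely the approach the paper intends: the paper presents this lemma as the immediate combination of Lemma \ref{findingBroadScale} (to obtain the scale $\rho$ and broadness, or else Conclusion (A)) with Corollary \ref{grainsDecompFromBroadCor} (to obtain the grains decomposition once broadness is in place), and offers no separate proof. One small inaccuracy: the bound $(\#\tubes)|T|^{1/2}\geq 1$ should read $(\#\tubes)|T|^{1/2}\gtrsim\delta^{\eta}$ under the hypothesis $\FS(\tubes)\leq\delta^{-\eta}$, but this only introduces a harmless $\delta^{O(\eta)}$ loss and does not affect your argument.
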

\begin{rem}
Note that the Conclusions of Lemma \ref{existsTwoScaleGrainsLem} are labelled (A) and (C), rather than (A) and (B). We chose this convention in order to have parallelism with Conclusions (A), (B), and (C) of Moves \#1, \#2, and \#3 below.
\end{rem}

Lemma \ref{existsTwoScaleGrainsLem} will serve as the starting point for the iterative process described in Section \ref{twoScaleGrainsDecompIntro}. In the following subsections, we will describe the three Moves in this iterative process.


\subsection{Moves \#1, \#2, \#3: Parallel structure}\label{movesParallelStructureSec}

In the following sections, we will describe three Moves, which we will iteratively apply to the two-scale grains decomposition that we obtained from Lemma \ref{existsTwoScaleGrainsLem}. Each of these Moves are expressed as a lemma, and these three lemmas have similar structure. In particular, the three lemmas have the same hypotheses, and have similar conclusions. 

\medskip

\noindent\fbox{%
    \parbox{\textwidth}{%
    {\bf Common setup for Moves \#1, \#2, \#3: Hypotheses}\\
        Suppose that $\cE(\boldsymbol{\sigma},\boldsymbol{\omega})$ is true and let $\eps\in (0, \boldsymbol{\zeta}/2]$. Then there exists $\alpha,\eta,\kappa>0$ so that the following holds for all $0<\delta\leq 1$, $\rho\geq \delta^{1-\boldsymbol{\omega}/100}$, and all $\delta\leq a\leq b\leq c$ with $b/c=\rho$. 

		Let $(\tubes,Y)_\delta$ be $\delta^\eta$ dense, with $\CKT(\tubes)\leq\delta^{-\eta}$ and $\FS(\tubes)\leq\delta^{-\eta}$. Let $\tubes_\rho$ be a balanced partitioning cover of $\tubes$, and suppose that $(\tubes,Y)_\delta$ is broad with error $\delta^{-\eta}$ relative to $\tubes_\rho$. Let $(\mathcal{P},Y)_{a \times b\times c}$ be a robustly $\delta^\eta$-dense two-scale grains decomposition of $(\tubes,Y)_\delta$ wrt $\tubes_\rho$.
    }%
}

Each of Moves \#1, \#2, and \#3 will have three possible conclusions, which we label (A), (B), and (C). Conclusion (A) is the same for all three moves.\\

\medskip  
\noindent\fbox{%
    \parbox{\textwidth}{%
    \noindent {\bf Common setup for Moves \#1, \#2, \#3: Conclusion (A).}
		\begin{equation}\label{volumeLowerBoundConclusionA}
		\Big| \bigcup_{T\in\tubes}Y(T) \Big| \geq \kappa \delta^{\boldsymbol{\omega}-\alpha}(\#\tubes)|T|\big((\#\tubes)|T|^{1/2}\big)^{-\boldsymbol{\sigma}}.
		\end{equation} 
    }%
}
In Inequality \eqref{volumeLowerBoundConclusionA}, $\kappa,\alpha>0$ are the quantities from the {\bf Common setup for Moves \#1, \#2, \#3: Hypotheses} described above.

\medskip
\noindent Conclusion (B) is not identical for the three Moves, but shares many common elements. We describe these below

\medskip  

\noindent\fbox{%
    \parbox{\textwidth}{%

    \noindent {\bf Common setup for Moves \#1, \#2, \#3: Conclusion (B).}

There are $\delta^{\eps}$ refinements $(\tubes',Y')_\delta$ and $(\mathcal{P}',Y')_{a \times b\times c}$  of $(\tubes,Y)_\delta$ and $(\mathcal{P},Y)_{a \times b\times c}$, respectively, and a set $\tubes'_\rho\subset\tubes_\rho$, so that the following holds.
\begin{itemize}
	\item[(i)] $(\tubes',Y')_\delta$ is $\delta^\eps$ dense, $\CKT(\tubes')\leq\delta^{-\eps}$, and $\FS(\tubes')\leq\delta^{-\eps}$. 
	\item[(ii)] $\tubes'_\rho$ is a balanced partitioning cover of $\tubes'$, and $(\tubes',Y')_\delta$ is broad with error $\delta^{-\eps}$ relative to $\tubes'_{\rho}$.
	\item[(iii)] $(\mathcal{P}',Y')_{a \times b\times c}$ is a robustly $\delta^{\eps}$-dense two-scale grains decomposition of $(\tubes',Y')_\delta$ wrt $\tubes'_\rho$.
	\item[(iv)] Moves \#1, \#2, \#3 will have additional conclusions specific to that Move.
\end{itemize}
    }%
}

\medskip

\noindent Conclusion (C) is not identical for the three Moves, but shares many common elements. We describe these below

\medskip  

\noindent\fbox{%
    \parbox{\textwidth}{%

\noindent   {\bf Common setup for Moves \#1, \#2, \#3: Conclusion (C)}.\\
There exist the following:
\begin{itemize} 
	\item Numbers $\tilde\rho$ and $\tilde a\leq\tilde b\leq\tilde c\leq 1$, with $\tilde \rho=\tilde b / \tilde c$.
	\item A $\delta^{\eps}$ refinement $(\tubes',Y')_\delta$ of $(\tubes,Y)_\delta$.
	\item A set $\tubes_{\tilde \rho}$ of $\tilde\rho$ tubes.
	\item A pair $(\tilde{\mathcal{P}},\tilde Y)_{\tilde a \times \tilde b\times \tilde c}$.
	\end{itemize}
These objects have the following properties:
	\begin{itemize}
	\item[(i)] $(\tubes',Y')_\delta$  is $\delta^{\eps}$ dense, $\CKT(\tubes')\leq\delta^{-\eps}$, and $\FS(\tubes')\leq\delta^{-\eps}$. 
	\item[(ii)] $\tubes_{\tilde \rho}$ is a balanced partitioning cover of $\tubes'$, and $(\tubes',Y')_\delta$ is broad with error $\delta^{-\eps}$ relative to $\tubes_{\tilde \rho}$.
	\item[(iii)] $(\tilde{\mathcal{P}}, \tilde Y)_{\tilde a \times \tilde b\times \tilde c}$ is a robustly $\delta^\eps$-dense two-scale grains decomposition of $(\tubes',Y')_\delta$ wrt $\tubes_{\tilde \rho}$.
	\item[(iv)] $\delta^{1-\boldsymbol{\omega}/100}\leq \tilde\rho\leq 1$.
	\item[(v)] Moves \#1, \#2, \#3 will have additional conclusions specific to that Move.
	\end{itemize}
    }%
}

\begin{rem}\label{outputMatchesInput}
Observe that Items (i), (ii), and (iii) of Conclusion (B) (resp.~Items (i) -- (iv) of Conclusion (C)) say that the output $(\tubes',Y')_\delta$; $\tubes_{\rho}'$; and $(\mathcal{P}', Y')_{\delta \times b\times c}$ (resp.~$\tubes_{\tilde\rho}$ and $ (\tilde{\mathcal{P}},\tilde{Y})_{\tilde a\times \tilde  b\times  \tilde c}$) of Moves \#1, \#2, and \#3 match the ``input'' hypotheses (i.e. the {\bf Common setup for Moves \#1, \#2, \#3: Hypotheses}), except that exponent $\eta$ has been weakened to $\eps$. This will allow us to iteratively apply these Moves many times.
\end{rem}


\subsection{Using Moves \#1, \#2, \#3 to prove Proposition \ref{grainsDecomposition}}
We will state and prove Moves \#1, \#2, and \#3 in Section \ref{moves123Sec}. However, by using the parallel structure described above we can already state the hypotheses and conclusions of these three moves. We do so in the table below.

\begin{center}
\begin{tabular}{ | m{0.9cm} | m{1.9cm} | m{6cm}| m{6cm}| } 
 \hline
{\bf Move} & {\bf Lemma} & {\bf Conclusion (B), Item (iv)} & {\bf Conclusion (C), Item (v)} \\ 
\hline
\#1 & \ref{replaceGrainsWLongerEnsureLargeC} & $c \geq \delta^{\boldsymbol{\zeta}} (\rho/\delta)\ (\#\tubes_\rho / \#\tubes_\delta)$ & $\tilde c\geq \delta^{-\boldsymbol{\zeta}}c$, $\tilde a=\delta,$ and $\tilde\rho=\rho$ \\
\hline
\#2 & \ref{squareGrainsGetLonger}  & $\rho\leq \delta^{\boldsymbol{\omega}/100}$ & $\tilde c\geq \delta^{-\boldsymbol{\omega}/100}c $ \\
\hline
\#3 & \ref{WiderGrainsSmallCKT}  & $\CKT^{\operatorname{loc}}(\mathcal{P}')\leq \delta^{-\boldsymbol{\zeta}}$ & $\tilde\rho\geq \delta^{-\boldsymbol{\zeta}/1000}\rho $ and $\tilde c\geq c$ \\
\hline
\end{tabular}
\end{center}

We will now match the Conclusions of Proposition \ref{grainsDecomposition} to their counterparts from Moves \#1, \#2, \#3.

\begin{itemize} 
	\item Conclusion (A) for Moves \#1, \#2, \#3 matches Conclusion (A) of Proposition \ref{grainsDecomposition}.
	\item Conclusion (B), Items (i) and (iii) for Moves \#1, \#2, \#3 matches Conclusion (B), Items (i) and (iii), respectively, of  Proposition \ref{grainsDecomposition}.

	\item Conclusion (B), Item (ii) for Moves \#1, \#2, \#3, plus Lemma \ref{bigVolOrFactorFrostmanSlabLem} either yields Conclusion (A), or Conclusion (B), Item (ii) of  Proposition \ref{grainsDecomposition}.

	\item Conclusion (B), Item (iv) for Move \#1 matches Conclusion (B), Item (iv) of Proposition \ref{grainsDecomposition}.
	\item Conclusion (B), Item (iv) for Move \#2, plus the hypothesis $\rho\geq\delta^{1-\boldsymbol{\omega}/100}$, matches Conclusion (B), Item (v) of  Proposition \ref{grainsDecomposition}.
	\item Conclusion (B), Item (v) for Move \#3 matches Conclusion (B), Item (vi) of  Proposition \ref{grainsDecomposition}.
\end{itemize}

\begin{proof}[Proof of Proposition~\ref{grainsDecomposition}]
We proceed as follows. Let $\eta_0<\eta_1<\ldots<\eta_N$ be a sequence of numbers that we will determine below. Let $(\tubes,Y)_\delta$ be $\delta^{\eta_0}$ dense, with $\CKT(\tubes)\leq\delta^{-\eta_0}$ and $\FS(\tubes)\leq\delta^{-\eta_0}$. If $\eta_0$ is sufficiently small depending on $\eta_1$, then we can apply the iteration base case, Lemma \ref{existsTwoScaleGrainsLem}, with $\eta_1$ in place of $\eps$. If Conclusion (A) of Lemma \ref{existsTwoScaleGrainsLem} holds, then Conclusion (A) of Proposition \ref{grainsDecomposition} holds, and we are done.

Suppose instead that Conclusion (B) of Lemma \ref{existsTwoScaleGrainsLem} holds. The output of Conclusion (B) is precisely the set of objects required for the {\bf Common setup for Moves \#1, \#2, \#3: Hypotheses}, with $\eta_1$ in place of $\eta$. 

We now repeatedly apply Moves \#1, \#2, \#3, with $\eta_{j+1}$ in place of $\eps$ at stage $j$. We may do so, provided $\eta_j$ is sufficiently small compared to $\eta_{j+1}$. 

\begin{itemize}
	\item If Conclusion (A) occurs at any point, then Conclusion (A) of Proposition \ref{grainsDecomposition} holds, and we halt. 
	\item If Conclusion (B) occurs for Move \# i, for some $i=1,2,3$,  then we switch to a different move. 
	\item If Conclusion (B) occurs for all three moves in succession, then Items (i), (iii), and (iv) of Conclusion (B) of Proposition \ref{grainsDecomposition} hold. We halt and apply Lemma~\ref{bigVolOrFactorFrostmanSlabLem} to show either Conclusion (B), Item (ii) holds, or else Conclusion (A) holds. We conclude that at least one of Conclusion (A) or Conclusion (B) of Proposition \ref{grainsDecomposition} holds.
	\item If Conclusion (C) occurs for Move \# i, then at least one of the following must occur:
		\begin{itemize}
			\item $c$ does not decrease, and $\rho$ becomes larger by $\delta^{-\boldsymbol{\zeta}/1000}$; this can occur at most $1000/\boldsymbol{\zeta}$ times in a row.
			\item $c$ becomes larger by $\min\{\delta^{-\boldsymbol{\zeta}}, \delta^{-\boldsymbol{\omega}/100}\}$. This can occur at most $1/\boldsymbol{\zeta}+100/ \boldsymbol{\omega}$ times total. 
		\end{itemize}
\end{itemize}
In particular, the iterative process described above must halt after at most $N = \frac{1000}{\boldsymbol{\zeta}}\big(\frac{2}{\boldsymbol{\zeta}}+\frac{100}{\boldsymbol{\omega}}\big)$ steps. We will choose  $\eta_{N+1}$ below, and then select each of $\eta_N,\eta_{N-1},\ldots,\eta_0$ in turn. Finally, we define $\eta$ (the quantity from Proposition \ref{grainsDecomposition} ) by $\eta  = \eta_0$. 

The refinement $(\tubes',Y')_\delta$, the set $\tubes_{\rho}$, and the pair $(\mathcal{G},Y)_{a\times b\times c}$ satisfy all of the conclusions of Proposition \ref{grainsDecomposition}, Conclusion (B), except that for Item (ii), we have not yet shown that $\tubes_\rho$ factors $\tubes'$ from below with respect to the Frostman Slab Wolff Axioms with error $\delta^{-\zeta}$. However, since $(\tubes',Y')_\delta$ is broad with respect to $\tubes_\rho$, we can apply Lemma \ref{bigVolOrFactorFrostmanSlabLem} and conclude that either either this is indeed the case (after replacing $(\tubes',Y')_\delta$, $\tubes_\rho$, and $(\mathcal{G},Y)_{a\times b\times c}$  by a suitable refinement), or else Conclusion (A) of Proposition \ref{grainsDecomposition} holds.

This completes the proof of Proposition \ref{grainsDecomposition}, except that we have not proved Moves \#1, \#2, \#3. We shall do so in the next section. 
\end{proof}


\section{Moves \#1, \#2, and \#3}\label{moves123Sec}

\subsection{Move \#1: Replacing grains with longer grains to ensure $c\geq \delta^{\boldsymbol{\zeta}}\frac{\rho}{\delta}(\#\tubes_\rho)/(\#\tubes)$}
Our goal in this section is to state and prove Move \#1, as described in Section \ref{twoScaleGrainsDecompIntro} and Section \ref{movesParallelStructureSec}.

\begin{lem}\label{replaceGrainsWLongerEnsureLargeC}
We assume the {\bf Common setup for Moves \#1, \#2, \#3: Hypotheses} from Section \ref{movesParallelStructureSec}. Then at least one of the following must hold.

\begin{itemize}
	\item[(A)] Conclusion (A) of the common setup for Moves \#1, \#2, \#3.
	\item[(B)] Conclusion (B) of the common setup for Moves \#1, \#2, \#3. In addition, 
		\begin{itemize}
			\item[(iv)]
				\itemizeEqnVSpacing
				\begin{equation}
				 c \geq \delta^{\boldsymbol{\zeta}} \frac{\rho}{\delta}\frac{\#\tubes_\rho}{\#\tubes_\delta}.
				\end{equation}
		\end{itemize}

	\item[(C)] Conclusion (C) of the common setup for Moves \#1, \#2, \#3. In addition, 
	\begin{itemize}
		\item[(v)] $\tilde a=\delta,$ $\tilde\rho=\rho$, and $\tilde c\geq \delta^{-\boldsymbol{\zeta}}c$.
	\end{itemize}
\end{itemize}
\end{lem}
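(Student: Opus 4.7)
The plan is to trichotomize on the current value of $c$ and, in the non-trivial branch, on the size of $\bigl|\bigcup_T Y(T)\bigr|$. First I would observe that if $c \geq \delta^{\boldsymbol{\zeta}}(\rho/\delta)(\#\tubes_\rho/\#\tubes)$ already, then Conclusion (B) holds in the trivial way: take $(\tubes',Y')=(\tubes,Y)$, $\tubes_\rho'=\tubes_\rho$, and $\mathcal{P}'=\mathcal{P}$. All required items (density, $\CKT$, $\FS$, broadness relative to the cover, and the two-scale grains structure) are inherited directly from the hypotheses, using that the input error $\eta$ is chosen much smaller than the output error $\eps$. No genuine work is needed in this branch.

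Suppose instead $c < \delta^{\boldsymbol{\zeta}}(\rho/\delta)(\#\tubes_\rho/\#\tubes)$. I would like to discard $(\mathcal{P},Y)_{a\times b\times c}$ and produce a brand-new decomposition by applying Corollary \ref{grainsDecompFromBroadCor} (the two-scale Guth grains decomposition) to $(\tubes,Y)_\delta$ and $\tubes_\rho$. Its hypotheses ask for broadness of $(\tubes,Y)_\delta$ relative to $\tubes_\rho$ (given) and the upper volume bound $\bigl|\bigcup_T Y(T)\bigr|\le \delta^{\eps_0}(\#\tubes)|T|$, for some $\eps_0$ chosen below. If this volume bound \emph{fails}, then Conclusion (A) is immediate: the hypothesis $\FS(\tubes)\le\delta^{-\eta}$ forces $(\#\tubes)|T|^{1/2}\gtrsim \delta^{\eta}$ via Remark \ref{remarksFollowingConvexWolffDefn}(C)/(E), so $\bigl((\#\tubes)|T|^{1/2}\bigr)^{-\boldsymbol{\sigma}}\lesssim \delta^{-\eta\boldsymbol{\sigma}}$, and any choice $\eps_0 < \boldsymbol{\omega}-\alpha-\eta\boldsymbol{\sigma}$ promotes the failure of the density upper bound into the volume lower bound required by Conclusion (A).

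Now assume the volume bound does hold, and invoke Corollary \ref{grainsDecompFromBroadCor}. It returns a $\delta^{\eps_0}$-refinement $(\tubes',Y')_\delta$, a subset $\tubes_\rho'\subset\tubes_\rho$, and a pair $(\tilde{\mathcal P},\tilde Y)_{\delta\times \tilde b\times \tilde c}$ that is a robustly $\delta^{\eps_0}$-dense two-scale grains decomposition of $(\tubes',Y')_\delta$ wrt $\tubes_\rho'$, with
\[
\tilde a = \delta, \qquad \tilde\rho = \tilde b/\tilde c = \rho, \qquad \tilde c \;\gtrsim\; \frac{\rho}{\delta}\frac{\#\tubes_\rho'}{\#\tubes'}.
\]
Since $\tubes'$ and $\tubes_\rho'$ are $\approx_\delta 1$-refinements of $\tubes$ and $\tubes_\rho$, the ratio $\#\tubes_\rho'/\#\tubes'$ is comparable to $\#\tubes_\rho/\#\tubes$ up to subpolynomial factors; combined with the standing assumption $c<\delta^{\boldsymbol{\zeta}}(\rho/\delta)(\#\tubes_\rho/\#\tubes)$ this yields $\tilde c\ge \delta^{-\boldsymbol{\zeta}}c$ after absorbing these factors, which is exactly Conclusion (C) Item (v). The remaining Items (i)--(iv) of Conclusion (C) are inherited: the density, $\CKT$, and $\FS$ bounds on $\tubes'$ follow from the refinement bound and the input hypotheses; broadness with error $\delta^{-\eps}$ relative to $\tubes_\rho'$ follows from broadness with error $\delta^{-\eta}$ relative to $\tubes_\rho$ together with the conclusion of Corollary \ref{grainsDecompFromBroadCor}; and the lower bound $\tilde\rho\ge \delta^{1-\boldsymbol{\omega}/100}$ is immediate since $\tilde\rho=\rho$ by construction.

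The only real obstacle is arithmetic bookkeeping: one must arrange the chain $\eta\ll \eps_0 \ll \eps$ and $\eps_0\ll \boldsymbol{\zeta}$ so that (a) the $\eps_0$-loss in the refinement through Corollary \ref{grainsDecompFromBroadCor}, (b) the sub-polynomial comparison between $\#\tubes_\rho/\#\tubes$ and $\#\tubes_\rho'/\#\tubes'$, and (c) the $\delta^{-\eta\boldsymbol{\sigma}}$ term used in the Conclusion (A) branch can all simultaneously be absorbed into the target exponents $\delta^{-\boldsymbol{\zeta}}$ and $\delta^{\boldsymbol{\omega}-\alpha}$. This is a matter of choosing $\alpha,\eta,\kappa$ small enough as functions of $\eps$ and $\boldsymbol{\zeta}$; no further geometric idea is required beyond the direct appeal to Corollary \ref{grainsDecompFromBroadCor}.
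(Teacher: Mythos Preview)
Your proposal is correct and follows essentially the same approach as the paper: check whether $c$ already satisfies the bound (Conclusion (B)), and if not, discard $\mathcal{P}$ and invoke Corollary \ref{grainsDecompFromBroadCor}, with Conclusion (A) handling the case where that corollary's volume hypothesis fails. One small simplification: the corollary already states its lower bound on $\tilde c$ in terms of the \emph{original} $\#\tubes_\rho/\#\tubes$, so you do not need to compare $\#\tubes_\rho'/\#\tubes'$ with $\#\tubes_\rho/\#\tubes$ at all.
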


\begin{proof}
If Conclusion (B) fails, then we discard the cover $(\mathcal{P},Y)_{a\times b\times c}$ and replace it with the cover coming from Corollary \ref{grainsDecompFromBroadCor} applied to $(\tubes,Y)_\delta$ and $\tubes_\rho$. Conclusion (C) of Lemma \ref{replaceGrainsWLongerEnsureLargeC} was chosen to match the output of Corollary \ref{grainsDecompFromBroadCor}. Note that the prisms coming from Corollary \ref{grainsDecompFromBroadCor} have length $\tilde c\geq \frac{\rho}{\delta}\frac{\#\tubes_\rho}{\#\tubes}$. If Conclusion (B) fails, then this quantity is $\geq \delta^{-\boldsymbol{\zeta}}c$, as claimed. 
\end{proof}


\subsection{Move \#2: Replacing square grains with longer grains}
In this section we will use geometric arguments in the spirit of Cordoba's proof of the Kakeya maximal function conjecture in $\RR^2$ to show the following: if $(\tubes,Y)_\delta$ has a two-scale grains decomposition consisting of square grains, i.e.~grains of dimensions $a\times b\times c$ with $\rho = b/c> \delta^{\boldsymbol{\omega}/100}$, then either $\bigcup Y(T)$ is large, or else we can construct a new two-scale grains decomposition of $(\tubes,Y)$ with significantly longer grains. The precise statement is as follows.

\begin{lem}\label{squareGrainsGetLonger}
We assume the {\bf Common setup for Moves \#1, \#2, \#3: Hypotheses} from Section \ref{movesParallelStructureSec}. Then at least one of the following must hold.

\begin{itemize}
\item[(A)] Conclusion (A) of the common setup for Moves \#1, \#2, \#3.
\item[(B)] Conclusion (B) of the common setup for Moves \#1, \#2, \#3. In addition, 
		\begin{itemize}
			\item[(iv)] $\rho\leq \delta^{\boldsymbol{\omega}/100}$. 
		\end{itemize}

\item[(C)] Conclusion (C) of the common setup for Moves \#1, \#2, \#3. In addition, 
	\begin{itemize}
		\item[(v)] $\tilde c\geq \delta^{-\boldsymbol{\omega}/100}c$.
	\end{itemize}

\end{itemize}
\end{lem}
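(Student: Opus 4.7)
The plan is to assume Conclusions (A) and (B) both fail — so $\rho > \delta^{\boldsymbol{\omega}/100}$, meaning the grains are ``almost pancake-shaped'' in the sense that $b/c$ is close to $1$ — and to use Cordoba-type $L^2$ hairbrush arguments on the grain family itself, in the spirit of Lemma \ref{cordobaLem} and Corollary \ref{prismsWithTangentPlanes}. The governing dichotomy is the one measured by $\theta_{\operatorname{min}}$ of $(\mathcal{P},Y)_{a\times b\times c}$ as in Definition \ref{thetaMinDefn}: when a typical pair of intersecting grains meets transversely (i.e.\ $\theta_{\operatorname{min}} \gtrapprox_\delta 1$), I want to use Corollary \ref{prismsWithTangentPlanes} together with Lemma \ref{inflateTubesToSlabsLem} (or Corollary \ref{corOfInflateTubesToSlabsLem}) and the assumed $\cE(\boldsymbol{\sigma},\boldsymbol{\omega})$ to push the volume of $\bigcup_T Y(T)$ up to Conclusion~(A); in the opposite, highly tangential regime, I want to absorb each cluster of tangentially intersecting grains inside a common longer prism, producing Conclusion~(C).

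Concretely, first I would perform the standard regularizations: dyadic pigeonholing so every point of $\bigcup Y(P)$ has a uniform grain-multiplicity $\mu_G$, Lemma \ref{everySetHasARegularShadingLem} to obtain regular sub-shadings, and pigeonholing over the angular scale at which intersecting tangent planes coincide. Second, in the transverse regime where $\theta_{\operatorname{min}}$ is bounded below, Corollary \ref{prismsWithTangentPlanes} shows the grain-hairbrush of a typical $P_0$ fills a full $b$-tube neighborhood of $P_0$, so the shading $Y$ essentially inflates to a shading on $\delta \times b \times 1$ slabs around the corresponding tubes; feeding this into Lemma \ref{inflateTubesToSlabsLem} with scale $b \gg \delta$ produces the $\delta^{\boldsymbol{\omega}}$-gain needed for Conclusion~(A), using the assumed bounds $\CKT(\tubes),\FS(\tubes)\leq\delta^{-\eta}$. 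Third, in the tangential regime, for each $P_0 \in \mathcal{P}$ the grains $P \in \mathcal{P}$ with $Y(P) \cap Y(P_0) \neq \emptyset$ and $\angle(\Pi(P),\Pi(P_0)) \leq \delta^{\boldsymbol{\omega}/100}$ are contained in a common $\tfrac{ac}{b} \times c \times c$ box $\square(P_0)$ from Definition \ref{defnPSquare}, and a Cordoba-style $L^2$ count inside that box shows that the clustered grains concentrate along the axis $\dir(P_0)$ into a single amalgamated prism of length $\tilde c \geq \delta^{-\boldsymbol{\omega}/100} c$. Taking $\tilde{\mathcal{P}}$ to be a maximal essentially-distinct collection of such amalgamated prisms, with $\tilde Y(\tilde P) = \tilde P \cap \bigcup_{P \subset \tilde P} Y(P)$, yields the candidate new grains family.

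The main obstacle is verifying that $(\tilde{\mathcal{P}},\tilde Y)_{\tilde a\times \tilde b\times \tilde c}$ is actually a robustly $\delta^{\eps}$-dense two-scale grains decomposition of a suitable $\delta^{\eps}$-refinement $(\tubes',Y')_\delta$ with respect to some cover $\tubes_{\tilde\rho}$, rather than merely a set of wider prisms with the right volumes. Four conditions have to be preserved simultaneously: (i) each $T \in \tubes'$ must still exit its new grain through the long ends in the sense of Item~(iv) of Definition \ref{twoScaleGrainsDecomp}; (ii) inside each $\tilde\rho$-tube the shadings $\tilde Y(\tilde P)$ must remain essentially disjoint; (iii) the broadness of $(\tubes',Y')_\delta$ relative to $\tubes_{\tilde\rho}$ must be inherited from the original broadness, which will force the use of Lemmas \ref{broadnessUnion} and \ref{broadnessAcrossScales} to combine broadness across the nested scales $\delta \le \tilde\rho \le \rho$; and (iv) $\CKT(\tubes')$ and $\FS(\tubes')$ must not degrade past $\delta^{-\eps}$, which a careful final pigeonhole on the amalgamation step should preserve. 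Propagating the quantitative gain $\tilde c \geq \delta^{-\boldsymbol{\omega}/100} c$ through all four checks, while keeping the hierarchy of small exponents $\eta \ll \eps \ll \alpha \ll \boldsymbol{\omega},\boldsymbol{\zeta}$ under control, is the delicate bookkeeping that I expect to occupy the bulk of the proof.
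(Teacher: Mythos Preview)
Your proposal conflates the mechanism of Move~\#2 with that of Move~\#3. The transverse/tangential dichotomy on $\theta_{\operatorname{min}}$ of the grain family $(\mathcal{P},Y)_{a\times b\times c}$ is indeed the opening step of Move~\#3 (Lemma~\ref{WiderGrainsSmallCKT}), and your transverse-case argument (Corollary~\ref{prismsWithTangentPlanes} plus Corollary~\ref{corOfInflateTubesToSlabsLem}) is essentially what the paper does there. But the tangential case in Move~\#3 produces \emph{wider} grains (larger $\tilde\rho$, same $\tilde c=c$), not \emph{longer} ones. Your claim that ``the clustered grains concentrate along the axis $\dir(P_0)$ into a single amalgamated prism of length $\tilde c \geq \delta^{-\boldsymbol{\omega}/100} c$'' cannot be right as stated: every grain tangentially intersecting $P_0$ lies in $\square(P_0)$, and $\square(P_0)$ has length $c$, not $\delta^{-\boldsymbol{\omega}/100}c$. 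There is no Cordoba argument inside $\square(P_0)$ that manufactures length beyond $c$.

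The missing idea is that the elongation in Move~\#2 comes from the \emph{tube} multiplicity, not from grain geometry. When $\rho>\delta^{\boldsymbol{\omega}/100}$, at most $\rho^{-2}\leq\delta^{-\boldsymbol{\omega}/50}$ essentially distinct $\rho$-tubes pass through any point, so failure of Conclusion~(A) forces the fine-scale multiplicity to satisfy $\mu_{\operatorname{fine}}\gtrsim\delta^{-24\boldsymbol{\omega}/25}$. The paper then fixes a stem tube $T_{\operatorname{stem}}\in\tubes[T_\rho]$, a grain $P\ni x$, and uses broadness to find $\gtrsim\mu_{\operatorname{fine}}\cdot c\rho/\delta$ tubes in $\tubes[T_\rho]$ that intersect $T_{\operatorname{stem}}$ inside $P$ at angle $\sim\rho$, each carrying a shading that is regular at scale $\tilde c=\delta^{-\boldsymbol{\omega}/100}c$. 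Following those tubes outward to distance $\tilde c$ and pigeonholing into a prism $P^\dag$ of dimensions $2a\times 2\rho\tilde c\times\tilde c$, one gets a densely shaded slab of length $\tilde c$; a further broad-scale decomposition (Corollary~\ref{coveringTubesBroadCor}) picks out the new scale $\tilde\rho$ and the final prism $\tilde P$. The original grains enter only to guarantee that the hairbrush tubes exit $P$ through its long ends (Definition~\ref{twoScaleGrainsDecomp}(iv)), so the hairbrush is genuinely two-dimensional. This tube-hairbrush step---and the need to first deduce $\mu_{\operatorname{fine}}$ is large---is the core of the proof and is absent from your plan.
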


\begin{proof}$\phantom{1}$\\
\noindent{\bf Step 1.} 
Let $0<\eps_1<\eps_2$ be small quantities to be chosen below. We will choose $\eps_1$ very small compared to $\eps_{2}$; we will choose $\eps_2$ very small compared to $\eps$; we will choose $\alpha,\eta$ very small compared to $\eps_1$. 

First, we claim that either
\begin{equation}\label{aAlmostDelta}
a\leq \delta^{1-\eps_1},
\end{equation}
or else Conclusion (A) immediately holds, provided we choose $\alpha$ and $\eta$ sufficiently small depending on $\eps_1$. 

We verify this claim as follows. Suppose that \eqref{aAlmostDelta} fails. Then after replacing $(\tubes,Y)_\delta$ by an $\approx_\delta 1$ refinement, we have that for each $x\in\bigcup_{T\in\tubes}Y(T)$,
\[
\Big|B(x,\delta^{1-\eps_1})\cap\bigcup_{T\in\tubes}Y(T)\Big|\gtrsim \delta^{\eta}|B(x,\delta^{1-\eps_1})|.
\]
But from this it follows (see Corollary \ref{corOfInflateTubesToSlabsLem}) that Conclusion (A) holds, provided we select $\alpha\leq \boldsymbol{\omega}\eps_1/2$ and $\eta>0$ sufficiently small. Henceforth we shall suppose that \eqref{aAlmostDelta} holds.

\medskip

\noindent{\bf Step 2.} 
We will regularize the set $\tubes$. By dyadic pigeonholing and replacing $(\tubes,Y)_\delta$ by a $\gtrsim(\log 1/\delta)^{-1/\eps_1}$ refinement $(\tubes_1,Y_1)_\delta$, we can suppose that 
\begin{itemize}
	\item[(i)] For each scale of the form $\tau_i=\delta^{\eps_1 i}$, $i=1,\ldots,\eps_1^{-1}$, there exists a balanced partitioning cover $\tubes_{\tau_i}$ of $\tubes$, and a number $\mu_i$ so that $\#(\tubes_1[T_{\tau_i}])_{Y_1}(x)\sim\mu_i$ for each $T_{\tau_i}\in\tubes_{\tau_i}$ and each $x\in\bigcup_{T\in\tubes_1[T_{\tau_i}]}Y_1(T)$. 
	
	\item[(ii)] There exists a number $\mu_{\operatorname{fine}}$ so that for each $T_\rho\in\tubes_\rho$ and each $x\in\bigcup_{T\in\tubes_1[T_\rho]}Y_1(T)$, we have $\#(\tubes_1[T_\rho]_{Y_1})(x)\sim\mu_{\operatorname{fine}}$.

	\item[(iii)] For each $T_\rho\in\tubes_\rho$ and each $x\in\bigcup_{T\in\tubes_1[T_\rho]}Y_1(T)$, we have 
	\[
		(\#\tubes_1[T_\rho])_{Y_1}(x)\gtrsim (\log 1/\delta)^{-1/\eps_1}\big(\#\tubes[T_\rho]_Y(x)\big).
	\]
\end{itemize}
\noindent Item (iii) implies that $(\tubes_1,Y_1)_\delta$ is broad with error $\lessapprox_\delta \delta^{-\eta}$ relative to $\tubes_\rho$.

Since the tubes in $\tubes_\rho$ are essentially distinct, at most $O(\rho^{-2})$ tubes from $\tubes_\rho$ can pass through a common point. We conclude that either Conclusion (B) holds, or
\[
\Big|\bigcup_{T\in\tubes}Y(T)\Big| \gtrsim \mu_{\operatorname{fine}}^{-1}\rho^2 \sum_{T\in\tubes}|Y_1(T)| \geq \delta^{\frac{\boldsymbol{\omega}}{50}+\eta}\mu_{\operatorname{fine}}^{-1}(\#\tubes)|T|.
\]
If $\mu_{\operatorname{fine}}$ is small, then Conclusion (A) holds. More precisely, if Conclusion (A) fails then 
\begin{equation}\label{muFineIsBig}
\mu_{\operatorname{fine}} \geq \delta^{-\boldsymbol{\omega}+\frac{\boldsymbol{\omega}}{50}+2\eta+\alpha}\Big((\#\tubes)|T|^{1/2}\Big)^{\boldsymbol{\sigma}}
\geq \delta^{-\frac{24}{25} \boldsymbol{\omega}},
\end{equation}
where for the final inequality we used the fact that $\FS(\tubes)\leq\delta^{-\eta}$ to conclude that $(\#\tubes)|T|^{1/2}\geq\delta^{\eta}$. Henceforth we shall suppose that \eqref{muFineIsBig} is true.

\medskip

\noindent{\bf Step 3.}
Let $\tilde c= \delta^{-\boldsymbol{\omega}/100}c$; a bit later in the argument we will abuse notation and replace $\tilde c$ by a number of the form $K\delta^{-\boldsymbol{\omega}/100}c$ for $1\leq K\lesssim 1$. We will describe a procedure that finds a prism $\tilde P$ of dimensions $a\times \tilde \rho \tilde c\times\tilde c$, for some $\tilde \rho = \tilde \rho(\tilde P) \in [\delta^{1-\boldsymbol{\omega}/100},  \delta^{\boldsymbol{\omega}/100}]$, that satisfies some of the properties from Conclusion (C). This procedure is illustrated in Figure \ref{longThinSlabFig}. In Step 4, we will iterate this procedure multiple times.


\begin{figure}[h!]
\begin{overpic}[width=.42\linewidth]{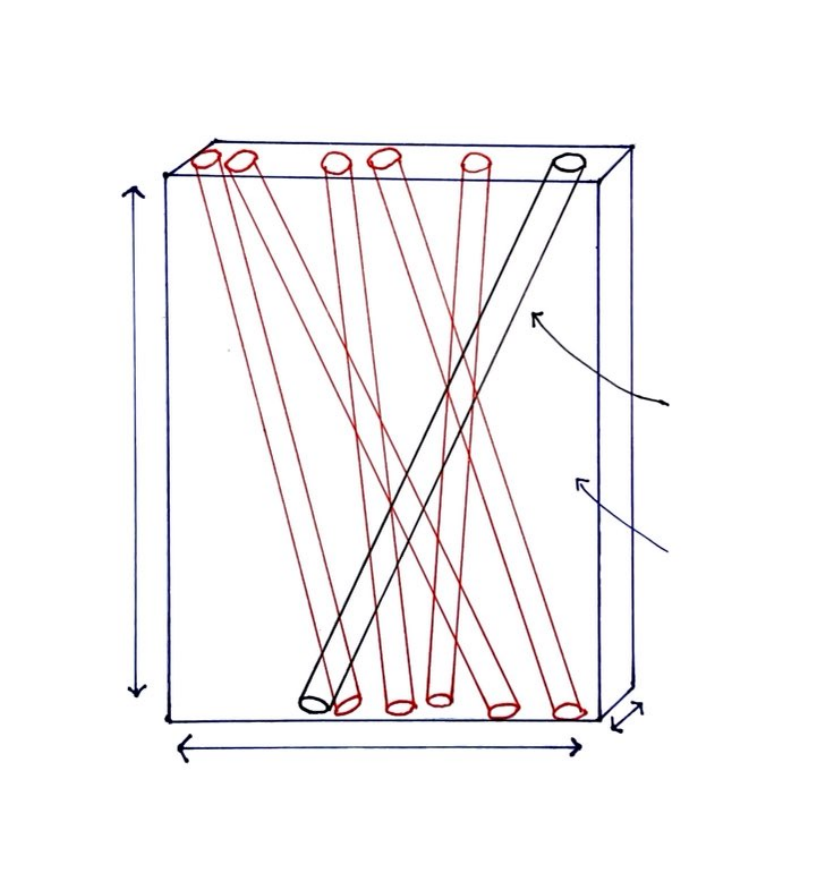}
	\put (11, 50) {$c$}
	\put (41, 11) {$b$}
	\put (71, 15) {$a$}
	\put (76, 34) {$P$}
	\put (77, 52) {$T\cap P$}

\end{overpic}
\hfill
\begin{overpic}[width=.42\linewidth]{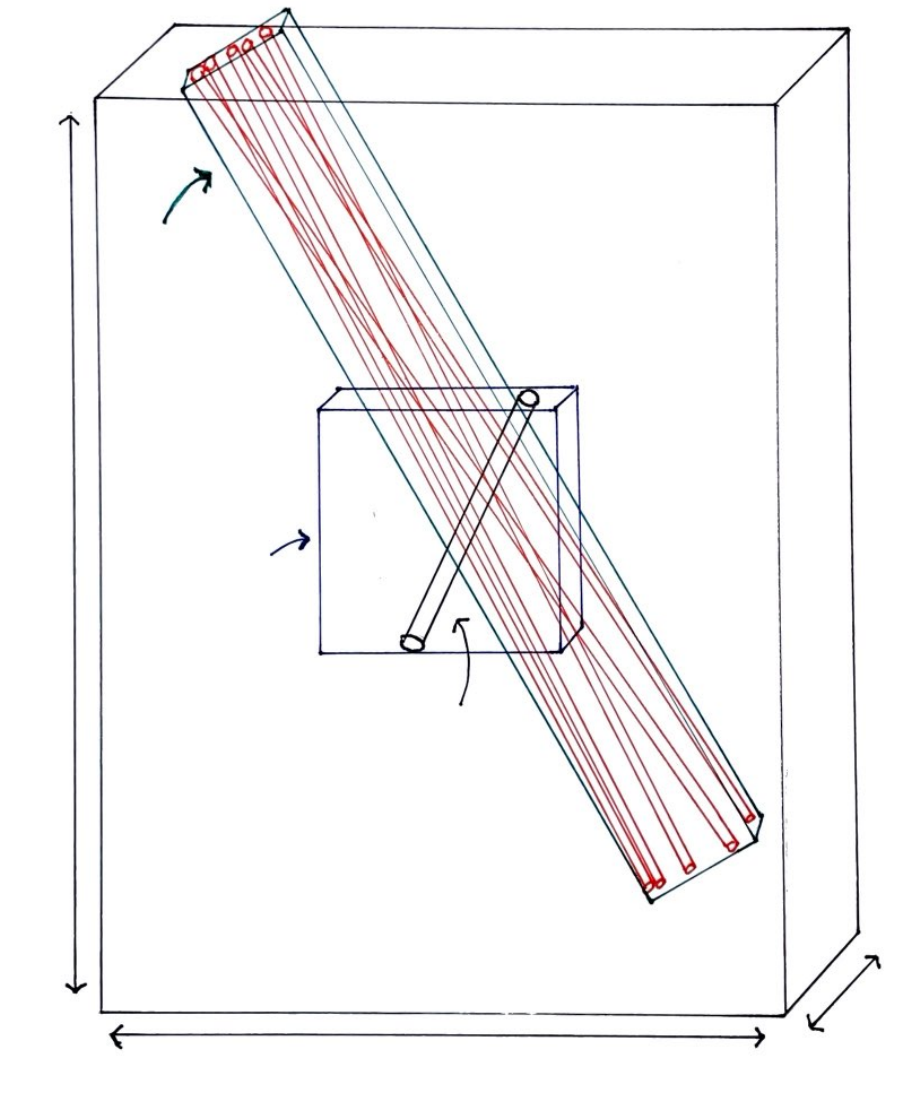}
    \put (-8,52) {$\delta^{-\frac{\omega}{100}}c$}
    \put (31,1) {$\delta^{-\frac{\omega}{100}}b$}
    \put (76,7) {$a$}
    \put (21,46) {$P$}
    \put (35,32) {$T\cap P$}
    \put (11, 75) {$\tilde P$}
    \put (57, 80) {$P^\dag$}
\end{overpic}
\caption{Finding a prism $\tilde P$}
\label{longThinSlabFig}
\end{figure}

Recall that $(\tubes_1,Y_1)_\delta$ is $\gtrapprox_\delta\delta^{\eta}$ dense. This means that for a typical tube and a typical point $x\in Y_1(T)$, we expect $|B(x,\tilde c)\cap Y_1(T)|$ to have size $\gtrapprox_\delta \delta^{\eta} \tilde c \delta^2$. This should also hold for any (reasonably dense) sub-shading $Y'(T)\subset Y_1(T)$. The next definition makes this heuristic precise: Given a shading  $Y'(T)\subset T$, we define $Y'_{\operatorname{reg}}(T)\subset Y'(T)$ to consist of those points $x\in Y(T)$ for which 
\begin{equation}\label{bigOnTildeCBalls}
|B(x, \tilde c)\cap Y'(T)|\geq\frac{1}{100} \delta^{2\eta} \tilde c\delta^2,
\end{equation}
i.e. $Y'_{\operatorname{reg}}$ consists of those points where $Y'(T)$ has at density at least $\delta^{2\eta}/100$ at scale $\tilde c$. Observe that if $(\tubes_1,Y')_\delta$ is $\delta^{2\eta}$ dense, then $(\tubes,Y'_{\operatorname{reg}})$ is a $\sim 1$ refinement of $(\tubes_1,Y')$.

With the above definition, we proceed as follows. Let $(\tubes_1,Y')_\delta$ be any refinement of $(\tubes_1,Y_1)_\delta$ that satisfies $\sum |Y'(T)|\geq\frac{1}{2}\sum |Y_1(T)|$ (so in particular, $(\tubes_1,Y')_\delta$ is $\gtrapprox_\delta \delta^\eta$ dense).

We claim that we can select a $\rho$ tube $T_\rho$; a $\delta$ tube $T_{\operatorname{stem}}\in\tubes_1[T_\rho]$; a prism $P\in\mathcal{P}_{T_\rho}$ (recall Definition \ref{twoScaleGrainsDecomp}, Item (i)) and a number $\kappa\sim 1$ so that the set
\begin{equation}\label{defnE}
E = \{x \in Y'(T_{\operatorname{stem}})\cap Y(P)\colon \#\tubes_1[T_\rho]_{Y_{\operatorname{reg}}'}(x)\geq \kappa \mu_{\operatorname{fine}}\}
\end{equation}
satisfies 
\begin{equation}\label{EIsLarge}
|E|\gtrsim \delta^{2\eta}c\delta^2.
\end{equation}

To verify this claim, let us temporarily define the refinement $(\tubes_1,Y'')_\delta$ given by 
\[
Y''(T) = Y'(T)\cap\{x\colon \#\tubes_1[T_\rho]_{Y_{\operatorname{reg}}'}(x)\geq \kappa \mu_{\operatorname{fine}}\},
\]
where $T_\rho$ is the unique $\rho$ tube from $\tubes_\rho$ containing $T_{\operatorname{stem}}$. If $\kappa\sim 1$ is chosen sufficiently small, then $(\tubes_1,Y'')_\delta$ is a $1/2$ refinement of $(\tubes_1,Y')_\delta$. By pigeonholing we can select a tube $T_{\operatorname{stem}}$ and a point $x\in T_{\operatorname{stem}}$ so that the two adjacent tube segments $T_{\operatorname{seg}}^{(1)},T_{\operatorname{seg}}^{(2)}$ of length $c/10$ whose intersection contains $x$ (see Figure \ref{tubeSegmentsAndPrism}, Left) satisfy $|Y''(T_{\operatorname{stem}})\cap T_{\operatorname{seg}}^{(i)}|\gtrsim \delta^{2\eta}c\delta^2$, $i=1,2$. Next, select a prism $P\in\mathcal{P}_{T_\rho}$ with $x\in Y(P)$  (by Definition~\ref{twoScaleGrainsDecomp}, Item (ii), such $P$ is unique); by Definition \ref{twoScaleGrainsDecomp}, Item (iv), at least one of the segments $T_{\operatorname{seg}}^{(i)}$  must be almost contained in $P$, in the sense that $T_{\operatorname{seg}}^{(i)}\backslash N_\delta(x)\subset P$  (see Figure \ref{tubeSegmentsAndPrism}, Right), and thus the set $E$ from \eqref{defnE} contains at least one of the sets $\big(Y''(T_{\operatorname{stem}})\cap T_{\operatorname{seg}}^{(i)}\big)\backslash N_\delta(x)$. This yields the volume bound \eqref{EIsLarge}.

\begin{figure}[h!]
\centering
\begin{overpic}[ scale=0.35]{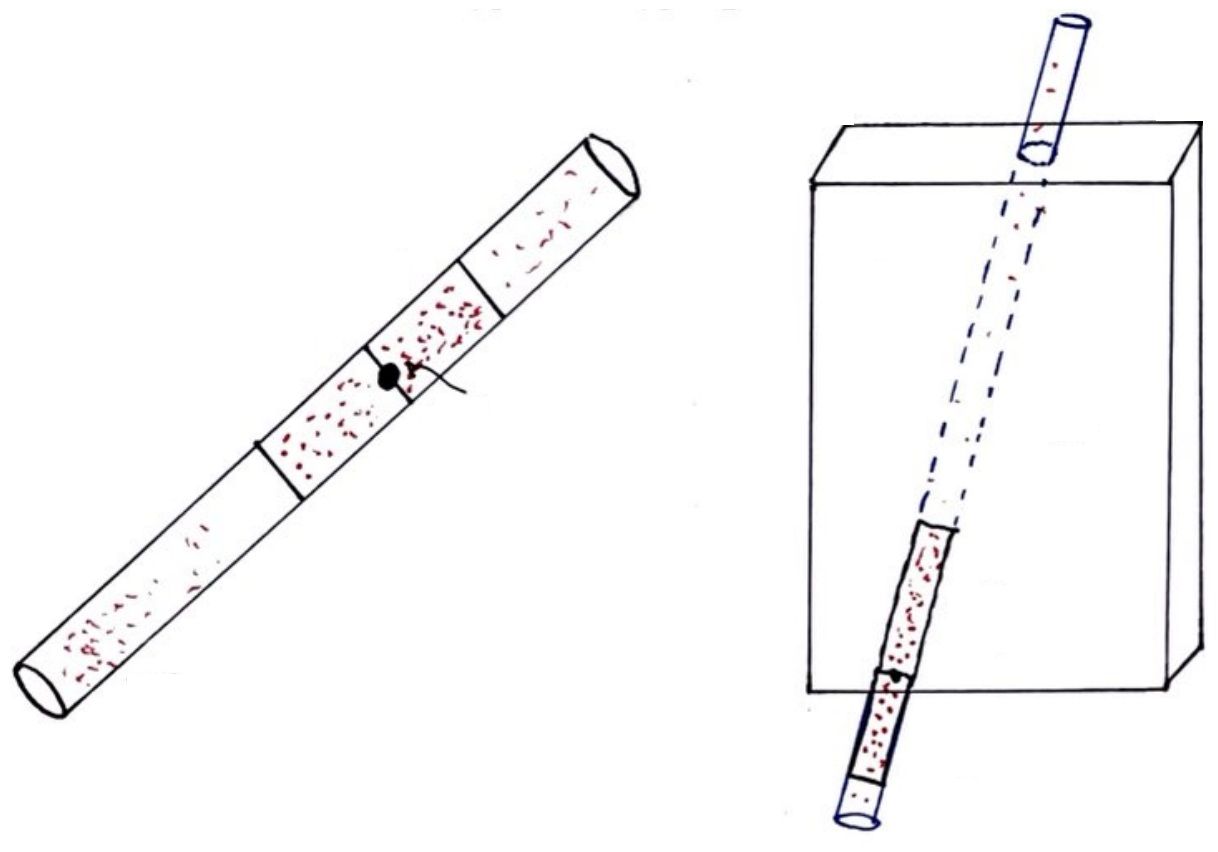}
 \put (11,10) {$T_{\operatorname{stem}}$}
 \put (16,38) {$T_{\operatorname{seg}}^{(1)}$}
 \put (25,47) {$T_{\operatorname{seg}}^{(2)}$}
 \put (39,35) {$x$}
 \put (71,46) {$P$}
 \put (83,36) {$T_{\operatorname{stem}}$}
 \put (75,5) {$T_{\operatorname{seg}}^{(1)}$}
 \put (78,17) {$T_{\operatorname{seg}}^{(2)}$}
\end{overpic}
\caption{Left: The two tube segments on either side of the point $x$. Both tube segments have rich shadings.\\ Right: Since $x\in P$ and $T_{\operatorname{stem}}$ exits $P$ through its ``long ends,'' at least one of the tube segments must be almost contained in $P$, in the sense that $T_{\operatorname{seg}}^{(i)}\backslash N_\delta(x)\subset P$.}
\label{tubeSegmentsAndPrism}
\end{figure}

Since $(\tubes_1,Y_1)_\delta$ is broad with error $\lessapprox_\delta \delta^{-\eta}$ relative to $\tubes_\rho$, we have that for each $x\in E$, there are $\gtrapprox_\delta \mu_{\operatorname{fine}}$ tubes $T\in \tubes_1[T_\rho]_{Y_{\operatorname{reg}}'}(x)$ with $\angle(T,T_{\operatorname{stem}})\gtrapprox_\delta \delta^{\eta/\boldsymbol{\beta}}\rho$. Since each such tube intersects $T_{\operatorname{stem}}$ in a set of dimensions at most $\delta\times\delta\times \frac{\delta}{\delta^{\eta/\boldsymbol{\beta}}\rho}$, we conclude that there is a set $\mathcal{H} = \mathcal{H}(T_{\operatorname{stem}})\subset \tubes_1[T_\rho]$ with the following properties:
\begin{itemize}
\item $\#\mathcal{H}\gtrapprox_{\delta} (\mu_{\operatorname{fine}} \delta^{-1} c)(\delta^{\eta/\boldsymbol{\beta}}\rho) \delta^{2\eta}$.

\item Each $T\in \mathcal{H}$ intersects $P$, and exits $P$ through the ``long ends.''

\item Each $T\in \mathcal{H}$ satisfies $|B\cap Y'(T)| \gtrsim \delta^{2\eta}|B\cap T| \sim \delta^{2\eta}\tilde c \delta^2$, where $B$ is the ball of radius $\tilde c$ with the same center as $P$. 
\end{itemize}
The second item follows from Definition \ref{twoScaleGrainsDecomp} Item (iv), plus the fact that $T\in \mathcal{H}$ implies that $T$ and $P$ are both associated to the same $\rho$ tube, and $Y(T)\cap  Y(P)\neq\emptyset$. The third item follows from the fact that the set $E$ from \eqref{defnE} was defined with respect to the shading $Y_{\operatorname{reg}}'$ (recall \eqref{bigOnTildeCBalls} for the definition of $Y_{\operatorname{reg}}'$).

As a consequence, using $a\in [\delta, \delta^{1-\eps_1}]$, $\tilde{c}=c\delta^{-\boldsymbol{\omega}/100}$, and \eqref{muFineIsBig},  we have
\[
\sum_{T\in \mathcal{H}}|B \cap Y'(T)| \gtrsim \Big(\mu_{\operatorname{fine}}\delta^{-1+2\eta +\eta/\boldsymbol{\beta}} c\rho \Big)\Big(\delta^{2\eta}\tilde c \delta^2\Big)\gtrsim \mu_{\operatorname{fine}}\delta^{\eta/\boldsymbol{\beta}+4\eta+\boldsymbol{\omega}/100 +\eps_1}(a\cdot \rho\tilde c\cdot \tilde c)\gtrsim \delta^{-\frac{1}{2}\boldsymbol{\omega}}(a\cdot \rho\tilde c\cdot\tilde c),
\]
where $B$ is the ball of radius $\tilde c$ with the same center as $P$. To ensure that the final inequality holds, we select $\eta\leq \boldsymbol{\beta}\boldsymbol{\omega}/100$ and $\eps_1\leq \boldsymbol{\omega}/100$. 

We claim that by pigeonholing, we can select a prism $P^\dag\supset P$ (the larger prism on the right side of Figure \ref{longThinSlabFig}) of dimensions $2a\times 2\rho\tilde c\times\tilde c$ so that
\begin{equation}\label{lotsOfMassInsidePDag}
\sum_{\substack{T\in \mathcal{H}\\ T\ \textrm{long end}\ P^\dag}}|P^\dag  \cap Y'(T)| \gtrsim \delta^{2\eta} \sum_{\substack{T\in \mathcal{H}\\ T\ \textrm{long end}\ P^\dag}} |P^{\dag}\cap T|  \gtrsim \delta^{-\frac{1}{4}\boldsymbol{\omega}}|P^\dag|,
\end{equation}
where ``$T\ \textrm{long end}\ P^\dag$'' means that $T$ exits $P^\dag$ through its long ends. 

To verify this claim, note that for each tube $T\in \mathcal{H},$ there exists at least one $a\times \rho\tilde c\times\tilde c$ prism $P^\dag$ so that $T$ exits $P^\dag$ through its long end. On the other hand, there are only $\lesssim (\tilde c/c)^2 = \delta^{-\frac{1}{50}\boldsymbol{\omega}}$ essentially distinct prisms of dimensions $a\times \rho\tilde c\times\tilde c$ that contain $P$. The result now follows from pigeonholing. (Note that if $T$ exits a prism $P_1^\dag$ through its long ends, and if $P_2^\dag$ is comparable to $P_1^\dag$, then $T'$ exits the 2-fold thickening of $P_2^\dag$ through its long ends, where the 2-fold thickening is the prism obtained by increasing the two smaller dimensions of $P_2^\dag$ by a factor of 2. This is why the dimension of our prisms have increased to  $2a\times 2\rho\tilde c\times\tilde c$ at this step).

Apply Corollary \ref{coveringTubesBroadCor} (finding a broad scale) to the set $\mathcal{H}$ with the shading $P^\dag\cap Y'(T)$, $T\in\mathcal{H}$. We obtain a scale $\tilde\rho\in[\delta,1]$; a set $\mathcal{H}'\subset\mathcal{H}$ (each $T\in\mathcal{H}'$ exists $P^\dag$ through its long ends); a sub-shading of the shadings $\{P^\dag\cap Y'(T), T\in\mathcal{H}'\}$, which we will denote by $Y_{P^{\dag}}(T)$; and a balanced partitioning cover $\tubes^{\mathcal{H}}_{\tilde\rho}$ of $\mathcal{H}'$.

Note that $Y_{P^{\dag}}(T)\subset P^\dag\cap Y'(T),$ 
and the latter set is contained in a tube segment of dimensions comparable to $\delta\times\delta\times \tilde c$. In particular, $Y_{P^{\dag}}(T)$ is \emph{not} a $\delta^{O(\eta)}$ dense shading of $T$. However, Corollary \ref{coveringTubesBroadCor} guarantees that the shadings are ``relatively'' dense inside $T\cap P^\dag$, thus
\begin{equation}\label{mostMassPreservedInTildeHPrime}
\sum_{T\in\mathcal{H}'}|Y_{P^{\dag}}(T)|\gtrapprox_\delta \delta^{2\eta}\tilde c\delta^2(\#\mathcal{H}').
\end{equation}

 Note that \eqref{lotsOfMassInsidePDag} remains true if the shading $P^\dag\cap Y'(T)$ on the LHS of \eqref{lotsOfMassInsidePDag}  is replaced by $Y_{P^{\dag}}(T)$, provided the RHS is weakened by an additional $\approx_\delta 1$ factor, i.e. we have
\begin{equation}\label{YPDagHasEnoughMass}
\sum_{T\in\mathcal{H}'}|Y_{P^{\dag}}(T)| \gtrapprox_\delta \delta^{-\frac{1}{4}\boldsymbol{\omega}}|P^\dag|.
\end{equation}

We claim that
\begin{equation}\label{rhoDagBig}
\tilde\rho\geq\delta^{1-\boldsymbol{\omega}/100}.
\end{equation}
We verify this claim as follows. Each point $x\in P^\dag$ is contained in at most $(\tilde\rho/\delta)^2 (\tilde\rho)^{-2\boldsymbol{\beta}}$ of the shadings $\{Y_{P^\dag}(T),\ T\in\mathcal{H}\}$. Thus if \eqref{rhoDagBig} failed, then by \eqref{YPDagHasEnoughMass} we would have
\[
|P^\dag|\geq \Big|\bigcup_{T\in\mathcal{H}}P^\dag\cap Y_{P^\dag}(T)\Big|\gtrapprox_\delta \Big(\big(\frac{\tilde\rho}{\delta}\big)^2 (\tilde\rho)^{-2\boldsymbol{\beta}}\Big)^{-1}\Big(\delta^{-\frac{1}{4}\boldsymbol{\omega}}|P^\dag|\Big)\gtrsim \delta^{-\frac{1}{8}\boldsymbol{\omega}}|P^\dag|,
\]
which is impossible. For the final inequality, we used the assumption that $\boldsymbol{\beta}\leq\boldsymbol{\omega}/100$.

By \eqref{mostMassPreservedInTildeHPrime} and pigeonholing, we can select $T_1\in\mathcal{H}'$ with $|Y_{P^\dag}(T_1)|\gtrapprox_\delta  \delta^{2\eta}\tilde c\delta^2$. Let $ T_{\tilde\rho}\in\tubes_{\tilde\rho}^{\mathcal{H}}$ be the (unique) $\tilde\rho$ tube with $T_1\in\mathcal{H}'[T_{\tilde\rho}]$. For each $x\in Y_{P^\dag}(T_1)$, the directions of the tubes in $\mathcal{H}'[T_{\tilde\rho}]_{Y_{P^\dag}}(x)$ are broad with error $\lessapprox_\delta 1$ inside the $\tilde\rho$ cap centered at $\dir(T_{\tilde\rho})$. In particular, the intersection of each of these tubes with $P^\dag$ is contained in $P^\dag\cap N_{\tilde\rho\tilde c}(T_1)$; the latter set is contained in a $2a\times \tilde\rho\tilde c\times\tilde c$ prism; call this prism $\tilde P$---this is the green prism in Figure \ref{longThinSlabFig}. Note that since each $T\in\mathcal{H}'$ exits $P^\dag$ through its long ends, each of the tubes $T\in \mathcal{H}'[T_{\tilde\rho}]_{Y_{P^\dag}}(x)$ described above exit $\tilde P$ through its long ends.

Applying a standard Cordoba-type $L^2$ argument\footnote{In brief, we select a set of tubes $T'\in \mathcal{H}'$ that make angle $\sim \tilde\rho$ with $T_1$ and intersect $T_1$ at $\delta/\tilde\rho$ separated points; this latter collection of tubes, restricted to the $2a\times\tilde\rho\tilde c\times \tilde c$ prism described above, satisfies the Katz-Tao Convex Wolff Axioms with error $\lesssim 1$. Each of these tubes has a shading $Y'(T)\cap\tilde P$ that satisfies $|Y'(T)\cap\tilde P|\gtrsim \delta^{2\eta}\tilde c\delta^2$, and hence the union of these shadings is almost disjoint.}, we conclude that if we define 
\begin{equation*}
\tubes_{\tilde P} = \{T\in \mathcal{H}'[T_{\tilde \rho}]\colon T\cap\tilde P\neq\emptyset,\ T\ \textrm{exits}\ \tilde P\ \textrm{through its long ends}\},
\end{equation*}
and define the shading $Y_{\tilde P}(T)=Y_{P^\dag}(T)\cap\tilde P$ (note if $T\in \tubes_{\tilde{P}}$, $T$ exists $\tilde{P}$ through its long ends, so $Y_{\tilde{P}}(T)= Y_{P^{\dag}}(T)$, we rename it to be $Y_{\tilde{P}}(T)$ just for notational convenience),
then the set
\[
Y(\tilde P) = \bigcup_{T\in \tubes_{\tilde P}} Y_{\tilde P}(T)
\]
satisfies $|Y(\tilde P)|\gtrsim \delta^{4\eta+4\eps_1}|\tilde P|$ (here the $\delta^{4\eps_1}$ loss comes from the fact that the prism $\tilde P$ is not a $\delta\times \tilde\rho\tilde c\times\tilde c$ prism, but rather a $2a\times \tilde\rho\tilde c\times\tilde c$ prism with $a\in [\delta, \delta^{1-\eps_1}]$). Furthermore, for each $x\in Y(\tilde P)$, the set of unit vectors $\{\dir(T)\colon T\in (\tubes_{\tilde P})_{Y_{\tilde P}}(x)\}$ is broad with error $\lessapprox_\delta 1$ inside the $\tilde\rho$-cap $B(\dir(\tilde P),\tilde\rho)$.

\medskip

\noindent {\bf Step 4.}
We summarize the conclusion from Step 3. Given a refinement $(\tubes_1,Y')_\delta$ of $(\tubes_1,Y_1)_\delta$ that satisfies $\sum |Y'(T)|\geq\frac{1}{2}\sum |Y_1(T)|$, we have located the following objects:
\begin{itemize}
	\item A scale $\tilde\rho$.
	\item A $2a\times\tilde\rho\tilde c\times\tilde c$ prism $\tilde P$ and a shading $Y(\tilde P)$ on this prism.
	\item A set of tubes $\tubes_{\tilde P}$ and a shading $Y_{\tilde P}(T)\subset Y'(T)\cap\tilde P$ on these tubes. 
\end{itemize}
 These objects have the following properties:
\begin{itemize}
	\item Each $T\in\tubes_{\tilde P}$ exits $\tilde P$ through its long ends, in the sense of Figure \ref{exitLongEndsFig}, Left.
	\item $Y(\tilde P) = \bigcup_{T\in\tubes_{\tilde P}}Y_{\tilde P}(T)$, and $|Y(\tilde P)|\gtrapprox_\delta \delta^{4\eta+4\eps_1}|\tilde P|$.
	\item For each $x\in Y(P)$, the tubes in $(\tubes_{\tilde P})_{Y_{\tilde P}}(x)$ point in directions that are broad with error $\lessapprox_\delta 1$ inside the $\tilde\rho$ cap $B(\dir(\tilde P),\tilde\rho)$. 
\end{itemize}

We will now iteratively apply the argument from Step 3. We begin by setting $(\tubes,Y')=(\tubes,Y_1)$ and $\tilde{\mathcal{P}}_0=\emptyset$. As long as $\sum |Y'(T)|\geq\frac{1}{2}\sum |Y_1(T)|$, we proceed as follows:
\begin{itemize}
\item  Apply the argument from Step 3.
\item Place the prism $\tilde P$ located in Step 3 into the multiset $\tilde{\mathcal{P}}_0$ (i.e. if the prism is already present, then we add another copy).
\item For each $T\in\tubes_{\tilde P}$, replace the shading $Y'(T)$ with $Y'(T)\backslash Y_{\tilde P}(T)$.
\end{itemize} 
We repeat the above steps until $\sum |Y'(T)|<\frac{1}{2}\sum |Y_1(T)|$, at which point we halt.

Let us examine the output from the above procedure. First, we have
\begin{equation}\label{massBdTildeP}
\sum_{\tilde P\in\tilde{\mathcal{P}}_0}\sum_{T\in\tubes_{\tilde P}} |Y_{\tilde P}(T)| \geq\frac{1}{2}\sum_{T\in\tubes}|Y_1(T)|\gtrapprox_\delta \sum_{T\in\tubes}|Y(T)|.
\end{equation}

After dyadic pigeonholing, we can select a multiset $\tilde{\mathcal{P}}_1\subset \tilde{\mathcal{P}}_0$ so that each $\tilde P\in \tilde{\mathcal{P}}_1$ has a common value of $\tilde\rho$ (up to a factor of 2). Abusing notation slightly, we will denote this value by $\tilde\rho$. We will choose $\tilde{\mathcal{P}}_1$ so that the bound \eqref{massBdTildeP} (the first and final terms) remains true with $\tilde{\mathcal{P}}_1$ in place of $\tilde{\mathcal{P}}_0$.

For each $T\in\tubes_1$, define $Y_2(T) = \bigcup_{\tilde P} Y_{\tilde P}(T)$, where the union is taken over those $\tilde P\in\tilde{\mathcal P}_1$ with $T\in\tubes_{\tilde P}$. For notational consistency, define $\tubes_2=\tubes_1$. Then $(\tubes_2,Y_2)_\delta$ is an $\approx_\delta 1$ refinement of $(\tubes_1,Y_1)_\delta$. 

\medskip

\noindent {\bf Step 5.}
We will summarize the conclusion from Step 4. We have located the following objects:
\begin{itemize}
	\item A $\approx_\delta 1$ refinement $(\tubes_2,Y_2)_\delta$ of $(\tubes_1,Y_1)_\delta$, which in turn is an $\approx_\delta 1$ refinement of $(\tubes,Y)_\delta$.
	\item A scale $\tilde\rho \geq\delta^{1-\boldsymbol{\omega}/100}$.
	\item A multiset $\tilde{\mathcal P}_1$ of $2a\times\tilde\rho\tilde c\times\tilde c$ prisms, and a shading $Y_1(\tilde P)$ on these prisms (In Step 4 this shading was called $Y(\tilde P)$). Note that the prisms in $\tilde{\mathcal P}_1$ might not be essentially distinct.
	\item For each prism $\tilde P\in\tilde{\mathcal{P}}_1$, a set of tubes $\tubes_{\tilde P}\subset \tubes_2$.
\end{itemize}
 These objects have the following properties:
\begin{itemize}
	\item[(a)] For each $\tilde P\in\tilde{\mathcal{P}}_1$, each $T\in\tubes_{\tilde P}$ exits $\tilde P$ through the long ends.
	\item[(b)] For each $\tilde P\in\tilde{\mathcal{P}}_1$, we have $Y_1(\tilde P) = \tilde P\cap \bigcup_{T\in\tubes_{\tilde P}}Y_2(T)$, and $|Y_1(\tilde P)|\gtrapprox_\delta \delta^{4\eta+4\eps_1}|\tilde P|$.
	\item[(c)] For each $\tilde P\in\tilde{\mathcal{P}}_1$ and for each $x\in Y_1(\tilde P)$, the tubes in $(\tubes_{\tilde P})_{Y_2}(x)$ point in directions that are broad with error $\lessapprox_\delta 1$ inside the $\tilde\rho$ cap $B(\dir(\tilde P),\tilde \rho)$.
\end{itemize}
Let us compare the above items to Conclusion (C) of Lemma \ref{squareGrainsGetLonger}. We have that Items (i) and (iv) of Conclusion (C) are currently satisfied.  We will work towards satisfying the other Items.

First, the prisms in $\tilde{\mathcal P}_1$ might not be essentially distinct. This is not a minor failure fixable by a $\sim 1$ refinement, but instead a dramatic failure --- it is possible that a very large number of prisms from $\tilde{\mathcal P}_1$ are all pairwise comparable, or even identical. We can fix this problem by merging comparable $2 a \times\tilde\rho\tilde c\times\tilde c$ prisms in $\tilde{\mathcal P}_1$ into a single $4 a\times \tilde\rho(2\tilde c)\times(2\tilde c)$ prism. We will refer to this new, post-merger set of prisms as $\tilde{\mathcal{P}}_2$. Our shading $Y_2(\tilde P)$ on our newly constructed prisms is given by the union of the shadings of the corresponding prisms from $\tilde{\mathcal P}_1$, and the set $\tubes_{\tilde P}\subset\tubes_2$ is the union of the sets $\{\tubes_{\tilde P_1}\}$ from the corresponding prisms from $\tilde{\mathcal P}_1$. 

Item (a) from the start of Step 5 might no longer hold for our newly constructed prisms $\tilde{\mathcal{P}}_2$, but this is a minor failure --- we can restore it by replacing each $4 a \times2\tilde\rho(2\tilde c)\times(2\tilde c)$ prism by the prism of dimensions $100  a\times 100\tilde\rho\tilde c\times 2\tilde c$ with the same center and axes (see Figure \ref{comparablePrismsBadExitFig}). Annoyingly, this might destroy the property that the prisms are essentially distinct, but this time, this is only a minor failure --- essential distinctness can be restored by a $\sim 1$ refinement of the prisms (this refinement induces a $\sim 1$ refinement of the shading $Y_2$ on $\tubes_2$). Denote the new set of prisms created through this process by $\tilde{\mathcal{P}}_3$. Abusing notation slightly, we will redefine the quantities $\tilde c,$ and $\tilde\rho$ and let $\tilde a=100a$ (increasing each by a $\sim 1$ multiplicative factor) so that the prisms in $\tilde{\mathcal{P}}_3$ still have dimensions $\tilde a\times\tilde \rho\tilde c\times\tilde c$.

\begin{figure}[h!]
\centering
\begin{overpic}[ scale=0.25]{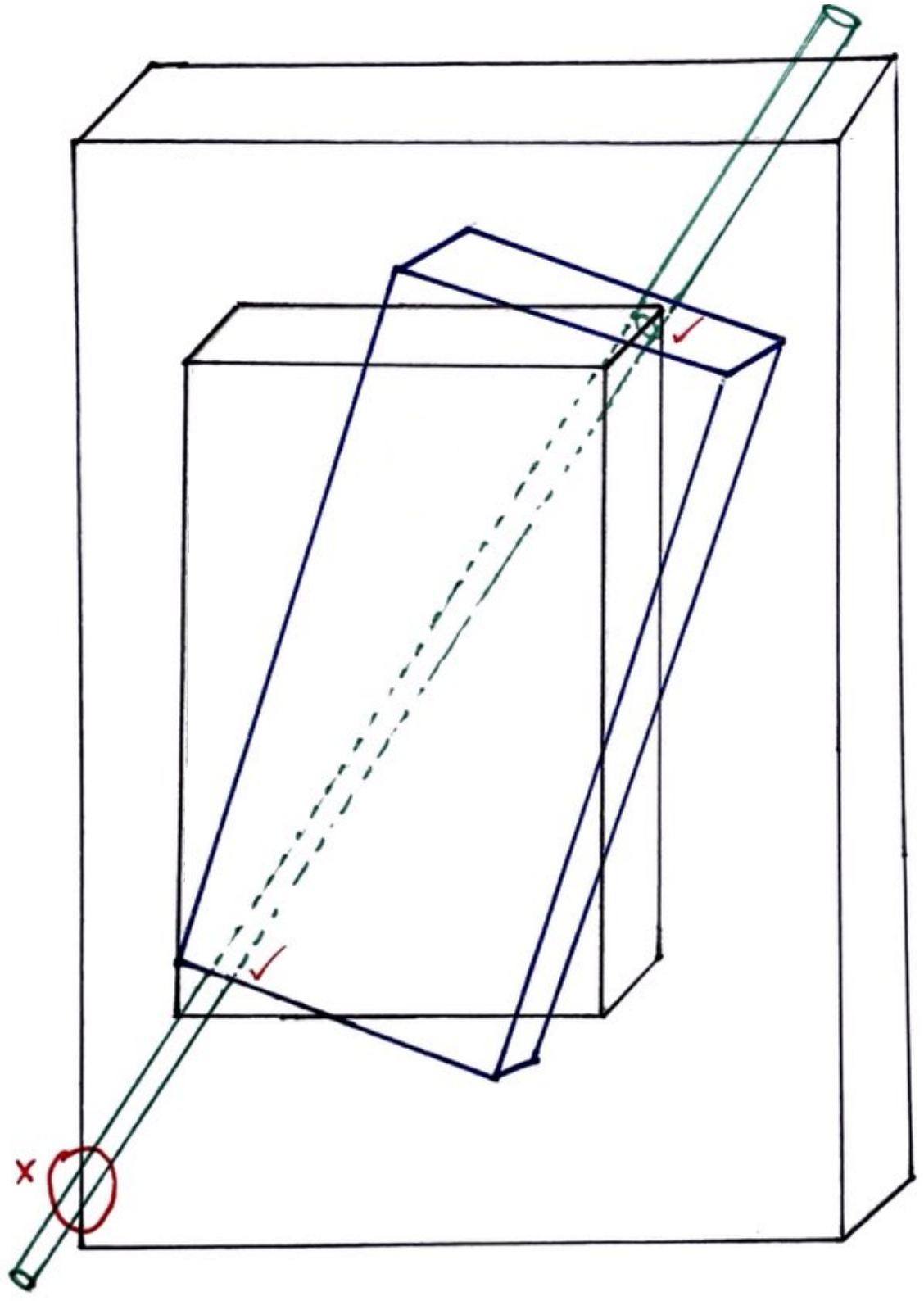}
 \put (17,60) {$\tilde P$}
 \put (29,62) {$\tilde P'$}
 \put (12,81) {$2\tilde P$}
\end{overpic}
\caption{The prisms $\tilde P$ and $\tilde P'$ are comparable, and thus both are replaced by a common $4 a\times \tilde\rho(2\tilde c)\times(2\tilde c)$ prism (which happens to be $2\tilde P$, i.e.~the 2-fold dilate of $\tilde P$). This creates a problem (circled in red): A tube (green) that exits the prism $\tilde P'$ through its long ends might fail to exit $2\tilde P$ through its long ends. However, this problem can be fixed by replacing the $4 a\times \tilde\rho(2\tilde c)\times(2\tilde c)$ prism by a slightly wider and thicker (but not taller) prism.
}
\label{comparablePrismsBadExitFig}
\end{figure}

Note that Item (b) above continues to hold for our newly constructed prisms and their associated shading. Crucially, Item (c) also continues to hold; this follows from Lemma \ref{broadnessUnion} (a union of sets of broad vectors is broad).  More specifically, the sets of broad vectors through each point are disjoint because in Step 4, for each $T\in \tubes_{\tilde{P}}$, we have replaced the shading $Y'(T)$ with $Y'(T)\setminus Y_{\tilde{P}}(T)$, so through each point $x$, there is  at most one  $\tilde{P}$ such that $x\in Y_{\tilde{P}}(T)$.  Since the sets are disjoint, their union is a set instead of multi-set, so Lemma~\ref{broadnessUnion} implies that the union (as a set)  of sets of broad vectors is broad.

\medskip

\noindent {\bf Step 6.}
In Step 5 we constructed a set $\tilde{\mathcal P}_3$ of essentially distinct $\tilde a\times\tilde\rho\tilde c\times\tilde c$ prisms, and a shading $Y_3$ on these prisms. We shall refer to this pair as $(\tilde{\mathcal P}_3,Y_3)_{\tilde a\times \tilde\rho \tilde c\times \tilde c}$. For each $\tilde P\in\tilde{\mathcal P}_3$, we have a set $\tubes_{\tilde P}\subset\tubes_2$; each of these tubes exits $\tilde P$ through its long ends.

Our current task is to further refine the pair $(\tilde{\mathcal P}_3,Y_3)_{\tilde a\times \tilde\rho \tilde c\times \tilde c}$ and $(\tubes_2,Y_2)_\delta$ to more closely match Conclusion (C) of Lemma \ref{squareGrainsGetLonger}.
Item (ii) of Conclusion (C) refers to a partitioning cover $\tubes_{\tilde \rho}$ of $\tubes'$. We will construct this as follows.  To begin, let $\{T_{\tilde\rho}\}$ be a set of $\tilde\rho$ tubes with the following properties:
\begin{itemize}
\item Every $\delta$ tube is contained in at least one tube from $\{T_{\tilde\rho}\}$.
\item For every $\tilde a\times\tilde\rho\tilde c\times\tilde c$ prism $\tilde P$, there is at least one $\tilde\rho$ tube $T_{\tilde\rho}\in  \{T_{\tilde\rho}\}$ with $\tilde P\subset T_{\tilde\rho}$ and $\angle(\dir(\tilde P),\dir(T_{\tilde \rho}))\leq 2\tilde\rho$.
\item The tubes in $\{ T_{\tilde\rho}\}$ are \emph{weakly} essentially distinct, in the following sense: for each fixed $T_{\tilde\rho}\in\{T_{\tilde\rho}\}$, there are $O(1)$ other tubes from $\{T_{\tilde\rho}\}$ that are comparable to $T_{\tilde\rho}$.
\end{itemize}

Next, by pigeonholing the set $\{T_{\tilde\rho}\}$ by a $O(1)$ factor, we can select a set $\tubes_{\tilde\rho}\subset\{T_{\tilde\rho}\}$ that is \emph{strongly} essentially distinct, in the following sense: for each pair of distinct tubes $T_{\tilde\rho},T_{\tilde\rho}'$ from $\tubes_{\tilde\rho}$, we have that $N_{100\tilde\rho}(T_{\tilde\rho})\cap N_{100\tilde\rho}(T_{\tilde\rho}')$ has diameter at most $1/2$, and in particular no $\delta$ tube can be contained in both $N_{100\tilde\rho}(T_{\tilde\rho})$ and  $N_{100\tilde\rho}(T_{\tilde\rho}')$. We will select the set $\tubes_{\tilde\rho}$ so that the following properties hold:

\begin{itemize}
	\item[(i)] If we define $\tubes_4$ to be the set of tubes $T\in\tubes_2$ contained in some $T_{\tilde\rho}\in\tubes_{\tilde\rho}$, i.e. $\tubes_4=\bigcup_{T_{\tilde\rho}\in\tubes_{\tilde\rho}}\tubes_2[T_{\tilde\rho}]$, and define $Y_4$ to be the restriction of $Y_2$ to $\tubes_4$, then $(\tubes_4,Y_4)_\delta$ is a $\sim 1$ refinement of $(\tubes_2,Y_2)_\delta$. 

	\item[(ii)] Similarly to the previous item, if we define $\tilde{\mathcal{P}}_4$ to be the set of prisms $\tilde P\in \tilde{\mathcal{P}}_3$ with the property that there exists $T_{\tilde\rho}\in\tubes_{\tilde\rho}$ with $\tilde P\subset T_{\tilde\rho}$ and $\angle(\dir(\tilde P),\dir(T_{\tilde \rho}))\leq 2\tilde\rho$, and define 

		\begin{equation}\label{defnY4Shading}
		Y_4(\tilde P) = Y_3(\tilde P)\cap \bigcup_{T\in\tubes_{\tilde P}\cap\tubes_4}Y_4(T),
		\end{equation}
		then $(\tilde{\mathcal{P}}_4,Y_4)_{\tilde a\times\tilde\rho\tilde c\times\tilde c}$ is a $\gtrsim 1$ refinement  of $(\tilde{\mathcal{P}}_3,Y_3)_{\tilde a\times\tilde\rho\tilde c\times\tilde c}$. 
\end{itemize}

\begin{itemize}
	\item[(a)] A consequence of Item (i) is that $\tubes_{\tilde\rho}$ is a partitioning cover of $\tubes_4$, and in fact more is true:  the sets $\{N_{100\tilde\rho}(T_{\tilde \rho})\colon T_{\tilde \rho}\in\tubes_{\tilde\rho}\}$ form a partitioning cover of of $\tubes_4$.

	\item[(b)] A consequence of Item (ii) is that for each $\tilde P\in \tilde{\mathcal{P}}_4$, there is a unique $T_{\tilde\rho}\in\tubes_{\tilde\rho}$ that satisfies the two properties $\tilde P\subset T_{\tilde \rho}$ and $\angle(\dir(\tilde P),\dir(T_{\tilde \rho}))\leq 2\tilde\rho$. This induces a partition 
	\begin{equation}\label{mathcalP4Partition}
		\tilde{\mathcal{P}}_4=\bigsqcup_{\tubes_{\tilde \rho}} (\tilde{\mathcal{P}}_4)_{T_{\tilde\rho}}.
	\end{equation}
	(C.f. Definition \ref{twoScaleGrainsDecomp}, Item (i).)

	\item[(c)] A consequence of Items (i) and (ii) is that if $T_{\tilde\rho}\in\tubes_{\tilde\rho}$, $\tilde P\in (\tilde{\mathcal{P}}_4)_{T_{\tilde \rho}},$ and $T\in\tubes_{\tilde P}\cap\tubes_4,$ then $T\in\tubes_4[T_{\tilde\rho}]$. This is because $T$ exits $\tilde P$ through its long ends, and hence  $\angle(\dir(T),\dir(\tilde P))\leq 10\tilde\rho$.

\end{itemize}

Our next task is to estimate the quantity $\sum_{\tilde P\in\tilde{\mathcal{P}}_4}|\tilde P|$. Let $\tau_i$ be the scale from Step 2 satisfying $\delta^{\eps_1}\rho\leq\tau_i<\rho$. Since $(\tilde{\mathcal{P}}_4,Y_4)_{\tilde a\times\tilde\rho\tilde c\times\tilde c}$ is $\gtrapprox_\delta \delta^{4\eta+4\eps_1}$ dense, we have
\begin{equation}\label{volumeLowerBoundSumOfGrains}
\begin{split}
\sum_{\tilde P\in\tilde{\mathcal{P}}_4}|\tilde P| & \lessapprox_\delta \delta^{-4\eta-4\eps_1} \sum_{\tilde P\in\tilde{\mathcal{P}}_4}|Y_4(\tilde P)|\\
&=\delta^{-4\eta-4\eps_1} \sum_{T_{\tilde\rho}\in\tubes_{\tilde\rho}}\Big|\bigsqcup_{\tilde P\in(\tilde{\mathcal{P}}_4)_{T_{\tilde\rho}}}Y_4(\tilde P) \Big|\\
&\leq \delta^{-4\eta-4\eps_1} \sum_{T_{\tilde\rho}\in\tubes_{\tilde\rho}} \Big|\bigcup_{T\in\tubes_4[T_{\tilde\rho}]}Y_4(T)\Big|\\
&\lesssim \delta^{-4\eta-4\eps_1} \sum_{T_{\tilde\rho}\in\tubes_{\tilde\rho}}\Big( \mu_i^{-1} \sum_{T\in\tubes_1[T_{\tilde\rho}]}|Y_1(T)|\Big)\\
&= \delta^{-4\eta-4\eps_1}\mu_i^{-1}\sum_{T\in\tubes_1}|Y_1(T)|\\
& \lesssim   \delta^{-4\eta-4\eps_1}\mu_i^{-1}\sum_{T\in\tubes_2}|Y_2(T)|.
\end{split}
\end{equation}
For the third line we used \eqref{defnY4Shading}.  For the fourth line, we used the fact that each $\tilde\rho$ tube  contains some $\tau_i$-tube and Item (i) in Step 2. For the last line, we used \eqref{massBdTildeP} and the definition of $Y_2(T) = \bigcup_{\tilde P} Y_{\tilde P}(T)$.

\medskip

\noindent {\bf Step 7.}

We would like to show that after a suitable refinement, $(\tilde{\mathcal{P}}_4,Y_4)_{\tilde a\times\tilde\rho\tilde c\times\tilde c}$ is a robustly $\delta^{\eps}$-dense  two-scale grains decomposition of $(\tubes_4,Y_4)_\delta$ wrt $\tubes_{\tilde \rho}$, in the sense of Definition \ref{twoScaleGrainsDecomp}. Currently, the biggest obstacle is Item (ii) from Definition \ref{twoScaleGrainsDecomp}. In particular, it need not be the case that the sets $\{Y_4(\tilde P)\colon \tilde P \in (\tilde{\mathcal{P}}_4)_{T_{\tilde\rho}}\}$ are pairwise disjoint.


We will fix this problem as follows. We claim that either Conclusion (A) of Lemma \ref{squareGrainsGetLonger} is true (and thus we are done), or there exists a refinement $(\tubes_5,Y_5)_\delta$ of $(\tubes_4,Y_4)$, and a refinement $(\tilde{\mathcal{P}}_5,Y_5)_{\tilde a\times\tilde\rho\tilde c\times\tilde c}$ of $(\tilde{\mathcal{P}}_4,Y_4)_{\tilde a\times\tilde\rho\tilde c\times\tilde c}$ with the following properties:
\begin{itemize}
	\item For each $T_\rho\in\tubes_\rho$, the sets $\{Y_5(\tilde P)\colon \tilde P\in(\tilde{\mathcal{P}}_5)_{T_{\tilde\rho}}\}$ are pairwise disjoint (here $(\tilde{\mathcal{P}}_5)_{T_{\tilde\rho}} = \tilde{\mathcal{P}}_5 \cap (\tilde{\mathcal{P}}_4)_{T_{\tilde\rho}}$; recall \eqref{mathcalP4Partition}).
	\item $(\tilde{\mathcal{P}}_5,Y_5)_{\tilde a\times\tilde\rho\tilde c\times\tilde c}$ is a $\gtrapprox_\delta \delta^{\eps_2}$ refinement of $(\tilde{\mathcal{P}}_4,Y_4)_{\tilde a\times\tilde\rho\tilde c\times\tilde c}$.
	\item The pair $(\tubes_5,Y_5)_\delta$ is a $\gtrapprox_\delta \delta^{\eps_2}$ refinement of $(\tubes_4,Y_4)_\delta$, where $\tubes_5 = \tubes_4$, and the shading $Y_5$ is given by

	\begin{equation}\label{defnOfY5Shading}
	Y_5(T)=Y_4(T)\cap\bigcup_{\substack{\tilde P\in (\tilde{\mathcal P}_5)_{T_{\tilde \rho}} \\ T\in\tubes_{\tilde P} \cap \tubes_4  }}Y_5(\tilde P),
	\end{equation}
	where $T_{\tilde\rho}$ is the unique $\tilde  \rho$ tube containing $T$.

	\item For each $T_{\tilde\rho}\in\tubes_{\tilde\rho}$, we have
	\begin{equation}\label{unionShadingY4EqualsY5}
		\bigcup_{T\in\tubes_5[T_{\tilde\rho}]}Y_5(T) = \bigsqcup_{\tilde P\in(\tilde{\mathcal{P}}_5)_{T_{\tilde\rho}}}Y_5(\tilde P).
	\end{equation}
\end{itemize}

\noindent We will prove this claim in Step 8 below. Let us accept this claim for the moment. 

The pair $(\tilde{\mathcal{P}}_5,Y_5)_{\tilde a\times\tilde\rho\tilde c\times\tilde c}$ and $(\tubes_5,Y_5)_\delta$ now satisfy Items (i), (ii), and (iv) from Definition \ref{twoScaleGrainsDecomp}. Items (i) and (ii) are immediate. We can verify Item (iv) as follows: If $T$ and $\tilde P$ are associated to a common $\rho$ tube, and if $Y_5(T)\cap Y_5(\tilde P)\neq\emptyset$, then we must have $T\in\tubes_4\cap\tubes_{\tilde P}$, and hence we have that $T$ exits $\tilde P$ through its long end, and also $Y_5(T)\cap\tilde P\subset Y_5(\tilde P)$ (this follows from the definition of the shading $Y_5$ from \eqref{defnOfY5Shading}), as desired. 

It remains to obtain Item (iii) from Definition \ref{twoScaleGrainsDecomp}. By dyadic pigeonholing, there is a number $\mu$; a set $\tubes_{\tilde\rho}'$; and an $\approx_\delta1$ refinement $(\tubes_6,Y_6)_\delta$ of $(\tubes_5,Y_5)_\delta$ so that the following holds:
\begin{itemize}
	\item $\tubes_{\tilde\rho}'$ is a balanced partitioning cover of $\tubes_6$.
	\item For each $T_{\tilde\rho}\in\tubes_{\tilde\rho}'$ and each $x\in\bigcup_{T\in\tubes_6[T_{\tilde\rho}]}Y_6(T)$, we have $\#\big((\tubes_6[T_{\tilde\rho}])_{Y_6}(x)\big)\sim\mu$.
\end{itemize}

Let $\tilde{\mathcal{P}}_6 = \bigcup_{T_{\tilde\rho}\in\tubes_{\tilde\rho}'}(\tilde{\mathcal{P}}_5)_{T_{\tilde\rho}}$ and let $Y_6(\tilde P)\subset Y_5(\tilde P)$ be the shading so that \eqref{unionShadingY4EqualsY5} continues to hold with $(\tubes_6,Y_6)_\delta$ in place of  $(\tubes_5,Y_5)_\delta$, and $(\tilde{\mathcal{P}}_6,Y_6)_{\tilde a\times\tilde\rho\tilde c\times\tilde c}$ in place of $(\tilde{\mathcal{P}}_5,Y_5)_{\tilde a\times\tilde\rho\tilde c\times\tilde c}$.

The triple $(\tubes_6,Y_6)_\delta$, $(\tilde{\mathcal{P}}_6,Y_6)_{\tilde a\times\tilde\rho\tilde c\times\tilde c}$, and $\tubes_{\tilde\rho}$ continue to satisfy Items (i), (ii), and (iv) from Definition \ref{twoScaleGrainsDecomp}. To verify Item (iii), we need to estimate the density of the shading on $(\tilde{\mathcal{P}}_6,Y_6)_{\tilde a\times\tilde\rho\tilde c\times\tilde c}$. From \eqref{volumeLowerBoundSumOfGrains}, we have
\begin{equation}\label{upperBdSumVolumePrisms}
\sum_{\tilde P \in\tilde{\mathcal{P}}_6}|\tilde P|\lessapprox_\delta \delta^{-4\eta-4\eps_1}\mu_i^{-1}\sum_{T\in\tubes_2}|Y_2(T)|.
\end{equation}

Note that $\mu\lesssim\mu_i \delta^{-2\eps_1}$ (recall that $\mu_i$ is the multiplicity associated to scale $\tau_i$, which was chosen in Step 6). Thus we can compute
\begin{equation}\label{lowerBdOnMassOfShading}
\begin{split}
\sum_{\tilde P \in\tilde{\mathcal{P}}_6}|Y_6(\tilde P)| & = \sum_{T_{\tilde\rho}\in\tubes_{\tilde\rho}'}\Big| \bigsqcup_{\tilde P \in(\tilde{\mathcal{P}}_6)_{T_{\tilde\rho}}}Y_6(\tilde P)\Big|\\
& = \sum_{T_{\tilde\rho}\in\tubes_{\tilde\rho}'}\Big| \bigcup_{T\in\tubes_6[T_{\tilde\rho}]}Y_6(T)\Big|\\
& \gtrsim \sum_{T_{\tilde\rho}\in\tubes_{\tilde\rho}'}\Big(\mu^{-1}\sum_{T\in\tubes_6[T_{\tilde\rho}]}|Y_6(T)|\Big)\\
& \gtrsim \mu_i^{-1} \delta^{2\eps_1} \sum_{T\in\tubes_6}|Y_6(T)|\\
& \gtrapprox_\delta \mu_i^{-1}\delta^{\eps_2+2\eps_1} \sum_{T\in\tubes_6}|Y_2(T)|.
\end{split}
\end{equation}

Comparing \eqref{upperBdSumVolumePrisms} and \eqref{lowerBdOnMassOfShading}, we conclude that $(\tilde{\mathcal{P}}_6,Y_6)_{\tilde a\times\tilde\rho\tilde c\times\tilde c}$  is $\gtrapprox_\delta\delta^{4\eta+6\eps_1+\eps_2}\geq\delta^{2\eps_2}$ dense. We now select $\eps_2$ sufficiently small (depending on $\eps$) so that this quantity is $\geq\delta^{\eps}$. We conclude that Conclusion (C) of Lemma \ref{squareGrainsGetLonger} holds.

This concludes the proof of Lemma \ref{squareGrainsGetLonger}, except that we must still prove the Claim stated at the beginning of Step 7. We will do this below.

\medskip

\noindent {\bf Step 8.} 
Our final task is to prove the Claim from Step 7. For notational convenience, we will abuse notation and rename the set $(\tilde{\mathcal P}_4,Y_4)_{\tilde a \times  \tilde\rho\tilde c\times\tilde c}$ as $(\tilde{\mathcal P},Y)_{\tilde a \times  \tilde\rho\tilde c\times\tilde c}$. Informally, the idea is as follows: If the shadings $\{Y(\tilde P)\colon \tilde P\in\tilde{\mathcal{P}}_{T_{\tilde \rho}}\}$ have small overlap, then we can refine these these shadings to be disjoint. On the other hand, if the shadings have large overlap, then since the prisms in $\tilde{\mathcal{P}}_{T_{\tilde \rho}}$ are essentially distinct and all satisfy $\angle(\operatorname{dir}(\tilde P), \dir(T_{\tilde \rho}))\lesssim \tilde \rho$, we have that the prisms in $\tilde{\mathcal{P}}_Y(x)$  (i.e. the prisms passing through a typical point) must have differing tangent planes (i.e. there must exist prisms $\tilde P,\tilde P'\in \tilde{\mathcal{P}}_Y(x)$ for which $\angle(\Pi(\tilde P), \Pi(\tilde P'))$ is large). We then apply Lemma \ref{OnePrismWithTangentPlanes} to show that the thickened neighbourhood of a typical prism in $\tilde{\mathcal P}$ has large intersection with $\bigcup_{\tilde P\in\tilde{\mathcal{P}}}Y(\tilde P)$, and this in turn means that the thickened neighbourhood of a typical tube in $\tubes_4$ has large intersection with $\bigcup_{\tubes}Y(T)$. By Corollary \ref{corOfInflateTubesToSlabsLem}, this yields Conclusion (A) of Lemma \ref{squareGrainsGetLonger}. We now turn to the details.

Using Lemma \ref{everySetHasARegularShadingLem} (every shading has a regular sub-shading), we may select a $\gtrsim 1$ refinement $(\tilde{\mathcal P}',Y')_{\tilde a \times  \tilde\rho\tilde c\times\tilde c}$ with the property that each shading $Y'( \tilde P),\ \tilde P\in \tilde{\mathcal{P}}'$ is regular (recall Definition \ref{regularShading}) and satisfies $|Y'(\tilde P)|\gtrapprox_\delta  \delta^{4\eta+4\eps_1}|\tilde P|$. 

After dyadic pigeonholing, we may suppose there exists a number $\nu$ and a $\gtrapprox_\delta$ refinement $(\tilde{\mathcal P}'',Y'')_{\tilde a \times  \tilde\rho\tilde c\times\tilde c}$ of $(\tilde{\mathcal P}',Y')_{\tilde a \times \tilde\rho\tilde c\times\tilde c}$ so that for each $T_{\tilde \rho}\in\tubes_{\tilde \rho}$ and each point $x\in \bigcup_{\tilde P\in(\tilde{\mathcal{P}}'')_{T_{\tilde \rho}}}Y''(\tilde P)$, we have 
$\#((\tilde{\mathcal{P}}''_{T_{\tilde \rho}})_{Y''}(x))\sim\nu$. 

First, we will consider the case where
\begin{equation}\label{nuGeqDeltaEps2}
\nu \geq \delta^{-\eps_2}.
\end{equation}
We will show that Conclusion (A) of Lemma \ref{squareGrainsGetLonger} is true for a suitably chosen value of $\alpha$.  Observe that the prisms in $((\tilde{\mathcal{P}}')_{T_{\tilde \rho}})_{Y'}(x)$ are essentially distinct, and they all satisfy $\angle(\dir(T_{\tilde \rho}),\dir(\tilde P))\leq 2\tilde \rho$. Furthermore, they all (by definition) pass through the common point $x$. Thus for each point $x\in \bigcup_{\tilde P\in(\tilde{\mathcal{P}}'')_{T_ {\tilde \rho}}}Y''(\tilde P)$, there must exist a pair of prisms $\tilde P,\tilde P'$ from this set with 
\[
\angle\big(\Pi(\tilde P^{T_{\tilde \rho}}),\ \Pi\big((\tilde P')^{T_{\tilde \rho}}\big)\big)\gtrsim \nu^{1/2}   (\tilde a/(\tilde \rho \tilde c)).
\]
(For comparison, $\Pi(\tilde P^{T_{\tilde \rho}})$ and $\Pi\big((\tilde P')^{T_{\tilde \rho}}\big)$ are defined up to uncertainty $\tilde a/ ( \tilde \rho \tilde c )$ ).

From the above discussion, we see that for each $T_{\tilde \rho}\in\tubes_{\tilde \rho}$, each $\tilde P_0\in\tilde{\mathcal{P}}''_{T_{\tilde \rho}}$ and each $x\in Y''( \tilde P_0)$, we have
\[ 
 \sup_{\tilde P\in ((\tilde{\mathcal{P}}'_{T_{\tilde \rho}})_{Y'}(x)) }\angle\big(\Pi(\tilde P_0^{T_{\tilde \rho}}),\Pi(\tilde P^{T_{\tilde \rho}})\big) \gtrsim  \nu^{1/2} \tilde a/(\tilde \rho \tilde c),
\]
and thus
\[ 
\inf_{x\in Y''( \tilde P_0)} \sup_{\tilde P\in ((\tilde{\mathcal{P}}'_{T_{\tilde \rho}})_{Y'}(x)) }\angle\big(\Pi(\tilde P_0^{T_{\tilde \rho}}),\Pi( \tilde P^{T_{\tilde \rho}})\big) \gtrsim  \nu^{1/2} \tilde a/(\tilde \rho \tilde c).
\]
But this is precisely the condition we need to apply Lemma \ref{OnePrismWithTangentPlanes} with $\lambda\approx_\delta \delta^{4\eta+4\eps_1}$. Let $\tilde{\mathcal{P}}'''$ be the set of those prisms $\tilde P_0\in\tilde{\mathcal{P}}''$ satisfying $|Y''(\tilde P_0)|\gtrapprox_\delta \delta^{4\eta+4\eps_1}$. Undoing the scaling, we conclude that for each $\tilde P_0\in\mathcal{P}'''$ we have
\begin{equation}\label{nbhdMostlyFullOnePrismApplication}
\Big|N_{\nu^{1/2} \tilde a}(\tilde P_0) \cap \bigcup_{\tilde P\in\tilde{\mathcal{P}}}Y(\tilde P)\Big| 
\gtrapprox_\delta \delta^{16\eta+16\eps_1}|N_{\nu^{1/2} \tilde a}(\tilde P_0)|.
\end{equation}

But this means that after refining $(\tubes_4,Y_4)_\delta$ by an $\approx_\delta 1$ factor, there is a pair $(\tubes_4',Y_4')_\delta$ so that for each $x\in\bigcup_{T\in\tubes_4'}Y_4'(T)$, we have 
\begin{equation}\label{tubesIntersectWithBallLowerBd}
\Big|B(x, \nu^{1/2}\tilde a)\cap\bigcup_{T\in\tubes}Y(T)\Big|\gtrapprox_\delta \delta^{O(\eta+\eps_1)}|B(x, \nu^{1/2}\tilde a )|. 
\end{equation}
By Corollary \ref{corOfInflateTubesToSlabsLem} (and using \eqref{nuGeqDeltaEps2}), we conclude that Conclusion (A) holds, provided $\alpha\leq \boldsymbol{\omega}\eps_2/2$, and provided $\eps_1$ and $\eta$ are selected sufficiently small (depending on $\boldsymbol{\omega},\eps_2,$ and the implicit constant on the RHS of \eqref{tubesIntersectWithBallLowerBd}).

Finally, we will consider the case where \eqref{nuGeqDeltaEps2} fails, i.e.
\begin{equation}\label{nuLeqDeltaEps2}
\nu \leq \delta^{-\eps_2}.
\end{equation}
This means that for each $T_{\tilde\rho}\in\tubes_{\tilde\rho}$, the sets $\{Y''(\tilde P)\colon \tilde P \in (\tilde{\mathcal{P}}'')_{T_{\tilde \rho}}\}$ are $\leq \delta^{-\eps_2}$ overlapping. By pigeonholing, we can select a refinement $(\tilde{\mathcal{P}}_5,Y_5)_{\tilde a\times\tilde\rho\tilde c\times\tilde c}$ of $(\tilde{\mathcal{P}}_4,Y_4)_{\tilde a\times\tilde\rho\tilde c\times\tilde c}$ satisfying the four Items listed in Step 7. 
\end{proof}


\subsection{Move \#3: Replacing grains with wider grains with small $\CKT^{\operatorname{loc}}$}

\begin{lem}\label{WiderGrainsSmallCKT}
We assume the {\bf Common setup for Moves \#1, \#2, \#3: Hypotheses} from Section \ref{movesParallelStructureSec}. Then at least one of the following must hold.

Suppose that $\cE(\boldsymbol{\sigma},\boldsymbol{\omega})$ is true and let $\eps>0$. Then there exists $\alpha,\eta,c>0$ so that the following holds for all $0<\delta\leq \rho\leq 1$, and all $\delta\leq a\leq b\leq c$ with $b/c=\rho$. 

Let $(\tubes,Y)_\delta$ be $\delta^\eta$ dense, with $\CKT(\tubes)\leq\delta^{-\eta}$ and $\FS(\tubes)\leq\delta^{-\eta}$. Let $\tubes_\rho$ be a balanced partitioning cover of $\tubes$, and suppose that $(\tubes,Y)_\delta$ is broad with error $\delta^{-\eta}$ relative to $\tubes_\rho$. Let $(\mathcal{P},Y)_{a \times b\times c}$ be a robustly $\delta^\eta$-dense two-scale grains decomposition of $(\tubes,Y)_\delta$ wrt $\tubes_\rho$.

Then at least one of the following must hold.

\begin{itemize}

\item[(A)] Conclusion (A) of the common setup for Moves \#1, \#2, \#3.

\item[(B)] Conclusion (B) of the common setup for Moves \#1, \#2, \#3. In addition, 
		\begin{itemize}
			\item[(iv)] $\CKT^{\operatorname{loc}}(\mathcal{P}')\leq \delta^{-\boldsymbol{\zeta}}$. 
		\end{itemize}

\item[(C)] Conclusion (C) of the common setup for Moves \#1, \#2, \#3. In addition, 
	\begin{itemize}
		\item[(v)] $\tilde c\geq c$, $\delta^{-\boldsymbol{\zeta}/400}\rho\leq \tilde\rho\leq 1$.
	\end{itemize}

\end{itemize}
\end{lem}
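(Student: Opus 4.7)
The plan is a dichotomy on $\CKT^{\operatorname{loc}}(\mathcal{P})$, with a Cordoba $L^2$ argument inside the bounding boxes $2\square(P)$ as the core step. I will first observe a trivial upper bound: since $\mathcal{P}\langle 2\square(P)\rangle$ consists of essentially distinct $a\times b\times c$ prisms inside a box of volume $\sim (ac/b)\cdot c^2$, we have $\#\mathcal{P}\langle 2\square(P)\rangle\lesssim \rho^{-2}$, and hence $\CKT^{\operatorname{loc}}(\mathcal{P})\lesssim \rho^{-2}$. Thus if $\rho\ge C\delta^{\boldsymbol{\zeta}/2}$ for a suitable absolute constant $C$, Conclusion (B) holds immediately by taking $(\mathcal{P}',Y')=(\mathcal{P},Y)$, $(\tubes',Y')=(\tubes,Y)$, $\tubes_\rho'=\tubes_\rho$, with the remaining items inherited from the hypotheses once $\eta\le\eps$. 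Otherwise $\rho\le C\delta^{\boldsymbol{\zeta}/2}$, giving the slack $\delta^{-\boldsymbol{\zeta}/400}\rho\le 1$ and permitting the target $\tilde\rho=1$ for Conclusion (C).

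Next I would run a one-shot peeling. Let $\mathcal{P}^{\operatorname{bad}}=\{P\in\mathcal{P}:\CKT(\mathcal{P}\langle 2\square(P)\rangle)>\delta^{-\boldsymbol{\zeta}}\}$ and $\mathcal{P}''=\mathcal{P}\setminus\mathcal{P}^{\operatorname{bad}}$. Monotonicity of $\CKT$ under passing to subsets gives $\CKT(\mathcal{P}''\langle 2\square(P)\rangle)\le\delta^{-\boldsymbol{\zeta}}$ for every $P\in\mathcal{P}''$, so $\CKT^{\operatorname{loc}}(\mathcal{P}'')\le\delta^{-\boldsymbol{\zeta}}$. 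If $(\mathcal{P}'',Y)_{a\times b\times c}$ retains at least a $\delta^{\eps/2}$-fraction of the original shading mass, Conclusion (B) follows after inducing the corresponding refinement on $\tubes$. Otherwise $\mathcal{P}^{\operatorname{bad}}$ carries all but a $\delta^{\eps/2}$-fraction of the mass; after dyadic pigeonholing in the spirit of Steps 1--2 of the proof of Lemma \ref{squareGrainsGetLonger}, I would fix a refinement $\mathcal{P}_1\subset\mathcal{P}^{\operatorname{bad}}$ with a common cardinality $N\gtrapprox_\delta\delta^{-\boldsymbol{\zeta}}\rho^{-2}$ satisfying $\#\mathcal{P}\langle 2\square(P)\rangle\sim N$ for each $P\in\mathcal{P}_1$. (The sub-case in which the witness convex set in $\CKT(\mathcal{P}\langle 2\square(P)\rangle)$ is substantially smaller than $2\square(P)$ corresponds to a thinner tangential cluster of siblings and is handled by the same $L^2$ argument inside the smaller witness, yielding an intermediate $\tilde\rho\in[\delta^{-\boldsymbol{\zeta}/400}\rho,1]$; for brevity I focus on witnesses comparable to $2\square(P)$.)

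For each $P\in\mathcal{P}_1$, rescale via $\phi_{2\square(P)}$: the $N$ sibling prisms become essentially distinct $\rho\times\rho\times 1$ tubes in the unit cube with Katz--Tao Convex Wolff constant $O(1)$. Since $N\rho^2\gtrapprox_\delta\delta^{-\boldsymbol{\zeta}}$, Lemma \ref{cordobaLem} gives that the union of the rescaled shadings covers a $\gtrapprox_\delta\delta^{O(\eta)}$ fraction of the unit cube; undoing the rescaling, the sibling shadings fill a $\gtrapprox_\delta\delta^{O(\eta)}$ fraction of $2\square(P)$. Define the new grain $\tilde P=\square(P)$, of dimensions $\tilde a\times\tilde b\times\tilde c=(a/\rho)\times c\times c$, with shading $\tilde Y(\tilde P)=\tilde P\cap\bigcup_{P'\in\mathcal{P}\langle 2\square(P)\rangle}Y(P')$; then $\tilde\rho=1\ge\delta^{-\boldsymbol{\zeta}/400}\rho$, $\tilde c=c$, and $|\tilde Y(\tilde P)|\gtrapprox_\delta\delta^{O(\eta)}|\tilde P|$. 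Collect distinct representatives into $\tilde{\mathcal P}$, pigeonhole for a common multiplicity, and select a $\tilde\rho$-tube cover $\tubes_{\tilde\rho}$ of the induced $\tubes'$ via the construction in Step 6 of the proof of Lemma \ref{squareGrainsGetLonger}. Broadness of $(\tubes',Y')_\delta$ relative to $\tubes_{\tilde\rho}$ follows from the original broadness relative to the finer cover $\tubes_\rho$ via Lemma \ref{broadnessAcrossScales}, and the bounds $\CKT(\tubes'),\FS(\tubes')\le\delta^{-\eps}$ are preserved under refinement.

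The main obstacle will be verifying Item (ii) of Definition \ref{twoScaleGrainsDecomp} -- essential disjointness of the new shadings within a common $\tilde\rho$-tube -- which is the precise situation analyzed in Steps 7--8 of the proof of Lemma \ref{squareGrainsGetLonger}. The same dichotomy plays out: either the multiplicity $\nu$ of $\tilde P$'s at a typical point is $\le\delta^{-O(\eps)}$, in which case a further $\delta^{O(\eps)}$-refinement enforces disjointness and completes Conclusion (C); or $\nu\ge\delta^{-\eps_2}$ for some small $\eps_2>0$, forcing the tangent planes of the $\tilde P$'s passing through a common point to diverge by $\gtrsim\nu^{1/2}\tilde a/\tilde b$. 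In the latter case Lemma \ref{OnePrismWithTangentPlanes} shows that a thickened neighborhood of a typical $\tilde P$ is densely filled by $\bigcup Y(T)$, and applying Corollary \ref{corOfInflateTubesToSlabsLem} with $\alpha\le\boldsymbol{\omega}\eps_2/2$ converts this density into the volume bound \eqref{volumeLowerBoundConclusionA} of Conclusion (A). Thus the genuinely new ingredient in Move \#3 is the Cordoba $L^2$ bound inside each bounding box $2\square(P)$; the remaining bookkeeping parallels Move \#2 almost verbatim.
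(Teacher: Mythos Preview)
Your core step has a genuine gap. After rescaling by $\phi_{2\square(P)}$, you assert that the $N$ sibling prisms become $\rho$-tubes ``with Katz--Tao Convex Wolff constant $O(1)$.'' But $P\in\mathcal{P}^{\operatorname{bad}}$ means precisely that $\CKT(\mathcal{P}\langle 2\square(P)\rangle)>\delta^{-\boldsymbol{\zeta}}$, and $\CKT$ is affine-invariant, so the rescaled tubes still have $\CKT>\delta^{-\boldsymbol{\zeta}}$. Taking the witness to be the whole box only tells you $N\rho^2\gtrsim\delta^{-\boldsymbol{\zeta}}$; it does not rule out further clustering in smaller convex subsets. Moreover, Lemma~\ref{cordobaLem} is a statement about $\delta\times 1\times\cdots\times 1$ \emph{slabs}, not about $\rho$-tubes in $\RR^3$: there is no $L^2$ Cordoba argument showing that $N\gg\rho^{-2}$ essentially distinct $\rho$-tubes fill the unit cube --- that would amount to solving Kakeya by Cordoba's method. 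The sub-case you set aside (witness much smaller than $2\square(P)$) does not reduce to the same argument; it is where the real difficulty lies.

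The paper's proof instead applies Proposition~\ref{factoringConvexSetsProp} inside each box $Z$ to factor the rescaled tubes $\tilde\tubes_Z$ through convex sets $\mathcal{W}_Z$ in which $\CFC\lessapprox 1$; then Corollary~\ref{coveringTubesBroadCor} locates a broad scale $\tau$, and Proposition~\ref{slabWolffFactoring} produces $\theta\times\tau'\times 1$ prisms $V$ with $\FS(\tilde\tubes^V)$ small. If these $V$ are tube-like ($\theta\gtrsim\delta^{\boldsymbol{\zeta}/10}\tau'$), the estimate $\cF(\boldsymbol{\sigma},\boldsymbol{\omega})$ applied to $\tilde\tubes^V$ gives a density gain of order $m^{\boldsymbol{\sigma}/2}$ on a small ball, which unwinds to Conclusion~(A). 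If the $V$ are genuinely flat, a hairbrush argument builds intermediate $\rho\times(\tau'/\theta)\rho\times 1$ prisms $X$, and a further dichotomy on the tube-count $M$ in each $X$ either invokes $\cE(\boldsymbol{\sigma},\boldsymbol{\omega})$ again to reach Conclusion~(A), or (when $M$ is large) supplies the wider grains for Conclusion~(C) with $\tilde\rho\geq\delta^{-\boldsymbol{\zeta}/400}\rho$. The essential point is that large $\CKT^{\operatorname{loc}}$ is converted into a volume gain only through the Kakeya-type input $\cE(\boldsymbol{\sigma},\boldsymbol{\omega})$, never by a direct $L^2$ bound.
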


\begin{proof}
\noindent{\bf Step 1.} 
Let $0<\eps_1<\cdots <\eps_4$ be small quantities to be chosen below. We will choose $\eps_i$ very small compared to $\eps_{i+1}$ for each $i=1,\ldots, 3$; we will choose $\eps_4$ very small compared to $\eps$; we will choose $\alpha,\eta$ very small compared to $\eps_1$. 

First, we may suppose that
\begin{equation}\label{aAlmostDeltaStep3}
a\leq \delta^{1-\eps_1},
\end{equation}
or else Conclusion (A) immediately holds, provided we choose $\alpha$ and $\eta$ sufficiently small depending on $\eps_1$. The argument is identical to the argument in Step 1 of the proof of Lemma \ref{squareGrainsGetLonger}.

Next we will regularize the set $\tubes$. By dyadic pigeonholing and replacing $(\tubes,Y)_\delta$ by a $\gtrsim(\log 1/\delta)^{-1/\eps_1}$ refinement $(\tubes_1,Y_1)_\delta$, we can suppose that 

\begin{itemize}
	\item[(a)] 
		For each scale of the form $\tau_i=\delta^{\eps_1 i}$, $i=1,\ldots,\eps_1^{-1}$, there exists a ``density'' $\lambda_i$ so that
		\begin{equation}
		\Big| B(x,\tau_i)\cap\bigcup_{T\in\tubes}Y(T)\Big| \sim \lambda_i |B(x,\tau_i)|\quad\textrm{for every}\ x\in\bigcup_{T\in\tubes_1}Y_1(T).
		\end{equation} 

	\item[(b)] For each $i=1,\ldots,\eps_1^{-1}$, there is a pair $(\tubes_{\tau_i},Y_{\tau_i})_{\tau_i}$ that is $\gtrapprox_\delta \delta^{\eta}$ dense. Furthermore, $\tubes_{\tau_i}$ is a balanced partitioning cover of $\tubes_1$; and for each $T_{\tau_i}$ we have
	\[
		Y_{\tau_i}(T_{\tau_i})\subset T_{\tau_i}\cap \bigcup_{T\in\tubes_1}N_{\tau_i}(Y_1(T)).
	\]
\end{itemize}

From the above items, we have that $\FS(\tubes_{\tau_i})\lesssim\FS(\tubes_1)\lesssim (\log 1/\delta)^{-1/\eps_1}\delta^{-\eta}$, and 
\[
\CKT(\tubes_{\tau_i})\lesssim \CKT(\tubes_1)\frac{\#\tubes_{\tau_i}}{\#\tubes_1}\frac{|T_{\tau_i}|}{|T|}.
\]
If $\eta>0$ is selected sufficiently small depending on $\eps_1$, then we can apply the estimate $\cE(\boldsymbol{\sigma},\boldsymbol{\omega})$ (with $\eps_1$ in place of $\eps$) to conclude that
\begin{equation}\label{tubesVolumeBdDensity}
\begin{split}
\Big|\bigcup_{T\in\tubes}Y(T)\Big| & \gtrsim \delta^{2\eps_1} \lambda_i \tau_i^{\boldsymbol{\omega}} \CKT(\tubes_{\tau_i})^{-1}(\#\tubes_{\tau_i})|T_{\tau_i}|\Big(\CKT(\tubes_{\tau_i})^{-3/2}\FS(\tubes_{\tau_i}) (\#\tubes_{\tau_i})|T_{\tau_i}|^{1/2} \Big)^{-\boldsymbol{\sigma}}\\
& \gtrapprox_{\delta} \delta^{2\eps_1+ O(\eta)}   \Big[\lambda_i\Big(\frac{\tau_i}{\delta}\Big)^{\boldsymbol{\omega}}  \Big(\frac{|T_{\tau_i}|  (\#\tubes_{\tau_i})^{1/2}}{|T| (\#\tubes)^{1/2}} \Big)^{\boldsymbol{\sigma}} \Big] 
\delta^{\boldsymbol{\omega}}(\#\tubes)|T| \Big((\#\tubes)|T|^{1/2}\Big)^{-\boldsymbol{\sigma}}\\
& \gtrapprox \delta^{3\eps_1}  \Big[\lambda_i\Big(\frac{\tau_i}{\delta}\Big)^{\boldsymbol{\omega}} \frac{|T_{\tau_i}|^{\boldsymbol{\sigma}/2}}{|T|^{\boldsymbol{\sigma}/2}}\Big] 
\delta^{\boldsymbol{\omega}}(\#\tubes)|T| \Big((\#\tubes)|T|^{1/2}\Big)^{-\boldsymbol{\sigma}}.
\end{split}
\end{equation}
For the last inequality, we used the fact that $\CKT(\tubes_1)\lessapprox_{\delta} \delta^{-\eta}$, and so $(\#\tubes)/(\#\tubes_{\tau_i}) \lessapprox_{\delta} \delta^{-\eta} |T_{\tau_i}|/|T|$. 
In particular, if there is an index $i$ for which  $\lambda_i\Big(\frac{\tau_i}{\delta}\Big)^{\boldsymbol{\omega}} \frac{|T_{\tau_i}|^{\boldsymbol{\sigma}/2}}{|T|^{\boldsymbol{\sigma}/2}}$ 
is substantially larger than 1, then we will obtain Conclusion (A) of Lemma \ref{WiderGrainsSmallCKT}.

\medskip

\noindent \noindent{\bf Step 2.} 
Let $\mathcal{P}_1=\mathcal{P}$. For each $P\in\mathcal{P}_1$, define $Y_1(P) = Y(P)\cap\bigcup_{T\in\tubes_1[T_\rho]}Y_1(T)$, where $T_\rho\in\tubes_\rho$ is the unique $\rho$ tube with $P\in T_\rho$ and $\angle(\dir(P),\dir(T_\rho))\leq 2\rho$. Since $(\tubes_1,Y_1)_\delta$ is an $\approx_\delta 1$ refinement of $(\tubes,Y)_\delta$, by Definition \ref{twoScaleGrainsDecomp} Items  (ii) and (iii) we have that $(\mathcal{P}_1,Y_1)_{a\times b\times c}$ is a $\approx_\delta 1$ refinement of $(\mathcal{P},Y)_{a\times b\times c}$.

Apply Lemma \ref{everySetHasARegularShadingLem} (every shading has a regular sub-shading) to each shading $Y_1(P),\ P\in\mathcal{P}_1$. This gives us a regular sub-shading $Y_2(P)\subset Y_1(P)$. Let $\mathcal{P}_2\subset\mathcal{P}_1$ be those prisms for which $|Y_2(P)|\geq\delta^{2\eta}|P|$; we have that $(\mathcal{P}_2,Y_2)_{a\times b\times c}$ is a $\gtrsim 1$ refinement of $(\mathcal{P}_1,Y_1)_{a\times b\times c}$.

Let $\mathcal{P}_3=\mathcal{P}_2$. By dyadic pigeonholing, we can select a number $\theta_0\in [\frac{a}{b},1]$ and a $(\log 1/\delta)^{-1}$ refinement $(\mathcal{P}_3,Y_3)_{a\times b\times c}$ of $(\mathcal{P}_2,Y_2)_{a\times b\times c}$ so that for each $x\in\bigcup_{P\in\mathcal{P}_3}Y_3(P)$, we have $\theta(x)\sim\theta_0$, where $\theta(x)$ is as defined in Definition \ref{thetaMinDefn}.

We first consider the case where $\theta_0\geq \delta^{-\eps_1}(a/b)$. Our goal is to show that Conclusion (A) holds. Let $\mathcal{P}_3'\subset\mathcal{P}_3$ be the set of those prisms for which $|Y_3(P_0)|\geq \frac{1}{100}\delta^{2\eta}|P_0|$. Then for each $x\in Y_3(P_0)$, we have

\[
\frac{a}{b} + \sup_{P\in\mathcal{P}_2}\angle(\Pi(P_0),\Pi(P))\gtrsim \theta_0.
\]
We have $|Y_3(P_0)|\geq \frac{1}{100}\delta^{2\eta}|P_0|$; each $P\in\mathcal{P}_2$ satisfies $|Y_2(P)|\geq \frac{1}{100}\delta^{2\eta}|P_0|$; and $Y_2(P)$ is regular. Thus we can apply Lemma \ref{OnePrismWithTangentPlanes} (with $Y_3(P_0)$ in place of $Y_0(P_0)$ and $(\mathcal{P}_2,Y_2)_{a\times b\times c}$ in place of $(\mathcal{P},Y)_{a\times b\times c}$) to conclude that
\begin{equation}\label{thickenedNeighbourhoodDense}
\Big|N_{b \theta_0}(P_0) \cap \bigcup_{P\in\mathcal{P}}Y(P)\Big| \gtrapprox_\delta \delta^{8\eta} |N_{b \theta}(P_0)|.
\end{equation}

Recall that \eqref{thickenedNeighbourhoodDense} holds for each $P_0\in\mathcal{P}_3'$, and $(\mathcal{P}_3', Y_3)_{a\times b\times c}$ is a $\gtrapprox_\delta 1$ refinement of $(\mathcal{P},Y)_{a\times b\times c}$. After replacing $(\mathcal{P}_3', Y_3)_{a\times b\times c}$ by a further $\sim 1$ refinement $(\mathcal{P}_3', Y_3')_{a\times b\times c}$, we can suppose that for each $x\in \bigcup_{P\in\mathcal{P}_3'}Y'_3$, we have
\[
\Big|N_{b \theta_0}(x) \cap \bigcup_{P\in\mathcal{P}}Y(P)\Big| \gtrapprox_\delta \delta^{8\eta} |N_{b \theta_0}(x)|.
\]

Finally, if $(\tubes_1,Y_1')_\delta$ is the refinement of $(\tubes_1,Y_1)_\delta$ induced by $(\mathcal{P}_3',Y_3')_{a\times b\times c}$, then by Definition~\ref{twoScaleGrainsDecomp}, Item (ii), $(\tubes_1,Y_1')_\delta$ is a  $\approx_\delta 1$-refinement of $(\tubes_1,Y_1)_\delta$, and  for each $x\in\bigcup_{T\in\tubes_1'}Y_1'(T)$ we have
\begin{equation}\label{thickenedNeighbourhoodDenseBall}
\Big|N_{b \theta_0}(x) \cap \bigcup_{T\in\tubes_1'}Y(T)\Big| \gtrapprox_\delta \delta^{8\eta} |N_{b \theta_0}(x)|.
\end{equation}

Since $b\theta_0\geq \delta^{-\eps_1}a$, by Corollary \ref{corOfInflateTubesToSlabsLem} we see that Conclusion (A) holds, provided we select $\eta>0$ sufficiently small depending on $\eps_1$, and select  $\alpha\leq \eps_1 \boldsymbol{\omega}/2$.

Henceforth we shall suppose that $\theta_0\leq \delta^{-\eps_1}(a/b)$, i.e.
\begin{equation}
\sup_x \sup_{P,P'\in (\mathcal{P}_3)_{Y_3}(x)}\angle\big(\Pi(P),\ \Pi(P')\big) \leq \delta^{-\eps_1}(a/b).
\end{equation}


\medskip

\noindent{\bf Step 3.}
Recall the discussion following Definition \ref{defnPSquare} (the quantity ``$K$'' in that discussion is $\delta^{-\eps_1}$ in this context); after replacing $(\mathcal{P}_3,Y_3)_{a\times b\times c}$ by a $\sim 1$ refinement, which we will denote by $(\mathcal{P}_4,Y_4)_{a\times b\times c}$ (which in turn induces a $\sim 1$ refinement $(\tubes_4,Y_4)_\delta$ of $(\tubes_1,Y_1)_\delta)$, we can find a set $\mathcal{U}$ of pairwise distinct prisms of dimensions $\delta^{-\eps_1}\frac{ac}{b} \times c\times c$ so that the following holds.
\begin{itemize}
	\item[(a)] The sets $\{\mathcal{P}_4\langle U\rangle\colon U\in \mathcal{U}\}$ are a partition  of $\mathcal{P}_4$.
	\item[(b)] The sets $\big\{\bigcup_{P\in\mathcal{P}_4\langle U\rangle }Y_4(P)\colon  U\in\mathcal{U}\big\}$ are disjoint.
\end{itemize}

Each $U\in\mathcal{U}$ is a prism of dimensions $\delta^{-\eps_1}\frac{ac}{b} \times c\times c=\delta^{-\eps_1}\frac{a}{\rho} \times c\times c$. Thus for each $U\in\mathcal{U}$, there is a set $\{Z\}$ of $\lesssim\delta^{-3\eps_1}$ prisms of dimensions comparable to $\frac{a}{\rho}\times c\times c$ with the property that for each $P\in\mathcal{P}_4\langle U\rangle$, there is a prism $Z$ from this collection with $\square(P)\subset Z$ (recall that $\rho = b/c$, and thus $\square(P)$ is a prism of dimensions comparable to $\frac{a}{\rho}\times c\times c$). 

Let $Z_U$ be a prism of dimensions comparable to $\frac{a}{\rho}\times c\times c$ that maximizes 
\[
\#\{ P\in \mathcal{P}_4\colon \square(P)\subset Z\},
\]
so in particular $\#\mathcal{P}_4\langle Z_U\rangle \gtrsim\delta^{3\eps_1}(\#\mathcal{P}_4\langle U\rangle )$. Let $\mathcal{Z} = \{Z_U\colon U\in\mathcal{U}\}$; let $\mathcal{P}_5=\bigcup_{U\in\mathcal{U}}\mathcal{P}_4\langle Z_U\rangle $; and let $Y_5$ be the restriction of $Y_4$ to $\mathcal{P}_5$. Then $(\mathcal{P}_5,Y_5)_{a\times b\times c}$ is a $\gtrsim\delta^{3\eps_1}$ refinement of $(\mathcal{P}_4,Y_4)_{a\times b\times c}$, and we have the following analogue of Items (a) and (b).

\begin{itemize}
	\item[(a$'$)] The sets $\{\mathcal{P}_4\langle Z\rangle\colon Z\in \mathcal{Z}\}$ become a partition  of $\mathcal{P}_5$.
	\item[(b$'$)] The sets $\big\{\bigcup_{P\in\mathcal{P}_5\langle Z\rangle }Y_5(P)\colon Z\in\mathcal{Z}\big\}$ are disjoint.
\end{itemize}

For each $Z\in\mathcal{Z}$, the sets in $(\mathcal{P}_5\langle Z\rangle)^Z$ are convex sets of dimensions comparable to $\rho\times\rho\times 1$, i.e. the sets are comparable to $\rho$ tubes. To record this useful fact, we will define $(\tilde\tubes_Z, \tilde Y_5)_{\rho} = ((\mathcal{P}_5\langle Z\rangle)^Z,Y_5^Z)_{\rho\times\rho\times 1}$.

After replacing $(\mathcal{P}_5,Y_5)_{a\times b\times c}$ and $\mathcal{Z}$ with $\approx_\delta 1$ refinements, we may suppose that each set $\tilde\tubes_Z$ has approximately the same size (up to a factor of 2) for each $Z\in\mathcal{Z}$, and similarly each set $|\tilde Y_5(\tilde T)|$ has approximately the same size for each $\tilde T\in\tilde\tubes_Z$. Furthermore, we can suppose that each pair  $(\tilde\tubes_Z, \tilde Y_5)_{\rho}$ is $\gtrapprox_\delta \delta^{\eta}$ dense (indeed, recall that $(\mathcal{P},Y)_{a\times b\times c}$ is $\delta^\eta$ dense;  $(\mathcal{P}_4,Y_4)_{a\times b\times c}$  is a $\gtrapprox_{\delta}1$-refinement of $(\mathcal{P},Y)_{a\times b\times c}$; and $(\mathcal{P}_5\langle Z\rangle,Y_5)_{a\times b\times c}$ is a $\approx_\delta 1$ refinement of $(\mathcal{P}_4  \langle Z\rangle,Y_4)_{a\times b\times c}$).

\medskip

\noindent{\bf Step 4.}
For notational convenience, we will fix a prism $Z\in\mathcal{Z}$. In what follows, we will find certain quantities (for example certain scales, multiplicities, etc.), and navigate between different cases depending on the specifics of the arrangement $\tilde\tubes_Z$. However, by pigeonholing the set $\mathcal{Z}$, we may suppose that all quantities described below are the same (up to a factor of 2) for each $Z\in\mathcal{Z}$, and thus the same cases occur for each $Z\in\mathcal{Z}$.

 Apply Proposition \ref{factoringConvexSetsProp} (factoring convex sets) to $\tilde\tubes_Z$. We obtain a number $m\geq 1$; a $\approx_\delta 1$ refinement $\tilde\tubes_Z'$ of $\tilde\tubes_Z$, and a partitioning cover $\mathcal{W}_Z$ of $\tilde\tubes_Z'$ consisting of congruent prisms; we shall denote the dimensions of these prisms by $s\times t\times 1$ (since each prism contains at least one tube, we know that the longest dimension is $\sim 1$). We have that $\mathcal{W}_Z$ factors $\tilde\tubes_Z'$ from below with respect to the Frostman Convex Wolff Axioms, and from above with respect to the Katz-Tao Convex Wolff Axioms, both with error $\lessapprox_\delta 1$. Finally,

\begin{equation}\label{tubesInW}
\CKT(\tilde\tubes'_Z)\leq m,\quad\textrm{and}\quad \#\tilde\tubes_Z'[W]\approx_\delta m \frac{|W|}{|\tilde T|}\quad\ \textrm{for each}\ W\in\mathcal{W}_Z.
\end{equation}

We first consider the case where 
\begin{equation}\label{mSmall}
m\leq\delta^{-\boldsymbol{\zeta}}.
\end{equation}
Our goal is to show that Conclusion (B) of Lemma \ref{WiderGrainsSmallCKT} holds. 

As described at the beginning of Step 4, we can suppose that \eqref{mSmall} is true for at least half the prisms $Z\in\mathcal{Z}$. Let $\mathcal{P}_6\subset\mathcal{P}_5$ be given by $\mathcal{P}_6=\bigcup_Z \phi_Z^{-1}(\tilde\tubes_Z')$, where the union is taken over those prisms in $\mathcal{Z}$ for which \eqref{mSmall} holds (note that $\mathcal{P}_6\subset\mathcal{P}_5$, since each tube in $\tilde \tubes_Z'\subset \tilde \tubes_Z$ is the image of a prism from $\mathcal{P}_5$ under the map $\phi_Z$). Let $Y_6$ be the restriction of $Y_5$ to $\mathcal{P}_6$.

We have that $(\mathcal{P}_6,Y_6)_{a\times b\times c}$ is a $\gtrapprox_\delta 1$ refinement of $(\mathcal{P}_5,Y_5)$. Undoing the scaling $\phi_Z$, 
 we have that for each $P\in\mathcal{P}_6$, 
\[
\CKT\big(\mathcal{P}_6\langle 2\square(P)\rangle \big) \lesssim \sup_{Z\in\mathcal{Z}}\CKT(\mathcal{P}_6\langle Z\rangle)\leq m.
\]
We conclude that 
\[
\CKT^{\operatorname{loc}}(\mathcal{P}_6)\lesssim m.
\]

Applying a final dyadic pigeonholing, we can select a $\approx_\delta 1$ refinement $(\mathcal{P}',Y')_{a\times b\times c}$ of $(\mathcal{P}_6,Y_6)_{a\times b\times c}$ (this in turn induces a   $\gtrapprox_{\delta} \delta^{3\eps_1}$ refinement $(\tubes',Y')_\delta$ of $(\tubes_4,Y_4)_\delta$) and a set $\tubes_\rho'\subset\tubes_\rho$ so that the following holds: $\tubes_\rho'$ is a balanced partitioning cover of $(\tubes',Y')_\delta$, and $(\mathcal{P}',Y')_{a\times b\times c}$ is a robustly $\approx_\delta \delta^{3\eps_1}$-dense two-scale grains decomposition of $(\tubes',Y')_\delta$ wrt $\tubes'_\rho$. 

Since $\CKT^{\operatorname{loc}}(\mathcal{P}')\lesssim m$, Conclusion (B), Item (iv) of Lemma \ref{WiderGrainsSmallCKT} is satisfied. $(\mathcal{P}',Y')_{a\times b\times c}$ satisfies Conclusion (B), Item (iii) by construction. $(\tubes',Y')_\delta$ satisfies Conclusion (B), Item (i), provided $\eps_1$ is chosen sufficiently small depending on $\eps$.  Finally, since 
\[
\#\big(\tubes'[T_\rho]_{Y'}(x)\big)\gtrapprox_\delta\delta^{3\eps_1} \#(\tubes[T_\rho])_Y(x)\quad\textrm{for every}\ T_\rho\in\tubes_\rho'\ \textrm{and every}\ x\in\bigcup_{T\in\tubes'[T_\rho]}Y'(T),
\] 
and since (by hypothesis) $(\tubes,Y)_\delta$ is broad with error $\delta^{-\eta}$ relative to $\tubes_\rho$, we conclude that $(\tubes',Y')_\delta$ is broad with error $\lessapprox_\delta\delta^{-\eta-3 \eps_1}$ relative to $\tubes_\rho'$. We will select $\eta$ and $\eps_1$ sufficiently small so that this quantity is $\leq\delta^{-\eps}$. This verifies Conclusion (B), Item (ii) \footnote{To be precise, we can only ensure that the error is $\leq\delta^{-\eps}$ provided $\delta>0$ is sufficiently small, depending on the implicit constant in the above $\gtrapprox$ notation. However if this fails then Conclusion (A) holds, provided we select $\kappa>0$ sufficiently small.}.

In summary, if \eqref{mSmall} holds, then Conclusion (B) of Lemma \ref{WiderGrainsSmallCKT} is satisfied. Henceforth we will suppose that \eqref{mSmall} fails, i.e.
\begin{equation}\label{mLarge}
m > \delta^{-\boldsymbol{\zeta}}.
\end{equation}


\medskip

\noindent{\bf Step 5.}
In Step 4, we fixed a choice of prism $Z\in\mathcal{Z}$. We will continue to fix this choice of $Z$, and in additional we will fix a choice of $W\in\mathcal{W}_Z$. As in Step 4, we can assume (by dyadic pigeonholing) that all relevant scales, multiplicities, etc.~are approximately the same (up to a factor of 2) for each $Z\in\mathcal{Z}$ and each $W\in\mathcal{W}_Z$). 

In the arguments that follow, we will analyze the pairs $(\tilde\tubes'_Z[W],\tilde Y_5)_\rho$ constructed in Step 4. For notational convenience, we will refer to such a pair as $(\tilde\tubes,\tilde Y)_\rho$. Recall that this pair is $\gtrapprox_\delta \delta^{\eta}$ dense; the cardinality of $\tilde\tubes$ is given by \eqref{tubesInW}; and $m$ satisfies \eqref{mLarge}. In particular, we have
\begin{equation}\label{CFCTildeTubes}
\CFC(\tilde\tubes^W)\lessapprox_\delta 1.
\end{equation}

Apply Corollary \ref{coveringTubesBroadCor} (finding a broad scale) to the set $\tilde\tubes$. Denote the ``output'' scale of this Corollary by $\tau$ (in Corollary \ref{coveringTubesBroadCor}, this output scale is called $\rho$, but that variable is already in use). Abusing notation, we will continue to use $(\tilde\tubes,\tilde Y)_\rho$ to refer to the output of Corollary \ref{coveringTubesBroadCor}. Thus there is a set $\tubes_\tau$ that forms a balanced partitioning cover of $\tilde\tubes$, and $(\tilde\tubes,\tilde Y)_\rho$ is broad with error $\lessapprox_\delta 1$ relative to $\tubes_{\tau}$. Furthermore, 
\begin{equation}\label{boundedOverlapSets}
\textrm{the sets}\ \bigcup_{\tilde T\in\tilde\tubes[T_\tau]}\tilde Y(\tilde T), \quad T_\tau\in\tubes_{\tau}\quad \textrm{are}\ \lesssim\tau^{-\boldsymbol{\beta}}\ \textrm{overlapping}.
\end{equation}

We claim that
\begin{equation}\label{tauNotTooSmall}
\tau\geq \rho^{1-\boldsymbol{\omega}/8},
\end{equation}
or else Conclusion (A) of Lemma \ref{WiderGrainsSmallCKT} holds, and we are done. To verify this claim, note that if \eqref{tauNotTooSmall} failed, then the sets $\{\tilde Y(\tilde T)\colon \tilde T\in\tilde\tubes\}$ are $\lesssim \rho^{-\boldsymbol{\omega}/4}\tau^{-\boldsymbol{\beta}}\leq\delta^{-\boldsymbol{\omega}/2}$ overlapping; but this fact, combined with Item (b$'$) from Step 3, gives Conclusion (A) of Lemma \ref{WiderGrainsSmallCKT}.


\medskip

\noindent{\bf Step 6.}
We would like to apply the estimate $\cE(\boldsymbol{\sigma},\boldsymbol{\omega})$  to each set $(\tilde\tubes^{T_\tau},\tilde Y^{T_\tau})_{\rho/\tau}$. However, we do not currently have a good estimate for $\FS(\tilde\tubes^{T_\tau})$. To fix this problem, apply Proposition \ref{slabWolffFactoring} (factoring convex sets with respect to the Frostman Slab Wolff Axioms) to each set $(\tilde\tubes[T_\tau],\tilde Y)_\rho$, with $\eps_2$ in place of $\eps$. We can do so, provided $\eps_1$ and $\eta$ is selected sufficiently small compared to $\eps_2$. This gives us a $\gtrapprox_\delta \delta^{\eps_2}$ refinement $(\tilde\tubes'[T_\tau], \tilde Y')_\rho$ of $(\tilde\tubes[T_\tau],\tilde Y)_\rho$, and a family of convex subsets of $T_\tau$, which we denote by $\mathcal{V}_{T_\tau}$, that factors $\tilde\tubes'[T_\tau]$ from below with respect to the Frostman Slab Wolff Axioms with error $\delta^{-\eps_2}$. In addition, 
\begin{equation}\label{disjointSetsV}
\textrm{the sets}\ \bigcup_{\tilde T\in\tilde\tubes'[V]}\tilde Y'(\tilde T), \quad V \in\mathcal{V}_{T_\tau}\quad \textrm{are disjoint}.
\end{equation}

After pigeonholing, we may suppose that the sets in $\mathcal{V}_{T_\tau}$ have the same dimensions, and furthermore these dimensions are common across all $T_\tau\in\tubes_\tau$. Denote these dimensions $\theta\times\tau'\times 1$. Since $V\subset T_\tau$, we must have $\tau'\leq\tau$. We claim that this inequality is almost tight, in the sense that
\begin{equation}\label{tauPrimeSize}
\delta^{\eps_2/\boldsymbol{\beta}}\tau\lessapprox_\delta \tau' \leq\tau.
\end{equation}
To verify this claim, note that $(\tilde \tubes,\tilde Y)_\rho$ is broad with error $\lessapprox_\delta 1$ relative to the cover $\tubes_\tau$. Since  $(\tilde\tubes'[T_\tau], \tilde Y')_\rho$ is a $\gtrapprox_\delta \delta^{\eps_2}$ refinement of $(\tilde\tubes[T_\tau], \tilde Y)_\rho$, by pigeonholing there is at least one point $x$ for which the tubes in $\tilde\tubes'_{Y'}(x)$ point in directions that are broad with error $\lessapprox_\delta \delta^{-\eps_2}$ inside a cap of radius $\tau$. This means that there are at least two tubes from this set that make an angle $\gtrapprox_\delta \delta^{\eps_2/\boldsymbol{\beta}}\tau$. On the other hand, by \eqref{disjointSetsV} we have that the pair of tubes described above must be contained in a common $\theta\times\tau'\times 1$ prism. This establishes \eqref{tauPrimeSize}.

Let $\tilde\tubes'$ be the union of the sets $\tilde\tubes'[T_\tau]$, as $T_\tau$ ranges over the elements of $\tubes_\tau$. Let $\tilde Y'$ be the associated shading on $\tilde\tubes'$ coming from the pairs $(\tilde\tubes'[T_\tau], \tilde Y')_\rho$. Abusing notation, we will rename this pair $(\tilde\tubes,\tilde Y)_\rho$. At this point in the argument, this pair is $\gtrapprox_\delta \delta^{\eta+ \eps_2}\geq \delta^{2\eps_2}$ dense.   


\medskip

\noindent{\bf Step 7.}
We first consider the case where the prisms $\mathcal{V}_{T_\tau}$ from Step 6 are almost tubes, in the sense that 
\begin{equation}\label{thetaBig}
\theta\geq \delta^{\boldsymbol{\zeta}/10} \tau'.
\end{equation}
If \eqref{thetaBig} is true, then each set $\tilde\tubes^V$ consists of prisms of dimensions $\frac{\rho}{\tau'}\times\frac{\rho}{\theta}\times 1$. The pair $(\tilde\tubes^V,\tilde Y^V)_{\rho/\tau'\times  \rho/\theta \times 1}$ is $\gtrapprox_\delta\delta^{2\eps_2}$ dense. Let us suppose for the moment that 
\begin{equation}\label{rhoSmallDeltaSqrtEps2}
\rho\leq\delta^{\sqrt \eps_2}.
\end{equation}
If we select $\eps_2$ sufficiently small so that $\eps_2 \leq \frac{1}{2}\sqrt\eps_2 \boldsymbol{\beta}\boldsymbol{\omega}$, and hence $\frac{8\eps_2\boldsymbol{\beta}}{\sqrt\eps_2 \boldsymbol{\beta}\boldsymbol{\omega}-8\eps_2}\leq \frac{16\sqrt\eps_2}{\boldsymbol{\omega}}$, then by \eqref{tauNotTooSmall} and \eqref{tauPrimeSize}, we have $\delta^{2\eps_2}\geq \big(\frac{\rho}{\tau'}\big)^{\frac{32\sqrt{\eps_2}}{\boldsymbol{\omega}}}$.

Thus if $\eps_2$ is chosen sufficiently small compared to $\eps_3$ and $\boldsymbol{\omega}$, and if \eqref{rhoSmallDeltaSqrtEps2} is true, then we can apply the estimate $\cF(\boldsymbol{\sigma},\boldsymbol{\omega})$ (recall Definition \ref{defnF} and Remark \ref{discussionOfDRemark}, Situation 3) to estimate the volume of each (rescaled) set $(\tilde\tubes^V,\tilde Y^V)_{\rho/\tau'\times \rho/\theta \times 1}$, with $\eps_3$ in place of $\eps$ and $32\sqrt{\eps_2}/\boldsymbol{\omega}$ in place of $\eta$. This gives the estimate
\begin{equation}\label{volumeBoundTV}
\Big| \bigcup_{\tilde T\in\tilde\tubes[V]}\tilde Y^V(\tilde T^V)\Big| \gtrsim \delta^{\eps_3}\Big(\frac{\rho}{\theta}\Big)^{\boldsymbol{\omega}} \tilde m^{-1} (\#\tilde\tubes[V])|\tilde T^V|\Big(\tilde m^{-3/2} \tilde \ell (\#\tilde\tubes[V])|\tilde T^V|^{1/2}\Big)^{-\boldsymbol{\sigma}},
\end{equation} 
where we define
\begin{equation}\label{tildeMTildeLDefn}
\tilde m = \CKT(\tilde\tubes[V])\leq \CKT(\tilde\tubes) \leq  m\quad\textrm{and}\quad \tilde\ell = \FS(\tilde\tubes[V])\lesssim \delta^{-\eps_2}.
\end{equation}
(for the first inequality, we used \eqref{tubesInW}). On the other hand, if \eqref{rhoSmallDeltaSqrtEps2} is false, then \eqref{volumeBoundTV} follows from the fact that the LHS of \eqref{volumeBoundTV} is bounded below by the volume of a single shading $|\tilde Y^V(T^V)|$, and we can select a tube with volume 
\begin{equation}\label{volumeOfASingleShading}
|\tilde Y^V(T^V)| \gtrapprox \delta^{2\eps_2}|T^V| = \delta^{2\eps_2}\frac{\rho^2}{\tau'\theta} \geq \delta^{2\eps_2}\rho^2\geq \delta^{3\eps_2}\geq\delta^{\eps_3} .
\end{equation}
Since $\sigma \in (0, 2/3]$, by Remark~\ref{rmk: RHSF} and Remark \ref{discussionOfDRemark}, Situation 3, 
\begin{equation}\label{complicatedBdBy1}
\tilde m^{-1}(\#\tilde\tubes[V])|\tilde T^V|\Big(\tilde m^{-3/2} \tilde \ell (\#\tilde\tubes[V])|\tilde T^V|^{1/2}\Big)^{-\boldsymbol{\sigma}} 
 \lesssim 1.
\end{equation}

Combining \eqref{volumeOfASingleShading} and \eqref{complicatedBdBy1} establishes \eqref{volumeBoundTV} in the case where \eqref{rhoSmallDeltaSqrtEps2} is false. We conclude that \eqref{volumeBoundTV} holds, independently of whether \eqref{rhoSmallDeltaSqrtEps2} is true or false.  

Undoing the scaling $\phi_V$ and substituting the values of $\tilde m$ and $\tilde \ell$ from \eqref{tildeMTildeLDefn}, we have
\begin{equation}\label{boundForTildeTInsideV}
\Big| \bigcup_{\tilde T\in\tilde\tubes[V]}\tilde Y(\tilde T)\Big| 
\gtrsim \delta^{2\eps_3} \Big(\frac{\rho}{\theta}\Big)^{\boldsymbol{\omega}} m^{-1}(\#\tilde\tubes[V])|\tilde T|\Big( m^{-3/2} (\#\tilde\tubes[V])\frac{|\tilde T|^{1/2}}{|V|^{1/2}}\Big)^{-\boldsymbol{\sigma}}.
\end{equation} 

By \eqref{boundedOverlapSets}, \eqref{disjointSetsV} (recall that we have renamed $\tilde Y'$ as $\tilde Y$), and \eqref{boundForTildeTInsideV}, we have
\begin{equation}\label{volumeBdTildeTubes}
\begin{split}
\Big| \bigcup_{\tilde T\in\tilde\tubes}\tilde Y(\tilde T)\Big| 
&\gtrsim \tau^{\boldsymbol{\beta}}\sum_{T_\tau\in\tubes_\tau}\sum_{V\in\mathcal{V}_{T_\tau}}\Big| \bigcup_{\tilde T\in\tilde\tubes[V]}\tilde Y(\tilde T)\Big|\\
& \gtrsim \delta^{2\eps_3+\boldsymbol{\beta}} \Big(\frac{\rho}{\theta}\Big)^{\boldsymbol{\omega}} m^{-1}(\#\tilde\tubes)|\tilde T|\Big( m^{-3/2} (\#\tilde\tubes[V])\frac{|\tilde T|^{1/2}}{|V|^{1/2}}\Big)^{-\boldsymbol{\sigma}}.
\end{split}
\end{equation}
Next,  by \eqref{CFCTildeTubes} and using the fact that $(\tilde \tubes' [T_\tau], Y')_\rho$ is a $\gtrapprox_\delta \delta^{\eps_2}$ refinement of $(\tubes [T_\tau], \tilde Y)_\rho$,  we have that $\CFC(\tilde\tubes^W) \lessapprox_\delta \delta^{-2\eps_2}$.     By \eqref{tubesInW}, $ \#\tilde \tubes \gtrapprox_\delta \delta^{2\eps_2} \Big ( m \frac{|W|}{|\tilde T|} \Big)$.  Since 
\[
\#\tilde\tubes[V]\leq \CFC (\tilde\tubes^W)\frac{|V|}{| W|}(\#\tilde\tubes)\lessapprox_\delta \delta^{-2\eps_2} \frac{|V|}{|W|}\Big( m \frac{| W|}{|\tilde T|}\Big), 
\]
using  
\eqref{mLarge}, \eqref{tauPrimeSize}, \eqref{thetaBig}, and \eqref{volumeBdTildeTubes} we conclude that
\begin{equation}\label{volumeBdTildeTubesMore}
\begin{split}
\Big| \bigcup_{\tilde T\in\tilde\tubes}\tilde Y(\tilde T)\Big| & \gtrapprox_\delta \delta^{2\eps_3+\boldsymbol{\beta}-\boldsymbol{\zeta}\boldsymbol{\sigma}/2} \Big(\frac{\rho}{\theta}\Big)^{\boldsymbol{\omega}} m^{-1}(m \frac{| W|}{|\tilde T|})|\tilde T|\Big( m^{-1} \big(m \frac{|V|}{|\tilde T|}\big)\frac{|\tilde T|^{1/2}}{|V|^{1/2}}\Big)^{-\boldsymbol{\sigma}}\\
&\gtrapprox_\delta \delta^{2\eps_3+\boldsymbol{\beta}-\boldsymbol{\zeta}\boldsymbol{\sigma}/4} \Big(\frac{\rho}{\theta}\Big)^{\boldsymbol{\omega}+\boldsymbol{\sigma}} |W|.
\end{split}
\end{equation}


\medskip

\noindent{\bf Step 8.}
Let us analyze \eqref{volumeBdTildeTubesMore}. First, the set on the LHS of \eqref{volumeBdTildeTubesMore} is contained in $W$, which is a prism of dimensions $s\times t\times 1$. Second, the quantities $\eps_3$ and $\boldsymbol{\beta}$ are chosen after $\boldsymbol{\zeta}$ and $\boldsymbol{\sigma}$, so we can select the former quantities to ensure that $\delta^{-2\eps_3+\boldsymbol{\beta}-\boldsymbol{\zeta}\boldsymbol{\sigma}/4}\geq \delta^{-\boldsymbol{\zeta}\boldsymbol{\sigma}/5}$. 

Thus by pigeonholing, we can select a ball $B$ of radius $\theta$ (recall that $\rho\leq \theta\leq s$) with the property that
\begin{equation}\label{bigBallIntersection}
\Big| B \cap \bigcup_{\tilde T\in\tilde\tubes}\tilde Y(T)\Big| \gtrapprox \delta^{-\boldsymbol{\zeta}\boldsymbol{\sigma}/5}\Big(\frac{\rho}{\theta}\Big)^{\boldsymbol{\omega}+\boldsymbol{\sigma}} |B|.
\end{equation}

Recall that at the beginning of Step 4 we fixed a prism $Z\in\mathcal{Z}$. The set of $\rho$ tubes $\tilde \tubes$ are the images of prisms from $\mathcal{P}_3\langle Z\rangle$ under the linear map $\phi_Z$. Let $B^\dag = \phi_Z^{-1}(B)$, where $B$ is the ball described above. Then $B^\dag$ is an ellipsoid of dimensions $\theta \frac{a}{\rho}\times \theta  c\times \theta  c$. By \eqref{bigBallIntersection}, we have
\begin{equation}\label{bigBDagBallEstimate}
\Big| B^\dag \cap \bigcup_{P \in \mathcal{P}_1}Y_1(P)\Big| \gtrapprox \delta^{-\boldsymbol{\zeta}\boldsymbol{\sigma}/5}\Big(\frac{\rho}{\theta }\Big)^{\boldsymbol{\omega}+\boldsymbol{\sigma}} |B^\dag|.
\end{equation}
Let $1\leq i\leq \eps_1^{-1}$ be the index so that $\delta^{i\eps_1}\leq \theta  \frac{a}{\rho}<\delta^{(i-1)\eps_1}$. Such an index exists since $\theta \in [\rho,1]$ and $\delta\leq \frac{a}{\rho}  = \frac{ac}{b}\leq c \leq 1$.   Recall that we defined $\tau_i = \delta^{i\eps_1}$,  so $ \theta  a /\rho \leq \tau_i \delta^{-\eps_1}$ implies  $\rho/\theta \geq  \delta^{\eps_1}  a /\tau_i\geq \delta^{1+\eps_1}/\tau_i $.   Then there exists a ball $B_{\tau_i}$ of radius $\tau_i$,  so that
\begin{equation}\label{lambdaILowerBd}
\lambda_i  \geq \Big|B_{\tau_i}\cap\bigcup_{T\in\tubes_1} Y_1(T)\Big| |B_{\tau_i}|^{-1}  
\geq \delta^{3\eps_1-\boldsymbol{\zeta}\boldsymbol{\sigma}/5}\Big(\frac{\rho}{\theta}\Big)^{\boldsymbol{\omega}+\boldsymbol{\sigma}}  
\geq\delta^{4\eps_1-\boldsymbol{\zeta}\boldsymbol{\sigma}/5} \Big(\frac{\delta}{\tau_i}\Big)^{\boldsymbol{\omega}}\frac{|T|^{\boldsymbol{\sigma}/2}}{|T_{\tau_i}|^{\boldsymbol{\sigma}/2}}. 
\end{equation}
Combining \eqref{tubesVolumeBdDensity} and \eqref{lambdaILowerBd}, we conclude that Conclusion (A) of Lemma \ref{WiderGrainsSmallCKT} holds, provided we select $\eps_1 \leq  \frac{1}{100}\boldsymbol{\zeta}\boldsymbol{\sigma}$ and select $\alpha$ sufficiently small.

This concludes our analysis of the case where \eqref{thetaBig} holds (the analysis of this case began at the start of Step 7). Henceforth we shall suppose that \eqref{thetaBig} fails.

\medskip

\noindent{\bf Step 9.}
We shall now return to the start of Step 7, except, instead of assuming \eqref{thetaBig}, we will instead suppose that 
\begin{equation}\label{thetaSmall}
\theta< \delta^{\boldsymbol{\zeta}/10} \tau'.
\end{equation}
Informally, \eqref{thetaSmall} says that the prisms $V\in\mathcal{V}_{T_\tau}$ are flat. 

Recall that in Steps 4 and 5, we fixed a prism $Z\in\mathcal{Z}$ and a prism $W\in\mathcal{W}_Z$.  In this step, we will fix a $\tau$ tube $T_{\tau}\in \tubes_{\tau}$ and a $\theta\times\tau'\times 1$ prism $V\in\mathcal{V}_{T_{\tau}}$. Define $\tubes^\dag = \tilde\tubes[V]$ and let $Y^\dag$ be the restriction of $\tilde Y$ to $\tubes^\dag$. Thus $(\tubes^\dag,Y^\dag)_{\rho}$ is a set of $\rho$ tubes contained in $V$, and
\begin{equation}\label{goodFSConstant}
\FS( (\tubes^\dag)^V)\lessapprox_\delta \delta^{-\eps_2}.
\end{equation}
After pigeonholing, we may suppose that $(\tubes^\dag,Y^\dag)_{\rho}$ is $\gtrapprox_\delta\delta^{2\eps_2}$ dense.
For each ``stem'' $T^\dag_0 \in \tubes^\dag$, define the ``hairbrush'' 
\[
\mathcal{H}(T^\dag_0)=\{T^\dag\in\tubes^\dag\colon Y^\dag(T_0^{\dag} )\cap Y^\dag(T^{\dag})\neq\emptyset\}.
\] 
We claim that for each tube $T^\dag_0\in\tubes^\dag$, the set
\begin{equation}\label{shadingOfX}
N_{(\tau'/\theta)\rho }(T^\dag_0) \cap \bigcup_{T^{\dag} \in\mathcal{H}(T^\dag_0)}Y^\dag(T^\dag)
\end{equation}
is contained in a rectangular prism of dimensions comparable to $\rho\times (\tau' /\theta)\rho \times 1$; we will call this rectangular prism $X=X(T^{\dag}_0)$. This claim follows from straightforward geometric considerations --- See Figure \ref{boxControllsHairbrushFigure}. Define $Y(X)$ to be the set \eqref{shadingOfX}, so $Y(X)\subset X$.

\begin{figure}[h!]
\centering
\begin{overpic}[scale=0.5]{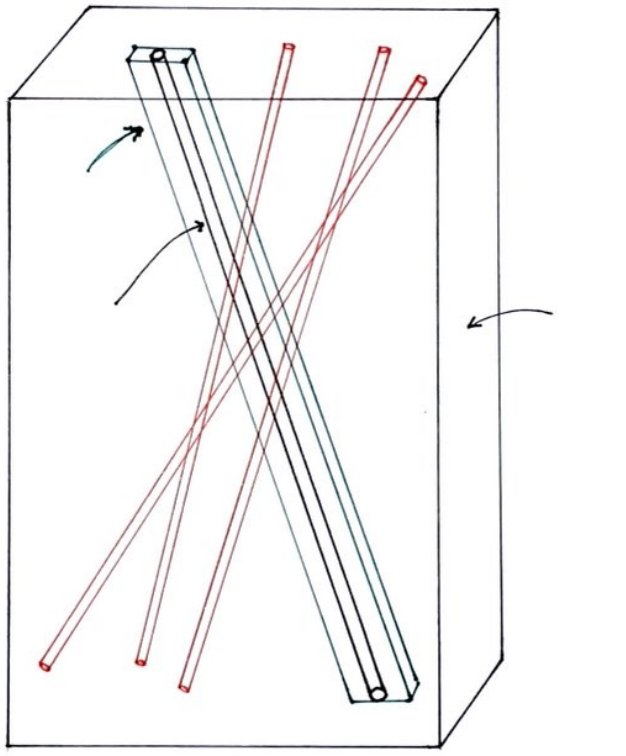}
 \put (3,72) {$X(T^{\dag}_0)$}
 \put (12,54) {$T^{\dag}_0$}
 \put (73,53) {$V$}
\end{overpic}
\caption{All tubes (red) in this figure intersect the stem tube $T^\dag_0$ (black). Since each tube is contained in the $\theta\times\tau'\times 1$ prism $V$, each tube makes angle $\lesssim \theta/\tau'$ with the plane $\Pi(V)$. Since each red tube intersect $T^\dag_0$, the union of these tubes, intersected with the $(\tau'/\theta)\rho$ neighbourhood of $T^\dag_0$, are contained in the prism $X=X(T^{\dag}_0)$ (green) of dimensions $\frac{\theta}{\tau'}\cdot (\frac{\tau'}{\theta})\rho \times   (\frac{\tau'}{\theta})\rho \times 1=\rho\times  (\frac{\tau'}{\theta})\rho \times 1$.
}
\label{boxControllsHairbrushFigure}
\end{figure}


Next, we claim that if $\eps_2$ is chosen sufficiently small compared to $\eps_3$, then for a $\gtrapprox_\delta 1$ fraction of the tubes $T^\dag_0\in \tubes^{\dag}$ we have
\begin{equation}\label{YXBigShading}
| Y(X)| \gtrapprox_\delta \delta^{\eps_3}|X|,\quad \textrm{where}\ X = X(T^\dag_0).
\end{equation}
The estimate \eqref{YXBigShading} says that the shading $Y(X)$ is $\gtrapprox_\delta \delta^{\eps_3}$ dense. This is a standard Cordoba-type $L^2$ argument. In brief, let $Y^\ddag(T^\dag)\subset Y^\dag(T^\dag),\  T^\dag\in\tubes^\dag$ be a regular shading, in the sense of Definition \ref{regularShading}, with $|Y^\ddag(T^\dag)|\geq \frac{1}{2}|Y^\dag(T^\dag)|$. By pigeonholing, a $\gtrapprox_\delta 1$ fraction of the tubes $T^\dag_0\in \tubes^{\dag}$ satisfy
\begin{equation}\label{stillHighMult}
|\{x\in Y^\dag(T^\dag_0)\colon \# \tubes^\dag_{Y^\ddag}(x) \geq \frac{1}{4} \#\tubes^\dag_{Y^\dag}(x)\}| \gtrapprox_\delta\delta^{2\eps_1}|T^\dag_0|.
\end{equation}
For each point $x$ in the set on the LHS of \eqref{stillHighMult}, we can select a tube $T^\dag\in\tubes^\dag$ with $x\in Y^{\ddag}(T)$ and $\angle(\dir(T^\dag_0),\dir(T^\dag))\gtrapprox_\delta \tau$ (recall that in Step 5, we refined our pair $(\tilde\tubes,\tilde Y)_\rho$ to be broad with error $\lessapprox_\delta 1$ relative to $\tubes_\tau$). We now choose a $\delta/\tau$-separated set of points from the LHS of \eqref{stillHighMult}, and consider the corresponding tubes $\{T^\dag\}$. Since each shading $Y^{\ddag}(T^\dag)$ is regular, we have  that the sum of the volumes of these shadings, restricted to $X$, has volume $\gtrapprox_\delta \delta^{2\eps_1}|X|$: 
\[
\sum_{\{T^{\dag}\} } |Y^{\ddag}(T^{\dag})\cap X| \gtrapprox_\delta \delta^{2\eps_1}|X|.
\]
Finally, we use a Cordoba-style $L^2$ argument to show that the corresponding shadings $\{Y^{\ddag}(T)\}$ are almost disjoint inside $X$; this gives \eqref{YXBigShading}.

Abusing notation, we will refine $\tubes^{\dag}$ so that each $T^\dag\in\tubes^\dag$ satisfies \eqref{YXBigShading}. By dyadic pigeonholing and replacing $\tubes^\dag$ by a $\approx_\delta 1$ refinement (abusing notation, we will continue to refer to this set as $\tubes^\dag$), we can select a number $M$ and a set $\mathcal{X}$ of essentially distinct $\rho\times\frac{\tau' \rho}{\theta}\times 1$ prisms of cardinality $\#\mathcal{X}=M^{-1}(\#\tubes^\dag)$, so that 
\begin{itemize}
	\item[(a)] For each $X\in\mathcal{X}$, there are $\sim M$ tubes $T^\dag\in\tubes^\dag$ with $T^\dag\subset X$ and $X(T^\dag)$ comparable to $X$. Denote this latter set by $\tubes^\dag_X$.
	\item[(b)] Each set $\tubes^\dag[X],\ X\in\mathcal{X}$ has the same cardinality (up to a factor of 2).
\end{itemize}
Note that $\tubes^\dag_X\subset\tubes^\dag[X]$, but the two sets need not be equal; it could be the case that $\#\tubes^\dag[X]$ is much larger than $\#\mathcal{X}$. In particular, $\tubes^\dag = \bigsqcup_{X\in\mathcal{X}}\tubes^\dag_X$, but $\mathcal{X}$ might not be a partitioning cover of $\tubes^\dag$. For example, there could exist a tube in $\mathbb{T}^{\dag}[X]$ that is contained in a different prism $V'$; such a tube will be also be contained in the set $\mathbb{T}^{\dag}_{X'},$ where $X'$ a prism with orientation compatible with $V'$ (recall Figure \ref{boxControllsHairbrushFigure}). In particular, it is possible that $X$ and $X'$ intersect transversely. 

\medskip

\noindent{\bf Step 10.}
In Step 9 we fixed a choice of $\tau$ tube $T_{\tau}\in \tubes_{\tau}$ and a $\theta\times\tau'\times 1$ prism $V\in\mathcal{V}_{T_{\tau}}$. We then constructed a pair $(\mathcal{X},Y)_{\rho\times\frac{\tau'}{\theta}\rho\times 1}$. The set $\mathcal{X}$ depended on the choice of prism $V\in\mathcal{V}_{T_{\tau}}$; we will highlight this dependence by writing $\mathcal{X}_V$. In this step we will analyze the interaction between different collections $\mathcal{X}_V$.  

Define $\mathcal{V} = \bigcup_{T_\tau\in\tubes_\tau}\mathcal{V}_{T_\tau}$. Note that $\mathcal{V}$ is a set of $\theta\times\tau'\times 1$ prisms contained in $W$ (the set $W$ was fixed at Step 5). The quantity $M$ from Step 9 depends on the choice of $V$, but after pigeonholing and refining $\mathcal{V}$ we may suppose that this number is the same (up to a factor of 2) for every $V\in\mathcal{V}$.

We will first  consider the case where 
\begin{equation}\label{notTooMuchOverlap}
\# \mathbb{T}^{\dag}_X \sim M \leq \delta^{-\boldsymbol{\zeta}/100}\frac{|X|}{|T^\dag|}.
\end{equation}
We will show that Conclusion (A) of Lemma \ref{WiderGrainsSmallCKT} holds.

Recall from Figure \ref{boxControllsHairbrushFigure} that the prisms $X\in\mathcal{X}_V$ and $V$ have compatible orientations, in the sense that $X^V$ is a prism of dimensions comparable to $\frac{\rho}{\theta}\times \frac{\rho}{\theta}\times 1$. Thus we will refer to the set $(\mathcal{X}_V^V,Y^V)_{\frac{\rho}{\theta}\times \frac{\rho}{\theta}\times 1})$ as $(\tubes_{\frac{\rho}{\theta},V},Y_V)_{\frac{\rho}{\theta}}$. 
 
We claim that 
\begin{equation}\label{minOverlappingVs}
		\textrm{the sets}\ \Big\{\ \bigcup_{ X\in \mathcal{X}_V} Y(  X),\ V\in\mathcal{V}\Big\}\quad\textrm{are}\ \leq\delta^{-\boldsymbol{\beta}}\ \textrm{overlapping}.
	\end{equation}
This claim follows from combining \eqref{boundedOverlapSets} and \eqref{disjointSetsV} (and noting that $\tau^{-\boldsymbol{\beta}}\leq \delta^{-\boldsymbol{\beta}}$).

Observe that for each $V\in\mathcal{V}$, we have
\begin{equation}\label{computeEllRhoTHeta}
\FS(\tubes_{\frac{\rho}{\theta},V})\lessapprox_\delta \delta^{-2\eps_2}.
\end{equation}
This is because $\FS(\tilde\tubes^V)\lessapprox_\delta \delta^{-2\eps_2}$ (recall that $\mathcal{V}$ factors $\tilde\tubes$ from below with small error with respect to the Frostman Slab Wolff Axioms), and by Item (b) from Step 9, each $X\in\mathcal{X}_V$ contains the same number (up to a factor of 2) of  tubes from $\tilde\tubes[V]$. This means that $\FS(\mathcal{X}_V^V)\lessapprox_\delta \delta^{-2\eps_2}$, which is precisely \eqref{computeEllRhoTHeta}.

Define
\begin{equation}\label{defnMRhoTheta}
m_{\frac{\rho}{\theta}}=m M^{-1}\frac{|X|}{|\tilde T|},
\end{equation}
where $m$ is as defined in \eqref{tubesInW}, and $|X| = \rho\times\frac{\tau'}{\theta}\rho\times 1 = \frac{\tau'}{\theta}\rho^2$ is the volume of a prism from $\mathcal{X}_V$. We have 
\begin{equation}\label{computeMRhoTHeta}
\CKT(\tubes_{\frac{\rho}{\theta},V})\lessapprox_\delta m M^{-1}\frac{|X|}{|\tilde T|}\leq m_{\frac{\rho}{\theta}}.
\end{equation}

Finally, we compute
\begin{equation}\label{boundSumTubesInV}
\sum_{V\in\mathcal{V}} (\#\tubes_{\frac{\rho}{\theta},V})\gtrapprox_\delta M^{-1} \#\tilde\tubes[W] \gtrapprox mM^{-1}\frac{|W|}{|\tilde T|},
\end{equation}
where $W$ is the prism fixed at Step 5, and the final inequality used \eqref{tubesInW}.

Fix a choice of $V\in\mathcal{V}$. Applying the estimate $\cE(\boldsymbol{\sigma},\boldsymbol{\omega})$ to $(\tubes_{\frac{\rho}{\theta},V},Y_V)_{\rho/\theta}$ with $\eps_4$ in place of $\eps$ and using \eqref{computeEllRhoTHeta}, we have
\begin{equation}
\begin{split}
\Big| \bigcup_{T_{\frac{\rho}{\theta}}\in\tubes_{\frac{\rho}{\theta},V}}Y_V(T_{\frac{\rho}{\theta}})\Big|
&\gtrapprox_\delta \delta^{\eps_4+2\eps_2}\Big(\frac{\rho}{\theta}\Big)^{\boldsymbol{\omega}} m_{\frac{\rho}{\theta}}^{-1}(\#\tubes_{\frac{\rho}{\theta},V})|T_{\frac{\rho}{\theta}}|
\Big(m_{\frac{\rho}{\theta}}^{-3/2} (\#\tubes_{\frac{\rho}{\theta},V})|T_{\frac{\rho}{\theta}}|^{1/2} \Big)^{-\boldsymbol{\sigma}}.
\end{split}
\end{equation}
In the above inequality, we used \eqref{computeMRhoTHeta} plus the fact that $\boldsymbol{\sigma}\leq 2/3$ (the latter inequality allows us to replace  $\CKT(\tubes_{\frac{\rho}{\theta},V})$ with the potentially larger quantity $m_{\frac{\rho}{\theta}}$). Undoing the scaling $\phi_V$ and using \eqref{minOverlappingVs}, we conclude that
\begin{equation}\label{boundInsideWZ}
\begin{split}
\Big|\bigcup_{\tilde T\in\tilde\tubes}Y(\tilde T)\Big| &\geq \Big|\bigcup_{V\in\mathcal{V}}\bigcup_{X\in\mathcal{X}_V}Y(X)\Big|\\
&\geq\delta^{\boldsymbol{\beta}}\sum_{V\in\mathcal{V}}\Big|\bigcup_{X\in\mathcal{X}_V}Y(X)\Big|\\
&\geq  \delta^{\boldsymbol{\beta}} \frac{|X|}{|T_{\frac{\rho}{\theta}}|}\sum_{V\in\mathcal{V}}\Big| \bigcup_{T_{\frac{\rho}{\theta}}\in\tubes_{\frac{\rho}{\theta},V}}Y_V(T_{\frac{\rho}{\theta}})\Big|\\
&\gtrapprox_\delta  \delta^{\boldsymbol{\beta}} \frac{|X|}{|T_{\frac{\rho}{\theta}}|} \cdot  
\delta^{\eps_4+2\eps_2}\Big(\frac{\rho}{\theta}\Big)^{\boldsymbol{\omega}} m_{\frac{\rho}{\theta}}^{-1}\Big(\sum_{V\in\mathcal{V}}\#\tubes_{\frac{\rho}{\theta},V}\Big)|T_{\frac{\rho}{\theta}}|
\Big(m_{\frac{\rho}{\theta}}^{-3/2} \big(\sup_{V\in\mathcal{V}}\#\tubes_{\frac{\rho}{\theta},V}\big)|T_{\frac{\rho}{\theta}}|^{1/2} \Big)^{-\boldsymbol{\sigma}}\\
&\gtrapprox_\delta  \delta^{\boldsymbol{\beta}+2\eps_4} \frac{|X|}{|T_{\frac{\rho}{\theta}}|} \cdot  
\Big(\frac{\rho}{\theta}\Big)^{\boldsymbol{\omega}} m_{\frac{\rho}{\theta}}^{-1}\Big(mM^{-1}\frac{|W|}{|\tilde T|}\Big)|T_{\frac{\rho}{\theta}}|
\Big(m_{\frac{\rho}{\theta}}^{-3/2} \big(\sup_{V\in\mathcal{V}}\#\tubes_{\frac{\rho}{\theta},V}\big)|T_{\frac{\rho}{\theta}}|^{1/2} \Big)^{-\boldsymbol{\sigma}},
\end{split}
\end{equation}
where the final inequality used \eqref{boundSumTubesInV}. Substituting \eqref{defnMRhoTheta} and simplifying, we obtain
\begin{equation}\label{reduceLHSboundInsideWZ}
\begin{split}
\textrm{LHS}\ \eqref{boundInsideWZ} &\gtrapprox_\delta \delta^{\boldsymbol{\beta}+2\eps_4} \frac{|X|}{|T_{\frac{\rho}{\theta}}|} \cdot  
\Big(\frac{\rho}{\theta}\Big)^{\boldsymbol{\omega}} \Big(m M^{-1}\frac{|X|}{|\tilde T|}\Big)^{-1}\Big(mM^{-1}\frac{|W|}{|\tilde T|}\Big)|T_{\frac{\rho}{\theta}}|\\
&\qquad\qquad\qquad\qquad\cdot \Big(\Big(m M^{-1}\frac{|X|}{|\tilde T|}\Big)^{-3/2} \big(\sup_{V\in\mathcal{V}}\#\tubes_{\frac{\rho}{\theta},V}\big)|T_{\frac{\rho}{\theta}}|^{1/2} \Big)^{-\boldsymbol{\sigma}}\\
&\gtrapprox_\delta \delta^{\boldsymbol{\beta}+2\eps_4} 
\Big(\frac{\rho}{\theta}\Big)^{\boldsymbol{\omega}} |W|  \Big(m M^{-1}\frac{|X|}{|\tilde T|}\Big)^{\boldsymbol{\sigma}/2}     \Big(\Big(m M^{-1}\frac{|X|}{|\tilde T|}\Big)^{-1} \big(M^{-1}(\sup_{V\in \mathcal{V}} \#\tilde\tubes[V])\big)|T_{\frac{\rho}{\theta}}|^{1/2} \Big)^{-\boldsymbol{\sigma}}\\
&\gtrapprox_\delta \delta^{-\boldsymbol{\sigma}\boldsymbol{\zeta}/4 }
\Big(\frac{\rho}{\theta}\Big)^{\boldsymbol{\omega}} |W|\Big(m^{-1} \frac{|\tilde T|}{|X|} \big(\#\tilde\tubes[V]\big)|T_{\frac{\rho}{\theta}}|^{1/2} \Big)^{-\boldsymbol{\sigma}},
\end{split}
\end{equation}
where the second inequality used the fact that $\#\tubes_{\frac{\rho}{\theta},V}\sim M^{-1}(\#\tilde\tubes[V])$  for each $V\in\mathcal{V}=\mathcal{V}$, and the third inequality used \eqref{mLarge}, \eqref{notTooMuchOverlap}, $|T^{\dag}|=|\tilde{T}|$, and the fact that $\boldsymbol{\beta}$ and $\eps_4$ are small compared to $\boldsymbol{\sigma}\boldsymbol{\zeta}$.

Observe that the set on the LHS of \eqref{reduceLHSboundInsideWZ} is contained in $|W|$, while the RHS involves the term $|W|$. Thus \eqref{reduceLHSboundInsideWZ} gives a lower bound for the density of $\bigcup_{\tilde T\in\tilde\tubes}\tilde Y(\tilde T)$ inside $W$. This bound contains the term $\delta^{-\boldsymbol{\sigma}\boldsymbol{\zeta}/4 }$ --- this quantity is much larger than 1, and this will eventually allow us to conclude that Conclusion (A) of Lemma \ref{WiderGrainsSmallCKT} holds.

Let us analyze the final term in brackets $(\cdots)^{-\boldsymbol{\sigma}}$. We have
\begin{equation}\label{reduceInsideBrackets}
\begin{split}
m^{-1} \frac{|\tilde T|}{|X|} \big(\#\tilde\tubes[V]\big)|T_{\frac{\rho}{\theta}}|^{1/2}
&\leq m^{-1} \frac{|\tilde T|}{|X|} \Big(\CFC(\tilde\tubes^W)\frac{|V|}{|W|}(\#\tilde \tubes[W])\Big)|T_{\frac{\rho}{\theta}}|^{1/2}\\
&\lessapprox_\delta  m^{-1} \frac{|\tilde T|}{|X|} \Big(\frac{|V|}{|W|}(m\frac{|W|}{|\tilde T|})\Big)|T_{\frac{\rho}{\theta}}|^{1/2}\\
&= \frac{|V|}{|X|}|T_{\frac{\rho}{\theta}}|^{1/2}\\
& = \frac{\theta}{\rho},
\end{split}
\end{equation}
where the second inequality used \eqref{CFCTildeTubes} and \eqref{tubesInW}.

Combining \eqref{reduceLHSboundInsideWZ} and \eqref{reduceInsideBrackets}, we conclude that
\begin{equation}\label{tildeTubesLargeVolume}
\Big|\bigcup_{\tilde T\in\tilde\tubes}Y(\tilde T)\Big| \gtrapprox_\delta \delta^{-\boldsymbol{\sigma}\boldsymbol{\zeta}/4} \Big(\frac{\rho}{\theta}\Big)^{\boldsymbol{\omega}+\boldsymbol{\sigma}} |W|.
\end{equation}

\medskip

\noindent{\bf Step 11.}
We can now argue similarly to our reasoning in Step 8. The set on the LHS of \eqref{tildeTubesLargeVolume} is contained in $W$, which is a prism of dimensions $s\times t\times 1$, with $\delta\leq \rho\leq\theta\leq s\leq 1$. Thus there exists a ball $B_\theta$ of radius $\theta$ so that
\begin{equation*}
\Big|B_\theta  \cap \bigcup_{\tilde T\in\tilde\tubes}Y(\tilde T)\Big|\gtrapprox_\delta \delta^{-\boldsymbol{\sigma}\boldsymbol{\zeta}/4} \Big(\frac{\rho}{\theta}\Big)^{\boldsymbol{\omega}+\boldsymbol{\sigma}} |B_\theta|.
\end{equation*}
Recall that at the beginning of Step 4 we fixed a prism $Z\in\mathcal{Z}$. The set of $\rho$ tubes $\tilde \tubes$ are the images of prisms from $\mathcal{P}_3\langle Z\rangle$ under the linear map $\phi_Z$. Let $B^\dag = \phi_Z^{-1}(B_\theta)$; $B^\dag$ is an ellipsoid of dimensions $\theta \frac{a}{\rho}\times \theta c\times\theta c$, which satisfies
\[
\Big|B^\dag \cap \bigcup_{T_1\in\tubes_1}Y_1(T)\Big| \gtrapprox_\delta \delta^{-\boldsymbol{\sigma}\boldsymbol{\zeta}/4} \Big(\frac{\rho}{\theta}\Big)^{\boldsymbol{\omega}+\boldsymbol{\sigma}} |B^\dag|.
\]
This is the analogue of \eqref{bigBDagBallEstimate}. An identical argument (in particular, note that 
$a\in [\delta, 1]$) shows that Conclusion (A) of Lemma \ref{WiderGrainsSmallCKT} holds, provided we select $\alpha<\boldsymbol{\sigma}\boldsymbol{\zeta}/10$. This concludes our analysis of the case where \eqref{notTooMuchOverlap} holds. 

\medskip

\noindent{\bf Step 12}.
We shall now return to the start of Step 10, except, instead of assuming \eqref{notTooMuchOverlap}, we will instead suppose that 
\begin{equation}\label{TooMuchOverlap}
M > \delta^{-\boldsymbol{\zeta}/100}\frac{|X|}{|\tilde T|}.
\end{equation}
We summarize the situation thus far: 
\begin{itemize}
	\item We have fixed a choice of $Z\in\mathcal{Z}$ (these are prisms of dimensions $\frac{a}{\rho}\times c\times c$) and $W\in\mathcal{W}_Z$. 
	\item We have a set $\tilde\tubes$ of $\rho$ tubes contained in $W$. 
	\item We have a set $\mathcal{V}$ of $\theta\times\tau'\times 1$ prisms contained in $W$. 
	\item For each $V\in\mathcal{V}$, we have a set $\mathcal{X}_V$ of $\rho\times\frac{\tau'}{\theta}\rho\times 1$ prisms, and a partition $\tilde\tubes[V]=\bigsqcup_{X\in\mathcal{X}_V}(\tilde\tubes[V])_X$. 
	\item Each set $(\tilde\tubes[V])_X$ has cardinality roughly $M$, where $M$ satisfies \eqref{TooMuchOverlap}.
	\item We have a shading $\tilde Y$ on $\tilde \tubes$ so that $(\tilde \tubes, \tilde Y)_\rho$ is $\gtrapprox_\delta \delta^{2\eps_2}$ dense.
\end{itemize}

Let $\mathcal{X} = \bigcup_{V\in\mathcal{V}}\mathcal{X}_V$ (this is a slight abuse of notation, since in Step 9 we defined $\mathcal{X}$ to be a set of the form $\mathcal{X}_V$, where the prism $V$ was fixed in advance). To simplify notation, define $\tilde\tubes_X = (\tilde\tubes[V])_X$, where $V$ is the (unique) prism for which $X\in\mathcal{X}_V$. Since $(\tilde\tubes, \tilde Y)_\rho$ is $\gtrapprox_\delta \delta^{2\eps_2}$ dense, after a harmless refinement of $\mathcal{X}$ we may suppose that each pair $(\tilde\tubes_X,\tilde Y)_\rho$ is $\gtrapprox_\delta \delta^{2\eps_2}$ dense, where $\tilde Y$ is the restriction of the shading on $\tilde\tubes$ to the set $\tilde\tubes_X$. 

Apply Corollary \ref{coveringTubesBroadCor} (finding a broad scale) to each pair $(\tilde\tubes_X,\tilde Y)_\rho$, for each $X\in\mathcal{X}$. This yields a scale $\tilde\rho$ and a $\gtrapprox_\delta 1$ refinement $(\tilde\tubes_X',\tilde Y')_\rho$ that is broad with error $\lessapprox_\delta 1$ with regard to a balanced partitioning cover of $\tilde\rho$ tubes. 

After dyadic pigeonholing and refining the set $\mathcal{X}$, we can suppose that the scale $\tilde\rho$ from Corollary \ref{coveringTubesBroadCor} is the same for each prism $X$. We claim that 
\begin{equation}\label{tildeRhoBig}
\tilde\rho\gtrapprox_\delta \delta^{-\boldsymbol{\zeta}/200}\rho.
\end{equation} 
To verify this claim, observe that for each $X\in\mathcal{X}$, the sets $\{\tilde Y'(\tilde T)\colon \tilde T\in\tilde\tubes_X'\}$ are $\lessapprox_\delta (\tau'/\tilde\rho)^{\boldsymbol{\beta}}(\tilde\rho/\rho)$ overlapping. This is because the tubes from $\tilde\tubes'_X$ whose shadings pass through a common point $x$ must point in directions confined to a $\rho\times \tau'$ region in the unit sphere $S^2\subset \RR^3$ (we identify $S^2$ with the set of directions of tubes in $\RR^3$). Since these tubes are essentially distinct and pass through a common point, they must point in $\rho$ separated directions. Thus at most $\rho/\tilde\rho$ tubes can point in directions confined to a $\rho\times \tilde\rho$ region in $S^2$ (this corresponds to those $\rho$ tubes contained in a single $\tilde\rho$ tube), and at most $(\tau'/\tilde\rho)^{\boldsymbol{\beta}}$ distinct $\tilde\rho$ tubes can contribute to the count. This implies that
\[
\Big|\bigcup_{\tilde T\in\tilde\tubes'_X}\tilde Y'(\tilde T)\Big| 
\gtrapprox_\delta 
\Big(\frac{\tilde\rho}{\tau'}\Big)^{\boldsymbol{\beta}}\Big(\frac{\rho}{\tilde\rho}\Big)\sum_{\tilde T\in\tilde\tubes'_X}|\tilde Y'(\tilde T)|
%
\geq \delta^{-\boldsymbol{\zeta}/200}\Big(\frac{\rho}{\tilde\rho}\Big)|X|,
\]
where the second inequality used \eqref{TooMuchOverlap} and $\boldsymbol{\beta}< \boldsymbol{\zeta}/200$. Since the set on the LHS is contained in $X$, we obtain \eqref{tildeRhoBig}, as claimed.

Next, for each prism $X\in\mathcal{X}$ there exists a set $\{U\}$ of essentially distinct $\rho\times\tilde\rho\times 1$ prisms, each of which are contained in $X$, so that these sets form a partitioning cover of $\tilde\tubes_X'$, and for each such $U$, the pair $(\tilde\tubes_X'[U],\tilde Y')_{\rho}$ is broad with error $\lessapprox_\delta 1$ relative to the cap of diameter $\tilde\rho$ centered at the point $\dir(U)$. See Figure \ref{breakXIntoUFig}.


\begin{figure}[h!]
\centering
\begin{overpic}[ scale=0.5]{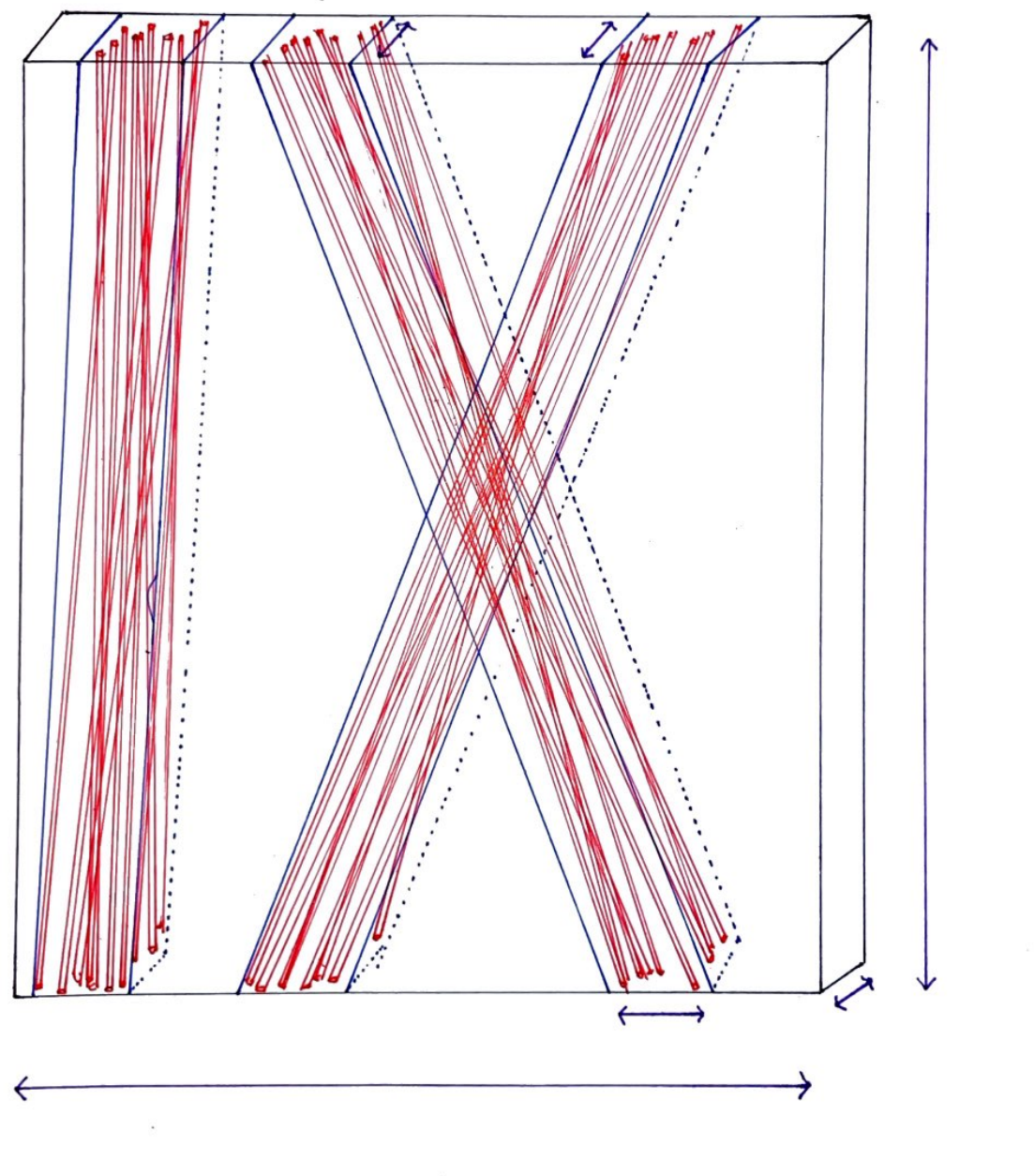}
 \put (35,3) {$\delta^{-\frac{\zeta}{20}}\rho$}
 \put (55,10) {$\tilde\rho$}
 \put (73,12) {$\rho$}
 \put (47,96) {$\rho$}
 \put (81,53) {$1$}
\end{overpic}
\caption{We find a set of $\rho\times\tilde\rho\times 1$ prisms (blue) inside $X$ (black prism), which forms a partitioning cover of  $\tilde\tubes_X'$
 (red lines). A typical pair of tubes from  $\tilde\tubes_X'$ inside a common (blue) $\rho\times\tilde\rho\times 1$ prism intersect at angle roughly $\tilde\rho$.}
\label{breakXIntoUFig}
\end{figure}

For each such prism $U$, define
\[
Y(U) = \bigcup_{\tilde T\in\tilde\tubes_X'[U]}\tilde Y'(\tilde T).
\]
Then a Cordoba-style $L^2$ argument shows that for each prism $U$ for which $(\tilde\tubes_X'[U],\tilde Y')_\rho$ is $\gtrapprox_\delta \delta^{2\eps_2}$ dense (here $\tilde Y'$ denotes the restriction of the shading on $\tilde\tubes_X'$ to $\tilde\tubes_X'[U]$), we have $|Y(U)|\gtrapprox_\delta \delta^{8\eps_2}|U|$. Let $\mathcal{U}$ denote the set of all prisms $U$ for which this holds, as $X$ ranges over the elements of $\mathcal{X}$. Note, however, that the prisms in $\mathcal{U}$ need not be essentially distinct. 

\medskip

\noindent{\bf Step 13}.
Our task in this step is to unwind the various transformations and rescalings from the previous step, and to understand what the prisms $U$ (and their associated shadings $Y(U)$) correspond to in the original space in which the tubes $\tubes$ and prisms $\mathcal{P}$ reside.

In Step 12, we fixed a choice of $Z\in\mathcal{Z}$ and $W\in\mathcal{W}_Z$.  Recall that $\tilde \tubes$ comes from $(\mathcal{P}_5\langle Z \rangle )^Z$, and is a set of $\rho \times \rho \times 1$-tubes. We obtained a set $\mathcal{U}=\mathcal{U}_{W,Z}$ of $\rho\times\tilde{\rho}\times 1$ prisms (this $\mathcal{U}$ is different from the set $\mathcal{U}$ in Step 3, the latter $\mathcal{U}$ was defined to introduce $\mathcal{Z}$ and only appeared within Step 3),  and a shading $Y(U)$ on these prisms. These prisms are contained inside $W$. Undoing the linear transformation $\phi_Z$, we have a set $\phi_Z^{-1}(\mathcal{U}_{W,Z})$ of prisms.  After pigeonholing, we may assume that these prisms are of dimensions $\tilde a\times \tilde b\times c$, for some $\tilde a\geq a$ and $\tilde b\geq b$. Since the linear transformation $\phi_Z$ distorts volume by a factor of  $(a/\rho) c^2$, we have that  $(\tilde a \tilde b) = a c \tilde{\rho}$. The dimensions $\tilde a, \tilde b$ depend on the choice of $W$ and $Z$, but after pigeonholing, we may suppose that these values are the same (up to a factor of 2) for all $Z\in\mathcal{Z}$ and all $W\in\mathcal{W}_Z$. We claim (provided $\eps_2$ is chosen sufficiently small compared to $\eps_3$)  that either
\begin{equation}\label{aPrimeSmall}
\tilde a\leq \delta^{-\eps_3}a,
\end{equation}
or else Conclusion (A) of Lemma \ref{WiderGrainsSmallCKT} holds (provided $\alpha$ is chosen sufficiently small, depending on $\eps_3$). The argument is identical to the argument in Step 1 of Lemma \ref{squareGrainsGetLonger}; we refer the reader there for details.

Henceforth we shall assume that \eqref{aPrimeSmall} holds. Abusing notation, we will re-define $\tilde\rho = \tilde b/c$; this re-definition might decrease the value of $\tilde\rho$ by as much as $\delta^{\eps_3}$, and \eqref{tildeRhoBig} might be weakened to $\tilde\rho\geq \delta^{-\boldsymbol{\zeta}/400}\rho$. With this re-definition of $\tilde\rho$, we clearly have $\tilde b=\tilde\rho c$.

Let 
\[
\mathcal{Q} = \bigcup_{Z\in\mathcal{Z}}\bigcup_{W\in\mathcal{W}_Z}\phi_Z^{-1}(\mathcal{U}_{Z,W}).
\]
For each $Q\in\mathcal{Q}$ of the form $Q =\phi_Z^{-1}(U)$, define the shading $Y(Q)=\phi_Z^{-1}(Y(U))$  and define the set $\mathcal{P}_Q\subset \mathcal{P}[Q]$ as $\phi_Z^{-1} (\tilde\tubes_X'[U])$, where $X$ is the $\rho \times \frac{\tau'}{\theta}\rho \times 1$-prism associated to $U$ (see Step 12). Summarizing our conclusions thus far, we have the following. 
\begin{itemize}
	\item Each $Q\in\mathcal{Q}$ is a prism of dimensions $\tilde a\times \tilde b\times c$, with $\tilde b =\tilde\rho c$.

	\item For each $Q\in\mathcal{Q}$, there is a set $\mathcal{P}_Q\subset\mathcal{P}[Q]$. Define $\mathcal{P}'=\bigcup_{Q\in\mathcal{Q}}\mathcal{P}_Q$. There is a shading $Y'(P)\subset Y(P)$, so that $(\mathcal{P}',Y')_{a\times b\times c}$ is a $\gtrapprox_\delta \delta^{2\eps_2}$ refinement of $(\mathcal{P},Y)_{a\times b\times c}$.

	\item Each $Q\in\mathcal{Q}$ has a shading $Y(Q)$ given by
	\[
		Y(Q) = \bigcup_{P\in\mathcal{P}_Q}Y'(P).
	\]
	We have $|Y(Q)|\gtrapprox_\delta\delta^{8\eps_2}|Q|$ for each $Q\in\mathcal{Q}$.

	\item For each $Q\in\mathcal{Q}$ and each $x\in Y(Q)$, the prisms $P\in \mathcal{P}_Q$ with $x\in Y'(P)$ point in directions that are broad with error $\lessapprox_\delta 1$ inside a cap of radius $\tilde b/c$ centered at $\operatorname{dir}(Q)$ (these correspond to the tubes $\tilde\tubes'_X[U]$ and their associated shading, which are broad with error $\lessapprox_\delta 1$ inside the $\rho\times\tilde\rho\times 1$ prism $U$). 

	\item The refinement of $(\tubes,Y)_\delta$ induced by the refinement $(\mathcal{P}',Y')_{a\times b\times c}$ of $(\mathcal{P},Y)_{a\times b\times c}$ is $\gtrapprox_\delta \delta^{2\eps_2}$ dense. If we denote this refinement by $(\tubes',Y')_\delta$, then (by the definition of being an induced refinement) we have 
	\[
		\bigcup_{T\in\tubes'}Y'(T)= \bigcup_{Q\in\mathcal{Q}}Y(Q).
	\] 
\end{itemize}

The pair $(\mathcal{Q},Y)_{\tilde a\times \tilde b\times c}$ has some of the desired properties from Conclusion (C) of Lemma \ref{WiderGrainsSmallCKT}. Observe that for each $Q\in\mathcal{Q},$ $\dir(Q)$ is defined up to uncertainty $\tilde\rho$. Thus after a refinement of $\mathcal{Q}$ and $\tubes'$, we can find a set of $\tilde\rho$ tubes $\tubes_{\tilde\rho}$ and a partition $\mathcal{Q}=\bigsqcup_{T_{\tilde\rho}\in\tubes_{\tilde\rho}}\mathcal{Q}_{T_{\tilde\rho}}$, so that each $Q\in\mathcal{Q}_{T_{\tilde\rho}}$ is contained in $T_{\tilde\rho}$ and satisfies $\angle(\dir(Q),\dir(T_{\tilde\rho}))\leq\tilde\rho$. 

Fix a prism $Q\in\mathcal{Q}_{T_{\tilde\rho}}$ and a point $x\in Y(Q)$. The set of prisms $P\in\mathcal{P}_Q$ with $x\in Y'(P)$ point in directions that satisfy $\angle(\dir(P),\dir(Q))\leq\tilde\rho$, and this set of directions is $\rho$-separated and broad with error $\lessapprox_\delta 1$ at scales $\geq\rho$ inside a cap of diameter $\tilde\rho$ centered at $\dir(Q)$. For each such $P$ (contained in a $\rho$ tube $T_{\rho}$), the set of tubes $T\in\tubes[T_\rho]$ with $x\in Y'(T)$ are broad with error $\lessapprox_\delta \delta^{-2\eps_2}$ at scales $\geq\delta$, inside a cap of diameter $\rho$ centered at $\dir(P)$. Thus by Lemma \ref{broadnessAcrossScales} (broadness combines across scales) and Items (ii) and (iii) of Definition~\ref{twoScaleGrainsDecomp}, we have that the set of tubes $T\in\tubes'$ associated to the point $x\in Y(Q)$ point in directions that are broad with error $\lessapprox_{\delta}\delta^{-2\eps_2}$ at scales $\geq\delta$ inside a cap of diameter $\tilde\rho$ centered at $\dir(Q)$. 

Note that even though the conclusion of Lemma~\ref{broadnessAcrossScales} is a statement about multi-sets, here it is also true in the sense of sets.  Indeed,  Item (ii) of Definition~\ref{twoScaleGrainsDecomp} says that for each point $x\in \bigcup_{Q\in \mathcal{Q}_{T_{\tilde \rho}}}Y(Q)$, we have that the sets
$\{ \dir (T):  T\in \tubes[T_\rho] \text{ and } x\in Y'(T)\cap Y'(P)\}$ are disjoint, as $P$ ranges over the elements of $\bigcup_{Q\in \mathcal{Q}_{T_{\tilde \rho}} } \mathcal{P}_Q$ that are contained in $T_\rho$.
 In particular, we can construct a set $\tubes_{\tilde\rho}$ of $\tilde\rho$ tubes that covers $\tubes'$, so that $(\tubes',Y')_\delta$ is broad with error $\lessapprox_\delta \delta^{-2\eps_2}$ relative to  $\tubes_{\tilde{\rho}}$.

\medskip

\noindent{\bf Step 14}.

We would like to show that $(\tubes',Y')_\delta$, $\mathcal{Q}$ and its associated shading $Y$, and $\tilde\tubes$ satisfy Conclusion (C) from Lemma \ref{WiderGrainsSmallCKT}. First, the prisms in $\mathcal{Q}$ might not be essentially distinct. This can be fixed, however, using the same argument as was employed in Step 5 from the proof of Lemma \ref{squareGrainsGetLonger}. In brief, we merge comparable prisms from $\mathcal{Q}$ into a single prism. Denote the resulting set of prisms by $\tilde{\mathcal P}$; these are essentially distinct prisms of dimensions comparable to $\tilde a\times\tilde b\times c$. Let $\tilde Y$ be the shading on the prisms of $\tilde{\mathcal{P}}$ obtained by taking the union of the shadings $Y(Q),\ Q\subset \tilde P$. We define 
\[
\mathcal{P}_{\tilde P}=\bigcup_{\substack{Q\in\mathcal{Q}\\ Q\subset\tilde P}}\mathcal{P}_Q.
\]
 Note that if $P\in\mathcal{P}_{\tilde P}$, then $\angle(\dir(P),\dir(\tilde P))\lesssim \tilde b/c =\tilde\rho$. Thus after enlarging $\tilde a$ and $\tilde b$ by a constant factor if needed, we can ensure that if (i): $T\cap P\neq\emptyset$, (ii): $T$ exists $P$ through its long ends, and (iii): $P\in\mathcal{P}_{\tilde P}$, then $T\cap\tilde P\neq\emptyset$ and $T$ exists $\tilde P$ through its long ends. 
At this point, the pair $(\tubes',Y')_\delta$ and $(\tilde{\mathcal{P}},\tilde Y)_{\tilde a\times\tilde b\times c}$ satisfy some of the requirements of Conclusion (C) from Lemma \ref{WiderGrainsSmallCKT}. The situation matches the setup at the end of Step 5 in the proof of Lemma \ref{squareGrainsGetLonger}.

We now proceed with the same argument that was used in Steps 6 -- 8 from the proof of Lemma \ref{squareGrainsGetLonger}. We conclude that either Conclusion (A) of Lemma \ref{WiderGrainsSmallCKT} holds (this is the same as Conclusion (A) of Lemma \ref{squareGrainsGetLonger}), or else there is a set $\tubes_{\tilde\rho}$ and a further refinement of $(\tubes',Y')_\delta$ and $(\tilde P,\tilde Y)_{\tilde a\times\tilde b\times c}$ that satisfies Items (i), (ii), and (iii) of Conclusion (C) from Lemma \ref{WiderGrainsSmallCKT} (Items (i), (ii), and (iii) of Conclusion (C) from Lemma \ref{squareGrainsGetLonger}). Note that Item (iv) Conclusion (C) is also satisfied, since $\tilde\rho\geq\delta^{-\boldsymbol{\zeta}/400}\rho$. We conclude that Conclusion (C) from Lemma \ref{WiderGrainsSmallCKT} holds.
\end{proof}


\section{A refined induction-on-scales argument}\label{refinedInductionOnScaleSec}
The goal in this section is to show that if $(\tubes,Y)_\delta$ is a set of $\delta$-tubes, then either $\bigcup_{\tubes}Y(T)$ has larger volume than one would expect from the estimate \eqref{defnCEEstimate} from Assertion $\cE(\sigma,\omega)$, or else there exists a scale $\delta<\!\!<\rho<\!\!<1$ and a set of $\rho$ tubes that factors $\tubes$ above and below with respect to the Katz-Tao Convex Wolff Axioms and Frostman Slab Wolff Axioms. The precise statement is as follows. 

\begin{prop}\label{refinedInductionOnScaleProp}
Let $\omega,\zeta>0$ and $\sigma\in(0,2/3]$, and suppose that $\cE(\sigma,\omega)$ is true. Then there exists 
$\alpha,\eta,\kappa>0$ 
so that the following holds for all $\delta>0$.
Let $(\tubes,Y)_\delta$ be $\delta^{\eta}$ dense, and suppose that $\CKT(\tubes)\leq\delta^{-\eta}$ and $\FS(\tubes)\leq\delta^{-\eta}$. Then at least one of the following must hold.

\begin{itemize}
\item[(A)]\itemizeEqnVSpacing
\begin{equation}\label{conclusionArefinedInductionOnScaleProp}
\Big| \bigcup_{T\in\tubes}Y(T) \Big| \geq \kappa \delta^{\omega-\alpha}(\#\tubes)|T|\big((\#\tubes)|T|^{1/2}\big)^{-\sigma}.
\end{equation}
\item[(B)] There exists a refinement $(\tubes',Y')_\delta$ of $(\tubes,Y)_\delta$ that is $\delta^{\zeta}$ dense, a number $\rho\in [\delta^{1-\omega/100},\delta^{\omega/100}]$, and a set $\tubes_\rho$ that factors $\tubes'$ above and below with respect to both the Katz-Tao Convex Wolff Axioms and the Frostman Slab Wolff Axioms, both with error $\leq\delta^{-\zeta}$.
\end{itemize} 
\end{prop}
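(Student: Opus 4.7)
The plan is to combine Proposition~\ref{grainsDecomposition} with a refined multiplicity analysis that applies $\cE(\sigma,\omega)$ simultaneously at two scales. I will show that the only way to avoid Conclusion~(A) is to have $\CKT(\tubes_\rho)\lessapprox_\delta 1$, at which point Conclusion~(B) follows by combining the factoring information already present in the grains decomposition with the Katz-Tao inheritance from $\CKT(\tubes)\leq\delta^{-\eta}$.

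First I would invoke Proposition~\ref{grainsDecomposition} with parameter $\zeta'\ll\zeta$. If Conclusion~(A) of that proposition holds, I am done. Otherwise I obtain a refinement $(\tubes',Y')_\delta$, a scale $\rho\in[\delta^{1-\omega/100},\delta^{\omega/100}]$, a balanced partitioning cover $\tubes_\rho$ that factors $\tubes'$ above and below wrt the Frostman Slab Wolff Axioms with error $\delta^{-\zeta'}$, and a robustly $\delta^{\zeta'}$-dense two-scale grains decomposition $(\mathcal{G},Y')_{a\times b\times c}$ with $c\geq\delta^{\zeta'}(\rho/\delta)(\#\tubes_\rho/\#\tubes')$ and $\CKT^{\operatorname{loc}}(\mathcal{G})\leq\delta^{-\zeta'}$. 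The Katz-Tao factoring \emph{from below}, namely $\CKT(\tubes'^{T_\rho})\leq\CKT(\tubes)\leq\delta^{-\eta}\leq\delta^{-\zeta}$, is automatic from Remark~\ref{FrostmanWolffInheritedUpwardsDownwards}(B). Thus Conclusion~(B) of the proposition will follow once I establish the bound $\CKT(\tubes_\rho)\leq\delta^{-\zeta}$.

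The heart of the argument is a two-scale multiplicity estimate. After a harmless pigeonholing, I may assume every $x\in\bigcup Y'(T)$ lies in $\sim\mu$ tubes from $\tubes'$; similarly, every $x$ in the union of the shading over $\tubes'[T_\rho]$ lies in $\sim\mu_{\operatorname{fine}}$ tubes, and every $x$ lies in $\sim\mu_{\operatorname{coarse}}$ grains from $\mathcal{G}$. The structure of the grains decomposition (Definition~\ref{twoScaleGrainsDecomp}) gives $\mu\lessapprox_\delta\mu_{\operatorname{fine}}\mu_{\operatorname{coarse}}$. To bound $\mu_{\operatorname{fine}}$, I apply the estimate $\cE(\sigma,\omega)$ (which equals $\cD(\sigma,\omega)$ here, since both $\FS$ and $\CKT$ of $\tubes'^{T_\rho}$ are $\lessapprox_\delta 1$) to each rescaled set $(\tubes'^{T_\rho},Y'^{T_\rho})_{\delta/\rho}$, getting
\begin{equation*}
\mu_{\operatorname{fine}}\lessapprox_\delta (\delta/\rho)^{-\omega}\bigl((\#\tubes'[T_\rho])|T^{T_\rho}|^{1/2}\bigr)^\sigma.
\end{equation*}
To bound $\mu_{\operatorname{coarse}}$, I use Item~(v) of Definition~\ref{twoScaleGrainsDecomp} to cover $\mathcal{G}$ by boxes $\square$, so that each rescaled $\mathcal{G}^\square$ is a set of $\rho$-tubes with $\CKT(\mathcal{G}^\square)\lessapprox_\delta 1$ and with Frostman Slab constant controlled by Lemma~\ref{frostmanSlabTransitiveUnderCovers} applied through the chain $\mathcal{G}\prec\tubes_\rho$. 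Applying $\cE(\sigma,\omega)$ to $\mathcal{G}^\square$ at scale $\rho$ gives $\mu_{\operatorname{coarse}}\lessapprox_\delta \rho^{-\omega}|T_\rho|^{-\sigma/2}$. Combining the two bounds and using $(\#\tubes_\rho)(\#\tubes'[T_\rho])\sim\#\tubes'$ together with $|T_\rho||T^{T_\rho}|=|T|$, I obtain
\begin{equation*}
\mu\lessapprox_\delta \bigl[\delta^{-\omega}\bigl((\#\tubes')|T|^{1/2}\bigr)^\sigma\bigr]\bigl[(\#\tubes_\rho)|T_\rho|\bigr]^{-\sigma}.
\end{equation*}
The first bracket is exactly the bound that would be given by applying $\cE(\sigma,\omega)$ directly to $\tubes'$, while the second bracket is an improvement whenever $(\#\tubes_\rho)|T_\rho|\gg 1$. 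Since $|\bigcup Y'(T)|\gtrsim\mu^{-1}(\#\tubes')|T|$, an improvement of $\delta^{-\alpha_0}$ in the multiplicity bound translates directly into Conclusion~(A).

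The main obstacle, as the paper notes, is that a large $\CKT(\tubes_\rho)$ need not be witnessed by a single ball; the clustering can happen inside a flat convex set. To handle this I will apply Proposition~\ref{factoringConvexSetsProp} to $\tubes_\rho$, obtaining a refinement $\tubes_\rho'\subset\tubes_\rho$ and a cover $\mathcal{W}$ by congruent $a'\times b'\times 2$ prisms which factor $\tubes_\rho'$ above wrt Katz-Tao Convex Wolff and below wrt Frostman Convex Wolff, both with error $\approx_\delta 1$, and satisfying $\#\tubes_\rho'[W]\approx_\delta\CKT(\tubes_\rho)|W|/|T_\rho|$. If $a'\sim b'$ (the ``tube'' case) then the two-scale argument above runs inside each $W\in\mathcal{W}$ using Lemma~\ref{frostmanSlabTransitiveUnderCovers} to propagate the Frostman Slab control from $\tubes_\rho$, and a nontrivial $\CKT(\tubes_\rho)\gtrsim\delta^{-\zeta}$ produces the improvement $[|T_\rho|(\#\tubes_\rho)]^\sigma\geq\delta^{-\zeta\sigma}$, yielding Conclusion~(A) with $\alpha\leq\zeta\sigma/2$. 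If $a'\ll b'$ (the ``flat prism'' case) then $\mathcal{W}$ satisfies the hypotheses of Proposition~\ref{PropALLbPropGen} relative to the cover $\tubes_\rho'$ of $\tubes'$, whose output already improves on the estimate from $\cE(\sigma,\omega)$ by the factor $(b'/a')^\omega\geq\delta^{-\zeta_1\omega}$, again giving Conclusion~(A). In either case, assuming $\CKT(\tubes_\rho)>\delta^{-\zeta}$ forces Conclusion~(A); contrapositively we obtain $\CKT(\tubes_\rho)\leq\delta^{-\zeta}$ and hence Conclusion~(B). The bookkeeping of the parameters $\eta\ll\zeta'\ll\zeta$ and the careful application of Lemma~\ref{frostmanSlabTransitiveUnderCovers} at the coarse scale are the main technical points, but they introduce no new ideas beyond those developed in Sections~\ref{arrangementsConvexSetsSec}, \ref{factoringTubesSection}, and \ref{twoScaleGrainsSec}.
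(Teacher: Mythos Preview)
Your approach is essentially the paper's, and you have correctly identified all the main ingredients: invoke the grains decomposition; if $\CKT(\tubes_\rho)\leq\delta^{-\zeta}$ then Conclusion~(B) follows by the Katz--Tao downward inheritance you describe; otherwise factor $\tubes_\rho$ via Proposition~\ref{factoringConvexSetsProp} and split into the flat-prism case (handled by Proposition~\ref{PropALLbPropGen}) and the tube case.

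Where the paper is more explicit than your sketch is in the ``tube case.'' Rather than simply ``running the two-scale argument inside each $W$,'' the paper promotes this to a genuine \emph{three}-scale decomposition: it replaces each $W$ by a $\theta$-tube (so $\mathcal{W}$ becomes $\tubes_\theta$), and writes $\mu\lessapprox\mu_{\operatorname{fine}}\cdot\mu_{\operatorname{medium}}\cdot\mu_{\operatorname{coarse}}$, where $\mu_{\operatorname{medium}}$ counts grains from a common $\theta$-tube through a point and $\mu_{\operatorname{coarse}}$ is the $\theta$-tube multiplicity. Bounding $\mu_{\operatorname{medium}}$ requires a short additional tangentiality step (the paper's Step~3): grains from distinct $\rho$-tubes inside a common $T_\theta$ must intersect tangentially (or else Conclusion~(A) holds by the same Cordoba argument used in Move~\#3), so they regroup into boxes of dimensions $\sim\frac{\theta\delta}{\rho}\times\theta c\times c$, whose rescaled contents are $\rho/\theta$-tubes to which $\cE(\sigma,\omega)$ applies. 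The $\delta^{-\sigma\zeta}$ gain then comes from the bound $\#\tubes_\rho\gtrapprox\delta^{-\zeta}(\theta/\rho)^2(\#\tubes_\theta)$ fed into $\mu_{\operatorname{fine}}$. Your localization idea is morally this same argument, but the regrouping at scale $\theta$ and the extra tangentiality verification are real steps you should spell out. Two small corrections: there is no Item~(v) in Definition~\ref{twoScaleGrainsDecomp} (tangentiality and $\CKT^{\operatorname{loc}}(\mathcal{G})\leq\delta^{-\zeta'}$ are conclusions of Proposition~\ref{grainsDecomposition}); and the paper controls $\FS(\mathcal{G}_{T_\theta}^P)$ not via Lemma~\ref{frostmanSlabTransitiveUnderCovers} but by the crude bound $\CFC\leq\CKT\cdot(\rho/\theta)^{-2}(\#\mathcal{G}_{T_\theta}^P)^{-1}$.
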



\begin{proof}
\noindent{\bf Step 1.}
Let $\eps_1,\eps_2,\eps_3$ be small numbers to be chosen below. We will select $\eps_1$ very small compared to $\eps_{2}$ and $\eps_2$ very small compared to $\eps_3$. These numbers depend on $\omega,\sigma,$ and $\zeta$. We will select $\eta$ and $\alpha$ very small compared to $\eps_1$.

Apply Proposition \ref{grainsDecomposition} (two scale grains decomposition) to $(\tubes,Y)$ with $\eps_1$ in place of $\zeta$, and let $\alpha_1 = \alpha_1(\omega,\sigma,\eps_1)$ be the output of that proposition. If Conclusion (A) of Proposition \ref{grainsDecomposition} holds, then \eqref{conclusionArefinedInductionOnScaleProp} is true (provided we select $\alpha\leq\alpha_1$), and we are done. 

Next, suppose Conclusion (B) of Proposition \ref{grainsDecomposition} holds. Let $\rho, c$, $(\tubes_1,Y_1)_\delta,$ $\tubes_\rho,$ and $(\mathcal{G},Y)_{  a \times \rho c \times c}$ be the output from Proposition \ref{grainsDecomposition}, Conclusion (B).

Apply Proposition \ref{factoringConvexSetsProp} (factoring convex sets) to $\tubes_\rho$. We obtain a $\approx_\rho 1$ refinement of $\tubes_\rho$, which in turn induces a $\approx_\rho 1$ refinement of $(\tubes_1,Y_1)_\delta$ (abusing notation, we will continue to refer to these objects as $\tubes_\rho$ and $(\tubes_1,Y_1)_\delta$) and a collection $\mathcal{Z}$ of congruent convex sets that factors $\tubes_\rho$ from above with respect to the Katz-Tao Convex Wolff Axioms and from below with respect to the Frostman Convex Wolff Axioms, both with error $\lessapprox_\rho 1$. Furthermore,
	\begin{equation}\label{lotsOfUinWAppliedToTRho}
		\#\tubes_\rho[Z]\gtrapprox_\rho \CKT(\tubes_\rho)|Z||T_\rho|^{-1}\quad\textrm{for each}\ Z\in\mathcal{Z}.
	\end{equation} 

If $\CKT(\tubes_\rho)\leq\delta^{-\zeta},$ then provided we select $\eps_1\leq\zeta/2$, we have that $(\tubes_1,Y_1)$ and $\tubes_\rho$ satisfy Conclusion (B) of Proposition \ref{refinedInductionOnScaleProp}, and we are done. Indeed; by Remark \ref{FrostmanWolffInheritedUpwardsDownwards}(A) we have $\FS(\tubes_\rho)\lesssim\FS(\tubes_1)\lessapprox_\delta \delta^{-\eta_1-\eps_1}\leq\delta^{-\zeta}$, while by Remark \ref{FrostmanWolffInheritedUpwardsDownwards}(B) we have $\CKT(\tubes_1^{T_\rho})\lesssim \CKT(\tubes)\leq\delta^{-\eta}$ for each $T_\rho\in\tubes_\rho$. 

\medskip

\noindent{\bf Step 2.}
We now consider the case where $\CKT(\tubes_\rho) > \delta^{-\zeta}$, and hence
\begin{equation}\label{lotsOfRhoTubesInsideEachZ}
\#\tubes_\rho[Z]\gtrsim \delta^{-\zeta}\frac{|Z|}{|T_\rho|}\quad\textrm{for each}\ Z\in\mathcal{Z}.
\end{equation}

Our goal is to show that Conclusion (A) of Proposition \ref{refinedInductionOnScaleProp} holds, provided $\alpha>0$ is chosen appropriately. 

First, we claim that either Conclusion (A) of Proposition \ref{refinedInductionOnScaleProp} holds, or else the prisms in $\mathcal{Z}$ are almost tubes. Indeed, let $t\times\theta\times 2$ be the dimensions of the prisms in $\mathcal{Z}$. If $\eps_1$ is chosen sufficiently small depending on $\eps_2, \omega,$ and $\sigma$, then by applying Proposition \ref{PropALLbPropGen} with $\eps_2$ in place of $\eps$, we have
\begin{equation}
\Big|\bigcup_{T\in\tubes_1}Y_1(T)\Big| \gtrsim   \delta^{\omega+\eps_2}\Big(\frac{\theta}{t}\Big)^{\omega} (\#\tubes)|T|\big( (\#\tubes)|T|^{1/2}\big)^{-\sigma},
\end{equation} 
where the implicit constant depends on $\eps_2$. In particular, we may suppose that 
\begin{equation}\label{tAlmostTheta}
t \geq \delta^{\eps_3}\theta,
\end{equation} 
or else Conclusion (A) of Proposition \ref{refinedInductionOnScaleProp} holds, provided $\eps_2\leq \eps_3\omega/2$ and $\alpha\leq\eps_3\omega/2$. Replace each $t\times\theta\times 2$ prism $Z\in\mathcal{Z}$ with its coaxial $\theta$-tube. After dyadic pigeonholing and replacing $(\tubes_1,Y_1)_\delta$ and $\tubes_\rho$ with a $\approx_\delta 1$ refinement, we can find a balanced cover $\tubes_\theta$ of $\tubes_\rho$ that factors $\tubes_\theta$ from below with respect to the Frostman Convex Wolff Axioms and from above with respect to the Katz-Tao Wolff Axioms, both with error $\lessapprox_\delta \delta^{-\eps_3}$.

\eqref{lotsOfRhoTubesInsideEachZ} implies that for each $T_\theta\in\tubes_\theta$ we have $\#\tubes_\rho[T_\theta]\gtrapprox_\delta \delta^{\eps_3-\zeta}(\theta/\rho)^2$, and hence
\begin{equation}\label{manyTubesRho}
\#\tubes_\rho\gtrapprox_\delta \delta^{\eps_3-\zeta}(\theta/\rho)^2(\#\tubes_\theta).
\end{equation}

\medskip

\noindent{\bf Step 3.}
For each $T_\theta\in\tubes_\theta$, define 
\[
\mathcal{G}_{T_\theta}=\bigcup_{T_\rho\in\tubes_\rho[T_\theta]}\mathcal{G}_{T_\rho}.
\] 
We claim that either Conclusion (A) holds (for a suitably chosen value of $\alpha$), or else there is a $\approx_\delta 1$ refinement of $(\mathcal{G},Y)_{a\times\rho c\times c}$ so that the following holds: $a\in [\delta, \delta^{1-\eps_1}]$ and for each $T_\theta\in\tubes_\theta$ and each $G\in\mathcal{G}_{T_\theta}$, the set of grains $G'\in\mathcal{G}_{T_\theta}$ with $Y(G)\cap Y(G')\neq\emptyset$ is contained in a prism of dimensions comparable to $\frac{\theta \delta^{1-\eps_2}}{\rho}\times \theta c \times c$ (compare this with the dimensions of $G$, which are $a\times \rho c \times c$). The argument is identical to the argument in Steps 1 and 2 from Lemma \ref{WiderGrainsSmallCKT}; we refer the reader to those Steps for details.

We shall suppose henceforth that for each $T_\theta\in\tubes_\theta$ and each $G\in\mathcal{G}_{T_\theta}$, the set of grains $G'\in\mathcal{G}_{T_\theta}$ with $Y(G)\cap Y(G')\neq\emptyset$ is contained in a prism of dimensions comparable to $\frac{\theta \delta^{1-\eps_2}}{\rho}\times \theta c \times c$. 

\medskip

\noindent{\bf Step 4.}
By dyadic pigeonholing we can find a number $\mu_{\operatorname{fine}}$ and a $\approx_\delta 1$ refinement of $(\tubes_1,Y_1)_\delta$ and $\tubes_\rho$ so that for each $T_\rho\in\tubes_\rho$ and each $x\in\bigcup_{T\in\tubes_1[T_\rho]}Y_1(T)$, we have 
\[
\#\big((\tubes_1[T_\rho])_{Y_1}(x)\big) \sim \mu_{\operatorname{fine}}.
\] 
We can choose these refinements so it continues to be the case that $(\tubes_1^{T_\rho}, Y_1^{T_\rho})_{\delta/\rho}$ is $\gtrapprox_\delta \delta^{\eps_1}$ dense and $\FS(\tubes_1^{T_\rho})\lessapprox_\delta\delta^{-\eps_1}$ for each $T_\rho\in\tubes_\rho$ (recall that we still have $\CKT(\tubes_1^{T_\rho})\leq\delta^{-\eta}$).

If $\eps_1$ is chosen sufficiently small depending on $\eps_2$, then we can apply the estimate $\cE(\sigma,\omega)$ to conclude that for each $T_\rho\in\tubes_\rho$, we have
\[
\Big|\bigcup_{T\in\tubes_1[T_\rho]}Y_1(T)\Big| \gtrsim   \big(\frac{\delta}{\rho}\big)^{\omega+\eps_2}(\#\tubes_1[T_\rho])  |T| \Big((\#\tubes_1[T_\rho])\big(\frac{|T|}{|T_\rho|}\big)^{1/2}\Big)^{-\sigma},
\]
where the implicit constant depends on $\eps_2$, and hence by \eqref{manyTubesRho},
\begin{equation}\label{upperBdMuFine}
\mu_{\operatorname{fine}} 
\lesssim   \Big(\frac{\delta}{\rho}\Big)^{-\omega-\eps_2}\Big(\frac{\#\tubes_1}{\#\tubes_\rho} \frac{\delta}{\rho}\Big)^{\sigma}
\lessapprox_\delta \delta^{-2\eps_3+\sigma\zeta}\Big(\frac{\delta}{\rho}\Big)^{-\omega}\Big(\frac{\#\tubes_1}{\#\tubes_\theta} \frac{\delta\rho}{\theta^2}\Big)^{\sigma}.
\end{equation}

\medskip

\noindent{\bf Step 5.}
In previous applications of induction on scale, the estimate \eqref{upperBdMuFine} would be paired with a multiplicity estimate on the tubes in $\tubes_\rho$. Our innovation, however, is to pair the estimate \eqref{upperBdMuFine} with a multiplicity estimate on $\mathcal{G}$.

After refining the pair $(\mathcal{G},Y)_{a\times\rho c \times c}$ by a $\approx_\delta 1$ factor (this in turn refines $(\tubes_1,Y_1)_\delta$ by a similar quantity), we can find a number $\mu_{\operatorname{medium}}$ so that for each $T_\theta\in\tubes_\theta$ and each $x\in\bigcup_{G\in\mathcal{G}_{T_\theta}}Y(G)$, we have
\[
\#\{G \in \mathcal{G}_{T_\theta} \colon x\in Y(G)\} \sim \mu_{\operatorname{medium}}.
\]

Our task is to estimate $\mu_{\operatorname{medium}}$. Recalling the conclusion of Step 3, we can cover $T_\theta$ by rectangular prisms $P$ of dimensions comparable to $\frac{\theta \delta^{1-\eps_2}}{\rho} \times \theta c \times c$, so that every pair of grains $G,G'\in\mathcal{G}_{T_\theta}$ with $Y(G)\cap Y(G')\neq\emptyset$ are contained in a common prism. Let $\mathcal{P}$ denote this set of prisms; then for each $P\in\mathcal{P}$, $\mathcal{G}_{T_\theta}^P$ is a set of prisms, each of which has dimensions roughly $\frac{\rho}{\theta}\times \frac{\rho}{\theta}\times 1$ (more precisely, each prism in $\mathcal{G}_{T_\theta}^P$ has dimensions comparable to $s\times t\times 1$, where $s,t\in [\delta^{\eps_2}\frac{\rho}{\theta},\frac{\rho}{\theta}]$; this additional $\delta^{\eps_2}$ factor will be harmless).  After pigeonholing, we may assume that the lengths $s$ and $t$ are the same for every $T_{\theta}\in \mathbb{T}_{\theta}$ and every $P\in \mathcal{P}$.

We have $\CKT(\mathcal{G}_{T_\theta}^P)\lesssim\delta^{-\eps_2} \CKT^{\operatorname{loc}}(\mathcal{G}) \leq\delta^{-\eps_1-\eps_2}$, while 
\[
\FS(\mathcal{G}_{T_\theta}^P)\leq \CFC(\mathcal{G}_{T_\theta}^P)\leq \CKT(\mathcal{G}_{T_\theta}^P)(\rho/\theta)^{-2}(\#\mathcal{G}_{T_\theta}^P)^{-1}\leq \delta^{-\eps_1 -2\eps_2}(\rho/\theta)^{-2}(\#\mathcal{G}_{T_\theta}^P)^{-1}.
\]

If $\eps_2$ is selected sufficiently small depending on $\eps_3$, $\omega$, and $\sigma$, then we can apply Assertion $\cE(\sigma,\omega)$ to conclude that
\begin{equation*}
\begin{split}
\Big|\bigcup_{G^P \in \mathcal{G}_{T_\theta}^P}Y^P(G^P)\Big| & \geq \big(\frac{\rho}{\theta}\big)^{\omega+\eps_3} \delta^{5\eps_2} (\#\mathcal{G}_{T_\theta}^P)|G^P|\Big( \big[(\rho\theta)^{-2}(\#\mathcal{G}_{T_\theta}^P)^{-1}\big]\big[\#\mathcal{G}_{T_\theta}^P\big]\big[|G^P|^{1/2}] \Big)^{-\sigma}\\
&\gtrapprox_\delta \big(\frac{\rho}{\theta}\big)^{\omega+\eps_3}  \delta^{5\eps_2} (\#\mathcal{G}_{T_\theta}^P)|G^P|\big(\frac{\rho}{\theta}\big)^{\sigma},
\end{split}
\end{equation*}
and thus
\begin{equation}\label{upperBdMuMedium}
\mu_{\operatorname{medium}} \lessapprox_\delta \delta^{-2\eps_3} \big(\frac{\rho}{\theta}\big)^{-\omega-\sigma}.
\end{equation}

\medskip

\noindent{\bf Step 6.}
At this point, we have estimated the quantities $\mu_{\operatorname{fine}}$ and $\mu_{\operatorname{medium}}$. The former allows us to control the number of $\delta$ tubes that contribute to (a specific point in) a grain, while the latter allows us to control the number of grains that contribute to (a specific point in) a $\theta$ tube. 

It remains to place a dense shading on $\tubes_\theta$ and obtain a corresponding multiplicity estimate for the number of $\theta$ tubes that contribute to (a specific point in) $\RR^3$.  After dyadic pigeonholing, we can refine $(\tubes_1,Y_1)_\delta$ and $\tubes_\rho$ so that for each $T_\theta\in\tubes_\theta$ and each $x\in\bigcup_{T\in\tubes_1[T_\theta]}Y_1(T)$, we have that $|B(x,\theta)\cap\bigcup_{T\in\tubes_1[T_\theta]}Y(T)|$ has roughly the same volume. Let $Y(T_\theta)=T_\theta\cap N_\theta\big(\bigcup_{T\in\tubes_1[T_\theta]}Y_1(T)\big);$ then $(\tubes_\theta,Y)_\theta$ is $\gtrapprox_\delta \delta^{\eps_1}$ dense. After further pigeonholing we can find a number $\mu_{\operatorname{coarse}}$ so that 
\[
\# (\tubes_\theta)_Y(x) \sim\mu_{\operatorname{coarse}}\quad\textrm{for each}\ x\in\bigcup_{T_\theta\in\tubes_\theta}Y(T_\theta).
\]

By Remark \ref{FrostmanWolffInheritedUpwardsDownwards}(A), we have $\FS(\tubes_\theta)\lesssim\FS(\tubes)\lessapprox_\delta\delta^{-\eps_1}$, and $\CKT(\tubes_\theta)\lesssim \delta^{-\eps_3}\CKT(\mathcal{Z})\lessapprox_\delta \delta^{-\eps_3}$. Thus if $\eps_1$ is chosen sufficiently small compared to $\eps_2$, then we can apply $\cE(\sigma,\omega)$ to conclude that
\begin{equation*}
\begin{split}
\Big|\bigcup_{T_\theta\in\tubes_\theta}Y(T_\theta)\Big|& \gtrsim \theta^{\omega+\eps_2 +\eps_3}(\#\tubes_\theta)|T_\theta|\Big((\#\tubes_\theta)|T_\theta|^{1/2}\Big)^{-\sigma},
\end{split}
\end{equation*}
and hence
\begin{equation}\label{upperBdMuCoarse}
\mu_{\operatorname{coarse}}\lesssim \theta^{-\omega-\eps_2-\eps_3}\Big((\#\tubes_\theta)\theta\Big)^{\sigma}.
\end{equation}

Combining \eqref{upperBdMuFine}, \eqref{upperBdMuMedium}, and \eqref{upperBdMuCoarse}, we conclude that for each $x\in\RR^3$ we have
\begin{equation}
\begin{split}
\#\{T\in\tubes_1\colon x\in Y_1(T)\} & \leq \mu_{\operatorname{fine}}\ \mu_{\operatorname{medium}}\ \mu_{\operatorname{coarse}}\\
& \lessapprox
\delta^{-4\eps_3+\sigma\zeta} \delta^{-\omega}\Big((\#\tubes)\delta\Big)^{\sigma}.
\end{split}
\end{equation}
We conclude that
\[
\Big|\bigcup_{T\in\tubes}Y(T)\Big| \gtrapprox_\delta \delta^{\omega + 4\eps_3 - \sigma\zeta}(\#\tubes_1)|T|\Big((\#\tubes)|T|^{1/2}\Big)^{-\sigma}.
\]
Since $\#\tubes_1\gtrapprox_\delta \delta^{\eps_1}(\#\tubes)$, we have that Conclusion (A) holds, provided we select $\eps_3 < \sigma\zeta/10$ and $\alpha<\sigma\zeta/10$.
\end{proof}


\section{Sticky Kakeya for tubes satisfying the Katz-Tao Convex Wolff Axioms at every Scale}\label{stickyKakeyaEveryScaleSec}
In Section \ref{cEIffcDSec}, we recalled a version of the Sticky Kakeya Theorem that was proved in \cite{WZ23}; this is Theorem \ref{WZThm52}. Theorem \ref{WZThm52} applies to families of tubes that satisfy the Frostman Convex Wolff Axioms at every scale, in the sense of Definition \ref{convexAtEveryScaleFromAssouadPaper}. In this section, we will prove an analogue of Theorem \ref{WZThm52} for sets of tubes that satisfy the \emph{Katz-Tao} Convex Wolff Axioms at every scale.

\begin{defn}\label{KatzTaoConvexWolffAtEveryScaleDefn}
Let $K\geq 1,\delta>0$. We say a set $\tubes$ of essentially distinct $\delta$-tubes satisfies the \emph{Katz-Tao Wolff Axioms at every scale with error $K$} if for every $\rho_0\in [\delta,1]$, there exists $\rho\in [\rho_0,  K \rho_0)$ and a set of $\rho$-tubes $\tubes_\rho$ that satisfies the following properties.
\begin{itemize}
	\item[(i)] $\tubes_\rho$ is a $K$-balanced partitioning cover of $\tubes$.
	\item[(ii)] $\CKT(\tubes_\rho)\leq K$.
\end{itemize}
\end{defn}

\begin{thm}\label{katzTaoEveryScaleStickyKakeyaThm}
For all $\eps>0$, there exists $\eta,\kappa>0$ so that the following holds for all $\delta>0$. Let $\tubes$ be a set of $\delta$-tubes that satisfy the Katz-Tao Convex Wolff Axioms at every scale with error $\delta^{-\eta}$, and let $Y(T)$ be a $\delta^{\eta}$ dense shading. Then
\begin{equation}\label{katzTaoEveryScaleStickyKakeyaIneq}
\Big|\bigcup_{T\in\tubes}Y(T)\Big|\geq \kappa\delta^\eps(\#\tubes)|T|.
\end{equation}
\end{thm}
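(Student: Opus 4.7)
The plan is to reduce Theorem \ref{katzTaoEveryScaleStickyKakeyaThm} to the Frostman Convex Wolff version already established as Theorem \ref{WZThm52}, following the strategy sketched in Section~\ref{endOfProofSketchSec}. The key observation is that for any set of $\delta$-tubes with near-maximal cardinality $\#\tubes\sim\delta^{-2}$, the Katz-Tao Convex Wolff Axioms at every scale (Definition \ref{KatzTaoConvexWolffAtEveryScaleDefn}) imply the Frostman Convex Wolff Axioms at every scale (Definition \ref{convexAtEveryScaleFromAssouadPaper}), up to a small multiplicative loss: at each scale $\rho$, a balanced partitioning cover $\tubes_\rho$ must have $\#\tubes_\rho\sim\rho^{-2}$ and $\#\tubes^{T_\rho}\sim(\rho/\delta)^2$, so $\#\tubes^{T_\rho}\cdot|T^{T_\rho}|\sim 1$, and then $\CFC(\tubes^{T_\rho})\lesssim \CKT(\tubes^{T_\rho})\leq\CKT(\tubes_\delta)\lesssim \delta^{-\eta}$ by Remark \ref{FrostmanWolffInheritedUpwardsDownwards}(B). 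In this saturated regime, Theorem \ref{WZThm52} directly yields $|\bigcup Y(T)|\geq\kappa\delta^\eps\sim\kappa\delta^\eps(\#\tubes)|T|$.

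For the case $\#\tubes\ll\delta^{-2}$, the idea is to use a Nikishin--Stein--Pisier factorization argument to create a surrogate collection of near-maximal cardinality. Set $N=\lceil\delta^{-2}/\#\tubes\rceil$, sample $N$ independent rigid motions $\phi_1,\ldots,\phi_N$ from a suitable probability measure on the group of rigid motions (e.g., Haar measure on motions sending $B(0,1)$ into $B(0,2)$), and define
\[
\hat\tubes=\bigsqcup_{i=1}^N\phi_i(\tubes),\qquad \hat Y(\phi_i(T))=\phi_i(Y(T)).
\]
Then $\#\hat\tubes\sim\delta^{-2}$, the shading $\hat Y$ remains $\delta^\eta$-dense on each tube, and translation-rotation invariance of Lebesgue measure gives
\begin{equation}\label{trInvPropSketch}
\Big|\bigcup_{\hat T\in\hat\tubes}\hat Y(\hat T)\Big|\leq \sum_{i=1}^N\Big|\phi_i\Big(\bigcup_{T\in\tubes}Y(T)\Big)\Big|= N\Big|\bigcup_{T\in\tubes}Y(T)\Big|.
\end{equation}

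The main obstacle is to verify that with positive probability, $\hat\tubes$ inherits the Katz-Tao Convex Wolff Axioms at every scale with error $\delta^{-O(\eta)}$. For each admissible scale $\rho$ with its cover $\tubes_\rho$ guaranteed by the hypothesis on $\tubes$, the natural candidate cover is $\hat\tubes_\rho=\bigsqcup_i\phi_i(\tubes_\rho)$, which is partitioning and balanced (up to harmless modifications near the boundary of $B(0,1)$). For a fixed convex set $W\subset\RR^3$,
\[
\#\hat\tubes_\rho[W]=\sum_{i=1}^N\#\tubes_\rho[\phi_i^{-1}(W)]
\]
is a sum of independent bounded random variables, and averaging over rigid motions gives $\mathbb{E}[\#\tubes_\rho[\phi_i^{-1}(W)]]\lesssim\#\tubes_\rho\cdot|W|$, so $\mathbb{E}[\#\hat\tubes_\rho[W]]\lesssim N\#\tubes_\rho\cdot|W|\sim|W|/|T_\rho|$. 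Combining Bernstein-type concentration with an $\eps$-net over the essentially $\delta^{-O(1)}$ combinatorially distinct convex subsets $W$ of $B(0,2)$, together with a union bound over the $\lessapprox_\delta 1$ admissible scales, one obtains with positive probability that $\CKT(\hat\tubes_\rho)\leq\delta^{-O(\eta)}$ simultaneously at every scale. This is the essence of Nikishin--Stein--Pisier factorization in the present setting, and it is the most delicate step of the proof.

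Once such $\phi_1,\ldots,\phi_N$ are fixed, $\hat\tubes$ has $\#\hat\tubes\sim\delta^{-2}$ and satisfies the Katz-Tao Convex Wolff Axioms at every scale with small error, so the saturated-case argument of the first paragraph applies and Theorem \ref{WZThm52} yields $|\bigcup_{\hat T\in\hat\tubes}\hat Y(\hat T)|\geq\kappa\delta^\eps$. Combining with \eqref{trInvPropSketch} and the identity $N\sim\delta^{-2}/\#\tubes$ gives
\[
\Big|\bigcup_{T\in\tubes}Y(T)\Big|\gtrsim\frac{\kappa\delta^\eps}{N}\sim\kappa\delta^\eps(\#\tubes)|T|,
\]
which is the desired inequality \eqref{katzTaoEveryScaleStickyKakeyaIneq}.
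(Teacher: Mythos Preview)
Your overall strategy---reduce to Theorem \ref{WZThm52} via a Nikishin--Stein--Pisier factorization that creates a surrogate family of cardinality $\sim\delta^{-2}$---is exactly the paper's strategy, and your first paragraph (the saturated case $\#\tubes\sim\delta^{-2}$) is correct. The gap is in the random construction of $\hat\tubes$.

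Your claim that $\hat\tubes_\rho=\bigsqcup_i\phi_i(\tubes_\rho)$ satisfies $\CKT(\hat\tubes_\rho)\leq\delta^{-O(\eta)}$ with positive probability is false in general, and the failure is deterministic, not a concentration issue. Take $W=B(0,2)$: then $\#\hat\tubes_\rho[W]=N\cdot\#\tubes_\rho$, and the bound $\CKT(\hat\tubes_\rho)\lesssim 1$ would force $N\cdot\#\tubes_\rho\lesssim\rho^{-2}$. But nothing in the hypotheses forces $\#\tubes[T_\rho]$ to be close to its maximal value $(\rho/\delta)^2$; only the upper bound is given. For instance, if $\#\tubes=\delta^{-1}$, $\rho=\delta^{1/2}$, and $\tubes_\rho$ is a cover with $\#\tubes_\rho=\delta^{-1}$ and one $\delta$-tube per $\rho$-tube (which is consistent with $\CKT(\tubes_\rho)\lesssim 1$), then $N=\delta^{-1}$ and $N\cdot\#\tubes_\rho=\delta^{-2}\gg\rho^{-2}=\delta^{-1}$. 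Your asserted identity $N\#\tubes_\rho\sim|T_\rho|^{-1}$ simply does not hold. No choice of $\phi_1,\ldots,\phi_N$ can repair this, since the multiset cardinality is fixed.

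The paper's Proposition \ref{factorizationProp} avoids this by a genuinely multi-scale random construction: at each consecutive pair of scales $\rho_{j-1}>\rho_j$ it draws rotations from $\mathfrak{A}_{\rho_{j-1}}$ (motions displacing points by at most $\rho_{j-1}$), so the $\rho_{j-1}$-tube structure is essentially frozen while the $\rho_j$-tubes inside each $\rho_{j-1}$-tube are randomly replicated just enough to reach density $\sim(\rho_{j-1}/\rho_j)^2$. The Chernoff bound (Lemma \ref{findingGoodRotationManyFamilies}) is applied \emph{locally} inside each $\rho_{j-1}$-tube, and the global $\CKT$ control is then assembled across scales via the sub-multiplicativity Lemma \ref{KTWSubMultiplicativeTubes}. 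The composite set of rigid motions is $\mathcal{A}=\mathcal{A}_1\circ\cdots\circ\mathcal{A}_J$, and the telescoping of cardinalities guarantees $\#\hat\tubes_{\rho_j}\sim\rho_j^{-2}$ at every scale---precisely what your single-step global construction cannot arrange.
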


In the next section, we will combine Theorem \ref{katzTaoEveryScaleStickyKakeyaThm} with Proposition \ref{refinedInductionOnScaleProp} to prove Proposition \ref{improvingProp}. Theorem \ref{katzTaoEveryScaleStickyKakeyaThm} is proved by combining Theorem \ref{WZThm52} with the following Nikishin-Stein-Pisier Factorization type result.
\begin{prop}\label{factorizationProp}
Let $\eps>0$. Then there exists $K_\eps,\eta>0$ so that the following holds for all $\delta>0$. Let $\tubes$ be a non-empty set of $\delta$ tubes inside the unit ball in $\RR^3$ that satisfy the Katz-Tao Convex Wolff axioms at every scale with error $\delta^{-\eta}$. Then there exist rigid transformations $A_1,\ldots,A_N,$ $N\leq K_\eps (\#\tubes)^{-1}|T|^{-1}$ so that each set $A_i(\tubes)$ is contained inside $B(0,2)$, and $\bigcup_{i=1}^N A_i(\tubes)$ contains a subset of essentially distinct tubes that satisfies the Frostman Convex Wolff Axioms at every scale with error $K_\eps\delta^{-\eps}$. 
\end{prop}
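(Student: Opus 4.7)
The plan is a Nikishin--Stein--Pisier factorization by random rigid motions. Set $N := \lceil C_\eps (\#\tubes)^{-1}|T|^{-1}\delta^{-\eps/4} \rceil$ and draw $A_1,\ldots,A_N$ independently from the uniform Haar measure on the compact set of isometries $A$ of $\RR^3$ with $A(B(0,1))\subset B(0,2)$ (a rotation in $SO(3)$ composed with a translation in $B(0,1)$). Define $\hat\tubes := \bigcup_{i=1}^N A_i(\tubes)\subset B(0,2)$; this has at most $N\#\tubes\lesssim C_\eps\delta^{-\eps/4}|T|^{-1}$ tubes, well below the ceiling $\delta^{-4}$ of essentially distinct $\delta$-tubes in $B(0,2)$, so for generic $A_i$ the tubes in $\hat\tubes$ are themselves essentially distinct. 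I will show that with positive probability $\hat\tubes$ satisfies the Frostman Convex Wolff Axioms at every scale in the sense of Definition \ref{convexAtEveryScaleFromAssouadPaper} with error $\leq K_\eps\delta^{-\eps}$.

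First I would reduce to a finite collection of scales. Using the Katz--Tao-at-every-scale hypothesis, extract $\delta\leq\rho_1<\ldots<\rho_M\leq 1$ with $\rho_{j+1}/\rho_j\leq\delta^{-\eta}$ and $M\lesssim 1/\eta$, together with, at each $\rho_j$, a balanced partitioning cover $\tubes_{\rho_j}$ of $\tubes$ having $\CKT(\tubes_{\rho_j})\leq\delta^{-\eta}$. Verifying the Frostman axiom at every scale for $\hat\tubes$ then reduces to checking it at each $\rho_j$. The candidate cover at scale $\rho_j$ is $\hat\tubes_{\rho_j}$, obtained from $\bigsqcup_i A_i(\tubes_{\rho_j})$ by retaining one representative from each essentially distinct class; the core claim is that for every cover-tube $T_\rho^*\in\hat\tubes_{\rho_j}$, the rescaled family $\hat\tubes^{T_\rho^*}\subset B(0,1)$ of $\delta/\rho_j$-tubes satisfies
\[
\#\hat\tubes^{T_\rho^*}[V]\leq \delta^{-\eps/2}|V|\cdot\#\hat\tubes^{T_\rho^*}
\]
for every convex $V\subset B(0,1)$.

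The set $\hat\tubes^{T_\rho^*}$ is the union, over those indices $i$ for which $A_i^{-1}(T_\rho^*)$ meets some $\rho_j$-tube $T_\rho^{(i)}\in\tubes_{\rho_j}$, of isometric copies of the corresponding $\tubes^{T_\rho^{(i)}}$. Since the rigid motions $A_i$ are random, the copies land in random isometric images inside the unit ball, and Nikishin--Stein--Pisier-style averaging spreads them out. Quantitatively, the Katz--Tao property of $\tubes$ at scale $\delta$ yields the deterministic per-copy envelope $\#\tubes^{T_\rho^{(i)}}[V]\lesssim\delta^{-\eta}|V|(\rho_j/\delta)^2$, while averaging over the uniform distribution on rotations and translations yields the expectation bound $\mathbb{E}[\#\hat\tubes^{T_\rho^*}[V]]\lesssim|V|\cdot\mathbb{E}[\#\hat\tubes^{T_\rho^*}]$. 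A Bernstein-type concentration inequality combined with a union bound over a $\delta^{-O(1)}$-size net of candidate convex sets $V$, cover-tubes $T_\rho^*$, and scales $\rho_j$ then yields the desired Frostman estimate simultaneously for all of them with positive probability, provided $\eta<\eps/8$ and $C_\eps$ is chosen large enough.

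The principal obstacle will be the averaging estimate itself: verifying rigorously that the Haar-measure expectation is controlled by $|V|$ with a uniform absolute constant. This reduces by Fubini (rotation $\times$ translation) to the elementary geometric claim that for a fixed unit tube $T$ and fixed convex $V\subset B(0,2)$, the measure of rigid motions sending $T$ into $V$ is $\lesssim|V|$, since for each fixed rotation the acceptable translations form an eroded set of volume at most $|V|$. A subsidiary technical issue is the handling of those scales $\rho_j$ at which each cover-tube receives only one contributing copy, so that the averaging gain is absent; at such scales the Frostman estimate is inherited from the trivial bound coming from the Katz--Tao hypothesis on $\tubes$ itself, which suffices provided $\rho_j$ is close enough to $\delta$ that each rescaled $\tubes^{T_\rho^{(i)}}$ contains very few tubes. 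The final refinement of $\hat\tubes$ to an essentially distinct subset is then routine because the total count remains well below $\delta^{-4}$.
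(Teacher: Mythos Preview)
Your global randomization has a genuine gap at intermediate scales: it does not give a large enough \emph{denominator} in the Frostman estimate. The Frostman Convex Wolff bound $\CFC(\hat\tubes^{T_\rho^*})\leq\delta^{-\eps}$ needs $\#\hat\tubes[T_\rho^*]\gtrsim\delta^{\eps}(\rho/\delta)^2$ for every cover tube $T_\rho^*\in\hat\tubes_\rho$, and drawing the $A_i$ from the full isometry group cannot guarantee this. Concretely, take $\tubes$ to be $\delta^{-1}$ tubes through the origin with directions a $\delta$-net on a fixed great circle; this satisfies the Katz--Tao axioms at every scale with error $O(1)$, and $N\sim\delta^{-1}$. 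At scale $\rho$, your cover $\hat\tubes_\rho$ consists of random rotates of $\rho$-tubes through the $N$ random basepoints $A_i(0)$. Fix one such cover tube $T_\rho^*=A_k(T_\rho^{(k)})$. The guaranteed contribution from copy $k$ is $\#\tubes[T_\rho^{(k)}]\sim\rho/\delta$. A different copy $A_i$ contributes to $T_\rho^*$ only if $A_i(0)\in T_\rho^*$ (probability $\sim\rho^2$), and then contributes at most $\rho/\delta$ tubes; so the expected random contribution is $N\cdot\rho^2\cdot\rho/\delta\sim\rho^3/\delta^2$. At $\rho=\delta^{1/2}$ both pieces are $\sim\delta^{-1/2}$, but you need $(\rho/\delta)^2=\delta^{-1}$. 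The claimed expectation bound $\mathbb{E}[\#\hat\tubes^{T_\rho^*}[V]]\lesssim|V|\cdot\mathbb{E}[\#\hat\tubes^{T_\rho^*}]$ therefore cannot yield the Frostman inequality here---the numerator control is fine, but the denominator is too small by a factor $\delta^{-1/2}$. Your ``subsidiary technical issue'' paragraph anticipates a problem of this shape but misdiagnoses it as occurring only near $\rho_j\approx\delta$; in fact it is worst at $\rho\approx\delta^{1/2}$.

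The paper avoids this by working \emph{hierarchically}: at each consecutive pair of scales $\rho_{j-1}>\rho_j$ it chooses a separate family $\mathcal{A}_j\subset\mathfrak{A}_{\rho_{j-1}}$ of rigid motions that move points by at most $\rho_{j-1}$, so that every image $A(T_{\rho_{j-1}})$ stays inside $N_{3\rho_{j-1}}(T_{\rho_{j-1}})$. The single-scale random step (Lemma~\ref{findingGoodRotationManyFamilies} and its Corollary~\ref{corFactoringSingleScale}) then fills each $\rho_{j-1}$-tube with $\sim|T_{\rho_{j-1}}|/|T_{\rho_j}|$ copies of the $\rho_j$-tubes it already contains while keeping $\CKT$ under control; the number of motions at step $j$ is $\sim(\#\tubes_{\rho_{j-1}}/\#\tubes_{\rho_j})(|T_{\rho_{j-1}}|/|T_{\rho_j}|)$, and these telescope to $(\#\tubes\,|T|)^{-1}$. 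The final motions are the compositions $A_1\circ\cdots\circ A_J$, and the Frostman bound at each scale then follows from Lemma~\ref{KTWSubMultiplicativeTubes} (sub-multiplicativity of $\CKT$ under covers) together with the count $\#\hat\tubes'_{\rho_J}[\tilde T_{\rho_j}]\gtrsim|T_{\rho_j}|/|T|$ built up scale by scale. The locality of the motions is exactly what your global construction is missing.
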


\begin{proof}[Proof of Theorem \ref{katzTaoEveryScaleStickyKakeyaThm} using Proposition \ref{factorizationProp}]
Fix $\eps>0$ and let $\eta=\eta(\eps)>0$ be a small quantity to be determined below. Let  $(\tubes,Y)_\delta$ be $\delta^\eta$ dense, and suppose that $\tubes$ satisfies the Katz-Tao Wolff Axioms at every scale with error $\delta^{-\eta}$. After a harmless refinement we may suppose $|Y(T)|\geq\delta^{2\eta}|T|$ for each $T\in\tubes$. 

Apply Proposition \ref{factorizationProp} with a small value $\eps_1$ in place of $\eps$. We may do this, provided $\eta$ is selected sufficiently small depending on $\eps_1$. Let $\tilde\tubes \subset \bigcup_{i=1}^N A_i(\tubes)$ be the output from Proposition \ref{factorizationProp}. Note that each $\tilde T\in\tilde\tubes$ is of the form $\tilde T = A_i(T)$ for some index $i$ and some $T\in\tubes$, and hence we can define the shading $\tilde Y(\tilde T) = A_i(Y(T))$; we have $|\tilde Y(\tilde T)| = |Y(T)|\geq\delta^{2\eta}|T|$, and hence $(\tilde T, \tilde Y)_\delta$ is $\delta^{2\eta}$ dense. 

If $\eps_1$ and $\eta$ are chosen sufficiently small depending on $\eps$, then we can apply Theorem \ref{WZThm52} to conclude that
\[
N\Big|\bigcup_{T\in\tubes}Y(T)\Big| =  \sum_{i=1}^N\Big|\bigcup_{T\in\tubes}A_i(Y(T))\Big|\geq \Big| \bigcup_{i=1}^N \bigcup_{T\in\tubes}A_i(Y(T))\Big| \geq \Big|\bigcup_{\tilde T\in\tilde \tubes}\tilde Y(\tilde T)\Big| \geq \kappa_\eps \delta^{\eps}.
\]
Re-arranging and noting that $N\leq K_{\eps}(\#\tubes)^{-1}|T|^{-1}$, we obtain \eqref{katzTaoEveryScaleStickyKakeyaIneq}, with $\kappa=\kappa_{\eps} K_{\eps}^{-1}$.
\end{proof}

It remains to prove Proposition \ref{factorizationProp}. We will do so below.


\subsection{Nikishin-Stein-Pisier Factorization and the Convex Wolff Axioms}\label{factorizationArgSection}

We begin with a single-scale version of Proposition \ref{factorizationProp}. We first need the following definition. 
\begin{defn}
We say that a set $\tubes$ of $\delta$ tubes is regular with granularity $\tau\in[\delta,1]$ if for every scale $\rho\in[\delta,1]$ of the form $\rho = \delta \tau^{-\ell}$, $\ell\in\mathbb{N}$, we have that $\tubes$ has a balanced partitioning cover by $\rho$ tubes.
\end{defn}

\begin{defn}
For $\rho>0$, we define $\mathfrak{A}_\rho$ to be the set of rigid transformations $A\colon \RR^3\to\RR^3$ that satisfy $|Ax-x|\leq\rho$ for all $x\in B(0,1)$. 
\end{defn}

\begin{lem}\label{findingGoodRotationManyFamilies}
For all $\eps>0$, there exists $\eta>0$ and $K_\eps\geq 1$ so that the following holds for all $0<\delta\leq\rho\leq 1$. Let $K,M\geq 1$ and let $\tubes_1,\ldots,\tubes_K$ be sets of $\delta$ tubes in $B(0,1)\subset\RR^3$, each of cardinality at most $M$. Suppose that the tubes in each set $\tubes_j$ are regular with granularity $\delta^{\eta}$, and furthermore each set $\tubes_j$ is contained in a $\rho$ tube.

Then there exists a set of rigid transformations $\mathcal{A}\subset\mathfrak{A}_\rho$  with $\#\mathcal{A} = \big\lceil \frac{\rho^2}{M \delta^2}\big\rceil$, so that
\begin{equation}\label{favorableCKTBd}
\CKT\Big(\bigsqcup_{A\in\mathcal{A}} A(\tubes_j)\Big) \leq K_\eps\delta^{-\eps}(\log(2+K))\CKT(\tubes_j),\quad j = 1,\ldots,K.
\end{equation}
\end{lem}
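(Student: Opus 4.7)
The plan is to use a probabilistic (Nikishin--Stein--Pisier style) factorization argument. Set $N = \lceil \rho^2/(M\delta^2) \rceil$ and choose $A_1,\ldots,A_N$ independently and uniformly at random from $\mathfrak{A}_\rho$, equipped with the natural Haar-type probability measure (i.e.~the pushforward of normalized Lebesgue measure on the $6$-dimensional parameter space of rigid motions near the identity, restricted to the size-$\rho$ ball). I will show that with positive probability the resulting set $\mathcal{A}=\{A_1,\ldots,A_N\}$ satisfies \eqref{favorableCKTBd} simultaneously for all $j=1,\ldots,K$. Note that $N$ is chosen so that $N M \delta^2 \sim \rho^2$, which matches the maximum ``density'' of $\delta$-tubes allowed inside a $\rho$-tube; this normalization is what makes the target estimate attainable in expectation.

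First, for a fixed $j$ and a fixed convex set $W \subset B(0,2)$, I would estimate the first moment. By Fubini,
\[
\mathbb{E}\Big[\sum_{i=1}^N \#A_i(\tubes_j)[W]\Big] \;=\; N \sum_{T \in \tubes_j} \mathbb{P}[A(T) \subset W].
\]
Since $\tubes_j$ is contained in a single $\rho$-tube $T^{(j)}_\rho$, and $A\in\mathfrak{A}_\rho$ displaces points of $B(0,1)$ by at most $\rho$, the image $A(T)$ lives in a fixed $O(\rho)$-neighbourhood. Integrating the indicator $\mathbf{1}[A(T)\subset W]$ over $\mathfrak{A}_\rho$ yields a factor comparable to $|W|/\rho^2$ (this is where the ``normalization by $\rho^2$'' coming from the $\rho$-tube enters); combined with the Katz-Tao hypothesis on $\tubes_j$, the whole expectation is bounded by $O(\CKT(\tubes_j)|W|/|T|)$.

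Second, I would establish concentration. The variables $X_i := \#A_i(\tubes_j)[W]$ are independent with $0\leq X_i \leq \min(M,\CKT(\tubes_j)|W|/|T|)$, so a Chernoff/Bernstein inequality gives that $\sum_i X_i$ exceeds $K' \CKT(\tubes_j)|W|/|T|$ with probability at most $\exp(-cK')$, as long as $K' \gtrsim 1$. Third, I would discretize: every convex subset of $B(0,2)$ is comparable (at scale $\delta$) to one of a collection of at most $\delta^{-O(1)}$ model convex sets, using e.g.~a $\delta$-net in the parameter space of John ellipsoids. Union-bounding over this collection and over the $K$ families $\tubes_j$, the failure probability is at most
\[
K \cdot \delta^{-O(1)} \cdot \exp(-cK'),
\]
which is $<1$ provided $K' \geq K_\eps \delta^{-\eps}\log(2+K)$ with $K_\eps$ large enough; this precisely produces the logarithmic factor in \eqref{favorableCKTBd}. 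Hence a deterministic choice of $\mathcal{A}$ exists with the required property.

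The hard part will be the first-moment computation. The Katz-Tao hypothesis only provides the \emph{pointwise} bound $\#\tubes_j[A^{-1}(W)] \leq \CKT(\tubes_j)|W|/|T|$; to convert this into an \emph{averaged} estimate of the correct size (with constant $O(\CKT(\tubes_j))$ rather than $O(\CKT(\tubes_j)\delta^{-O(\eta)})$), one must exploit the regularity hypothesis with granularity $\delta^\eta$. Concretely, the regularity supplies balanced partitioning covers of $\tubes_j$ at all dyadic-like scales, and for each intermediate scale $\rho'\in[\delta,\rho]$ the expected number of tubes from $\tubes_j$ whose image under $A$ is contained in $W$ can be computed by considering how $A^{-1}(W)$ interacts with the $\rho'$-cover. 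Summing across these scales and carefully tracking how $\mathfrak{A}_\rho$ distributes the mass of $W$ over $N_\rho(T^{(j)}_\rho)$ is the delicate geometric step; all other ingredients (Chernoff and the $\delta$-net of convex sets) are standard.
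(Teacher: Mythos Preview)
Your approach is correct and follows the same high-level strategy as the paper: pick $N\sim\rho^2/(M\delta^2)$ i.i.d.\ rigid motions, control the probability that any fixed prism $W_0$ is overloaded via Chernoff, then union-bound over a $\delta^{-O(1)}$ net of prisms and over the $K$ families (the latter producing the $\log(2+K)$). The organizational difference is in the Chernoff step. The paper does not work with your count variables $X_i=\#A_i(\tubes_j)[W_0]$; instead, for each dyadic level $\nu$ it enumerates a set $\mathcal{W}_\nu$ of essentially distinct $a\times b\times 2$ prisms in the original configuration containing $\sim\nu\cdot(\#\tubes_j/\#\tubes_a)$ tubes, and applies Chernoff to the \emph{Bernoulli} variables $\mathbf{1}[\exists W\in\mathcal{W}_\nu:A_i(W)\text{ comparable to }W_0]$. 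Regularity enters only here, through the balanced cover $\tubes_a$ used to prove $\#\mathcal{W}_\nu\lesssim(\#\tubes_a)(b/a)\nu^{-1}$.

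One correction: you have mislocated the difficulty. The first moment is \emph{not} hard and does not need regularity or the Katz--Tao hypothesis beyond $\#\tubes_j\le M$. A direct count over $\mathfrak{A}_\rho$ (two constraints on the direction of $A(T)$, two on its transverse position, each of relative size $ab/\rho^2$) gives $\mathbb{P}[A(T)\subset W_0]\lesssim a^2b^2/\rho^4$ uniformly in $T$, hence
\[
\mathbb{E}\Big[\sum_{i=1}^N X_i\Big]\;\lesssim\; N\cdot M\cdot \frac{a^2b^2}{\rho^4}\;\sim\;\frac{a^2b^2}{\rho^2\delta^2}\;\le\;\frac{|W_0|}{|T|}\;\le\;L,
\]
with $L=\CKT(\tubes_j)|W_0|/|T|$. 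Since $Y_i=X_i/L\in[0,1]$ and $\mu=\mathbb{E}[\sum Y_i]\le 1$, the multiplicative Chernoff bound for $[0,1]$-valued variables gives $\mathbb{P}[\sum Y_i>K']\le(e/K')^{K'}$, which beats $K\cdot\delta^{-O(1)}$ once $K'\gtrsim\delta^{-\eps}\log(2+K)$. So your route is actually more direct than the paper's and avoids the $\mathcal{W}_\nu$ machinery; the regularity hypothesis turns out to be an artifact of the paper's particular organization rather than an essential ingredient for this lemma.
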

\begin{rem}
Note that for distinct $A,A'\in\mathcal{A}$, the sets $A(\tubes_j)$ and $A'(\tubes_j)$ might contain common tubes, and thus the disjoint union on the LHS of \eqref{favorableCKTBd} should be interpreted as a multiset. By Remark \ref{remarksFollowingConvexWolffDefn}(D), the LHS of \eqref{favorableCKTBd} is well-defined.
\end{rem}

\begin{proof}$\phantom{1}$\\
{\bf Step 1.}
Define $\tilde\delta = \delta/\rho$. First, we may suppose that $M<\tilde\delta^{-2}$, or else we can define $\mathcal{A}=\{I\}$ (here $I\colon\RR^3\to\RR^3$ is the identity map) and we are done. Similarly, we may suppose $\rho\geq\delta^{1-\eps/2}$, or else we can define $\mathcal{A}$ to be $\big\lceil \frac{\rho^2}{M \delta^2}\big\rceil$ infinitesimally perturbed copies of $I$, and \eqref{favorableCKTBd} follows from the fact that
\[
\CKT\Big(\bigsqcup_{A\in\mathcal{A}} A(\tubes_j)\Big) \leq (\#\mathcal{A})\CKT(\tubes_j).
\]

Fix an index $j\in[1,\ldots, K]$ and let $\tubes = \tubes_j$. By hypothesis, all of the tubes in $\tubes$ are contained in a common $\rho$ tube, which we will denote by $T_\rho$. Fix numbers $\delta\leq a\leq b\leq 2\rho$, with both $a$ and $b$ of the form $\delta^{\ell \eta}$. Let $\nu\geq 1$ be a power of 2. By hypothesis, $\tubes$ has a balanced partitioning cover $\tubes_a$.

Let $\mathcal{W}_{\nu}$ be a maximal set of essentially distinct $a\times b\times 2$ prisms, each of which satisfy $\#\tubes[W]\in  \big[\nu\frac{\#\tubes\phantom{_a}}{\#\tubes_a},\ 2\nu\frac{\#\tubes\phantom{_a}}{\#\tubes_a}\big)$. Note that each $W\in \mathcal{W}_{\nu}$ is contained in $N_{2\rho}(T_\rho)$. Observe that if $W$ is an $a\times b\times 2$ prism and $T\in\tubes[T_a]$ with $T\subset W$, then $T_a\subset 2W$. In particular, since $\tubes_a$ is a balanced partitioning cover of $\tubes$, we have
\[
\#\tubes_a[2W] \geq \nu/2\quad\textrm{for each}\ W\in\mathcal{W}_\nu.
\]

Each tube $T_a\in\tubes_a$ is contained in $\lesssim \frac{b}{a}$ essentially distinct $2a\times 2b\times 4$ prisms. Thus by double-counting we have
\begin{equation}\label{estimateOnSizeWNu}
\#\mathcal{W}_\nu \lesssim (\#\tubes_a)\frac{b}{a} \nu^{-1}.
\end{equation}
The above estimate is useful when $\nu$ is not too large. When $\nu\frac{\#\tubes\phantom{_a}}{\#\tubes_a} \geq \CKT(\tubes)(ab)|T|^{-1}$, then $\mathcal{W}_\nu=\emptyset$. 

\medskip

\noindent {\bf Step 2.}
We say two rigid motions $A,A'\in\mathfrak{A}_\rho$ are $\delta$-separated if there exists a point $x\in B(0,1)$ with $|A(x) - A'(x)|\geq\delta$. Let $\mathfrak{A}_{\rho}^\delta$ be a maximal $\delta$-separated subset of $\mathfrak{A}_\rho$; we have $\#\mathfrak{A}^\delta_\rho \sim \tilde\delta^{-6}=\delta^6/\rho^6$. 

Let
\begin{equation}\label{defnOfN}
N = 2\Big\lceil \frac{\rho^2}{M\delta^2} \Big\rceil,
\end{equation}
and let $A_1,\ldots,A_N$ be chosen uniformly and independently at random from $\mathfrak{A}_\rho^\delta$. We have
\begin{equation}\label{probabilityA1ANLarge}
\mathbb{P}\big( \#\{A_1,\ldots,A_N\}\geq  N/2\big)\geq 3/4,
\end{equation}
where $\#\{A_1,\dots, A_N\}$ denotes the number of distinct rigid motions in the set $\{A_1,\dots, A_N\}$. 

 Fix a $a\times b\times 2$ prism $W_0$. We would like to estimate the probability that
\begin{equation}\label{badW0EstimateHolds}
\#\Big( \bigsqcup_{i=1}^N A_i(\tubes) \Big)[W_0] \geq K_\eps \delta^{-\eps/2}(\log(2+K)) \CKT(\tubes) |W_0||T|^{-1},
\end{equation}
i.e.~we would like to estimate the probability that
\begin{equation}\label{W0Bad}
\sum_{i=1}^N \# \tubes[(A_i^{-1}(W_0))] \geq K_\eps \delta^{-\eps/2}(\log (2+K)) \CKT(\tubes) |W_0||T|^{-1}.
\end{equation}
Note that if \eqref{W0Bad} occurs, then by pigeonholing, there must be some dyadic $\nu$ so that
\begin{equation}\label{badNuAndW0}
\#\big\{ (W,i)\in  (\mathcal{W}_\nu\times\{1,\ldots,N\})\colon  A_i(W)\ \textrm{is contained in }\ 10W_0\big\} \geq Z_\nu,
\end{equation}
where 
\begin{equation}\label{ZNu}
Z_\nu  = \big(4\log(1/\delta)\big)^{-1} K_\eps \delta^{-\eps/2}(\log(2+K)) \CKT(\tubes) |W_0||T|^{-1}    \Big( \sup_{W\in \mathcal{W}_\nu} \#\tubes[W] \Big)^{-1}. 
\end{equation}
We may suppose that
\begin{equation}\label{boundNyTubesTubesA}
\sup_{W\in \mathcal{W}_\nu}  \#\tubes[W]  \sim \nu \frac{\#\tubes}{\#\tubes_a}  \leq  \CKT(\tubes)(ab)|T|^{-1},
\end{equation}
since otherwise $\mathcal{W}_\nu=\emptyset$, and thus it is impossible for either of Inequality \eqref{badNuAndW0} or \eqref{W0Bad} to be true. Since $|W_0|=2 ab$, we can use \eqref{boundNyTubesTubesA} to bound $Z_\nu$. We conclude that
\[
Z_\nu \geq \big(2\log(1/\delta)\big)^{-1} K_\eps \delta^{-\eps/2}(\log (2+K)).
\]
We will choose the constant $K_\eps$ sufficiently large so that $Z_\nu \geq 2$, and in particular 
\begin{equation}\label{ZnuVsNzM1}
Z_\nu - 1 \sim Z_\nu.
\end{equation} 
This will be relevant in Step 3 when we apply Chernoff's inequality.

\medskip

\noindent {\bf Step 3.}
We will estimate the probability that \eqref{badNuAndW0} occurs for a fixed choice of $\nu$ and $W_0$. For two $a\times b\times 2$ prisms $W,W_0$, both of which are contained inside $N_{2\rho}(T_\rho)$ we have that the number of $A\in\mathfrak{A}_\rho^\delta$ for which $A(W)$ is comparable to $W_0$ (or equivalently, $A^{-1}(W_0)$ is comparable to $W$) 
is $\lesssim \delta^{-6} a^2 b^2\rho \min \{ a/b, \rho \}.$ We will write this as $\tilde\delta^{-6}\tilde{a}^2 \tilde{b}^2 \min \{ \tilde{a}/(\tilde{b} \rho), 1\} \leq \tilde\delta^{-6}\tilde{a}^2 \tilde{b}^2$, where we define $\tilde{a}=a/\rho$ and $\tilde{b}=b/\rho$.

The reason for this numerology is as follows. Without loss of generality, assume $W_0= [0, a]\times [0, b]\times [0,2]$. Then a rigid motion  $A$ is determined  by   $A(v_i)$ with $v_0=(0, 0, 0), v_1=(0, 1, 0), v_2=(0, 0,  1)$.  Since $A\in \mathfrak{A}_{\rho}^{\delta}$, the number of $\delta$-separated choice for $A(v_0)$ is  $\leq \frac{| W\cap B(0, \rho) |}{ \delta^3} \sim \frac{ab\rho}{\delta^3}$. Once $A(v_0)$ is fixed, the number of $\delta$-separated choices for $A(v_2)$ is $\leq \frac{ab}{\delta^2}$.   Once $A(v_0), A(v_2)$ are fixed, the number of $\delta$-separated choices for $A(v_1)$ is $\leq \min \{ \frac{a}{b}, \rho \} \delta^{-1}$.

Thus if we define $X_i$ to be the event that there exists $W\in\mathcal{W}_\nu$ such that $A_i(W)$ is comparable to $W_0$, then
\begin{equation}\label{probXi}
\mathbb{P}(X_i)\lesssim \tilde{\delta}^{-6}\tilde a^2 \tilde b^2 \cdot \#\mathfrak{A}_\rho^\delta \cdot  \#\mathcal{W}_\nu \lesssim  \tilde a^2\tilde b^2 (\#\mathcal{W}_\nu).
\end{equation}
Since the prisms in $\mathcal{W}_\nu$ are essentially distinct, there can exist at most $O(1)$ $W\in\mathcal{W}_\nu$ such that $A_i(W)$ is comparable to $W_0$. Thus by linearity of expectation, we have
\begin{equation*}
\begin{split}
\mathbb{E}&\Big( \#\{ (W,i)\in  (\mathcal{W}_\nu\times\{1,\ldots,N\})\colon  A_i(W)\ \textrm{is comparable to}\ W_0\}\Big)\\
& = \mathbb{E}(X_1+\ldots+X_N)\\
&\lesssim   N  \tilde a^2 \tilde b^2  (\#\mathcal{W}_\nu)\\
&\lesssim 2\Big\lceil \frac{\rho^2}{M\delta^2} \Big\rceil  \tilde a^2 \tilde b^2  (\#\mathcal{W}_\nu) 
\end{split}
\end{equation*}
On the third line we used \eqref{probXi}; on the fourth line we used \eqref{defnOfN} and  \eqref{estimateOnSizeWNu}.

Define $X = X_1+\ldots+X_N$. Recalling \eqref{ZNu} and \eqref{ZnuVsNzM1}, we have 
\begin{equation*}
\begin{split}
\gamma:=  \frac{Z_\nu}{\mathbb{E}(X)}
& \gtrsim \log(1/\delta)^{-1} K_\eps \delta^{-\eps/2}(\log (2+K))  \CKT(\tubes) |W_0||T|^{-1} \Big( 2\Big\lceil \frac{\rho^2}{M\delta^2} \Big\rceil  \tilde a^2 \tilde b^2  \sup_{W\in \mathcal{W}_\nu} \#\tubes [W] \cdot  (\#\mathcal{W}_\nu ) \Big)^{-1} \\
&  \gtrsim \log(1/\delta)^{-1} K_\eps \delta^{-\eps/2}(\log (2+K))  \CKT(\tubes) |W_0||T|^{-1} \Big(  2\Big\lceil \frac{\rho^2}{M\delta^2} \Big\rceil  \tilde a^2 \tilde b^2 \frac{b}{a} (\# \tubes) \Big)^{-1}\\
& \gtrsim \log(1/\delta)^{-1} K_\eps \delta^{-\eps/2}(\log (2+K))
\end{split}
\end{equation*}
On the second line we used \eqref{estimateOnSizeWNu} and the inequality  $\#\tubes[W]\sim \nu \frac{\#\tubes}{\tubes_a}$; on the third line we used the fact that $|T|\sim\delta^2$ and the fact that $\tilde a = a/\rho$; and on the final line we used the fact that $\CKT(\tubes)\geq 1$, $\tilde b^2\leq 4$, $|W_0| = 2ab$, and $\#\tubes\leq M$.

Hence we can apply the multiplicative Chernoff's inequality to conclude that
\begin{equation*}
		\mathbb{P}(X\geq Z_\nu) 
		\leq \Big( \frac{e^{\gamma-1}}{\gamma^\gamma}\Big)^{\mathbb{E}(X)}  
		\lesssim e^{-\gamma \mathbb{E}(X)} =  e^{-Z_\nu}
		\lesssim (2+K)^{- (\log 1/\delta)^{-1} K_\eps \delta^{-\eps/2}} 
		\lesssim K^{-1} \exp[-K_\eps \delta^{-\eps/3}].
\end{equation*}
For the second inequality, we used the fact that $K_{\eps}$ is sufficiently large, $ \frac{ e^{\gamma-1}}{\gamma^{\gamma} }\leq e^{-\gamma}$ for all $\delta>0$ and $K\geq 1$.

\medskip

\noindent {\bf Step 4.}
There are $\log(1/\delta)$ choices of $\nu$; at most $a^{-3}b^{-1}\leq\delta^{-4}$ essentially distinct prisms $W_0$; and $\eta^{-2}$ choices of numbers $(a,b)$. Thus the probability that there exists some prism $W\subset N_{2\rho}(T_\rho)$ of dimensions $a\times b\times 2$ for some pair $a \leq b$ of the form $\delta^{\eta \ell}$ for which \eqref{badW0EstimateHolds} holds is $\lesssim \delta^{-6} K^{-1} \exp[-K_\eps \delta^{-\eps/3}].$ We will select $K_\eps$ sufficiently large so that this quantity is at most $(2K)^{-1}$. If no such prism exists, then provided $\eta\leq \eps/4$ we have
\begin{equation}\label{goodCKTBdOnTubes}
\CKT\Big(\bigsqcup_{i=1}^N A_i(\tubes)\Big) \leq K_\eps \delta^{-\eps}(\log(2+K)) \CKT(\tubes).
\end{equation}
Indeed, for every prism $W\subset\RR^3$ of dimensions $a\times b\times 2,$ we can select a prism $W'\subset W$ of dimensions $a'\times b'\times 2$ with $a',b'$ of the form $\delta^{ \ell \eta}$ and $|W'|\leq\delta^{-2\eta}|W|\leq \delta^{-\eps/2}|W|$.

In particular, we have that \eqref{goodCKTBdOnTubes} holds with probability at least $1-(2K)^{-1}$. Since there are $K$ sets of tubes $\tubes_1,\ldots,\tubes_K$, we conclude that with probability at least $1/2$, we have that \eqref{goodCKTBdOnTubes} is true for every set $\tubes_1,\ldots,\tubes_K$. Finally, by \eqref{probabilityA1ANLarge}, we have that the probability that $\#\mathcal{A}\geq N/2$ \emph{and} \eqref{goodCKTBdOnTubes} is true for every set $\tubes_1,\ldots,\tubes_K$ is at least $1/4$. We conclude that there exists a choice of $A_1,\ldots,A_N$ and a set $\mathcal{A}\subset\{A_1,\ldots A_N\}$ of cardinality $\Big\lceil \frac{\rho^2}{M\delta^2} \Big\rceil$ so that \eqref{favorableCKTBd} holds.
\end{proof}

Lemma \ref{findingGoodRotationManyFamilies} has the following consequence.
\begin{cor}\label{corFactoringSingleScale}
For all $\eps>0$, there exists $\eta>0$ and $K_\eps\geq 1$ so that the following holds for all $0<\delta\leq\rho\leq 1$. Let $\tubes$ be a set of $\delta$-tubes, and let $\tubes_\rho$ be a balanced partitioning cover of $\tubes$. Suppose that for each $T_\rho\in\tubes_\rho$, we have that $\tubes^{T_\rho}$ is regular with granularity $(\delta/\rho)^{\eta}$. 

Then there exists a set of rigid transformations $\mathcal{A}$ with $\#\mathcal{A} =   2\big \lceil \frac{\#\tubes_\rho}{\#\tubes\phantom{_\rho}} \frac{|T_\rho|}{|T|}\big\rceil$ and
\begin{equation}\label{ATRhoSubset3TRho}
A(T_\rho)\subset N_{3\rho}(T_\rho)\quad\textrm{for each}\ A\in\mathcal{A},\ T_\rho\in\tubes_\rho,
\end{equation}
so that

\begin{equation}\label{favorableCKTBdInsideRho}
\CKT\Big(\Big(\bigsqcup_{A\in\mathcal{A}} A(\tubes)\Big)[N_{3\rho}(T_\rho)]\Big)  \leq K_\eps\delta^{-\eps}\CKT(\tubes)\quad\textrm{for each}\ T_\rho\in\tubes_\rho.
\end{equation}
\end{cor}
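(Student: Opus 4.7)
The plan is to apply Lemma~\ref{findingGoodRotationManyFamilies} directly to the $K := \#\tubes_\rho$ families $\{\tubes_j = \tubes[T_\rho^j]\}_{T_\rho^j \in \tubes_\rho}$. Each such family is contained in the $\rho$-tube $T_\rho^j$ and, since $\tubes_\rho$ is a balanced partitioning cover, has cardinality at most $M \sim \#\tubes/\#\tubes_\rho$. The hypothesis that $\tubes^{T_\rho^j}$ is regular with granularity $(\delta/\rho)^\eta$ transports under $\phi_{T_\rho^j}^{-1}$ to balanced partitioning covers of $\tubes[T_\rho^j]$ at a dense family of scales between $\delta$ and $\rho$; choosing $\eta$ sufficiently small (and coarsening the covers to match the dyadic-in-$\delta^{\eta_L}$ scale ladder needed by Lemma~\ref{findingGoodRotationManyFamilies}) verifies that lemma's regularity assumption. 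Applying the lemma with $\eps/2$ in place of $\eps$ produces a set $\mathcal{A} \subset \mathfrak{A}_\rho$ with $\#\mathcal{A} = \lceil \rho^2/(M\delta^2)\rceil$, matching the prescribed cardinality $2\lceil(\#\tubes_\rho/\#\tubes)(|T_\rho|/|T|)\rceil$ up to the factor of $2$, together with the per-family bound
\[
\CKT\Big(\bigsqcup_{A\in\mathcal{A}} A(\tubes[T_\rho^j])\Big) \leq K_\eps' \delta^{-\eps/2}\log(2+K)\,\CKT(\tubes[T_\rho^j])\quad\textrm{for each}\ j.
\]
Because every $A \in \mathfrak{A}_\rho$ moves each point of $B(0,1)$ by at most $\rho$, $A(T_\rho) \subset N_\rho(T_\rho)\subset N_{3\rho}(T_\rho)$, verifying \eqref{ATRhoSubset3TRho}.

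To deduce \eqref{favorableCKTBdInsideRho}, fix $T_\rho \in \tubes_\rho$ and a convex set $W \subset N_{3\rho}(T_\rho)$. The partition $\tubes = \bigsqcup_j \tubes[T_\rho^j]$ gives
\begin{align*}
\#\Big(\bigsqcup_{A\in\mathcal{A}} A(\tubes)\Big)[W]
&= \sum_{T_\rho^j \in \tubes_\rho} \#\Big(\bigsqcup_{A\in\mathcal{A}} A(\tubes[T_\rho^j])\Big)[W] \\
&\leq K_\eps' \delta^{-\eps/2}\log(2+K)\,\frac{|W|}{|T|} \sum_{T_\rho^j \in \mathcal{N}(T_\rho)} \CKT(\tubes[T_\rho^j]),
\end{align*}
where $\mathcal{N}(T_\rho)$ collects those $T_\rho^j$ contributing a nonzero summand. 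Such a contribution requires some $T \in \tubes[T_\rho^j]$ and $A \in \mathcal{A}$ with $A(T) \subset W$, hence $T \subset A^{-1}(W) \subset N_{4\rho}(T_\rho)$. Since $T$ is a length-$1$ $\delta$-tube living in both the $\rho$-tube $T_\rho^j$ and the $4\rho$-tube $N_{4\rho}(T_\rho)$, the directions of $T_\rho^j$ and $T_\rho$ must agree to within $O(\rho)$ and $T_\rho^j$ must be essentially contained in $N_{5\rho}(T_\rho)$.

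After a standard $\approx_\delta 1$ refinement allowing us to assume the tubes in $\tubes_\rho$ are essentially distinct, the comparability established above forces $|\mathcal{N}(T_\rho)| = O(1)$. Combining this with $\CKT(\tubes[T_\rho^j]) \leq \CKT(\tubes)$ and $\log(2+K) \leq \log(2+\delta^{-4}) \lessapprox 1$ yields
\[
\#\Big(\bigsqcup_{A\in\mathcal{A}} A(\tubes)\Big)[W] \leq K_\eps \delta^{-\eps} \CKT(\tubes)\,\frac{|W|}{|T|}
\]
for a suitable $K_\eps$, which is \eqref{favorableCKTBdInsideRho}. The main technical hurdle is the scale-matching in the initial application of Lemma~\ref{findingGoodRotationManyFamilies}: the granularity $(\delta/\rho)^\eta$ furnished by our hypothesis must be converted into the granularity $\delta^{\eta_L}$ required by that lemma, which amounts to verifying that balanced partitioning covers at a slightly off-dyadic sequence of scales are enough to drive the probabilistic argument at the heart of the lemma.
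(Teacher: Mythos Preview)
Your approach is essentially the same as the paper's: apply Lemma~\ref{findingGoodRotationManyFamilies} to the $K=\#\tubes_\rho$ families $\tubes[T_\rho^j]$, then upgrade the per-family estimate to \eqref{favorableCKTBdInsideRho} by observing that only $O(1)$ families can contribute to any fixed $N_{3\rho}(T_\rho)$. The paper's proof is just the Remark following the corollary, and your write-up fills in the details that Remark elides.

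One correction: you should not ``refine $\tubes_\rho$'' to obtain essential distinctness. A refinement would only yield \eqref{favorableCKTBdInsideRho} for the surviving $T_\rho$, not for every $T_\rho\in\tubes_\rho$ as the statement requires. Essential distinctness is already built in---throughout the paper any set written $\tubes_\rho$ consists of essentially distinct tubes by convention (see Section~\ref{notationSection}), and the paper's Remark treats it explicitly as a hypothesis. With that in hand your $|\mathcal{N}(T_\rho)|=O(1)$ argument goes through directly.

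Your flagged ``scale-matching'' wrinkle is real but harmless, exactly as you say: the ladder $\delta\cdot(\rho/\delta)^{\eta'\ell}$ coming from regularity of $\tubes^{T_\rho}$ with granularity $(\delta/\rho)^{\eta'}$ has step ratio $(\rho/\delta)^{\eta'}\leq\delta^{-\eta'}$, hence is at least as fine as the ladder $\delta^{1-\ell\eta_L}$ the lemma's proof uses. Taking $\eta'$ small relative to the lemma's $\eta_L$ and absorbing the mismatch into the $\delta^{-\eps}$ loss suffices; no change to the probabilistic argument is needed.
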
 
\begin{rem}
An immediate application of Lemma \ref{findingGoodRotationManyFamilies} yields the slightly weaker statement
\[
\CKT\Big(\Big(\bigsqcup_{A\in\mathcal{A}} A(\tubes[T_\rho])\Big)\Big)  \leq K_\eps\delta^{-\eps}\CKT(\tubes)\quad\textrm{for each}\ T_\rho\in\tubes_\rho,
\]
but \eqref{favorableCKTBdInsideRho} follows from the hypothesis that the tubes in $\tubes_\rho$ are essentially distinct, and hence there are $\lesssim 1$ tubes $T_\rho'\in\tubes_\rho$ for which $\big(\bigsqcup_{\mathcal{A}} A(\tubes[T_\rho'])\big)[T_\rho]$ is non-empty.
\end{rem}

We are now ready to prove Proposition \ref{factorizationProp}.
\begin{proof}[Proof of Proposition \ref{factorizationProp}]
{\bf Step 1.}
Fix $\eps>0$. Let $\eps_1>0$ be a small quantity to be chosen below. We will choose $\eps_1$ small compared to $\eps$, and $\eta$ small compared to $\eps_1$. Let $\tubes$ be a set of $\delta$ tubes that satisfy the Katz-Tao Convex Wolff axioms at every scale with error $\delta^{-\eta}$.

Provided we select $\eta\leq\eps$, there exists a number $J\sim \eps^{-1}$ and scales $1=\rho_0>\rho_1>\ldots>\rho_J = \delta$ with $\delta^\eps \leq \rho_{j+1}/\rho_{j} < 1/8$ for each index $j$, so that the following holds: for each index $j=0,\ldots,J-1$, there is a $\delta^{-\eta}$-balanced partitioning cover $\tubes_{\rho_j}$ of $\tubes$, with $\CKT(\tubes_{\rho_j})\leq\delta^{-\eta}$. Define $\tubes_{\rho_J} = \tubes$. 

Currently, $\tubes_{\rho_j}$ covers $\tubes$, but unfortunately, it might not be the case that $\tubes_{\rho_{j}}$ covers $\tubes_{\rho_{j+1}}$. We can fix this as follows. We know that for each index $j>0$, we have $\tubes[T_{\rho_{j+1}}]\neq\emptyset$ for each $T_{\rho_{j+1}}\in\tubes_{\rho_{j+1}}$. Since $\tubes_{\rho_j}$ covers $\tubes$, we have that for each $T_{\rho_{j+1}}\in\tubes_{\rho_{j+1}}$, there exists $T_{\rho_j}\in\tubes_{\rho_j}$ so that $\tubes[T_{\rho_{j+1}}]\cap \tubes[T_{\rho_j}]\neq\emptyset$, and hence $T_{\rho_{j+1}}\cap T_{\rho_j}$ contains a unit line segment. But this implies that $T_{\rho_{j+1}}\subset N_{2\rho_{j+1}}(T_{\rho_j})$. Thus, if we replace each set $\tubes_{\rho_j}$ with $\{N_{2\rho_{j+1}}(T_{\rho_j})\colon T_{\rho_j}\in\tubes_{\rho_j}\}$ for $j = 0,\ldots,J-1$ (leaving $\tubes_{\rho_J}$ unchanged), then for each index $j$ we have that $\tubes_{\rho_j}$ covers $\tubes_{\rho_{j+1}}$.   It is still the case that $\tubes_{\rho_j}$ is a $\lesssim\delta^{-\eta}$-balanced cover of $\tubes$, and hence $\tubes_{\rho_j}$ is a $\lesssim\delta^{-2\eta}$-balanced cover of $\tubes_{\rho_{j+1}}$. Abusing notation, we will continue to refer to these sets as $\tubes_{\rho_j}$ (even though this set is technically a collection of $(\rho_j + 2\rho_{j+1})$ tubes; since $\rho_{j+1}\leq \rho_j/4$, this distinction will not be important). Note that it might no longer be the case that the tubes in $\tubes_{\rho_j}$ are essentially distinct, nor that $\tubes_{\rho_j}$ is a partitioning cover of $\tubes$. 

\medskip

\noindent {\bf Step 2.}

 After dyadic pigeonholing, we can replace each set $\tubes_{\rho_j}$ by a subset (abusing notation, we will continue to refer to this subset as $\tubes_{\rho_j}$) so that $\#\tubes_{\rho_J}\gtrapprox \#\tubes$, and the following holds for each $j=0,\ldots,J$. (To simplify notation, we define $\tilde T_{\rho_j} = N_{3\rho_j}(T_{\rho_j})$ and $\tilde{\tubes}_{\rho_{j}}=\{\tilde T_{\rho_j}\colon T_{\rho_j}\in\tubes_{\rho_j}\}$ ).

\begin{itemize}
\item[(i)] The tubes in $\tilde \tubes_{\rho_{j}}$ are essentially distinct.
\item[(ii)] For each $T_{\rho_{j}}\in\tubes_{\rho_{j}}$, we have $\tilde\tubes_{\rho_{j+1}}[\tilde T_{\rho_{j}}]=\tilde\tubes_{\rho_{j+1}}[T_{\rho_{j}}].$
\item[(iii)] $\tubes_{\rho_{j}}$ is a balanced partitioning cover of $\tilde \tubes_{\rho_{j+1}}$.
\item[(iv)] for each $T_{\rho_{j}}\in\tubes_{\rho_{j}}$, we have that $({\tilde \tubes}_{\rho_{j+1}})^{T_{\rho_{j}}}$ is regular with granularity $(3\rho_{j}/\rho_{j+1})^{\eta_1}$.
\end{itemize}

For each index $j = 1,\ldots,J$, apply Corollary \ref{corFactoringSingleScale} with $\eps_1$ in place of $\eps$, $\tilde \tubes_{\rho_j}$ in place of $\tubes$, $\tubes_{\rho_{j-1}}$ in place of $\tubes_\rho$, $3\rho_j$ in place of $\delta$, and $3\rho_j/\rho_{j-1}$ in place of $\rho$. Let $\mathcal{A}_j$ be the output of Corollary \ref{corFactoringSingleScale}. 

By \eqref{ATRhoSubset3TRho} plus Item (ii) above, for each $T_{\rho_{j-1}}\in\tubes_{\rho_{j-1}}$, we have
\begin{equation}
\bigsqcup_{A\in\mathcal{A}_j} A(\tilde{\tubes}_{\rho_{j}}[\tilde T_{\rho_{j-1}}]) = \bigsqcup_{A\in\mathcal{A}_j} A(\tilde{\tubes}_{\rho_{j}}[T_{\rho_{j-1}}]) \subset \tilde T_{\rho_{j-1}},
\end{equation}
and hence by Item (iii), 
\begin{equation}\label{tildeTubesCovers}
\tilde\tubes_{\rho_{j-1}}\ \textrm{covers}\ \bigsqcup_{A\in\mathcal{A}_j} A(\tilde{\tubes}_{\rho_{j}}).
\end{equation} 
Here the disjoint union is in the sense of multi-sets. 

We have
\begin{equation}\label{cardEstimateOverCalAj}
\#\Big( \Big(\bigsqcup_{A\in\mathcal{A}_j} A(\tilde{\tubes}_{\rho_{j}})\Big)[\tilde T_{\rho_{j-1}}]\Big) \geq  (\#\mathcal{A}_j)\Big(\frac{\#\tilde \tubes_{\rho_j}}{2\#\tubes_{\rho_{j-1}}}\Big) \gtrsim \frac{|T_{\rho_{j-1}}|}{|T_{\rho_j}|}\quad \textrm{for each}\ T_{\rho_{j-1}}\in\tubes_{\rho_{j-1}}.
\end{equation}

By \eqref{favorableCKTBdInsideRho}, there exists $C_{\eps_1}\geq 1$ so that 
\begin{equation}\label{goodCKTBoundEachUnion}
\CKT\Big(\Big(\bigsqcup_{A\in\mathcal{A}_j} A(\tilde\tubes_{\rho_j})\Big)[\tilde T_{\rho_{j-1}}]\Big)  \leq C_{\eps_1}\delta^{-\eps_1-2\eta}\quad\textrm{for each}\ \tilde T_{\rho_{j-1}}\in\tilde\tubes_{\rho_{j-1}}.
\end{equation}

\medskip

\noindent {\bf Step 3.} Define $\mathcal{A}^{(0)}=\{I\}$ and for $j=1,\ldots,J$, define 
\[
\mathcal{A}^{(j)} = \{A_1\circ A_2 \circ \ldots \circ A_j\colon A_i\in\mathcal{A}_i\ \textrm{for each}\ i=1,\ldots j\}.
\]
Define $\mathcal{A} = \mathcal{A}^{(J)}$; this set of transformations will be the output of Proposition \ref{factorizationProp}.

For each index $j=0,\ldots,J$, define
\[
\hat\tubes_{\rho_j} = \bigsqcup_{A\in\mathcal{A}^{(j)}}A(\tilde\tubes_{\rho_j}).
\]
By \eqref{tildeTubesCovers}, $\hat\tubes_{\rho_{j-1}}$ covers $\hat\tubes_{\rho_{j}}$ for each $j=1,\ldots, J$.

By \eqref{cardEstimateOverCalAj}, for each $1\leq j\leq J$ and each $T_{\rho_{j-1}}\in\tubes_{\rho_{j-1}}$ we have
\[
\#\hat\tubes_{\rho_{j}}[\tilde T_{\rho_{j-1}}] \gtrsim  \frac{|T_{\rho_{j-1}}|}{|T_{\rho_{j}}|}. 
\]

Thus after dyadic pigeonholing, we can replace each set $\hat\tubes_{\rho_j}$ by a refinement $\hat\tubes_{\rho_j}'$ (in the sense of multi-sets), so that $\hat\tubes_{\rho_{j-1}}'$ is a balanced partitioning cover of $\hat\tubes_{\rho_{j}}'$ for each $j=1,\ldots, J$, and for each $j=1,\ldots,J$ and each $T_{\rho_{j-1}}\in\tubes_{\rho_{j-1}}$ we have 
\[
\#\hat\tubes_{\rho_{j}}'[\tilde T_{\rho_{j-1}}] \gtrsim \delta^\eta \frac{|T_{\rho_{j-1}}|}{|T_{\rho_{j}}|},
\]
and hence
\begin{equation}\label{tubesJBig}
\# \hat\tubes_{\rho_{J}}'[\tilde T_{\rho_j}] \gtrsim \delta^{\eta/\eps} \frac{|T_{\rho_{j}}|}{|T|}\quad\textrm{for each}\ \tilde T_{\rho_j}\in \hat\tubes_{\rho_{j}}'.
\end{equation}
\medskip

\noindent {\bf Step 4.} Using Lemma \ref{KTWSubMultiplicativeTubes} and \eqref{goodCKTBoundEachUnion}, we have 
\[
\CKT(\hat\tubes_{\rho_J}')\leq \CKT(\hat\tubes_{\rho_J})\lesssim \prod_{j=1}^J \big(C_{\eps_1}\delta^{-\eps_1-2\eta}\big) \lesssim \delta^{-2\eps_1 J}.
\]
In particular, for each $T\in \hat\tubes_{\rho_J}'$, there are $\lesssim \delta^{-2\eps_1 J}$ tubes $T'\in\hat\tubes_{\rho_J}'$ comparable to $T$. Recall that $J\sim\eps^{-1}$. We will choose $\eps_1$ sufficiently small so that $2\eps_1 J<\eps/4$. Thus we can select a set  $\tubes^\dag \subset\hat\tubes_{\rho_J}'\subset\bigcup_{A\in\mathcal{A}} A(\tilde \tubes_{\rho_J})$  with
$
\#\tubes^\dag \gtrsim \delta^{\eps/4}(\#\hat\tubes_{\rho_J}') \geq \delta^{\eps}|T|^{-1}
$
that consists of of essentially distinct tubes. The set $\tubes^\dag$ will be the output of Proposition \ref{factorizationProp}.

It remains to verify that $\tubes^\dag$ satisfies the Frostman Convex Wolff Axioms at every scale, with error $\delta^{-\eps}$. To do so, it suffices to note that for each index $j = 0,\ldots,J-1$, we have that $\hat\tubes_{\rho_j}'$ is a $ \delta^{-\eps/4} 2^{J-j}$-balanced partitioning cover of $\tubes^\dag$, and thus if we choose $\eta<\eps^2/2$, then for each $\tilde T_{\rho_j}\in  \hat\tubes_{\rho_j}'$ we can use \eqref{tubesJBig} to conclude that
\[
\CFC\big( (\tubes^\dag)^{\tilde T_{\rho_j}}\big)\leq \frac{\CKT(\tubes^\dag)}{\#\tubes^\dag[\tilde T_{\rho_j}]} \frac{|\tilde T_{\rho_j}|}{|T|}
\lesssim \delta^{-\eta/\eps}\CKT(\tubes^\dag) \leq \delta^{-\eta/\eps  -\eps/4} \CKT(\hat\tubes_{\rho_J}')\leq  \delta^{-\eps}.\qedhere
\]
\end{proof}


\section{Multi-scale analysis and the proof of Proposition \ref{improvingProp}}
Our goal in this section is to prove Proposition \ref{improvingProp} by combining Proposition \ref{refinedInductionOnScaleProp} and Theorem \ref{katzTaoEveryScaleStickyKakeyaThm}.

\begin{lem}\label{bigVolumeOrFineDivisions}
Let $\sigma\in(0,2/3]$; $\omega,\eps>0$ and $N\geq 1$. Suppose that $\cE(\sigma,\omega)$ is true. Then there exists $\eta,\alpha,\kappa>0$ so that the following holds for all $\delta>0$. Let $(\tubes,Y)_\delta$ be $\delta^\eta$ dense with $\CKT(\tubes)\leq\delta^{-\eta}$ and $\FS(\tubes)\leq\delta^{-\eta}$. Then at least one of the following must hold.
\begin{itemize}
	\item[(A)] \itemizeEqnVSpacing
		\[
			\Big|\bigcup_{T\in\tubes}Y(T)\Big| \geq \kappa \delta^{\omega-\alpha}(\#\tubes)|T|\big((\#\tubes)|T|^{1/2}\big)^{-\sigma}.
		\]

	\item[(B)] There is a $\delta^\eps$ refinement $(\tubes',Y')_\delta$ of $(\tubes,Y)_\delta$; a number $J\leq 2^N$; scales $\delta=\rho_J<\rho_{J-1}<\ldots<\rho_0 = 1$; and sets $\tubes_{\rho_j},\ j=0,\ldots,J$, so that the following holds
	\begin{itemize}
		\item[(i)] $\rho_{i+1}/\rho_i\geq \delta^{(1-\omega/100)^N}$ for each $i = 0,\ldots,J-1$.
		\item[(ii)] $\tubes' = \tubes_{\rho_J},$ and $\tubes_{\rho_0}$ consists of a single 1 tube.
		\item[(iii)] For each $i=1,\ldots,J-1$ and each $T_{\rho_{i-1}}\in\tubes_{\rho_{i-1}}$, we have that $\tubes_{\rho_i}^{T_{\rho_{i-1}}}$ factors $\tubes_{\rho_{i+1}}^{T_{\rho_{i-1}}}$ from above and below with regard to the Katz-Tao Convex Wolff Axioms and Frostman Slab Wolff Axioms, both with error $\delta^{-\eps}$. 
	\end{itemize}
\end{itemize}
\end{lem}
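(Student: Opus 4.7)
The plan is to iterate Proposition \ref{refinedInductionOnScaleProp}, starting from the trivial chain $\rho_0 = 1$, $\rho_1 = \delta$ with $\tubes_{\rho_0} = \{B(0,1)\}$ and $\tubes_{\rho_1} = \tubes$, and performing up to $N$ rounds of scale-splitting. At a generic point in the iteration we maintain scales $\delta = \rho_J < \ldots < \rho_0 = 1$ and covers $\tubes_{\rho_i}$ satisfying the invariant that for each $i = 1,\ldots,J$ and each $T_{\rho_{i-1}} \in \tubes_{\rho_{i-1}}$, the rescaled pair $(\tubes_{\rho_i}^{T_{\rho_{i-1}}}, Y^{T_{\rho_{i-1}}})$ at scale $\rho_i/\rho_{i-1}$ satisfies the hypotheses of Proposition \ref{refinedInductionOnScaleProp} with parameter $\eta_k$ decreasing in the round number $k$. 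The base case $J=1$ is immediate from the hypotheses on $\tubes$.

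To perform one round, I go through each existing pair of consecutive scales $(\rho_{i-1}, \rho_i)$ and, for each $T_{\rho_{i-1}} \in \tubes_{\rho_{i-1}}$, apply Proposition \ref{refinedInductionOnScaleProp} to $(\tubes_{\rho_i}^{T_{\rho_{i-1}}}, Y^{T_{\rho_{i-1}}})$ with parameter $\zeta$ in place of its $\zeta$ (to be chosen small). If Conclusion (A) of the proposition ever fires, I upgrade it to Conclusion (A) of the present lemma: the local estimate says the union inside $T_{\rho_{i-1}}$ has density at least $\gtrapprox_\delta (\rho_i/\rho_{i-1})^{\omega-\alpha'}$ times the baseline from $\cE(\sigma,\omega)$; applying $\cE(\sigma,\omega)$ to $\tubes_{\rho_{i-1}}$ itself (whose good Wolff constants are guaranteed by the running invariant together with Remark \ref{FrostmanWolffInheritedUpwardsDownwards}) and combining the two estimates, using the essential disjointness of the contributions in distinct $T_{\rho_{i-1}}$, produces the global bound \eqref{conclusionArefinedInductionOnScaleProp}. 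Otherwise, Conclusion (B) of the proposition delivers a rescaled intermediate scale $\rho^*(T_{\rho_{i-1}})/\rho_{i-1} \in [(\rho_i/\rho_{i-1})^{1-\omega/100}, (\rho_i/\rho_{i-1})^{\omega/100}]$ and a local cover factoring the rescaled collection both ways with error $(\rho_i/\rho_{i-1})^{-\zeta}$. Dyadic pigeonholing across $T_{\rho_{i-1}}$ produces a common intermediate scale $\rho^*$ (at a cost of $|\log \delta|^{-O(1)}$ in the refinement); undoing the rescaling and gluing together the local covers yields a new collection $\tubes_{\rho^*}$, which we insert into the chain.

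The two new triple conditions at the insertion site --- namely that $\tubes_{\rho^*}^{T_{\rho_{i-1}}}$ factors $\tubes_{\rho_i}^{T_{\rho_{i-1}}}$, and that $\tubes_{\rho_i}^{T_{\rho^*}}$ has good axioms at scale $\rho_i/\rho^*$ --- are exactly the factoring-from-above and factoring-from-below statements produced by the proposition. All other triple conditions are preserved from the previous round. After $N$ rounds the chain has at most $2^N$ scales, and the relation that each split replaces a rescaled ratio $r$ by two ratios each between $r^{\omega/100}$ and $r^{1-\omega/100}$ yields $\rho_{i+1}/\rho_i \geq \delta^{(1-\omega/100)^N}$, giving (B.i). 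Each of the at most $2^N - 1$ insertions costs a $\delta^{O(\zeta)}$ factor in density of the final refinement, so choosing $\zeta \leq \eps / 2^{N+2}$ leaves the final refinement at least $\delta^\eps$-dense and keeps all Wolff errors bounded by $\delta^{-\eps}$.

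The main technical obstacle is parameter translation under rescaling. A $(\rho_i/\rho_{i-1})^{-\zeta}$ error at the rescaled level is a $\delta^{-\zeta \log(\rho_i/\rho_{i-1})/\log \delta}$ error at the ambient scale; the factor $\log(\rho_i/\rho_{i-1})/\log\delta$ can be as small as $(1-\omega/100)^N$, inflating the exponent by $(1-\omega/100)^{-N}$. Similarly, verifying that the hypotheses of Proposition \ref{refinedInductionOnScaleProp} continue to hold at the rescaled level at the start of round $k+1$ requires that the current $\eta_k$ satisfy $\eta_k \leq \eta_{\text{prop}}(\rho_i/\rho_{i-1})^N / (1-\omega/100)^N$. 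Choosing the decreasing sequence $\eta_0 \gg \eta_1 \gg \ldots \gg \eta_N$ (with $\eta_N$ taken to be $\eps$) sufficiently fast relative to $N$, $\omega$, $\eps$, $\zeta$ absorbs all of these inflations, and the final parameter $\eta$ in the statement is set to $\eta_0$. The second subtle point --- upgrading a local Conclusion (A) to a global one --- works because the rescaled scale satisfies $\rho_i/\rho_{i-1} \geq \delta^{(1-\omega/100)^N}$, so the local improvement $(\rho_i/\rho_{i-1})^{-\alpha'}$ translates into a genuine $\delta^{-\alpha' (1-\omega/100)^N}$ improvement over the baseline estimate, and taking $\alpha \leq \alpha' (1-\omega/100)^N /2$ delivers the required $\delta^{\omega - \alpha}$ gain.
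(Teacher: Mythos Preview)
Your overall strategy---iterating Proposition~\ref{refinedInductionOnScaleProp} through $N$ rounds of scale-splitting, starting from the trivial chain $(1,\delta)$---is exactly the paper's, which packages the same iteration as induction on~$N$; your handling of Conclusion~(B) and the parameter bookkeeping matches the paper's Step~3.

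The gap is in your upgrade of a local Conclusion~(A) to the global bound. You combine the improved estimate inside one $T_{\rho_{i-1}}$ with a single application of $\cE(\sigma,\omega)$ to $\tubes_{\rho_{i-1}}$, but once the chain has three or more scales this two-factor combination does not suffice: the improved estimate concerns $\rho_i$-tube shadings inside $T_{\rho_{i-1}}$, while the scales below $\rho_i$ (down to $\delta$) are left unaccounted for, so the product cannot reproduce the target quantity $(\#\tubes)|T|\big((\#\tubes)|T|^{1/2}\big)^{-\sigma}$. What is needed is a telescoping product over \emph{all} consecutive pairs in the current chain: one first constructs nested shadings $Y_{\rho_j}$ so that $\big|\bigcup_{\tubes}Y(T)\big|$ is bounded below by the product over~$j$ of the scale-$\rho_j$-to-$\rho_{j+1}$ densities (the paper's~\eqref{productOfShadingEqn}), bounds every factor $j\neq i_0$ by the $\cE(\sigma,\omega)$ baseline~\eqref{averageDensityRhoIRhoIp1}, and uses the improved estimate only at the single level $j=i_0$; the product then telescopes. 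A related point: your inequality for the gain runs the wrong way---the lower bound $\rho_i/\rho_{i-1}\geq\delta^{(1-\omega/100)^N}$ gives only $(\rho_i/\rho_{i-1})^{-\alpha'}\leq\delta^{-\alpha'(1-\omega/100)^N}$, an \emph{upper} bound on the gain. To get a lower bound one needs an upper bound on the ratio at the pair where~(A) fires; the paper secures this with a ``Type~C'' check, skipping pairs whose ratio already exceeds $2^{-N}\delta^{(1-\omega/100)^N}$, so that~(A) can only fire at pairs below that threshold.
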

\begin{proof}$\phantom{1}$\\
{\bf Step 1.}
We will prove the result by induction on $N$. When $N=0$ there is almost nothing to prove; since the tubes in $\tubes$ are contained in the unit ball, we can select a $\sim 1$ refinement $\tubes'\subset\tubes$ that is contained in a single 1 tube. We have $J = 1$, and Conclusion (B) always holds (note that Item (iii) of Conclusion B is vacuously true when $J=1$.

Suppose now that the result has been proved for $N-1.$ Let $\theta\in(0,2/3]$ and let $\eps,\omega>0$. Let $\eps_1,\eps_2,\eta,\alpha,c>0$ be small quantities to be determined below. We will choose $\eps_1$ depending on $\eps,N,\theta,\omega$; $\eps_2$ small compared to $\eps_1$; and $\eta,\alpha,c$ small compared to $\eps_2$.

Let $(\tubes,Y)_\delta$ be $\delta^\eta$ dense and satisfy $\CKT(\tubes)\leq\delta^{-\eta}$ and $\FS(\tubes)\leq\delta^{-\eta}$. If $\eta$ is chosen sufficiently small depending on $\omega,\sigma$, and $\eps_2$, then we can apply the induction hypothesis with $\eps_2$ in place of $\eps$; let $\alpha_1=\alpha_1(\omega,\sigma,\eps_2,N-1)$ be the corresponding value of $\alpha$.

If Conclusion (A) holds when we apply the induction hypothesis, then Conclusion (A) holds and we are done, provided we select $\alpha\leq\alpha_1$. Suppose instead that Conclusion (B) holds, i.e. there is a $\delta^{\eps_2}$ refinement $(\tubes',Y')_\delta$; scales $\delta = \rho_{J} < \rho_{J-1} < \ldots < \rho_0 = 1$ with $J\leq 2^{N-1}$ and $\rho_{i+1}/\rho_i\geq \delta^{(1-\omega/100)^{N-1}}$; and sets $\tubes_{\rho_j}$ satisfying Conclusion (B) with $\eps_2$ in place of $\eps$.

Replacing $(\tubes',Y')_\delta$ and each set $\tubes_{\rho_i}$ with a $(\log 1/\delta)^{-J}\geq (\log 1/\delta)^{-2^N}$ refinement, we can construct shadings $Y_{\rho_i}$ on $\tubes_{\rho_i}$ so that the following holds.
\begin{itemize}
	\item[(a)] $|Y_{\rho_0}(T_{\rho_0})|\gtrsim (\log 1/\delta)^{-2^N}\delta^{\eta}$, where $T_{\rho_0}$ is the unique tube in $\tubes_{\rho_0}$. 
	\item[(b)] For each $T_{\rho_i}\in\tubes_{\rho_i}$, we have that $(\tubes_{\rho_{i+1}}^{T_{\rho_i}}, Y_{\rho_{i+1}}^{T_{\rho_i}})_{\rho_{i+1}/\rho_i}$ is $\gtrsim (\log 1/\delta)^{-2^N}\delta^{\eta}$ dense,  and $(Y_{\rho_J}, \tubes_{\rho_J} )_{\delta}$ is a $\gtrsim (\log 1/\delta)^{-2^N}$-refinement of $(\tubes', Y')_{\delta}$.
	\item[(c)] Item (iii) of Conclusion (B) remains true, with the error $\delta^{-\eps_2}$ weakened to $\lesssim (\log 1/\delta)^{2^N} \delta^{-\eps_2}$.

	\item[(d)] For each $i=1,\ldots,J-1$ and each $T_{\rho_{i-1}}\in\tubes_{\rho_{i-1}}$, we have that $\tubes_{\rho_i}^{T_{\rho_{i-1}}}$ is a balanced partitioning cover of $\tubes_{\rho_{i+1}}^{T_{\rho_{i-1}}}$.

	\item[(e)] For each $i = 0,\ldots, J-1$, the quantity
	\[
		\Big|\bigcup_{T\in\tubes_{\rho_{i+1}}[T_{\rho_i}]}Y_{\rho_{i+1}}(T_{\rho_{i+1}})\Big|
	\]
	is approximately the same (up to a multiplicative factor of 2) for each $T_{\rho_i}\in \tubes_{\rho_i}$.
	\item[(f)] We have
	\begin{equation}\label{productOfShadingEqn}
		\Big|\bigcup_{T\in\tubes}Y(T)\Big| \gtrsim \prod_{i=1}^{J-1}\Big( \sup_{T_{\rho_i\in\tubes_{\rho_i}}}\frac{1}{|T_{\rho_i}|}\Big|\bigcup_{T_{\rho_{i+1}}\in\tubes_{\rho_{i+1}}[T_{\rho_{i}}]}Y_{\rho_{i+1}}(T_{\rho_{i+1}})\Big|\Big).
	\end{equation}
\end{itemize}
Item (f) is obtained by selecting each shading $Y_{\rho_{i+1}}$ so that the union $\bigcup_{T_{\rho_{i+1}}\in\tubes_{\rho_{i+1}}[T_{\rho_{i}}]}Y_{\rho_{i+1}}(T_{\rho_{i+1}}) \subset Y_{\rho_i}(T_{\rho_i})$ has approximately the same density (up to a multiplicative factor of 2) on balls of radius $\rho_{i}$ contained in $Y_{\rho_i}(T_{\rho_i})$. By the previous item, the RHS of \eqref{productOfShadingEqn} is reduced by at most a factor of $2^J$ if the supremum in \eqref{productOfShadingEqn} is replaced by an infimum. Items (a) - (e) are obtained by dyadic pigeonholing; we omit the details.

If $\eps_2$ is chosen sufficiently small depending on $\eps_1$, then for each index $i$ and each $T_{\rho_i}\in\tubes_{\rho_i}$ we have
\begin{equation}\label{averageDensityRhoIRhoIp1}
\frac{1}{|T_{\rho_i}|}\Big|\bigcup_{T_{\rho_{i+1}}\in\tubes_{\rho_{i+1}}[T_{\rho_{i}}]}Y_{\rho_{i+1}}(T_{\rho_{i+1}})\Big| 
\gtrsim \delta^{\eps_1}\big(\frac{\rho_{i+1}}{\rho_i}\big)^{\omega}\big(\frac{\#\tubes_{\rho_{i+1}}}{\#\tubes_{\rho_i}}\big)|T_{\rho_{i+1}}|\Big( \big(\frac{\#\tubes_{\rho_{i+1}}}{\#\tubes_{\rho_i}}\big)\big(\frac{|T_{\rho_{i+1}}}{|T_{\rho_i}|}\big)^{1/2} \Big)^{-\sigma}
\end{equation}
Indeed, \eqref{averageDensityRhoIRhoIp1} follows from the estimate $\cE(\sigma,\omega)$; this estimate can be applied, for example, when $\frac{\rho_{i+1}}{\rho_i} < \delta^{\eps_1}$ and $\eps_2$ is selected sufficiently small depending on $\omega,\sigma$, and $\eps_1$. If $\frac{\rho_{i+1}}{\rho_i} \geq \delta^{\eps_1}$, then \eqref{averageDensityRhoIRhoIp1} follows from the elementary fact that the volume of the LHS of \eqref{averageDensityRhoIRhoIp1} is bounded below by the volume of $Y_{\rho_{i+1}}(T_{\rho_{i+1}})$ for every $T_{\rho_{i+1}}\in\tubes_{\rho_{i+1}}[T_{\rho_i}]$. 

\medskip

\noindent {\bf Step 2.}

We say an index $i$ is of Type C if $\rho_{i+1}/\rho_i \geq 2^{-N}\delta^{(1-\omega/100)^{N}}$; in this case no further splitting is required. If instead $\rho_{i+1}/\rho_i < 2^{-N}\delta^{(1-\omega/100)^{N}}$, then apply Proposition \ref{refinedInductionOnScaleProp} (refined induction on scales) with $\omega$ and $\sigma$ as above, and $\zeta=\eps$ to each arrangement $(\tubes_{\rho_{i+1}}^{T_{\rho_i}},Y_{\rho_{i+1}}^{T_{\rho_i}})_{\rho_{i+1}/\rho_i}$; we can do this provided $\eps_2$ is chosen sufficiently small depending on $\omega,\sigma,N,$ and $\eps$. If Conclusion (A) holds for at least one $T_{\rho_i}\in\tubes_{\rho_i}$, then we say the index $i$ is of Type A. Otherwise we say that the index $i$ is of Type B.

Suppose there exists at least one index of Type A; call this index $i_0$. Let $\alpha_2 = \alpha_2(\omega,\sigma,\eps)$ be the output of Proposition \ref{refinedInductionOnScaleProp}. Then applying \eqref{averageDensityRhoIRhoIp1} to each index $i\neq i_0$; using Conclusion (A) of Proposition \ref{refinedInductionOnScaleProp} to estimate the contribution from index $i_0$; and using \eqref{productOfShadingEqn} to combine these estimates, we have
\begin{equation}
\begin{split}
\Big|\bigcup_{T\in\tubes}Y(T)\Big| & 
\gtrsim \prod_{\substack{i=1,\ldots,J-1 \\ i \neq i_0}}\Big(\delta^{\eps_1}\big(\frac{\rho_{i+1}}{\rho_i}\big)^{\omega}\big(\frac{\#\tubes_{\rho_{i+1}}}{\#\tubes_{\rho_i}}\big)|T_{\rho_{i+1}}\Big( \big(\frac{\#\tubes_{\rho_{i+1}}}{\#\tubes_{\rho_i}}\big)\big(\frac{|T_{\rho_{i+1}}}{|T_{\rho_i}|}\big)^{1/2} \Big)^{-\sigma}\Big)\cdot \\
& \qquad \cdot \Big(\delta^{\eps_1}\big(\frac{\rho_{i_0+1}}{\rho_{i_0}}\big)^{\omega-\alpha_2}\big(\frac{\#\tubes_{\rho_{i_0+1}}}{\#\tubes_{\rho_{i_0}}}\big)|T_{\rho_{i_0+1}}\Big( \big(\frac{\#\tubes_{\rho_{i_0+1}}}{\#\tubes_{\rho_{i_0}}}\big)\big(\frac{|T_{\rho_{i_0+1}}}{|T_{\rho_{i_0}}|}\big)^{1/2} \Big)^{-\sigma}\Big)\\
& \geq \delta^{\eps_1 J}\big(\frac{\rho_{i_0+1}}{\rho_{i_0}}\big)^{-\alpha_2}(\#\tubes)|T|\big((\#\tubes)|T|^{1/2}\big)^{-\sigma}\\
& \gtrsim \delta^{\eps_1 2^N -\alpha_2 (1-\omega/100)^{N}}(\#\tubes)|T|\big((\#\tubes)|T|^{1/2}\big)^{-\sigma}
\end{split}
\end{equation}
Thus Conclusion (A) of Lemma \ref{bigVolumeOrFineDivisions} holds, provided we select $\alpha \leq \frac{1}{2}\alpha_2 (1-\omega/100)^{N}$ and $\eps_1<2^{-N-1}\alpha_2 (1-\omega/100)^{N}$.

\medskip

\noindent {\bf Step 3.}
Suppose instead that every index is of Type B or Type C. We will show that Conclusion (B) of Lemma \ref{bigVolumeOrFineDivisions} holds. For each index $i$ of Type B, starting with the lowest, we will perform the following procedure. After dyadic pigeonholing, there is a number $s\in \big[ \big(\frac{\rho_{i+1}}{\rho_i}\big)^{1-\frac{\omega}{100}},\ \big(\frac{\rho_{i+1}}{\rho_i}\big)^{\frac{\omega}{100}}\big]$ and a set $\tubes_{\rho_i}'\subset\tubes_{\rho_i}$ with $\#\tubes_{\rho_i}'\gtrapprox_\delta \#\tubes_{\rho_i}$ such that the output ``$\rho$'' from Proposition \ref{refinedInductionOnScaleProp} is between $s$ and $2s$ for each $T_{\rho_i}\in\tubes_{\rho_i}'$. Define $\rho_{i+1/2}=s\rho_i$. Then
\begin{equation}
\frac{\rho_{i+1/2}}{\rho_i}\geq s \geq \big(\frac{\rho_{i+1}}{\rho_i}\big)^{1-\frac{\omega}{100}}\geq (2^{-N+1}\delta^{(1-\omega/100)^{N-1}})^{1-\frac{\omega}{100}}\geq 2^{-N}\delta^{(1-\omega/100)^N}.
\end{equation}
Similarly, we have
\begin{equation}
\frac{\rho_{i+1}}{\rho_{i+1/2}}=\frac{\rho_{i+1}}{\rho_{i}}\frac{\rho_i}{\rho_{i+1/2}}\geq \frac{\rho_{i+1}}{\rho_{i}}\frac{1}{2s}\geq \frac{1}{2}\big(\frac{\rho_{i+1}}{\rho_{i}}\big)^{1-\frac{\omega}{100}}\geq 2^{-N}\delta^{(1-\omega/100)^N}.
\end{equation}
This verifies Item (i) from Conclusion (B) of Lemma \ref{bigVolumeOrFineDivisions}.

 Let $\tubes_{\rho_{i+1/2}}$ be the union of the corresponding sets from  Proposition \ref{refinedInductionOnScaleProp}, and let 
\[
\tubes_{\rho_{i+1}}'=\bigcup_{T_{\rho_{i+1/2}}\in\tubes_{\rho_{i+1/2}}}\tubes_{\rho_{i+1}}[T_{\rho_{i+1/2}}].
\]
Then for each $T_{\rho_i}\in\tubes_{\rho_i}'$, we have that $\tubes_{\rho_{i+1/2}}^{T_{\rho_i}}$ factors $(\tubes'_{\rho_{i+1}})^{T_{\rho_i}}$ above and below with respect to the Katz-Tao Convex Wolff Axioms and Frostman Slab Wolff Axioms, both with error $\delta^{-\eps}$.  This verifies Item (iii) from Conclusion (B) of Lemma \ref{bigVolumeOrFineDivisions}.

To conclude the proof, we re-index our sets $\tubes_{\rho_i}'$ and $\tubes_{\rho_{i+1/2}}$ using consecutive integers $1,2,3\ldots$, and let $\tubes'=\tubes_{\rho_{J'}}$, where $J'$ is the final index after re-indexing. Note that $J'\leq 2J-1\leq 2^N$. After re-indexing, we still have $\rho_0=1$ and $\rho_{J'}=\delta$. This concludes the induction step.
\end{proof}


\begin{lem}\label{bigVolumeOrKatzTaoAllScales}
Let $\sigma\in(0,2/3]$ and $\omega,\eps>0$. Suppose that $\cE(\sigma,\omega)$ is true. Then there exists $\eta,\alpha,\kappa>0$ so that the following holds for all $\delta>0$. Let $(\tubes,Y)_\delta$ be $\delta^\eta$ dense with $\CKT(\tubes)\leq\delta^{-\eta}$ and $\FS(\tubes)\leq\delta^{-\eta}$. Then at least one of the following must hold.
\begin{itemize}
	\item[(A)] \itemizeEqnVSpacing
		\[
			\Big|\bigcup_{T\in\tubes}Y(T)\Big| \geq \kappa \delta^{\omega-\alpha}(\#\tubes)|T|\big((\#\tubes)|T|^{1/2}\big)^{-\sigma}.
		\]

	\item[(B)] There is a $\delta^\eps$ refinement $(\tubes',Y')_\delta$ of $(\tubes,Y)_\delta$ that satisfies the Katz-Tao Convex Wolff Axioms at every scale with error $\delta^{-\eps}$ (recall Definition \ref{KatzTaoConvexWolffAtEveryScaleDefn}). 
\end{itemize}
\end{lem}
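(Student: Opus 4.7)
The plan is to reduce the result to Lemma \ref{bigVolumeOrFineDivisions}, which already produces a multi-scale decomposition $\rho_0 = 1 > \rho_1 > \cdots > \rho_J = \delta$ together with covers $\tubes_{\rho_i}$ that enjoy the factoring properties with respect to both the Katz-Tao Convex Wolff and the Frostman Slab Wolff Axioms. The key quantitative features we will exploit are (i) the lower bound $\rho_{i+1}/\rho_i \ge \delta^{(1-\omega/100)^N}$ on consecutive ratios, which means that by taking $N$ large, consecutive scales can be made arbitrarily close on the logarithmic scale, and (ii) the hypothesis that $\tubes_{\rho_i}^{T_{\rho_{i-1}}}$ factors $\tubes_{\rho_{i+1}}^{T_{\rho_{i-1}}}$ from above with Katz-Tao Convex Wolff error at most $\delta^{-\eps_0}$, which will let us control $\CKT(\tubes_{\rho_i})$ by iterating Lemma \ref{KTWSubMultiplicativeTubes}.

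Given $\eps > 0$, I would first choose $N = N(\omega, \eps)$ to be the smallest integer satisfying $(1-\omega/100)^N \le \eps/2$, and then choose a small parameter $\eps_0 \le \eps / (C \cdot 2^N)$ for a suitable absolute constant $C$, together with $\eta \le \eps_0$. Applying Lemma \ref{bigVolumeOrFineDivisions} with this integer $N$ and $\eps_0$ in place of $\eps$, let $\alpha_1 = \alpha_1(\sigma, \omega, \eps_0, N)$ be the corresponding output. If Conclusion (A) of Lemma \ref{bigVolumeOrFineDivisions} holds then choosing $\alpha \le \alpha_1$ immediately gives Conclusion (A) of the present lemma.

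Assume therefore that Conclusion (B) of Lemma \ref{bigVolumeOrFineDivisions} holds, yielding a $\delta^{\eps_0}$-refinement $(\tubes', Y')_\delta$; scales $\delta = \rho_J < \cdots < \rho_0 = 1$ with $J \le 2^N$; and covers $\tubes_{\rho_i}$. I claim that $\tubes'$ satisfies the Katz-Tao Convex Wolff Axioms at every scale with error $\delta^{-\eps}$. Given $\rho_* \in [\delta, 1]$, the bound $\rho_{i+1}/\rho_i \ge \delta^{(1-\omega/100)^N} \ge \delta^{\eps/2}$ guarantees the existence of some $i$ with $\rho_i \in [\rho_*, \delta^{-\eps/2} \rho_*)$; this $\tubes_{\rho_i}$, after at most an $O(1)$ pigeonhole refinement to enforce the balanced partitioning cover property of Definition~\ref{defnOfCover}, will serve as the witness at scale $\rho_*$. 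The balanced and partitioning properties follow either directly from Item~(iii) of Conclusion~(B) of Lemma~\ref{bigVolumeOrFineDivisions} or from a harmless pigeonhole. For the Katz-Tao constant, I would iterate Lemma \ref{KTWSubMultiplicativeTubes} through the chain of covers $\tubes_{\rho_0} \succ \tubes_{\rho_1} \succ \cdots \succ \tubes_{\rho_i}$, obtaining
\begin{equation*}
\CKT(\tubes_{\rho_i}) \lesssim \CKT(\tubes_{\rho_0}) \prod_{j=1}^{i} \sup_{T_{\rho_{j-1}} \in \tubes_{\rho_{j-1}}} \CKT\!\left(\tubes_{\rho_j}^{T_{\rho_{j-1}}}\right) \lesssim \delta^{-\eps_0 \cdot 2^N} \le \delta^{-\eps/2},
\end{equation*}
where the base case $\CKT(\tubes_{\rho_0}) \lesssim 1$ is immediate because $\tubes_{\rho_0}$ is a single unit tube, and the middle inequality uses Item~(iii) of Conclusion~(B) from Lemma~\ref{bigVolumeOrFineDivisions}. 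Provided $\delta$ is small enough to absorb the implicit constants, this is at most $\delta^{-\eps}$.

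The main technical nuisance, rather than any deep obstacle, lies in verifying the balanced partitioning cover condition exactly as in Definition \ref{defnOfCover}. Lemma \ref{bigVolumeOrFineDivisions} only gives covers that factor things from above with small error, which is slightly weaker than the strict partitioning hypothesis; one must pigeonhole within each $\tubes_{\rho_i}$ and possibly thicken slightly (in the spirit of Step~1 of the proof of Proposition \ref{factorizationProp}) to convert a cover into a balanced partitioning cover without damaging the Katz-Tao constant by more than an acceptable factor. A secondary point is that one must handle the two extreme scales: at the finest scale $\rho_* = \delta$ the hypothesis $\CKT(\tubes) \le \delta^{-\eta}$ together with $\tubes' \subset \tubes$ certifies that $\tubes'$ itself serves as the cover, provided $\eps \ge \eta$; at the coarsest scale the cover $\tubes_{\rho_0}$ is a single unit tube, which trivially has Katz-Tao constant $O(1)$. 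Once these bookkeeping issues are in place, the lemma follows.
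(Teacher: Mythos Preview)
Your proposal is correct and follows essentially the same approach as the paper's proof: choose $N$ so that $(1-\omega/100)^N < \eps$, apply Lemma~\ref{bigVolumeOrFineDivisions} with a sufficiently small parameter $\eps_1 \sim \eps/2^N$, and then iterate Lemma~\ref{KTWSubMultiplicativeTubes} along the chain of covers to bound $\CKT(\tubes_{\rho_i}) \lesssim \delta^{-O(2^N \eps_1)} \le \delta^{-\eps}$. The paper handles the balanced partitioning cover requirement with the same ``harmless refinement'' you describe, noting that the composite balance factor is at most $2^J \le 2^{2^N}$, a constant absorbed into $\delta^{-\eps}$ for small $\delta$.
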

\begin{proof}
Let $\eps_1>0$ be a small quantity to be chosen below. Select $N$ sufficiently large so that $(1-\omega/100)^N<\eps$. Apply Lemma \ref{bigVolumeOrFineDivisions} to $(\tubes,Y)_\delta$ with this choice of $N$, with $\eps_1$ in place of $\eps$, and with $\omega,\sigma$ as above. If Conclusion (A) of Lemma \ref{bigVolumeOrFineDivisions} holds, then Conclusion (A) of Lemma \ref{bigVolumeOrKatzTaoAllScales} holds and we are done. 

Suppose instead that Conclusion (B) of Lemma \ref{bigVolumeOrFineDivisions} holds. After a harmless refinement we can suppose that each set $\tubes_{\rho_i}$ is a balanced partitioning cover of $\tubes_{\rho_{i+1}}$, and hence $\tubes_{\rho_i}$ is a $2^J\leq\delta^{-\eps}$ balanced partitioning cover of $\tubes'$.

We claim that if $\eps_1$ is chosen sufficiently small (depending on $N$ and $\eps$), then the output $(\tubes',Y')_\delta$ of Lemma \ref{bigVolumeOrFineDivisions} satisfies the Katz-Tao Convex Wolff Axioms at every scale with error $\delta^{-\eps}$. We verify this as follows. Since $(1-\omega/100)^N<\eps$, for each $\rho\in[\delta,1]$ the interval $[\rho,\delta^{-\eps}\rho]$ contains at least one $\rho_i$. We have already noted that $\tubes_{\rho_i}$ satisfies Item (i) from Definition \ref{KatzTaoConvexWolffAtEveryScaleDefn}. It remains to verify Item (ii). Applying Lemma \ref{KTWSubMultiplicativeTubes}, we have that for each index $i$,
\[
\CKT(\tubes_i)\lesssim \prod_{j=i+1}^J \delta^{-2\eps_1}\lesssim \delta^{-2^{N+1}\eps_1}.
\]
We will select $\eps_1$ sufficiently small so that $\CKT(\tubes_i)\leq \delta^{-\eps}$ for all sufficiently small $\delta>0$. If $\delta>0$ is not sufficiently small, then Conclusion (A) always holds, provided we select $\kappa>0$ sufficiently small.
\end{proof}


\begin{proof}[Proof of Proposition \ref{improvingProp}]
Fix $\omega,\sigma>0$ and suppose that $\cE(\sigma,\omega)$ is true. Let $\eta_1$ be the output of Theorem \ref{katzTaoEveryScaleStickyKakeyaThm} with $\omega/2$ in place of $\eps$. Let $\alpha_2,\eta_2,c_2$ be the output of Lemma \ref{bigVolumeOrKatzTaoAllScales} with $\eta_1$ in place of $\eps$. Then for every pair $(\tubes,Y)_\delta$ that is $\delta^{\eta_2}$ dense and satisfies $\CKT(\tubes)\leq\delta^{-\eta_2}$, $\FS(\tubes)\leq\delta^{-\eta_2}$, we have 
\begin{equation}
\Big|\bigcup_{T\in\tubes}Y(T)\Big| \gtrsim \delta^{\omega - \min(\alpha_2,\omega/2)}(\#\tubes)|T|\Big((\#\tubes)|T|\Big)^{-\sigma}.
\end{equation}
In particular, we have that $\cD(\sigma,\omega-g(\sigma,\omega))$ is true, where $g(\sigma,\omega) = \min(\alpha_2,\omega/2)$. 
\end{proof}


\section{Tube Doubling}\label{extensionAndKeletiSec}
In this section we will prove Theorem \ref{tubeDoublingTheoremR3}. We will prove the following slightly stronger statement. 
\begin{tubeDoublingTheoremR3Again}
For all $\eps>0$, there exists $\eta>0$ so that the following is true for all $\delta>0$ sufficiently small. Let $\tubes$ be a set of $\delta$ tubes in $\RR^3$. For each $T\in\tubes$, let $Y(T)\subset T$ with $|Y(T)|\geq\delta^\eta|T|$. Let $R\geq 1$ and for each $T\in\tubes$, let $ T_R$ denote the $R$-fold dilate of $T$. Then
\begin{equation}\label{lineSegmentExtensionInsideThm}
\Big|\bigcup_{T\in\tubes} T_R\Big| \leq \delta^{-\eps} R^3\Big|\bigcup_{T\in\tubes}Y(T)\Big|.
\end{equation}
\end{tubeDoublingTheoremR3Again}
\noindent Theorem \ref{tubeDoublingTheoremR3} is the special case where $Y(T) = T$ and $R = 2$.  
\begin{proof}
{\bf Step 1.} 
First, we may suppose that the tubes in $\tubes$ are contained in $B(0,1)$ (indeed, we can cover $\RR^3$ by a set of boundedly overlapping unit balls, so that each $T\in\tubes$ is contained in at least one ball). Second, we may suppose that the tubes in $\tubes$ are essentially distinct, and in particular $\#\tubes\lesssim \delta^{-4}$ (indeed, we can replace $\tubes$ by a maximal, essentially distinct sub-collection, and this affects the RHS of Inequality \eqref{lineSegmentExtensionInsideThm} by a constant factor).

Fix $\eps>0$.  For each $i=1,2,\ldots,$ we will define sets $\tubes_i,\tubes_i'$, and $\mathcal{W}_i$ as follows. We begin by defining $\tubes_0=\tubes$. Define $\tubes_i'$ and $\mathcal{W}_i$ to be the output of Proposition \ref{factoringConvexSetsProp} applied to $\tubes_{i-1}$. Define $\tubes_i = \tubes_{i-1}\backslash\tubes_i'$. We continue this process until $\tubes_N$ is empty, at which point we halt. Observe that $\tubes = \bigsqcup_{i=1}^N \tubes_i'$. For each index $i$ we have 
\[
\#\tubes_i' \geq 100^{-3} e^{-100 \sqrt{\log(\delta^{-5})}}(\#\tubes_{i-1}),
\]
As a consequence, if $j-i \geq 2\cdot 100^{3} e^{100 \sqrt{\log(\delta^{-5})}},$ then $\#\tubes_j\leq \frac{1}{2}(\#\tubes_i)$. Since $\#\tubes_0\lesssim\delta^{-4}$, we conclude that the process described above must halt after $\lesssim (\log 1/\delta)( 2\cdot 100^{3} e^{100 \sqrt{\log(\delta^{-5})}}) \lessapprox_\delta 1$ steps, i.e.~$N \lessapprox_\delta 1$.

\medskip

\noindent {\bf Step 2.} 
Fix an index $j$. To simplify notation, we write $\tubes' = \tubes'_j$ and $\mathcal{W}' = \mathcal{W}_j$.

We have that $\mathcal{W}'$ factors $\tubes'$ from above with respect to the Katz-Tao convex Wolff Axioms and from below with respect to the Frostman Convex Wolff Axioms, both with error $K\leq 100^3 e^{100 \sqrt{\log(\delta^{-5})}}$. Recall that the prisms in $\mathcal{W}'$ all have the same dimensions --- denote these dimensions by $a\times b\times 1$. 

Let $\eps_1>0$ be a small quantity to be chosen below. Our next task is to estimate the volume of the tubes inside each prism from $\mathcal{W}'$, i.e.~we wish to obtain a bound of the form
\begin{equation}\label{desiredBoundInsideW}
\Big|\bigcup_{T\in\tubes'[W]}Y(T)\Big| \geq\kappa_0\delta^{\eps_1}|W|,
\end{equation}
for some $\kappa_0 = \kappa_0(\eps_1)>0$. To do this, we will estimate the volume of the set $\bigcup_{\tubes'[W]}Y^W(T)^W$; the latter is a union of prisms of dimensions $\delta/b\times\delta/a\times 1$. By Theorem \ref{cDAndcEAreTrue} we have that Assertion $\cE(\eps_1/10,\eps_1/10)$ is true, and thus by Proposition \ref{EiffF}, we have that Assertion $\cF(\eps_1/10,\eps_1/10)$ (recall Definition \ref{defnF}) is true. The latter assertion is helpful, since it allows us to bound the volume of unions of prisms with three distinct dimensions (i.e. prisms that are not tubes).

For each $W\in\mathcal{W}'$, we will apply the estimate $\cF(\eps_1/10,\eps_1/10)$ to the pair $((\tubes')^W,Y^W)_{\delta/b\times \delta/a\times 1}$.  Recall that the estimate $\cF(\eps_1/10,\eps_1/10)$ contains an additional term ``$D$.'' However, as discussed in Remark \ref{discussionOfDRemark}, Situation 3, we have $D\sim 1$ in this setting. If $\eta>0$ is selected sufficiently small depending on $\eps_1$, we conclude that for each $W\in\mathcal{W}'$, we have
\begin{equation}\label{rescaledBdInsideW}
\begin{split}
\Big|\bigcup_{T\in\tubes'[W]}Y^W(T)^W\Big| &\geq \kappa_0 \delta^{\eps_1/10} m^{-1}(\#\tubes'[W])|T^W|\Big(m^{-3/2}\ell (\#\tubes'[W])|T^W|^{1/2}\Big)^{-\eps_1/10}\\
&\gtrapprox_\delta \kappa_0 \delta^{\eps_1/2},
\end{split}
\end{equation}
where $m = \CKT( (\tubes')^W)\lessapprox_\delta (\#\tubes'[W])\frac{|T|}{|W|} = (\#\tubes'[W])|T^W|$, and $\ell = \FS((\tubes')^W)$. For the second inequality, the term $\Big(m^{-3/2}\ell (\#\tubes'[W])|T^W|^{1/2}\Big)$ can be trivially bounded by $\delta^{-4}$. \eqref{desiredBoundInsideW} now follows from \eqref{rescaledBdInsideW} by rescaling and adjusting the constant $\kappa_0$ if necessary.

For each $W\in\mathcal{W}'$, define $Y(W) = \bigcup_{T\in\tubes'[W]}T$; we have $Y(W)\subset W$ and $|Y(W)|\geq\kappa_0\delta^{\eps_1}$ for each $W\in\mathcal{W}'$; we will suppose that $\delta>0$ is chosen sufficiently small so that this quantity is $\geq\delta^{2\eps_1}$.

If $\eps_1>0$ is chosen sufficiently small depending on $\eps$, then we can apply $\cF(\eps/10,\eps/10)$ to $(\mathcal{W}',Y)_{a\times b\times 1}$ (again, the term $D$ has size at most $\CKT(\mathcal{W}')\lessapprox_\delta 1$, as discussed in Remark \ref{discussionOfDRemark}, Situation 2) to conclude that
\begin{equation}\label{boundScaleW}
\Big|\bigcup_{W\in\mathcal{W}'} Y(W)\Big| \geq \kappa_1\delta^{\eps/2}(\#\mathcal{W}')|W|.
\end{equation}

Combining \eqref{desiredBoundInsideW} and \eqref{boundScaleW} and returning to our original notation, we have
\begin{equation}\label{boundForOneJ}
\Big|\bigcup_{T\in\tubes'_j}Y(T)\Big|\geq \kappa_1\delta^{\eps/2} \sum_{W\in\mathcal{W}_j}|W|.
\end{equation}

\medskip

\noindent {\bf Step 3.}
We will now combine the estimates \eqref{boundForOneJ} for different values of $j$. Let $\mathcal{W} = \bigcup_{j=1}^N \mathcal{W}_j$. Then $\mathcal{W}$ covers $\tubes$, and
\begin{equation}\label{lowerBoundTubes}
\Big|\bigcup_{T\in\tubes}Y(T)\Big| \geq N^{-1}\max_{1\leq j\leq N} \Big|\bigcup_{T\in\tubes'_j}T\Big| \gtrapprox_\delta  \kappa_1\delta^{\eps/2} \sum_{W\in\mathcal{W}}|W|.
\end{equation}

Finally, since $\mathcal{W}$ covers $T$, we have that the set of $R$-fold dilates $\{RW\colon W\in\mathcal{W}\}$ covers $\{T_R\colon T\in\tubes\}$, and thus
\begin{equation}\label{upperBdTildeTubes}
\Big|\bigcup_{T\in\tubes} T_R\Big| \leq \Big|\bigcup_{W\in\mathcal{W}}RW\Big|\leq\sum_{W\in\mathcal{W}}|RW| = R^3 \sum_{W\in\mathcal{W}}|W|.
\end{equation}
Inequality \eqref{lineSegmentExtensionInsideThm} now follows by comparing \eqref{lowerBoundTubes} and \eqref{upperBdTildeTubes}. 
\end{proof}


\appendix

\section{A grains decomposition for tubes in $\RR^3$}\label{guthGrainsDecompAppendix}
Our goal in this section is to prove Proposition \ref{GuthGrainsProp}. For the reader's convenience, we reproduce it here. In what follows, recall that broadness is defined in Definition \ref{broadnessDefnShading}. This definition assumes a small parameter $\beta>0,$ which was defined in Section \ref{broadnessSection}. In Section \ref{broadnessSection}, an explicit value of $\beta$ was chosen, which depends on $\omega$ and $\zeta$. In the results below, however, we will allow $\beta$ to be any positive number (though in $\mathbb{R}^n$, we will always have $\beta\in (0, n-1]$), and subsequent quantities (such as $\eta$ and $K_1$) may depend on $\beta$. In particular, the results proved in this section are applicable in Section \ref{twoScaleGrainsSec}.

\begin{GuthGrainsPropRepeat}
Let $\eps>0$. Then there exists $\eta,\kappa>0$ so that the following holds for all $\delta>0$. Let $(\tubes,Y)_\delta$ be $\delta^\eta$ dense and be broad with error $\delta^{-\eta}$. Suppose that the tubes in $\tubes$ are contained in a common 1 tube $T_1$.

Then there is a $\delta^\eps$ refinement $(\tubes',Y')$ that is $\delta^\eps$ dense and is broad with error $\leq\kappa^{-1}\delta^{-\eps}$, and a number $\mu\geq 1$ so that $\mu\sum \#\tubes'_{Y'}(x)$ for each $x\in \bigcup_{\tubes'} Y'(T)$. In addition, there is a number $c\geq  \kappa \mu  \delta^\eps (\delta\#\tubes)^{-1}$ and a pair $(\mathcal{G},Y)_{\delta\times c\times c}$, so that $(\mathcal{G},Y)_{\delta\times c\times c}$ is a robustly $\delta^{\eps}$-dense two-scale grains decomposition of $(\tubes',Y')_\delta$ wrt $\{T_1\}$ (the latter is a set consisting of a single 1 tube).
\end{GuthGrainsPropRepeat}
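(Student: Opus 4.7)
Since all tubes in $\tubes$ lie inside the common $1$-tube $T_1$, after applying the affine transformation $\phi_{T_1}$ we may pretend $T_1$ is the unit ball and reduce to producing a grains decomposition of $(\tubes,Y)_\delta$ by flat boxes of dimensions $\delta\times c\times c$. The overall strategy is Guth's polynomial method from \cite{Gut14}, adapted so as to preserve the pointwise broadness of the shading. Begin with a dyadic pigeonholing that arranges the multiplicity $\#\tubes_Y(x)$ to be comparable to a constant $\mu$ on $\bigcup Y(T)$, at a cost of only a $\approx_\delta 1$ factor; then $|\bigcup Y(T)|\sim \mu^{-1}\delta\cdot(\delta \#\tubes)\cdot\delta$. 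Set the target grain length $c=\kappa\mu\delta^\eps(\delta\#\tubes)^{-1}$, so the reciprocal degree $D=1/c$ is the natural partitioning parameter.

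Next, iterate polynomial partitioning at degree $D$ applied to $\bigcup Y(T)$: at each step either a cellular case puts the surviving mass inside $D^3$ balls of size $c$ (which we recurse into) or an algebraic case concentrates the mass into the $\delta$-neighbourhood of a real variety $Z$ of dimension two and degree $\lesssim D^{C}$. After $O(1/\eps)$ rounds all cellular cases have been flushed out and, at the cost of a $\delta^\eps$ refinement of $(\tubes,Y)_\delta$, we may assume the entire remaining shading lies in $N_\delta(Z)$ for such a variety. Now classify each pair $(T,x)$ with $x\in Y(T)\cap N_\delta(Z)$ as \emph{tangent} or \emph{transverse} to $Z$: by Wongkew's theorem $|N_\delta(Z)|\lesssim D\delta$, and by the Bennett--Carbery--Tao multilinear Kakeya inequality \cite{BCT06} (used via the broadness hypothesis) the transverse interactions account for at most a $\delta^{\eps}$ fraction of $\sum_T|Y(T)|$, so we may discard them in another $\delta^\eps$ refinement. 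Stratifying $Z=Z_{\operatorname{sm}}\sqcup Z_{\operatorname{sing}}$ and iterating into $Z_{\operatorname{sing}}$ a bounded number of times reduces us to the case where every surviving $x\in Y(T)$ lies within $\delta$ of a smooth point $z(x)\in Z$ of reach $\gtrsim c$, and $\angle(\dir(T),T_{z(x)}Z)\lesssim \delta/c$.

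The grains are now defined geometrically: cover $Z_{\operatorname{sm}}$ by finitely overlapping caps $\Sigma$ of diameter $c$, and for each cap set $G_\Sigma$ to be the $\delta\times c\times c$ box whose central slab is the affine tangent plane to $\Sigma$ at its centre, thickened by $\delta$. Let $\mathcal{G}=\{G_\Sigma\}$, with the shading $Y(G_\Sigma)=G_\Sigma\cap\bigcup\{Y(T)\colon (T,\cdot)\text{ tangent through }\Sigma\}$; a measurable selection disjointifies these shadings across caps, giving Items (ii) and (iii) of Definition \ref{twoScaleGrainsDecomp}. Item (iv), the long-end exit condition, follows because a $\delta$-tube tangent at $\Sigma$ makes angle $\lesssim\delta/c$ with the long face of $G_\Sigma$, so on entering the 2-fold dilate $2G_\Sigma$ it traverses essentially the full length $c$ before exiting through a short face. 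Item (i) is automatic because $T_1$ is the only $\rho$-tube in the statement. The induced refinement $(\tubes',Y')_\delta$ inherits $\delta^\eps$-density and remains broad with error $\lesssim \delta^{-\eps}$ since every step above was a pigeonhole that discards only a $\delta^\eps$-fraction of the broad direction set through each surviving point, preserving the broad structure up to an $O(\eps)$ weakening of the exponent. Finally $c=1/D=\kappa\mu\delta^\eps(\delta\#\tubes)^{-1}$ by construction, completing the construction.

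\paragraph{Main obstacle.} The delicate point is compatibility of the polynomial method, whose outputs are volumetric and lossy at the level of multiplicities, with the pointwise broadness hypothesis and the sharp requirement that $(\tubes',Y')_\delta$ be broad with error only $\leq\kappa^{-1}\delta^{-\eps}$. Guth's argument in \cite{Gut14} does not track broadness through the cellular recursion, and naively one could drop so many tubes at a point that broadness collapses there. The fix is to interleave the dyadic regularisation of $\#\tubes_Y(x)$ with the polynomial partitioning and the tangent/transverse split, showing inductively that at each stage the set of discarded directions through any surviving point is a $\delta^\eps$-fraction of a broad family (by Lemma \ref{broadnessUnion} the complement is then broad with a slightly worse error). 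Coordinating this with the long-end exit condition for the grains, and with the requirement that distinct grains have disjoint shadings, is the most technically involved step of the argument.
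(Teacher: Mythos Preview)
Your approach has a genuine gap at the transverse/tangent dichotomy. You assert that ``by the Bennett--Carbery--Tao multilinear Kakeya inequality (used via the broadness hypothesis) the transverse interactions account for at most a $\delta^\eps$ fraction of $\sum_T|Y(T)|$,'' but this does not follow. The broadness hypothesis in Definition~\ref{broadnessDefn} is a bilinear-type non-concentration condition (directions avoid clustering near a single vector), not the trilinear spanning condition that feeds into BCT. More concretely, a tube transverse to a degree-$D$ surface $Z$ at angle $\gtrsim \delta/c$ can meet $N_\delta(Z)$ in up to $D$ intervals each of length up to $c=1/D$, giving transverse mass up to order~$1$ per tube---comparable to, not dominated by, the tangent mass. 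In Guth's original argument \cite{Gut14} this step requires planiness as a \emph{prior} input, established separately via trilinear Kakeya for genuine Kakeya sets; your hypotheses do not supply it, and the ``fix'' you describe (interleaving regularisation with partitioning) restates the difficulty without resolving it.

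The paper's proof takes a structurally different route that avoids the tangent/transverse issue altogether. First (Lemma~\ref{grainsDecompBroad}), iterated \emph{bounded}-degree polynomial partitioning decomposes $\bigcup Y(T)$ into $N$ disjoint pieces $E_i$, each of diameter $\sim r\sim N^{-1/3}$ and volume $\lesssim \delta r^2$ by Wongkew's theorem; the bound $N\lesssim(\delta\mu^{-1}\#\tubes)^3$ is obtained by a direct $L^2$ incidence count (pigeonhole a single cell $E_i$, then bound pairs of broad tubes through it by $|T\cap T'|\lesssim\delta^3/\angle(T,T')$), with no reference to tangency. Second (Lemma~\ref{EiContainedInPlane} and Corollary~\ref{LemEiContainedInPlaneRescaled}), each small-volume piece $E_i$ is shown to be mostly planar by a purely combinatorial triangle argument: broadness forces many triples of tubes through $E_i$ that pairwise intersect at distinct points, and three such lines are nearly coplanar. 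No multilinear Kakeya, no global variety, and broadness is preserved because the whole construction involves only $O(1)$ refinements, each by a $\delta^{O(\eps_1)}$ factor.
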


To prove Proposition \ref{GuthGrainsProp}, we shall repeatedly apply the Guth-Katz polynomial partitioning theorem \cite{GuKa15} to decompose a (large portion of) $\bigcup_{\tubes} Y(T)$ into a union of thin neighbourhoods of semi-algebraic sets. This idea was first used by Guth \cite{Gut16b, Gut18} in the context of the Fourier restriction problem, and later by Guth and the second author \cite{GuZa18} in the context of the Kakeya problem. 

Before stating this result precisely, we will need several definitions. First, recall that a \emph{semi-algebraic set} is a set $S\subset\RR^n$ that is defined by a Boolean combination of polynomial equalities and inequalities. 
\begin{defn}
Let $\rho>0$. A \emph{semi-algebraic grain} of thickness $\rho$ is a set of the form
\[
G = N_{\rho}(S),
\]
where $S\subset\RR^3$ is a semi-algebraic set with $\dim(S)\leq 2$. If there exists a non-zero polynomial $Q\colon\RR^3\to\RR$ with $S\subset Z(Q)$, then we say that $G$ is defined by a polynomial of degree at most $\deg(Q)$.
\end{defn}
Next, we recall the following result of Wongkew \cite{Won93}, which allows us to bound the volume of a semi-algebraic grain.
\begin{thm}\label{WonkewThm}
Let $Z=Z(Q)\subset\RR^n$ be an algebraic variety of dimension $d$. Let $B\subset\RR^n$ be a ball of radius $r$. Then there exists a constant $K$ depending only on $n$ so that for all $0<\rho\leq r$,
\[
|N_{\rho}(Z\cap B)|\leq K (\deg Q)^n \rho^{n-d}r^d.
\]
\end{thm}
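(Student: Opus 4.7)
The plan is to combine a Bezout-type estimate for the $d$-dimensional Hausdorff measure of $Z\cap B$ with a standard covering inequality that converts Hausdorff measure into tubular neighborhood volume. Throughout I write $K_1,K_2,\ldots$ for constants depending only on $n$.

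\emph{Step 1: A Hausdorff measure bound via Crofton.} First I would bound $\mathcal{H}^d(Z\cap B)$. Decompose $Z=Z(Q)$ into its irreducible components $Z=Z_1\cup\cdots\cup Z_k$ and restrict attention to those components of dimension exactly $d$ (lower-dimensional components contribute $0$ to $\mathcal{H}^d$). Each such component is a smooth $d$-dimensional variety off a proper subvariety, so $Z\cap B$ is $d$-rectifiable. By the classical Crofton formula on $\mathbb{R}^n$, there is a constant $c_{n,d}$ such that for any $d$-rectifiable set $E$,
\[
\mathcal{H}^d(E)=c_{n,d}\int_{A(n,n-d)}\#(E\cap\Pi)\,d\mu(\Pi),
\]
where $\mu$ is the rigid-motion-invariant measure on the affine Grassmannian $A(n,n-d)$ of $(n-d)$-planes in $\mathbb{R}^n$. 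For a generic $\Pi\in A(n,n-d)$, the intersection $Z\cap\Pi$ is zero-dimensional, and by Bezout's theorem applied in $\mathbb{CP}^n$,
\[
\#(Z\cap\Pi)\le \sum_{i:\dim Z_i=d}\deg Z_i\le \deg Q.
\]
(The set of $\Pi$ for which $Z\cap\Pi$ is positive-dimensional is contained in a proper subvariety of $A(n,n-d)$, hence has $\mu$-measure zero.) Since $\mu\bigl(\{\Pi:\Pi\cap B\neq\emptyset\}\bigr)\le K_1 r^d$, the Crofton identity yields
\[
\mathcal{H}^d(Z\cap B)\le K_2\,\deg(Q)\, r^d.
\]

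\emph{Step 2: From Hausdorff measure to tubular neighborhood volume.} Next I would apply a standard covering inequality: any $d$-rectifiable set $E\subset\mathbb{R}^n$ with $\mathcal{H}^d(E)\le M$ can be covered by at most $K_3 M/\rho^d$ Euclidean balls of radius $\rho$. (This follows from a Vitali-type $5r$-covering argument together with the density estimate $\mathcal{H}^d(E\cap B(x,\rho))\ge c_d\rho^d$ at $\mathcal{H}^d$-a.e.~point of a $d$-rectifiable set.) Consequently
\[
|N_\rho(E)|\le K_4\, M\,\rho^{n-d}.
\]
Applying this with $E=Z\cap B(x_0,2r)$ (which contains $N_\rho(Z\cap B)$ since $\rho\le r$) and using Step 1 with $2r$ in place of $r$ gives
\[
|N_\rho(Z\cap B)|\le K_4\cdot K_2\,\deg(Q)\,(2r)^d\,\rho^{n-d}\le K\,(\deg Q)^n\,\rho^{n-d}\,r^d.
\]
(The loose replacement of $\deg Q$ by $(\deg Q)^n$ is harmless and matches the form of the statement.)

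\emph{Main obstacle.} The genuine work is in Step 1, specifically the generic-position assertion: we need that the $\mu$-a.e.~intersection $Z\cap\Pi$ is both finite and of cardinality at most $\deg Q$. Finiteness follows because an irreducible $d$-dimensional real variety meets a generic $(n-d)$-plane in a finite set (the "bad" planes lie in a proper algebraic subset of the Grassmannian, of lower dimension). The cardinality bound is the real analogue of Bezout's theorem and requires passing to $\mathbb{CP}^n$, where the intersection of a degree-$D$ variety with a generic linear subspace of complementary dimension has exactly $D$ points counted with multiplicity, then noting that the number of \emph{real} points is at most the number of complex points. Care is also needed when $Q$ has repeated or lower-dimensional factors; these are handled by the decomposition into irreducible components and the inequality $\sum_i\deg Z_i\le\deg Q$ used above.
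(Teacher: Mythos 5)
The paper itself does not prove this statement; it is quoted verbatim from Wongkew \cite{Won93}, so your sketch has to be measured against the known proofs, and as written it has two genuine gaps. The first is the Bezout step. Since $Z(Q)$ is the real zero set of a \emph{single} polynomial, its complexification is a hypersurface of complex dimension $n-1$; when $d\le n-2$ a generic complex $(n-d)$-plane meets that hypersurface in a positive-dimensional set, so Bezout gives no bound on the number of real intersection points, and the inequality $\sum_{\dim Z_i=d}\deg Z_i\le\deg Q$ is false for real varieties. Concretely, in $\mathbb{R}^3$ take $Q(x,y,z)=\prod_{i=1}^m(x-i)^2+\prod_{j=1}^m(y-j)^2$, of degree $2m$: its zero set is a union of $m^2$ parallel lines, a generic $2$-plane meets it in $m^2\gg\deg Q$ points, and $\mathcal{H}^1(Z\cap B_r)\sim m^2 r$, so your Step 1 conclusion $\mathcal{H}^d(Z\cap B)\lesssim \deg(Q)\,r^d$ is simply wrong. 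The correct input at this point is an Oleinik--Petrovsky--Milnor--Thom type bound on the number of connected components of $Z\cap\Pi$, which is $O_n\big((\deg Q)^{\,n-d}\big)$; this, not Bezout, is why the theorem carries $(\deg Q)^n$ rather than $\deg Q$.

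The second gap is Step 2. The almost-everywhere lower density of a $d$-rectifiable set is an asymptotic statement as the radius tends to $0$; at a fixed scale $\rho$ there is no estimate $\mathcal{H}^d(E\cap B(x,\rho))\gtrsim\rho^d$, so the covering claim ``$E$ is covered by $\lesssim \mathcal{H}^d(E)/\rho^d$ balls of radius $\rho$'' fails, e.g.\ for $E$ a union of many $\rho$-separated pieces of diameter much smaller than $\rho$. For real varieties this is not a removable technicality: a variety of dimension $d$ can contain isolated points or lower-dimensional strata (the curve $y^2=x^2(x-1)$ has an isolated real point), and these carry zero $\mathcal{H}^d$-measure while each contributes volume $\sim\rho^n$ to $N_\rho(Z\cap B)$; similarly $N$ tiny $d$-dimensional components of diameter $\ll\rho$ have negligible $\mathcal{H}^d$ but neighborhood volume $\sim N\rho^n$, which exceeds your bound $\mathcal{H}^d\cdot\rho^{n-d}$. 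The uniform lower density that would rescue this step holds for complex analytic sets (Lelong monotonicity) but not for real ones. A correct argument bounds directly the number of $\rho$-cubes meeting $Z\cap B$ by $O_n\big((\deg Q)^n(r/\rho)^d\big)$ --- for instance by slicing with coordinate $(n-d)$-planes and applying the Milnor--Thom bound, or by following Wongkew's integral-geometric proof --- and this automatically accounts for the low-dimensional and small components that the Hausdorff-measure route misses.
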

\begin{cor}\label{wonkewCor}
Let $G\subset\RR^3$ be a semi-algebraic grain of thickness $\rho$ that is defined by a polynomial of degree at most $D$. Then
\[
|G| \lesssim D^3\ \rho\ \operatorname{diam}(G)^2.
\]
\end{cor}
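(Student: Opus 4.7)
The plan is to deduce this directly from Wongkew's theorem (Theorem \ref{WonkewThm}) applied with $n=3$ and $d = \dim Z(Q) \leq 2$, together with a localization step to convert the global neighborhood $N_\rho(S)$ into the localized form $N_\rho(Z(Q) \cap B)$ on which Wongkew's theorem is stated.

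First I would dispense with the trivial case $S = \emptyset$, in which $G = \emptyset$ and there is nothing to prove. Otherwise, fix any $x_0 \in S$ and set $r = \operatorname{diam}(G)$ and $B = B(x_0, r)$. Since $x_0 \in S$ and $\operatorname{diam}(S) \leq \operatorname{diam}(G) = r$, we have $S \subset B$, and since $S \subset Z(Q)$ by hypothesis, we get $S \subset Z(Q) \cap B$, hence $N_\rho(S) \subset N_\rho(Z(Q) \cap B)$. So $|G|$ is bounded by $|N_\rho(Z(Q) \cap B)|$.

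Next I need to check the hypothesis $\rho \leq r$ of Wongkew's theorem. This is automatic in our setting: since $x_0 \in S$, the ball $B(x_0, \rho)$ is contained in $G = N_\rho(S)$, so $r = \operatorname{diam}(G) \geq 2\rho$. Now Wongkew's theorem gives
\[
|N_\rho(Z(Q) \cap B)| \leq K (\deg Q)^3 \rho^{3-d} r^{d} \leq K D^3 \rho^{3-d} r^d,
\]
where $d = \dim Z(Q) \in \{0,1,2\}$. Using $\rho \leq r$ and $2 - d \geq 0$, I bound $\rho^{3-d} r^d = \rho \cdot (\rho/r)^{2-d}\, r^2 \leq \rho r^2$, which gives $|G| \lesssim D^3 \rho \operatorname{diam}(G)^2$, as required. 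There is no real obstacle here; the only step that is not completely automatic is noting $\operatorname{diam}(G) \geq 2\rho$, which is what allows a uniform application of Wongkew across all admissible dimensions $d$.
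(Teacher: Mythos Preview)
Your proof is correct and is exactly the intended deduction from Wongkew's theorem; the paper states the corollary without proof, and your localization via a ball of radius $\operatorname{diam}(G)$ together with the observation $\operatorname{diam}(G)\geq 2\rho$ is precisely what is needed to fill in the details.
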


Next, we will need the following grains decomposition result, which is similar to Proposition 3.2 from \cite{GuZa18}.
\begin{prop}\label{grainsDecompProp}
Let $\eps>0$. Then there exists $K_\eps>0$ so that the following holds. Let $E\subset B(0,1)\subset\RR^3$ be open. Then there exists 
\begin{itemize} 
	\item A set $\mathcal{G}$ of semi-algebraic grains of thickness $100\delta$, each of which has diameter $\leq K_\eps (\#\mathcal{G})^{-1/3}$ and is defined by a polynomial of degree at most $K_\eps$.

	\item For each $G \in\mathcal{G}$, a set $E_G \subset E\cap G$. The sets $\{  E_G \colon G\in\mathcal{G}\}$ are disjoint.
\end{itemize}
Furthermore, we have
	\[
		\sum_{G\in\mathcal{G}}|E_G| \geq K_\eps^{-1} \delta^\eps |E|,
	\]
	and for each $G\in\mathcal{G}$ we have
	\[
		K_\eps^{-1} \delta^\eps\frac{|E|}{\#\mathcal{G}}\leq |E_G| \leq K_\eps \delta^{-\eps}\frac{|E|}{\#\mathcal{G}}.
	\]
Finally, for every $\delta$ tube $T$, we have
\begin{equation}\label{countNumberOfEGintersectT}
\#\{G\in\mathcal{G}\colon T\cap E_G\neq\emptyset\} \leq K_\eps \delta^{-\eps}(\#\mathcal{G})^{1/3}.
\end{equation}
\end{prop}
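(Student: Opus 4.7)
\textbf{Proof plan for Proposition \ref{grainsDecompProp}.}

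The plan is to proceed by iterated Guth--Katz polynomial partitioning, in the style of arguments by Guth and by Guth--Zahl (cf.~\cite{Gut16b, GuZa18}). Fix a degree parameter $D = D(\eps)$ depending only on $\eps$, and set $K := \lceil \log(1/\delta) / \log D \rceil$, so that $D^{-K} \leq 100\delta$. Starting from the single pair $(B(0,1), E)$, maintain at each stage a list of pairs $(\Omega, F)$ with $\Omega \subset \RR^3$ open, $F \subset E \cap \Omega$, and the $F$'s disjoint across pairs. At each stage, for every pair $(\Omega, F)$ in the current list, invoke Guth--Katz to obtain a polynomial $P_\Omega$ of degree $\leq D$ whose zero set partitions $\Omega$ into $\lesssim D^3$ cells $\{O_i\}$, each satisfying $|F \cap O_i| \lesssim D^{-3}|F|$. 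Declare a candidate grain $G_\Omega := N_{100\delta}(Z(P_\Omega) \cap \Omega^+)$, where $\Omega^+$ is a mild enlargement of $\Omega$ (so that $G_\Omega$ is a semi-algebraic grain of diameter $\lesssim \diam(\Omega)$ defined by a polynomial of degree $D$), assign $E_{G_\Omega} := F \cap N_{100\delta}(Z(P_\Omega))$, and replace $(\Omega, F)$ in the list by the cellular descendants $\{(O_i, (F \cap O_i) \setminus N_{100\delta}(Z(P_\Omega)))\}_i$. After $K$ iterations, every surviving cellular pair $(O, F)$ has $\diam(O) \leq D^{-K} \leq 100\delta$, so $O$ itself can be treated as a trivial grain, since $O \subset N_{100\delta}(\{x_O\})$ for any $x_O \in O$; add these to the collection as final-level grains.

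By construction, $E = \bigsqcup_G E_G$ across all candidate grains. Now dyadically pigeonhole on the level $j^* \in \{1, \ldots, K+1\}$ at which a grain was declared, and on the dyadic size $m^*$ of $|E_G|$: this extracts a subcollection $\mathcal{G}$ with all grains at level $j^*$ and all $|E_G| \in [m^*, 2m^*)$, satisfying $\sum_{G \in \mathcal{G}} |E_G| \gtrsim (\log 1/\delta)^{-2}|E| \geq K_\eps^{-1} \delta^\eps |E|$ for $\delta$ sufficiently small. At level $j^*$ each grain comes from a unique parent cell at stage $j^*-1$, so $\#\mathcal{G} \lesssim D^{3(j^*-1)}$; each grain has diameter $\lesssim D^{-(j^*-1)}$, giving $\diam(G) \lesssim D \cdot (\#\mathcal{G})^{-1/3}$. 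The two-sided bound on $|E_G|$ relative to $|E|/\#\mathcal{G}$ follows immediately from the pigeonhole. For the tube-intersection bound \eqref{countNumberOfEGintersectT}, invoke Bezout: the axis of a $\delta$-tube $T$ meets any $Z(P_\Omega)$ in at most $D$ points (or lies entirely in $Z(P_\Omega)$, in which case the tube sits in the corresponding wall and trivially meets only that one grain among all cells at that step). Hence $T$ crosses $\lesssim D$ sub-cells at each partitioning step, accumulating to $\lesssim D^{j^*} \lesssim (\#\mathcal{G})^{1/3}$ parent cells after $j^*$ steps, and therefore meets at most that many grains at level $j^*$; the $\delta$-fattening of the axis contributes only an $O(1)$ multiplicative factor.

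The main obstacle is quantitative bookkeeping. The double dyadic pigeonholing costs a factor of $(\log 1/\delta)^2$, while each of the $K \sim \log(1/\delta)/\log D$ iterations of Guth--Katz introduces $O(1)$ multiplicative constants that propagate through the cellular-balance estimates. Collectively these losses are of the form $(\log 1/\delta)^{O(1)}$, absorbable into $\delta^{-\eps}$ once $\delta$ is smaller than a threshold depending on $\eps$ (for larger $\delta$ the conclusion holds trivially by enlarging $K_\eps$). A secondary subtlety is ensuring that the grain $G_\Omega$ has both $\diam(G_\Omega) \lesssim \diam(\Omega)$ \emph{and} $E_{G_\Omega} \subset G_\Omega$; this is achieved by taking the $100\delta$-neighborhood of $Z(P_\Omega) \cap \Omega^+$ rather than of all of $Z(P_\Omega)$, which preserves the semi-algebraic structure and the degree bound.
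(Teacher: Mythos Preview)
Your iterated-partitioning outline matches the paper's approach (the paper simply cites Proposition~3.2 of \cite{GuZa18} and adds one tweak), but there is a genuine gap in your diameter control. Standard Guth--Katz polynomial partitioning equidistributes the \emph{measure} of $F$ among the cells $O_i$; it says nothing about $\diam(O_i)$. A cell at stage $j$ could be a long thin sliver of diameter $\sim 1$ even though $|F\cap O_i|\lesssim D^{-3j}|E|$. Hence your claims ``every surviving cellular pair has $\diam(O)\leq D^{-K}$'' and ``each grain has diameter $\lesssim D^{-(j^*-1)}$'' are not justified by the partitioning theorem as stated, and without them you lose both the diameter bound $\diam(G)\leq K_\eps(\#\mathcal G)^{-1/3}$ and the terminal step that treats leftover cells as trivial $100\delta$-grains.

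The paper fixes precisely this point: at each partitioning step, in addition to the Guth--Katz polynomial, one multiplies in $\sim D$ axis-parallel planes in each of the $e_1,e_2,e_3$ directions spaced at distance $\sim D^{-1}\diam(\Omega)$. This forces each child cell to have diameter $\lesssim D^{-1}\diam(\Omega)$, while only increasing the degree and the cell count by constant factors depending on $D$. With this modification in place, your remaining bookkeeping (the dyadic pigeonholing on level and on $|E_G|$, the B\'ezout argument for \eqref{countNumberOfEGintersectT}, and the absorption of $(\log 1/\delta)^{O(1)}$ losses into $\delta^{-\eps}$) goes through essentially as you wrote it.
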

The above result is essentially Proposition 3.2 from \cite{GuZa18}, except that the result in \cite{GuZa18} does not require that the grains in $\mathcal{G}$ have diameter at most $K_\eps(\#\mathcal{G})^{-1/3}$. Adding this latter property is straightforward --- we simply add additional partitioning planes in the $e_1,e_2,$ and $e_3$ directions at each partitioning step.

Using Proposition \ref{grainsDecompProp}, we have the following structure theorem about arrangements of tubes. 
\begin{lem}\label{grainsDecompBroad}
Let $\eps,\beta>0$. Then there exists $K,\eta>0$ so that the following holds.  Let $(\tubes,Y)_\delta$ be $\delta^\eta$ dense and broad with error $\delta^{-\eta}$ (and exponent $\beta$). Suppose there is a number $\mu$ so that $\#\tubes_Y(x)\in[\mu,2\mu)$ for all $x\in\bigcup_{T\in\tubes}Y(T)$. Then there exists the following:
\begin{itemize}
	\item A $\delta^\eps$ refinement $(\tubes',Y')_\delta$ of $(\tubes,Y)_\delta$.
	\item A partition $E = E_1\sqcup \ldots E_N$ of $\bigcup_{\tubes}Y'(T)$.
\end{itemize}
These objects have the following properties:
\begin{itemize}
	\item[(a)]
	\begin{itemize}
		\item[(i)]
	\begin{equation}\label{boundOnN}
		1\leq N\leq K\delta^{-\eps} \big(\delta \mu^{-1}\ \#\tubes\big)^3.
	\end{equation} 
	\item [(ii)] $r\geq K^{-1}\delta^{\eps}N^{-1/3}$.
\end{itemize}

	\item[(b)] Each set $E_i$ has diameter at most $2r$, and volume 
	\begin{equation}\label{upperLowerBdVolumeEi}
		K^{-1} N^{-1} \delta^{\eps} \Big| \bigcup_{T\in\tubes}Y(T)\Big| \leq |E_i| \leq K \delta \big(\operatorname{diam}(E_i)\big)^2.
	\end{equation}
	\item[(c)]  $(\tubes',Y')_\delta$ is broad with error $K\delta^{-\eps}$ (and exponent $\beta$), and $\#\tubes'_{Y'}(x)\geq K^{-1}\delta^\eps\mu$ for each $x\in\bigcup_{T\in\tubes'}Y'(T)$.
	\item[(d)] For each $T\in\tubes'$,  there exists a set $\{T^{(j)}\}$ of pairwise $100r$-separated tube segments $T^{(j)}\subset T$, each of length $r$. This set has the following properties:
	\begin{itemize}

	\item[(i)] Each segment $T^{(j)}$ intersects $Y'(T)$, and $Y'(T)\subset \bigsqcup T^{(j)}$.

	\item[(ii)] If $T^{(j)}$ is a tube-segment, then $T^{(j)}\cap Y'(T)$ intersects exactly one set $E_i$.
\end{itemize}

\end{itemize}
\end{lem}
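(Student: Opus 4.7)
The plan is to apply Proposition~\ref{grainsDecompProp} to the set $E := \bigcup_{T\in\tubes}Y(T)$, producing a family $\mathcal{G}$ of $N := \#\mathcal{G}$ semi-algebraic grains of thickness $100\delta$ and subsets $E_G\subset G\cap E$. I set $r := K_\eps N^{-1/3}$, where $K_\eps$ is the constant from Proposition~\ref{grainsDecompProp}, and let $E_1,\ldots,E_N$ be an enumeration of $\{E_G\}$. Conclusion (b) is then immediate: the diameter bound and the lower bound on $|E_i|$ are part of Proposition~\ref{grainsDecompProp}, and the upper bound $|E_i|\leq K\delta\diam(E_i)^2$ is Corollary~\ref{wonkewCor} applied to the defining polynomial of $G$. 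The bounds $N\geq 1$ and $r\geq K^{-1}\delta^\eps N^{-1/3}$ in (a) are also immediate.

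The upper bound on $N$ in (a)(i) will follow by comparing the incidence count $I := \#\{(T,G)\in\tubes\times\mathcal{G}\colon T\cap E_G\neq\emptyset\}$ in two ways: Inequality~\eqref{countNumberOfEGintersectT} gives $I\leq K_\eps\delta^{-\eps}N^{1/3}(\#\tubes)$, while the multiplicity hypothesis yields $\sum_{(T,G)}|Y(T)\cap E_G|\gtrsim\delta^\eps\mu|E|$; together with $|Y(T)\cap E_G|\leq|T\cap G|\lesssim\delta^2 r$ (because $T\cap G$ has diameter at most $\diam(G)\leq r$), this forces $I\gtrsim\delta^\eps\mu|E|/(\delta^2 r)$. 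Combining these estimates and using $|E|\gtrsim\delta^{\eta+2}\mu^{-1}\#\tubes$ together with $r = K_\eps N^{-1/3}$ yields, after absorbing a finite number of $\delta^\eps$ losses, the desired $N\leq K\delta^{-\eps}(\delta\mu^{-1}\#\tubes)^3$.

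For Conclusion (d), I will first partition $T$ into $\sim 1/(101r)$ pairwise $100r$-separated tube-segments $\{T^{(j)}\}$ of length $r$, by selecting every $101$st consecutive $r$-interval along $T$. By a dyadic pigeonholing, for each $T$ I may assume that the nonzero masses $|Y(T)\cap E_G|$ and the incidence counts $\#\{G\colon T^{(j)}\cap E_G\neq\emptyset\}$ are comparable across $G$ and $j$; the incidence bound then forces the latter quantity to be $\lesssim\delta^{-\eps}$, so picking the grain $G_j$ that maximizes $|T^{(j)}\cap Y(T)\cap E_{G_j}|$ for each $j$ retains at least a $\delta^\eps$-fraction of the mass in each $T^{(j)}$. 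Setting $Y_1(T) := \bigsqcup_j\bigl(T^{(j)}\cap Y(T)\cap E_{G_j}\bigr)$, and discarding the $T^{(j)}$ on which this is empty, produces a $\delta^\eps$-refinement $(\tubes,Y_1)_\delta$ of $(\tubes,Y)_\delta$ satisfying (d)(i)--(ii) by construction.

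Finally, a dyadic pigeonholing on the level sets of $m(x) := \#\tubes_{Y_1}(x)$, combined with $\int m\,dx\gtrsim\delta^\eps\mu|E|$, produces a subset $E'\subset\bigcup Y_1(T)$ of measure $\gtrsim (\log 1/\delta)^{-1}|E|$ on which $m(x)\geq K^{-1}\delta^\eps\mu$ pointwise; then $Y'(T) := Y_1(T)\cap E'$ and $\tubes' := \{T\colon Y'(T)\neq\emptyset\}$ give the refinement of (c). At each $x\in\bigcup Y'(T)$ the contributing directions form a $\geq\delta^\eps$-fraction of the original broad collection, so the broadness error degrades from $\delta^{-\eta}$ to at most $K\delta^{-\eps}$ provided $\eta\leq\eps$. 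The main technical hurdle will be orchestrating these successive refinements — the segment/grain assignment and the level-set pigeonholing — so that all the $\delta^\eps$-losses remain mutually compatible and every pointwise condition (broadness, multiplicity, and the per-tube segment structure) survives simultaneously.
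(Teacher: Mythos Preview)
Your argument for the upper bound on $N$ in Conclusion~(a)(i) is circular. Substituting $r = K_\eps N^{-1/3}$ and $\mu|E|\gtrsim\delta^{\eta+2}(\#\tubes)$ into your lower bound $I\gtrsim\delta^\eps\mu|E|/(\delta^2 r)$ gives $I\gtrsim\delta^{\eps+\eta}K_\eps^{-1}N^{1/3}(\#\tubes)$. Comparing this with the upper bound $I\leq K_\eps\delta^{-\eps}N^{1/3}(\#\tubes)$ from \eqref{countNumberOfEGintersectT}, the factors of $N^{1/3}(\#\tubes)$ cancel and you are left with the tautology $K_\eps^2\delta^{-2\eps-\eta}\gtrsim 1$, which says nothing about $N$. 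Both of your incidence bounds encode the same linear tube--cell relation, so they cannot be played off against one another.

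The paper obtains \eqref{boundOnN} by a bilinear argument that uses broadness in an essential way --- note that your proposal never invokes broadness for (a)(i) at all. One first pigeonholes on \eqref{countNumberOfEGintersectT} to find a single cell $E_i$ meeting only $\#\tubes_i\lesssim\delta^{-\eps}N^{-2/3}(\#\tubes)$ tubes. On that cell, broadness guarantees that for each $x\in E_i$ a $\gtrsim 1$ fraction of the $\sim\mu^2$ pairs $T,T'\in(\tubes_i)_Y(x)$ satisfy $\angle(\dir(T),\dir(T'))\gtrsim\delta^{O(\eta)/\beta}$, whence $|T\cap T'|\lesssim\delta^{3-O(\eta)/\beta}$. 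This yields
\[
\mu^2|E_i|\;\lesssim\;\sum_{T,T'\in\tubes_i}|T\cap T'|\;\lesssim\;\delta^{3-O(\eta)/\beta}(\#\tubes_i)^2\;\lesssim\;\delta^{3-O(\eps)}N^{-4/3}(\#\tubes)^2,
\]
and combining with the lower bound $|E_i|\gtrsim\delta^{\eps}N^{-1}\mu^{-1}\delta^2(\#\tubes)$ from \eqref{upperLowerBdVolumeEi} gives $N^{1/3}\lesssim\delta^{1-O(\eps)}\mu^{-1}(\#\tubes)$. The quadratic dependence on $\#\tubes_i$ here is exactly what breaks the homogeneity in $N$; a purely linear incidence count cannot do this.
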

\begin{proof}$\phantom{1}$\\
\noindent {\bf Step 1.}
Let $\eps_1$ be a small quantity to be chosen below.  Let $\tubes_1 = \{T\in\tubes\colon |Y(T)|\geq \frac{1}{2}\delta^{\eta}|T|\}$, and let $Y_1$ be the restriction of $Y$ to $\tubes_1$. Then $(\tubes_1,Y_1)_\delta$ is a $(1/2)$-refinement of $(\tubes,Y)_\delta$. Apply Proposition \ref{grainsDecompProp} to $E = \bigcup_{T\in\tubes_1}Y_1(T)$ with $\eps_1$ in place of $\eps$, and let $\mathcal{G}$ and $\{E_G\colon G\in\mathcal{G}\}$ be the output of that Proposition. 

For each $T\in\tubes_1$, there are at most $K_{\eps_1}\delta^{-\eps_1}(\#\mathcal{G})^{1/3}$ grains $G\in\mathcal{G}$ for which $T\cap E_G\neq\emptyset$. Define the set of ``significant'' grains 
\[
\mathcal{G}_{T, \operatorname{sig}}=\big\{G\in\mathcal{G}\colon |T\cap E_G| \geq (\frac{1}{2}|Y_1(T)|)\big(K_{\eps_1}\delta^{-\eps_1}(\#\mathcal{G})^{1/3}\big)^{-1}\big\}.
\]
Then  
\[
\Big| Y_1(T) \cap \bigsqcup_{G\in \mathcal{G}_{T, \operatorname{sig}}} E_G\Big| \geq \frac{1}{2}|Y_1(T)|.
\]

Each set $Y_1(T)\cap E_G$ is contained in a sub-tube of length $\diam(T\cap E_G)$, and we have
\[
\delta^{\eta+\eps_1}K_{\eps_1}^{-1}(\#\mathcal{G})^{-1/3} \lesssim \diam(T\cap E_G)\leq K_{\eps_1}(\#\mathcal{G})^{-1/3}.
\]
The first inequality follows from the volume bound $|T\cap E_G|\gtrsim \delta^{\eta+\eps_1}|T|K_{\eps_1}^{-1} (\#\mathcal{G})^{-1/3}$, while the second follows from the diameter bound on $G$ coming from Proposition \ref{grainsDecompProp}.

Define \begin{equation} \label{eq: rA5}
	 r = \kappa \delta^{\eta+\eps_1}K_{\eps_1}^{-1}(\#\mathcal{G})^{-1/3},
	\end{equation}  where $\kappa\sim 1$ is chosen so that each set $T\cap E_G$ has diameter at least $r$. Then for each grain $G\in\mathcal{G}_{T, \operatorname{sig}}$, we can choose a sub-tube $T_G\subset T$ of length $r$, so that 
\begin{equation}\label{bigVolumeBdInsideTG}
|T_G\cap E_G| \gtrsim \delta^{\eta+\eps_1}K_{\eps_1}^{-2}|T\cap E_G| \gtrsim  
\delta^{2\eta+2\eps_1}K_{\eps_1}^{-3}|T|(\#\mathcal{G})^{-1/3} \gtrsim \delta^{\eta + \eps_1}K_{\eps_1}^{-2} r|T|.
\end{equation}
Furthermore, we can choose these sub-tubes so that every pair of sub-tubes in the set $\{T_G\colon G\in \mathcal{G}_{T, \operatorname{sig}}\}$ either coincide or are interior-disjoint. 

Since the sets $\{E_G\}$ are disjoint, the volume bound \eqref{bigVolumeBdInsideTG} says that for each tube $T$, at most $\delta^{-\eta - \eps_1}K_{\eps_1}^{2}$ tubes from the set $\{T_G  \colon G\in \mathcal{G}_{T, \operatorname{sig}}\}$ can pairwise coincide (the sets $T_G$ are sub-tubes of $T$, as described previously). Thus for each $T\in\tubes_1$, we can select a set $Y_2(T)\subset Y_1(T)$ with the following properties:
\begin{itemize}
	\item 
	\[ 
	|Y_2(T)|\gtrsim \delta^{2\eta+2\eps_1}K_{\eps_1}^{-4}|Y_1(T)|.
	\]
	(This follows from the first inequality in \eqref{bigVolumeBdInsideTG}, plus the fact that at most $\delta^{\eta + \eps_1}K_{\eps_1}^{-2}$ can coincide).

	\item There is a set $\{T^{(j)}\}$ of sub-tubes of $T$, each of length $r$. These tubes are $100r$ separated. Each of these sub-tubes $T^{(j)}$ intersects $Y_2(T)$, and for each sub-tube $T^{(j)}$ there is exactly one $G\in\mathcal{G}$ for which $Y_2(T)\cap T^{(j)}\cap E_G\neq\emptyset$.
\end{itemize}

\medskip

\noindent {\bf Step 2.}
In Step 1 we processed the tubes. In this step, we will process the grains in $\mathcal{G}$. Fix a grain $G$. Cover $G$ by boundedly overlapping balls $\{B_i\}$ of radius $2r$ (recall that $r$ was defined in Step 1). 
We will select a $100r$-separated subset of $\{B_i\}$ (after re-indexing, we will denote this set by $\{B_i\}_{i=1}^M$ for some $M=M(G)\geq 1$) so that 
\begin{equation}\label{sufficientMassPreservedInsideSmallerBalls}
\sum_{i=1}^M \sum_{T\in\tubes_1}|Y_2(T)\cap E_G\cap  B_i| \gtrsim \sum_{T\in\tubes_1}|Y_2(T)\cap E_G|.
\end{equation}
Since the balls are $100r$-separated, each sub-tube $T^{(j)}$ of length $r$ intersects at most one ball. Thus for each $T\in\tubes_1$ and each sub-tube $T^{(j)}$, there is at most one ball $B_i$ from the above collection for which $Y_2(T)\cap T^{(j)}\cap E_G \cap B_i\neq\emptyset$. 

We perform the above operation for each $G\in\mathcal{G}$. Let $\mathcal{G}'$ be the union of sets $\{B_i\cap G\colon i = 1,\ldots, M(G)\}$, as $G$ ranges over the grains in $\mathcal{G}$. For each $G'\in\mathcal{G}'$, define $E_{G'} = G'\cap E_G$, where $G\in\mathcal{G}$ is the (unique) grain that gave rise to $G'$. We have that the sets $\{E_{G'}\colon G'\in\mathcal{G}\}$ are disjoint.

Define $Y_3(T) = Y_2(T)\cap \bigcup_{G'\in\mathcal{G}'}E_{G'}$. By \eqref{sufficientMassPreservedInsideSmallerBalls} we have that $(\tubes_1,Y_3)_\delta$ is a $\gtrsim 1$ refinement of $(\tubes_1,Y_2)_{\delta}$, which in turn is a $\gtrsim  \delta^{2\eta+2\eps_1}K_{\eps_1}^{-4}$ refinement of $(\tubes,Y)_\delta$. Furthermore, for each $T\in\tubes_1$ it is still the case that there is a set of $100r$-separated sub-tubes $\{T^{(j)}\}$, each of which has length $r$ and intersects $Y_3(T)$  (indeed, any sub-tubes $T^{(j)}$ from the previous collection that fail to intersect $Y_3(T)$ can be discarded), so that for each sub-tube $T^{(j)}$, there is exactly one $G'\in\mathcal{G}'$ with $T^{(j)}\cap E_{G'}\neq\emptyset$.

\medskip

\noindent {\bf Step 3.}
Let $\mu'\leq\mu$ and let $(\tubes_1,Y_4)_\delta$ be a $\gtrapprox_\delta1$ refinement  (we use $\gtrapprox_\delta$ to absorb the $K_{\eps_1}^{-4}$ factor) of $(\tubes_1,Y_3)_\delta$ with the property that $\#(\tubes_1)_{Y_4}(x)\sim\mu'$ for each $x\in\bigcup_{T\in\tubes_1}Y_4(T)$. Since $(\tubes_1,Y_4)_\delta$ is a $\approx_\delta \delta^{2\eta+2\eps_1}$ refinement of $(\tubes,Y)_\delta$, we have \begin{equation} \label{eq: mu'A5}
	\mu'\gtrapprox_\delta \delta^{2\eta+2\eps_1} \mu,
	\end{equation}  and hence $(\tubes_1,Y_4)_\delta$ is broad with error $\lessapprox_\delta \delta^{-3\eta-2\eps_1}$ and exponent $\beta$. We will select $\eta$ and $\eps_1$  sufficiently small, and $K$ sufficiently large so that $(\tubes_1,Y_4)_\delta$ is broad with error $\leq K\delta^{-\eps}$, and $\mu' \geq K^{-1}\delta^{\eps}\mu$ (c.f.~Conclusion (c) from Proposition \ref{grainsDecompBroad}). 

For each $G'\in\mathcal{G}'$, define $E'_{G'}=E_{G'}\cap\bigcup_{T\in\tubes_1}Y_4(T)$. After dyadic pigeonholing, we can select a set $\mathcal{G}''\subset\mathcal{G}'$ so that $|E'_{G'}|$ is the same (up to a factor of 2) for each $G'\in\mathcal{G}''$. For each $T\in\tubes_1$, define $Y_5(T) = Y_4(T)\cap\bigcup_{G''\in\mathcal{G}''}E'_{G''}$. 

Define $N = \#\mathcal{G}''$. Abusing notation, we will enumerate the elements of $\{E'_{G''}\colon G''\in\mathcal{G}''\}$ as $E_1,\ldots,E_N$.  Since $\#\tubes_Y(x)\sim \mu$ and   $K_{\eps_1}^{-1} \delta^{\eps_1} \frac{|E|}{\#\mathcal{G}} \leq |E_G|\leq K_{\eps_1} \delta^{-\eps_1} \frac{|E|}{\#\mathcal{G}}$ for each $G\in \mathcal{G}$ (the latter is true since $\mathcal{G}$ was the output of Proposition~\ref{grainsDecompProp}), we have 
\begin{equation}\label{eq: lowerbdN}
\#\mathcal{G}\geq 	N=\#\mathcal{G}''\gtrapprox_{\delta} \delta^{2\eps_1} \#\mathcal{G}.
	\end{equation}

Define $\tubes'=\tubes_1$ and $Y'(T) = Y_5(T)$. This choice of $(\tubes',Y')_\delta$ satisfies Conclusion (c) from Proposition \ref{grainsDecompBroad}.

We have 
\[
\bigcup_{T\in\tubes'}Y'(T) = \bigsqcup_{i=1}^N E_i,
\]
and if the constant $K=K(\eps_1)$ (recall that $\eps_1$ depends on $\eps$) is chosen sufficiently large, then for for each index $i=1,\ldots,N$ we have
\[
N^{-1}\Big|\bigcup_{T\in\tubes}Y(T)\Big|\lessapprox_\delta N^{-1}\Big|\bigcup_{T\in\tubes'}Y'(T)\Big| \lesssim |E_i| \leq K \delta (\operatorname{diam}(E_i))^2,
\]
where the final inequality is Corollary \ref{wonkewCor}. Thus we have established Conclusion (b) from Proposition \ref{grainsDecompBroad}. Recall that for each $T\in\tubes'$, we have a set $\{T^{(j)}\}$ of tube-segments, each of which has length $r$. These segments are $100r$-separated. Discarding segments if necessary, we may suppose that each tube segment $T^{(j)}$ intersects $Y'(T)$. It is still the case that for each surviving tube segments, there is exactly one index $i$ so that $Y'(T)\cap T^{(j)}\cap E_i\neq\emptyset$. This establishes Conclusions (d.i) and (d.ii).

\medskip

\noindent {\bf Step 4.}
It remains to establish Conclusion (a) from Proposition \ref{grainsDecompBroad}. We begin with Conclusion (a.i), i.e.~we must show that $N$ satisfies Inequality \eqref{boundOnN}.  In Step 3 we established Conclusion (d.ii). Combining this with \eqref{eq: rA5} and  \eqref{eq: lowerbdN}, we conclude that for each $T\in\tubes$, there are  $\lessapprox_{\delta}  \delta^{-2\eps_1}N^{1/3}$ indices $i$ for which $Y'(T)\cap E_i\neq\emptyset$. Thus by pigeonholing, there is an index $i$ so that
\[
\{\#T\in\tubes\colon Y'(T)\cap E_i\neq\emptyset\} \lessapprox_{\delta}  \delta^{-2\eps_1}N^{-2/3}(\#\tubes).
\]
Fix this index $i$, and denote the above set by $\tubes_i$.  We have 
\begin{equation}\label{estimateNuMeasure}
\nu\big( \{(x,T,T')\in E_i \times \tubes_i^2\colon x \in Y'(T)\cap Y'(T'),\ \angle(\operatorname{dir}(T),\operatorname{dir}(T))\gtrapprox_\delta \delta^{ ( 3\eta+2\eps_1)/\beta}\big\} \lessapprox_{\delta}  \delta^{3-(3\eta+2\eps_1)/\beta}(\#\tubes_i)^2,
\end{equation}
where $\nu$ denotes the product of Lebesgue measure on $E_i$ with counting measure on $\tubes_i^2$. The above inequality is justified by the observation that for each pair $T,T'\in\tubes_i$ with $\angle(\operatorname{dir}(T),\operatorname{dir}(T'))\geq \delta^{( 3\eta+2\eps_1)/\beta}$, we have $|T\cap T'|\lesssim \delta^{3-( 3\eta+2\eps_1)/\beta}$.

On the other hand, we have $|E_i| \gtrapprox_\delta  \delta^{\eps_1}N^{-1}\big(\mu^{-1}\delta^2(\#\tubes)\big)$, and since $(\tubes',Y')_\delta$ is broad with error $\lessapprox_\delta \delta^{-( 3\eta+2\eps_1)}$, we have that for each $x\in E_i$ there are $\gtrsim (\mu')^2$ pairs $T,T'\in\tubes_i$ so that $(x,T,T')$ is in the above set. We conclude that by \eqref{eq: mu'A5}, 
\begin{equation*}
\begin{split}
\mu\delta^{2+ 6\eta+4\eps_1}N^{-1}(\#\tubes) &\lessapprox_\delta  (\mu')^2 \cdot\Big( \delta^{4\eta+2\eps_1}N^{-1}\mu'^{-1}\delta^2(\#\tubes)\Big) \lesssim \textrm{LHS}\ \eqref{estimateNuMeasure}\\
& \lesssim \textrm{RHS}\ \eqref{estimateNuMeasure} = \delta^{-(3\eta+2\eps_1)/\beta+3} (\#\tubes_i)^2 \lesssim K_{\eps_1}^2 \delta^{-(3\eta+2\eps_1)/\beta -4\eps_1+3}N^{-4/3}(\#\tubes)^2.
\end{split}
\end{equation*}
Re-arranging, we have
\[
N^{1/3} \lessapprox_\delta K_{\eps_1}^2 \mu^{-1}\delta^{-(3\eta+2\eps_1)/\beta - 6\eta- 8\eps_1+1}(\#\tubes).
\]
The claimed bound on $N$ now follows by selecting $3\eta+2\eps_1<\eps \beta/20$, and choosing $K$ appropriately.

Finally,  Conclusion (a.ii) follows from the definition of $r$ and \eqref{eq: lowerbdN}. 
\end{proof}

Lemma \ref{grainsDecompBroad} says nothing about the shape of the sets $\{E_i\}$. The next result will help us show that a substantial portion of each set $E_i$ must be contained in the $\delta$ neighbourhood of a plane.   The idea is that since the volume of $E_{i}$ is small, each point $x\in E_i$ is broad, and $E_i\cap T$ is dense for each tube $T\in \tubes_i$,  we can find many ``triangles'' formed by three tubes from $\tubes_i$ that pairwise intersect at distinct points. Since every triple of lines that pairwise intersect at distinct points must be coplanar, this in turn forces a significant subset of  $E_i$ to be planar.

\begin{lem}\label{EiContainedInPlane}
Let $\eps,\beta>0$. Then there exists $\kappa,\eta>0$ so that the following holds.  Let $(\tubes,Y)_\delta$ be $\delta^\eta$ dense and broad with error $\leq \delta^{-\eta}$ (and exponent $\beta$). Suppose that 
\[
\Big|\bigcup_{T\in\tubes}Y(T)\Big| \leq \delta^{1-\eta}.
\]
Then there is a plane $\Pi$ so that if we define $\tubes'=\{T\in\tubes\colon T\subset N_{2\delta}(\Pi)\}$, then there is a shading $\{Y'(T)\colon T\in\tubes'\}$ and a number $\mu$ so that 
\begin{itemize}
	\item[(a)] $\#\tubes'_{Y'}(x)\sim\mu$ for each $x\in\bigcup_{T\in\tubes'}Y'(T)$.
\item[(b)]
	\begin{equation}\label{mostMassInPlane}
	\sum_{T\in\tubes'}|Y'(T)|  \geq \kappa \delta^{\eps}\sum_{T\in\tubes}|Y(T)|.
	\end{equation}
\item[(c)]
\begin{equation}\label{planeMostlyFull}
\Big|\bigcup_{T\in\tubes'}Y'(T)\Big| \geq \kappa\delta^{1+\eps},
\end{equation}
\end{itemize}
\end{lem}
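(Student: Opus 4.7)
The plan is to combine the grains decomposition of Lemma \ref{grainsDecompBroad} with a ``coplanar triples'' argument that uses the broadness hypothesis to pin down a single 2-plane. First I would refine $(\tubes, Y)_\delta$ so that a uniform multiplicity $\mu$ passes through each point of $E = \bigcup Y(T)$. Combining the mass bound $\sum_T |Y(T)| \geq \delta^{2+\eta}(\#\tubes)$ with the volume hypothesis $|E| \leq \delta^{1-\eta}$ shows $\mu \gtrapprox_\delta \delta(\#\tubes)$, so that $\delta\mu^{-1}(\#\tubes) \lessapprox_\delta 1$. Feeding this into Lemma \ref{grainsDecompBroad} (applied with an internal parameter $\eps_1$ that is much smaller than $\eps$) produces a $\delta^{\eps_1}$-refinement partitioned into $N \lessapprox_\delta 1$ cells $E_1,\ldots,E_N$ of common diameter $\leq 2r$, where $r \gtrapprox_\delta 1$, together with the structure in (d) guaranteeing that each tube in $\tubes'$ meets each intersecting cell in a tube segment of length $r$ exiting through its long ends. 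Pigeonholing over cells, pick a single cell $E_*$ carrying a $\gtrapprox_\delta 1$ fraction of the mass.

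The core of the argument is then to show that $E_*$ is contained in the $2\delta$-neighbourhood of a single plane. After rescaling the ball of radius $r$ containing $E_*$ to the unit ball, we have a broad, dense arrangement of $(\delta/r)$-tubes whose shading has volume $\lessapprox_\delta \delta/r$. Pick a point $x_0$ of high multiplicity and, using the broadness at $x_0$, select two tubes $T_1,T_2\in\tubes_*$ through $x_0$ with angle $\gtrapprox_\delta 1$; these determine a plane $\Pi_0$. Suppose for contradiction that a non-negligible fraction of tubes $T_3\in\tubes_*$ have direction at angular distance $\geq \delta^{\eta'}$ from $\Pi_0$. Using the tube segment structure from Lemma~\ref{grainsDecompBroad}(d), pick a point $y\in Y'(T_3)\cap E_*$ far from $x_0$; broadness at $y$ lets us select an auxiliary tube $T_4\in\tubes_*$ through $y$ that also meets $T_1$ at a point $z\neq x_0$. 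The triple $(T_1,T_3,T_4)$ consists of three pairwise-intersecting lines at three distinct points $x_0,y,z$ (distinctness is guaranteed by taking tube-segments $100r$-apart), hence is coplanar in a plane containing $T_1$. Averaging over broad choices of $T_4$ through $y$ shows that this plane must coincide with $\Pi_0$ up to $O(\delta)$-error, forcing $T_3\subset N_{2\delta}(\Pi_0)$, a contradiction. Thus $\tubes'=\{T\in\tubes_1:T\subset N_{2\delta}(\Pi_0)\}$ captures most of $\tubes_*$, and a final pigeonholing to uniformize the multiplicity yields (a)--(c) with $\Pi=\Pi_0$.

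The principal obstacle is executing the triples step cleanly. Three technical issues arise: (i) ensuring that the three pairwise intersection points of $(T_1,T_3,T_4)$ are truly $\delta$-separated, which we handle using the $100r$-separated tube-segment structure in Lemma~\ref{grainsDecompBroad}(d); (ii) tracking the quantitative loss $\delta^{-O(\eta+\eps_1)}$ that accumulates through broadness, the density refinement, and the coplanarity pinning — these are absorbed into the final $\delta^{-\eps}$ by choosing $\eta, \eps_1$ small depending on $\eps$; and (iii) the fact that the plane pinned down by the triples argument need not a priori be $\Pi_0$ for every bad tube, so we must pigeonhole over planes as well as cells, using that the space of planes intersected with $B(0,1)$ carries only a polynomial number of essentially distinct $\delta$-slabs. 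Once these parameter choices are calibrated, the lower bounds \eqref{mostMassInPlane} and \eqref{planeMostlyFull} follow respectively from the mass recorded on $E_*$ and from the fact that $E_*\subset N_{2\delta}(\Pi)\cap B(0,1)$ has volume $\lesssim \delta$ together with the lower bound on $|E_*|$ obtained from Lemma \ref{grainsDecompBroad}.
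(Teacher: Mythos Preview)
Your triples step is broken, in two places.  First, broadness at a point $y$ does not produce a tube $T_4$ through $y$ that also meets a prescribed tube $T_1$.  Broadness says the directions of the tubes through $y$ are spread out; the set of directions pointing from $y$ into $T_1$ is a single $\delta$-arc, and broadness tells you that \emph{few} tubes point that way, not that some tube does.  Second, even if such a $T_4$ existed, the argument does not close.  Read charitably so that $T_3$ also passes through $x_0$ (otherwise $T_1\cap T_3$ is never established and your three ``pairwise intersection points'' $x_0,y,z$ don't make sense), the plane spanned by the triple $(T_1,T_3,T_4)$ is already determined by $T_1$ and $T_3$ alone, since they meet at $x_0$.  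That plane contains $T_3$, which by hypothesis makes angle $\geq\delta^{\eta'}$ with $\Pi_0$, so it is \emph{not} $\Pi_0$; averaging over $T_4$ cannot change this, because $T_4$ plays no role in determining the plane.  No contradiction arises.

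The paper runs the triples argument in the opposite direction, and this inversion is the point.  One fixes a stem tube $T_0$ and considers its hairbrush $\{T:\ Y(T)\cap Y(T_0)\neq\emptyset,\ \angle(T,T_0)\gtrsim\delta^{O(\eta)}\}$.  Broadness plus $|Y(T_0)|\gtrsim\delta^{2+O(\eta)}$ forces the hairbrush to carry mass $\gtrsim\delta^{1+O(\eta)}\mu$ outside a small neighbourhood of $T_0$; since the total volume is $\leq\delta^{1-\eta}$, a typical point $x$ of the hairbrush away from $T_0$ is hit by $\gtrsim\delta^{O(\eta)}\mu$ hairbrush tubes.  Broadness at $x$ then gives a pair $T,T'$ through $x$ at large angle, and \emph{both already hit $T_0$ by construction}.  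Counting such triples $(T_0,T,T')$ and pigeonholing on the pair $(T,T')$ yields a single pair whose span $\Pi$ traps $\gtrsim\delta^{O(\eta/\beta^2)}(\#\tubes)$ stems $T_0$.  A final pigeonhole over $\lesssim\delta^{-O(\eta/\beta^2)}$ parallel $2\delta$-slabs and over multiplicity gives (a)--(c).  Your preliminary use of Lemma~\ref{grainsDecompBroad} is harmless but does no work: with $\delta\mu^{-1}(\#\tubes)\lessapprox 1$ it returns $N\lessapprox 1$, $r\gtrapprox 1$, and the tube-segment structure in (d) degenerates to whole tubes.
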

\begin{proof}
First, after dyadic pigeonholing and replacing $(\tubes,Y)_\delta$ by a refinement, we may suppose that there is a number $\mu$ so that $\#\tubes_Y(x)\sim \mu$ for all $x\in\bigcup_{T\in\tubes}Y(T)$; we still have that $(\tubes,Y)_\delta$ is $\gtrapprox_\delta\delta^\eta$ dense, and $(\tubes,Y)_\delta$ is broad with error $\lessapprox_\delta \delta^{-\eta}$. Observe that 
\[
\delta^{2+\eta}(\#\tubes) \lessapprox_\delta \sum_{T\in\tubes}|Y(T)| \lesssim \mu\Big|\bigcup_{T\in\tubes}Y(T)\Big| \leq \delta^{1-\eta}\mu,
\]
and thus
\begin{equation}\label{numberOfTubesBound}
\#\tubes \lessapprox_\delta \delta^{-1-2\eta}\mu.
\end{equation}
Define 
\begin{equation*}
\begin{split}
\tubes'&=\{T\in\tubes\colon |Y(T)|\geq \delta^{2\eta}|T|\},\\
E & = \{x\in\RR^3\colon \#\tubes'_Y(x) \geq \delta^{2\eta}\mu\},\\
\tubes''&=\{T\in\tubes\colon |Y(T)\cap E| \geq \delta^{4\eta}|T|\}.
\end{split}
\end{equation*}
By double-counting we have $\#\tubes''\geq \delta^{5\eta}(\#\tubes)$. 

Note that for each $T_0\in\tubes''$ and each $x\in Y(T_0)\cap E$, there are $\geq \frac{1}{2} \delta^{2\eta}\mu$ tubes $T\in\tubes'$ with $Y(T)\cap Y(T_0)\neq\emptyset$ and $\angle(\dir(T),\dir(T_0))\geq\delta^{3\eta/\beta}$; this is because there are $\geq \delta^{2\eta}\mu$ tubes $T\in\tubes'$ with $Y(T)\cap Y(T_0)\neq\emptyset$, and by broadness at most $\geq \frac{1}{2} \delta^{3\eta}\mu$ of the tubes passing through a point $x\in\RR^3$ can satisfy $\angle(\dir(T),v)\leq \delta^{3\eta/\beta}$ for any fixed vector $v$. In particular, for each tube $T_0$ of this type, there are at least $\big(\delta^{3\eta}\mu\big)\big(\delta^{3\eta/\beta} \delta^{-1}\big)$ distinct tubes $T\in\tubes'$ with $T\cap T_0\neq\emptyset$ and $\angle(\dir(T),\dir(T_0))\geq\delta^{3\eta/\beta}$. Since each such $T\in\tubes'$ satisfies $Y(T)\geq  \delta^{2\eta}|T|$ and $|T\cap N_{\delta^{10\eta/\beta}}(T_0)|\leq \frac{1}{2}\delta^{2\eta}|T|$, we conclude that for each tube $T_0$ of the type described above, we have 
\[
\sum_{\substack{T\in\tubes'\\ Y(T)\cap Y(T_0)\neq\emptyset\\ \angle(\dir(T),\dir(T_0))\geq\delta^{3\eta/\beta} }}|Y(T)\backslash N_{\delta^{10\eta/\beta}}(T_0)| \gtrsim \big(\delta^{3\eta+3\eta/\beta }\mu\delta^{-1}\big)\big( \delta^{2\eta}|T|\big) \gtrsim \delta^{10\eta/\beta}\delta\mu.
\]
On the other hand,
\[
\Big| \bigcup_{\substack{T\in\tubes'\\ Y(T)\cap Y(T_0)\neq\emptyset\\ \angle(\dir(T),\dir(T_0))\geq\delta^{3\eta/\beta} }} Y(T)\backslash N_{\delta^{10\eta/\beta}}(T_0)\Big| \leq \Big|\bigcup_{T\in\tubes}Y(T)\Big| \leq \delta^{1-\eta},
\]
Thus if we define $F_{T_0}$ to be the set of points $x\in\RR^3\backslash N_{\delta^{10\eta/\beta}}(T_0)$ satisfying
\[
\sum_{\substack{T\in\tubes'\\ Y(T)\cap Y(T_0)\neq\emptyset\\ \angle(\dir(T),\dir(T_0))\geq\delta^{3\eta/\beta} }}\chi_{Y(T)}(x) \geq \delta^{11\eta/\beta}\mu,
\]
then $|F_{T_0}|\geq\delta^{11\eta/\beta+1}$ because $\#\tubes_Y(x)\lesssim \mu$. Since $(\tubes,Y)_\delta$ is broad with error $\delta^{-\eta}$, for each point $x\in F_0$, there are $\gtrsim (\delta^{11\eta/\beta}\mu)^2$ pairs $T,T'\in\tubes'$ satisfying
\begin{equation}\label{goodPairTTPrime}
\begin{split}
& Y(T)\cap Y(T_0)\neq\emptyset,\ Y(T')\cap Y(T_0)\neq\emptyset,\ Y(T)\cap Y(T')\neq\emptyset, \\
& \angle(\dir(T_0),\dir(T))\geq \delta^{3\eta/\beta},\  \angle(\dir(T_0),\dir(T'))\geq \delta^{3\eta/\beta},\ \angle(\dir(T),\dir(T'))\geq \delta^{12\eta/\beta^2},\\
& (T\cap T') \backslash N_{\delta^{10\eta/\beta}}(T_0)\neq\emptyset.
\end{split}
\end{equation}

For each such pair, we have $|Y(T)\cap Y(T')|\leq |T\cap T'|\lesssim \delta^{3-12\eta/\beta^2}$. We conclude that for $T_0$ fixed, there are $\gtrsim (\delta^{11\eta/\beta}\mu)^2(|F_{T_0}| / \delta^{3-12\eta/\beta^2}) \gtrsim \delta^{23\eta/\beta^2}\mu^2\delta^{-2}$ distinct pairs $(T,T')$ satisfying \eqref{goodPairTTPrime}. Thus  by \eqref{numberOfTubesBound} there are $\gtrsim \delta^{23\eta/\beta^2}\mu^2\delta^{-2}(\#\tubes'')\gtrapprox_\delta \delta^{24\eta/\beta^2}(\#\tubes')^2(\#\tubes)$ triples $(T_0,T_1,T_2)\in \tubes''\times \tubes'\times \tubes'$ that satisfy \eqref{goodPairTTPrime}. 

By pigeonholing, we can select a pair $(T,T')$ which is a member of $\gtrapprox_\delta \delta^{24\eta/\beta^2}(\#\tubes)$ such triples. Fix this pair $T,T'$, and let $\tubes_0$ denote the set of tubes $T_0$ so that $(T_0,T',T'')$ satisfies \eqref{goodPairTTPrime}. We have the following:
\begin{itemize}
\item $T$ and $T'$ intersect, and make angle $\gtrsim \delta^{12\eta/\beta^2}$. 
\item For each $T_0\in\tubes_0,$ we have that $Y(T_0)$ intersects both $T$ and $T'$, and makes angle $\geq \delta^{3\eta/\beta}$ with $T$ and $T'$. 
\item $(T\cap T') \backslash N_{\delta^{10\eta/\beta}}(T_0)\neq\emptyset$.
\end{itemize}
The above items imply that each $T_0\in\tubes_0$ is contained in the $\delta^{1-100\eta/\beta^2}$ neighbourhood of a plane --- this is the plane spanned by the tubes $T$ and $T$ (the latter plane is technically not well defined, but is instead defined up to uncertainty $\frac{\delta}{\angle(\dir(T),\dir(T'))}\leq \delta^{1-11\eta/\beta^2}$). In particular, there is a set of $\lesssim \delta^{200\eta/\beta^2}$ planes $\{\Pi_i\}$ so that each $T_0\in\tubes_0$ is contained in the $2\delta$ neighbourhood of some plane from this collection. By pigeonholing, we can select a plane $\Pi$ so that
\[
\sum_{\substack{T\in\tubes_0 \\ T \subset N_{2\delta}(\Pi)}}|Y(T)|  \gtrapprox_\delta \big(\delta^{200\eta/\beta^2}\big) \big(\delta^{2+2\eta}\big)\big(\delta^{24\eta/\beta^2}(\#\tubes)\big) \gtrapprox_\delta \delta^{300\eta/\beta^2}\sum_{T\in\tubes}|Y(T)|.
\]

Define $\tubes'=\{T\in\tubes\colon T\subset N_{2\delta}(\Pi)\}$. By pigeonholing, we can select a number $\mu'$ so that if we define the shading $Y'(T)=\{x\in Y(T)\colon \#\tubes'_Y(x)\sim\mu'\}$, then $(\tubes',Y')_\delta$ is an $\approx_\delta 1$ refinement of $(\tubes',Y)_\delta$. This gives Conclusions (a) with $\mu'$ in place of $\mu$ and (b). Finally, Conclusion (c) follows by a standard Cordoba-type $L^2$ argument (making use of the fact that $(\tubes',Y')_\delta$ is broad with error $\delta^{-400\eta/\beta^2}$ and exponent $\beta$). 
\end{proof}

Next, we record the following re-scaled variant of Lemma \ref{EiContainedInPlane} 
\begin{cor}\label{LemEiContainedInPlaneRescaled}
Let $\eps,\beta>0$. Then there exists $\kappa,\eta>0$ so that the following holds. Let $0<\delta<r\leq 1$ and let $B\subset \RR^3$ be a ball of radius $r$.  Let $(\tubes,Y)_\delta$ be broad with error $\delta^{-\eta}$  (and exponent $\beta$). Suppose that $Y(T)\subset B$ for each $T\in\tubes$, and $\sum_{T\in\tubes}|Y(T)|\geq\delta^\eta \delta^2 r(\#\tubes)$. Suppose furthermore that the tubes in $\tubes$ are contained in a 1 tube $T_1$, and that 
\[
\Big|\bigcup_{T\in\tubes}Y(T)\Big| \leq r^2 \delta^{1-\eta}.
\]
Then there is a set $\tubes'\subset\tubes$; a number $\mu$; a shading $Y'(T)\subset Y(T)$, $T\in\tubes'$; a prism $P$ of dimensions comparable to $\delta\times r\times r$ so that each $T\in\tubes'$ satisfies $T\cap P\neq\emptyset$, and $T$ exists $P$ through its long sides, in the sense of Remark \ref{remarkLongEndsExitSquareGrain}. Furthermore, we have
\begin{itemize}
	\item[(a)] $\#\tubes'_{Y'}(x)\sim\mu$ for each $x\in\bigcup_{T\in\tubes'}Y'(T)$.
\item[(b)]
\begin{equation}\label{mostMassInPlaneInsideGrain}
\sum_{T\in\tubes'}|Y'(T)|\geq \kappa \delta^\eps\sum_{T\in\tubes}|Y(T)|.
\end{equation}
\item[(c)]
\begin{equation}\label{planeMostlyFullInsideGrain}
\Big|\bigcup_{T\in\tubes'}Y'(T)\Big|\geq \kappa \delta^{1+\eps}r^2.
\end{equation}
\end{itemize}
\end{cor}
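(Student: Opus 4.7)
The natural approach is to rescale $B$ to the unit ball and invoke Lemma~\ref{EiContainedInPlane}. Let $\phi_B: B \to B(0,1)$ be the similarity (a translation composed with dilation by $r^{-1}$). Under $\phi_B$, each $\delta$-tube $T$ becomes a $\tilde\delta$-tube with $\tilde\delta := \delta/r$, each shading $Y(T)\subset B$ maps to a subset of the unit ball of volume $|Y(T)|/r^3$, and broadness (with exponent $\beta$) is preserved exactly because $\phi_B$ is a similarity and so transports directions rigidly.

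First I would refine $(\tubes,Y)_\delta$ by a $\gtrapprox_\delta 1$ factor so that $|Y(T)|\geq\tfrac{1}{2}\delta^\eta\delta^2 r$ for every surviving $T$ (using the mass hypothesis together with the trivial bound $|Y(T)|\leq|T|$), and pigeonhole to fix the multiplicity $\#\tubes_Y(x)\sim\mu$ on $\bigcup Y(T)$. A routine computation then shows that the rescaled pair has shading density $\geq\tfrac{1}{2}\delta^\eta$ on each rescaled unit tube, union volume $|\bigcup\phi_B(Y(T))|\leq\delta^{1-\eta}/r$, and broadness error $\delta^{-\eta}$. Choosing an auxiliary parameter $\eps_1\leq\eps$ and the corresponding threshold $\eta_1=\eta_{\mathrm{Lem}}(\eps_1,\beta)$ from Lemma~\ref{EiContainedInPlane}, I would pick $\eta$ small enough in terms of $\eta_1$ so that the key inequalities $\delta^\eta\geq 2\tilde\delta^{\eta_1}$ and $\delta^{1-\eta}/r\leq\tilde\delta^{1-\eta_1}$ both hold; both reduce to a condition of the form $r\geq C\delta^{1-\eta/\eta_1}$, which is the substantive exponent bookkeeping in the argument. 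Applying the lemma then produces a plane $\tilde\Pi\subset B(0,1)$, a refined subset of rescaled tubes contained in $N_{2\tilde\delta}(\tilde\Pi)$, and a shading $\tilde Y'$ with uniform multiplicity satisfying the three volume conclusions at scale $\tilde\delta$.

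Pulling back via $\phi_B^{-1}$, set $\Pi:=\phi_B^{-1}(\tilde\Pi)$, let $\tubes'\subset\tubes$ and $Y'$ be the corresponding preimages, and take $P$ to be a $\delta\times r\times r$ rectangular prism centered inside $B$ whose short direction is normal to $\Pi$ and whose two long directions span $\Pi$, chosen large enough to contain $N_{2\delta}(\Pi)\cap B$. Each $T\in\tubes'$ lies in $N_{2\delta}(\Pi)$ and has shading in $B$, so $T\cap B$ is a chord of the ball traveling inside the $2\delta$-slab around $\Pi$; such a chord enters and exits $P$ through its two $\delta\times r$ end-faces, which are the long ends of $P$ under the convention $a\leq b\leq c$. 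Conclusions (a)--(c) then follow from the rescaling identities: the multiplicity $\mu$ is invariant, and the volume relations $\sum|Y'(T)|=r^3\sum|\tilde Y'|$, $|\bigcup Y'(T)|=r^3|\bigcup\tilde Y'|$ convert the Lemma's bounds into $\kappa_1\tilde\delta^{\eps_1}\sum|Y(T)|\geq\kappa\delta^\eps\sum|Y(T)|$ and $\kappa_1 r^{2-\eps_1}\delta^{1+\eps_1}\geq\kappa\delta^{1+\eps}r^2$, both using $\eps_1\leq\eps$, $r\leq 1$, and taking $\kappa$ suitably small. The main obstacle is the exponent bookkeeping highlighted above: one must choose $\eta$ sufficiently small in terms of $\eps$, $\beta$ and $\eta_1$ so that the rescaled density, union-volume and broadness hypotheses all survive the passage to scale $\tilde\delta$. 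The geometric construction of $P$ and the ``long ends'' condition are routine once $\Pi$ is fixed, because $P$ has two equal long dimensions, making the short direction (perpendicular to $\Pi$) unambiguous, and tubes in the $2\delta$-thickening of $\Pi$ cannot exit through either $r\times r$ face; the hypothesis $\tubes\subset T_1$ plays no geometric role beyond fixing the ambient $1$-tube frame required by the two-scale grains formalism (cf.~Remark~\ref{remarkLongEndsExitSquareGrain}).
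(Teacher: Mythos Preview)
Your approach---rescale $B$ to the unit ball and invoke Lemma~\ref{EiContainedInPlane} at scale $\tilde\delta=\delta/r$---is exactly what the paper intends; the paper itself gives no proof and simply calls this a ``re-scaled variant.'' You also correctly identify the key technical point: the density, broadness-error, and union-volume hypotheses of the lemma are stated as powers of $\tilde\delta$, whereas the corollary's hypotheses are powers of $\delta$, and transferring all three forces the constraint $r\gtrsim\delta^{1-\eta/\eta_1}$.

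There is one loose end. You write that this constraint ``is the substantive exponent bookkeeping,'' but no choice of $\eta>0$ makes it hold for \emph{all} $r\in(\delta,1]$: the degenerate regime $\delta<r\lesssim\delta^{1-\eta/\eta_1}$ (where $\tilde\delta$ is too close to $1$ for the rescaled density and broadness-error hypotheses to hold) is not covered by the rescaling argument, and you do not say what to do there. The fix is not difficult---choose $\eta/\eta_1\le\eps/C$ so that in this regime $r/\delta\le\delta^{-\eps/C}$, and then pigeonhole directly over the $\lesssim(r/\delta)^{O(1)}\le\delta^{-\eps/2}$ many $\delta\times r\times r$ prisms (with varying normal directions) covering $B$ to locate one carrying a $\delta^{\eps}$-fraction of the mass; the multiplicity condition~(a) is then arranged by a further dyadic pigeonhole---but you should acknowledge and dispatch this case explicitly rather than fold it into ``bookkeeping.'' Alternatively, one can observe that in the paper's sole application (Step~2 of the proof of Proposition~\ref{GuthGrainsProp}) the parameter $r$ produced by Lemma~\ref{grainsDecompBroad} always satisfies $r\gtrsim\delta^{1-O(\eps_1)}$ once $\eps_1$ is chosen small, so the degenerate regime does not arise there.

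A minor remark on the geometry: after pulling back, the full unit-length tube $T$ need not lie in $N_{2\delta}(\Pi)$---only $T\cap B$ does. This is harmless since the corollary only asks that $T\cap P\neq\emptyset$ and that $T$ exit $P$ through its long ends, both of which follow from $Y'(T)\subset N_{2\delta}(\Pi)\cap B$ together with the square shape of $P$ (cf.\ Remark~\ref{remarkLongEndsExitSquareGrain}); your treatment of this point is adequate.
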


We are now ready to prove Proposition \ref{GuthGrainsProp}. In brief, we use Lemma \ref{grainsDecompBroad} to cover a substantial portion of $(\tubes,Y)_\delta$ by a union of disjoint sets $\{E_i\}$, and then we use Corollary \ref{LemEiContainedInPlaneRescaled} to trap a substantial portion of each set $E_i$ in a prism $P_i$ of dimensions  comparable to $\delta\times r\times r$, for an appropriately chosen diameter $r$.

\begin{proof}[Proof of Proposition \ref{GuthGrainsProp}]$\phantom{1}$\\
{\bf Step 1}.
Let $\delta_0, \eps_1,\eps_2$ be small numbers to be chosen below. We will select $\delta_0$ very small compared to $\eps_1$, $\eps_1$ very small compared to $\eps_{2}$, and $\eta,\kappa$ very small compared to $\eps_1$. These numbers depend on $\eps$ and $\beta$. We may suppose that $\delta\leq\delta_0$, or else the result is immediate provided we choose $\kappa$ sufficiently small so that $\kappa\delta^\eps(\delta\tubes)^{-1}\leq\delta$. In this case the set $\mathcal{G}$ consists of $\delta\times\delta\times\delta$ balls, and there is nothing to prove. Henceforth we shall assume that $\delta\leq\delta_0$.

After dyadic pigeonholing and replacing $(\tubes,Y)_\delta$ by a $\sim(\log 1/\delta)^{-1}$ refinement, we may suppose there exists a number $\mu\geq 1$ so that $\#\tubes_Y(x)\in[\mu,2\mu)$ for every $x\in\bigcup_{T\in\tubes}Y(T)$; we still have that $(\tubes,Y)_\delta$ is broad with error $\delta^{-\eta}$.

Apply Lemma \ref{grainsDecompBroad} to $(\tubes,Y)_\delta$ with $\eps_1$ in place of $\eps$ and $\beta$ as above; we can do this, provided we select $\eta>0$ sufficiently small depending on $\eps_1$ and $\beta$. Let $K_1, N$, $(\tubes_1,Y_1)_\delta$, and $E_1,\ldots,E_N$ be the output of that lemma. In particular, $(\tubes_1,Y_1)$ is broad with error $K_1 \delta^{-\eps_1}$. Note that by Item (a), we have 
\begin{equation}\label{lowerBdOnR}
r \geq K_1^{-2}\delta^{2\eps_1}(\delta \#\tubes)^{-1}  \mu.
\end{equation}

Let $I_1$ denote the set of indices in $\{1,\ldots,N\}$ for which
\[
\#\{T\in\tubes_1 \colon Y_1(T)\cap E_i\neq\emptyset\} \leq 2  N^{-1}r^{-1}(\#\tubes),
\]
and let $I_2$ denote the remaining indices. For each $T\in\tubes_1$, there are at most $(100 r)^{-1}$ sets $E_i$ for which $Y_1(T)\cap E_i\neq\emptyset$. Thus

\[
\#I_2 \leq \Big(  2N^{-1}r^{-1}(\#\tubes)\Big)^{-1}\sum_{T\in\tubes}\#\big(i\colon Y_1(T)\cap E_i\neq\emptyset \big)\leq N/2.
\]
We conclude that $\#I_1\geq N/2$. For each $i\in I_1$, define 
\[
\tubes^{(i)} = \{T\in\tubes\colon Y_1(T)\cap E_i\neq\emptyset\}.
\]
We have
\begin{equation*}
\begin{split}
\sum_{T\in\tubes^{(i)}}|Y_1(T)\cap E_i| 
& \geq K_1^{-1}\delta^{\eps_1} \mu|E_i|\\
&\geq \mu K_1^{-2}\delta^{2\eps_1} N^{-1} \Big| \bigcup_{T\in\tubes}Y(T)\Big|\\
&\geq K_1^{-1}N^{-1}\delta^{2\eps_1+\eta} (|T| \#\tubes)\\
&\geq K_1^{-2}\delta^{2\eps_1+\eta}N^{-1}(N  r/2)(|T| \# \tubes^{(i)})\\
& \geq K_1^{-3}\delta^{3\eps_1+\eta}(\delta^2 r)(\#\tubes^{(i)}).
\end{split}
\end{equation*}
In the above, the first inequality used  Conclusion (c) of Lemma \ref{grainsDecompBroad}; the second inequality used \eqref{upperLowerBdVolumeEi}; the third inequality used the fact that $(\tubes,Y)_\delta$ is $\delta^\eta$ dense, and $\#\tubes_Y(x)\sim\mu$ for each $x\in\bigcup Y(T)$; and the fourth inequality used the fact that $i\in I_1$.

\medskip

\noindent {\bf Step 2}.
By \eqref{upperLowerBdVolumeEi} we have $|E_i|\leq K \delta \big(\operatorname{diam}(E_i)\big)^2$, and by Conclusion (b) of Lemma \ref{grainsDecompBroad}, $\diam(E_i)\leq 2r$. Let $B_i$ be a ball of radius $r$ that contains $E_i$. For each $T\in\tubes^{(i)}$ define $Y^{(i)}(T) = Y_1(T) \cap E_i \subset B_i$. If $\delta_0$ and $\eps_1$ are chosen sufficiently small depending on $\eps_2$ and $\beta$, then   by Step 1,  $(\tubes^{(i)},Y^{(i)})_\delta$ satisfies the hypotheses of Corollary~\ref{LemEiContainedInPlaneRescaled}. Applying this Corollary, we obtain a set $G_i$ comparable to a $\delta\times r\times r$ prism; a number $\mu_i$; a set $\tubes^{(i)\prime}\subset\tubes^{(i)}$; and a sub-shading $Y^{(i)\prime}(T)\subset Y^{(i)}(T),\ T\in\tubes^{(i)\prime}$, so that the following holds:
\begin{itemize}
	\item Each $T\in\tubes^{(i)\prime}$ intersects $G_i$ and exits $G_i$ through its long ends, in the sense of Remark \ref{remarkLongEndsExitSquareGrain}.
	\item 
\[
\sum_{T\in \tubes^{(i)\prime}}|Y^{(i)\prime}(T)| \geq \kappa_2 \delta^{\eps_2}\sum_{T\in\tubes^{(i)}}|Y^{(i)}(T)|.
\]
\item 
\[
\Big| \bigcup_{T\in \tubes^{(i)\prime}}Y^{(i)\prime}(T)\Big| \geq \kappa_2 \delta^{1+\eps_2}r^2\gtrsim \kappa_2\delta^{\eps_2}|G_i|.
\]
\item For each $x\in \bigcup_{T\in \tubes^{(i)\prime}}Y^{(i)\prime}(T)$ we have $\#\tubes^{(i)\prime}_{Y^{(i)\prime}}(x)\sim\mu_i.$
\end{itemize}

After dyadic pigeonholing, we can select a number $\mu'$ and a set of indices $I_1'\subset I_1$ so that $\mu_i\sim\mu'$ for each $i\in I_1'$. We will choose $\mu'$ in such a way that if we define  $\mathcal{G} = \{G_i\colon i\in I_1'\}$, define the shading
\[
Y(G_i) = \bigcup_{T\in \tubes^{(i)\prime}} Y^{(i)\prime}(T),
\]
and define the shading 
\[
Y_2(T)=Y_1(T) \cap \bigcup_G Y(G),
\]
where the union is taken over those $G\in\mathcal{G}$ for which $T\in \tubes^{(i)\prime}$, then $(\tubes_1,Y_2)_\delta$ is a $\gtrapprox_\delta\delta^{\eps_2}$ refinement of $(\tubes_1,Y_1)_\delta$, and thus a $\gtrapprox_{\delta}\delta^{\eps_1+\eps_2}$ refinement of $(\tubes,Y)_\delta$.

Note that for each $G_i\in\mathcal{G}$, we have $Y(G_i)\subset G_i\cap E_i$, and thus the sets $\{Y(G)\colon G\in\mathcal{G}\}$ are disjoint. Furthermore, $|Y_1(G)|\geq\kappa_2\delta^{\eps_2}|G_i|$, and thus $(\mathcal{G},Y)_{\delta\times r\times r}$ is $\kappa_2\delta^{\eps_2}$ dense.

We have the following:
\begin{itemize}
\item[(i)] $\bigcup_{\tubes}Y_2(T) = \bigsqcup_{G\in\mathcal{G}}Y(G)$.
\item[(ii)] If $Y_2(T)\cap Y(G)\neq\emptyset$, then $T$ exits $G$ through its long ends, in the sense of in the sense of Remark \ref{remarkLongEndsExitSquareGrain}.
\item[(iii)] If $Y_2(T)\cap Y(G)\neq\emptyset$, then $Y_2(T)\cap G \subset Y(G)$.
\end{itemize}
Item (iii) follows from the fact that if  $Y_2(T)\cap Y(G_i)\neq\emptyset$  for some $G_i\in\mathcal{G}$ (recall that $G_i$ is a prism associated to a set $E_i$), then there is a length-$r$ sub-tube $T^{(j)}\subset T$ so that $Y_2(T)\cap T^{(j)}\subset E_i\subset Y(G_i)$. Since the sub-tubes $\{T^{(j)}\}$ associated to $T$ are $100r$ separated, and $\operatorname{diam}(G)\leq 10r$, we conclude that $T^{(j)}$ is the only sub-tube from the above collection that intersects $G$.

We conclude that $(\mathcal{G}, Y)_{\delta\times r\times r}$ is a robustly $\kappa_2\delta^{\eps_2}$-dense two-scale grains decomposition of $(\tubes_1,Y_2)_\delta$ wrt the single 1-tube $\{T_1\}$. Finally, the desired bound on $r$ is given by \eqref{lowerBdOnR}.
\end{proof}


\section{Wolff's hairbrush argument and the proof of Proposition \ref{WolffHairbrush}}\label{WolffHairbrushSec}
The goal of this section is to prove Proposition \ref{WolffHairbrush}. We will restate it here in an expanded form.
\begin{PropWolffHairbrushExpanded}
For all $\eps>0$, there exists $\kappa,\eta>0$ so that the following holds for all $\delta>0$. Let $(\tubes,Y)_\delta$ be $\delta^{\eta}$ dense, with $\CKT(\tubes)\leq\delta^{-\eta}$ and $\FS(\tubes)\leq\delta^{-\eta}$. Then
\begin{equation}\label{desiredTubesBd}
\Big|\bigcup_{T\in\tubes}Y(T)\Big|\geq \kappa \delta^{3/2+\eps}(\#\tubes)^{1/2}.
\end{equation}
\end{PropWolffHairbrushExpanded}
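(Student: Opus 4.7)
The strategy is a direct adaptation of Wolff's hairbrush argument from \cite{Wol95}, with the non-concentration input coming from $\FS(\tubes) \leq \delta^{-\eta}$ rather than from $\delta$-separated directions. By dyadic pigeonholing I first reduce to the case that $|Y(T)| \sim \lambda |T|$ uniformly with $\lambda \geq \delta^{O(\eta)}$ and that there is a multiplicity $\mu \geq 1$ with $\#\{T \in \tubes : x \in Y(T)\} \sim \mu$ for every $x \in \bigcup_T Y(T)$; I also apply Lemma \ref{everySetHasARegularShadingLem} (regular sub-shading) and the standard two-ends reduction so that each $Y(T)$ contains two ``halves'' of $T$ separated by distance $\sim 1$. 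Writing $V := |\bigcup_T Y(T)|$, the identity $\mu V \sim \lambda \delta^2 (\#\tubes) \gtrapprox_\delta \delta^{2} (\#\tubes)$ shows that the target bound $V \geq \kappa \delta^{3/2+\eps}(\#\tubes)^{1/2}$ is equivalent to the Wolff-type hairbrush volume estimate $V \gtrapprox_\delta \mu \delta$.

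To prove $V \gtrapprox_\delta \mu\delta$, I construct a stem. By averaging the double-count $\int(\#\tubes_Y(x))^2\,dx \sim \mu^2 V$ together with dyadic pigeonholing on the angle $\theta^* \in [\delta^{1-\eta},1]$, I select $T_0 \in \tubes$, a segment $I \subset T_0$ of length $\sim 1$ with rich shading on $I$, and a subcollection
\[
H_{\theta^*}(T_0) = \{T \in \tubes : Y(T) \cap Y(T_0) \cap I \neq \emptyset,\; \angle(T, T_0) \sim \theta^*\}
\]
of cardinality $\gtrapprox_\delta \theta^* \mu / \delta$. (The $\theta^*/\delta$ factor is the basic hairbrush count: each $\delta$-ball of $I$ carries $\sim \mu$ tubes, a tube at angle $\theta^*$ crosses $I$ in a segment of length $\sim \delta/\theta^*$, and the stem $I$ has $\sim \delta^{-1}$ many $\delta$-balls.)

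For each $T \in H_{\theta^*}(T_0)$, the $2$-plane $\Pi_T$ spanned by $T_0$ and $\dir(T)$ is meaningfully defined up to angular accuracy $\delta/\theta^*$, giving at most $O(\theta^*/\delta)$ distinguishable planes through $T_0$. Fix one such plane $\Pi$. All tubes of $H_{\theta^*}(T_0)$ lying inside $N_{100\delta}(\Pi)$ are contained in a $\delta\times \theta^*\times 1$ slab, which is where the hypothesis $\FS(\tubes)\leq \delta^{-\eta}$ enters: the number of such tubes is at most $\delta^{-\eta}\cdot \delta\theta^*\cdot(\#\tubes)$. Within $\Pi$, the $n_\Pi$ tubes of the hairbrush are mutually parallel (all at angle $\theta^*$ with $T_0$) and meet $T_0$ at $\delta/\sin\theta^*$-separated points; they therefore have $2$-dimensional area in $\Pi$ equal to $\gtrsim n_\Pi\delta$, and hence $3$-dimensional volume within $N_\delta(\Pi)$ at least $\gtrsim n_\Pi\delta^2$. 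The two-ends reduction guarantees that a positive fraction of this volume is achieved at distance $\gtrsim 1$ from $T_0$, where the $\delta$-neighbourhoods of distinct planes (separated by angle $\gtrsim \delta/\theta^*$ about $T_0$) become disjoint. Summing gives
\[
V \;\gtrapprox_\delta\; \sum_\Pi n_\Pi \delta^2 \;=\; (\#H_{\theta^*}(T_0))\,\delta^2 \;\gtrapprox_\delta\; \theta^*\mu\delta.
\]

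The main obstacle is that this naive bound is weakest when $\theta^*$ is small. To remove the $\theta^*$ loss I will use the Frostman Slab Wolff axiom a second time to show that the mass $\sum_T |Y(T)\cap Y(T_0)|\sim \mu|Y(T_0)|$ contributed by tubes of small angle $\theta< \theta_0$ is at most $\delta^{-\eta}\delta\theta_0(\#\tubes)\cdot\delta^2 \ll \mu|Y(T_0)|$ for an appropriate threshold $\theta_0$, so that the dominant dyadic scale $\theta^*$ in the pigeonhole above is $\gtrapprox_\delta 1$. Combining $V \gtrapprox_\delta \mu\delta$ with $\mu V \gtrapprox_\delta \delta^{2+O(\eta)}\#\tubes$ yields $V^2 \gtrapprox_\delta \delta^{3+O(\eta)}\#\tubes$, hence $V \gtrapprox_\delta \delta^{3/2+O(\eta)}(\#\tubes)^{1/2}$, and the desired estimate follows by choosing $\eta$ sufficiently small in terms of $\eps$.
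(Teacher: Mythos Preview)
Your hairbrush bound $V\gtrapprox_\delta \theta^*\mu\delta$ is fine, but the final step --- arguing via $\FS$ that $\theta^*\gtrapprox_\delta 1$ --- does not work. The set of tubes meeting $T_0$ at angle $<\theta_0$ is \emph{not} contained in any slab of volume $\sim\delta\theta_0$; it fills the $\theta_0$--tube around $T_0$, which is a convex set of volume $\sim\theta_0^2$, and $\FS$ says nothing about tubes (only slabs). Concretely, take $\tubes$ to consist of $\delta$ tubes whose directions all lie in a fixed $\theta_0$--cap on $S^2$, with positions spread out so that $\CKT(\tubes),\FS(\tubes)\lesssim 1$. Then every pair of intersecting tubes makes angle $\lesssim\theta_0$, so the pigeonholed scale $\theta^*$ is $\lesssim\theta_0$, and your bound degrades to $V\gtrapprox\theta_0^{1/2}\delta^{3/2}(\#\tubes)^{1/2}$.

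The missing ingredient is exactly what the paper's proof supplies: a robust--transversality (broadness) reduction that locates the scale $\theta$, together with a cover $\tubes_\theta$ whose shadings contribute \emph{disjointly} to $V$. Inside each $T_\theta$ the rescaled tubes are $2$--broad and Wolff's hairbrush gives $|T_\theta|^{-1}\big|\bigcup_{\tubes[T_\theta]}Y(T)\big|\gtrapprox(\delta/\theta)^{3/2}(\#\tubes[T_\theta])^{1/2}$; summing over $T_\theta$ and using the balanced cover gives $V\gtrapprox\delta^{3/2}\theta^{1/2}(\#\tubes_\theta)^{1/2}(\#\tubes)^{1/2}$. The Frostman Slab axiom now enters correctly --- each $\theta$--tube sits inside a $\theta$--slab, so $\#\tubes[T_\theta]\leq\delta^{-\eta}\theta(\#\tubes)$, forcing $\#\tubes_\theta\gtrapprox\theta^{-1}$ and cancelling the $\theta^{1/2}$. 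In your single--hairbrush approach there is no analogous sum over disjoint pieces, so the $\theta^*$ loss cannot be recovered.
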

\begin{proof}
The proof uses a standard ``bilinear'' or ``robust transversality'' argument to reduce to the case where a typical pair of intersecting tubes make large angle (i.e.~the unit vectors $\operatorname{dir}(T)$ and $\operatorname{dir}(T')$ are far from parallel), followed by Wolff's hairbrush argument. Since these arguments are covered in detail elsewhere (see e.g.~\cite[\S 2.4]{KaZa21}), we will just provide a brief sketch. 

Fix $\eps>0$ and let $\eta>0$ be a small quantity to be chosen below. Let $(\tubes,Y)_\delta$ be $\delta^{\eta}$ dense, with $\CKT(\tubes)\leq\delta^{-\eta}$ and $\FS(\tubes)\leq\delta^{-\eta}$. Applying standard reductions, we may replace $(\tubes,Y)_\delta$ by a $\geq\delta^{\eta}$ dense refinement so that the following holds: There exists a number $\theta\in[\delta,1]$ so that for each $x\in \bigcup_{T\in\tubes}Y(T)$, there is a vector $v = v(x)$ so that $\angle(v, \operatorname{dir}(T))\leq\theta$ for each $T\in\tubes$ with $x\in Y(T)$, and for each unit vector $w\in\RR^3$ and each $r\in[\delta,\theta]$, we have
\[
\#\{T\in\tubes_Y(x),\ \angle(w, \operatorname{dir}(T))\leq r \} \leq (r/\theta)^\eta \#\tubes_Y(x).
\]
Furthermore, there exists a balanced partitioning cover $\tubes_\theta$ of $\tubes$, so that
\begin{equation}\label{broadAtScaleTheta}
\Big|\bigcup_{T\in\tubes}Y(T)\Big| = \sum_{T_\theta\in\tubes_\theta}\Big|\bigcup_{T\in\tubes[T_\theta]}Y(T)\Big|.
\end{equation}
After a further refinement, we may suppose that each set $\tubes^{T_\theta}$ is $\delta^{3\eta}$-dense. Note that $\tubes^{T_\theta}$ is a set of $\delta/\theta$ tubes that satisfies the broadness condition
\[
\#\{T^{T_\theta}\in\tubes^{T_\theta}_{Y^{T_\theta}}(x),\ \angle(w, \operatorname{dir}(T^{T_\theta}))\leq r \} \leq r^\eta \big(\#\tubes^{T_\theta}_{Y^{T_\theta}}(x)\big),
\]
for all unit vectors $w$ and all $r\in[\delta/\theta,1]$. Thus a standard application of Wolff's hairbrush argument \cite{Wol95} for 2-broad tubes shows that
\[
\Big|\bigcup_{T^\theta\in\tubes^{T_\theta}}Y^{T_\theta}(T^{T_\theta})\Big| \gtrsim \delta^{(5/3)(3\eta)}(\delta/\theta)^{1/2}\Big((\delta/\theta)(\#\tubes^{T_\theta})\Big)^{1/2}.
\]
By \eqref{broadAtScaleTheta}, we conclude that
\[
\Big|\bigcup_{T\in\tubes}Y(T)\Big| \gtrsim \delta^{5\eta}\delta^{3/2}\theta^{1/2}\sum_{T_\theta\in\tubes_\theta}(\#\tubes^{T_\theta})^{1/2}\gtrsim \delta^{5\eta}\delta^{3/2}\theta^{1/2}(\#\tubes_\theta)^{1/2}(\#\tubes^{1/2}).
\]
\eqref{desiredTubesBd} now follows from the observation that $\#\tubes[T_\theta]\leq \theta \FS(\tubes)(\#\tubes)\lessapprox_\delta \delta^{-2\eta}\theta (\#\tubes)$, and thus $\#\tubes_\theta\gtrapprox_\delta \delta^{2\eta}\theta^{-1}$. 
\end{proof}

\end{document}